\theoremstyle{plain}
\def\theequation{\thesection.\arabic{equation}}
\newtheorem{theorem}[equation]{Theorem}
\newtheorem{thm}[equation]{Theorem}
\newtheorem{proposition}[equation]{Proposition}
\newtheorem{lemma}[equation]{Lemma}
\newtheorem{cor}[equation]{Corollary}
\newtheorem{corollary}[equation]{Corollary}
\theoremstyle{definition}
\newtheorem{remark}[equation]{Remark}
\newtheorem{definition}[equation]{Definition}
\newtheorem{notation}[equation]{Notation}
\newtheorem{example}[equation]{Example}
\def\namedthm#1{\let\oldtheeq\theequation\def\theequation{#1}\begin{thm}}
\def\endnamedthm{\end{thm}\let\theequation\oldtheeq}
\def\smallmatrix#1#2#3#4{\genfrac{(}{.}{0pt}{1}{#1}{#3}\genfrac{.}{)}{0pt}{1}{#2}{#4}}
\def\choice#1#2#3#4{{\def\arraystretch{0.7}
\Bigl\{\!\!\begin{array}{ll}
   \scriptstyle #1\!\!\!&\scriptstyle #2\cr
   \scriptstyle #3\!\!\!&\scriptstyle #4\end{array}}\!\!\Bigr\}\,}
\def\leftchoice#1#2#3#4{{\def\arraystretch{0.7}
\Bigl\{\!\!\begin{array}{ll}
   \scriptstyle #1\!\!\!&\scriptstyle #2\cr
   \scriptstyle #3\!\!\!&\scriptstyle #4\end{array}}}
\def\bigleftchoice#1#2#3#4{{\def\arraystretch{1}
  \biggl\{\!\!\begin{array}{ll}#1&#2\cr#3&#4\end{array}}}
\let\iff\Leftrightarrow
\def\injects{\lhook\joinrel\rightarrow}
\def\longinjects{\lhook\joinrel\lar}
\let\lar\longrightarrow
\let\iso\cong
\let\tensor\otimes
\def\beq{$$\begin{array}{llllllllllllllll}}
\def\eeq{\end{array}$$}
\def\beqn{\begin{equation}\begin{array}{llllllllllllllll}}
\def\eeqn{\end{array}\end{equation}}
\def\beql#1{\begin{equation}\label{#1}}
\def\eeql{\end{equation}}
\def\tbuildrel#1\over#2{\buildrel\text{\rm\normalsize\smaller[3]#1}\over{#2}}
\def\thincdots{\raise1.3pt\hbox{$\scriptscriptstyle
  \>\cdot\>\cdot\>\cdot\>\cdot\hskip0.3pt$}}
\def\II{\text{\rm II}}
\def\IV{\text{\rm IV}}
\def\comment{}
\def\endcomment{}
\def\<{\raise0.5pt\hbox{$\,\scriptstyle<\,$}}
\def\arrowlim#1#2{\mathop{\underset{\scriptstyle #1}{\underset
    {\raisebox{0ex}[0.25ex][-0.5ex]{$#2$}}{\operatorname{lim}}}}}
\newcommand{\invlim}[1][]{\arrowlim{#1}{\longleftarrow}}        
\def\isoarrow{\xrightarrow{\raise-6pt\hbox{\smash{$\scriptscriptstyle\iso\,$}}}}
\def\bb@symb#1|#2{\expandafter\def\csname #2#1\endcsname{\mathbb{#1}}}
\def\bbsymbols#1#2{\@for\@tmpz:=#2\do{\expandafter\bb@symb\@tmpz|{#1}}}
\def\cal@symb#1|#2{\expandafter\def\csname #2#1\endcsname{\mathcal{#1}}}
\def\calsymbols#1#2{\@for\@tmpz:=#2\do{\expandafter\cal@symb\@tmpz|{#1}}}
\def\dmth@p#1|{\expandafter\let\csname#1\endcsname\relax
  \expandafter\DeclareMathOperator\csname#1\endcsname{#1}}
\def\operators#1{\@for\@tmpz:=#1\do{\expandafter\dmth@p\@tmpz|}}
\DeclareMathOperator\vchar{char}
\def\triv{\boldsymbol{1}}
\def\mapiso{\>{\buildrel\raise-2pt\hbox{$\scriptscriptstyle\iso$}\over\lar}\,}
\def\mm#1#2#3{m^{\scriptscriptstyle #1}_{#2,#3}}
\def\centralC2{C_2^z}
\long\def\introthm#1#2#3\endthm{%
  \expandafter\gdef\csname #2\endcsname
  {\begin{theorem}[=Theorem \ref{i#1}]\label{#1}#3\end{theorem}}%
  \begin{theorem}[=Theorem \ref{#1}]\label{i#1}#3\end{theorem}}
\def\Tcropt#1{\rule{0pt}{#1ex}\cr}          
\def\Bcropt#1{\rule[-#1ex]{0pt}{0pt}\cr}    
\def\Bcr{\Bcropt{1}}
\def\optional#1{{\color{cyan}#1}}
                             \long\def\comment#1\endcomment{}
\let\O\cO
\def\kbar{\bar k}
\def\m{\mathfrak{m}}
\def\Ks{K^{\sep}}                     
\def\Kbar{\bar K}
\def\Knr{K^{\nr}}
\def\Fnr{F^{\nr}}
\def\OKnr{\cO_{K^{\nr}}}
\def\OFnr{\cO_{F^{\nr}}}
\def\II{{\mathcal I}}            
\def\JJ{{\mathcal J}}
\def\KK{{\mathcal K}}
\def\Cmp{\Gamma}                 
\def\Cmpn{\tilde\Gamma}          
\def\GKK{\Gal(\bar K/K)}
\def\floor*#1{\lfloor #1\rfloor}
\def\ceil*#1{\lceil #1\rceil}
\def\theequation{\thesection.\arabic{equation}}
\def\ZZ{\mathbb{Z}}
\def\cO{\mathcal{O}}
\def\b{\beta}
\def\c{{\mathfrak s}}
\def\v{v}
\def\Pa{P}
\def\ub{\"ubereven}
\def\neck#1{#1^*}
\def\r{{\mathfrak r}}
\def\overarrow#1{\>\>{\buildrel{#1}\over\lar}\>\>}
\def\overlarrow#1{\>\>{\buildrel{#1}\over{\hbox to 3em{\rightarrowfill}}}\>\>}
\def\overhline#1{\>\>{\buildrel{#1}\over{\raise0.33em\hbox to 3em{\hrulefill}}}\>\>}
\let\s\c
\def\so{\tilde\s}
\def\I{{\rm \hskip 1pt I}}
\def\ss{{\rm ss}}
\def\Cbar{{\bar C}}
\def\Cbarn{\tilde{\bar C}}
\def\H{H^1_{\text{\'et}}}
\def\t{{\mathfrak t}}
\DeclareFontFamily{U}{mathb}{\hyphenchar\font45}
\DeclareFontShape{U}{mathb}{m}{n}{
      <5> <6> <7> <8> <9> <10> gen * mathb
      <10.95> mathb10 <12> <14.4> <17.28> <20.74> <24.88> mathb12
      }{}
\DeclareSymbolFont{mathb}{U}{mathb}{m}{n}
\DeclareMathSymbol{\lefttorightarrow}{3}{mathb}{"FC}
\def\acts{\mathop{\raise 0.13ex\hbox{\protect\scalebox{0.8}[0.9]{$\lefttorightarrow$}}}}
\crefname{equation}{}{}
\definecolor{amethyst}{rgb}{0.6, 0.4, 0.8}
\definecolor{atomictangerine}{rgb}{1.0, 0.6, 0.4}
\definecolor{deeppeach}{rgb}{1.0, 0.8, 0.64}
\definecolor{eggshell}{rgb}{0.94, 0.92, 0.84}
\definecolor{lightapricot}{rgb}{0.99, 0.84, 0.69}
\definecolor{lemonchiffon}{rgb}{1.0, 0.98, 0.8}
\definecolor{roundabout}{rgb}{1.0, 0.91, 0.75}
\definecolor{atomictangerine}{rgb}{1.0, 0.6, 0.4}
\def\rootsep{0.03}               
\def\clustersep{0.06}            
\def\cnamescale{0.4}             
\def\cdepthscale{0.4}            
\def\cltopskip{1pt}              
\def\clbottomskip{1pt}           
\def\rootscale{0.5}   \def\rootcolor{amethyst}
\def\rootscaleA{0.7}  \def\rootcolorA{yellow}
\def\rootscaleB{0.5}  \def\rootcolorB{green}
\def\rootscaleC{0.5}  \def\rootcolorC{red}
\def\rootscaleD{0.5}  \def\rootcolorD{blue}
\tikzset{
  clA/.style = {very thick,black},
  clB/.style = {thick,purple}
}
\def\graphdslabelscale{0.6}
\def\GraphScale{0.6}
\def\SnakeWiggle{4pt}
\def\SnakeAmplitude{0.8pt}
\tikzset{
  root/.style = {circle,scale=\rootscale,ball color=\rootcolor},
    rc/.style 2 args = {right=#1*1.5*\clustersep of {#2.east|-first},root}, rr/.style = {right=\rootsep of {#1.east|-first},root},
  roott/.style = {circle,inner sep=-2pt,minimum size=5pt,black,font=\ttfamily\footnotesize},
    rct/.style 2 args = {right=#1*1.5*\clustersep of {#2.east|-first},roott}, rrt/.style = {right=\rootsep of {#1.east|-first},roott},
  rootA/.style = {circle,scale=\rootscaleA,ball color=\rootcolorA},
    rcA/.style 2 args = {right=#1*1.5*\clustersep of {#2.east|-first},rootA}, rrA/.style = {right=\rootsep of {#1.east|-first},rootA},
  rootB/.style = {circle,scale=\rootscaleB,ball color=\rootcolorB},
    rcB/.style 2 args = {right=#1*1.5*\clustersep of {#2.east|-first},rootB}, rrB/.style = {right=\rootsep of {#1.east|-first},rootB},
  rootC/.style = {circle,scale=\rootscaleC,ball color=\rootcolorC},
    rcC/.style 2 args = {right=#1*1.5*\clustersep of {#2.east|-first},rootC}, rrC/.style = {right=\rootsep of {#1.east|-first},rootC},
  rootD/.style = {circle,scale=\rootscaleD,ball color=\rootcolorD},
    rcD/.style 2 args = {right=#1*1.5*\clustersep of {#2.east|-first},rootD}, rrD/.style = {right=\rootsep of {#1.east|-first},rootD},
  cluster/.style = {draw=blue!70,thick,rounded corners,inner sep=22*\clustersep,outer xsep=22*\clustersep,fit=#1},
  clabel/.style  = {anchor=west,scale=\cdepthscale,black,inner sep=0,outer xsep=1,outer ysep=0},
  clabelL/.style = {above right=-\clustersep of #1t.north east,clabel},
  clabelD/.style = {below right=-\clustersep of #1t.south east,clabel},
  clouter/.style = {inner sep=0,outer sep=0,fit=#1}
}
\def\Cluster #1 = #2;{\node[cluster=#2] (#1) {};}
\def\ClusterL #1[#2] = #3;{
  \node[cluster=#3] (#1t) {}; \node[clabelL=#1] (#1l) {$#2$}; \node[clouter=(#1t)(#1l)] (#1) {};}
\def\ClusterD #1[#2] = #3;{
  \node[cluster=#3] (#1t) {}; \node[clabelD=#1] (#1d) {$#2$}; \node[clouter=(#1t)(#1d)] (#1) {};}
\def\ClusterLD #1[#2][#3] = #4;{
  \node[cluster=#4] (#1t) {}; \node[clabelL=#1] (#1l) {$#2$}; 
  \node[clabelD=#1] (#1d) {$#3$}; \node[clouter=(#1t)(#1l)(#1d)] (#1) {};}
\def\ClusterLDName #1[#2][#3][#4] = #5;{
  \node[cluster=#5] (#1t) {}; \node[clabelL=#1] (#1l) {$#2$}; 
  \node[clabelD=#1] (#1d) {$#3$}; 
  \node[scale=\cnamescale,above=\clustersep/3 of #1t,inner sep=0, outer sep=0] (#1n) {$#4$}; 
  \node[clouter=(#1l)(#1d)(#1t)] (#1) {};}
\newcommand{\Root}[4][]{
  \ifx\relax#2\relax\node[rr#1=#3] (#4) {};\else\node[rc#1={#2}{#3}] (#4) {};\fi}
\newcommand{\RootT}[5][]{
  \ifx\relax#2\relax\node[rrt#1=#3] (#4) {#5};\else\node[rct#1={#2}{#3}] (#4) {#5};\fi}
\def\frob(#1)(#2){\path[draw,thick,shorten <=-22*\clustersep,shorten >=-22*\clustersep](#1.east)--(#2.west|-#1){};}
\long\def\clusterpicture#1\endclusterpicture{\pb{\vbox to \cltopskip{\vfill}\\%
  \begin{tikzpicture}\node[coordinate] (first) {};#1\end{tikzpicture}\\[-11pt]\vbox to \clbottomskip{\vfill}}}   
\long\def\clusterpictureopt#1#2\endclusterpicture{\pb{\vbox to \cltopskip{\vfill}\\%
  \begin{tikzpicture}[#1]\node[coordinate] (first) {};#2\end{tikzpicture}\\[-11pt]\vbox to \clbottomskip{\vfill}}}   
\def\VSwap#1#2#3#4{\path[draw](#1) edge[<->,#3,shorten >=#4pt,shorten <=#4pt] (#2){};}
\def\VArr#1#2#3#4{\path[draw](#1) edge[->,#3,shorten >=#4pt,shorten <=#4pt] (#2){};}
\def\ESwapOfs#1#2#3#4#5#6#7#8{\VSwap{$(#1)!0.5!(#2) + (#6)$}{$(#3)!0.5!(#4) + (#7)$}{#5}{#8}}
\def\EArrOfs#1#2#3#4#5#6#7#8{\VArr{$(#1)!0.5!(#2) + (#6)$}{$(#3)!0.5!(#4) + (#7)$}{#5}{#8}}
\def\EdgeSign(#1)(#2)#3(#4)#5{
  \node at ($(#1)!#3!(#2) + (#4)$) [color=black, scale=\graphdslabelscale] {$\scriptstyle #5$};
}
\def\TreeEdgeSignDist{0.33}
\def\GraphEdgeSignDist{0.55}
\def\TreeEdgeSignN(#1)(#2)#3#4{\EdgeSign(#1)(#2)#3(0,\TreeEdgeSignDist){#4}}
\def\TreeEdgeSignS(#1)(#2)#3#4{\EdgeSign(#1)(#2)#3(0,-\TreeEdgeSignDist){#4}}
\def\TreeEdgeSignW(#1)(#2)#3#4{\EdgeSign(#1)(#2)#3(-\TreeEdgeSignDist,0){#4}}
\def\TreeEdgeSignE(#1)(#2)#3#4{\EdgeSign(#1)(#2)#3(\TreeEdgeSignDist,0){#4}}
\def\GraphEdgeSignN(#1)(#2)#3#4{\EdgeSign(#1)(#2)#3(0,\GraphEdgeSignDist){#4}}
\def\GraphEdgeSignS(#1)(#2)#3#4{\EdgeSign(#1)(#2)#3(0,-\GraphEdgeSignDist){#4}}
\def\GraphEdgeSignW(#1)(#2)#3#4{\EdgeSign(#1)(#2)#3(-\GraphEdgeSignDist,0){#4}}
\def\GraphEdgeSignE(#1)(#2)#3#4{\EdgeSign(#1)(#2)#3(\GraphEdgeSignDist,0){#4}}
\def\TreeSignAt(#1)(#2)#3{\EdgeSign(#1)(#1){0.5}(#2){#3}}
\def\ecluster(#1)(#2)#3{
  \path(#1) coordinate (#2a){}; 
  \path($(#1)+(0.3,0)$) coordinate (#2c){}; 
  \path($(#1)+(#3/2,0)-(0,#3)$) coordinate (#2t); 
  \path[draw,-,thick] (#2a)--($(#2a)-(0,0.15)$);
  \path[draw,-,thick] (#2a) edge[out=90,in=90] (#2c);
  \path[draw,-,thick] (#2c) edge[out=270,in=180] (#2t);
  \expandafter\edef\csname lastnode#2\endcsname{#2t}
  \expandafter\let\csname cltype#2\endcsname\epsilon
}
\def\ocluster(#1)(#2)#3{
  \path(#1) coordinate (#2a); 
  \path($(#1)-(0,0.1)$) coordinate (#2b); 
  \path($(#1)+(#3/2,0)-(0,#3)$) coordinate (#2t); 
  \path[draw,-,thick] ($(#2a)+(0,0.15)$)--(#2b);
  \path[draw,-,thick] (#2b) edge[out=270,in=180] (#2t);
  \expandafter\edef\csname lastnode#2\endcsname{#2t}
  \expandafter\let\csname cltype#2\endcsname\omega
}
\def\tocluster(#1)(#2)#3{
  \path(#1) coordinate (#2a); 
  \path(#1) coordinate (#2b); 
  \path(#1) coordinate (#2t); 
  \expandafter\edef\csname lastnode#2\endcsname{#2t}
  \expandafter\let\csname cltype#2\endcsname\omega
}
\def\tecluster(#1)(#2)#3{
  \path(#1) coordinate (#2a); 
  \path(#1) coordinate (#2b); 
  \path(#1) coordinate (#2t); 
  \expandafter\edef\csname lastnode#2\endcsname{#2t}
  \expandafter\let\csname cltype#2\endcsname\epsilon
}
\def\tucluster(#1)(#2)#3{
  \path(#1) coordinate (#2a); 
  \path(#1) coordinate (#2c); 
  \path(#1) coordinate (#2t); 
  \path($(#2c)-(0,0.1)$) coordinate (#2b); 
  \path(#2b) coordinate (#2d); 
  \expandafter\edef\csname lastnode#2\endcsname{#2t}
  \expandafter\let\csname cltype#2\endcsname\upsilon
}
\def\uclustergen(#1)(#2)#3#4{
  \path($(#1)+(0.1,0)$) coordinate (#2c){}; 
  \path($(#2c)-(0,0.1)$) coordinate (#2d){}; 
  \path($(#2c)+(#3/2,0)-(0,#3)$) coordinate (#2t); 
  \path[draw,-,thick] ($(#2c)+(0,0.15)$)--(#2d);
  \path[draw,-,thick] (#2d) edge[out=270,in=180] (#2t);
  \path($(#1)$) coordinate (#2a){}; 
  \path($(#2a)-(0,0.1)$) coordinate (#2b){}; 
  \path($(#2d)+(#3/2,0)-(0,#3)$) coordinate (#2u); 
  \path[draw,-,thick#4] ($(#2a)+(0,0.15)$)--(#2b);          
  \path[draw,-,thick#4] (#2u) edge[out=180,in=270] (#2b);   
  \expandafter\edef\csname lastnode#2\endcsname{#2t}
  \expandafter\let\csname cltype#2\endcsname\upsilon
}
\def\ucluster(#1)(#2)#3{\uclustergen(#1)(#2){#3}{}}
\def\uclusterbr(#1)(#2)#3{\uclustergen(#1)(#2){#3}{,shorten >=1.1pt}}
\def\clusterNWlabel(#1)#2{
  \path(#1a) node[color=blue,anchor=east] {$#2$}; 
}
\def\clusterSWlabel(#1)#2{
  \path(#1t) node[color=blue,anchor=east] {$#2$}; 
}
\def\subroot #1-#2 #3{
  \edef\lastnode{\csname lastnode#1\endcsname}
  \path($(\lastnode)+(#3,0)$) node[root](#2){};
  \path[draw,-,thick](\lastnode)--(#2){};
  \expandafter\def\csname lastnode#1\endcsname{#2}
}
\def\subrootname #1-#2 #3#4{
  \subroot #1-#2 {#3}
  \node (#2n) [color=purple!50!black,below=of #2.south,scale=0.7,yshift=0pt,font=\ttfamily, anchor=north] {$#4$};  
}
\def\endcluster #1 #2{
  \edef\lastnode{\csname lastnode#1\endcsname}
  \path($(\lastnode)+(#2,0)$) node[coordinate](#1end){};
  \path[draw,-,thick](\lastnode)--(#1end){};
  \expandafter\ifx\csname cltype#1\endcsname\upsilon
    \path[draw,-,thick]($(\lastnode)-(0,0.1)$)--($(#1end)-(0,0.1)$){};  
  \else\fi
  \expandafter\def\csname lastnode#1\endcsname{#1end}
}
\def\subcluster #1 #2-#3 (#4,#5){
  \edef\lastnode{\csname lastnode#2\endcsname}
  \expandafter\ifx\csname cltype#2\endcsname\upsilon
    \expandafter\ifx #1u
      \uclusterbr($(\lastnode)+(#4,0)$)(#3){#5}               
    \else
      \csname#1cluster\endcsname($(\lastnode)+(#4,0)$)(#3){#5}
    \fi
  \else
    \csname#1cluster\endcsname($(\lastnode)+(#4,0)$)(#3){#5}
  \fi
  \path[draw,-,thick](\lastnode)--(#3a){};
  \expandafter\ifx\csname cltype#2\endcsname\upsilon
    \path[draw,-,thick]($(\lastnode)-(0,0.1)$)--($(#3a)-(0,0.1)$){};  
    \expandafter\ifx\csname cltype#3\endcsname\upsilon      
      \path($(#3c)+(1.1pt,0)$) node[coordinate] (#3e) {};
      \path($(#3d)+(1.1pt,0)$) node[coordinate] (#3f) {};
      \path[draw,-,thick] (#3a)--($(#3e)-(2.2pt,0)$) {};  
      \path[draw,-,thick] (#3b)--(#3f) {};  
      \expandafter\def\csname lastnode#2\endcsname{#3e}
    \else
      \expandafter\def\csname lastnode#2\endcsname{#3a}
    \fi 
  \else
    \expandafter\def\csname lastnode#2\endcsname{#3a}
  \fi
}
\def\pb#1{\pbox[c]{\textwidth}{\hfil #1\hfil}}
\def\GraphVertices{\SetVertexNormal[Shape=circle, FillColor=blue!50, LineColor=blue!50, LineWidth=0.8pt]
  \tikzset{VertexStyle/.append style = {inner sep=0.5pt,minimum size=0.3em,font = \tiny\bfseries}}}
\let\BlueVertices\GraphVertices
\def\YellowEdges{\SetUpEdge[lw=1pt,color=orange]
   \tikzset{EdgeStyle/.append style = {snake=snake,segment length=\SnakeWiggle,
    segment amplitude=\SnakeAmplitude,shorten <=1pt,shorten >=1pt}}}
\def\BlueEdges{  \SetUpEdge[lw=0.8pt,color=blue!70]
   \tikzset{EdgeStyle/.append style = {shorten <=0.5pt,shorten >=0.5pt}}}
\def\BendedEdges{  \SetUpEdge[lw=0.8pt,color=blue!70]
   \tikzset{EdgeStyle/.append style = {shorten <=0.5pt,shorten >=0.5pt,bend left=30}}}
\def\LoopW(#1){
  \path[draw,-,thick,color=blue!70] (#1) edge[out=155,in=90] ($(#1)-(1.3,0)$);
  \path[draw,-,thick,color=blue!70] (#1) edge[out=210,in=270] ($(#1)-(1.3,0)$);
}
\def\LoopE(#1){
  \path[draw,-,thick,color=blue!70] (#1) edge[out=25,in=90] ($(#1)+(1.3,0)$);
  \path[draw,-,thick,color=blue!70] (#1) edge[out=-25,in=270] ($(#1)+(1.3,0)$);
}
\def\LoopS(#1){
  \path[draw,-,thick,color=blue!70] (#1) edge[out=115,in=180] ($(#1)+(0,1.2)$);
  \path[draw,-,thick,color=blue!70] (#1) edge[out=65,in=0] ($(#1)+(0,1.2)$);
}
\def\LoopN(#1){
  \path[draw,-,thick,color=blue!70] (#1) edge[out=-115,in=180] ($(#1)-(0,1.2)$);
  \path[draw,-,thick,color=blue!70] (#1) edge[out=-65,in=0] ($(#1)-(0,1.2)$);
}
\def\EdgeW(#1){
  \path[draw,-,thick,color=blue!70] (#1+) edge[out=180,in=90] ($(#1+)-(1.3,0.3)$);
  \path[draw,-,thick,color=blue!70] (#1-) edge[out=180,in=270] ($(#1+)-(1.3,0.3)$);
}
\def\EdgeE(#1){
  \path[draw,-,thick,color=blue!70] (#1+) edge[out=0,in=90] ($(#1-)+(1.3,0.3)$);
  \path[draw,-,thick,color=blue!70] (#1-) edge[out=0,in=270] ($(#1-)+(1.3,0.3)$);
}
\def\EdgeS(#1){
  \path[draw,-,thick,color=blue!70] (#1+) edge[out=90,in=0] ($(#1-)+(0.3,1.3)$);
  \path[draw,-,thick,color=blue!70] (#1-) edge[out=90,in=180] ($(#1-)+(0.3,1.3)$);
}
\def\EdgeN(#1){
  \path[draw,-,thick,color=blue!70] (#1+) edge[out=270,in=0] ($(#1+)-(0.3,1.3)$);
  \path[draw,-,thick,color=blue!70] (#1-) edge[out=270,in=180] ($(#1+)-(0.3,1.3)$);
}
\def\GCircle(#1,#2)(#3,#4){
  \path(#1,#2) node[coordinate] (1) {};
  \path(#3,#4) node[coordinate] (2) {};
  \path[draw,-,thick,color=blue!70] (1) edge[out=90,in=90] (2);
  \path[draw,-,thick,color=blue!70] (2) edge[out=270,in=270] (1);
}
\def\vertlabelscale{0.6}
\def\vertlabeldist{0.3}
\def\VertexLabelAt#1(#2)#3{
  \node at ($(#1)+(#2)$) [scale=\vertlabelscale,color=black] {#3};
}
\def\VertexLN[#1]#2#3{\Vertex[#1]{#2}\VertexLabelAt#2(0,\vertlabeldist){#3}}
\def\VertexLS[#1]#2#3{\Vertex[#1]{#2}\VertexLabelAt#2(0,-\vertlabeldist){#3}}
\def\VertexLW[#1]#2#3{\Vertex[#1]{#2}\VertexLabelAt#2(-\vertlabeldist,0){#3}}
\def\VertexLE[#1]#2#3{\Vertex[#1]{#2}\VertexLabelAt#2(\vertlabeldist,0){#3}}
\def\VertexLSE[#1]#2#3{\Vertex[#1]{#2}\VertexLabelAt#2(\vertlabeldist,-\vertlabeldist){#3}}
\def\VertexLNW[#1]#2#3{\Vertex[#1]{#2}\VertexLabelAt#2(-\vertlabeldist/1.5,\vertlabeldist/1.8){#3}}
\def\VertexLNE[#1]#2#3{\Vertex[#1]{#2}\VertexLabelAt#2(\vertlabeldist/1.5,\vertlabeldist/1.8){#3}}
\newcommand{\mysetminusD}{\hbox{\tikz{\draw[line width=0.6pt,line cap=round] (3pt,0) -- (0,6pt);}}}
\newcommand{\mysetminusT}{\mysetminusD}
\newcommand{\mysetminusS}{\hbox{\tikz{\draw[line width=0.45pt,line cap=round] (2pt,0) -- (0,4pt);}}}
\newcommand{\mysetminusSS}{\hbox{\tikz{\draw[line width=0.4pt,line cap=round] (1.5pt,0) -- (0,3pt);}}}
\let\setminus\mysetminus
\def\FrobL{\raise 0.3pt\hbox{\scalebox{0.3}{$\,\sim\,$}}}
\def\FrobDot{\hbox{$\hspace{0.6pt}\raise4pt\hbox{\rlap{\scalebox{0.26}{$\kern 0.14em\sim$}}}\cdot\hspace{0.6pt}$}}
\def\FrobCol{\hbox{$\hspace{0.6pt}\raise5pt\hbox{\rlap{\scalebox{0.26}{$\kern 0.14em\sim$}}}\hbox{:}\hspace{0.6pt}$}}
\def\FrobO{\hbox{$\hspace{0.6pt}\raise5pt\hbox{\rlap{\scalebox{0.4}{$\>\sim$}}}\circ\hspace{0.6pt}$}}
\def\FrobX{\hbox{$\hspace{0.6pt}\raise5pt\hbox{\rlap{\scalebox{0.4}{$\kern 0.6em\sim$}}}\times\hspace{0.6pt}$}}
\def\FrobDot{\raise1pt\hbox{\scalebox{0.7}{$\sim$}}}
\let\FrobCol\FrobDot
\def\hetype#1{\hbox{$\tn@main#1\tn@end\tn@end$}}
\def\tn@und#1#2{{\tn@dound #2\tn@end\tn@end}\tn@main}
\def\tn@tri#1#2#3{{\smash{\raise2pt\hbox{$\scriptscriptstyle #3$}}}\tn@main}
\def\tn@main#1{%
   \ifx#1\tn@end\relax\else   
   \ifx#1__\expandafter\tn@und\else
   \ifx#1^ ^\expandafter\tn@tri\else     
   \ifx#1.\hbox{$\cdot$}\else
   \ifx#1:\hbox{:}\else
   \ifx#1o\hbox{$\hspace{0.6pt}\circ\hspace{0.6pt}$}\else
   \ifx#1x\hbox{$\hspace{0.6pt}\times\hspace{0.6pt}$}\else   
   \ifx#1I\hbox{I}\else   
   \ifx#1U\hbox{U}\else   
   #1\fi\fi\fi\fi\fi\fi
   \expandafter\expandafter\expandafter\expandafter
   \expandafter\expandafter\expandafter\tn@main\fi\fi\fi}
\def\tn@dound#1{%
  \ifx#1\tn@end\relax\else
  \ifx#1~\hbox{$\FrobL$}\else #1\fi\expandafter\tn@dound\fi}
\let\tn@end\relax
\def\clgBal{\clusterpicture            
  \Root {1} {first} {r1};
  \Root {} {r1} {r2};
  \Root {} {r2} {r3};
  \Root {} {r3} {r4};
  \Root {} {r4} {r5};
  \Root {} {r5} {r6};
  \ClusterD c1[0] = (r1)(r2)(r3)(r4)(r5)(r6);
\endclusterpicture}
\def\clgAc{\clusterpicture            
  \Root {1} {first} {r1};
  \Root {} {r1} {r2};
  \Root {} {r2} {r3};
  \Root {} {r3} {r4};
  \Root {} {r4} {r5};
  \Cluster c1 = (r1)(r2)(r3)(r4)(r5),clA;
\endclusterpicture}
\def\clgBalc{\clusterpicture            
  \Root {1} {first} {r1};
  \Root {} {r1} {r2};
  \Root {} {r2} {r3};
  \Root {} {r3} {r4};
  \Root {} {r4} {r5};
  \Root {} {r5} {r6};
  \Cluster c1 = (r1)(r2)(r3)(r4)(r5)(r6),clA;
\endclusterpicture}
\def\clgCc{\clusterpicture            
  \Root {1} {first} {r1};
  \Root {1} {r1} {r2};
  \Root {} {r2} {r3};
  \Root {} {r3} {r4};
  \Root {} {r4} {r5};
  \Root {} {r5} {r6};
  \Cluster c1 = (r2)(r3)(r4)(r5)(r6),clA;
  \Cluster c2 = (r1)(c1);
\endclusterpicture}
\def\clnBals{\clusterpicture            
  \Root {1} {first} {r1};
  \Root {} {r1} {r2};
  \Root {} {r2} {r3};
  \Root {} {r3} {r4};
  \Root {1} {r4} {r5};
  \Root {} {r5} {r6};
  \ClusterLD c1[{+}][{\nn}] = (r5)(r6);
  \ClusterD c2[0] = (r1)(r2)(r3)(r4)(c1);
\endclusterpicture}
\def\clnBalns{\clusterpicture            
  \Root {1} {first} {r1};
  \Root {} {r1} {r2};
  \Root {} {r2} {r3};
  \Root {} {r3} {r4};
  \Root {1} {r4} {r5};
  \Root {} {r5} {r6};
  \ClusterLD c1[{-}][{\nn}] = (r5)(r6);
  \ClusterD c2[0] = (r1)(r2)(r3)(r4)(c1);
\endclusterpicture}
\def\clnAce{\clusterpicture            
  \Root {1} {first} {r1};
  \Root {} {r1} {r2};
  \Root {} {r2} {r3};
  \Root {1} {r3} {r4};
  \Root {} {r4} {r5};
  \ClusterLD c1[{\ee}][{\nn}] = (r4)(r5);
  \Cluster c2 = (r1)(r2)(r3)(c1),clA;
\endclusterpicture}
\def\clnBalce{\clusterpicture            
  \Root {1} {first} {r1};
  \Root {} {r1} {r2};
  \Root {} {r2} {r3};
  \Root {} {r3} {r4};
  \Root {1} {r4} {r5};
  \Root {} {r5} {r6};
  \ClusterLD c1[{\ee}][{\nn}] = (r5)(r6);
  \Cluster c2 = (r1)(r2)(r3)(r4)(c1),clA;
\endclusterpicture}
\def\clnCce{\clusterpicture            
  \Root {1} {first} {r1};
  \Root {} {r1} {r2};
  \Root {1} {r2} {r3};
  \Root {} {r3} {r4};
  \Root {} {r4} {r5};
  \Root {} {r5} {r6};
  \ClusterLD c1[{\ee}][{\nn}] = (r3)(r4)(r5)(r6),clA;
  \Cluster c2 = (r1)(r2)(c1);
\endclusterpicture}
\def\clnDce{\clusterpicture            
  \Root {2} {first} {r1};
  \Root {} {r1} {r2};
  \ClusterD c1[{\nn\!-\!t}] = (r1)(r2);
  \Root {1} {c1} {r3};
  \Root {} {r3} {r4};
  \Root {} {r4} {r5};
  \Root {} {r5} {r6};
  \ClusterD c2[t] = (r3)(r4)(r5)(r6),clA;
  \ClusterL c3[{\ee}] = (c1)(c2);
\endclusterpicture}
\def\clnEce{\clusterpicture            
  \Root {1} {first} {r1};
  \Root {1} {r1} {r2};
  \Root {} {r2} {r3};
  \Root {} {r3} {r4};
  \Root {1} {r4} {r5};
  \Root {} {r5} {r6};
  \ClusterLD c1[{\ee}][{\nn}] = (r5)(r6);
  \Cluster c2 = (r2)(r3)(r4)(c1),clA;
  \Cluster c3 = (r1)(c2);
\endclusterpicture}
\def\clnFce{\clusterpicture            
  \Root {1} {first} {r1};
  \Root {1} {r1} {r2};
  \Root {} {r2} {r3};
  \Root {} {r3} {r4};
  \Root {} {r4} {r5};
  \ClusterLD c1[{\ee}][{\nn}] = (r2)(r3)(r4)(r5),clA;
  \Cluster c2 = (r1)(c1);
\endclusterpicture}
\def\clnGce{\clusterpicture            
  \Root {1} {first} {r1};
  \Root {1} {r1} {r2};
  \Root {1} {r2} {r3};
  \Root {} {r3} {r4};
  \Root {} {r4} {r5};
  \Root {} {r5} {r6};
  \ClusterLD c1[{\ee}][{\nn}] = (r3)(r4)(r5)(r6),clA;
  \Cluster c2 = (r2)(c1);
  \Cluster c3 = (r1)(c2);
\endclusterpicture}
\def\clnmBalss{\clusterpicture            
  \Root {1} {first} {r1};
  \Root {} {r1} {r2};
  \Root {1} {r2} {r3};
  \Root {} {r3} {r4};
  \ClusterLD c1[{+}][{\nn}] = (r3)(r4);
  \Root {1} {c1} {r5};
  \Root {} {r5} {r6};
  \ClusterLD c2[{+}][{\mm}] = (r5)(r6);
  \ClusterD c3[0] = (r1)(r2)(c1)(c2);
\endclusterpicture}
\def\clnmBalnsns{\clusterpicture            
  \Root {1} {first} {r1};
  \Root {} {r1} {r2};
  \Root {1} {r2} {r3};
  \Root {} {r3} {r4};
  \ClusterLD c1[{-}][{\nn}] = (r3)(r4);
  \Root {1} {c1} {r5};
  \Root {} {r5} {r6};
  \ClusterLD c2[{-}][{\mm}] = (r5)(r6);
  \ClusterD c3[0] = (r1)(r2)(c1)(c2);
\endclusterpicture}
\def\clnmBalsns{\clusterpicture            
  \Root {1} {first} {r1};
  \Root {} {r1} {r2};
  \Root {1} {r2} {r3};
  \Root {} {r3} {r4};
  \ClusterLD c1[{+}][{\nn}] = (r3)(r4);
  \Root {1} {c1} {r5};
  \Root {} {r5} {r6};
  \ClusterLD c2[{-}][{\mm}] = (r5)(r6);
  \ClusterD c3[0] = (r1)(r2)(c1)(c2);
\endclusterpicture}
\def\clnmAce{\clusterpicture            
  \Root {1} {first} {r1};
  \Root {1} {r1} {r2};
  \Root {} {r2} {r3};
  \ClusterLD c1[{\ee}][{\nn}] = (r2)(r3);
  \Root {1} {c1} {r4};
  \Root {} {r4} {r5};
  \ClusterLD c2[{\dd}][{\mm}] = (r4)(r5);
  \Cluster c3 = (r1)(c1)(c2),clA;
\endclusterpicture}
\def\clnmBalce{\clusterpicture            
  \Root {1} {first} {r1};
  \Root {} {r1} {r2};
  \Root {1} {r2} {r3};
  \Root {} {r3} {r4};
  \ClusterLD c1[{\ee}][{\nn}] = (r3)(r4);
  \Root {1} {c1} {r5};
  \Root {} {r5} {r6};
  \ClusterLD c2[{\dd}][{\mm}] = (r5)(r6);
  \Cluster c3 = (r1)(r2)(c1)(c2),clA;
\endclusterpicture}
\def\clnmCce{\clusterpicture            
  \Root {1} {first} {r1};
  \Root {} {r1} {r2};
  \Root {1} {r2} {r3};
  \Root {} {r3} {r4};
  \Root {1} {r4} {r5};
  \Root {} {r5} {r6};
  \ClusterLD c1[{\ee}][{\nn}] = (r5)(r6);
  \ClusterLD c2[{\dd}][{\mm}] = (r3)(r4)(c1),clA;
  \Cluster c3 = (r1)(r2)(c2);
\endclusterpicture}
\def\clnmDce{\clusterpicture            
  \Root {2} {first} {r1};
  \Root {} {r1} {r2};
  \ClusterD c1[{\mm\!-\!t}] = (r1)(r2);
  \Root {1} {c1} {r3};
  \Root {} {r3} {r4};
  \Root {1} {r4} {r5};
  \Root {} {r5} {r6};
  \ClusterLD c2[{\ee}][{\nn}] = (r5)(r6);
  \ClusterD c3[t] = (r3)(r4)(c2),clA;
  \ClusterL c4[{\dd}] = (c1)(c3);
\endclusterpicture}
\def\clnmEce{\clusterpicture            
  \Root {1} {first} {r1};
  \Root {1} {r1} {r2};
  \Root {1} {r2} {r3};
  \Root {} {r3} {r4};
  \ClusterLD c1[{\ee}][{\nn}] = (r3)(r4);
  \Root {1} {c1} {r5};
  \Root {} {r5} {r6};
  \ClusterLD c2[{\dd}][{\mm}] = (r5)(r6);
  \Cluster c3 = (r2)(c1)(c2),clA;
  \Cluster c4 = (r1)(c3);
\endclusterpicture}
\def\clnmFce{\clusterpicture            
  \Root {1} {first} {r1};
  \Root {1} {r1} {r2};
  \Root {} {r2} {r3};
  \Root {1} {r3} {r4};
  \Root {} {r4} {r5};
  \ClusterLD c1[{\ee}][{\nn}] = (r4)(r5);
  \ClusterLD c2[{\dd}][{\mm}] = (r2)(r3)(c1),clA;
  \Cluster c3 = (r1)(c2);
\endclusterpicture}
\def\clnmGce{\clusterpicture            
  \Root {1} {first} {r1};
  \Root {1} {r1} {r2};
  \Root {1} {r2} {r3};
  \Root {} {r3} {r4};
  \Root {1} {r4} {r5};
  \Root {} {r5} {r6};
  \ClusterLD c1[{\ee}][{\nn}] = (r5)(r6);
  \ClusterLD c2[{\dd}][{\mm}] = (r3)(r4)(c1),clA;
  \Cluster c3 = (r2)(c2);
  \Cluster c4 = (r1)(c3);
\endclusterpicture}
\def\clnnBals{\clusterpicture            
  \Root {1} {first} {r1};
  \Root {} {r1} {r2};
  \Root {1} {r2} {r3};
  \Root {} {r3} {r4};
  \ClusterL c1[{+}] = (r3)(r4);
  \Root {1} {c1} {r5};
  \Root {} {r5} {r6};
  \ClusterLD c2[{+}][{\nn}] = (r5)(r6);
  \ClusterD c3[0] = (r1)(r2)(c1)(c2);
  \frob(c1t)(c2);
\endclusterpicture}
\def\clnnBalns{\clusterpicture            
  \Root {1} {first} {r1};
  \Root {} {r1} {r2};
  \Root {1} {r2} {r3};
  \Root {} {r3} {r4};
  \ClusterL c1[{-}] = (r3)(r4);
  \Root {1} {c1} {r5};
  \Root {} {r5} {r6};
  \ClusterLD c2[{+}][{\nn}] = (r5)(r6);
  \ClusterD c3[0] = (r1)(r2)(c1)(c2);
  \frob(c1t)(c2);
\endclusterpicture}
\def\clnnAce{\clusterpicture            
  \Root {1} {first} {r1};
  \Root {1} {r1} {r2};
  \Root {} {r2} {r3};
  \ClusterL c1[{\eta}] = (r2)(r3);
  \Root {1} {c1} {r4};
  \Root {} {r4} {r5};
  \ClusterLD c2[{\ee\eta}][{\nn}] = (r4)(r5);
  \Cluster c3 = (r1)(c1)(c2),clA;
  \frob(c1t)(c2);
\endclusterpicture}
\def\clnnBalce{\clusterpicture            
  \Root {1} {first} {r1};
  \Root {} {r1} {r2};
  \Root {1} {r2} {r3};
  \Root {} {r3} {r4};
  \ClusterL c1[{\eta}] = (r3)(r4);
  \Root {1} {c1} {r5};
  \Root {} {r5} {r6};
  \ClusterLD c2[{\ee\eta}][{\nn}] = (r5)(r6);
  \Cluster c3 = (r1)(r2)(c1)(c2),clA;
  \frob(c1t)(c2);
\endclusterpicture}
\def\clnnCce{\clusterpicture            
  \Root {1} {first} {r1};
  \Root {1} {r1} {r2};
  \Root {1} {r2} {r3};
  \Root {} {r3} {r4};
  \ClusterL c1[{\eta}] = (r3)(r4);
  \Root {1} {c1} {r5};
  \Root {} {r5} {r6};
  \ClusterLD c2[{\ee\eta}][{\nn}] = (r5)(r6);
  \Cluster c3 = (r2)(c1)(c2),clA;
  \Cluster c4 = (r1)(c3);
  \frob(c1t)(c2);
\endclusterpicture}
\def\clUnmkBals{\clusterpicture            
  \Root {2} {first} {r1};
  \Root {} {r1} {r2};
  \ClusterD c1[{\nn}] = (r1)(r2);
  \Root {1} {c1} {r3};
  \Root {} {r3} {r4};
  \ClusterD c2[{\mm}] = (r3)(r4);
  \Root {1} {c2} {r5};
  \Root {} {r5} {r6};
  \ClusterD c3[{\kk}] = (r5)(r6);
  \ClusterLD c4[{+}][0] = (c1)(c2)(c3);
\endclusterpicture}
\def\clUnmkBalns{\clusterpicture            
  \Root {2} {first} {r1};
  \Root {} {r1} {r2};
  \ClusterD c1[{\nn}] = (r1)(r2);
  \Root {1} {c1} {r3};
  \Root {} {r3} {r4};
  \ClusterD c2[{\mm}] = (r3)(r4);
  \Root {1} {c2} {r5};
  \Root {} {r5} {r6};
  \ClusterD c3[{\kk}] = (r5)(r6);
  \ClusterLD c4[{-}][0] = (c1)(c2)(c3);
\endclusterpicture}
\def\clUnmkAce{\clusterpicture            
  \Root {1} {first} {r1};
  \Root {2} {r1} {r2};
  \Root {} {r2} {r3};
  \ClusterD c1[{\nn}] = (r2)(r3);
  \Root {1} {c1} {r4};
  \Root {} {r4} {r5};
  \ClusterD c2[{\mm}] = (r4)(r5);
  \ClusterLD c3[{\ee}][{\kk}] = (c1)(c2),clA;
  \Cluster c4 = (r1)(c3);
\endclusterpicture}
\def\clUnmkBalce{\clusterpicture            
  \Root {2} {first} {r1};
  \Root {} {r1} {r2};
  \ClusterD c1[{\nn}] = (r1)(r2);
  \Root {1} {c1} {r3};
  \Root {} {r3} {r4};
  \ClusterD c2[{\mm}] = (r3)(r4);
  \Root {1} {c2} {r5};
  \Root {} {r5} {r6};
  \ClusterD c3[{\kk}] = (r5)(r6);
  \ClusterL c4[{\ee}] = (c1)(c2)(c3),clA;
\endclusterpicture}
\def\clUnmkCce{\clusterpicture            
  \Root {1} {first} {r1};
  \Root {} {r1} {r2};
  \Root {2} {r2} {r3};
  \Root {} {r3} {r4};
  \ClusterD c1[{\nn}] = (r3)(r4);
  \Root {1} {c1} {r5};
  \Root {} {r5} {r6};
  \ClusterD c2[{\mm}] = (r5)(r6);
  \ClusterLD c3[{\ee}][{\kk}] = (c1)(c2),clA;
  \Cluster c4 = (r1)(r2)(c3);
\endclusterpicture}
\def\clUnmkDce{\clusterpicture            
  \Root {2} {first} {r1};
  \Root {} {r1} {r2};
  \ClusterD c1[{\kk\!-\!t}] = (r1)(r2);
  \Root {2} {c1} {r3};
  \Root {} {r3} {r4};
  \ClusterD c2[{\nn}] = (r3)(r4);
  \Root {1} {c2} {r5};
  \Root {} {r5} {r6};
  \ClusterD c3[{\mm}] = (r5)(r6);
  \ClusterD c4[t] = (c2)(c3),clA;
  \ClusterL c5[{\ee}] = (c1)(c4);
\endclusterpicture}
\def\clUnmkEce{\clusterpicture            
  \Root {1} {first} {r1};
  \Root {1} {r1} {r2};
  \Root {2} {r2} {r3};
  \Root {} {r3} {r4};
  \ClusterD c1[{\nn}] = (r3)(r4);
  \Root {1} {c1} {r5};
  \Root {} {r5} {r6};
  \ClusterD c2[{\mm}] = (r5)(r6);
  \ClusterLD c3[{\ee}][{\kk}] = (c1)(c2),clA;
  \Cluster c4 = (r2)(c3);
  \Cluster c5 = (r1)(c4);
\endclusterpicture}
\def\clUnnkBals{\clusterpicture            
  \Root {2} {first} {r1};
  \Root {} {r1} {r2};
  \Cluster c1 = (r1)(r2);
  \Root {1} {c1} {r3};
  \Root {} {r3} {r4};
  \ClusterD c2[{\nn}] = (r3)(r4);
  \Root {1} {c2} {r5};
  \Root {} {r5} {r6};
  \ClusterD c3[{\kk}] = (r5)(r6);
  \ClusterLD c4[{+}][0] = (c1)(c2)(c3);
  \frob(c1)(c2);
\endclusterpicture}
\def\clUnnkBalns{\clusterpicture            
  \Root {2} {first} {r1};
  \Root {} {r1} {r2};
  \Cluster c1 = (r1)(r2);
  \Root {1} {c1} {r3};
  \Root {} {r3} {r4};
  \ClusterD c2[{\nn}] = (r3)(r4);
  \Root {1} {c2} {r5};
  \Root {} {r5} {r6};
  \ClusterD c3[{\kk}] = (r5)(r6);
  \ClusterLD c4[{-}][0] = (c1)(c2)(c3);
  \frob(c1)(c2);
\endclusterpicture}
\def\clUnnkAce{\clusterpicture            
  \Root {1} {first} {r1};
  \Root {2} {r1} {r2};
  \Root {} {r2} {r3};
  \Cluster c1 = (r2)(r3);
  \Root {1} {c1} {r4};
  \Root {} {r4} {r5};
  \ClusterD c2[{\nn}] = (r4)(r5);
  \ClusterLD c3[{\ee}][{\kk}] = (c1)(c2),clA;
  \Cluster c4 = (r1)(c3);
  \frob(c1)(c2);
\endclusterpicture}
\def\clUnnkBalce{\clusterpicture            
  \Root {2} {first} {r1};
  \Root {} {r1} {r2};
  \Cluster c1 = (r1)(r2);
  \Root {1} {c1} {r3};
  \Root {} {r3} {r4};
  \ClusterD c2[{\nn}] = (r3)(r4);
  \Root {1} {c2} {r5};
  \Root {} {r5} {r6};
  \ClusterD c3[{\kk}] = (r5)(r6);
  \ClusterL c4[{\ee}] = (c1)(c2)(c3),clA;
  \frob(c1)(c2);
\endclusterpicture}
\def\clUnnkCce{\clusterpicture            
  \Root {1} {first} {r1};
  \Root {} {r1} {r2};
  \Root {2} {r2} {r3};
  \Root {} {r3} {r4};
  \Cluster c1 = (r3)(r4);
  \Root {1} {c1} {r5};
  \Root {} {r5} {r6};
  \ClusterD c2[{\nn}] = (r5)(r6);
  \ClusterLD c3[{\ee}][{\kk}] = (c1)(c2),clA;
  \Cluster c4 = (r1)(r2)(c3);
  \frob(c1)(c2);
\endclusterpicture}
\def\clUnnkDce{\clusterpicture            
  \Root {2} {first} {r1};
  \Root {} {r1} {r2};
  \ClusterD c1[{\kk\!-\!t}] = (r1)(r2);
  \Root {2} {c1} {r3};
  \Root {} {r3} {r4};
  \Cluster c2 = (r3)(r4);
  \Root {1} {c2} {r5};
  \Root {} {r5} {r6};
  \ClusterD c3[{\nn}] = (r5)(r6);
  \ClusterD c4[t] = (c2)(c3),clA;
  \ClusterL c5[{\ee}] = (c1)(c4);
  \frob(c2)(c3);
\endclusterpicture}
\def\clUnnkEce{\clusterpicture            
  \Root {1} {first} {r1};
  \Root {1} {r1} {r2};
  \Root {2} {r2} {r3};
  \Root {} {r3} {r4};
  \Cluster c1 = (r3)(r4);
  \Root {1} {c1} {r5};
  \Root {} {r5} {r6};
  \ClusterD c2[{\nn}] = (r5)(r6);
  \ClusterLD c3[{\ee}][{\kk}] = (c1)(c2),clA;
  \Cluster c4 = (r2)(c3);
  \Cluster c5 = (r1)(c4);
  \frob(c1)(c2);
\endclusterpicture}
\def\clUnnnBals{\clusterpicture            
  \Root {2} {first} {r1};
  \Root {} {r1} {r2};
  \Cluster c1 = (r1)(r2);
  \Root {1} {c1} {r3};
  \Root {} {r3} {r4};
  \Cluster c2 = (r3)(r4);
  \Root {1} {c2} {r5};
  \Root {} {r5} {r6};
  \ClusterD c3[{\nn}] = (r5)(r6);
  \ClusterLD c4[{+}][0] = (c1)(c2)(c3);
  \frob(c1)(c2);
  \frob(c2)(c3);
\endclusterpicture}
\def\clUnnnBalns{\clusterpicture            
  \Root {2} {first} {r1};
  \Root {} {r1} {r2};
  \Cluster c1 = (r1)(r2);
  \Root {1} {c1} {r3};
  \Root {} {r3} {r4};
  \Cluster c2 = (r3)(r4);
  \Root {1} {c2} {r5};
  \Root {} {r5} {r6};
  \ClusterD c3[{\nn}] = (r5)(r6);
  \ClusterLD c4[{-}][0] = (c1)(c2)(c3);
  \frob(c1)(c2);
  \frob(c2)(c3);
\endclusterpicture}
\def\clUnnnBalce{\clusterpicture            
  \Root {2} {first} {r1};
  \Root {} {r1} {r2};
  \Cluster c1 = (r1)(r2);
  \Root {1} {c1} {r3};
  \Root {} {r3} {r4};
  \Cluster c2 = (r3)(r4);
  \Root {1} {c2} {r5};
  \Root {} {r5} {r6};
  \ClusterD c3[{\nn}] = (r5)(r6);
  \ClusterL c4[{\ee}] = (c1)(c2)(c3),clA;
  \frob(c1)(c2);
  \frob(c2)(c3);
\endclusterpicture}
\def\clInImBalss{\clusterpicture            
  \Root {2} {first} {r1};
  \Root {1} {r1} {r2};
  \Root {} {r2} {r3};
  \ClusterLD c1[{+}][{\nn}] = (r2)(r3);
  \ClusterD c2[r] = (r1)(c1);
  \Root {1} {c2} {r4};
  \Root {1} {r4} {r5};
  \Root {} {r5} {r6};
  \ClusterLD c3[{+}][{\mm}] = (r5)(r6);
  \ClusterD c4[{r}] = (r4)(c3);
  \ClusterD c5[0] = (c2)(c4);
\endclusterpicture}
\def\clInImBalnsns{\clusterpicture            
  \Root {2} {first} {r1};
  \Root {1} {r1} {r2};
  \Root {} {r2} {r3};
  \ClusterLD c1[{-}][{\nn}] = (r2)(r3);
  \ClusterD c2[r] = (r1)(c1);
  \Root {1} {c2} {r4};
  \Root {1} {r4} {r5};
  \Root {} {r5} {r6};
  \ClusterLD c3[{-}][{\mm}] = (r5)(r6);
  \ClusterD c4[{r}] = (r4)(c3);
  \ClusterD c5[0] = (c2)(c4);
\endclusterpicture}
\def\clInImBalsns{\clusterpicture            
  \Root {2} {first} {r1};
  \Root {1} {r1} {r2};
  \Root {} {r2} {r3};
  \ClusterLD c1[{+}][{\nn}] = (r2)(r3);
  \ClusterD c2[r] = (r1)(c1);
  \Root {1} {c2} {r4};
  \Root {1} {r4} {r5};
  \Root {} {r5} {r6};
  \ClusterLD c3[{-}][{\mm}] = (r5)(r6);
  \ClusterD c4[{r}] = (r4)(c3);
  \ClusterD c5[0] = (c2)(c4);
\endclusterpicture}
\def\clInImAce{\clusterpicture            
  \Root {2} {first} {r1};
  \Root {} {r1} {r2};
  \ClusterLD c1[{\ee}][{\nn}] = (r1)(r2);
  \Root {1} {c1} {r3};
  \Root {1} {r3} {r4};
  \Root {} {r4} {r5};
  \ClusterLD c2[{\dd}][{\mm}] = (r4)(r5);
  \ClusterD c3[{2r}] = (r3)(c2);
  \Cluster c4 = (c1)(c3),clA;
\endclusterpicture}
\def\clInImBalce{\clusterpicture            
  \Root {2} {first} {r1};
  \Root {1} {r1} {r2};
  \Root {} {r2} {r3};
  \ClusterLD c1[{\ee}][{\nn}] = (r2)(r3);
  \ClusterD c2[t] = (r1)(c1),clA;
  \Root {1} {c2} {r4};
  \Root {1} {r4} {r5};
  \Root {} {r5} {r6};
  \ClusterLD c3[{\dd}][{\mm}] = (r5)(r6);
  \ClusterD c4[{2r\!-\!t}] = (r4)(c3);
  \Cluster c5 = (c2)(c4);
\endclusterpicture}
\def\clInImCce{\clusterpicture            
  \Root {1} {first} {r1};
  \Root {} {r1} {r2};
  \Root {1} {r2} {r3};
  \Root {1} {r3} {r4};
  \Root {1} {r4} {r5};
  \Root {} {r5} {r6};
  \ClusterLD c1[{\ee}][{\nn}] = (r5)(r6);
  \ClusterD c2[{2r}] = (r4)(c1);
  \ClusterLD c3[{\dd}][{\mm}] = (r3)(c2),clA;
  \Cluster c4 = (r1)(r2)(c3);
\endclusterpicture}
\def\clInImDce{\clusterpicture            
  \Root {2} {first} {r1};
  \Root {} {r1} {r2};
  \ClusterD c1[{\nn\!-\!t}] = (r1)(r2);
  \Root {1} {c1} {r3};
  \Root {1} {r3} {r4};
  \Root {1} {r4} {r5};
  \Root {} {r5} {r6};
  \ClusterLD c2[{\dd}][{\mm}] = (r5)(r6);
  \ClusterD c3[{2r}] = (r4)(c2);
  \ClusterD c4[t] = (r3)(c3),clA;
  \ClusterL c5[{\ee}] = (c1)(c4);
\endclusterpicture}
\def\clInImEce{\clusterpicture            
  \Root {1} {first} {r1};
  \Root {2} {r1} {r2};
  \Root {} {r2} {r3};
  \ClusterLD c1[{\ee}][{\nn}] = (r2)(r3);
  \Root {1} {c1} {r4};
  \Root {1} {r4} {r5};
  \Root {} {r5} {r6};
  \ClusterLD c2[{\dd}][{\mm}] = (r5)(r6);
  \ClusterD c3[{2r}] = (r4)(c2);
  \Cluster c4 = (c1)(c3),clA;
  \Cluster c5 = (r1)(c4);
\endclusterpicture}
\def\clInImFce{\clusterpicture            
  \Root {1} {first} {r1};
  \Root {1} {r1} {r2};
  \Root {1} {r2} {r3};
  \Root {1} {r3} {r4};
  \Root {} {r4} {r5};
  \ClusterLD c1[{\ee}][{\nn}] = (r4)(r5);
  \ClusterD c2[{2r}] = (r3)(c1);
  \ClusterLD c3[{\dd}][{\mm}] = (r2)(c2),clA;
  \Cluster c4 = (r1)(c3);
\endclusterpicture}
\def\clInImGce{\clusterpicture            
  \Root {1} {first} {r1};
  \Root {1} {r1} {r2};
  \Root {} {r2} {r3};
  \ClusterLD c1[{\ee}][{\nn}] = (r2)(r3);
  \Root {1} {c1} {r4};
  \Root {1} {r4} {r5};
  \Root {} {r5} {r6};
  \ClusterLD c2[{\dd}][{\mm}] = (r5)(r6);
  \ClusterD c3[{2r}] = (r4)(c2);
  \Cluster c4 = (r1)(c1)(c3),clA;
\endclusterpicture}
\def\clInImHce{\clusterpicture            
  \Root {1} {first} {r1};
  \Root {1} {r1} {r2};
  \Root {1} {r2} {r3};
  \Root {1} {r3} {r4};
  \Root {1} {r4} {r5};
  \Root {} {r5} {r6};
  \ClusterLD c1[{\ee}][{\nn}] = (r5)(r6);
  \ClusterD c2[{2r}] = (r4)(c1);
  \ClusterLD c3[{\dd}][{\mm}] = (r3)(c2),clA;
  \Cluster c4 = (r2)(c3);
  \Cluster c5 = (r1)(c4);
\endclusterpicture}
\def\clInInBals{\clusterpicture            
  \Root {2} {first} {r1};
  \Root {1} {r1} {r2};
  \Root {} {r2} {r3};
  \ClusterL c1[{+}] = (r2)(r3);
  \Cluster c2 = (r1)(c1);
  \Root {1} {c2} {r4};
  \Root {1} {r4} {r5};
  \Root {} {r5} {r6};
  \ClusterLD c3[{+}][{\nn}] = (r5)(r6);
  \ClusterD c4[r] = (r4)(c3);
  \ClusterD c5[0] = (c2)(c4);
  \frob(c2)(c4);
\endclusterpicture}
\def\clInInBalns{\clusterpicture            
  \Root {2} {first} {r1};
  \Root {1} {r1} {r2};
  \Root {} {r2} {r3};
  \ClusterL c1[{+}] = (r2)(r3);
  \Cluster c2 = (r1)(c1);
  \Root {1} {c2} {r4};
  \Root {1} {r4} {r5};
  \Root {} {r5} {r6};
  \ClusterLD c3[{-}][{\nn}] = (r5)(r6);
  \ClusterD c4[r] = (r4)(c3);
  \ClusterD c5[0] = (c2)(c4);
  \frob(c2)(c4);
\endclusterpicture}
\def\clInInBalce{\clusterpicture            
  \Root {2} {first} {r1};
  \Root {1} {r1} {r2};
  \Root {} {r2} {r3};
  \ClusterL c1[{\eta}] = (r2)(r3);
  \Cluster c2 = (r1)(c1),clA;
  \Root {1} {c2} {r4};
  \Root {1} {r4} {r5};
  \Root {} {r5} {r6};
  \ClusterLD c3[{\ee\eta}][{\nn}] = (r5)(r6);
  \ClusterD c4[r] = (r4)(c3);
  \Cluster c5 = (c2)(c4);
  \frob(c2)(c4);
\endclusterpicture}
\def\cloInBals{\clusterpicture            
  \Root {2} {first} {r1};
  \Root {} {r1} {r2};
  \Root {} {r2} {r3};
  \ClusterD c1[r] = (r1)(r2)(r3);
  \Root {1} {c1} {r4};
  \Root {1} {r4} {r5};
  \Root {} {r5} {r6};
  \ClusterLD c2[{+}][{\nn}] = (r5)(r6);
  \ClusterD c3[{r}] = (r4)(c2);
  \ClusterD c4[0] = (c1)(c3);
\endclusterpicture}
\def\cloInBalns{\clusterpicture            
  \Root {2} {first} {r1};
  \Root {} {r1} {r2};
  \Root {} {r2} {r3};
  \ClusterD c1[r] = (r1)(r2)(r3);
  \Root {1} {c1} {r4};
  \Root {1} {r4} {r5};
  \Root {} {r5} {r6};
  \ClusterLD c2[{-}][{\nn}] = (r5)(r6);
  \ClusterD c3[{r}] = (r4)(c2);
  \ClusterD c4[0] = (c1)(c3);
\endclusterpicture}
\def\cloInAce{\clusterpicture            
  \Root {1} {first} {r1};
  \Root {} {r1} {r2};
  \Root {1} {r2} {r3};
  \Root {1} {r3} {r4};
  \Root {} {r4} {r5};
  \ClusterLD c1[{\ee}][{\nn}] = (r4)(r5);
  \ClusterD c2[{2r}] = (r3)(c1);
  \Cluster c3 = (r1)(r2)(c2),clA;
\endclusterpicture}
\def\cloInBalce{\clusterpicture            
  \Root {2} {first} {r1};
  \Root {} {r1} {r2};
  \Root {} {r2} {r3};
  \ClusterD c1[t] = (r1)(r2)(r3),clA;
  \Root {1} {c1} {r4};
  \Root {1} {r4} {r5};
  \Root {} {r5} {r6};
  \ClusterLD c2[{\ee}][{\nn}] = (r5)(r6);
  \ClusterD c3[{2r\!-\!t}] = (r4)(c2);
  \Cluster c4 = (c1)(c3);
\endclusterpicture}
\def\cloInCce{\clusterpicture            
  \Root {1} {first} {r1};
  \Root {} {r1} {r2};
  \Root {1} {r2} {r3};
  \Root {1} {r3} {r4};
  \Root {} {r4} {r5};
  \Root {} {r5} {r6};
  \ClusterD c1[{2r}] = (r4)(r5)(r6);
  \ClusterLD c2[{\ee}][{\nn}] = (r3)(c1),clA;
  \Cluster c3 = (r1)(r2)(c2);
\endclusterpicture}
\def\cloInDce{\clusterpicture            
  \Root {2} {first} {r1};
  \Root {} {r1} {r2};
  \ClusterD c1[{\nn\!-\!t}] = (r1)(r2);
  \Root {1} {c1} {r3};
  \Root {1} {r3} {r4};
  \Root {} {r4} {r5};
  \Root {} {r5} {r6};
  \ClusterD c2[{2r}] = (r4)(r5)(r6);
  \ClusterD c3[t] = (r3)(c2),clA;
  \ClusterL c4[{\ee}] = (c1)(c3);
\endclusterpicture}
\def\cloInEce{\clusterpicture            
  \Root {1} {first} {r1};
  \Root {1} {r1} {r2};
  \Root {} {r2} {r3};
  \Root {1} {r3} {r4};
  \Root {1} {r4} {r5};
  \Root {} {r5} {r6};
  \ClusterLD c1[{\ee}][{\nn}] = (r5)(r6);
  \ClusterD c2[{2r}] = (r4)(c1);
  \Cluster c3 = (r2)(r3)(c2),clA;
  \Cluster c4 = (r1)(c3);
\endclusterpicture}
\def\cloInFce{\clusterpicture            
  \Root {2} {first} {r1};
  \Root {} {r1} {r2};
  \ClusterLD c1[{\ee}][{\nn}] = (r1)(r2);
  \Root {1} {c1} {r3};
  \Root {} {r3} {r4};
  \Root {} {r4} {r5};
  \ClusterD c2[{2r}] = (r3)(r4)(r5);
  \Cluster c3 = (c1)(c2),clA;
\endclusterpicture}
\def\cloInGce{\clusterpicture            
  \Root {1} {first} {r1};
  \Root {1} {r1} {r2};
  \Root {} {r2} {r3};
  \ClusterLD c1[{\ee}][{\nn}] = (r2)(r3);
  \Root {1} {c1} {r4};
  \Root {} {r4} {r5};
  \Root {} {r5} {r6};
  \ClusterD c2[{2r}] = (r4)(r5)(r6);
  \Cluster c3 = (r1)(c1)(c2),clA;
\endclusterpicture}
\def\cloInHce{\clusterpicture            
  \Root {1} {first} {r1};
  \Root {2} {r1} {r2};
  \Root {} {r2} {r3};
  \ClusterLD c1[{\ee}][{\nn}] = (r2)(r3);
  \Root {1} {c1} {r4};
  \Root {} {r4} {r5};
  \Root {} {r5} {r6};
  \ClusterD c2[{2r}] = (r4)(r5)(r6);
  \Cluster c3 = (c1)(c2),clA;
  \Cluster c4 = (r1)(c3);
\endclusterpicture}
\def\cloInIce{\clusterpicture            
  \Root {1} {first} {r1};
  \Root {1} {r1} {r2};
  \Root {1} {r2} {r3};
  \Root {} {r3} {r4};
  \Root {} {r4} {r5};
  \ClusterD c1[{2r}] = (r3)(r4)(r5);
  \ClusterLD c2[{\ee}][{\nn}] = (r2)(c1),clA;
  \Cluster c3 = (r1)(c2);
\endclusterpicture}
\def\cloInJce{\clusterpicture            
  \Root {1} {first} {r1};
  \Root {} {r1} {r2};
  \Root {} {r2} {r3};
  \Root {1} {r3} {r4};
  \Root {1} {r4} {r5};
  \Root {} {r5} {r6};
  \ClusterLD c1[{\ee}][{\nn}] = (r5)(r6);
  \ClusterD c2[{2r}] = (r4)(c1);
  \Cluster c3 = (r1)(r2)(r3)(c2),clA;
\endclusterpicture}
\def\cloInKce{\clusterpicture            
  \Root {1} {first} {r1};
  \Root {1} {r1} {r2};
  \Root {1} {r2} {r3};
  \Root {1} {r3} {r4};
  \Root {} {r4} {r5};
  \Root {} {r5} {r6};
  \ClusterD c1[{2r}] = (r4)(r5)(r6);
  \ClusterLD c2[{\ee}][{\nn}] = (r3)(c1),clA;
  \Cluster c3 = (r2)(c2);
  \Cluster c4 = (r1)(c3);
\endclusterpicture}
\def\clooBal{\clusterpicture            
  \Root {2} {first} {r1};
  \Root {} {r1} {r2};
  \Root {} {r2} {r3};
  \ClusterD c1[r] = (r1)(r2)(r3);
  \Root {1} {c1} {r4};
  \Root {} {r4} {r5};
  \Root {} {r5} {r6};
  \ClusterD c2[{r}] = (r4)(r5)(r6);
  \ClusterD c3[0] = (c1)(c2);
\endclusterpicture}
\def\clooAc{\clusterpicture            
  \Root {1} {first} {r1};
  \Root {} {r1} {r2};
  \Root {1} {r2} {r3};
  \Root {} {r3} {r4};
  \Root {} {r4} {r5};
  \ClusterD c1[{2r}] = (r3)(r4)(r5);
  \Cluster c2 = (r1)(r2)(c1),clA;
\endclusterpicture}
\def\clooBalc{\clusterpicture            
  \Root {2} {first} {r1};
  \Root {} {r1} {r2};
  \Root {} {r2} {r3};
  \ClusterD c1[t] = (r1)(r2)(r3),clA;
  \Root {1} {c1} {r4};
  \Root {} {r4} {r5};
  \Root {} {r5} {r6};
  \ClusterD c2[{2r\!-\!t}] = (r4)(r5)(r6);
  \Cluster c3 = (c1)(c2);
\endclusterpicture}
\def\clooCc{\clusterpicture            
  \Root {1} {first} {r1};
  \Root {1} {r1} {r2};
  \Root {} {r2} {r3};
  \Root {1} {r3} {r4};
  \Root {} {r4} {r5};
  \Root {} {r5} {r6};
  \ClusterD c1[{2r}] = (r4)(r5)(r6);
  \Cluster c2 = (r2)(r3)(c1),clA;
  \Cluster c3 = (r1)(c2);
\endclusterpicture}
\def\clooDc{\clusterpicture            
  \Root {1} {first} {r1};
  \Root {} {r1} {r2};
  \Root {} {r2} {r3};
  \Root {1} {r3} {r4};
  \Root {} {r4} {r5};
  \Root {} {r5} {r6};
  \ClusterD c1[{2r}] = (r4)(r5)(r6);
  \Cluster c2 = (r1)(r2)(r3)(c1),clA;
\endclusterpicture}
\def\cloof{\clusterpicture            
  \Root {2} {first} {r1};
  \Root {} {r1} {r2};
  \Root {} {r2} {r3};
  \Cluster c1 = (r1)(r2)(r3);
  \Root {1} {c1} {r4};
  \Root {} {r4} {r5};
  \Root {} {r5} {r6};
  \ClusterD c2[{r}] = (r4)(r5)(r6);
  \ClusterD c3[0] = (c1)(c2);
  \frob(c1)(c2);
\endclusterpicture}
\def\cloofc{\clusterpicture            
  \Root {2} {first} {r1};
  \Root {} {r1} {r2};
  \Root {} {r2} {r3};
  \Cluster c1 = (r1)(r2)(r3),clA;
  \Root {1} {c1} {r4};
  \Root {} {r4} {r5};
  \Root {} {r5} {r6};
  \ClusterD c2[{r}] = (r4)(r5)(r6);
  \Cluster c3 = (c1)(c2);
  \frob(c1)(c2);
\endclusterpicture}
\def\triple{\clusterpicture            
  \Root {1} {first} {r1};
  \Root {} {r1} {r2};
  \Root {} {r2} {r3};
  \ClusterD c1[{\s}] = (r1)(r2)(r3);
\endclusterpicture}
\def\childparent{\clusterpicture            
  \Root {1} {first} {r4};
  \Root {} {r4} {r5};
  \Root {} {r5} {r6};
  \Root {2} {r6} {r1};
  \Root {} {r1} {r2};
  \Root {} {r2} {r3};
  \ClusterD c1[{\s^{\prime}}] = (r1)(r2)(r3);
  \ClusterD c2[{\s}] = (r4)(r5)(r6)(c1);
\endclusterpicture}
\def\childparentn{\clusterpicture            
  \Root {1} {first} {r4};
  \Root {} {r4} {r5};
  \Root {} {r5} {r6};
  \Root {2} {r6} {r1};
  \Root {} {r1} {r2};
  \Root {} {r2} {r3};
  \ClusterD c1[{\s}] = (r1)(r2)(r3);
  \ClusterD c2[{P(\s)}] = (r4)(r5)(r6)(c1);
\endclusterpicture}
\def\event{\clusterpicture            
  \Root {1} {first} {r1};
  \Root {} {r1} {r2};
  \Root {} {r2} {r3};
  \Root {} {r3} {r4};
  \ClusterD c1[{\s}] = (r1)(r2)(r3)(r4);
\endclusterpicture}
\def\oddt{\clusterpicture            
  \Root {1} {first} {r1};
  \Root {} {r1} {r2};
  \Root {} {r2} {r3};
  \Root {} {r3} {r4};
  \Root {} {r4} {r5};
  \ClusterD c1[{\s}] = (r1)(r2)(r3)(r4)(r5);
\endclusterpicture}
\def\ubereven{\clusterpicture            
  \Root {2} {first} {r1};
  \Root {} {r1} {r2};
  \Cluster c1 = (r1)(r2);
  \Root {1} {c1} {r3};
  \Root {} {r3} {r4};
  \Cluster c2 = (r3)(r4);
  \Root {1} {c2} {r5};
  \Root {} {r5} {r6};
  \Cluster c3 = (r5)(r6);
  \ClusterD c4[{\s}] = (c1)(c2)(c3);
\endclusterpicture}
\def\twin{\clusterpicture            
  \Root {1} {first} {r1};
  \Root {} {r1} {r2};
  \ClusterD c1[{\s}] = (r1)(r2);
\endclusterpicture}
\def\cotwino{\clusterpicture            
  \Root {1} {first} {r5};
  \Root {2} {r5} {r1};
  \Root {} {r1} {r2};
  \Root {} {r2} {r3};
  \Root {} {r3} {r4};
  \Cluster c1 = (r1)(r2)(r3)(r4);
  \ClusterD c2[{\s}] = (r5)(c1);
\endclusterpicture}
\def\cotwint{\clusterpicture            
  \Root {1} {first} {r5};
  \Root {} {r5} {r6};
  \Root {2} {r6} {r1};
  \Root {} {r1} {r2};
  \Root {} {r2} {r3};
  \Root {} {r3} {r4};
  \Cluster c1 = (r1)(r2)(r3)(r4);
  \ClusterD c2[{\s}] = (r5)(r6)(c1);
\endclusterpicture}
\def\clusterlcas{\clusterpicture            
  \Root {2} {first} {r1};
  \Root {} {r1} {r2};
  \ClusterD c1[{\s^{\prime}}] = (r1)(r2);
  \Root {1} {c1} {r5};
  \Root {2} {r5} {r3};
  \Root {} {r3} {r4};
  \ClusterD c2[{\s}] = (r3)(r4);
  \Cluster c3 = (r5)(c2);
  \ClusterD c4[{\s^{\prime}\wedge\s}] = (c1)(c3);
\endclusterpicture}
\def\clusternecks{\clusterpicture            
  \Root {1} {first} {r5};
  \Root {3} {r5} {r1};
  \Root {} {r1} {r2};
  \ClusterD c1[{\s}] = (r1)(r2);
  \Root {1} {c1} {r3};
  \Root {} {r3} {r4};
  \Cluster c2 = (r3)(r4);
  \ClusterD c3[{\s^*}] = (c1)(c2);
  \Cluster c4 = (r5)(c3);
\endclusterpicture}
\def\RebclnCce{\clusterpicture            
  \Root {1} {first} {r1};
  \Root {} {r1} {r2};
  \Root {1} {r2} {r3};
  \Root {} {r3} {r4};
  \Root {} {r4} {r5};
  \Root {} {r5} {r6};
  \ClusterD c1[{n}] = (r3)(r4)(r5)(r6);
  \Cluster c2 = (r1)(r2)(c1);
\endclusterpicture}
\def\RebclnDce{\clusterpicture            
  \Root {2} {first} {r1};
  \Root {} {r1} {r2};
  \ClusterD c1[{n\!-\!t}] = (r1)(r2);
  \Root {1} {c1} {r3};
  \Root {} {r3} {r4};
  \Root {} {r4} {r5};
  \Root {} {r5} {r6};
  \ClusterD c2[t] = (r3)(r4)(r5)(r6);
  \Cluster c3 = (c1)(c2);
\endclusterpicture}
\def\RebclnFce{\clusterpicture            
  \Root {1} {first} {r1};
  \Root {1} {r1} {r2};
  \Root {} {r2} {r3};
  \Root {} {r3} {r4};
  \Root {} {r4} {r5};
  \ClusterD c1[{n}] = (r2)(r3)(r4)(r5);
  \Cluster c2 = (r1)(c1);
\endclusterpicture}
\def\RebclnGce{\clusterpicture            
  \Root {1} {first} {r1};
  \Root {1} {r1} {r2};
  \Root {1} {r2} {r3};
  \Root {} {r3} {r4};
  \Root {} {r4} {r5};
  \Root {} {r5} {r6};
  \ClusterD c1[{n}] = (r3)(r4)(r5)(r6);
  \Cluster c2 = (r2)(c1);
  \Cluster c3 = (r1)(c2);
\endclusterpicture}
\def\clgBal{\clusterpicture            
  \Root {1} {first} {r1};
  \Root {} {r1} {r2};
  \Root {} {r2} {r3};
  \Root {} {r3} {r4};
  \Root {} {r4} {r5};
  \Root {} {r5} {r6};
  \ClusterD c1[0] = (r1)(r2)(r3)(r4)(r5)(r6);
\endclusterpicture}
\def\clgAc{\clusterpicture            
  \Root {1} {first} {r1};
  \Root {} {r1} {r2};
  \Root {} {r2} {r3};
  \Root {} {r3} {r4};
  \Root {} {r4} {r5};
  \Cluster c1 = (r1)(r2)(r3)(r4)(r5),clA;
\endclusterpicture}
\def\clgBalc{\clusterpicture            
  \Root {1} {first} {r1};
  \Root {} {r1} {r2};
  \Root {} {r2} {r3};
  \Root {} {r3} {r4};
  \Root {} {r4} {r5};
  \Root {} {r5} {r6};
  \Cluster c1 = (r1)(r2)(r3)(r4)(r5)(r6),clA;
\endclusterpicture}
\def\clgCc{\clusterpicture            
  \Root {1} {first} {r1};
  \Root {1} {r1} {r2};
  \Root {} {r2} {r3};
  \Root {} {r3} {r4};
  \Root {} {r4} {r5};
  \Root {} {r5} {r6};
  \Cluster c1 = (r2)(r3)(r4)(r5)(r6),clA;
  \Cluster c2 = (r1)(c1);
\endclusterpicture}
\def\clnBals{\clusterpicture            
  \Root {1} {first} {r1};
  \Root {} {r1} {r2};
  \Root {} {r2} {r3};
  \Root {} {r3} {r4};
  \Root {1} {r4} {r5};
  \Root {} {r5} {r6};
  \ClusterLD c1[{+}][{\nn}] = (r5)(r6);
  \ClusterD c2[0] = (r1)(r2)(r3)(r4)(c1);
\endclusterpicture}
\def\clnBalns{\clusterpicture            
  \Root {1} {first} {r1};
  \Root {} {r1} {r2};
  \Root {} {r2} {r3};
  \Root {} {r3} {r4};
  \Root {1} {r4} {r5};
  \Root {} {r5} {r6};
  \ClusterLD c1[{-}][{\nn}] = (r5)(r6);
  \ClusterD c2[0] = (r1)(r2)(r3)(r4)(c1);
\endclusterpicture}
\def\clnAce{\clusterpicture            
  \Root {1} {first} {r1};
  \Root {} {r1} {r2};
  \Root {} {r2} {r3};
  \Root {1} {r3} {r4};
  \Root {} {r4} {r5};
  \ClusterLD c1[{\ee}][{\nn}] = (r4)(r5);
  \Cluster c2 = (r1)(r2)(r3)(c1),clA;
\endclusterpicture}
\def\clnBalce{\clusterpicture            
  \Root {1} {first} {r1};
  \Root {} {r1} {r2};
  \Root {} {r2} {r3};
  \Root {} {r3} {r4};
  \Root {1} {r4} {r5};
  \Root {} {r5} {r6};
  \ClusterLD c1[{\ee}][{\nn}] = (r5)(r6);
  \Cluster c2 = (r1)(r2)(r3)(r4)(c1),clA;
\endclusterpicture}
\def\clnCce{\clusterpicture            
  \Root {1} {first} {r1};
  \Root {} {r1} {r2};
  \Root {1} {r2} {r3};
  \Root {} {r3} {r4};
  \Root {} {r4} {r5};
  \Root {} {r5} {r6};
  \ClusterLD c1[{\ee}][{\nn}] = (r3)(r4)(r5)(r6),clA;
  \Cluster c2 = (r1)(r2)(c1);
\endclusterpicture}
\def\clnDce{\clusterpicture            
  \Root {2} {first} {r1};
  \Root {} {r1} {r2};
  \ClusterD c1[{\nn\!-\!t}] = (r1)(r2);
  \Root {1} {c1} {r3};
  \Root {} {r3} {r4};
  \Root {} {r4} {r5};
  \Root {} {r5} {r6};
  \ClusterD c2[t] = (r3)(r4)(r5)(r6),clA;
  \ClusterL c3[{\ee}] = (c1)(c2);
\endclusterpicture}
\def\clnEce{\clusterpicture            
  \Root {1} {first} {r1};
  \Root {1} {r1} {r2};
  \Root {} {r2} {r3};
  \Root {} {r3} {r4};
  \Root {1} {r4} {r5};
  \Root {} {r5} {r6};
  \ClusterLD c1[{\ee}][{\nn}] = (r5)(r6);
  \Cluster c2 = (r2)(r3)(r4)(c1),clA;
  \Cluster c3 = (r1)(c2);
\endclusterpicture}
\def\clnFce{\clusterpicture            
  \Root {1} {first} {r1};
  \Root {1} {r1} {r2};
  \Root {} {r2} {r3};
  \Root {} {r3} {r4};
  \Root {} {r4} {r5};
  \ClusterLD c1[{\ee}][{\nn}] = (r2)(r3)(r4)(r5),clA;
  \Cluster c2 = (r1)(c1);
\endclusterpicture}
\def\clnGce{\clusterpicture            
  \Root {1} {first} {r1};
  \Root {1} {r1} {r2};
  \Root {1} {r2} {r3};
  \Root {} {r3} {r4};
  \Root {} {r4} {r5};
  \Root {} {r5} {r6};
  \ClusterLD c1[{\ee}][{\nn}] = (r3)(r4)(r5)(r6),clA;
  \Cluster c2 = (r2)(c1);
  \Cluster c3 = (r1)(c2);
\endclusterpicture}
\def\clnmBalss{\clusterpicture            
  \Root {1} {first} {r1};
  \Root {} {r1} {r2};
  \Root {1} {r2} {r3};
  \Root {} {r3} {r4};
  \ClusterLD c1[{+}][{\nn}] = (r3)(r4);
  \Root {1} {c1} {r5};
  \Root {} {r5} {r6};
  \ClusterLD c2[{+}][{\mm}] = (r5)(r6);
  \ClusterD c3[0] = (r1)(r2)(c1)(c2);
\endclusterpicture}
\def\clnmBalnsns{\clusterpicture            
  \Root {1} {first} {r1};
  \Root {} {r1} {r2};
  \Root {1} {r2} {r3};
  \Root {} {r3} {r4};
  \ClusterLD c1[{-}][{\nn}] = (r3)(r4);
  \Root {1} {c1} {r5};
  \Root {} {r5} {r6};
  \ClusterLD c2[{-}][{\mm}] = (r5)(r6);
  \ClusterD c3[0] = (r1)(r2)(c1)(c2);
\endclusterpicture}
\def\clnmBalsns{\clusterpicture            
  \Root {1} {first} {r1};
  \Root {} {r1} {r2};
  \Root {1} {r2} {r3};
  \Root {} {r3} {r4};
  \ClusterLD c1[{+}][{\nn}] = (r3)(r4);
  \Root {1} {c1} {r5};
  \Root {} {r5} {r6};
  \ClusterLD c2[{-}][{\mm}] = (r5)(r6);
  \ClusterD c3[0] = (r1)(r2)(c1)(c2);
\endclusterpicture}
\def\clnmAce{\clusterpicture            
  \Root {1} {first} {r1};
  \Root {1} {r1} {r2};
  \Root {} {r2} {r3};
  \ClusterLD c1[{\ee}][{\nn}] = (r2)(r3);
  \Root {1} {c1} {r4};
  \Root {} {r4} {r5};
  \ClusterLD c2[{\dd}][{\mm}] = (r4)(r5);
  \Cluster c3 = (r1)(c1)(c2),clA;
\endclusterpicture}
\def\clnmBalce{\clusterpicture            
  \Root {1} {first} {r1};
  \Root {} {r1} {r2};
  \Root {1} {r2} {r3};
  \Root {} {r3} {r4};
  \ClusterLD c1[{\ee}][{\nn}] = (r3)(r4);
  \Root {1} {c1} {r5};
  \Root {} {r5} {r6};
  \ClusterLD c2[{\dd}][{\mm}] = (r5)(r6);
  \Cluster c3 = (r1)(r2)(c1)(c2),clA;
\endclusterpicture}
\def\clnmCce{\clusterpicture            
  \Root {1} {first} {r1};
  \Root {} {r1} {r2};
  \Root {1} {r2} {r3};
  \Root {} {r3} {r4};
  \Root {1} {r4} {r5};
  \Root {} {r5} {r6};
  \ClusterLD c1[{\ee}][{\nn}] = (r5)(r6);
  \ClusterLD c2[{\dd}][{\mm}] = (r3)(r4)(c1),clA;
  \Cluster c3 = (r1)(r2)(c2);
\endclusterpicture}
\def\clnmDce{\clusterpicture            
  \Root {2} {first} {r1};
  \Root {} {r1} {r2};
  \ClusterD c1[{\mm\!-\!t}] = (r1)(r2);
  \Root {1} {c1} {r3};
  \Root {} {r3} {r4};
  \Root {1} {r4} {r5};
  \Root {} {r5} {r6};
  \ClusterLD c2[{\ee}][{\nn}] = (r5)(r6);
  \ClusterD c3[t] = (r3)(r4)(c2),clA;
  \ClusterL c4[{\dd}] = (c1)(c3);
\endclusterpicture}
\def\clnmEce{\clusterpicture            
  \Root {1} {first} {r1};
  \Root {1} {r1} {r2};
  \Root {1} {r2} {r3};
  \Root {} {r3} {r4};
  \ClusterLD c1[{\ee}][{\nn}] = (r3)(r4);
  \Root {1} {c1} {r5};
  \Root {} {r5} {r6};
  \ClusterLD c2[{\dd}][{\mm}] = (r5)(r6);
  \Cluster c3 = (r2)(c1)(c2),clA;
  \Cluster c4 = (r1)(c3);
\endclusterpicture}
\def\clnmFce{\clusterpicture            
  \Root {1} {first} {r1};
  \Root {1} {r1} {r2};
  \Root {} {r2} {r3};
  \Root {1} {r3} {r4};
  \Root {} {r4} {r5};
  \ClusterLD c1[{\ee}][{\nn}] = (r4)(r5);
  \ClusterLD c2[{\dd}][{\mm}] = (r2)(r3)(c1),clA;
  \Cluster c3 = (r1)(c2);
\endclusterpicture}
\def\clnmGce{\clusterpicture            
  \Root {1} {first} {r1};
  \Root {1} {r1} {r2};
  \Root {1} {r2} {r3};
  \Root {} {r3} {r4};
  \Root {1} {r4} {r5};
  \Root {} {r5} {r6};
  \ClusterLD c1[{\ee}][{\nn}] = (r5)(r6);
  \ClusterLD c2[{\dd}][{\mm}] = (r3)(r4)(c1),clA;
  \Cluster c3 = (r2)(c2);
  \Cluster c4 = (r1)(c3);
\endclusterpicture}
\def\clnnBals{\clusterpicture            
  \Root {1} {first} {r1};
  \Root {} {r1} {r2};
  \Root {1} {r2} {r3};
  \Root {} {r3} {r4};
  \ClusterL c1[{+}] = (r3)(r4);
  \Root {1} {c1} {r5};
  \Root {} {r5} {r6};
  \ClusterLD c2[{+}][{\nn}] = (r5)(r6);
  \ClusterD c3[0] = (r1)(r2)(c1)(c2);
  \frob(c1t)(c2);
\endclusterpicture}
\def\clnnBalns{\clusterpicture            
  \Root {1} {first} {r1};
  \Root {} {r1} {r2};
  \Root {1} {r2} {r3};
  \Root {} {r3} {r4};
  \ClusterL c1[{-}] = (r3)(r4);
  \Root {1} {c1} {r5};
  \Root {} {r5} {r6};
  \ClusterLD c2[{+}][{\nn}] = (r5)(r6);
  \ClusterD c3[0] = (r1)(r2)(c1)(c2);
  \frob(c1t)(c2);
\endclusterpicture}
\def\clnnAce{\clusterpicture            
  \Root {1} {first} {r1};
  \Root {1} {r1} {r2};
  \Root {} {r2} {r3};
  \ClusterL c1[{\eta}] = (r2)(r3);
  \Root {1} {c1} {r4};
  \Root {} {r4} {r5};
  \ClusterLD c2[{\ee\eta}][{\nn}] = (r4)(r5);
  \Cluster c3 = (r1)(c1)(c2),clA;
  \frob(c1t)(c2);
\endclusterpicture}
\def\clnnBalce{\clusterpicture            
  \Root {1} {first} {r1};
  \Root {} {r1} {r2};
  \Root {1} {r2} {r3};
  \Root {} {r3} {r4};
  \ClusterL c1[{\eta}] = (r3)(r4);
  \Root {1} {c1} {r5};
  \Root {} {r5} {r6};
  \ClusterLD c2[{\ee\eta}][{\nn}] = (r5)(r6);
  \Cluster c3 = (r1)(r2)(c1)(c2),clA;
  \frob(c1t)(c2);
\endclusterpicture}
\def\clnnCce{\clusterpicture            
  \Root {1} {first} {r1};
  \Root {1} {r1} {r2};
  \Root {1} {r2} {r3};
  \Root {} {r3} {r4};
  \ClusterL c1[{\eta}] = (r3)(r4);
  \Root {1} {c1} {r5};
  \Root {} {r5} {r6};
  \ClusterLD c2[{\ee\eta}][{\nn}] = (r5)(r6);
  \Cluster c3 = (r2)(c1)(c2),clA;
  \Cluster c4 = (r1)(c3);
  \frob(c1t)(c2);
\endclusterpicture}
\def\clUnmkBals{\clusterpicture            
  \Root {2} {first} {r1};
  \Root {} {r1} {r2};
  \ClusterD c1[{\nn}] = (r1)(r2);
  \Root {1} {c1} {r3};
  \Root {} {r3} {r4};
  \ClusterD c2[{\mm}] = (r3)(r4);
  \Root {1} {c2} {r5};
  \Root {} {r5} {r6};
  \ClusterD c3[{\kk}] = (r5)(r6);
  \ClusterLD c4[{+}][0] = (c1)(c2)(c3);
\endclusterpicture}
\def\clUnmkBalns{\clusterpicture            
  \Root {2} {first} {r1};
  \Root {} {r1} {r2};
  \ClusterD c1[{\nn}] = (r1)(r2);
  \Root {1} {c1} {r3};
  \Root {} {r3} {r4};
  \ClusterD c2[{\mm}] = (r3)(r4);
  \Root {1} {c2} {r5};
  \Root {} {r5} {r6};
  \ClusterD c3[{\kk}] = (r5)(r6);
  \ClusterLD c4[{-}][0] = (c1)(c2)(c3);
\endclusterpicture}
\def\clUnmkAce{\clusterpicture            
  \Root {1} {first} {r1};
  \Root {2} {r1} {r2};
  \Root {} {r2} {r3};
  \ClusterD c1[{\nn}] = (r2)(r3);
  \Root {1} {c1} {r4};
  \Root {} {r4} {r5};
  \ClusterD c2[{\mm}] = (r4)(r5);
  \ClusterLD c3[{\ee}][{\kk}] = (c1)(c2),clA;
  \Cluster c4 = (r1)(c3);
\endclusterpicture}
\def\clUnmkBalce{\clusterpicture            
  \Root {2} {first} {r1};
  \Root {} {r1} {r2};
  \ClusterD c1[{\nn}] = (r1)(r2);
  \Root {1} {c1} {r3};
  \Root {} {r3} {r4};
  \ClusterD c2[{\mm}] = (r3)(r4);
  \Root {1} {c2} {r5};
  \Root {} {r5} {r6};
  \ClusterD c3[{\kk}] = (r5)(r6);
  \ClusterL c4[{\ee}] = (c1)(c2)(c3),clA;
\endclusterpicture}
\def\clUnmkCce{\clusterpicture            
  \Root {1} {first} {r1};
  \Root {} {r1} {r2};
  \Root {2} {r2} {r3};
  \Root {} {r3} {r4};
  \ClusterD c1[{\nn}] = (r3)(r4);
  \Root {1} {c1} {r5};
  \Root {} {r5} {r6};
  \ClusterD c2[{\mm}] = (r5)(r6);
  \ClusterLD c3[{\ee}][{\kk}] = (c1)(c2),clA;
  \Cluster c4 = (r1)(r2)(c3);
\endclusterpicture}
\def\clUnmkDce{\clusterpicture            
  \Root {2} {first} {r1};
  \Root {} {r1} {r2};
  \ClusterD c1[{\kk\!-\!t}] = (r1)(r2);
  \Root {2} {c1} {r3};
  \Root {} {r3} {r4};
  \ClusterD c2[{\nn}] = (r3)(r4);
  \Root {1} {c2} {r5};
  \Root {} {r5} {r6};
  \ClusterD c3[{\mm}] = (r5)(r6);
  \ClusterD c4[t] = (c2)(c3),clA;
  \ClusterL c5[{\ee}] = (c1)(c4);
\endclusterpicture}
\def\clUnmkEce{\clusterpicture            
  \Root {1} {first} {r1};
  \Root {1} {r1} {r2};
  \Root {2} {r2} {r3};
  \Root {} {r3} {r4};
  \ClusterD c1[{\nn}] = (r3)(r4);
  \Root {1} {c1} {r5};
  \Root {} {r5} {r6};
  \ClusterD c2[{\mm}] = (r5)(r6);
  \ClusterLD c3[{\ee}][{\kk}] = (c1)(c2),clA;
  \Cluster c4 = (r2)(c3);
  \Cluster c5 = (r1)(c4);
\endclusterpicture}
\def\clUnnkBals{\clusterpicture            
  \Root {2} {first} {r1};
  \Root {} {r1} {r2};
  \Cluster c1 = (r1)(r2);
  \Root {1} {c1} {r3};
  \Root {} {r3} {r4};
  \ClusterD c2[{\nn}] = (r3)(r4);
  \Root {1} {c2} {r5};
  \Root {} {r5} {r6};
  \ClusterD c3[{\kk}] = (r5)(r6);
  \ClusterLD c4[{+}][0] = (c1)(c2)(c3);
  \frob(c1)(c2);
\endclusterpicture}
\def\clUnnkBalns{\clusterpicture            
  \Root {2} {first} {r1};
  \Root {} {r1} {r2};
  \Cluster c1 = (r1)(r2);
  \Root {1} {c1} {r3};
  \Root {} {r3} {r4};
  \ClusterD c2[{\nn}] = (r3)(r4);
  \Root {1} {c2} {r5};
  \Root {} {r5} {r6};
  \ClusterD c3[{\kk}] = (r5)(r6);
  \ClusterLD c4[{-}][0] = (c1)(c2)(c3);
  \frob(c1)(c2);
\endclusterpicture}
\def\clUnnkAce{\clusterpicture            
  \Root {1} {first} {r1};
  \Root {2} {r1} {r2};
  \Root {} {r2} {r3};
  \Cluster c1 = (r2)(r3);
  \Root {1} {c1} {r4};
  \Root {} {r4} {r5};
  \ClusterD c2[{\nn}] = (r4)(r5);
  \ClusterLD c3[{\ee}][{\kk}] = (c1)(c2),clA;
  \Cluster c4 = (r1)(c3);
  \frob(c1)(c2);
\endclusterpicture}
\def\clUnnkBalce{\clusterpicture            
  \Root {2} {first} {r1};
  \Root {} {r1} {r2};
  \Cluster c1 = (r1)(r2);
  \Root {1} {c1} {r3};
  \Root {} {r3} {r4};
  \ClusterD c2[{\nn}] = (r3)(r4);
  \Root {1} {c2} {r5};
  \Root {} {r5} {r6};
  \ClusterD c3[{\kk}] = (r5)(r6);
  \ClusterL c4[{\ee}] = (c1)(c2)(c3),clA;
  \frob(c1)(c2);
\endclusterpicture}
\def\clUnnkCce{\clusterpicture            
  \Root {1} {first} {r1};
  \Root {} {r1} {r2};
  \Root {2} {r2} {r3};
  \Root {} {r3} {r4};
  \Cluster c1 = (r3)(r4);
  \Root {1} {c1} {r5};
  \Root {} {r5} {r6};
  \ClusterD c2[{\nn}] = (r5)(r6);
  \ClusterLD c3[{\ee}][{\kk}] = (c1)(c2),clA;
  \Cluster c4 = (r1)(r2)(c3);
  \frob(c1)(c2);
\endclusterpicture}
\def\clUnnkDce{\clusterpicture            
  \Root {2} {first} {r1};
  \Root {} {r1} {r2};
  \ClusterD c1[{\kk\!-\!t}] = (r1)(r2);
  \Root {2} {c1} {r3};
  \Root {} {r3} {r4};
  \Cluster c2 = (r3)(r4);
  \Root {1} {c2} {r5};
  \Root {} {r5} {r6};
  \ClusterD c3[{\nn}] = (r5)(r6);
  \ClusterD c4[t] = (c2)(c3),clA;
  \ClusterL c5[{\ee}] = (c1)(c4);
  \frob(c2)(c3);
\endclusterpicture}
\def\clUnnkEce{\clusterpicture            
  \Root {1} {first} {r1};
  \Root {1} {r1} {r2};
  \Root {2} {r2} {r3};
  \Root {} {r3} {r4};
  \Cluster c1 = (r3)(r4);
  \Root {1} {c1} {r5};
  \Root {} {r5} {r6};
  \ClusterD c2[{\nn}] = (r5)(r6);
  \ClusterLD c3[{\ee}][{\kk}] = (c1)(c2),clA;
  \Cluster c4 = (r2)(c3);
  \Cluster c5 = (r1)(c4);
  \frob(c1)(c2);
\endclusterpicture}
\def\clUnnnBals{\clusterpicture            
  \Root {2} {first} {r1};
  \Root {} {r1} {r2};
  \Cluster c1 = (r1)(r2);
  \Root {1} {c1} {r3};
  \Root {} {r3} {r4};
  \Cluster c2 = (r3)(r4);
  \Root {1} {c2} {r5};
  \Root {} {r5} {r6};
  \ClusterD c3[{\nn}] = (r5)(r6);
  \ClusterLD c4[{+}][0] = (c1)(c2)(c3);
  \frob(c1)(c2);
  \frob(c2)(c3);
\endclusterpicture}
\def\clUnnnBalns{\clusterpicture            
  \Root {2} {first} {r1};
  \Root {} {r1} {r2};
  \Cluster c1 = (r1)(r2);
  \Root {1} {c1} {r3};
  \Root {} {r3} {r4};
  \Cluster c2 = (r3)(r4);
  \Root {1} {c2} {r5};
  \Root {} {r5} {r6};
  \ClusterD c3[{\nn}] = (r5)(r6);
  \ClusterLD c4[{-}][0] = (c1)(c2)(c3);
  \frob(c1)(c2);
  \frob(c2)(c3);
\endclusterpicture}
\def\clUnnnBalce{\clusterpicture            
  \Root {2} {first} {r1};
  \Root {} {r1} {r2};
  \Cluster c1 = (r1)(r2);
  \Root {1} {c1} {r3};
  \Root {} {r3} {r4};
  \Cluster c2 = (r3)(r4);
  \Root {1} {c2} {r5};
  \Root {} {r5} {r6};
  \ClusterD c3[{\nn}] = (r5)(r6);
  \ClusterL c4[{\ee}] = (c1)(c2)(c3),clA;
  \frob(c1)(c2);
  \frob(c2)(c3);
\endclusterpicture}
\def\clInImBalss{\clusterpicture            
  \Root {2} {first} {r1};
  \Root {1} {r1} {r2};
  \Root {} {r2} {r3};
  \ClusterLD c1[{+}][{\nn}] = (r2)(r3);
  \ClusterD c2[r] = (r1)(c1);
  \Root {1} {c2} {r4};
  \Root {1} {r4} {r5};
  \Root {} {r5} {r6};
  \ClusterLD c3[{+}][{\mm}] = (r5)(r6);
  \ClusterD c4[{r}] = (r4)(c3);
  \ClusterD c5[0] = (c2)(c4);
\endclusterpicture}
\def\clInImBalnsns{\clusterpicture            
  \Root {2} {first} {r1};
  \Root {1} {r1} {r2};
  \Root {} {r2} {r3};
  \ClusterLD c1[{-}][{\nn}] = (r2)(r3);
  \ClusterD c2[r] = (r1)(c1);
  \Root {1} {c2} {r4};
  \Root {1} {r4} {r5};
  \Root {} {r5} {r6};
  \ClusterLD c3[{-}][{\mm}] = (r5)(r6);
  \ClusterD c4[{r}] = (r4)(c3);
  \ClusterD c5[0] = (c2)(c4);
\endclusterpicture}
\def\clInImBalsns{\clusterpicture            
  \Root {2} {first} {r1};
  \Root {1} {r1} {r2};
  \Root {} {r2} {r3};
  \ClusterLD c1[{+}][{\nn}] = (r2)(r3);
  \ClusterD c2[r] = (r1)(c1);
  \Root {1} {c2} {r4};
  \Root {1} {r4} {r5};
  \Root {} {r5} {r6};
  \ClusterLD c3[{-}][{\mm}] = (r5)(r6);
  \ClusterD c4[{r}] = (r4)(c3);
  \ClusterD c5[0] = (c2)(c4);
\endclusterpicture}
\def\clInImAce{\clusterpicture            
  \Root {2} {first} {r1};
  \Root {} {r1} {r2};
  \ClusterLD c1[{\ee}][{\nn}] = (r1)(r2);
  \Root {1} {c1} {r3};
  \Root {1} {r3} {r4};
  \Root {} {r4} {r5};
  \ClusterLD c2[{\dd}][{\mm}] = (r4)(r5);
  \ClusterD c3[{2r}] = (r3)(c2);
  \Cluster c4 = (c1)(c3),clA;
\endclusterpicture}
\def\clInImBalce{\clusterpicture            
  \Root {2} {first} {r1};
  \Root {1} {r1} {r2};
  \Root {} {r2} {r3};
  \ClusterLD c1[{\ee}][{\nn}] = (r2)(r3);
  \ClusterD c2[t] = (r1)(c1),clA;
  \Root {1} {c2} {r4};
  \Root {1} {r4} {r5};
  \Root {} {r5} {r6};
  \ClusterLD c3[{\dd}][{\mm}] = (r5)(r6);
  \ClusterD c4[{2r\!-\!t}] = (r4)(c3);
  \Cluster c5 = (c2)(c4);
\endclusterpicture}
\def\clInImCce{\clusterpicture            
  \Root {1} {first} {r1};
  \Root {} {r1} {r2};
  \Root {1} {r2} {r3};
  \Root {1} {r3} {r4};
  \Root {1} {r4} {r5};
  \Root {} {r5} {r6};
  \ClusterLD c1[{\ee}][{\nn}] = (r5)(r6);
  \ClusterD c2[{2r}] = (r4)(c1);
  \ClusterLD c3[{\dd}][{\mm}] = (r3)(c2),clA;
  \Cluster c4 = (r1)(r2)(c3);
\endclusterpicture}
\def\clInImDce{\clusterpicture            
  \Root {2} {first} {r1};
  \Root {} {r1} {r2};
  \ClusterD c1[{\nn\!-\!t}] = (r1)(r2);
  \Root {1} {c1} {r3};
  \Root {1} {r3} {r4};
  \Root {1} {r4} {r5};
  \Root {} {r5} {r6};
  \ClusterLD c2[{\dd}][{\mm}] = (r5)(r6);
  \ClusterD c3[{2r}] = (r4)(c2);
  \ClusterD c4[t] = (r3)(c3),clA;
  \ClusterL c5[{\ee}] = (c1)(c4);
\endclusterpicture}
\def\clInImEce{\clusterpicture            
  \Root {1} {first} {r1};
  \Root {2} {r1} {r2};
  \Root {} {r2} {r3};
  \ClusterLD c1[{\ee}][{\nn}] = (r2)(r3);
  \Root {1} {c1} {r4};
  \Root {1} {r4} {r5};
  \Root {} {r5} {r6};
  \ClusterLD c2[{\dd}][{\mm}] = (r5)(r6);
  \ClusterD c3[{2r}] = (r4)(c2);
  \Cluster c4 = (c1)(c3),clA;
  \Cluster c5 = (r1)(c4);
\endclusterpicture}
\def\clInImFce{\clusterpicture            
  \Root {1} {first} {r1};
  \Root {1} {r1} {r2};
  \Root {1} {r2} {r3};
  \Root {1} {r3} {r4};
  \Root {} {r4} {r5};
  \ClusterLD c1[{\ee}][{\nn}] = (r4)(r5);
  \ClusterD c2[{2r}] = (r3)(c1);
  \ClusterLD c3[{\dd}][{\mm}] = (r2)(c2),clA;
  \Cluster c4 = (r1)(c3);
\endclusterpicture}
\def\clInImGce{\clusterpicture            
  \Root {1} {first} {r1};
  \Root {1} {r1} {r2};
  \Root {} {r2} {r3};
  \ClusterLD c1[{\ee}][{\nn}] = (r2)(r3);
  \Root {1} {c1} {r4};
  \Root {1} {r4} {r5};
  \Root {} {r5} {r6};
  \ClusterLD c2[{\dd}][{\mm}] = (r5)(r6);
  \ClusterD c3[{2r}] = (r4)(c2);
  \Cluster c4 = (r1)(c1)(c3),clA;
\endclusterpicture}
\def\clInImHce{\clusterpicture            
  \Root {1} {first} {r1};
  \Root {1} {r1} {r2};
  \Root {1} {r2} {r3};
  \Root {1} {r3} {r4};
  \Root {1} {r4} {r5};
  \Root {} {r5} {r6};
  \ClusterLD c1[{\ee}][{\nn}] = (r5)(r6);
  \ClusterD c2[{2r}] = (r4)(c1);
  \ClusterLD c3[{\dd}][{\mm}] = (r3)(c2),clA;
  \Cluster c4 = (r2)(c3);
  \Cluster c5 = (r1)(c4);
\endclusterpicture}
\def\clInInBals{\clusterpicture            
  \Root {2} {first} {r1};
  \Root {1} {r1} {r2};
  \Root {} {r2} {r3};
  \ClusterL c1[{+}] = (r2)(r3);
  \Cluster c2 = (r1)(c1);
  \Root {1} {c2} {r4};
  \Root {1} {r4} {r5};
  \Root {} {r5} {r6};
  \ClusterLD c3[{+}][{\nn}] = (r5)(r6);
  \ClusterD c4[r] = (r4)(c3);
  \ClusterD c5[0] = (c2)(c4);
  \frob(c2)(c4);
\endclusterpicture}
\def\clInInBalns{\clusterpicture            
  \Root {2} {first} {r1};
  \Root {1} {r1} {r2};
  \Root {} {r2} {r3};
  \ClusterL c1[{+}] = (r2)(r3);
  \Cluster c2 = (r1)(c1);
  \Root {1} {c2} {r4};
  \Root {1} {r4} {r5};
  \Root {} {r5} {r6};
  \ClusterLD c3[{-}][{\nn}] = (r5)(r6);
  \ClusterD c4[r] = (r4)(c3);
  \ClusterD c5[0] = (c2)(c4);
  \frob(c2)(c4);
\endclusterpicture}
\def\clInInBalce{\clusterpicture            
  \Root {2} {first} {r1};
  \Root {1} {r1} {r2};
  \Root {} {r2} {r3};
  \ClusterL c1[{\eta}] = (r2)(r3);
  \Cluster c2 = (r1)(c1),clA;
  \Root {1} {c2} {r4};
  \Root {1} {r4} {r5};
  \Root {} {r5} {r6};
  \ClusterLD c3[{\ee\eta}][{\nn}] = (r5)(r6);
  \ClusterD c4[r] = (r4)(c3);
  \Cluster c5 = (c2)(c4);
  \frob(c2)(c4);
\endclusterpicture}
\def\cloInBals{\clusterpicture            
  \Root {2} {first} {r1};
  \Root {} {r1} {r2};
  \Root {} {r2} {r3};
  \ClusterD c1[r] = (r1)(r2)(r3);
  \Root {1} {c1} {r4};
  \Root {1} {r4} {r5};
  \Root {} {r5} {r6};
  \ClusterLD c2[{+}][{\nn}] = (r5)(r6);
  \ClusterD c3[{r}] = (r4)(c2);
  \ClusterD c4[0] = (c1)(c3);
\endclusterpicture}
\def\cloInBalns{\clusterpicture            
  \Root {2} {first} {r1};
  \Root {} {r1} {r2};
  \Root {} {r2} {r3};
  \ClusterD c1[r] = (r1)(r2)(r3);
  \Root {1} {c1} {r4};
  \Root {1} {r4} {r5};
  \Root {} {r5} {r6};
  \ClusterLD c2[{-}][{\nn}] = (r5)(r6);
  \ClusterD c3[{r}] = (r4)(c2);
  \ClusterD c4[0] = (c1)(c3);
\endclusterpicture}
\def\cloInAce{\clusterpicture            
  \Root {1} {first} {r1};
  \Root {} {r1} {r2};
  \Root {1} {r2} {r3};
  \Root {1} {r3} {r4};
  \Root {} {r4} {r5};
  \ClusterLD c1[{\ee}][{\nn}] = (r4)(r5);
  \ClusterD c2[{2r}] = (r3)(c1);
  \Cluster c3 = (r1)(r2)(c2),clA;
\endclusterpicture}
\def\cloInBalce{\clusterpicture            
  \Root {2} {first} {r1};
  \Root {} {r1} {r2};
  \Root {} {r2} {r3};
  \ClusterD c1[t] = (r1)(r2)(r3),clA;
  \Root {1} {c1} {r4};
  \Root {1} {r4} {r5};
  \Root {} {r5} {r6};
  \ClusterLD c2[{\ee}][{\nn}] = (r5)(r6);
  \ClusterD c3[{2r\!-\!t}] = (r4)(c2);
  \Cluster c4 = (c1)(c3);
\endclusterpicture}
\def\cloInCce{\clusterpicture            
  \Root {1} {first} {r1};
  \Root {} {r1} {r2};
  \Root {1} {r2} {r3};
  \Root {1} {r3} {r4};
  \Root {} {r4} {r5};
  \Root {} {r5} {r6};
  \ClusterD c1[{2r}] = (r4)(r5)(r6);
  \ClusterLD c2[{\ee}][{\nn}] = (r3)(c1),clA;
  \Cluster c3 = (r1)(r2)(c2);
\endclusterpicture}
\def\cloInDce{\clusterpicture            
  \Root {2} {first} {r1};
  \Root {} {r1} {r2};
  \ClusterD c1[{\nn\!-\!t}] = (r1)(r2);
  \Root {1} {c1} {r3};
  \Root {1} {r3} {r4};
  \Root {} {r4} {r5};
  \Root {} {r5} {r6};
  \ClusterD c2[{2r}] = (r4)(r5)(r6);
  \ClusterD c3[t] = (r3)(c2),clA;
  \ClusterL c4[{\ee}] = (c1)(c3);
\endclusterpicture}
\def\cloInEce{\clusterpicture            
  \Root {1} {first} {r1};
  \Root {1} {r1} {r2};
  \Root {} {r2} {r3};
  \Root {1} {r3} {r4};
  \Root {1} {r4} {r5};
  \Root {} {r5} {r6};
  \ClusterLD c1[{\ee}][{\nn}] = (r5)(r6);
  \ClusterD c2[{2r}] = (r4)(c1);
  \Cluster c3 = (r2)(r3)(c2),clA;
  \Cluster c4 = (r1)(c3);
\endclusterpicture}
\def\cloInFce{\clusterpicture            
  \Root {2} {first} {r1};
  \Root {} {r1} {r2};
  \ClusterLD c1[{\ee}][{\nn}] = (r1)(r2);
  \Root {1} {c1} {r3};
  \Root {} {r3} {r4};
  \Root {} {r4} {r5};
  \ClusterD c2[{2r}] = (r3)(r4)(r5);
  \Cluster c3 = (c1)(c2),clA;
\endclusterpicture}
\def\cloInGce{\clusterpicture            
  \Root {1} {first} {r1};
  \Root {1} {r1} {r2};
  \Root {} {r2} {r3};
  \ClusterLD c1[{\ee}][{\nn}] = (r2)(r3);
  \Root {1} {c1} {r4};
  \Root {} {r4} {r5};
  \Root {} {r5} {r6};
  \ClusterD c2[{2r}] = (r4)(r5)(r6);
  \Cluster c3 = (r1)(c1)(c2),clA;
\endclusterpicture}
\def\cloInHce{\clusterpicture            
  \Root {1} {first} {r1};
  \Root {2} {r1} {r2};
  \Root {} {r2} {r3};
  \ClusterLD c1[{\ee}][{\nn}] = (r2)(r3);
  \Root {1} {c1} {r4};
  \Root {} {r4} {r5};
  \Root {} {r5} {r6};
  \ClusterD c2[{2r}] = (r4)(r5)(r6);
  \Cluster c3 = (c1)(c2),clA;
  \Cluster c4 = (r1)(c3);
\endclusterpicture}
\def\cloInIce{\clusterpicture            
  \Root {1} {first} {r1};
  \Root {1} {r1} {r2};
  \Root {1} {r2} {r3};
  \Root {} {r3} {r4};
  \Root {} {r4} {r5};
  \ClusterD c1[{2r}] = (r3)(r4)(r5);
  \ClusterLD c2[{\ee}][{\nn}] = (r2)(c1),clA;
  \Cluster c3 = (r1)(c2);
\endclusterpicture}
\def\cloInJce{\clusterpicture            
  \Root {1} {first} {r1};
  \Root {} {r1} {r2};
  \Root {} {r2} {r3};
  \Root {1} {r3} {r4};
  \Root {1} {r4} {r5};
  \Root {} {r5} {r6};
  \ClusterLD c1[{\ee}][{\nn}] = (r5)(r6);
  \ClusterD c2[{2r}] = (r4)(c1);
  \Cluster c3 = (r1)(r2)(r3)(c2),clA;
\endclusterpicture}
\def\cloInKce{\clusterpicture            
  \Root {1} {first} {r1};
  \Root {1} {r1} {r2};
  \Root {1} {r2} {r3};
  \Root {1} {r3} {r4};
  \Root {} {r4} {r5};
  \Root {} {r5} {r6};
  \ClusterD c1[{2r}] = (r4)(r5)(r6);
  \ClusterLD c2[{\ee}][{\nn}] = (r3)(c1),clA;
  \Cluster c3 = (r2)(c2);
  \Cluster c4 = (r1)(c3);
\endclusterpicture}
\def\clooBal{\clusterpicture            
  \Root {2} {first} {r1};
  \Root {} {r1} {r2};
  \Root {} {r2} {r3};
  \ClusterD c1[r] = (r1)(r2)(r3);
  \Root {1} {c1} {r4};
  \Root {} {r4} {r5};
  \Root {} {r5} {r6};
  \ClusterD c2[{r}] = (r4)(r5)(r6);
  \ClusterD c3[0] = (c1)(c2);
\endclusterpicture}
\def\clooAc{\clusterpicture            
  \Root {1} {first} {r1};
  \Root {} {r1} {r2};
  \Root {1} {r2} {r3};
  \Root {} {r3} {r4};
  \Root {} {r4} {r5};
  \ClusterD c1[{2r}] = (r3)(r4)(r5);
  \Cluster c2 = (r1)(r2)(c1),clA;
\endclusterpicture}
\def\clooBalc{\clusterpicture            
  \Root {2} {first} {r1};
  \Root {} {r1} {r2};
  \Root {} {r2} {r3};
  \ClusterD c1[t] = (r1)(r2)(r3),clA;
  \Root {1} {c1} {r4};
  \Root {} {r4} {r5};
  \Root {} {r5} {r6};
  \ClusterD c2[{2r\!-\!t}] = (r4)(r5)(r6);
  \Cluster c3 = (c1)(c2);
\endclusterpicture}
\def\clooCc{\clusterpicture            
  \Root {1} {first} {r1};
  \Root {1} {r1} {r2};
  \Root {} {r2} {r3};
  \Root {1} {r3} {r4};
  \Root {} {r4} {r5};
  \Root {} {r5} {r6};
  \ClusterD c1[{2r}] = (r4)(r5)(r6);
  \Cluster c2 = (r2)(r3)(c1),clA;
  \Cluster c3 = (r1)(c2);
\endclusterpicture}
\def\clooDc{\clusterpicture            
  \Root {1} {first} {r1};
  \Root {} {r1} {r2};
  \Root {} {r2} {r3};
  \Root {1} {r3} {r4};
  \Root {} {r4} {r5};
  \Root {} {r5} {r6};
  \ClusterD c1[{2r}] = (r4)(r5)(r6);
  \Cluster c2 = (r1)(r2)(r3)(c1),clA;
\endclusterpicture}
\def\cloof{\clusterpicture            
  \Root {2} {first} {r1};
  \Root {} {r1} {r2};
  \Root {} {r2} {r3};
  \Cluster c1 = (r1)(r2)(r3);
  \Root {1} {c1} {r4};
  \Root {} {r4} {r5};
  \Root {} {r5} {r6};
  \ClusterD c2[{r}] = (r4)(r5)(r6);
  \ClusterD c3[0] = (c1)(c2);
  \frob(c1)(c2);
\endclusterpicture}
\def\cloofc{\clusterpicture            
  \Root {2} {first} {r1};
  \Root {} {r1} {r2};
  \Root {} {r2} {r3};
  \Cluster c1 = (r1)(r2)(r3),clA;
  \Root {1} {c1} {r4};
  \Root {} {r4} {r5};
  \Root {} {r5} {r6};
  \ClusterD c2[{r}] = (r4)(r5)(r6);
  \Cluster c3 = (c1)(c2);
  \frob(c1)(c2);
\endclusterpicture}
\def\triple{\clusterpicture            
  \Root {1} {first} {r1};
  \Root {} {r1} {r2};
  \Root {} {r2} {r3};
  \ClusterD c1[{\s}] = (r1)(r2)(r3);
\endclusterpicture}
\def\childparent{\clusterpicture            
  \Root {1} {first} {r4};
  \Root {} {r4} {r5};
  \Root {} {r5} {r6};
  \Root {2} {r6} {r1};
  \Root {} {r1} {r2};
  \Root {} {r2} {r3};
  \ClusterD c1[{\s^{\prime}}] = (r1)(r2)(r3);
  \ClusterD c2[{\s}] = (r4)(r5)(r6)(c1);
\endclusterpicture}
\def\childparentn{\clusterpicture            
  \Root {1} {first} {r4};
  \Root {} {r4} {r5};
  \Root {} {r5} {r6};
  \Root {2} {r6} {r1};
  \Root {} {r1} {r2};
  \Root {} {r2} {r3};
  \ClusterD c1[{\s}] = (r1)(r2)(r3);
  \ClusterD c2[{P(\s)}] = (r4)(r5)(r6)(c1);
\endclusterpicture}
\def\event{\clusterpicture            
  \Root {1} {first} {r1};
  \Root {} {r1} {r2};
  \Root {} {r2} {r3};
  \Root {} {r3} {r4};
  \ClusterD c1[{\s}] = (r1)(r2)(r3)(r4);
\endclusterpicture}
\def\oddt{\clusterpicture            
  \Root {1} {first} {r1};
  \Root {} {r1} {r2};
  \Root {} {r2} {r3};
  \Root {} {r3} {r4};
  \Root {} {r4} {r5};
  \ClusterD c1[{\s}] = (r1)(r2)(r3)(r4)(r5);
\endclusterpicture}
\def\ubereven{\clusterpicture            
  \Root {2} {first} {r1};
  \Root {} {r1} {r2};
  \Cluster c1 = (r1)(r2);
  \Root {1} {c1} {r3};
  \Root {} {r3} {r4};
  \Cluster c2 = (r3)(r4);
  \Root {1} {c2} {r5};
  \Root {} {r5} {r6};
  \Cluster c3 = (r5)(r6);
  \ClusterD c4[{\s}] = (c1)(c2)(c3);
\endclusterpicture}
\def\twin{\clusterpicture            
  \Root {1} {first} {r1};
  \Root {} {r1} {r2};
  \ClusterD c1[{\s}] = (r1)(r2);
\endclusterpicture}
\def\cotwino{\clusterpicture            
  \Root {1} {first} {r5};
  \Root {2} {r5} {r1};
  \Root {} {r1} {r2};
  \Root {} {r2} {r3};
  \Root {} {r3} {r4};
  \Cluster c1 = (r1)(r2)(r3)(r4);
  \ClusterD c2[{\s}] = (r5)(c1);
\endclusterpicture}
\def\cotwint{\clusterpicture            
  \Root {1} {first} {r5};
  \Root {} {r5} {r6};
  \Root {2} {r6} {r1};
  \Root {} {r1} {r2};
  \Root {} {r2} {r3};
  \Root {} {r3} {r4};
  \Cluster c1 = (r1)(r2)(r3)(r4);
  \ClusterD c2[{\s}] = (r5)(r6)(c1);
\endclusterpicture}
\def\clusterlcas{\clusterpicture            
  \Root {2} {first} {r1};
  \Root {} {r1} {r2};
  \ClusterD c1[{\s^{\prime}}] = (r1)(r2);
  \Root {1} {c1} {r5};
  \Root {2} {r5} {r3};
  \Root {} {r3} {r4};
  \ClusterD c2[{\s}] = (r3)(r4);
  \Cluster c3 = (r5)(c2);
  \ClusterD c4[{\s^{\prime}\wedge\s}] = (c1)(c3);
\endclusterpicture}
\def\clusternecks{\clusterpicture            
  \Root {1} {first} {r5};
  \Root {3} {r5} {r1};
  \Root {} {r1} {r2};
  \ClusterD c1[{\s}] = (r1)(r2);
  \Root {1} {c1} {r3};
  \Root {} {r3} {r4};
  \Cluster c2 = (r3)(r4);
  \ClusterD c3[{\s^*}] = (c1)(c2);
  \Cluster c4 = (r5)(c3);
\endclusterpicture}
\def\RebclnCce{\clusterpicture            
  \Root {1} {first} {r1};
  \Root {} {r1} {r2};
  \Root {1} {r2} {r3};
  \Root {} {r3} {r4};
  \Root {} {r4} {r5};
  \Root {} {r5} {r6};
  \ClusterD c1[{n}] = (r3)(r4)(r5)(r6);
  \Cluster c2 = (r1)(r2)(c1);
\endclusterpicture}
\def\RebclnDce{\clusterpicture            
  \Root {2} {first} {r1};
  \Root {} {r1} {r2};
  \ClusterD c1[{n\!-\!t}] = (r1)(r2);
  \Root {1} {c1} {r3};
  \Root {} {r3} {r4};
  \Root {} {r4} {r5};
  \Root {} {r5} {r6};
  \ClusterD c2[t] = (r3)(r4)(r5)(r6);
  \Cluster c3 = (c1)(c2);
\endclusterpicture}
\def\RebclnFce{\clusterpicture            
  \Root {1} {first} {r1};
  \Root {1} {r1} {r2};
  \Root {} {r2} {r3};
  \Root {} {r3} {r4};
  \Root {} {r4} {r5};
  \ClusterD c1[{n}] = (r2)(r3)(r4)(r5);
  \Cluster c2 = (r1)(c1);
\endclusterpicture}
\def\RebclnGce{\clusterpicture            
  \Root {1} {first} {r1};
  \Root {1} {r1} {r2};
  \Root {1} {r2} {r3};
  \Root {} {r3} {r4};
  \Root {} {r4} {r5};
  \Root {} {r5} {r6};
  \ClusterD c1[{n}] = (r3)(r4)(r5)(r6);
  \Cluster c2 = (r2)(c1);
  \Cluster c3 = (r1)(c2);
\endclusterpicture}
\begin{document}

\title{Arithmetic of hyperelliptic curves over local fields}
\author{Tim and Vladimir Dokchitser, C\'eline Maistret, Adam Morgan}

\address{Department of Mathematics, University of Bristol, Bristol BS8 1TW, UK}
\email{tim.dokchitser@bristol.ac.uk}

\address{King's College London, Strand, London WC2R 2LS, UK}
\email{vladimir.dokchitser@kcl.ac.uk}

\address{Department of Mathematics, University of Bristol, Bristol BS8 1TW, UK}
\email{celine.maistret@bristol.ac.uk}

\address{King's College London, Strand, London WC2R 2LS, UK}
\email{adam.morgan@kcl.ac.uk}

\begin{abstract}
We study hyperelliptic curves $y^2 = f(x)$ over local fields of odd residue characteristic. 
We introduce the notion of a ``cluster picture'' associated to the curve, that describes the $p$-adic distances between the roots of $f(x)$, and show that this elementary combinatorial object encodes the curve's Galois representation, conductor, whether the curve is semistable, and if so, the special fibre of its minimal regular model, the discriminant of its minimal Weierstrass equation and other invariants. 
\end{abstract}


\maketitle

\vfill

\setcounter{tocdepth}{1}

\tableofcontents

\vfill

\section{Introduction}

In this paper we study hyperelliptic curves $y^2 = f(x)$ over local fields of odd residue characteristic. 
To a curve we associate a ``cluster picture'', defined by the combinatorics of the root configuration of $f$, 
and show that it encodes many arithmetic invariants of the curve and its Jacobian. 
We use cluster pictures to get hold of a curve's Galois representation and conductor, determine whether 
it is semistable and if so obtain the special fibre of its minimal regular model, the discriminant 
of its minimal Weierstrass model and other invariants. 

%
%

Throughout the paper $K$ will be a local field, with normalised valuation $v$, ring of integers $\cO_K$, uniformiser $\pi$,
and finite residue field $k$ of characteristic $p\ne 2$. We use the shorthand mod $\mathfrak{m}$ to denote reduction to the residue field. 
We write $G_K=\Gal(\Ks/K)$ for the absolute Galois group, and $I_K<G_K$ for the inertia subgroup.

We work with hyperelliptic curves\footnote{in this paper, hyperelliptic curves will always be assumed to have genus at least 2}
$C/K$ given by Weierstrass equations
$$
  C\colon y^2=f(x).
$$
We write $\cR$ for the set of roots of $f(x)$ in $\Ks$ and $c_f$ for its leading coefficient, so that
$$
f(x) = c_f\prod_{r\in\cR} (x-r). 
$$
We denote by $g$ the genus of the curve so that $|\cR| = 2g+1$ or $2g+2$. 

The main invariant that we are interested in is the configuration of distances between 
the roots of $f$. This is captured in the following: 
\begin{definition}
\label{defclusters}
A \emph{cluster} is a non-empty subset $\c\subset\cR$ of the form $\c = D \cap \cR$
for some disc $D=\{x\!\in\! \Kbar \mid v(x-z)\!\geq\! d\}$
for some $z\in \Kbar$ and $d\in \Q$. 
If $|\s|>1$, we say that $\s$ is a \emph{proper} cluster and define its \emph{depth} $d_\s$ to be 
$$
  d_\s = \min_{r,r' \in \mathfrak{s}} v(r-r').
$$
It is the minimal $d$ for which $\s$ is cut out by such a disc. 
\end{definition}


It turns out that the cluster data carries a huge amount of information about the arithmetic of $C/K$. To fix ideas, let us begin with an example.

\begin{example}\label{introexample}
Let $C/\Q_p$ be the hyperelliptic curve of genus 3 given by
$$
  C/\Q_p\colon y^2 = (x\!-\!1)\,(x\!-\!(1\!+\!p^2))\,(x\!-\!(1\!-\!p^2))\,(x\!-\!p)\,x\,(x\!-\!p^3)\,(x\!+\!p^3).
$$
The set of roots is $\cR=\{1,1\!+\!p^2,1\!-\!p^2,p,0,p^3,-p^3\}$. There are four proper clusters:
$$
 \{1,1\!+\!p^2,1\!-\!p^2\}, \quad
  \{0,p^3,-p^3\}, \quad 
 \{p,0,p^3,-p^3\}, \quad 
  \cR,
$$
of depths 2,3,1 and 0, respectively. 
We draw cluster pictures by drawing roots $r \in\cR$ as
\smash{\raise4pt\hbox{\clusterpicture\Root{1}{first}{r1};\endclusterpicture}},
and draw ovals around roots to represent a cluster:
$$
\scalebox{1.4}{\clusterpicture            
  \Root {1} {first} {r1};
  \Root {} {r1} {r2};
  \Root {} {r2} {r3};
  \Root {3} {r3} {r4};
  \Root {1} {r4} {r5};
  \Root {} {r5} {r6};
  \Root {} {r6} {r7};
  \ClusterD c1[{2}] = (r1)(r2)(r3);
  \ClusterD c2[{2}] = (r5)(r6)(r7);
  \ClusterD c3[{1}] = (r4)(c2);
  \ClusterD c4[{0}] = (c1)(c2)(c3);
\endclusterpicture}
$$
Here we have ordered the roots as they appear in the equation for $C$. The subscript of the top cluster $\cR$ is its depth. For all other clusters it is their ``relative depth'', that is the difference between their depth and that of their parent cluster. 
%

From this picture our results let us extract the following information.

\noindent (1) $C/\Q_p$ is semistable (Thm. \ref{redcond} (1)), with
conductor exponent 1 (Thm. \ref{th:condexpogen}),
the model for $C$ is a minimal Weierstrass equation (Thm. \ref{introminimal}) and has discriminant of valuation
$v(\Delta_C) = 36$ (Thm. \ref{thmmindisc}).


\noindent\begin{minipage}[t]{.7\textwidth}
(2)
The special fibre $\cC_{\bar{\mathbb{F}}_p}$ of the minimal regular model $\cC/\Z_p^{nr}$ has 4 components,
as shown on the right (Thms. \ref{introhomology}(2), \ref{introspecial}). Both genus 1 curves 
are given by $y^2 = x^3-x$. 
\end{minipage}%
\smash{$\quad$\raise -10pt\hbox{
\begin{minipage}{0.2\textwidth}
  \includegraphics[height=1.7cm]{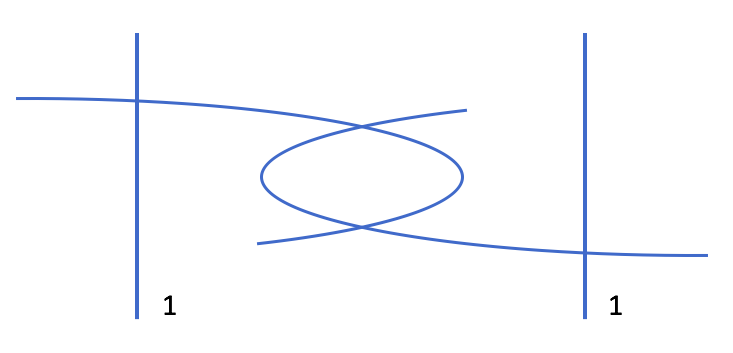}
\end{minipage}}
}
\vskip 2pt
    
\noindent (3) The homology of the dual graph of $\cC_{\bar{\mathbb{F}}_p}$ is isomorphic to $\Z = \langle \ell\rangle $ 
with length pairing $\langle \ell, \ell \rangle =2$. The absolute Galois group of $\F_p$ acts naturally
on it, and the Frobenius element $\Phi\colon a\mapsto a^p$ acts by 
$\Phi(\ell) = \epsilon\ell$ with $\epsilon=\bigl(\frac{-1}{p}\bigr)$  (Thm. \ref{introhomology}).

\noindent (4) Fix $l\ne p$ and a Frobenius element $\Frob\in G_{\Q_p}$ (a lift of $\Phi$). Let 
$\tau\colon I_{\Q_p}\to\Z_l$ be an $l$-adic tame character. Then
there is a basis for $\H(C,\Z_l)\tensor_{\Z_l}{\bar \Q_l}$ such that $\sigma \in I_{\Q_p}$ and $\Frob$ 
act as
$$
\scalebox{0.7}{$
\begin{pmatrix}
1 &0&0&0&&\\
0 &1&0&0&&\\
0 &0&1&0&&\\
0 &0&0&1&&\\
&&&&1&\tau(\sigma)\\
&&&&0&1
\end{pmatrix}$} \qquad \text{and} \qquad 
\scalebox{0.7}{$
\begin{pmatrix}
\alpha &0&0&0&&\\
0 &\beta&0&0&&\\
0 &0&\alpha&0&&\\
0 &0&0&\beta&&\\
&&&&\epsilon&0\\
&&&&0&\epsilon p
\end{pmatrix}$} 
$$
respectively,
where 
$\alpha, \beta$ are the Frobenius eigenvalues for $y^2 = x^3-x$ over $\F_p$ (Thm.~\ref{hetmain} and Remark~\ref{semsible action on cohomology remark}).

\noindent (5) If $g(x)$ is any polynomial of degree 7 whose coefficients are congruent to those of $f(x)$ modulo $p^4$, then the data listed above is the same for the curve $y^2 =g(x)$
(Thm.~\ref{introcontinuity}). 

To deduce (1) to (5) we need very little information beyond the cluster picture displayed above. To be precise, we obviously need the valuation of the leading term $c_f$ (in this case $0$, as $f(x)$ is monic). 
Moreover the explicit description of $\epsilon$ for the Galois representation and the Galois action on homology, and the explicit equations of the curves in the special fibre (but not their genera) also require more delicate information about the roots of $f(x)$ (see Definition \ref{de:epsilon} and Theorem \ref{introspecial}).
\end{example}

\medskip

We now present our main results. We first need some terminology to work with clusters. 

\begin{definition}
If $\s'\subsetneq \s$ is a maximal subcluster, we write $\s'<\s$ and 
refer to $\s'$ as a \emph{child} of $\s$, and to $\s$ as the \emph{parent} of $\s'$. We write $\s =P(\s')$.

For two clusters (or roots)  $\c_1$, $\c_2$ we write $\c_1\wedge \c_2$ for the smallest cluster that contains them.  
\end{definition}

\begin{definition} \label[definition]{principaldefi}
A cluster $\s$ is
a \emph{twin} if $|\s|=2$,
and it is \emph{odd}/\emph{even} if its size is odd/even. 
A proper cluster is \emph{\"ubereven} if it has only even children.
A cluster $\s$ is {\em principal} if $|\s|\ge 3$, except if either $\s=\cR$ is even and has exactly two children, or if $\s$ has a child of size $2g$.
(See also Table \ref{clusternotation} for a summary.)
\end{definition}

\begin{definition}\label{de:relativedepth}
For a proper cluster $\s \ne \cR$ we define its \emph{relative depth} to be 
$$
\delta_\s = d_\s -d_{P(\s)}.
$$ 
\end{definition}

\subsection{Reduction of $C$ and its Jacobian}
One of the main outcomes of the cluster approach is that it easily detects the reduction behaviour of the curve $C$ and its Jacobian. It requires one extra invariant (that feels the leading coefficient of $f(x)$): 
\begin{definition}\label[definition]{no:nu}
For a cluster $\c$ set
$$
 \nu_\c = v(c_f)+\sum_{r\in\cR} d_{r\wedge \s}.
$$  
\end{definition}

\begin{definition} \label[definition]{semistability criterion}
We say that $C/K$ satisfies the \emph{semistability criterion} if
the following conditions hold:
\begin{enumerate}
\item
The extension $K(\cR)/K$ has ramification degree at most 2.
\item
Every proper cluster is $I_K$-invariant. 
\item
Every principal cluster $\s$ has $d_\c \in\Z$ and 
$\nu_\s \in 2\Z.$
\end{enumerate}
\end{definition}


\begin{theorem}[=Theorem \ref{reduction}]
\label[theorem]{redcond}
Let $C/K$ be a hyperelliptic curve of genus~$g$. Write $\Jac C$ for its Jacobian. Then
\begin{enumerate}
\item
$C$ is semistable ($\iff$ $\Jac C$ semistable) 
\\$\iff$ $C/K$ satisfies the semistability criterion.
\item
$C$ has good reduction 
\\$\iff$ $K(\cR)/K$ is unramified, there are no proper clusters of size $<2g+1$ and $\nu_s\in 2\Z$ for the unique principal cluster.
\item
$C$ has potentially good reduction 
\\$\iff$ there are no proper clusters of size $<2g+1$.
\item
$C$ is tame\footnote{By `tame' we mean semistable over some tamely ramified extension of $K$; this is automatically the case if $p>2g+1$}
 ($\iff$ $\Jac C$ is tame)
\\ $\iff$ $K(\cR)/K$ is tame.
\item
$\Jac C$ has good reduction 
\\$\iff$  $K(\cR)/K$ is unramified, 
all clusters $\s\ne\cR$ are odd, and principal clusters 
have $\nu_\s \in 2\Z$.
\item
$\Jac C$ has potentially good reduction  
\\$\iff$ all clusters $\s\ne\cR$ are odd.
\item
The potential toric rank of $\Jac C$ equals the number of even non-\"ubereven clusters 
excluding $\cR$, less 1 if $\cR$ is \"ubereven.
\item
$\Jac C$ has potentially totally toric reduction 
\\ $\iff$ every cluster has at most two odd children. 
\end{enumerate}
\end{theorem}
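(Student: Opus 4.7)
My plan is to prove all eight statements in a unified way by constructing, after a suitable base change, an explicit semistable model of $C$ whose combinatorics mirror the cluster picture; then (1)-(8) become a dictionary between cluster data and components/loops/genera in the dual graph of the special fibre, together with Galois descent conditions. I would proceed in four stages: (a) pass to a finite extension $F/K$ containing all roots of $f$ and all cluster depths, (b) build a semistable model of $C_F$ by patching local charts indexed by principal clusters, (c) analyse which charts descend to $K$ to obtain the semistability criterion, and (d) translate the geometry into the reduction statements for $C$ and $\operatorname{Jac} C$.

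For stage (b), given a principal cluster $\mathfrak{s}$ with center $z_\mathfrak{s}\in F$ and depth $d_\mathfrak{s}$, I would substitute $x=z_\mathfrak{s}+\pi_F^{d_\mathfrak{s}}u$ and $y=\pi_F^{\nu_\mathfrak{s}/2}v$, checking that the normalisation $\nu_\mathfrak{s}$ is exactly the valuation of $f$ at this rescaling, so that the equation reduces mod $\mathfrak{m}_F$ to $v^2=\bar f_\mathfrak{s}(u)$ with $\bar f_\mathfrak{s}$ whose roots are the reductions of the children of $\mathfrak{s}$. This produces a smooth affine chart of genus $\lfloor(\#\{\text{odd children of }\mathfrak{s}\}-1)/2\rfloor$, and charts from nested principal clusters meet along chains of $\mathbb{P}^1$'s corresponding to intermediate (non-principal) proper clusters, with an extra $\mathbb{P}^1$ at each übereven cluster to account for the double cover splitting. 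Regularity at the intersections follows from the integrality of depth differences inside $F$.

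Stage (c) reads the semistability criterion directly off the descent data: for the model to be defined over $K$, inertia must preserve each proper cluster (condition (2)) and fix depths and the scaling $\pi_F^{\nu_\mathfrak{s}/2}$, which is exactly $d_\mathfrak{s}\in\mathbb{Z}$ and $\nu_\mathfrak{s}\in2\mathbb{Z}$ (condition (3)); the ramification index at most $2$ (condition (1)) is precisely what allows the square root $\pi_F^{\nu_\mathfrak{s}/2}$ to be absorbed by a rescaling of $y$. This proves (1), and (4) is immediate since tame semistability corresponds to tameness of $F/K$, which is equivalent to tameness of $K(\mathcal{R})/K$. For the remaining statements, I would use the standard description of the Néron model of $\operatorname{Jac} C$ from a semistable model: the toric rank equals the first Betti number of the dual graph, and the abelian rank equals the sum of the genera of the normalised components. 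Counting: loops in the dual graph come from even non-übereven clusters (each contributes a new cycle via the splitting of its associated chain), except $\mathcal{R}$ if übereven where one independent cycle is absorbed — yielding (7). Statements (2), (3), (5), (6), (8) follow by specialising: good reduction means a single vertex of genus $g$, potentially good means no cycles, and totally toric means every principal cluster contributes genus $0$, i.e.\ has $\leq 2$ odd children.

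The main obstacles I foresee are two: first, checking that the patched model is genuinely semistable (regularity at the nodes where components of different depths meet requires a careful local calculation, especially around übereven clusters and chains of $\mathbb{P}^1$'s of length $>1$); second, verifying the precise toric rank formula in (7), which requires a combinatorial argument that the loops of the dual graph are in bijection with even non-übereven proper clusters (with the correction for $\mathcal{R}$), as opposed to e.g.\ a naive ``one loop per even cluster'' count. The parity/sign conditions on $\nu_\mathfrak{s}$ are conceptually the subtlest, but they are forced by the requirement that $y$ be rescaled by a rational power of $\pi_F$ that descends to $K$.
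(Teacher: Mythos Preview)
Your outline is essentially the paper's strategy: build an explicit semistable model over an extension via charts indexed by principal clusters, describe the Galois action on the special fibre, and read off the reduction statements. However, your stage (c) contains a genuine misunderstanding that would derail the converse direction of (1).

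You write that ``for the model to be defined over $K$, inertia must preserve each proper cluster\ldots'' and that the ramification degree $\le 2$ condition ``is precisely what allows the square root $\pi_F^{\nu_\mathfrak{s}/2}$ to be absorbed by a rescaling of $y$''. Both points are off. First, the converse is not about \emph{your} model descending: if $C/K$ is semistable, it already has a stable model over $\mathcal{O}_K$, and base-changing that to $\mathcal{O}_F$ must agree (by uniqueness of the stable model) with your explicit construction. The key observation is that the $I_{F/K}$-action on the special fibre of this base-changed model is trivial, whereas on your explicit model it is computed by characters of order the denominators of $d_\mathfrak{s}$ and $\tilde\lambda_\mathfrak{s}$; equating these forces $d_\mathfrak{s}\in\mathbb{Z}$ and $\nu_\mathfrak{s}\in 2\mathbb{Z}$ for principal $\mathfrak{s}$. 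Second, the condition $e_{K(\mathcal{R})/K}\le 2$ has nothing to do with $\nu_\mathfrak{s}/2$ (that is condition (3)); rather, it is what guarantees that non-principal proper clusters (twins, cotwins) have half-integral depth and that centres can be chosen in $K^{\textup{nr}}$. This condition is not obtained from triviality of the action on components but from the fact that $K(\mathcal{R})\subseteq K(\operatorname{Jac} C[2])$, which is tame since inertia acts unipotently on the Tate module.

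Your stage (d) handles (2), (3), (6), (7), (8) correctly via dual-graph combinatorics, but (5) is not covered by ``no cycles'': that only gives \emph{potentially} good reduction of $\operatorname{Jac} C$. For good reduction over $K$ itself you need N\'eron--Ogg--Shafarevich, i.e.\ that inertia acts trivially on the abelian part of $V_\ell\operatorname{Jac} C$, and this is what forces $K(\mathcal{R})/K$ to be unramified and the $\nu_\mathfrak{s}$ even. You need the explicit description of $\H(C)_{ab}$ as an inertia module (via the $\tilde\Gamma_\mathfrak{s}$), not just the shape of the dual graph.
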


\subsection{Special fibre of the minimal regular model}
For semistable curves we write down explicit charts for a regular model (\Cref{normalisation of charts}). 
This then gives us an explicit construction for the special fibre of the minimal regular model in terms of clusters (see Theorem \ref{th:DualGraph}). We give a simplified statement here.

\begin{definition}\label{de:centre}
A \emph{centre} $z_\s$ of a proper cluster $\s$ is any element $z_\s \in \Ks$ such that $v(z_\s-r) \geq d_\s$ for all $r \in \s$; equivalently the cluster $\s$ can be written as $D \cap \cR$ for the disc $D = z_s + \pi^{d_\s} \cO_{\Ks}$. 
If $\s = \{r\}$ is a singleton, its centre is $z_\s = r$. 
\end{definition}

\begin{theorem}
\label{introspecial}
Suppose $C/K$ is semistable.
The special fibre of the minimal regular model of $C$ over $\OKnr$ consists of 
components $\Cmp_\s$ for every principal cluster $\s$, linked by chains of $\P^1$s. The normalisation of $\Cmp_s$ is given as follows. Fix a choice of centre $z_\s$ for each cluster. Then
$$
 \tilde{\Gamma}_\s \colon\> y^2 = 
   c_\s
    \prod_{{\text{odd }}{{\mathfrak o} < \s}}(x-\red_\s(\mathfrak o)),
$$
where $c_\s=\frac{c_f}{\pi^{v(c_f)}}\prod_{r \notin \c} \frac{z_\c-r}{\pi^{d_{\c\wedge r}}} \mod \m$ and $\red_\s(\mathfrak{o}) = \frac{z_\s-z_{\mathfrak{o}}}{\pi^{d_\s}} \mod \m$. 

If $\cR$ is principal then the chains of $\P^1$s are given explicitly as follows. 
If $\s'<\s$ are principal, then there is one chain with $\frac{\delta_{\s'}}{2}\!-\! 1$ $\P^1$s (resp. two chains with $\delta_{\s'}\!-\! 1$ $\P^1$s) between $\Gamma_\s$ and $\Gamma_{\s'}$ if $\s'$ is odd (resp. even); if a chain has length 0 the components $\Gamma_\s$ and $\Gamma_{\s'}$ intersect. Every twin $\t<\s$ with $\delta_\t>\frac 12$ gives a chain of $2\delta_\t\!-\! 1$ $\P^1$s from $\Gamma_\s$ to itself. 
\end{theorem}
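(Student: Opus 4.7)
My plan is to construct an explicit regular semistable model by patching together charts indexed by principal clusters, compute its special fibre, and then check minimality.

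For each principal cluster $\s$, I would work with the chart obtained from the substitution $x = z_\s + \pi^{d_\s} u$, $y = \pi^{\nu_\s/2} y'$. By the semistability criterion one has $d_\s \in \Z$ and $\nu_\s \in 2\Z$, so this is an integral transformation. The factors of $f(x)$ split into two types: a root $r \notin \s$ gives $(z_\s - r) + \pi^{d_\s} u$, of valuation exactly $d_{r\wedge\s}$, whose leading coefficient reduces to $(z_\s-r)/\pi^{d_{r\wedge\s}} \bmod \m$; a root $r$ lying in a child $\s' < \s$ gives $\pi^{d_\s}$ times an expression reducing to $u - \red_\s(\s')$ independently of which $r\in\s'$ is chosen. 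Collecting contributions, the reduction mod $\m$ in this chart is
\[
  y'^2 \equiv c_\s \prod_{\s' < \s}(u-\red_\s(\s'))^{|\s'|},
\]
and its normalization is the claimed curve $\tilde\Gamma_\s$, since even-sized children contribute squares that factor out.

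The next step is to understand how these charts glue. For two principal clusters $\s' < \s$, the substitution $u = \red_\s(\s') + \pi^{\delta_{\s'}} u'$ matches the $U_\s$-chart to the $U_{\s'}$-chart on the generic fibre, but on the special fibre forces a blow-up at the point $u=\red_\s(\s')$. The required number of intermediate $\mathbb{P}^1$s is governed by the parity of $|\s'|$ and by the discrepancy $\nu_{\s'}-\nu_\s$ between the two $y$-rescalings. A local normal-crossings computation shows: for $\s'$ odd the single branch point of $\Gamma_\s$ at $\red_\s(\s')$ is connected to the corresponding branch point of $\Gamma_{\s'}$ by $\tfrac{\delta_{\s'}}2 - 1$ rational components (the halved depth arises because the ramified cover absorbs a $\pi^{1/2}$ into the $y$-coordinate of the chain); for $\s'$ even one instead has a node whose resolution produces two symmetric chains of length $\delta_{\s'}-1$ meeting $\Gamma_\s$ and $\Gamma_{\s'}$ at the two sheets. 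A non-principal twin $\t<\s$ (with $\delta_\t>1/2$) produces a factor $(u-\red_\s(\t))^2$ in $\Gamma_\s$, giving a node of $\Gamma_\s$ that must be resolved into a loop chain of $2\delta_\t-1$ components. Singletons and short twins contribute nothing, consistent with the exceptional cases built into the definition of principal.

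The main obstacle is the patient bookkeeping showing that these local resolutions assemble consistently into a single global regular semistable model, together with the minimality check. Minimality is done by ruling out $(-1)$-curves: each $\mathbb{P}^1$ in a chain meets exactly two other components transversely and has self-intersection $-2$ by adjunction; each principal component $\tilde\Gamma_\s$ is either of positive genus (enforced by its odd children) or meets at least three other components in the special fibre. The bespoke exclusions in the definition of \emph{principal} — top even cluster with two children, or a child of size $2g$ — are exactly those that would otherwise produce a contractible rational component, so the construction yields the minimal regular model as asserted.
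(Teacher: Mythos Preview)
Your chart computation at a principal cluster $\s$ is correct and matches the paper's Lemma~\ref{reduced polynomial lemma} and Proposition~\ref{components of cdisc description}. However, the way you propose to produce the chains between principal clusters differs substantially from the paper's argument and contains a genuine gap.

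The paper does not patch charts indexed only by principal clusters and then ``resolve''. Instead it first builds a regular semistable model $\mathcal{Y}_{\textup{disc}}$ of $\mathbb{P}^1_{K^{\textup{nr}}}$ using one chart for \emph{every} valid disc --- that is, every integral disc lying between defining discs of proper clusters, not just the principal ones --- and then takes the normalisation $\mathcal{C}_{\textup{disc}}$ of $\mathcal{Y}_{\textup{disc}}$ in $K^{\textup{nr}}(C)$. Regularity of $\mathcal{C}_{\textup{disc}}$ follows from regularity of the branch divisor on $\mathcal{Y}_{\textup{disc}}$ (Theorem~\ref{main regular model theorem}). The chain components are already present as the preimages of the intermediate $\mathbb{P}^1$s downstairs; there is no resolution step upstairs.

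Your explanation of the halved chain length in the odd case (``the ramified cover absorbs a $\pi^{1/2}$ into the $y$-coordinate'') is the wrong mechanism. What actually happens is that among the $\delta_{\s'}-1$ intermediate valid discs $D$ between $D(\s)$ and $D(\s')$, the parity of $\nu_D(f)$ alternates (since $|\s'|$ is odd and $\nu_D$ jumps by $|\s'|$ at each step). Exactly $\delta_{\s'}/2$ of them have $\nu_D$ odd; for those the component $E_D$ lies inside the branch locus, and its preimage in $\mathcal{C}_{\textup{disc}}$ is a single rational curve of multiplicity~$2$ and self-intersection $-1$ (type~V in the paper's classification, Proposition~\ref{components of cdisc description}). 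Contracting these $(-1)$-curves yields the minimal model and leaves $\delta_{\s'}/2-1$ rational curves in the chain. For even $\s'$ every intermediate $\nu_D$ stays even, each preimage is two disjoint $\mathbb{P}^1$s, and nothing is contracted --- hence two chains of $\delta_{\s'}-1$ each. Since your proposal never writes down the intermediate charts, it does not actually construct a model nor verify its regularity, and the asserted chain lengths are not justified. The minimality check is along the right lines; in the paper it is completed by identifying the resulting dual graph with a hyperelliptic graph in the sense of~\cite{hyble}, which by definition has no genus-$0$ leaves (Theorems~\ref{semistability criterion implies semistability} and~\ref{main dual graph thingy}).
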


In particular we obtain a description of the dual graph of the special fibre and its homology in terms of clusters. The latter has a particularly simple description: roughly speaking, the set of even clusters corresponds to a basis of the homology group. Recall that this homology group is related to the Galois representation, the Tamagawa number of the Jacobian (Lemma \ref{lemtam}) and the character group of the toric part of the Raynaud parametrisation of the Jacobian (Lemma \ref{le:isolattice}).
In order to keep track of the Galois action on the dual graph (an analogue of split/non-split multiplicative reduction for an elliptic curves) we need an extra invariant $\epsilon_\s$,  which generalises $\epsilon$ in Example \ref{introexample}.

\begin{definition}
A {\em cotwin} is a non-\ub\ cluster that has a child of size~$2g$.

For a cluster $\s$ that is not a cotwin we write $\s^*$ for the smallest cluster $\s^*\supseteq\s$ whose parent is not \ub\ (and $\s^*=\cR$ if no such cluster exists). If $\s$ is a cotwin, we write $\s^*$ for its child of size $2g$.
\end{definition}

\begin{definition}\label[definition]{de:epsilon}
For even clusters $\s$ fix a choice of $\theta_\c = \sqrt{c_f\prod\nolimits_{r \notin \c} (z_\c-r)}$, where $z_s$ is some (any) centre for $\s$. 
If $\c$ is either even or a cotwin, define 
$\epsilon_\s:G_K\to \{\pm 1\}$ by
$$
  \epsilon_{\c}(\sigma) \equiv \frac{\sigma(\theta_{\c^*})}{\theta_{\neck{(\sigma\s)}}} \mod \mathfrak{m}.
$$
For all other clusters $\s$, set $\epsilon_\s(\sigma)=0$. 
\end{definition}

\begin{remark}
Note that $\epsilon_\s$ does not depend on the choice of centre $z_\s$: if $z'_\s$ is another centre and $r \notin \s$ then $v(z_\s-r) < v(z_\s-z'_\s)$ so the leading term in the $p$-adic expansion of $z'_s-r$ is the same as that of $z_\s-r$. 
Moreover, $\epsilon_\s$ restricts to a character on the stabiliser of $\s$ that is also independent of the choice of the sign of $\theta_\s$; this character is unramified if and only if $|I_K/I_\s| (v(c_f)+\sum_{r\notin\s}d_{r\wedge\s})$ is even, where $I_\s<I_K$ is the stabiliser of $\s$.
\end{remark}

\begin{theorem}[see Theorem \ref{th:Homology}, Corollary \ref{numcomponents}]
\label{introhomology}
Suppose $C/K$ is semistable. Let $\Upsilon_C$ denote the dual graph of the special fibre of the minimal regular model of $C$ over $\OKnr$,
with its natural action of $\Gal(\kbar/k)$.
Let $A$ be the set of even non-\ub\ clusters excluding $\cR$.
Then
\begin{enumerate}
\item 
$
rk_\Z( H_1(\Upsilon_C,\Z)) = \bigleftchoice {\#A-1}{\mbox{ if } \cR \mbox{ is \ub,}}{\#A}{\mbox{ otherwise.}}
$
\item The number of components in the special fibre is
 $$
 m_C = \sum_{\substack{\s \neq \cR, \\ \text{odd, proper}}} \frac{\delta_{\s}}{2} +  \sum_{\substack{\s \neq \cR, \\ \text{even}}} 2\delta_{\s} +1-\rk H_1(\Upsilon_C,\Z).
 $$
\item $H_1(\Upsilon_{C},\ZZ) = \Bigl\{ \sum_{\c\in A} a_{\c}\ell_{\c} \Bigm|\> a_\s\in\Z, \>\>\sum_{\c\in B} a_{\c}=0\Bigr\},$
where $B$ is the subset of clusters $\c \in A$ such that $\c^*=\cR$. 
\item the length pairing is given by
$$
\langle \ell_{\c_1},\ell_{\c_2} \rangle=\left\{
    \begin{array}{ll}
        0&  \mbox{ if } \c^*_1 \neq \c^*_2, \\
        2(d_{(\c_1\wedge\c_2)}-d_{P(\c^*_1)})& \mbox{ if } \c^*_1 =\c^*_2 \ne \cR, \\
        2(d_{(\c_1\wedge\c_2)}-d_{\cR)}& \mbox{ if } \c^*_1 =\c^*_2 = \cR.\\
    \end{array}
\right.
$$
\item $\sigma \in \Gal(\bar{k}/k)$ acts
on $H_1(\Upsilon_C,\Z)$ by $\sigma(\ell_{\c}) = \epsilon_{\c}(\sigma) \ell_{\sigma(\c)}.$
\end{enumerate}
\end{theorem}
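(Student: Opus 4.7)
My plan is to leverage the explicit description of the special fibre given in Theorem~\ref{introspecial} (in its full form, Theorem~\ref{th:DualGraph}) and extract all five statements by a direct combinatorial analysis of $\Upsilon_C$, with a Galois-sign computation for part~(5).

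First I would make $\Upsilon_C$ fully explicit as a graph. A principal non-übereven cluster $\s$ contributes a single vertex, corresponding to the (irreducible) component $\Gamma_\s$; a principal übereven cluster $\s$ contributes two vertices, since the component $y^2 = c_\s$ (with empty product over odd children) splits over $\bar k$ into two copies of $\P^1$. Each chain of $n$ intermediate $\P^1$s contributes $n$ vertices and $n{+}1$ edges between its end-vertices, and twin self-loops with $\delta_\t>\tfrac 12$ contribute analogously. Parts (1) and (2) then follow from $\rk H_1(\Upsilon_C,\Z)=E-V+1$ by a tally over the cluster data, with the extra $-1$ when $\cR$ is übereven accounting for the doubling of $\Gamma_\cR$. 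For (3), for each $\c\in A$ I define $\ell_\c$ by travelling once around the cycle naturally associated to $\c$: a self-loop at $\Gamma_{P(\c)}$ if $\c$ is a twin, the double-chain between $\Gamma_{\neck \c}$ and $\Gamma_{P(\neck \c)}$ when $\neck \c$ is a proper even principal cluster, or a cycle running between the two split components of $\Gamma_\cR$ when $\neck \c=\cR$. These generate $H_1$, and the sole relation $\sum_{\c\in B} a_\c=0$ expresses that summing the cycles in $B$ winds around each split component of $\Gamma_\cR$ equally and hence bounds. Linear independence modulo this relation follows by choosing a spanning tree and checking that each $\ell_\c$ uses a unique non-tree edge.

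For (4), assign length 1 to each edge of $\Upsilon_C$. A chain of $n$ intermediate $\P^1$s has length $n{+}1$, so a twin self-loop has length $2\delta_\t$, and the double chain of a principal even $\s'$ contributes a cycle of length $2\delta_{\s'}$; these match the diagonal entries $\langle \ell_\c,\ell_\c\rangle=2\delta_\c$. Two distinct cycles $\ell_{\c_1},\ell_{\c_2}$ share edges iff $\neck{\c_1}=\neck{\c_2}$, and in that case the shared portion is precisely the chain from the component $\Gamma_{\neck{\c_1}}$ (or the two components if $\neck{\c_1}=\cR$ is übereven) down to the level $d_{\c_1\wedge\c_2}$, of length $2(d_{\c_1\wedge\c_2}-d_{P(\neck{\c_1})})$ (or $-d_{\cR}$), giving the stated formula.

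Finally, for (5), $\sigma\in\Gkk$ permutes the roots of $f$, hence the clusters, and by the Galois-equivariance built into Theorem~\ref{introspecial} the components and chains are permuted compatibly, so $\sigma(\ell_\c)=\pm\ell_{\sigma(\c)}$. The sign is $+1$ iff $\sigma$ preserves the orientation of $\ell_\c$, which is equivalent to $\sigma$ fixing (rather than swapping) the two components $v_{\neck\c}^{\pm}$ when $\neck\c$ is übereven (and an analogous statement for the cotwin case, using $\neck \c$ as the child of size $2g$). In the chart of Theorem~\ref{introspecial} these two components are cut out by $y=\pm\theta_{\neck\c}$, so the geometric swap-sign is $\sigma(\theta_{\neck\c})/\theta_{\neck{(\sigma\c)}}\!\mod\m$, which matches $\epsilon_\c(\sigma)$ by definition. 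The main obstacle is precisely this final sign-matching: one must verify in all the cases enumerated by Theorem~\ref{introspecial} (principal even, twin loop, übereven $\cR$, cotwin) that the combinatorial $\epsilon_\c$ coming from square roots of $c_f\prod_{r\notin\neck\c}(z_{\neck\c}-r)$ correctly records the geometric orientation-reversal on the basis cycle, with careful attention to when the cycle traverses chains belonging to several übereven levels.
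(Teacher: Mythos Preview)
Your approach is broadly correct and follows the same underlying ideas as the paper, but the paper packages the argument differently. Rather than analysing $\Upsilon_C$ directly, the paper first proves (Theorem~\ref{th:ActionOnDualGraph}) that $\widehat{\Upsilon_C}$ is isomorphic to the combinatorial \emph{hyperelliptic graph} $G_\Sigma$ attached to the cluster picture $\Sigma$, with the Galois action corresponding to the automorphism $(\rho(\sigma),(\epsilon_\s(\sigma))_\s)$ of $G_\Sigma$. All of parts (1), (3), (4), (5) then follow by quoting a purely combinatorial result about the homology of hyperelliptic graphs (Theorem~\ref{th:apphomology}, which is \cite[Theorem~6.1]{hyble}), and part (2) is Euler characteristic plus an edge count. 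Your route amounts to unpacking that combinatorial result by hand from the explicit description of $\Upsilon_C$ in Theorem~\ref{th:DualGraph}. The paper's route is cleaner because the graph-theoretic computation is done once in~\cite{hyble} and reused; yours is more self-contained but requires redoing that work.

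One imprecision to flag: your definition of $\ell_\c$ only handles the case $\c^*=\c$ correctly. When $P(\c)$ is \ub\ (so $\c^*\supsetneq\c$), the cycle $\ell_\c$ must run from $v_{P(\c^*)}$ all the way down to $v_\c$ through the chain of \ub\ ancestors and back, not just along the double chain between $\Gamma_{\c^*}$ and $\Gamma_{P(\c^*)}$; otherwise distinct $\c$ with the same $\c^*$ would give the same cycle and you would not generate $H_1$. This is exactly what Definition~\ref{de:loops} does, and once corrected your pairing computation in (4) goes through. For (5), you are right that the sign-matching is the crux; the paper carries this out in the proof of Theorem~\ref{th:ActionOnDualGraph}, using Lemma~\ref{choices of tangents lemma} to translate the choice of $\theta_\s$ into a choice of tangent or point at infinity on the relevant component, and then checking case by case (twin node, size-$2g$ node, point at infinity on $\tilde\Gamma_\s$) that the geometric swap agrees with $\epsilon_\s(\sigma)$. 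Your sketch ``the two components are cut out by $y=\pm\theta_{\c^*}$'' is not literally correct (one has $y=\pm\sqrt{c_\s}$, related to $\theta_\s$ by a power of $\pi_F$), but the correction is exactly the content of that lemma.
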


Our description of the special fibre of the minimal regular model of $C$ also allows us to easily determine whether $C$ is deficient, i.e. has no $K$-rational divisor of degree $g-1$. Deficiency is used to determine whether the Tate-Shafarevich group of the Jacobian of a curve over a number field has square order (see Section 8 of \cite{PS}).

\begin{theorem}[=Theorem \ref{th:deficiency}]\label{introdeficiency}
Suppose $C/K$ is semistable. Then
$C$ is deficient if and only if it has even genus and either
\begin{enumerate}
\item $\cR = \s_1 \coprod \s_2$ with $\s_1, \s_2$ odd, $G_K$-conjugate and $\delta_{\s_i}$ odd, or
\item $\cR$ is \ub\ with $\epsilon_{\cR}(\Frob) = -1$ and for all non-\ub\ $\s$ such that $\neck{\s} =\cR$, either $d_\s \notin \Z$ or the $\Frob$-orbit of $\c$ has even size, or
\item $\cR$ is a cotwin, its principal child $\r$ is \ub\ with $\epsilon_{\r}(\Frob)\! =\! -1$, and for all $\s$ such that $\neck{\s}\!=\!\r$, either $d_\s \notin \Z$ or the $\Frob$-orbit of $\s$ has even size.
\end{enumerate}
\end{theorem}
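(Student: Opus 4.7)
The approach is to reduce the deficiency question to the special fibre of the minimal regular model via Theorems \ref{introspecial} and \ref{introhomology}, and then to perform a case analysis indexed by the shape of the top cluster $\cR$. The starting point is the criterion that, for semistable $C/K$, a $K$-rational divisor class of degree $g-1$ exists iff there is a $\Frob$-stable divisor class of degree $g-1$ on the special fibre $\mathcal{C}_{\kbar}$ of the minimal regular model, equivalently iff the image of $\deg\colon \Pic(\mathcal{C}_{\kbar})^{\Frob}\to\Z$ contains $g-1$. This rests on the standard specialisation isomorphism for Picard groups combined with Hensel lifting of smooth points and descent from $\Knr$.

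First I dispose of odd $g$: the hyperelliptic class has degree $2$, so $\frac{g-1}{2}$ times it is a $K$-rational class of degree $g-1$, and $C$ is not deficient. Hence assume $g$ is even throughout, and decide precisely when no $\Frob$-stable odd-degree divisor exists on $\mathcal{C}_{\kbar}$.

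The odd-degree sources are of two kinds. A $\Frob$-fixed principal component $\Gamma_\s$ carries a $\Frob$-fixed smooth point iff either some odd child $\mathfrak o$ of $\s$ is a $\Frob$-fixed rational point of $\tilde\Gamma_\s$ with integral $d_\s$, or the affine quadric $y^2 = c_\s \prod_{\text{odd }\mathfrak o<\s}(x-\red_\s(\mathfrak o))$ has a $\Frob$-fixed smooth $\kbar$-point; by the definition of $\epsilon_{\neck\s}$, the latter is controlled by whether $\epsilon_{\neck\s}(\Frob)=+1$. A chain of $\P^1$s between principal components $\Gamma_\s,\Gamma_{\s'}$ contributes an odd-degree $\Frob$-stable divisor iff either both endpoints are $\Frob$-fixed, or the chain length is odd (so that a central $\P^1$ is $\Frob$-fixed) when the endpoints are swapped. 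Using Theorem \ref{introspecial}, these chain-length parities are explicitly encoded in the integrality and parity of the relative depths $\delta_\s$.

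Combining these observations, one checks that a $\Frob$-stable odd-degree divisor fails to exist precisely in cases (1)--(3) of the theorem. Sufficiency is the main technical step: one sweeps through every principal cluster $\s$ in the cluster tree, showing that either $\Gamma_\s$ is not $\Frob$-fixed (so supports only even Galois-stable degree), or $\Gamma_\s$ is $\Frob$-fixed but $\epsilon_{\neck\s}(\Frob)=-1$ and every odd child of $\s$ is either non-$\Frob$-fixed or of non-integral depth, and similarly for chains. The principal obstacle is the bookkeeping of $\epsilon_\s$ through the neck operation $\s\mapsto\neck\s$, especially in cases (2)--(3), where the component that would normally control the global parity is absent: here one must trace the obstruction through the chain structure connecting non-übereven descendants $\s$ with $\neck\s=\cR$ (or with $\neck\s=\r$ in the cotwin case) to the rest of the tree, which is exactly what the extra conditions on such $\s$ in the theorem statement are designed to encode. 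The necessity direction is comparatively easier: in any configuration outside (1)--(3), one exhibits a rational Weierstrass point on some $\Gamma_\s$ with $\epsilon_{\neck\s}(\Frob)=+1$, or a $\Frob$-fixed central $\P^1$ in a chain of odd length between conjugate components, yielding a $K$-rational effective divisor of odd degree.
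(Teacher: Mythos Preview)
Your approach takes an unnecessary detour and contains a concrete error. The paper's proof is much more direct: it invokes the Poonen--Stoll criterion (Lemma~\ref{le:deficiency}) that the index of $C$ equals $\gcd\{d_ie_i\}$, where $d_i$ are the multiplicities and $e_i$ the Frobenius-orbit lengths of the components of the special fibre. Since $C/K$ is semistable, all $d_i=1$, so $C$ is deficient if and only if $g$ is even and \emph{every} component has even Frobenius-orbit length. The proof then reads this off from the explicit dual graph of Theorem~\ref{th:DualGraph}: one simply lists, for each type of vertex $v_\s,v_\s^\pm$ and each chain $L_\bullet$, the condition that forces its orbit length to be even, and observes that these conditions collapse to (1)--(3).

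Your framework via $\Pic(\mathcal{C}_{\bar k})^{\Frob}$ and Frob-stable odd-degree divisors is equivalent in principle but substantially harder to execute, and your execution has a real error. You claim that a Frob-fixed principal component $\Gamma_\s$ carries a Frob-fixed smooth point only under conditions involving $\epsilon_{\neck\s}(\Frob)$. This is false: if a single irreducible component is Frobenius-stable, it is a geometrically irreducible curve over $k$ (a $\P^1_k$ or a hyperelliptic curve over the finite field $k$) and certainly has smooth $k$-rational points. The invariant $\epsilon_\s(\Frob)$ does not measure the existence of rational points on one component; rather, for \"ubereven $\s$ it records whether Frobenius \emph{swaps the two components} $v_\s^+$ and $v_\s^-$ (Theorem~\ref{th:DualGraph}(2)), i.e.\ whether their orbit length is $1$ or $2$. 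Once you adopt the orbit-length criterion, the role of $\epsilon$ and of the integrality/parity conditions on $d_\s$ and $\delta_\s$ becomes transparent, and the case analysis is short.
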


%

In fact, even for curves that are not semistable, we construct a regular model over a field $F/K$ where the curve becomes semistable. The special fibre of the minimal regular model of $C$ over $\OFnr$ comes with a natural action of $G_K$, not just $G_{\Fnr}(=I_F)$, 
see \S\ref{ss:gcurves}. We describe it explicitly in Theorems \ref{th:DualGraph} and \ref{de:gammatilde} as well.

\subsection{Galois representation}

Knowing the explicit Galois action on the special fibre lets us determine the $l$-adic Galois representation 
of arbitrary hyperelliptic curves. 
We use the following shorthand notation:

\begin{notation}
For a curve $X/k$ and a prime $l\ne p$ write
$$
\H(X)=\H(X_{\bar k},\Q_l),
$$
and similarly for curves over $K$. 
\end{notation}

\begin{notation}\label{Gs}
For a cluster $\s$ we write  $G_\s=\Stab(\s)$ for its stabiliser in $G_K$ and $I_\s<G_\s$ for the corresponding inertia subgroup.
\end{notation}

As mentioned above, if $C$ acquires semistable reduction over $F$, the full Galois group $G_K$ acts on the special fibre of the minimal regular model of $C$ over $\OFnr$. In particular, the \'etale cohomology groups $\H(\Gamma_\s)$ of the components have an induced action of the stabiliser $G_\s$, which is closely linked to the \'etale cohomology of $C$ and which we are able to control explicitly.
We obtain the following description.

%
%
%
%

\begin{notation}\label{de:lambdatilde}
For a cluster $\s$ we define $\so$ to be the set of odd children of~$\s$ and write
$$
\tilde\lambda_\s 
= \frac 12 (v(c_f) +  |\so|d_{\s} +  \sum_{r\notin\s}d_{\s\wedge r}).
$$
\end{notation}

\begin{theorem}[= Theorem \ref{het}]
\label{hetmain}
Let $C/K$ be a hyperelliptic curve. Let $\H(C)=\H(C)_{ab}\oplus \H(C)_{t}\tensor\Sp_2$ be the decomposition into `toric' and `abelian' 
parts. 
Then 
$$
  \H(C)_{t} =\bigoplus_\s \Ind_{G_\s}^{G_K} \!\epsilon_\s \quad\ominus\epsilon_\cR,
$$
the sum taken over representatives of $G_K$-orbits of even non-\"ubereven clusters. 
Writing $\Gamma_\s$ for the components of the special fibre in Theorem \ref{introspecial} over a Galois extension where $C$ acquires semistable reduction, we have
$$
  \H(C)_{ab}=\bigoplus_\s \Ind_{G_\s}^{G_K} \H(\Cmpn_\s),
$$
the sum taken over representatives of $G_K$-orbits of principal non-\"ubereven 
clusters. For every such cluster $\s$, there is an isomorphism of $I_\s$-modules
$$
  \H(\Cmpn_\s) \>\>\iso\>\> \tilde\gamma_s \otimes (\Q_l[\so] \ominus \triv) \quad\ominus \epsilon_\s,  
$$
where $\tilde\gamma_\s\colon I_\s\to \bar\Q_l^\times$ is any character%
\footnote{Although $\tilde\gamma_\s$ is $\bar\Q_l^\times$-valued, 
the tensored representation is realisable over $\Q_l$, and we view it as 
a $\Q_l$-representation in this formula}
whose order is the prime-to-$p$ part of the denominator of 
$|I_K/I_\s|\,\tilde\lambda_\s$.
\end{theorem}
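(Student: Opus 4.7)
The plan is to first base change to a finite Galois extension $F/K$ over which $C$ acquires semistable reduction; such $F$ exists by Theorem~\ref{redcond}(1), and the explicit regular model of Theorem~\ref{introspecial} then applies over $\OFnr$. For the semistable curve $C_F$, standard results on semistable curves (via the monodromy filtration on $V_l\Jac(C_F)$ and the N\'eron model of $\Jac C$) give a $G_F$-equivariant decomposition
$$
  \H(C) \cong \H(C)_{ab} \oplus \H(C)_t\otimes\Sp_2,
$$
with $\H(C)_{ab} = \bigoplus_\s \H(\tilde\Gamma_\s)$, the sum of the \'etale cohomology of the components of the special fibre, and $\H(C)_t\cong H_1(\Upsilon_C,\Q_l)$ (using self-duality of the homology for the quadratic characters $\epsilon_\s$). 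Both summands inherit compatible $G_K$-actions via functoriality of the model (\S\ref{ss:gcurves}), lifting the decomposition from $G_F$ to $G_K$.

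\textbf{Toric part.} Theorem~\ref{introhomology} presents $H_1(\Upsilon_C,\Q_l)$ as the kernel of the $G_K$-equivariant map
$$
  \bigoplus_{\s\in A}\Q_l\,\ell_\s\;\longrightarrow\;\Q_l,
$$
sending $\ell_\s\mapsto 1$ if $\s\in B$ and $\ell_\s\mapsto 0$ otherwise. The source decomposes as $\bigoplus_{[\s]\in A/G_K}\Ind_{G_\s}^{G_K}\epsilon_\s$, and the target carries the character $\epsilon_\cR$: if $B=\emptyset$ then either $\cR$ is not even non-\ub\ (and $\epsilon_\cR$ does not appear in the sum to begin with) or $\cR\in A$ and the $\cR$-contribution $\Ind_{G_\cR}^{G_K}\epsilon_\cR=\epsilon_\cR$ cancels the $\ominus\epsilon_\cR$; if $B\ne\emptyset$ then $\cR$ is \ub\ (so not in $A$) and each $\s\in B$ satisfies $\s^*=\cR$, hence $\epsilon_\s=\epsilon_\cR$ by Definition~\ref{de:epsilon}. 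This gives the formula for $\H(C)_t$ uniformly.

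\textbf{Abelian part.} Grouping Galois-conjugate components yields
$$
  \H(C)_{ab} \cong \bigoplus_{[\s]}\Ind_{G_\s}^{G_K}\H(\tilde\Gamma_\s),
$$
over $G_K$-orbits of principal clusters; \ub\ clusters $\s$ give $\tilde\Gamma_\s\cong\P^1$ and so contribute nothing, leaving a sum over principal non-\ub\ $\s$. For each such $\s$, Theorem~\ref{introspecial} writes $\tilde\Gamma_\s\colon y^2=c_\s\prod_{\mathfrak o\in\tilde\s}(x-\red_\s(\mathfrak o))$, a hyperelliptic curve whose $l$-adic cohomology is described via the classical presentation of $\H$ of a double cover of $\P^1$ as a quadratic twist of the permutation module on Weierstrass points, namely $\H(\tilde\Gamma_\s)\cong\tilde\gamma_\s\otimes(\Q_l[\tilde\s]\ominus\triv)\ominus\epsilon_\s$ for a suitable character $\tilde\gamma_\s$.

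\textbf{Main obstacle.} The delicate step is identifying the character $\tilde\gamma_\s$ for the $I_\s$-action, as opposed to merely the $I_F$-action (which is trivial since $\tilde\Gamma_\s/\bar k$ lives on the special fibre). Although $\tilde\Gamma_\s$ is a variety over $\bar k$, writing its equation from the chart at $\s$ (see \Cref{normalisation of charts}) requires a normalisation by $\pi^{\tilde\lambda_\s}$, which in general lies only in a ramified extension of $K^{\nr}$: the valuation $\tilde\lambda_\s$ has fractional part controlled by $|I_K/I_\s|\tilde\lambda_\s\bmod\Z$. Tracking how $I_\s$ moves the chosen fractional-power root of the uniformiser identifies $\tilde\gamma_\s$ as a character whose order equals the prime-to-$p$ part of the denominator of $|I_K/I_\s|\tilde\lambda_\s$. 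Rigorous verification requires a careful analysis of the transformation of the normalised Weierstrass equation under $I_\s$, together with the observation that the sign ambiguity in the normalisation is exactly the quadratic character $\epsilon_\s$, which is the origin of the final $\ominus\epsilon_\s$.
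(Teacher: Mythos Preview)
Your approach is the paper's: the decomposition and the abelian formula come from the general structure theory for semistable curves recorded in \eqref{tatedec}, the toric part from the explicit homology of the dual graph (Theorem~\ref{th:Homology}; the paper packages your case analysis as Corollary~\ref{co:EtaleCohoToric}(2)), and the $I_\s$-module structure of $\H(\tilde\Gamma_\s)$ from the explicit inertia action on the component.

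Two points to tighten. In your toric case-split you write ``$\cR\in A$'', but $A$ by definition excludes $\cR$; the sum in the theorem runs over the larger set $E$ of \emph{all} even non-\ub\ clusters, and the reconciliation of $\Q_l[A\setminus B,\epsilon]\oplus\Q_l[B,\epsilon]_0$ with $\Q_l[E,\epsilon]\ominus\epsilon_\cR$ is exactly the case analysis in the proof of Corollary~\ref{co:EtaleCohoToric}(2). Second, your ``main obstacle'' paragraph correctly locates the issue in the inertia action on the normalised equation for $\tilde\Gamma_\s$ --- the paper makes this explicit as the affine action $(x,y)\mapsto(\alpha_\s(\sigma) x+\beta_\s(\sigma),\gamma_\s(\sigma) y)$ of Theorem~\ref{galactss}(i), with $\gamma_\s$ of the stated order by Remark~\ref{legaldisc} --- but the passage from knowing this action to the decomposition $\H(\tilde\Gamma_\s)\cong\tilde\gamma_\s\otimes(\Q_l[\so]\ominus\triv)\ominus\epsilon_\s$ is not a ``classical presentation''. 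It is the content of \cite[Thm.~1.2]{hq}, a nontrivial external result the paper invokes rather than reproves; your sketch of tracking fractional powers of the uniformiser explains why $\gamma_\s$ has the claimed order, but does not by itself yield the representation.
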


When $C/K$ is semistable one can in fact 
also recover the Frobenius action on $\H(\Cmpn_\s)$,
as we explain in \Cref{semsible action on cohomology remark}.
Briefly, 
one can pick the centres $z_\s$ in Theorem \ref{introspecial} to lie in $K_\s$, the field cut out by $G_\s$,
which makes $\widetilde{\Gamma}_\mathfrak{s}$ naturally a curve over the residue field $k_\s$ of $K_\s$. The action of $G_\s$ on $\H(\Cmpn_\s)$ in Theorem \ref{hetmain} is then simply via the quotient  $G_\s/I_\s=\textup{Gal}(\bar{k}/k_\s)$ and the usual action of $\Gal(\bar{k}/k_\s)$ on \'{e}tale cohomology. One may recover the Frobenius eigenvalues for this action from point counts on $\tilde\Gamma_\s$ over extensions of $k_\s$.
In a forthcoming paper \cite{weil2} we will explain how to reconstruct 
$\H(C)$ as a full $G_K$-representation from point counts even when $C/K$ is not semistable.

From the explicit description of the Galois representation we deduce the following formula for the conductor.

\begin{theorem}[Theorem \ref{condmain}, Corollary \ref{th:condexpo}]\label{th:condexpogen}
Let $C/K$ be a hyperelliptic curve. Decompose the conductor exponent of $\Jac C$ into its tame and wild parts,
$n_C = n_{C,\text{tame}}+n_{C,\text{wild}}$.

$(i)$ The wild part is given by
$$
n_{C,\text{wild}} = \sum_{r\in S} v(\Delta_{K(r)/K})-[K(r):K]+f_{K(r)/K}, 
$$
where $S$ is any set of representatives of $G_K$-orbits of $\cR$, $\Delta_{K(r)/K}$ is the 
discriminant of $K(r)$ over $K$, and $f_{K(r)/K}$ is the residue degree.

$(ii)$ The tame part is given by $n_{C,\text{tame}} = 2g-\dim \H(C)^{I_K}$ with
$$
\begin{array}{cll}
  \dim \H(C)^{I_K} &=&
  \#\{I_K\text{-orbits on }U\}\> - \cr
  &&\qquad\#\{I_K\text{-orbits on }V\} - \leftchoice{1}{\text{if $|\cR|$ and $v(c_f)$ are even,}}{0}{\text{otherwise,}}\cr
U &=& \{\s\ne\cR \text{ odd cluster} \,\bigm|\,  \xi_{P(\s)}(\tilde\lambda_{P(\s)})\le \xi_{P(\s)}(d_{P(\s)}) \bigr\},\cr
V &=& \{\s \text{ proper non-\"ubereven cluster} \,\bigm|\, \xi_{P(\s)}(\tilde\lambda_{P(\s)})=0 \bigr\};\cr
\end{array}
$$
here $\xi_\s(a)=\ord_2(b)$ where $b$ is the denominator of $|I_K/I_{\s}|a$, with $\xi_\s(0)=0$.

$(iii)$ If $C/K$ is semistable then 
$$
n_C =  \#A - \leftchoice{1}{\text{if $\cR$ is \ub,}}{0}{\text{otherwise,}}
$$
where $A$ is the set of even non-\ub\ clusters 
$\s\ne\cR$. 
\end{theorem}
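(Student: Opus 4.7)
My plan is to derive all three parts from the explicit description of $\H(C)$ given in Theorem \ref{hetmain}, using the standard decomposition $n_C = n_{C,\text{tame}} + n_{C,\text{wild}}$ with $n_{C,\text{tame}} = 2g - \dim \H(C)^{I_K}$ and $n_{C,\text{wild}} = \mathrm{Sw}(\H(C))$, combined with the induction formula $\mathrm{Sw}(\mathrm{Ind}_H^G V) = f_{K_H/K}\,\mathrm{Sw}(V) + \dim(V)\,\mathrm{Sw}(\mathrm{Ind}_H^G\triv)$ and the conductor-discriminant identity $a(\mathrm{Ind}_{G_L}^{G_K}\triv) = v(\Delta_{L/K})$.

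For $(i)$, since $p$ is odd, every ``internal'' character appearing in Theorem \ref{hetmain}---namely $\epsilon_\s$ (of order dividing $2$) and $\tilde\gamma_\s$ (of order coprime to $p$)---is tame, and $\Sp_2$ is tame as well. By additivity of $\mathrm{Sw}$ and the induction formula, the Swan conductor of each summand in $\H(C)$ collapses to that of a permutation representation $\mathrm{Ind}_{G_{\mathfrak{s}'}}^{G_K}\triv$ for various subclusters $\mathfrak{s}'$. A telescoping argument over the cluster tree---in which each root of $\cR$ is recognised as a terminal odd child at the bottom of some chain of principal clusters---reassembles these pieces into $\mathrm{Sw}(\Q_l[\cR])$ modulo tame contributions. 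Writing $\Q_l[\cR] = \bigoplus_{r\in S}\mathrm{Ind}_{G_{K(r)}}^{G_K}\triv$, the conductor-discriminant formula then yields $\mathrm{Sw}(\mathrm{Ind}_{G_{K(r)}}^{G_K}\triv) = v(\Delta_{K(r)/K}) - [K(r):K] + f_{K(r)/K}$, proving $(i)$.

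For $(ii)$, I compute $\dim \H(C)^{I_K}$ summand by summand in Theorem \ref{hetmain}. Since $\Sp_2^{I_K}$ is $1$-dimensional, $\dim(V\otimes\Sp_2)^{I_K} = \dim V^{I_K}$. For the toric pieces, Mackey together with Frobenius reciprocity gives $\dim(\mathrm{Ind}_{G_\s}^{G_K}\epsilon_\s)^{I_K}$ as the number of $I_K$-orbits on $G_K\cdot\s$ for which $\epsilon_\s$ becomes trivial when restricted to the inertia stabiliser; the remark after Definition \ref{de:epsilon} translates this triviality into the $\xi$-condition defining $V$ (after re-indexing even clusters $\s$ by their parents to match the statement). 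Analogously, the abelian summands contribute via the $I_\s$-module decomposition $\H(\tilde\Gamma_\s) = \tilde\gamma_\s\otimes(\Q_l[\tilde\s]\ominus\triv)\ominus\epsilon_\s$: an odd child of $\s$ produces an $I_K$-invariant precisely when $\tilde\gamma_\s$ restricts trivially to the corresponding inertia substabiliser, which by the construction of $\tilde\gamma_\s$ (its order being the prime-to-$p$ part of the denominator of $|I_K/I_\s|\tilde\lambda_\s$) is exactly the inequality $\xi_{P(\s)}(\tilde\lambda_{P(\s)})\leq \xi_{P(\s)}(d_{P(\s)})$ cutting out $U$. Assembling these counts and subtracting from $2g$ produces the stated formula for the tame part.

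For $(iii)$, Theorem \ref{redcond}(1) ensures that each $K(r)/K$ is tamely ramified of degree dividing $2$, so $n_{C,\text{wild}} = 0$ by $(i)$; equivalently, semistable abelian varieties have unipotent inertia action and hence trivial Swan. The integrality assumptions in Definition \ref{semistability criterion} trivialise $\xi$ throughout $(ii)$, so that $U$ and $V$ reduce to plain cluster counts summing to $\#A$ in the non-\"ubereven case and to $\#A-1$ when $\cR$ is \"ubereven. One may alternatively read this off directly from Theorem \ref{introhomology}(1) together with the general identity between the conductor exponent and the toric rank for semistable abelian varieties. The main obstacle will be the combinatorial bookkeeping in $(i)$---verifying that the tangle of induced permutation representations in Theorem \ref{hetmain} telescopes cleanly onto $\Q_l[\cR]$ modulo tame---and the parallel verification in $(ii)$ that the Mackey-invariance conditions match precisely with the $\xi$-inequalities defining $U$ and $V$.
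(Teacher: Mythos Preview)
Your approach for $(ii)$ and $(iii)$ is essentially the paper's, but for $(i)$ you take a genuinely harder route, and in $(ii)$ you gloss over a step that the paper treats carefully.

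\textbf{On $(i)$.} The paper does not telescope through the cluster picture at all. Instead it uses the elementary identification $(\Jac C)[2]\cong \F_2[\cR]\ominus(\text{trivial pieces})$ as $G_K$-modules. Since wild inertia $W$ has odd order, representation theory mod $2$ agrees with characteristic $0$ on $W$, so $V_2\Jac C\cong \Q_2[\cR]\ominus(\text{trivial})$ as $W$-modules. The wild conductor is then immediately that of the permutation representation $\Q_l[\cR]$, and the conductor--discriminant formula finishes. Your telescoping idea is not wrong in principle---since $P_K\lhd G_K$, the Swan conductor of $\Ind_{G_\s}^{G_K}V$ depends only on $V|_{P_\s}$, so the tame characters $\tilde\gamma_\s,\epsilon_\s$ do drop out---but the actual collapse to $\Q_l[\cR]$ is not the one-line argument you suggest. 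Theorem~\ref{hetmain} sums only over \emph{principal} clusters, so odd children whose parent is a twin, a cotwin, or a non-principal $\cR$ are missed; conversely the abelian and toric pieces contribute overlapping $[\s]$-terms with signs that must be reconciled. The paper's $2$-torsion trick bypasses all of this.

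\textbf{On $(ii)$.} You are right that the computation proceeds summand by summand via Frobenius reciprocity, and this is exactly what the paper does. But the statement's sets $U,V$ range over \emph{all} odd clusters and \emph{all} proper non-\"ubereven clusters, whereas Theorem~\ref{hetmain} indexes only over principal ones. Bridging this gap is the content of Lemma~\ref{lemets} (and Lemma~\ref{lemabmain} behind it): it shows that for non-principal clusters (twins, cotwins, the exceptional forms of $\cR$) the expression $\tilde\gamma_\s\otimes(\Q_l[\tilde\s]\ominus\triv)$ is either zero or equals $\epsilon_\s$, which allows the sums to be extended and then combined with the toric pieces. Your phrase ``after re-indexing even clusters by their parents'' gestures at this but does not capture the actual mechanism; in particular the distinction between $\epsilon_\cR$ and $\epsilon_\cR^0$ when $\cR$ is a cotwin enters here and accounts for the final $\{0,1\}$ correction term.
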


\subsection{Weierstrass equations}
By their very nature, clusters are designed to work with Weierstrass equations. We establish criteria in terms of clusters for an equation to be integral (Theorem \ref{th:integralpoly}) and to be a minimal Weierstrass equation, and give a formula for its discriminant. 

\begin{definition}
We say that the Weierstrass equation $y^2=f(x)$ for $C$ is \textit{integral} if $f(x)\in \cO_{K}[x]$. Its discriminant is 
$\Delta_C=  16^g c_f^{4g+2}\text{disc}\bigl(\frac{1}{c_f}f(x)\bigr)$. We say that such an equation is \textit{minimal} if $v(\Delta)$ is minimal amongst all integral Weierstrass equations for~$C$. 
\end{definition}

\begin{remark}
One can consider more general Weierstrass equations for $C$ of the form $y^2+Q(x)y=P(x)$ for $Q,P$ polynomials of degree at most $g+1$ and $2g+2$ respectively, and define integral and minimal equations accordingly. Since we work in odd residue characteristic, we can always find a minimal equation with $Q(x)=0$. 

Our notion of integral Weierstrass equation differs slightly from that of Liu used in \cite[Definition 2]{Liu}. However the resulting notion of minimal equation and minimal discriminant (Definition 3 of op. cit.) is easily seen to coincide. Several additional notions of minimal discriminant appear in the literature for both hyperelliptic curves and more general curves. See for example \cite{Kau}, \cite{Sri} and \cite{Sai} for a discussion of these, and \cite{LiuB}  for the relationship between them for hyperelliptic curves of genus 2. 
\end{remark}

\begin{theorem}[see Theorem \ref{th:mainmini}]\label{introminimal}
Suppose $C:y^2=f(x)$ is a semistable hyperelliptic curve over $K$ with $f(x)\in\cO_K[x]$, and that $|k|>2g+1$. 
Then $C$ defines a minimal Weierstrass equation if and only if one of the following conditions hold:
\begin{enumerate}
\item there are two clusters of size $g+1$ that are swapped by Frobenius, $d_\cR=0$ and $v(c_f)\in\{0,1\}$,
\item there is no cluster of size $>\!g\!+\!1$ with depth $>0$, but there is some $G_K$-stable cluster $\s$ of size $|\s|\ge g+1$ with  $d_\s\ge 0$ and $v(c_f)=-\sum_{r\notin\s}d_{r\wedge\s}$.
\end{enumerate}
\end{theorem}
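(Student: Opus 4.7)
The plan is to express $v(\Delta_C)$ cluster-theoretically and then analyse the action of admissible substitutions on integral Weierstrass equations. Expanding $\Delta_C=16^g c_f^{4g+2}\prod_{i<j}(r_i-r_j)^2$ and grouping pairs of roots by their smallest common cluster yields
$$
v(\Delta_C) \;=\; (4g+2)\,v(c_f) \;+\; 2\sum_{\s\,\mathrm{proper}}\binom{|\s|}{2}\delta_\s,
$$
with the convention $\delta_\cR:=d_\cR$; so minimality becomes an optimisation of the right-hand side over integral Weierstrass equations for $C$.

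The admissible substitutions split into two families: affine $(x,y)\mapsto(\alpha x+\beta,\gamma y)$ with $\alpha,\gamma\in K^\times$ and $\beta\in K$, which transforms $v(c_f)\mapsto v(c_f)+m\,v(\alpha)-2v(\gamma)$ and shifts all depths uniformly by $-v(\alpha)$; and Moebius swaps of a finite root with $\infty$, which toggle $\deg f$ between $2g+1$ and $2g+2$. A direct substitution into the formula shows that the affine change alters $v(\Delta_C)$ by $2(2g+1)\bigl((g+1)v(\alpha)-2v(\gamma)\bigr)$. Taking $\alpha=w^2$, $\gamma=w^{g+1}$ kills this change and provides a discriminant-preserving subgroup used for normalisation; on the other hand, a pure $y$-rescaling $y\mapsto\pi^{-k}y$ (legitimate iff $\pi^{2k}$ divides every coefficient of $f$) strictly decreases $v(\Delta_C)$ by $(8g+4)k$.

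Starting from any integral equation and iterating this $y$-rescaling to exhaustion, one reaches an equation with $\min_j v(\mathrm{coef}_j f)\in\{0,1\}$; the discriminant-preserving affine substitutions then allow one to normalise the cluster picture. The cluster characterisation of integrality (\Cref{th:integralpoly}) forces the resulting equation to be viewed from a ``distinguished'' $G_K$-stable configuration: either (a) a single $G_K$-stable cluster $\s$ of size $\ge g+1$, with $d_\s\ge 0$ and $v(c_f)=-\sum_{r\notin\s}d_{r\wedge\s}$ (the boundary value beyond which no further $y$-rescaling is possible, and no $G_K$-stable over-cluster of size $>g+1$ can have positive depth without enabling such a rescaling), or (b) a pair of Frobenius-conjugate clusters of size $g+1$ with $d_\cR=0$ and $v(c_f)\in\{0,1\}$, which is what occurs when $|\cR|=2g+2$ and no single $G_K$-stable cluster of size $\ge g+1$ exists. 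These are conditions (2) and (1) respectively.

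For the converse, assuming (1) or (2), one rules out Moebius swaps of a root to $\infty$ that might strictly reduce $v(\Delta_C)$. Such a swap re-roots the cluster tree at the chosen root: the new differences satisfy $v(r_i'-r_j')=d_{r_i\wedge r_j}-d_{r_i\wedge r_0}-d_{r_j\wedge r_0}$ where $r_0$ is the swapped root, and the corresponding discriminant can be rewritten in terms of the old cluster data. Comparing the two, the swap reduces $v(\Delta_C)$ exactly when either there is a $G_K$-stable cluster of size $>g+1$ with positive depth (excluded by (2)) or the configuration fails to centre at a $G_K$-stable $g+1$-cluster or a Frobenius-conjugate pair (excluded by (1),(2)). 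The main obstacle is this last comparison: Moebius swaps act non-trivially on the cluster tree and require a case analysis on which root is inverted and which cluster of the ``rerooted'' tree contains the relevant Galois-stable configuration. The hypothesis $|k|>2g+1$ enters in two places: to guarantee $K$-rational centres for every $G_K$-stable cluster (so that the affine substitutions realise the cluster normalisations by translations in $\cO_K$), and in the Moebius analysis to avoid residue-class collisions among the reductions of the roots.
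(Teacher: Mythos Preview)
Your discriminant formula and your analysis of affine substitutions are correct, and you have correctly identified the problem as an optimisation of $v(\Delta)$ over integral Weierstrass models. But the proposal has a genuine gap, which you yourself flag: the M\"obius-swap comparison that you call ``the main obstacle'' is never carried out. This is not a detail. A general isomorphism of Weierstrass models comes from an arbitrary element of $\mathrm{GL}_2(K)$ acting on the $x$-line, and while any such element decomposes into affine maps and an inversion, tracking how an inversion rearranges the cluster tree and proving that no such rearrangement followed by further affine moves and $y$-rescalings ever beats your candidate is exactly the content of the theorem. You have reduced the statement to a combinatorial optimisation and then left the optimisation unsolved.

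The paper avoids this direct analysis entirely. Rather than chasing substitutions over $K$, it observes (Theorem~\ref{th:isoequiv}) that isomorphic curves have \emph{equivalent} cluster pictures in a purely combinatorial sense (Definition~\ref{d:equivalent}), and then solves the optimisation abstractly over that equivalence class: Corollary~\ref{pr:genericcase} shows that among all integral pairs $(\Sigma',n')$ equivalent to a given balanced $\Sigma$ with integer depths and trivial Galois, the balanced one minimises $v(\Delta)$. The key point is that one may pass to a finite extension of $K$ where all depths become integers and Galois acts trivially on the roots, so this abstract lemma applies; the discriminant valuation scales predictably under base change, so the inequality descends. The case analysis you anticipate is thus replaced by a short list of moves on abstract cluster pictures (Proposition~\ref{balancingdisc}), each shown to monotonically decrease $v(\Delta)$ until a balanced picture is reached. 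The exceptional case~(1) is handled separately (Proposition~\ref{pr:excepmini}) because Frobenius-conjugate clusters of size $g+1$ obstruct the existence of a single $G_K$-stable large cluster.

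A minor correction: your explanation of where $|k|>2g+1$ enters is not right. Tame $G_K$-stable clusters always have $K$-rational centres (Lemma~\ref{lem:invcentre}), with no residue-field hypothesis. The bound on $|k|$ is needed instead in Proposition~\ref{mobius}, to guarantee one can shift a centre so that every child of a given cluster reduces to a \emph{nonzero} residue before inverting, thereby realising each combinatorial move by an actual change of model over~$K$.
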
 
\noindent 
Note that even if $C$ is not semistable or if $|k|\le2g\!+\!1$ but $y^2\!=\!f(x)$ satisfies (2) in the above theorem, then it is automatically a minimal Weierstrass equation, as it becomes minimal after a finite field extension.
In particular (taking $\s\!=\!\cR$ and $d_\cR\!=\!0$ in (2)), we immediately obtain the following general criterion for the equation to be minimal:

\begin{corollary}
Let $C:y^2\!=\!f(x)$ be a hyperelliptic curve over $K$ with $f(x)\in\cO_K[x]$.
If $f(x)\!\mod \m$ has at least two distinct roots in $\bar{k}$, but no root of multiplicity $>\!g\!+\!1$, and the leading coefficient of $f(x)$ is a unit, then this is a minimal Weierstrass equation.

\end{corollary}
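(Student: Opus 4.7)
The plan is to reduce the assertion to case~(2) of Theorem~\ref{introminimal} applied with $\s=\cR$, and then to quote the note immediately following that theorem, which says that condition~(2) implies minimality without any semistability or residue-field hypothesis. Thus the entire task is to translate the geometric hypotheses of the corollary into the cluster-theoretic conditions of~(2).

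First I would pin down the cluster structure at depth~$0$. Since $c_f$ is a unit and $f\in\cO_K[x]$, the Newton polygon of $f$ has no negative slopes, so every root $r\in\cR$ lies in $\bar\cO_K$ and admits a well-defined reduction $\bar r\in\bar k$. The factorisation $\bar f(x)=\bar c_f\prod_{r\in\cR}(x-\bar r)$ shows that for $\alpha\in\bar k$ the multiplicity of $\alpha$ as a root of $\bar f$ equals $|S_\alpha|$, where $S_\alpha=\{r\in\cR\colon\bar r=\alpha\}$. The assumption that $\bar f$ has at least two distinct roots supplies $r,r'\in\cR$ with $v(r-r')=0$, forcing $d_\cR=0$. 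Given this, any cluster $\c$ with $d_\c>0$ has $v(r-r')>0$ for all $r,r'\in\c$, so $\c$ sits inside a single residue disc and is contained in some $S_\alpha$. In particular the children of $\cR$ are exactly the $S_\alpha$, and the hypothesis that no root of $\bar f$ has multiplicity $>g+1$ becomes $|S_\alpha|\le g+1$ for every $\alpha$, whence no cluster of size $>g+1$ has positive depth.

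To conclude I would verify condition~(2) of Theorem~\ref{introminimal} with $\s=\cR$: the cluster $\cR$ is $G_K$-stable, has $|\cR|\ge 2g+1\ge g+1$ and $d_\cR=0\ge 0$, while the sum $\sum_{r\notin\cR}d_{r\wedge\cR}$ is empty and hence equal to $0=v(c_f)$. All requirements of~(2) are in place, and the note following Theorem~\ref{introminimal} then delivers minimality of $y^2=f(x)$. The main thing to get right is the depth-zero dictionary in the middle paragraph, namely that ``children of $\cR$'' and ``residue classes of roots of $\bar f$'' really do coincide once $d_\cR=0$ is forced; beyond that, the argument is a direct quotation of Theorem~\ref{introminimal} and the note following it, so no substantial obstacle arises.
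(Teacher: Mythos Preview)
Your proof is correct and follows exactly the approach the paper indicates: verify that the hypotheses force $d_\cR=0$ and that no proper subcluster of $\cR$ has size exceeding $g+1$, then apply condition~(2) of Theorem~\ref{introminimal} with $\s=\cR$ together with the note that follows it. Your translation from the residue-field hypotheses to the cluster language is more explicit than the paper's one-line parenthetical, but the route is identical.
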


We also obtain the following result on the discriminant.

\begin{theorem}[=Theorem \ref{thmmindisc}]
\label{thmmindisc}
Let $C/K$ be a hyperelliptic curve. The valuation of its discriminant $\Delta_C$ is given by \vskip-10pt
$$
 v(\Delta_C)=
   v(c_f)(4g+2) + \sum_{\s \text{ proper}} d_\s \Bigl(|\s|^2-\mathop{\Sigma}\limits_{\s'<\s}|\s'|^2\Bigr).
$$
If $C/K$ is semistable and $|k|>2g\!+\!1$, then the valuation of the discriminant $\Delta_C^{min}$ of a minimal Weierstrass model of $C$ is determined by the formula
$$
  \frac{v(\Delta_C)-v(\Delta_C^{min})}{4g+2}=v(c_f)-E+ d_{\cR}(|\cR|-g-1))+\sum_{g+1<|\s|<|\cR|}\delta_\s   (|\s|-g-1)),
$$
where $E=0$ unless there are two clusters of size $g+1$ that are permuted by Frobenius and $v(c_f)$ is odd, in which case $E=1$.
\end{theorem}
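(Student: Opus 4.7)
My plan has three parts: translate the classical discriminant formula into cluster language, compute the change of discriminant under admissible transformations and exhibit one reaching a minimal equation, and finally verify integrality. Let me sketch each.

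For the first displayed formula, I would start from the factorisation $\Delta_C=16^g c_f^{4g+2}\prod_{i<j}(r_i-r_j)^2$. Since $p$ is odd, $v(16^g)=0$, and by definition $v(r_i-r_j)=d_{r_i\wedge r_j}$, so $v(\Delta_C)=(4g\!+\!2)v(c_f)+2\sum_{i<j}d_{r_i\wedge r_j}$. I would group pairs $\{r_i,r_j\}$ by their smallest containing cluster $\s$ -- of which there are $\binom{|\s|}{2}-\sum_{\s'<\s\text{ proper}}\binom{|\s'|}{2}$ -- and apply the elementary identity $2\binom{|\s|}{2}-2\sum_{\s'<\s\text{ proper}}\binom{|\s'|}{2}=|\s|^2-\sum_{\s'<\s}|\s'|^2$, which holds because all children of $\s$ (including singleton children) partition $\s$, so $|\s|=\sum_{\s'<\s}|\s'|$. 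This converts the double sum into the stated expression.

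For the second formula, my plan is to exhibit an admissible change of coordinates carrying $y^2=f(x)$ to a minimal equation as characterised in Theorem~\ref{introminimal}, and then compute the resulting change of discriminant directly from part~(1). In odd residue characteristic every admissible transformation has the shape $(x,y)\mapsto(\pi^{2n}x+s,\pi^M y)$ up to units, sending $d_\s\mapsto d_\s-2n$ and $v(c_f)\mapsto v(c_f)+2n|\cR|-2M$; substituting into part~(1) (with $|\cR|\in\{2g\!+\!1,2g\!+\!2\}$) collapses to the uniform identity $\frac{v(\Delta_C)-v(\Delta_{C'})}{4g+2}=2(M-n(g\!+\!1))$. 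In case~(2) of Theorem~\ref{introminimal} I would pick $s=z_{\s^*}$ for $\s^*$ the minimal $G_K$-stable cluster of size $\ge g\!+\!1$, take $2n=d_{\s^*}$ to bring $d_{\s^*}$ to $0$, and $M$ maximal subject to integrality, which forces $v(c_{f'})=-\sum_{r\notin\s^*}d'_{r\wedge\s^*}$. Expanding $2(M-n(g\!+\!1))$ back in terms of the original data and telescoping along the chain $\cR\supset\s_1\supset\cdots\supset\s^*$ of $G_K$-stable clusters of size $>g\!+\!1$ (automatically totally ordered because $|\s_i|>|\cR|/2$) recovers $v(c_f)+d_\cR(|\cR|-g\!-\!1)+\sum_{g+1<|\s|<|\cR|}\delta_\s(|\s|-g\!-\!1)$. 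Case~(1), where $\cR$ has two Frobenius-swapped $(g\!+\!1)$-clusters, needs separate handling: no proper $G_K$-stable cluster of size $\ge g\!+\!1$ is available inside $\cR$, so I would instead target case~(1) by bringing $d_\cR$ to $0$; since $|\cR|=2g\!+\!2$ is even, the shift $2n|\cR|-2M$ to $v(c_f)$ is even and preserves its parity, so $v(c_{f'})=0$ is attainable precisely when $v(c_f)$ is even, producing the correction $-E$.

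The principal obstacle is verifying integrality of the transformed model, i.e.\ $\pi^{-2M}f(\pi^{2n}x+s)\in\cO_K[x]$ for the chosen $(n,M)$, together with the sharpness that no larger $M$ succeeds. This reduces to valuation bounds on the elementary symmetric functions of the translated roots $r_i-z_{\s^*}$, which cluster combinatorics controls tightly: roots $r_i\in\s^*$ contribute $v(r_i-z_{\s^*})\ge d_{\s^*}$ by definition of a centre, while roots $r_i\notin\s^*$ contribute exactly $d_{r_i\wedge\s^*}$. The semistability criterion combined with $|k|>2g\!+\!1$ guarantees that a $K$-rational centre $z_{\s^*}$ exists and that $d_{\s^*}\in\Z$ and $\nu_{\s^*}\in 2\Z$, so $n,M\in\Z$ and the transformation is defined over $\cO_K$; minimality of the target equation is then certified by Theorem~\ref{introminimal}, and the universal discriminant-change identity yields the displayed formula.
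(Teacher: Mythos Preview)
Your argument for the first formula is exactly the paper's: Lemma~\ref{comparedisc} identifies $v(\Delta_C)$ with $(4g{+}2)v(c_f)+2\sum_{i<j}d_{r_i\wedge r_j}$, and Lemma~\ref{le:discriminant} performs precisely the regrouping-by-smallest-cluster you describe.

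For the second formula your route differs genuinely from the paper's. The paper never writes down an explicit transformation from $C$ to a minimal model. Instead it works abstractly with the equivalence relation on cluster pictures (Definition~\ref{d:equivalent}): in the non-exceptional case it passes to the balanced picture $\Sigma^b$ via Theorem~\ref{Ombalanseringsatz} (which uses M\"obius transformations from Proposition~\ref{mobius}, including $x\mapsto 1/x$, and is where the hypothesis $|k|>2g{+}1$ enters), identifies $v(\Delta_C^{\min})$ with $v(\Delta_{\Sigma^b,0})$, and then tracks the discriminant through the equivalence steps using Lemma~\ref{le:discchange}. Your approach is more direct and more elementary: a single substitution $x\mapsto \pi^{d_{\s^*}}x+z_{\s^*}$, $y\mapsto \pi^{\nu_{\s^*}/2}y$ lands on a model satisfying condition~(2) of Theorem~\ref{th:mainmini}, and you read off the discriminant change. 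This works, and the telescoping along the chain of ancestors of $\s^*$ does recover the stated sum (since clusters of size $>g{+}1$ are exactly the proper ancestors of $\s^*$). A pleasant byproduct is that your argument appears not to need $|k|>2g{+}1$ at all: the existence of $z_{\s^*}\in K$ comes from tameness via Lemma~\ref{lem:invcentre}, integrality from Lemma~\ref{le:NewtonPoly}, and minimality of the target from the remark following Theorem~\ref{introminimal} (condition~(2) implies minimality over any extension, hence over $K$).

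Two things to fix. First, your assertion that ``every admissible transformation has the shape $(x,y)\mapsto(\pi^{2n}x+s,\pi^M y)$ up to units'' is false: isomorphisms of hyperelliptic curves involve arbitrary M\"obius transformations of $x$ (indeed the paper uses $x\mapsto 1/x$ in Proposition~\ref{mobius}). Fortunately you never use this claim; you only need that \emph{one} such affine substitution reaches a minimal model, which you verify via Theorem~\ref{th:mainmini}. Second, your $2n$ notation is inconsistent: you set $2n=d_{\s^*}$ and then assert $n\in\Z$, but the semistability criterion only gives $d_{\s^*}\in\Z$, not $d_{\s^*}\in 2\Z$. Just write the $x$-scaling exponent as an integer $a=d_{\s^*}$; the discriminant-change identity then reads $\frac{v(\Delta_C)-v(\Delta_{C'})}{4g+2}=2M-a(g{+}1)=\nu_{\s^*}-d_{\s^*}(g{+}1)$, and everything goes through.
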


\subsection{Cluster pictures}

As we discussed at the beginning, we extract most of our arithmetic 
data purely from the `cluster picture' of the roots of $f(x)$. 
Effectively this is how we propose to think about hyperelliptic curves over local 
fields. To formalise this slightly, we consider an abstract cluster picture 
purely combinatorially, without reference to roots of a polynomial.

\begin{definition}
\label{defclpic}
Let $X$ be a finite set and $\Sigma$ a collection of non-empty subsets of $X$;
elements of $\Sigma$ are called \emph{clusters}. 
Attach \emph{depth} $d_\s\in\Q$ to every cluster $\c$ of size $> 1$.
Then $\Sigma$ (or $(\Sigma,X,d)$) is a \emph{cluster picture} if
\begin{enumerate}
\item
Every singleton (`root') is a cluster, and $X$ is a cluster. 
\item
Two clusters are either disjoint or contained in one another.
\item
$d_\t >d_\s$ if $\t \subsetneq \s$.
\end{enumerate}
Two cluster pictures $(\Sigma,X,d)$ and $(\Sigma',X',d')$ are isomorphic if there is a bijection $\phi\colon X\to X'$ which induces a bijection from $\Sigma$ to $\Sigma'$ and $d_\s = d'_{\phi(\s)}$. 
\end{definition}


For a polynomial $f(x)\in K[x]$ or a hyperelliptic curve $C\colon y^2=f(x)$,
the cluster picture $\Sigma_f$ or $\Sigma_C$ is the collection of all clusters of the roots of $f$,
as in Definition \ref{defclusters}.
%
%
%
%
%
There is then a purely combinatorial notion of ``equivalence'' 
of cluster pictures (Definition \ref{realiseequiv}) that keeps track of isomorphisms of curves:

\begin{theorem}[Theorem \ref{th:isoequiv}, Corollary \ref{realiseequiv}]
If $C$ and $C'$ are isomorphic hyperelliptic curves over $K$, then their cluster pictures are equivalent.
Conversely, if an abstract cluster picture $\Sigma$ is equivalent to $\Sigma_C$ for some hyperelliptic curve $C$, 
then there is a hyperelliptic curve $C'/\bar{K}$ that is $\bar{K}$-isomorphic to $C$ and 
whose cluster picture is $\Sigma$.
\end{theorem}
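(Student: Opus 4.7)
The plan is to translate both statements into facts about the action of $\mathrm{PGL}_2$ on configurations of points in $\mathbb{P}^1(\bar K)$. Any isomorphism $\phi\colon C \to C'$ between hyperelliptic curves $C\colon y^2 = f(x)$ and $C'\colon y^2 = f'(x)$ has the form $(x,y) \mapsto (\mu(x),\, e y/(cx+d)^{g+1})$ for some M\"obius transformation $\mu \in \mathrm{PGL}_2$ and constant $e$, all defined over the same field as $\phi$. Hence the forward direction reduces to tracking how $\cR$ and its cluster picture transform under $\mathrm{PGL}_2(K)$, and the converse to a realisability question over $\bar K$.

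For the forward direction, I would decompose an arbitrary $\mu$ as a composition of affine maps $x \mapsto \alpha x + \beta$ and (when $\mu$ is non-affine) the inversion $x \mapsto 1/x$, and analyse how $\Sigma_f$ changes at each step. Affine maps translate the roots and shift all depths uniformly; the poset of clusters is unchanged, and the leading coefficient is rescaled by $\alpha^{\deg f}$. The inversion is the source of all nontriviality: for nonzero $r, r' \in \cR$ one has $v(1/r - 1/r') = v(r-r') - v(r) - v(r')$, so clusters lying in a disc disjoint from $0$ are preserved with adjusted depths, while clusters containing or close to $0$ get reorganised, with $0$ and $\infty$ swapping roles as Weierstrass points. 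I would then verify that the combinatorial effect on $\Sigma_f$ is exactly the elementary move appearing in the equivalence of \ref{realiseequiv}, and conclude that $\mu$ induces an equivalence $\Sigma_f \sim \Sigma_{f'}$.

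For the converse, the plan is to reverse this procedure. Given an abstract cluster picture $\Sigma$ equivalent to $\Sigma_C$, pick a finite chain of elementary equivalences $\Sigma_C = \Sigma_0 \sim \Sigma_1 \sim \cdots \sim \Sigma_n = \Sigma$. For each step realise the move as an explicit element of $\mathrm{PGL}_2(\bar K)$ acting on centres of the relevant clusters; this is always possible because $\bar K$ is algebraically closed and contains elements of every prescribed valuation. Composing produces $\mu \in \mathrm{PGL}_2(\bar K)$, and $C'/\bar K$ is then obtained from $C$ via the substitution $x \mapsto \mu^{-1}(x)$, clearing denominators by a matching rescaling of $y$. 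By construction $C'$ is $\bar K$-isomorphic to $C$ and $\Sigma_{C'} = \Sigma$.

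The main obstacle is the careful bookkeeping of the inversion step, where one must check that the effect of $x \mapsto 1/x$ on $(\Sigma_f, d)$ matches precisely the elementary move of \ref{realiseequiv}; the case distinction according to the parity of $\deg f$ (so whether $\infty$ is itself a Weierstrass point) and the behaviour of the leading coefficient $c_f$ under the substitution are the delicate points. A secondary subtlety is the realisation of abstract centres in $\bar K$: one must exhibit elements of $\bar K$ playing the prescribed roles up to the required precision, which poses no genuine obstruction but requires care to package into a single well-defined $\mu$.
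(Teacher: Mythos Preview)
Your converse direction matches the paper's: Corollary~\ref{realiseequiv} is proved exactly by passing to a large extension and invoking Proposition~\ref{mobius} to realise each elementary equivalence move by an explicit M\"obius transformation, which is what you describe.

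For the forward direction the paper takes a genuinely different route. Rather than decomposing an arbitrary $\mu\in\mathrm{PGL}_2(K)$ and tracking its effect on $\Sigma_f$, the paper passes to a finite extension $F/K$ over which both curves are semistable, and observes that isomorphic curves have isomorphic minimal regular models, hence isomorphic dual graphs $\Upsilon_C$. Theorem~\ref{main dual graph thingy} identifies $\widehat{\Upsilon_C}$ with the hyperelliptic graph $G_{\Sigma_C}$, and then \cite[Thm.~5.1]{hyble} gives the purely combinatorial statement that $G_{\Sigma_C}\cong G_{\Sigma_{C'}}$ forces $\Sigma_C\sim\Sigma_{C'}$. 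So the equivalence class of the cluster picture is exhibited as a genuine geometric invariant, and no bookkeeping of M\"obius transformations is needed.

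Your direct approach is more elementary and avoids the machinery of models, but the inversion step is more delicate than your proposal lets on. The formula $v(1/r-1/r')=v(r-r')-v(r)-v(r')$ means that the effect of $x\mapsto 1/x$ on the cluster structure depends on the valuation of \emph{every} root, not just on the top cluster; when the pole $a=\mu^{-1}(\infty)$ lies inside some deep subcluster, the resulting reorganisation of $\Sigma$ is a composition of several elementary moves (depth shift, adding/removing a root, redistributing between $\s$ and $\cR\setminus\s$), and one must argue carefully that this composition always lands in the equivalence class. The paper's Lemma~\ref{magicformula}(ii) handles only the special situation where all roots in $\s$ share a common valuation and likewise for $\cR\setminus\s$; the general case would require reducing to this by pre- and post-composing with suitable affine maps, which you can do, but the verification that the intermediate cluster pictures match up is a nontrivial case analysis that your proposal does not supply. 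The paper's geometric argument sidesteps this entirely.
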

In fact, in every equivalence class of cluster pictures there is a 
canonical representative, a `balanced cluster picture' (see Lemma \ref{balanced}). 
For semistable hyperelliptic curves we explain how to find an isomorphic curve 
with such a cluster picture; see 
Theorem \ref{Ombalanseringsatz} and Corollary \ref{co:evengenus}.

As we have already seen, also keeping track of the Frobenius action on clusters and $\epsilon$ lets one control 
many arithmetic invariants: 

\begin{theorem}[see Theorem \ref{th:classification}, Lemma \ref{le:reductiontypeGeneral}, Theorem \ref{thmmindisc}]
The cluster picture of a semistable hyperelliptic curve, together with the action of Frobenius on clusters and the values of $\epsilon_\s(\Frob)$ for even clusters $\s$,
determines the dual graph of the special fibre of its minimal regular model (with genera of components and Frobenius action), the conductor exponent, whether the curve is deficient, and the Tamagawa number and root number of the Jacobian. If $|k|>2g+1$, it also determines the valuation of the minimal discriminant of the curve.
\end{theorem}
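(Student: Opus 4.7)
The proof is essentially a synthesis of the explicit formulae proved earlier. The plan is, for each invariant on the list, to cite the result that expresses it purely in terms of the prescribed data $(\Sigma_C, \Frob|_{\text{clusters}}, \{\epsilon_\s(\Frob)\}_{\s\text{ even}})$; most of the proof is routine bookkeeping once the right formulae are lined up. I treat the invariants in turn.

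For the dual graph $\Upsilon_C$ with vertex genera and Frobenius action, \Cref{introspecial} and \Cref{introhomology} do all the work. Concretely, the vertex set (principal clusters together with one vertex per link in each chain of $\mathbb{P}^1$s), the edge set (chains whose lengths are prescribed by relative depths $\delta_{\s'}$, parities of sizes $|\s'|$, and twins $\t$ with $\delta_\t > 1/2$), and the vertex genera (counts of odd children of each principal cluster) are all combinatorial in $\Sigma_C$. Frobenius acts on vertices through its permutation of clusters; by \Cref{introhomology}(5) it acts on the cycle space $H_1(\Upsilon_C,\Z)$ by $\Frob(\ell_\s) = \epsilon_\s(\Frob)\,\ell_{\Frob(\s)}$, so exactly the $\epsilon_\s(\Frob)$ are needed beyond the combinatorics.

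The conductor exponent is then the combinatorial count in \Cref{th:condexpogen}(iii). Deficiency is the trichotomy of \Cref{introdeficiency}, whose inputs are genus parity, Frobenius orbits on clusters, integrality of the depths $d_\s$, and $\epsilon(\Frob)$ at a single distinguished \"ubereven cluster. The Tamagawa number of $\Jac C$ is the cardinality of the component group, which under semistability is computed from the length pairing on $H_1(\Upsilon_C,\Z)$ together with the Frobenius action (standard for uniformised abelian varieties, via \Cref{le:isolattice} and \Cref{lemtam}); by \Cref{introhomology} both the pairing and the action are determined by our data. For the root number, \Cref{hetmain} decomposes the local Weil--Deligne representation of $\Jac C$ into a toric part $\bigoplus \Ind_{G_\s}^{G_K}\epsilon_\s \ominus \epsilon_\cR$, which is immediately given by the Frobenius permutation of clusters and the $\epsilon_\s(\Frob)$, and an abelian part $\bigoplus \Ind_{G_\s}^{G_K}\H(\tilde{\Gamma}_\s)$; the Frobenius action on each summand $\H(\tilde{\Gamma}_\s)$ is recoverable from the explicit equations of \Cref{introspecial} (see the point-count remark following \Cref{hetmain}, using that centres can be chosen in the fixed field $K_\s$). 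The local $\varepsilon$-factor, hence the root number, follows from the standard formula for semistable representations.

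Finally, when $|k|>2g+1$, \Cref{thmmindisc} expresses both $v(\Delta_C)$ and the correction $v(\Delta_C)-v(\Delta_C^{\min})$ as explicit combinatorial sums in cluster sizes, depths, $v(c_f)$, and the single bit $E$ recording whether $\Frob$ swaps two clusters of size $g+1$ when $v(c_f)$ is odd -- all read off from our data. The main obstacle I expect is the root number claim, since it is not packaged as an earlier theorem: it requires invoking that the local $\varepsilon$-factor of a semistable abelian variety depends only on the Weil--Deligne representation on $\H$, and then verifying carefully that the Frobenius action on each $\H(\tilde{\Gamma}_\s)$ is genuinely reconstructible from $(\Sigma_C, \Frob, \{\epsilon_\s(\Frob)\})$ via the explicit models of \Cref{introspecial}. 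The remaining invariants are direct applications of the formulae cited above.
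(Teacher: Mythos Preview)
Your argument for the root number has a genuine gap. You propose to recover the full Weil--Deligne representation from the cluster data and then compute the $\varepsilon$-factor, and you flag as the main obstacle ``verifying carefully that the Frobenius action on each $\H(\tilde{\Gamma}_\s)$ is genuinely reconstructible from $(\Sigma_C, \Frob, \{\epsilon_\s(\Frob)\})$''. This verification fails: the curves $\tilde{\Gamma}_\s$ of \Cref{introspecial} depend on the actual residues $c_\s$ and $\red_\s(\mathfrak{o})\in\bar{k}$, not merely on the combinatorics. Two semistable hyperelliptic curves with identical cluster pictures, Frobenius permutations, and $\epsilon_\s(\Frob)$ can have non-isomorphic components $\tilde{\Gamma}_\s$ and hence different Frobenius eigenvalues on the abelian part of $\H(C)$ --- think already of two genus~2 curves with good reduction and trivial cluster picture but non-isogenous reductions. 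So the abelian part of the Galois representation is \emph{not} determined by the data in the statement, and your route to the root number through the full $\varepsilon$-factor cannot close.

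The paper sidesteps this entirely. It factors the argument through the notion of \emph{reduction type} (the dual graph with genera and Frobenius action), shows in \Cref{th:classification} that the cluster data determines the reduction type, and then in \Cref{le:reductiontypeGeneral} that the reduction type determines the listed invariants. For the root number specifically, the relevant packaged result is exactly \Cref{th:rootnumber}: for a semistable curve $w_A=(-1)^a$ where $a$ is the multiplicity of the trivial representation in $H_1(\Upsilon_C,\Q)$ as a $G_k$-module. This uses only the toric part, which \emph{is} determined by the cluster data via \Cref{introhomology}; the abelian part, which you cannot control, contributes trivially to the root number. A minor second point: in your minimal-discriminant paragraph you list $v(c_f)$ among the inputs, but it is not part of the given data. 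In fact $v(c_f)$ cancels when you combine the two displayed formulae of \Cref{thmmindisc}, leaving only the bit $E$, and the parity of $v(c_f)$ (hence $E$) is forced by the semistability criterion $\nu_\s\in 2\Z$ applied to any principal cluster.
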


We introduce a notation for cluster pictures that carries this extra data (Notation \ref{no:adddata}).
In view of the semistability criterion (Theorem \ref{redcond}(1)), it is easy to list and classify
all possible cluster pictures that correspond to semistable hyperelliptic curves. 
We give an example of such a classification by considering semistable genus 2 curves (Theorem \ref{th:unbal}, Table \ref{tb:unbal}) and give their associated 
invariants (Theorem \ref{th:g2bible}, Table \ref{tb:genus2bible}).

Finally observe that the cluster picture is a fairly coarse invariant 
of the polynomial $f(x)$. In particular small perturbations of the coefficients 
of $f(x)$ will not change its associated cluster picture and hence many 
of the invariants of the corresponding curve. Here is a precise statement 
(see Theorem \ref{continuity} for a more general result).

\begin{theorem}[=Corollary \ref{co:continuity}]\label{introcontinuity}
Suppose $C_1\colon y^2 = c_1g_1(x)$ and $C_2\colon y^2 = c_2g_2(x)$ are two hyperelliptic curves with $c_1, c_2 \in K^{\times}$ and $g_1(x), g_2(x) \in \cO_K[x]$ monic polynomials.
If $\frac{c_1}{c_2} \in K^{\times 2}$ and $g_1(x) \equiv g_2(x) \mod \pi^{d+1}$ where $d$ is the largest depth among the depths of all proper clusters of $C_1$, then 
\begin{itemize}[leftmargin=*]
\item $\H(C_1) \iso \H(C_2)$ as $G_K$-modules for every $l\ne p$, and $C_1$ and $C_2$ have the same conductor exponent and the same root number. 
\item If $C_1$ is semistable then so is $C_2$. In this case, the special fibres of their minimal regular 
models over $\cO_{K^{nr}}$ are isomorphic as curves with an action of Frobenius,
their Jacobians have the same Tamagawa number, $C_2$ is deficient if and only if $C_1$ is and, 
if $|k|>\deg g_1(x)$, the valuations of their minimal discriminants are equal.
\end{itemize}
\end{theorem}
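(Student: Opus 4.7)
The plan is to show that the hypothesis $g_1(x)\equiv g_2(x)\pmod{\pi^{d+1}}$ forces $C_1$ and $C_2$ to share exactly the data---cluster picture with depths, Galois action on clusters, valuation $v(c_f)$ modulo squares, and signs $\epsilon_\s$---that determines all of the listed invariants via the main theorems of the paper.

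\emph{Step 1: Krasner-style matching of roots.} Write $\cR_i$ for the roots of $g_i(x)$. Since $g_i(x)\in\cO_K[x]$ is monic, for each $r\in\cR_1$ one has $v(g_2(r))=v((g_2-g_1)(r))\ge d+1$. As $g_2(r)=\prod_{r'\in\cR_2}(r-r')$ and the number of factors is bounded, some $r'\in\cR_2$ satisfies $v(r-r')>0$, and a standard Krasner/Hensel iteration yields a $G_K$-equivariant bijection $\phi\colon\cR_1\to\cR_2$ with $v(r-\phi(r))>d$ for every $r\in\cR_1$. Because $d$ is at least the depth of every proper cluster of $C_1$, for any two roots $r,s\in\cR_1$ the ultrametric inequality gives $v(\phi(r)-\phi(s))=v(r-s)$. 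Thus $\phi$ induces an isomorphism of cluster pictures $\Sigma_{C_1}\to\Sigma_{C_2}$ that preserves depths and is compatible with the action of $G_K$.

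\emph{Step 2: Matching the remaining invariants.} Writing $c_2=u^2c_1$ with $u\in K^\times$ and replacing $C_2$ by the $K$-isomorphic curve $y^2=c_1g_2(x)$ via $(x,y)\mapsto(x,uy)$, we may assume $c_1=c_2$; then $v(c_{f_1})=v(c_{f_2})$ and $\nu_{\phi(\s)}=\nu_\s$ for every cluster $\s$. For any even cluster $\s$, pick a centre $z_\s\in\Ks$ of $\s$ in $\Sigma_{C_1}$; the bound $v(\phi(r)-r)>d\ge d_\s$ shows that $z_\s$ is also a centre for $\phi(\s)$ in $\Sigma_{C_2}$. For each $r\notin\s$ one has $v(z_\s-r)=d_{\s\wedge r}\le d<v(\phi(r)-r)$, so $(z_\s-\phi(r))/(z_\s-r)\equiv 1\pmod{\m}$. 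Taking the product over $r\notin\s$ and extracting a square root (coherently over $G_K$), the ratio $\theta_{\phi(\s)}/\theta_\s$ reduces to $\pm 1$ modulo $\m$ independently of $\sigma\in G_K$, whence $\epsilon_{\phi(\s)}(\sigma)=\epsilon_\s(\sigma)$ for all $\sigma$.

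\emph{Step 3: Appeal to the main theorems.} Every invariant in the conclusion depends only on the tuple $(\Sigma_C,G_K\text{-action},v(c_f),\epsilon_\s)$ identified in Steps 1--2: semistability is detected by the semistability criterion in Theorem \ref{redcond}(1); the $G_K$-representation $\H(C)$ is computed by Theorem \ref{hetmain} (for the Galois action on components see Remark \ref{semsible action on cohomology remark}), and hence the conductor exponent via Theorem \ref{th:condexpogen} and the root number agree; the special fibre of the minimal regular model is produced from the cluster data and the $\epsilon_\s$ by Theorem \ref{introspecial}; the Tamagawa number is read off via Lemma \ref{lemtam}; deficiency follows from Theorem \ref{introdeficiency}; and the valuation of the minimal discriminant (when $|k|>\deg g_1$) is given by Theorem \ref{thmmindisc}. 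Each formula depends only on the shared data, so $C_1$ and $C_2$ share each invariant.

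\emph{Main obstacle.} The technical heart is Step 2: verifying that $\epsilon_\s$ is intrinsic to the cluster picture together with the Galois action, rather than to the specific roots. Once a common centre $z_\s$ is available for both $\s$ and $\phi(\s)$ and the residue-$1$ computation above goes through, the identification is formal; obtaining a common centre requires precisely the bound $v(\phi(r)-r)>d$, which is exactly what the hypothesis $g_1\equiv g_2\pmod{\pi^{d+1}}$ supplies.
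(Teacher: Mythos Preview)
Your proof is correct and takes essentially the same approach as the paper, which streamlines it by observing that your Step~1 bijection satisfies condition~(b) of the more general Theorem~\ref{continuity} (namely $\frac{\phi(r_i)-\phi(r_j)}{r_i-r_j}\equiv 1\bmod\m$) and then invoking that theorem once rather than each main result separately. One point to tighten in Step~3: Theorems~\ref{introspecial} and~\ref{hetmain} depend not only on the cluster picture and $\epsilon_\s$ but on the components $\tilde\Gamma_\s$ with their $G_K$-action (i.e.\ on $c_\s$, $\red_\s(\mathfrak{o})$, and the characters $\alpha_\s,\beta_\s,\gamma_\s$); that these also match follows from your Step~2 computations---common centre $z_\s$, $v(r-\phi(r))>d_\s$, and $(z_\s-\phi(r))/(z_\s-r)\equiv 1\bmod\m$---but should be said explicitly, particularly since Remark~\ref{semsible action on cohomology remark} covers only the semistable case.
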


\subsection{Related work} \label[section]{related work}

A similar construction to our clusters was already used by Bosch \cite[\S5]{Bosch} to determine the stable type of hyperelliptic curves. 

Based on the present article, for semistable hyperelliptic curves Betts \cite{Betts} has given a description of the Tamagawa number of the Jacobian in terms of clusters, and Kunzweiler (work in progress) has found a simple formula for the other local ``fudge factor'' $|\omega/\omega^o|$ in the Birch--Swinnerton-Dyer formula for the $L$-value, as well as a description of an $\cO_{K^{\textup{nr}}}$-basis of integral differentials on the minimal regular model of $C$. Faraggi and Nowell have found a way to describe the special fibre of the minimal SNC model from the cluster picture for all hyperelliptic curves with tame reduction (work in progress). 

For the purposes of applications, a key feature of cluster pictures is that they let one study whole classes of hyperelliptic curves. For instance \cite{CelineThesis} uses a case-by-case analysis of cluster pictures as an ingredient in establishing the parity conjecture for a large class of abelian surfaces, and \cite{AD} uses them to construct explicit hyperelliptic curves of arbitrarily high genus over number fields whose Galois image on $\Jac C [\ell]$ is the maximal possible.

We would like to alert the reader to a forthcoming paper \cite{hyxit}, where
the authors plan to summarise various results on hyperelliptic curves, and illustrate
them with examples. This explains why examples are virtually non-existent in this,
already too long, paper.

\subsection{Layout} \label[section]{layout}

In \S\ref{sbackground} we review some facts about models of general curves over local
fields and \'etale cohomology. 

In \S\ref{models of P1} we show that cluster configurations 
(or, rather, certain collections of discs) give rise to regular model of $\P^1_{K^\textup{nr}}$
and describe its properties (Proposition \ref{main model properties}).
In \S\ref{ydisc_section} and \S\ref{scdisc}, we study double covers of those models, 
and deduce explicit regular models of hyperelliptic curves that satisfy 
the semistability criterion \ref{semistability criterion}. 
This approach is similar to that of Kausz \cite{Kau}, and has also been exploited by  Bouw-Wewers \cite{BW} and Srinivasan \cite{Sri}, though each of these works in a slightly different generality to us. In particular, we construct the models under the assumptions of the semistability criterion and so in particular do not assume that all Weierstrass points are rational. 
We find the minimal regular model
(\S\ref{ssminregmod}), describe the dual graph of its special fibre 
(Theorem \ref{main dual graph thingy}), give explicit equations 
for its components (Theorem \ref{components of min}) and describe the reduction map
from the generic to the special fibre (Prop. \ref{reducing points});
we deduce the stable model~in~\S\ref{stable model subsection}.

In \S\ref{sgalois} we turn to non-semistable curves and study the natural 
Galois action on the model that we have over an extension where the curve 
becomes semistable. We then deduce the semistability criterion 
(Theorem \ref{the semistability theorem}), and in \S\ref{sfibre} describe the Galois action
on the special fibre in terms of clusters. 
In \S\ref{shomology} we extract the homology of the dual graph of the special fibre
(Theorem \ref{th:Homology}), and, consequently, the toric part of the 
\'etale cohomology (Corollary \ref{co:EtaleCohoToric}). The abelian part 
is addressed in \S\ref{sgalrep}, and as a consequence, we get 
Theorems \ref{hetmain} (=\ref{galois rep theorem}) and 
\ref{redcond} (=\ref{reduction}). 
In \S\ref{sconductor} we then find the formula for the conductor,
and classify deficient curves in \S\ref{sdeficiency}.


In \S\ref{s:discriminant} and \S\ref{s:MWEquation} we study the discriminant
and the minimal Weierstrass equation of a hyperelliptic curve,
proving Theorems \ref{introminimal} (=\ref{th:mainmini})
and \ref{thmmindisc} (=\ref{thmmindisc}). This is primarily a combinatorial 
cluster yoga, relying on the semistability criterion to convert semistability into
cluster language. 
In \S\ref{se:bible} we propose a notion of a `reduction type' of a semistable curve, 
and give classfication in genus 2. 
In \S\ref{scontiunity} we study the variation of the coefficients of a curve that
does not affect its primary arithmetic invariants, and prove 
Theorem \ref{continuity} and 
Corollary \ref{introcontinuity} (=\ref{co:continuity}).

In Appendix \ref{ap:apphyp} we review affine automorphisms of (possibly singular)
hyperelliptic equations. Appendices \ref{s:app:centres} and \ref{s:app:equi}
prove some technical results concerning centres of clusters and equivalent forms 
of the semistability criterion. 
Finally, Appendix \ref{hyble appendix} links the results of this paper to its
combinatorial predecessor \cite{hyble}.

%

\subsection{Notation}


For the reader's convenience, the following tables gather the general notation and terminology that are used throughout the paper. We reserve gothic letters $\s$, $\t$, $\s_1$ etc. for clusters (except for ``mod $\mathfrak{m}$''). Tables \ref{fieldtable} and \ref{curvetable} list the general notation associated to fields and hyperelliptic curves. Tables \ref{clustertable} and \ref{NotationClusters} summarise the notation and terminology associated to a cluster $\s$, and the main functions and invariants associated to clusters. Table \ref{Adamtable} presents the main notation associated to a disc $D$, as used in \S \ref{models of P1}--\ref{scriterion}.


\begin{table}[H]
\caption{General notation associated to fields}
\begin{tabular}{ll}
$K$      & local field of odd residue characteristic \cr
$\cO_K$ & ring of integers of $K$\cr
$k$      &  residue field of $K$\cr
$\bar{k}$ & algebraic closure of $k$\cr
$\pi$ & uniformiser of $K$ \cr
$v$     & normalised valuation with respect to $K$ \cr
$\Knr$      & maximal unramified extension of $K$ \cr
$\Ks$ &  separable closure of $K$\cr
$\Kbar$ & algebraic closure of $K$ \cr
$G_K$ & $\Gal(\Ks/K)$ \cr
$I_K$ & inertia subgroup of $G_K$ \cr
$\Frob$ &a choice of Frobenius element in $G_K$\cr
$\Sp_2$ & representation $G_K\to\GL_2(\Q_l)$, given by $\sigma\mapsto \smallmatrix 1{\tau(\sigma)}01$ for $\sigma\in I_K$, where \cr
& $\tau: I_K\to\Z_l$ is the $l$-adic tame character, 
and $\Frob\mapsto \smallmatrix 100{|k|^{-1}}$; see \cite[4.1.4]{TatN}\cr
$\hat{c}$ & $=\frac{c}{\pi^{v(c)}}$ \cr
$F$ & a finite Galois extension of $K$ where $C$ is semistable\cr
$\pi_F$ & uniformiser of $F$, \cr
$\chi$ &$\chi(\sigma)= \frac{\sigma(\pi_F)}{\pi_F} \mod \m,$ for $\sigma \in G_K$, see Definition \ref{de:characters}\cr 
$e$ & residue degree of a finite Galois extension $F/K$\cr
mod $\mathfrak{m}$ & reduction to the residue field \cr
\cr
\end{tabular}
\label{fieldtable}
\end{table}


\begin{table}[H]
\caption{General notation associated to a hyperelliptic curve}
\begin{tabular}{ll}
$C$ & hyperelliptic curve given by $y^2 = f(x)$ \cr
$c_f$ & leading term of $f(x)$ \cr
$\cR$ & set of roots of $f(x)$ in $\Ks$ \cr
$g$ & genus of $C$ \cr
$\Delta_C$ & discriminant of $C$\cr
$\Delta_C^{min}$ & discriminant of a minimal Weierstrass equation of $C$ \cr
$\cC_{min}$ & minimal regular model of $C$ \cr
$\cC_{\text{disc}}$ & regular model of $C$ of \Cref{main regular model theorem} \cr
$\cC_{\text{st}}$ & stable model of $C$ \cr
$\cC_{min,\bar{k}}$ & special fibre of $\cC_{min}$ \cr
$\Upsilon_C$ &dual graph of $\cC_k$\cr
$\H(C)$ & $\H(C_{\Kbar},\Q_l)$\cr
Jac $C$ & Jacobian of $C$ \cr
$V_l A$ & $T_l A \otimes \Q_l$, where $T_l A$ denotes the $l$-adic Tate module of A \cr
$\iota$ & hyperelliptic involution \cr
\cr
\end{tabular}
\label{curvetable}
\end{table}

\vfill

\renewcommand{\arraystretch}{1.2}%
\begin{table}[H]  \label[table]{clusternotation}
\caption{Terminology and notation for clusters}
\begin{tabular}{lll}
\multicolumn{1}{l}{Terminology and notation} & Definition & Example\\
\multicolumn{1}{l}{for a cluster $\s\subset\cR$}\\
\hline
child of $\s$, $\s'<\s$ & maximal subcluster of $\s$&  \smash{\childparent}\\
parent of $\s$, $P(\s)$ & cluster $P(\s)$ in which $\s$ is maximal &   \smash{\childparentn}\\
proper cluster & cluster of size $>$ 1&  \triple \\
even cluster &cluster of even size&
\event\\
odd cluster  &cluster of odd size (e.g. singleton)&
\oddt\\
\"ubereven cluster  &even cluster with only even children&
\ubereven\\
twin  &cluster of size 2&
\twin\\
cotwin & cluster with a child of size $2g$&
\cotwino{} or\\
&whose complement is not a twin&\cotwint\\
principal cluster& proper, not a twin or a cotwin and if\\
& $|\c| = 2g+2$ then $\s$ has $\ge 3$ children\\
 $\neck{\c}$& smallest cluster $\neck{\c} \supseteq \c$ that  &\clusternecks\\
& does not have an \ub\ parent &\\
& or proper child of $\s$ if $\s$ cotwin\\
 $\c \wedge \c'$ & smallest cluster containing $\c$ and $\c'$& \clusterlcas\\ 
\end{tabular}
\label{clustertable}
\end{table}
\renewcommand{\arraystretch}{1}%

\vfill




\begin{table}[H]
\caption{Functions and invariants associated to clusters}
\begin{tabular}{ll}
$\delta_\c$ & relative depth of $\s$, $\delta_\s\!=\!d_\c\! -\! d_{\Pa(\c)}$ \hfill (
\ref{de:relativedepth})\cr 


$\so$ & set of odd children of  $\s$ \hfill (
\ref{de:lambdatilde})\cr
  
$g(\c)$ & genus of $\s$: $|\so|\!=\!2g(\c)\!+\!1$ or $2g(\c)\!+\!2$, or $g(\c)\!=\!0$ if $\c$ is \"ubereven \hfill (
\ref{se:specialfibre})\cr

$G_\s$ &stabiliser of $\s$ in $G_K$ \hfill (
\ref{Gs})\cr

$I_\s$ & inertia subgroup of $G_\s$ \hfill (
\ref{Gs}) \cr

$z_\s$ & (choice of) centre of $\s$; $z_\s\in \Ks$ with $\min_{r\in\s}v(z_\s-r)=d_\s$\hfill (
\ref{de:centre})\cr

$\red_\s$ & reduction map relative to $\s$: $\red_\s(x)=\frac{x-z_\s}{\pi^{d_\s}}\mod\m$ \hfill (
\ref{de:gammas})\cr

$c_\s$ & $\hat{c}_f \prod_{r \notin \c}\widehat{(z_\c-r)}  \mbox{ mod } \m $ \hfill (
\ref{de:gammas})\cr

$\Cmp_s$ &component of $\cC_k$ associated to $\s$ \hfill (
\ref{de:gammas})\cr

$\tilde{\Cmp}_\s$ & normalisation of $\Gamma_\c$; $y^2=c_\s\prod_{\mathfrak{o}\in\so }(x-\red_\s(z_\mathfrak{o}))$  \hfill (
\ref{de:gammatilde})\cr

$\nu_\s$ & $=v(c_f)+\sum_{r\in\cR} d_{r\wedge \s}$ \hfill(
\ref{no:nu})\cr


$\tilde{\lambda}_\s$ & 
$
= \frac{\nu_{\c}}{2} -  d_{\s}\!\sum_{\s'<\s} \lfloor \frac{|\c'|}{2}\rfloor $  \hfill (
\ref{de:lambdatilde}) \cr

$ \alpha_\c$ &$\alpha_\c(\sigma)=\chi(\sigma)^{d_\c}$ for $\sigma \in G_K$ \hfill (
\ref{de:characters}) \cr

$\beta_{\c}$ & $\beta_{\c}(\sigma)=\frac{\sigma(z_{\c})-z_{\sigma\c}}{\pi^{d_{\c}}} \mod \m$ for $\sigma \in G_K$ \hfill (
\ref{de:characters}) \cr

$ \gamma_\c$ & $\gamma_\c(\sigma)=\chi(\sigma)^{\tilde{\lambda}_\c}$ for $\sigma \in G_K$ \hfill (
\ref{de:characters}) \cr

$\epsilon_\s$  &$ \epsilon_\s(\sigma)=\frac{\sigma(\theta_{\c^*})}{\theta_{\neck{(\sigma\s)}}} \mod \m \in\{\pm1\}$ 
if $\s$ even or cotwin, \cr 
& $\epsilon_\s(\sigma) =0$ otherwise;  here $\theta_\c = \sqrt{c_f\prod\nolimits_{r \notin \c} (z_\c-r)}$ and $\sigma\in G_K$  \hfill (
\ref{de:characters})\cr
\cr
\end{tabular}
\label{NotationClusters}
\end{table}


\begin{table}[H]
\caption{General notation associated to discs}
\begin{tabular}{lllll}
$z_D$ &  a choice of centre for $D$ &\phantom{.................}& $\mathcal{Y}_{\text{disc}}$ & \hfill (\ref{the divisor B}) \cr
$P(D)$ & parent disc of $D$ \hfill (\ref{disc notation})&&$E_D$ & \hfill (\ref{the model of P1}) \cr
$\lambda_D$ & \hfill (\ref{the characters defi})&&$\widehat{\Upsilon_C}$ & \hfill (\ref{main dual graph thingy}) \cr
$\nu_D$ & \hfill (\ref{nu and kappa sub})&&$\mathcal{P}_D, \mathcal{Q}_D$ & \hfill (\ref{model components})\cr
$\kappa_D$ & \hfill (\ref{kappa def})&&
$\mathcal{U}_D, \mathcal{W}_D$ &\hfill (\ref{the charts of Cdisc})\cr
$\red_D$ & \hfill (\ref{model components})&&
$c_D$ & \hfill (\ref{the leading term defi}) \cr
$D_{max}$ & \hfill (\ref{valid discs defi}) &&Type I to VI & \hfill (\ref{types of valid disc}) \cr
$f_D, g_D$ & \hfill (\ref{defi of the polynomials})&&$\omega_D(f)$ & \hfill (\ref{the divisor B}) \cr
$U_D, W_D$ & \hfill (\ref{schemes defi}) &&$\Gamma_D$ & \hfill (\ref{components defi})\cr
$D(\s)$ & \hfill (\ref{no:disc}) &&$h_D$ & \hfill (\ref{the poly h def})\cr
Valid disc & \hfill (\ref{valid discs defi}) &&$\beta_D, \lambda_D$ & \hfill (\ref{the characters defi})\cr
\cr
\end{tabular}
\label{Adamtable}
\end{table}

\subsection*{Acknowledgements}
We would like to thank the Warwick Mathematics Institute
, where parts of this research were carried out, and Matthew Bisatt for helpful comments.
This research is supported by EPSRC grants EP/M016838/1 and EP/M016846/1 `Arithmetic of hyperelliptic curves'.
The second author is supported by a Royal Society University Research Fellowship.

\section{Curves and Jacobians over local fields}
\label{sbackground}

\def\H{H^1_{\text{\'et}}}
\def\Hn{H^n_{\text{\'et}}}
\def\Hgen#1{H^{#1}_{\text{\'et}}}
\def\cA{{\mathcal A}}
\def\B{{\mathcal B}}
\def\O{{\mathcal O}}
\def\cI{{\mathcal I}}
\def\T{{\mathcal T}}
\def\Gr#1{\text{\rm gr}_#1}

In this section we review some facts about models of curves over local fields and \'etale cohomology.
We refer the reader to \cite[\S2]{CFKS}, \cite{Gro}, \cite{BW}, \cite{ST}, and especially 
\cite{skew} for details. 
All of this is standard, except we want the residue field to be non-algebraically closed, and so have to
keep track of the Galois action throughout the section. 

\medskip

Let $K$ be a local\footnote{In fact, here and in \S\ref{ss:sjacs}, $K$ could be 
any complete discretely valued field with perfect residue field.} field, with uniformiser $\pi$ and  
residue field $k$.
Suppose $C/K$ is a non-singular projective curve, of genus $g\ge 2$.

A \emph{model} of $C/K$ is a flat proper scheme
$\cC/\cO_K$ together with a $K$-isomorphism of its generic fibre with $C$. 
It is a \emph{regular model} if $\cC$ is regular, and such a model can always be obtained from 
a given model by repeated blowups. Among regular models, there is a unique one 
dominated by all the others, the
%
\emph{minimal regular model}
$$
  \cC_{\min} \lar \Spec \cO_K.
$$

A model is \emph{semistable} if its special fibre $\cC_k$ is geometrically reduced and has only ordinary double points as 
singularities, and when such a model exists we say that $C/K$ is semistable or has \emph{semistable reduction}.
Such a model always exists over some finite extension $F/K$ \cite{DM}. When one exists over $K$, the minimal regular model is semistable as well, and
blowing down certain components of the special fibre yields a \emph{stable model}
$$
  \cC_{\textup{st}} \lar \Spec \cO_K, 
$$
characterised among semistable models by the fact that its special fibre has a finite 
automorphism group (i.e. it is a stable curve).
It is again unique, though it is not necessarily regular, and it commutes with base change,
as opposed to the regular model.

\medskip

\def\ecpic#1#2{
  \path($(7,6)+(#1,#2)$)  coordinate (p1) {}; 
  \path($(5,9)+(#1,#2)$)  coordinate (p2) {}; 
  \path($(4,10)+(#1,#2)$) coordinate (p3) {}; 
  \path($(5,11)+(#1,#2)$) coordinate (p4) {}; 
  \path($(7,14)+(#1,#2)$) coordinate (p5) {}; 
  \path[draw,-,thick] (p1) edge[out=100,in=40] (p2);
  \path[draw,-,thick] (p2) edge[out=220,in=270] (p3);
  \path[draw,-,thick] (p3) edge[out=90,in=140] (p4);
  \path[draw,-,thick] (p4) edge[out=320,in=260] (p5);
}
\def\cusppic#1#2{
  \path($(7,6)    +(#1,#2)$) coordinate (p1) {}; 
  \path($(6.8,8)  +(#1,#2)$) coordinate (p2) {}; 
  \path($(5.5,10) +(#1,#2)$) coordinate (p3) {}; 
  \path($(6.8,12) +(#1,#2)$) coordinate (p4) {}; 
  \path($(7,14)   +(#1,#2)$) coordinate (p5) {}; 
  \path[draw,-,thick] (p1) edge[out=100,in=270] (p2);
  \path[draw,-,thick] (p2) edge[out=90,in=0] (p3);
  \path[draw,-,thick] (p3) edge[out=0,in=270] (p4);
  \path[draw,-,thick] (p4) edge[out=90,in=260] (p5);
}
\def\IVpic#1#2{
  \path($(5,6)    +(#1,#2)$) coordinate (p1) {}; 
  \path($(5,14)   +(#1,#2)$) coordinate (p2) {}; 
  \path($(3.5,6.4)    +(#1,#2)$) coordinate (p3) {}; 
  \path($(6.5,13.6)   +(#1,#2)$) coordinate (p4) {}; 
  \path($(6.5,6.4)    +(#1,#2)$) coordinate (p5) {}; 
  \path($(3.5,13.6)   +(#1,#2)$) coordinate (p6) {}; 
  \path[draw,-,thick] (p1)--(p2);
  \path[draw,-,thick] (p3)--(p4);
  \path[draw,-,thick] (p5)--(p6);
}
\def\Qpmark#1#2#3#4{
  \path[draw,-,thin] ($(4,4)+(#1,#2)$)--($(13.5,4)+(#1,#2)$);
  \path($(6,4)+(#1,#2)$)    node[circle,scale=0.5,ball color=black!50](0,0){};
  \path($(11.5,4)+(#1,#2)$) node[circle,scale=0.3,ball color=black!50](0,0){};
  \path($(6,3)+(#1,#2)$) node(0,0) {$\scriptstyle #3$};
  \path($(11.5,3)+(#1,#2)$) node(0,0) {$\scriptstyle #4$};
}
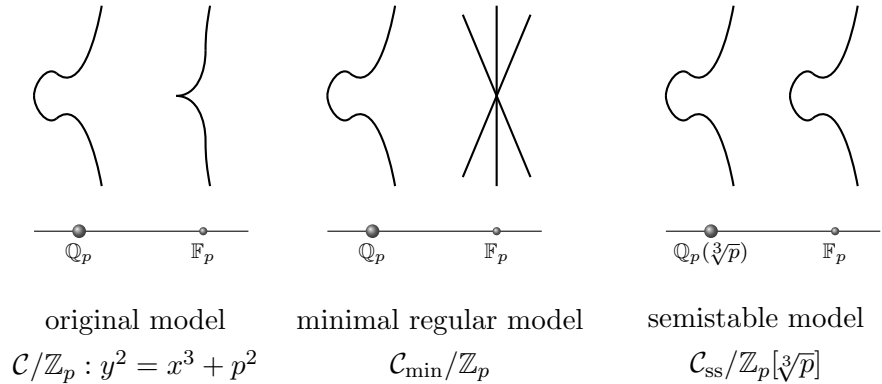
\begin{figure}[!htbp]
\begin{tikzpicture}[scale=0.3]
  \ecpic{2}{0}
  \cusppic{6.8}{0}
  \ecpic{15}{0}
  \IVpic{21.5}{0}
  \ecpic{30}{0}
  \ecpic{35.5}{0}
  \Qpmark{2}{0}{\Q_p}{\F_p}
  \Qpmark{15}{0}{\Q_p}{\F_p}
  \Qpmark{30}{0}{\Q_p(\!\sqrt[3]p)}{\F_p}
  \path(10.5,-2) node(0,0) {$\cC/\Z_p: y^2=x^3+p^2$};
  \path(24,-2) node(0,0) {$\cC_{\min}/\Z_p$};
  \path(38,-2) node(0,0) {$\cC_{\ss}/\Z_p[\!\sqrt[3]p]$};
  \path(10.5,0) node(0,0) {original model};
  \path(24,0) node(0,0) {minimal regular model};
  \path(38,0.2) node(0,0) {semistable model};
\end{tikzpicture}
\caption{Models of $y^2=x^3+p^2$ over $\Q_p$ and $\Q_p(\sqrt[3]{p})$}
\end{figure}

\begin{example}
\label{exbgr1}
Take $K=\Q_p$ ($p>3$), and 
$$
  C/K: y^2=x^3+p^2,
$$ 
an elliptic curve with additive reduction. The defining equation gives a model $\cC/\Z_p$ that
is neither regular (the ideal $(0,0,p)$ gives a singular point) nor semistable (the special fibre
has a cusp). Its minimal regular model $\cC_{\min}/\Z_p$ has three $\P^1$s meeting at a point as a special
fibre (Kodaira type~$\IV$). The curve $C$ becomes semistable over $\Q_p(\sqrt[3]{p})$, since
\begin{equation}\label{C3var}
  y^2 = x^3 + \pi^6 \quad \iso \quad y^2=x^3+1 \qquad (\pi=\sqrt[3]{p}),
\end{equation}
and the latter has good reduction: the special fibre is an elliptic curve over $y^2=x^3+1$ 
over $\F_p$.
\end{example}

We begin by reviewing special fibres of semistable models.

\subsection{Semistable curves over the residue field}
\label{ss:scurves}

\begingroup
\def\Cbar{Y}
\def\Cbarn{{\tilde Y}}
\def\Ish{{\mathbb I}}
\def\nmap{n}
\def\Yb{Y_{\kbar}}
\def\Ybn{\tilde Y_{\kbar}}

We follow \cite[pp. 469--474]{Gro} closely, except our description of $X(T)$ 
in \eqref{xtdesc} is slightly tweaked.

Let $\Cbar/k$ be a \emph{semistable} curve, that is
$\Cbar$ is complete, connected, not necessarily 
irreducible, and the only singularities of $\Yb=\Cbar\times_k\kbar$ are ordinary double points. Write

\medskip

\begin{tabular}{llllll}
$\nmap$      &=& normalisation map $\Cbarn\to\Cbar$,\cr
$\II$        &=& set of singular (ordinary double) points of $\Yb$,\cr
$\JJ$        &=& set of irreducible components of $\Yb$ (=connected comps. of $\Ybn$),\cr
$\KK$        &=& $\nmap^{-1}(\II)$; this comes with two canonical maps\cr
&& $\phi: \KK\to \II$, $P\mapsto\nmap(P),$\cr
&& $\psi: \KK\to \JJ$, $P\mapsto$ component of $\Ybn$ on which $P$ lies,\cr
$\Upsilon$ &=& dual graph of $\Yb$.\cr 
\end{tabular}
\smallskip

\subsubsection{Dual graph}
\label{sss:dualgraph}
By a \emph{metric graph} we mean a topological space $G$ homeomorphic
to a finite (combinatorial) graph, equipped with a set $V(G)$ of \emph{vertices} 
(containing (at least) all points $x\in G$ of degree $\ne 2$), a set $E(G)$ of \emph{edges}, and a length function $l:E(G)\rightarrow \mathbb{R}_{>0}$.
\emph{Graph isomorphisms} are homotopy classes of homeomorphisms that 
preserve vertices, edges and lengths. We allow loops and multiple edges and note that  automorphisms may permute multiple edges and reverse the direction of loops. Note that automorphisms act naturally on  the \emph{first singular homology group} $H_1(G,\Z)$.

The dual graph $\Upsilon$ is the metric graph with vertex set $\JJ$ and edge set $\II$. The set $\KK$ is the set of edge endpoints, the maps $\phi$ and $\psi$ specify adjacency, and each edge is given length 1.  It comes with a natural `genus' marking $g:V(\Upsilon)\rightarrow \mathbb{Z}_{\geq 0}$ which associates to each vertex the (geometric) genus of the component to which it corresponds.  Note that a graph automorphism of $\Upsilon$ is precisely
the data of bijections $\KK\to\KK$, $\II\to\II$ and $\JJ\to\JJ$ that commute with $\phi$ and $\psi$. 


\subsubsection{Character group}
\label{sss:chgroup}

The normalisation map $\nmap$ is an isomorphism outside $\II$,
and yields an exact sequence of sheaves on $\Cbar$,
$$
  1 \lar O_{\Cbar}^\times \lar \nmap_* O_{\Cbarn}^\times \lar \Ish \lar 0,
$$
with $\Ish$ concentrated in $\II$. Consider the long exact sequence on cohomology,
$$
  0 \to H^0(\Cbar,O_{\Cbar}^\times) \to H^0(\Cbarn,O_{\Cbarn}^\times) \to H^0(\Cbar,\Ish) \to 
    H^1(\Cbar,O_{\Cbar}^\times) \to H^1(\Cbarn,O_{\Cbarn}^\times) \to 0.
$$
The global sections of $\Ish$ are in bijection with invertible functions 
on $\KK$ modulo those pulled back from $\II$. In other words,
$$
  H^0(\Cbar,\Ish) = \coker((\kbar^\times)^\II\overarrow{\phi^*}(\kbar^\times)^\KK),
$$
where $\phi^*$ takes a function $\II\to \kbar^\times$ to $\KK\to \kbar^\times$ by 
composing it with $\phi$. With $\psi^*$ defined in the same way, 
the exact sequence above becomes
\begin{equation}\label{PicSeq}
  0 \lar \kbar^\times \lar (\kbar^\times)^\JJ \overarrow{\psi^*} 
    \frac{(\kbar^\times)^\KK}{\phi^*((\kbar^\times)^\II)} \lar \Pic \Cbar(\kbar) 
    \lar \Pic \Cbarn (\kbar) \lar 0. 
\end{equation}
In fact, there is an exact sequence of algebraic groups
\begin{equation}\label{alggpseq}
  0 \lar T \lar \Pic^0(\Cbar) \lar \Pic^0(\Cbarn) \lar 0,
\end{equation}
with $T$ the largest torus in $\Pic^0(\Cbar)$. 
Then \eqref{PicSeq} gives a canonical identification 
of its \emph{character group} $X(T)=\Hom(T_{\kbar},\G_{m,\kbar})$ as
\begin{equation}\label{xtdesc}
  X(T) = \ker(\Z^\KK\overarrow{(\phi,\psi)} \Z^\II\times\Z^\JJ).
\end{equation}
On the other hand, write the dual graph $\Upsilon$ as the union
$\Upsilon=U\cup V$, where $U$ 
is the union of open edges, and $V$ is the union of small open neighbourhoods of
the vertices. Then the Mayer-Vietoris sequence reads
$$
  0 \lar H_1(\Upsilon,\Z) \lar \Z^\KK \overarrow{(\phi,\psi)} \Z^\II\times\Z^\JJ \lar \Z \lar 0,
$$
since $H_0(U)=\Z^\II$, $H_0(V)=\Z^\JJ$, $H_0(U\cap V)=\Z^\KK$ 
and all their higher homology groups vanish. 
Therefore, the character group of $T$ and its $\Z$-linear dual are, canonically,
\begin{equation}\label{XvsUps}
  X(T)=H_1(\Upsilon,\Z), \qquad 
  X(T)^*=H^1(\Upsilon,\Z).
\end{equation}
On the level of Tate modules $T_l$, $l\ne\vchar k$, the sequence \eqref{alggpseq} becomes
\begin{equation}\label{curvetate}
  0 \lar X(T)^*\tensor_\Z\Z_l \lar T_l\Pic^0 \Cbar \lar T_l \Pic^0 (\Cbarn) \lar 0.
\end{equation}
There is a \emph{length pairing} on $H_1(\Upsilon,\Z)$: 
let $\langle e,e\rangle=1$ and $\langle e,e'\rangle=0$ for edges $e\ne e'$ of $\Upsilon$,  
and extend to singular chains by linearity. This descends to a pairing on~$H_1$,
\begin{equation}\label{curvepair}
  \langle , \rangle: H_1(\Upsilon,\Z)\times H_1(\Upsilon,\Z)\lar \Z. 
%
\end{equation}


Finally, it is clear that 
$\II$, $\JJ$, $\KK$, $X(T)$, $X(T)^*$, $H_1(\Upsilon,\Z)$, $T_l$
are all $G_k$-modules, and 
\eqref{XvsUps}, \eqref{curvetate}, \eqref{curvepair} are compatible with $G_k$-action,
as everything is canonical.

\endgroup

\subsection{Semistable Jacobians}
\label{ss:sjacs}

Now we go back to $C/K$, and suppose it has semistable reduction. 
Then the Jacobian $A=\Jac C$ is a semistable 
abelian variety over $K$. Let

\begin{tabular}{llllll}
$\cC/\cO_K$   & = & semistable model of $C$ over $\cO_K$ base changed to $\cO_{\Knr}$,\cr
&&with special fibre $\bar C/\kbar$.\cr
$\cN/\cO_K$   & = & N\'eron model of $A/\Knr$ base changed to $\cO_{\Knr}$,\cr
&&with special fibre $\bar N/\kbar$; the identity component $\bar N^0$ is $\Pic^0\bar C$.\cr
$\Phi(\kbar)$   & = & (finite) group of components $\bar N/\bar N^0$.\cr
$T/\kbar$       & = & toric part of $\Pic^0\bar C$, as in \eqref{alggpseq}.
\end{tabular}


By the work of Raynaud (\cite{Ray}, \cite{Gro} \S9),
there is a smooth commutative group scheme $\cA/\cO_{\Knr}$, 
unique up to a unique isomorphism, characterised by the following properties:
it is an extension
$$
  0 \lar \T \lar \cA \lar \B \lar 0,
$$
with $\T/\cO_{\Knr}$ a torus and $\B/\cO_{\Knr}$ an abelian scheme,
and $\cA\tensor(\O_{\Knr}/m_{\Knr}^i)$ is the identity component
of $\cN\tensor(\O_{\Knr}/m_{\Knr}^i)$.
Noting that $\B(\O_{\Knr})=\B(\Knr)$ as $\B$ is proper, from the commutative diagram
$$ 
\begin{tikzcd}[sep=1.2em]
  0 \rar& \T(\cO_{\Knr})\dar\rar& \cA(\cO_{\Knr}) \dar\rar& \B(\cO_{\Knr}) 
    \ar[d,dash,shift left=0.25ex]\ar[d,dash,shift right=0.25ex]\rar& 0 \\[0em]
  0 \rar& \T(\Knr) \rar& \cA(\Knr) \rar& \B(\Knr) \rar& 0 \cr
\end{tikzcd}
$$
we have
\begin{equation}\label{homXTZ}
  \frac{\cA(\Knr)}{\cA(\O_{\Knr})} \iso \frac{\T(\Knr)}{\T(\O_{\Knr})} = \frac{\Hom(X(\T),(\Knr)^\times)}{\Hom(X(\T),\cO_{\Knr}^\times)}
    = \Hom(X(\T),\Z) = X(\T)^*.
\end{equation}
By the rigidity of tori, we have $X(\T)=X(T)$ and so $X(\T)^*=X(T)^*$.

The dual abelian variety $A^t/\Knr$ has semistable reduction as well,
and there is a sequence as above with $\T^*, \cA^*$ and
$\B^*\iso \B^t$ (\cite{Gro}~Thm.~5.4).
Raynaud constructs a canonical map $X(\T^*)\hookrightarrow\cA(\Knr)$,
inducing a $G_{\Knr}$-isomorphism
$$
  A(\bar K) \iso \cA(\bar K)/X(\T^*).
$$
In the case of elliptic curves with split multiplicative reduction, this is 
Tate's parametrisation $E(\bar K)\iso \bar K^\times/q^\Z$.

Combining $X(\T^*)\injects\cA(\Knr)$ with \eqref{homXTZ}, we get an
inclusion 
\begin{equation}\label{nincl}
  n:\>\> X(\T^*) \injects \Hom(X(\T),\Z)
\end{equation}
with finite cokernel, which is canonically isomorphic to the group of components $\Phi(\kbar)$.
We may view $n$ as a non-degenerate bilinear pairing, the \emph{monodromy pairing},
\begin{equation}\label{ttspair}
  X(\T^*) \times X(\T) \lar \Z.
\end{equation}
If $K'/K$ is a finite extension, then $X(\T)$ and $X(\T^*)$
remain the same modules by uniqueness of Raynaud parametrisation, and
the map $n$ becomes $e_{K'/K}n$, see \cite[10.3.5]{Gro}.
Because $A$ is a Jacobian, it has a principal polarisation $\smash{A\overarrow{\raise-2pt\hbox{$\scriptscriptstyle\iso$}}A^t}$,
inducing $\cA\iso\cA^*$, $\T\iso\T^*$,  $\B\iso\B^*$.
The pairing \eqref{nincl} becomes a symmetric bilinear pairing (\cite{Gro} \S10.2) 
\begin{equation}\label{abpair}
  X(\T) \times X(\T) \lar \Z,
\end{equation}
and it coincides with \eqref{curvepair}, up to identifying 
$X(\T)=X(T)$.
Because $\Phi(\kbar)\iso \coker n$, we get a perfect symmetric pairing
\begin{equation}\label{phipair}
  \Phi(\kbar) \times \Phi(\kbar) \lar \Q/\Z.
\end{equation}
Finally, as in \S\ref{ss:scurves}, 
$G_k$ acts on everything, and \eqref{ttspair}, \eqref{abpair}, \eqref{phipair} are
$G_k$-equivariant.

%
%
%

\comment

Now suppose $A/K$ has {\em semistable\/} reduction.
The reduction becomes split semistable over some finite unramified extension
of $K$, and we take $L$ to be the smallest such field;
so now $\Gal(L/K)$ is cyclic, generated by Frobenius.
To describe the Tamagawa number $c_v(A/K)$ and
the action of inertia on $T_p(A)$ we
use the monodromy pairing
$$
  X(\T^*) \times X(\T) \lar \Z.
$$
This is a non-degenerate $\Gal(L/K)$-invariant pairing, and induces
a Galois-equivariant inclusion of lattices
$$
  N: X(\T^*) \longinjects \Hom(X(\T),\Z).
$$
These have the same $\Z$-rank, so $N$ has finite cokernel.
Moreover, $N$ is covariantly functorial with respect to isogenies
of semistable abelian varieties. 
Any polarisation on $A$ gives a map
$X(\T^*) \to X(\T)$, and the induced pairing
$$
  X(\T^*) \times X(\T^*) \lar \Z
$$
is symmetric (\cite{Gro} \S10.2).
In particular, if $A$ is principally polarised, we get a perfect
Galois-equivariant symmetric pairing 
$$
  \coker N \times \coker N \lar \Q/\Z.
$$
If $K'/K$ is a finite extension, then $X(\T)$ and $X(\T^*)$
remain the same modules (restricted to $\Gal(LK'/K')\subset \Gal(L/K)$)
by uniqueness of Raynaud parametrisation.
The map $N$ becomes $e_{K'/K}N$, see \cite{Gro} 10.3.5.

The $G_K$-module $\Gr2\oplus \Gr1\oplus \Gr0$ is unramified
and semisimple, so it is a semisimplification of $T_pA$.
With respect to this filtration, the inertia group acts on $T_pA$ by
$$
  I_K\ni\sigma\quad\longmapsto\quad
  \begin{pmatrix}
     1 & 0 & t_p(\sigma)N \cr
     0 & 1 & 0 \cr
     0 & 0 & 1 \cr
  \end{pmatrix}
  \in\Aut T_p(A)
$$
with $t_p: I_K\to \Z_p(1)$ defined by
$\sigma\mapsto \sigma(\pi_K^{1/p^n})/\pi_K^{1/p^n}\in\mu_{p^n}$
for any uniformiser $\pi_K$ of $K$
(\cite{Gro} \S\S 9.1--9.2).

Let $\Phi$ be the group scheme of connected components
of the special fibre of the N\'eron model of $A/\O_K$.
It is an \'etale group scheme over the residue field $k$ of $K$,
so
$\Phi(k)=\Phi(\kbar)^{\Gal(\kbar/k)}$
consists of components defined over $k$.
As $K$ is complete and $k$ is perfect, by \cite{BL} Lemma 2.1
the natural reduction map $A(K)\to\Phi(k)$ is onto, so
$c_v(A/K)=|\Phi(k)|$.
Finally, by \cite{Gro}~Thm.~11.5, $\Phi=\coker N$ as groups with
$\Gal(\kbar/k)=\Gal(K^{un}/K)$-action, so
$$
   c_v(A/K) = |(\coker N)^{\Gal(L/K)}|.
$$
\endcomment

\subsection{Galois and inertia} 
\label{ss:galinertia}

As $K$ is a local field, $G_K=\GKK$ fits into an exact sequence
$$
   1 \lar I_K \lar G_K \lar G_k \lar 1,
$$
with $I_K$ the \emph{inertia group}, and 
$G_k\iso\hat\Z$ topologically generated by the map $x\mapsto x^q$. 
Any of its lifts to $G_K$ is called an \emph{(arithmetic) 
Frobenius element} $\Frob$. 
Write $\chi_l: G_K\to\Z_l^\times=\GL_1(\Z_l)$,
$$
  \chi_l: \quad  I_K\mapsto 1, \>\> \Frob\mapsto q
$$
for the \emph{$l$-adic cyclotomic character}, and $\Z_l(n)=\chi_l^{\otimes n}$ for 
the Tate twist of the trivial module $\Z_l$.

The inertia group $I_K$ has a unique $p$-Sylow subgroup, the \emph{wild inertia} $P_K$,
and we have a short exact sequence
$$
  1 \lar P_K \lar I_K \lar \prod_{l\ne p} \Z_l \lar 1.
$$
The \emph{tame inertia} $I_K/P_K$ projects onto $\Z_l$ via the \emph{$l$-adic tame character}
$$
\begin{array}{cccccccccc}
  \tau:   & I_K     & \lar         & \invlim\mu_{l^n}=\Z_l(1) \cr
          & \sigma  & \longmapsto  & \frac{\sigma(\pi^{1/l^n})}{\pi^{1/l^n}}. \cr
\end{array}
$$

\comment
\subsection{Tate module and monodromy}

Let $X/K$ be any complete non-singular variety, $l\ne\vchar k$, and $0\le n\le 2\dim X$.
The group $G_K$ acts on the \'etale cohomology group
$$
  V = \Hn(X_{\Kbar},\Q_l).
$$
By the Grothendieck's monodromy theorem there is a finite Galois extension $F/K$ such
that $I_F$ acts unipotently on $V$ \cite[Thm. 1.2]{SGA7-I},
\begin{equation}\label{tameaction}
  \sigma\quad\longmapsto\quad \Id\,+\>t_l(\sigma) N,
\end{equation}
for a unique nilpotent operator $N\in\Aut V$.
The \emph{monodromy filtration} $\ker N^i\subset V$ is $G_K$-stable, and it follows 
$V$ decomposes as a $G_K$-module~as
\begin{equation}\label{weildec}
  V \iso \bigoplus V_i\tensor \Sp_{n_i}, 
\end{equation}
with $n_i$ distinct, $V_i$ uniquely determined $l$-adic representations of the group 
$\Gal(F^{nr}/K)$, and $\Sp_n$ the `special' $n$-dimensional representation (\cite[4.1.4]{TatN}). 
Explicitly,
$$
  \Sp(n) = \Q_l e_0 + \Q_l e_1 + \ldots + \Q_l e_{n-1},
$$
with $\Frob(e_i)=q^{-i}e_i$, and $I_K$ as in \eqref{tameaction} with $N(e_i)=e_{i+1}$ 
and $N(e_{n-1})=0$ (one Jordan block). Note that $\Sp(1)$ is the trivial representation, 
so $V=V_1\tensor\Sp(1)$ if and only if $V$ is unramified over $F$,
that is $I_F$ acts trivially.

When $X=C$ is a non-singular projective curve over $K$ of genus
$g\ge 2$, or $g=1$ with a marked point (an elliptic curve), we have

\medskip

\begin{tabular}{lllllllll}
$\Hgen0(C_{\bar K},\Q_l)$ &=& $\Q_l$ &&                                  &&& ($1$-dim)\cr
$\Hgen1(C_{\bar K},\Q_l)$ &=& $\Hgen1(J_{\bar K},\Q_l)$ &=& $(V_l J)^*$  &&& ($2g$-dim)\cr
$\Hgen2(C_{\bar K},\Q_l)$ &=& $\Q_l(1)$ &&                               &&& ($1$-dim).\cr
\end{tabular}

\smallskip

Let $V=\H(C_{\bar K},\Q_l)$, the only `interesting' cohomology group. 
Pick a Galois extension $F/K$ as above. It has a simple geometric interpretation:

\begin{center}
  $I_F$ acts unipotently on $V$ $\iff$
  $C/F$ is semistable $\iff$ $J/F$ is semistable.
\end{center}

\noindent
For $H^1$ of any variety, $N^2=0$, and so the decomposition \eqref{weildec} is of the form
\begin{equation}\label{vdec}
  V = V_{ab} \oplus (V_t\tensor\Sp_2),
\end{equation}
with $V_{ab}$ (the \emph{abelian part}) and $V_{t}$ (the \emph{toric part}). Both are
representations of $\Gal(L^{nr}/K)$, and we have
\begin{equation}\label{vtab}
  V_t \iso H^1(\Upsilon,\Q_l), \qquad V_{ab} \iso \bigoplus_i \H(\tilde{C_i},\Q_l),
\end{equation}
as $\Q_l$-vector spaces (for the moment). 
The first $H^1$ is the usual cohomology of a topological space, with $\Q_l$-coefficients.
(Its rank is the number of independent circuits in $\bar C$, including loops 
from self-intersections of components.)
Note that $J/F$ has \emph{good} reduction if and only if $V_t=0$, in other 
words the dual graph $\Upsilon$ of $\bar C$ is a tree.
\endcomment

\subsection{General curves}
\label[subsection]{ss:gcurves}

Fix

\begin{tabular}{l@{ }l}
$C/K$         & = arbitrary non-singular projective curve of genus $\ge 1$,\Tcropt{2.5}
$A/K$         & = Jacobian of $C$,\cr
$F/K$         & = finite Galois extension over which $C$ is semistable, with\cr
              & \ \ \ residue field $k_F$, ring of integers $\cO_F$ and uniformiser $\pi$,\cr
$\cC_{\min}/\cO_{\Fnr}$ & = minimal regular model of $C/\Fnr$, with special fibre $\Cbar$\cr
              & \ \ \ (semistable curve), and normalisation $\Cbarn$,\cr
$\cN/\cO_F$     & = N\'eron model of $A/\Fnr$, with special fibre $\bar N$,\cr
$\JJ$         & = set of connected comps. of $\Cbarn$,\cr 
$\Upsilon$    & = dual graph of $\bar C$, as in \S\ref{ss:scurves}.\Bcropt{1.3}
\end{tabular}

\noindent
For $\sigma\in G_K$, the model $\cC_{\min}^\sigma$ is again a stable model 
of $C/F$, and so $\cC_{\min}^\sigma\iso\cC_{\min}$, canonically. 
As explained in \cite{skew} (see also \cite[p. 13]{CFKS} and \cite[p. 497]{ST}), this implies 
that the Galois action $G_K\acts C(\Kbar)$,$A(\Kbar)$ extends to a \emph{semilinear} action 
on the geometric points of the special fibres $\Cbar$, $\bar N$ and $\Cbarn$,
\begin{equation}\label{saction}
  s: G_K \to \Aut\Cbar(\kbar)\>\> (\to \Aut\bar N(\kbar)).
\end{equation}

%
It is computed as follows. 
Let $\Cbar_{ns}(\kbar)\subset \Cbar(\kbar)$ be the non-singular locus.
Write $\red$ for the reduction map 
\begin{equation}\label{red}
  \red\colon C(F^{\textup{nr}}) \overarrow{=} \cC_{\min}(\cO_F^{\textup{nr}})  
  \overarrow{\rm reduce} \Cbar_{ns}(\kbar).
\end{equation}
It is surjective by Hensel's Lemma, so take a 
section $\red^{-1}\colon \Cbar_{ns}(\kbar)\to C(F^{\textup{nr}})$. 
Then on $\Cbar_{ns}(\kbar)$, the map $s$ is the composition (cf. \cite[Thm 1.5]{skew}, except there 
$\red^{-1}$ is chosen to land in $C(\bar F)$, which is a bit more general)
\begin{equation}\label{skew}
  s(\sigma)\colon\Cbar_{ns}(\kbar) \overarrow{\red^{-1}} C(F^{\textup{nr}}) 
    \overarrow{\sigma} C(F^{\textup{nr}}) 
  \overarrow{\rm red} \Cbar_{ns}(\kbar).
\end{equation}
The reduced curve $\Cbar$ has a natural structure of a $k$-scheme; denote 
by $\Phi: \Cbar\to \Cbar$ the absolute Frobenius map, acting on $K(C)$ by
raising everything to the power $|k|$. 
If $\sigma=\Frob^n\tau$ for some $n\in\Z_{\ge 0}$ and $\tau\in I_K$, then
$s(\sigma)=\Phi^n\tilde g$ for some $k$-linear automorphism $\tilde g$ 
of $\Cbar$. 
In particular, $n=0$ for $\sigma$ in the inertia group $I_K$, and
$s(I_K)=I_{F/K}$ is finite, acting through honest $k_F$-scheme automorphisms.
Note that for every $\sigma$ in the Weil group of $K$
(i.e. of the form $\sigma=\Frob^n\tau$ as above, but with $n\in\Z$), 
either $s(\sigma)$ or $s(\sigma^{-1})$ 
is a morphism of schemes, so that (a) \eqref{skew} determines it uniquely 
(on the whole of $G_K$, as the Weil group is dense in it), 
though it is only defined on the non-singular points, and (b) the action 
of $s$ extends naturally to the action on $\Cbarn(\kbar)$,
\begin{equation}\label{CbarnAction}
  \tilde s: \Cbarn(\kbar) \to \Cbarn(\kbar),
\end{equation}
on $\cN(\kbar)$, and on the dual graph $\Upsilon$.

Applying this to the $l$-power torsion points of $A=\Pic^0(C)$, from \eqref{XvsUps}, \eqref{curvetate}
and \cite[Prop. 2.6]{BW} (see also \cite[p.14]{CFKS}) we find an isomorphism of $G_K$-representations
$V_l A\iso (V_l A)_t\!\tensor\!\Sp_2\oplus (V_l A)_{ab}$, with
\begin{equation}\label{tatedec}
\begin{array}{llllllllllll}
  (V_l A)_t    &\iso&  H_1(\Upsilon_{C},\Z)\tensor_\Z\Q_l, \Bcr
  (V_l A)_{ab} &\iso&  \displaystyle \bigoplus_{\Gamma\in \JJ/G_K} \Ind_{\Stab(\Gamma)}^{G_K} V_l\Pic^0(\Gamma),\cr
\end{array}
\end{equation}
and $G_K$ acting through $s$ on the spaces on the right.
Twisting by $\Q_l(1)$ gives a similar decomposition for the \'etale cohomology
group $\H(C_{\Kbar},\Q_l)$.
See \cite[Cor 1.6]{skew} for details, noting that
$H_1(\Upsilon_{C},\Z)\tensor_\Z\Q_l\iso H^1(\Upsilon_{C},\Z)\tensor_\Z\Q_l$ as a $G_K$-module, since a rational
representation is self-dual.

\begin{example}
\label{exbgr2}
Consider the curve from Example \ref{exbgr1} over $K=\Q_p$ ($p>3$),
$$
  C/K\colon y^2=x^3+p^2.
$$ 
Fix a primitive 3rd root of unity $\zeta\in \bar K$ and $\pi=\sqrt[3]p$.
Let $K'=K(\pi)$, and $F=K(\zeta,\pi)$,
its Galois closure. Thus, $\Gal(F/K)\iso S_3$ if $p\equiv 2\mod 3$, and $F=K'$ with 
$\Gal(F/K)\iso C_3$ otherwise. In either case, the inertia group $I_{F/K}$ is $C_3$, generated
by $\tau$ that sends $\pi\to \zeta\pi$. Let $\Phi$ be a Frobenius element of $K'$; so
$\Phi$ fixes $\pi$ and sends $\zeta\to\zeta^p$. So
$$
  G\>=\>\Gal(F^{\textup{nr}}/K) \>=\> \langle \tau,\Phi \rangle \>\iso\> C_3\rtimes \hat\Z.
$$
Recall from \ref{exbgr1} that $C$ acquires good reduction over $K'$, and thus over $F$ as well,
and the special fibre of its minimal model is the curve $\Cbar: y^2=x^3+1$.  
Using the isomorphism \eqref{C3var}, the reduction map \eqref{red} becomes
$$
\begin{array}{llllllllllll}
\red\colon & C(F^{\textup{nr}}) & \lar & \Cbar(\bar\F_p) \cr
           & (x,y) & \longmapsto & (\tfrac{x}{\pi^2},\tfrac{y}{\pi^3})\mod \pi. \cr
\end{array}
$$
The semilinear action $s$ of $G_K$ in \eqref{skew} factors through $G$ and is given by
$$
\begin{array}{llllllllllll}
s(\tau)\colon & \Cbar(\bar\F_p) & \lar & \Cbar(\bar\F_p) \cr
              & (x,y) & \longmapsto & (\zeta x,y) \cr
\end{array}
\qquad 
\begin{array}{llllllllllll}
s(\Phi)\colon & \Cbar(\bar\F_p) & \lar & \Cbar(\bar\F_p) \cr
              & (x,y) & \longmapsto & (x^p,y^p) \cr
\end{array}
$$
We refer the reader to \cite[\S6]{hq} and \cite[\S3-4]{weil} and \cite{BW} for additional examples,
and explicit computations of Galois representations attached to curves.
\end{example}


We also recall the formula for the local root number of the Jacobian in the semistable case:

\begin{theorem}\label{th:rootnumber}
Let $C/K$ be a semistable curve. Then the local root number of the Jacobian 
$A=\Jac C$ is 
$$
w_A = (-1)^{a} ,
$$
where $a$ is the multiplicity of the trivial representation of $G_k$ 
in the homology of the dual graph $H_1(\Upsilon,\Q)$ of $C$. 
\end{theorem}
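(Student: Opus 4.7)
The strategy is to combine the decomposition of $V_\ell A$ in \eqref{tatedec} with the standard root-number formula for a twist of the Steinberg representation. By multiplicativity of local root numbers on direct sums,
$$w_A = w\bigl((V_\ell A)_{ab}\bigr) \cdot w\bigl(V_t \otimes \Sp_2\bigr),$$
where $V_t := (V_\ell A)_t \cong H_1(\Upsilon_C,\Q_\ell)$. First I would dispense with the abelian part: since $C/K$ is semistable, $(V_\ell A)_{ab} \cong V_\ell \B$ for the abelian scheme $\B$ in the Raynaud parametrisation of \S\ref{ss:sjacs}, so it is an unramified symplectic self-dual $G_K$-representation with cyclotomic central character. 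A standard property of local epsilon factors then gives $w\bigl((V_\ell A)_{ab}\bigr) = +1$, reducing the problem to the toric part.

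For the toric part, the $G_K$-action on $V_t$ is unramified and factors through the finite group $\Aut(\Upsilon_C)$, so all eigenvalues of $\Frob$ on $V_t$ are roots of unity. The monodromy pairing \eqref{abpair} coincides (under $V_t \cong H_1(\Upsilon_C,\Q_\ell)$) with the length pairing \eqref{curvepair}, which is symmetric and non-degenerate, so $V_t$ is an orthogonally self-dual unramified representation of $G_K$. Diagonalising $V_t \otimes_{\Q_\ell} \bar\Q_\ell = \bigoplus_i \chi_i$ into unramified characters, self-duality forces the $\chi_i$ to pair up as $\{\chi,\chi^{-1}\}$, and self-paired characters satisfy $\chi^2 = 1$, i.e.\ $\chi(\Frob) \in \{\pm 1\}$.

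Next I would apply Deligne's formula for twists of the Steinberg by unramified characters, $w(\chi \otimes \Sp_2) = \chi(\Frob)^{a(\Sp_2)} \cdot w(\Sp_2) = -\chi(\Frob)$, using $a(\Sp_2) = 1$ and $w(\Sp_2) = -1$. Contributions of non-self-paired pairs $\{\chi,\chi^{-1}\}$ cancel, since $(-\chi(\Frob))(-\chi^{-1}(\Frob)) = 1$. The trivial character contributes $-1$ per copy; the unramified quadratic character contributes $+1$ per copy. Therefore $w_A = (-1)^a$ where $a = \dim V_t^{G_k}$ is the multiplicity of the trivial representation of $G_k$ in $H_1(\Upsilon_C,\Q_\ell)$, equivalently in $H_1(\Upsilon_C,\Q)$.

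The main obstacle is the bookkeeping in the last step: one must fix consistent normalisations of $\Sp_2$ as a Weil--Deligne representation, the sign of $w(\Sp_2)$, and the handling of Tate twists (the Tate module has non-trivial weight), so that the product of local factors genuinely lands in $\{\pm 1\}$ and evaluates to $(-1)^a$. This is painless precisely because the $\chi_i(\Frob)$ are roots of unity, but still requires care. An alternative route that avoids this is to quote the root-number formula for semistable abelian varieties of Rohrlich (for elliptic curves) and its extensions by Sabitova and others, which gives the result directly from the shape of the Galois representation in \eqref{tatedec}.
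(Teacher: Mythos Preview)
Your proposal is correct and essentially unpacks the content of the reference the paper cites. The paper's entire proof is a one-line appeal to \cite[Prop.~3.23]{RegConst} together with the identification $X(\T)=H_1(\Upsilon,\Z)$ from \eqref{XvsUps}; your argument is precisely the kind of computation that sits behind such a citation (decompose $V_\ell A$ as in \eqref{tatedec}, dispose of the abelian part via symplectic self-duality, and evaluate the toric contribution character by character using $w(\chi\otimes\Sp_2)=-\chi(\Frob)$ for unramified $\chi$). So the approaches agree, with yours supplying the details the paper outsources.
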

\begin{proof}
This is a standard root number computation, see e.g. \cite[Prop. 3.23]{RegConst} 
with $\tau = \triv$ and $X(\T) = X(T) = H_1(\Upsilon,\Z)$ by \eqref{XvsUps}.
\end{proof}

\begin{lemma}\label{le:isolattice}
Let $C/K$ be a semistable curve. 
Then 
$H_1(\Upsilon_C, \Z) \iso X(\T)$ as $\Z[\Frob]$-modules with pairing.
\end{lemma}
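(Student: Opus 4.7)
The plan is essentially to assemble the pieces that have already been put in place in \S\ref{ss:scurves} and \S\ref{ss:sjacs}; the lemma is more a matter of bookkeeping than of new input.

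First I would recall the canonical identification \eqref{XvsUps}, which gives $X(T) = H_1(\Upsilon_C,\Z)$ as abelian groups, where $T$ is the toric part of $\Pic^0(\bar C)$ over $\bar k$ (the special fibre of the semistable model $\cC$). By rigidity of tori, the character group is insensitive to lifting: the torus $\mathcal{T}/\O_{K^{\textup{nr}}}$ in the Raynaud extension of $A = \Jac C$ has the same character lattice as its special fibre $T$, so $X(\mathcal{T}) = X(T)$ canonically (this is the observation made right after \eqref{homXTZ}). Composing gives $X(\mathcal{T}) \iso H_1(\Upsilon_C,\Z)$ as $\Z$-modules.

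Next, for the pairings, I would use that $A = \Jac C$ is principally polarised, so under the induced isomorphism $\mathcal{T} \iso \mathcal{T}^*$ the monodromy pairing \eqref{ttspair} becomes the symmetric pairing \eqref{abpair} on $X(\mathcal{T})$. It was noted in \S\ref{ss:sjacs} (right after \eqref{abpair}) that this pairing coincides, under the identification $X(\mathcal{T}) = X(T) = H_1(\Upsilon_C,\Z)$, with the length pairing \eqref{curvepair} on $H_1(\Upsilon_C,\Z)$. So the isomorphism of $\Z$-modules respects pairings.

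Finally, I would check Frobenius equivariance. The identification \eqref{XvsUps} arises from a Mayer--Vietoris sequence and from the exact sequence \eqref{PicSeq}, both of which are canonical and hence $G_k$-equivariant (as emphasised at the end of \S\ref{ss:scurves}). The identification $X(\mathcal{T}) = X(T)$ via rigidity of tori is likewise canonical, hence $G_k$-equivariant. As $\Frob$ acts via its image in $G_k$, the composite isomorphism $X(\mathcal{T}) \iso H_1(\Upsilon_C,\Z)$ intertwines the Frobenius actions, completing the proof. There is no serious obstacle here; the only thing to be slightly careful about is confirming that the principal polarisation of the Jacobian is what converts the a priori asymmetric monodromy pairing $X(\mathcal{T}^*) \times X(\mathcal{T}) \to \Z$ into the symmetric pairing on $X(\mathcal{T})$ that is to be compared with the length pairing, but this is exactly the content of the discussion preceding \eqref{abpair}.
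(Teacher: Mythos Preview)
Your proposal is correct and follows exactly the same approach as the paper: the paper's proof simply cites $X(\T) = X(T) = H_1(\Upsilon,\Z)$ by \eqref{XvsUps} and refers to \eqref{abpair} for the pairing, which is precisely the route you take, only spelled out in more detail.
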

\begin{proof}
Again, $X(\T) = X(T) = H_1(\Upsilon,\Z)$ by \eqref{XvsUps}; for the compatibility
with the pairing, see \eqref{abpair}.
\end{proof}

\begin{lemma}
\label{lemtam}
Suppose $C/K$ is semistable. 
Let $\Phi$ be the component group of the N\'eron model of the Jacobian of $C$ over $K^{nr}$.
Then $\Phi$ is isomorphic, as a $G_k$-module, to the cokernel of 
$$
  H_1(\Upsilon_C,\Z) \lar \Hom(H_1(\Upsilon_C,\Z),\Z), \qquad \ell\mapsto \langle \ell,\cdot \rangle.
$$
\end{lemma}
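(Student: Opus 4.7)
The plan is to assemble this from the general theory of semistable Jacobians reviewed in \S\ref{ss:sjacs}, applied to the principally polarised case $A = \Jac C$. The key point is that everything has already been done in that subsection; the lemma is essentially a restatement once the correct identifications are made.

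First I would recall that by Raynaud's theorem (see \eqref{nincl} and the discussion around it), the component group satisfies
$$\Phi(\kbar) \;\cong\; \coker\bigl(n\colon X(\T^*) \hookrightarrow \Hom(X(\T),\Z)\bigr)$$
canonically as $G_k$-modules, where $\T$ and $\T^*$ are the toric parts of the Raynaud extensions of $A$ and $A^t$ respectively. Next, since $C$ is a curve, its Jacobian carries a canonical principal polarisation $A \isoarrow A^t$, which induces $\T \iso \T^*$ and hence identifies $X(\T^*) = X(\T)$. Under this identification, the inclusion $n$ becomes precisely the map $\ell \mapsto \langle \ell, \cdot \rangle$ associated to the symmetric monodromy pairing \eqref{abpair} on $X(\T)$.

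The remaining step is to transport this to $H_1(\Upsilon_C,\Z)$. By rigidity of tori, $X(\T)$ agrees with the character group $X(T)$ of the toric part $T$ of $\Pic^0 \bar C$, which by \eqref{XvsUps} is canonically $G_k$-isomorphic to $H_1(\Upsilon_C,\Z)$. Moreover, as recorded just after \eqref{abpair}, the symmetric monodromy pairing on $X(\T)$ coincides with the length pairing \eqref{curvepair} on $H_1(\Upsilon_C,\Z)$ under this identification. Putting everything together, $n$ becomes the map $\ell \mapsto \langle \ell, \cdot\rangle$ of the lemma, and its cokernel is $\Phi(\kbar)$ as a $G_k$-module.

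The only thing to double-check (and the main obstacle, such as it is) is the $G_k$-equivariance of each identification in the chain. For Raynaud's description of $\Phi$ this is standard; for the identification $X(\T) = H_1(\Upsilon_C,\Z)$ and the compatibility of the two pairings, $G_k$-equivariance follows from the fact that every construction in \S\ref{ss:scurves}--\ref{ss:sjacs} is canonical and hence compatible with the semilinear action of \S\ref{ss:gcurves} (applied here with $F = K$, since $C$ is already semistable). This gives the claim.
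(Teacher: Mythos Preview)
Your proof is correct and is essentially what the paper does, just unpacked: the paper's proof is the single line ``Apply \cite[Thm.~9.6/1]{BLR} over $K^{nr}$,'' and your argument is precisely the content of that theorem as already reviewed in \S\ref{ss:scurves}--\ref{ss:sjacs}.
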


\begin{proof}
Apply \cite[Thm. 9.6/1]{BLR} over $K^{nr}$.
\end{proof}

\begin{remark}
\label{remtam}
The size of the $G_k$-invariants of $\Phi$ is known as the \emph{Tamagawa number} of the Jacobian of $C$ over $K$. 
\end{remark}

\def\fr{{\mathfrak r}}
\def\R{{\mathcal R}}
\def\optional#1{{\color{cyan}#1}}
\def\d#1{d_#1}

%
\section{Regular semistable models of $\P^1$} \label[section]{models of P1}
%
In this section we show how certain finite collections of discs (see \Cref{admissible defi})  give rise to regular semistable models of the projective line over $K^{\textup{nr}}$. We then describe (\Cref{basic model lemma}) the divisor of a polynomial on this model. In the next section, given a hyperelliptic curve $C/K:y^2=f(x)$ satisfying the semistability criterion we use this to associate to $C$ a particular model of $\mathbb{P}^1_{K^{\textup{nr}}}$ on which the divisor of $f(x)$ has nice properties (see \Cref{branch locus lemma}). We then deduce that the normalisation of this model in the function field $K^{\textup{nr}}(C)$ is a regular model of $C/K^{\textup{nr}}$ (\Cref{main regular model theorem}) from which several of our main theorems follow.  

The relationship between discs and models of the projective line  is not new and roughly proceeds as follow.  To each disc there is a naturally associated valuation on $\bar{K}(x)$ (we recall this construction in \Cref{nu and kappa sub}). Now by \cite[Proposition 3.4]{Rut} (see also \cite[Section 5]{OW})  there is a one to one correspondence between normal models of $\mathbb{P}^1_{K^{\textup{nr}}}$ and  finite non-empty collections of `inductive' valuations on $K^{\textup{nr}}(x)$, the map taking a model to the set of valuations corresponding to the local rings at the generic points of the components of its special fibre. Our model is the one yielding the same collection of valuations as that associated to the collection of discs.  To facilitate in the analysis of the model however we construct it below using explicit charts. That these two descriptions agree follows from  \Cref{basic model lemma}. For a more general correspondence between normal  models of $\mathbb{P}^1_{K^{\textup{nr}}}$ and  collections  of `rigid diskoids'  see \cite[Proposition 4.4]{Rut}, \cite[Section 5.1]{OW}.

\subsection{Discs and associated valuations}

\subsubsection{Discs} \label{disc notation}

A \textit{disc} is a subset
\[D=D_{z,d}:=\{x\in \Kbar ~\mid~ v(x-z)\geq d\}\]
for some $z\in \Kbar$ and $d\in \Q$. Here $d$ is an invariant of the disc, its \textit{depth}, denoted $d_D$. If $D$ has depth $d_D$ and $z\in D$ then $D=D_{z,d_D}$; we call any $z\in D$ a \textit{centre} of $D$. 
We say a disc  is \textit{integral} if it has a centre in $\Knr$ and integer depth. For an integral disc $D$ we denote by $P(D)$ its `parent' integral disc $P(D)=D_{z_D,d_{D}-1}$ for any $z_D\in D$. We say  integral discs $D$ and $D'$ are \textit{adjacent} if one is the parent of the other.

\subsubsection{The valuation  associated to a disc} \label[section]{nu and kappa sub}

Each disc $D=D_{z_D,d_D}$ defines a valuation $\nu_D$ on the function field $\Kbar(x)$ extending $v$ (see e.g. \cite[Section 1.4.4]{Berk}). Explicitly, for a polynomial $f(x)\in \Kbar[x]$, letting $c_i$ denote the coefficient of $x^i$ in $f(x+z_D)$ 
 we have
\[\nu_D(f)=\textup{min}_i \{v(c_i)+d_Di\}.\]

Writing $\mathcal{R}\subseteq \bar{K}$ for the (multi)set of roots of $f(x)$ and $c_f$ for its leading coefficient, factoring $f(x)$ as a product of linear polynomials it follows from the fact that $\nu_D$ is a valuation extending $v$ that
\begin{equation} \label{formula for nu lemma}
\nu_D(f)=v(c_f)+\sum_{r\in \mathcal{R}} \mathrm{min}\{d_D,v(z_D-r)\}.
\end{equation}

%



\subsection{Admissible collections of discs}

The following collections of discs will correspond to regular semistable models of $\mathbb{P}^1_{K^{\textup{nr}}}$.

\begin{definition} \label[definition]{admissible defi}
Call a finite non-empty collection $\mathcal{D}$ of integral discs \emph{admissible} if
\begin{itemize}
\item[(i)]
$\mathcal{D}$ has a maximal element $D_{\textup{max}}$ with respect to inclusion,
\item[(ii)]
if $D_1,D_2\in \mathcal{D}$ with $D_1\subseteq D_2$ then every integral disc  $D_1\subseteq D\subseteq D_2$ is in $\mathcal{D}$ also. 
\end{itemize}
To such  $\mathcal{D}$  we associate the finite connected rooted tree $T_\mathcal{D}$ with vertices $\{v_D~~\mid~~D\in \mathcal{D}\}$ and root $v_{D_\textup{max}}$, where $v_D$ and $v_{D'}$ are joined by an edge when $D$ and $D'$ are adjacent.  We write $\mathcal{D}_i$ (resp. $\mathcal{D}_{\leq i}$) for the subset of $\mathcal{D}$ consisting of discs whose associated vertices are a distance $i$ (resp. at most $i$) from the root.  
\end{definition}

\begin{remark}
We will see in \Cref{basic model lemma1} that $T_\mathcal{D}$ is canonically the dual graph of the model of $\mathbb{P}^1_{K^{\textup{nr}}}$ associated to $\mathcal{D}$.
\end{remark}


\subsection{The model of $\P^1_{\Knr}$ associated to an admissible collection of discs}\label[section]{construction of the p1 sect}

\begin{notation} \label{centres notation}
For the rest of this section we fix an admissible collection of discs $\mathcal{D}$, along with a choice of centre $z_D\in \Knr$ for each $D\in \mathcal{D}$.
\end{notation}

In what follows denote by $\mathcal{O}$ the ring of integers of $\Knr$ and, as in the introduction, let $\pi$ denote a fixed choice of uniformiser for $K$. We now associate to $\mathcal{D}$ a model $\mathcal{Y}_\mathcal{D}/\mathcal{O}$ of $\P^1_{\Knr}$, first introducing some objects and notation which will be useful for the construction. 

\begin{remark}
The choice of centres above is minor - in particular the model $\mathcal{Y}_\mathcal{D}/\mathcal{O}$ which we associate to $\mathcal{D}$ in  \Cref{the model of P1} is, up to isomorphism over $\mathcal{O}$,  independent of this.
\end{remark}

\subsubsection{The schemes $U_D$, $W_D$ and $Y_D$}


\begin{definition} \label[definition]{schemes defi}
To each disc $D\in \mathcal{D}$ we associate  schemes \[U_{D}=\textup{Spec}~\mathcal{O}[x_D]\phantom{hi}\textup{and}\phantom{hi}W_D=\begin{cases}\textup{Spec}~\mathcal{O}[t_D]~~&~~D=D_{\textup{max},}\\\textup{Spec}~\mathcal{O}[s_D,t_D]/(\pi-s_Dt_D)~~&~~\textup{else}.\end{cases}\]
We denote by $Y_D$ the glueing of $U_D$ and $W_D$ over the subsets $\{x_D\neq0\}$ and $\{t_D\neq 0\}$  via the isomorphsim $x_D=1/t_D$. 

For $D=D_\textup{max}$ we have $Y_D=\P^1_\mathcal{O}$ with variable $x_D$. We denote its special fibre $E_{D_\textup{max}}$.  For $D\neq D_\textup{max}$, $Y_D$ is the result of blowing up $\mathbb{A}^1_\mathcal{O}$ with variable $s_D$ at the origin  on the special fibre (see e.g. \cite[Lemma 8.1.4]{LiuA}). Its special fibre consists of two irreducible components intersecting transversally at the single closed point $s_D=t_D=0$. One component (the exceptional fibre of the blow up) is  isomorphic to $\P^1_{\kbar}$ with variable $x_D$ and we denote it $E_D$. 
The other is isomorphic to $\mathbb{A}^1_{\kbar}$ with variable $s_D$. It is contained entirely in the complement of $U_D$ and we denote it $F_D$. 
%
\end{definition}

%



\begin{definition} \label[definition]{model components}
Let $D\in \mathcal{D}$. We define a `reduction' map $D\rightarrow \bar{k}$ (which depends on the choice of centre $z_D$ for $D$) by setting
\[\textup{red}_{D}(z)=\frac{z-z_{D}}{\pi^{d_{D}}}\quad (\textup{mod}~\mathfrak{m}).\]
Note that this gives a bijection between closed points on the special fibre of $U_D$ and maximal integral subdiscs $D'$ of $D$, sending $D'$ to the point $x_D=\textup{red}_D(z_{D'})$ for any $z_{D'}\in D$. Since this does not depend on the choice of centre $z_{D'}$ we henceforth write  $\textup{red}_D(D')$ in place of $\textup{red}_D(z_{D'})$. 

We denote by  $\mathcal{P}_D$ the finite set of points on the special fibre of $U_D$ corresponding to maximal integral subdsics of $D$ which are in $\mathcal{D}$. Similarly, let $\mathcal{Q}_D$ denote the finite set of closed points on the special fibre of $W_D$ of the form 
\[s_D=\frac{z_{D'}-z_D}{\pi^{d_D-1}}\quad(\textup{mod}~\mathfrak{m})\]
  for  $D'\in \mathcal{D}$ a `sibling' of  $D$ (i.e. such that $D'\neq D$ is a maximal integral subdisc of $P(D)$).
  
   In what follows we will at times wish to consider the scheme $W_D\setminus \mathcal{P}_\mathcal{D}$, viewing both $\mathcal{P}_D$ and $W_D$ as subsets of $Y_D$ to form the complement. 
\end{definition}


 \subsubsection{The model $\mathcal{Y}_\mathcal{D}$}
 
 We now glue the schemes $Y_D$ with certain points removed  to form the model $\mathcal{Y}_\mathcal{D}$. We will do this in such a way that for $D\neq D_{\textup{max}}$ the component $F_D$ of $Y_D$ glues onto the component $E_{P(D)}$ of $Y_{P(D)}$, identifying the set $\mathcal{Q}_{D}$ with $\mathcal{P}_{P(D)}$ less the point $x_D=\textup{red}_{P(D)}(D)$  in the process.
 
\begin{definition}[The model $\mathcal{Y}_\mathcal{D}$] \label[definition]{the model of P1}
For  $i\geq 0$ we construct inductively schemes $\mathcal{Y}_{\mathcal{D}_{\leq i}}$ which will be covered by
\[\{Y_D\setminus (\mathcal{P}_D\cup \mathcal{Q}_D)~~\mid D\in \mathcal{D}_{\leq i-1}\}\phantom{hi}\textup{and}\phantom{hi} \{Y_D\setminus \mathcal{Q}_D~~\mid~~D\in \mathcal{D}_i\}.\]
 In this way we  talk about components $E_D$ of $\mathcal{Y}_{\mathcal{D}_{\leq i}}$. 
 We will  denote by $\infty$ the point on $\mathcal{Y}_{\mathcal{D}_{\leq i}}$ corresponding to $t_{D_\textup{max}}=0$ on the generic fibre of $Y_{D_\textup{max}}$, and denote by $\overline{\{\infty\}}$ its closure in the model. We then define  $\mathcal{Y}_\mathcal{D}$ to be equal to $\mathcal{Y}_{\mathcal{D}_{\leq n}}$ for $n$ minimal such that $\mathcal{D}=\mathcal{D}_{\leq n}$.

First, set $\mathcal{Y}_{\mathcal{D}_{\leq 0}}=Y_{D_\textup{max}}$. We make this a model of $\P^1_{\Knr}$ (thought of with variable $x$) via the change of variable $x=\pi^{d_{D_\textup{max}}}x_{D_\textup{max}}+z_{D_\textup{max}}.$

Now given $\mathcal{Y}_{\mathcal{D}_{\leq i}}$ we obtain $\mathcal{Y}_{\mathcal{D}_{\leq i+1}}$ by blowing up $\mathcal{Y}_{\mathcal{D}_{\leq i}}$ at the finite set  $\bigcup_{D\in \mathcal{D}_i}\mathcal{P}_{D}$  of closed points on its special fibre.

 Explicitly, since blowing up is a local process, $\mathcal{Y}_{\mathcal{D}_{\leq i+1}}$ is given by glueing each of the schemes $Y_D\setminus \mathcal{Q}_D$ for $D\in \mathcal{D}_{i+1}$ onto $\mathcal{Y}_{\mathcal{D}_{\leq i}}':=\mathcal{Y}_{\mathcal{D}_{\leq i}}\setminus \bigcup_{D\in \mathcal{D}_i}\mathcal{P}_{D}$ over the open subsets given by removing $E_D$ from the special fibre of $Y_D\setminus \mathcal{Q}_D$ ($D\in \mathcal{D}_{i+1}$), and removing $\overline{\{\infty\}}\cup\bigcup_{D'\neq P(D)}E_{D'}$ from $\mathcal{Y}_{\mathcal{D}_{\leq i}}'$. The glueing maps are given, for $D\in  \mathcal{D}_{i+1}$, by
\[s_D=x_{P(D)}+\frac{z_{P(D)}-z_D}{\pi^{d_{P(D)}-1}}.\]
\end{definition}

\begin{remark} \label[remark]{function field relations hold} \label{disc polynomials remark}
In  the function field of $\mathcal{Y}_\mathcal{D}$ we have, for each $D\in \mathcal{D}$,
\[x_D=\frac{x-z_D}{\pi^{d_D}},\phantom{hello}t_D=1/x_D,\phantom{hi}\textup{and}\phantom{hi}s_Dt_D=\pi.\]
Moreover, we see from the construction that $\mathcal{Y}_{\mathcal{D}}$ is covered by the open subsets $U_D\setminus\mathcal{P}_D$ and $W_D\setminus (\mathcal{Q}_D\cup \mathcal{P}_D)$ as $D$ ranges over all elements of $\mathcal{D}$. In particular, every closed point of the special fibre of $\mathcal{Y}_D$ is either the point at infinity (i.e. $t_{D_\textup{max}}=0$) on $E_{D_\textup{max}}$, the single point of intersection between $E_D$ and $E_{P(D)}$  (visible as the point $s_D=t_D=0$ on  $W_D\setminus (\mathcal{Q}_D\cup \mathcal{P}_D)$) for some $D\neq D_\textup{max}$, or a point of the form $x_D=\textup{red}_D(D')$ on $E_{D}$ for $D'\notin \mathcal{D}$ a maximal proper integral subdisc of some $D
\in \mathcal{D}$, visible on $U_D\setminus\mathcal{P}_D$.
\end{remark}

\subsubsection{Properties of the model}

\begin{proposition} \label[proposition]{main model properties1}  \label[proposition]{basic model lemma1}
Let $\mathcal{D}$ be an admissible collection of discs and $\mathcal{Y}_{\mathcal{D}}/\mathcal{O}$  the associated model of $\P^1_{\Knr}$.
Then $\mathcal{Y}_\mathcal{D}$ is proper regular and semistable with dual graph $T_\mathcal{D}$ (\Cref{admissible defi}), the vertex $v_D$ corresponding to the component $E_D$. The valuation on $K^{\textup{nr}}(x)$ corresponding to $E_D$ is (the restriction to $K^{\textup{nr}}(x)$ of) $\nu_D$. 
\end{proposition}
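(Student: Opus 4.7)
The plan is to prove all the claims simultaneously by induction on $i$, showing that each intermediate scheme $\mathcal{Y}_{\mathcal{D}_{\leq i}}$ is proper, regular, semistable, has the expected dual graph structure among components already constructed, and that the valuation associated to each existing $E_D$ is $\nu_D$.

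\textbf{Base case.} For $i=0$, $\mathcal{Y}_{\mathcal{D}_{\leq 0}} = Y_{D_\textup{max}} = \mathbb{P}^1_\mathcal{O}$ via the coordinate $x_{D_\textup{max}} = (x - z_{D_\textup{max}})/\pi^{d_{D_\textup{max}}}$, which is proper, regular, and has special fibre the single smooth component $E_{D_\textup{max}} \iso \mathbb{P}^1_{\bar{k}}$. The valuation at the generic point is computed directly: for $f(x) = \sum c_i (x-z_{D_\textup{max}})^i = \sum c_i \pi^{d_{D_\textup{max}} i} x_{D_\textup{max}}^i$, evaluating $v_\pi$ (the valuation with $v_\pi(\pi)=1$ and generic value $0$ on $E_{D_\textup{max}}$) gives $\min_i (v(c_i) + d_{D_\textup{max}} i) = \nu_{D_\textup{max}}(f)$ by the formula in \S\ref{nu and kappa sub}.

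\textbf{Inductive step.} Assuming the result for $\mathcal{Y}_{\mathcal{D}_{\leq i}}$, one forms $\mathcal{Y}_{\mathcal{D}_{\leq i+1}}$ by blowing up the finite disjoint collection $\bigcup_{D \in \mathcal{D}_i} \mathcal{P}_D$ of closed points of the special fibre. Each such point lies in $E_D$ for exactly one $D \in \mathcal{D}_i$, away from any intersection with another component (by \Cref{disc polynomials remark}), hence is a regular closed point of $\mathcal{Y}_{\mathcal{D}_{\leq i}}$ at which the special fibre is smooth. Blowing up a regular $2$-dimensional scheme at a closed regular point preserves regularity and replaces the point with an exceptional $\mathbb{P}^1$ meeting the strict transform transversally at a single point; this is precisely the local picture described by the chart $W_{D'} = \Spec \mathcal{O}[s_{D'}, t_{D'}]/(\pi - s_{D'} t_{D'})$, which is regular (the maximal ideal at the node is generated by $s_{D'}, t_{D'}$, and the ring is a regular $2$-dimensional local ring). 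Properness is preserved under blow-ups of closed subschemes, so $\mathcal{Y}_{\mathcal{D}_{\leq i+1}}$ is proper. Semistability follows because the only singularity of the new special fibre at each added component is the transverse node $s_{D'} = t_{D'} = 0$, which is an ordinary double point, and all components remain geometrically reduced $\mathbb{P}^1$s. The dual graph statement follows immediately: the new component $E_{D'}$ (for $D' \in \mathcal{D}_{i+1}$) is a $\mathbb{P}^1$ attached to $E_{P(D')}$ alone and at a single point, matching the adjacency in $T_\mathcal{D}$.

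\textbf{Valuation on $E_D$.} Fix $D \in \mathcal{D}$ and let $w_D$ denote the discrete valuation on $K^{\textup{nr}}(x)$ corresponding to the generic point of $E_D$ in $\mathcal{Y}_\mathcal{D}$; normalise so $w_D(\pi) = 1$ (which is the correct normalisation since $E_D$ appears with multiplicity $1$ in the special fibre of the regular model, as is visible from the chart $U_D = \Spec \mathcal{O}[x_D]$ where $\pi$ cuts out $E_D$ with multiplicity $1$). From the identity $x_D = (x - z_D)/\pi^{d_D}$ in the function field (\Cref{function field relations hold}) and the fact that $x_D$ is a parameter on $E_D$ (so $w_D(x_D) = 0$), we obtain $w_D(x - z_D) = d_D$. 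For a general polynomial $f(x) = c_f \prod_{r \in \mathcal{R}}(x - r)$, write $x - r = (x - z_D) + (z_D - r)$; then $w_D(x-r) = \min\{d_D, v(z_D - r)\}$ because $w_D$ extends $v$ and the two terms have distinct valuations unless $v(z_D - r) = d_D$ in which case the bound is trivially $\leq d_D$, and in fact equality holds since $x - z_D$ is transcendental over $\bar K$ at the generic point of $E_D$. Summing gives $w_D(f) = v(c_f) + \sum_{r \in \mathcal{R}} \min\{d_D, v(z_D - r)\} = \nu_D(f)$ by formula \eqref{formula for nu lemma}.

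\textbf{Main obstacle.} The only subtlety is verifying the minimum-equality $w_D(x-r) = \min\{d_D, v(z_D-r)\}$ without clash at the equality locus; this is handled by noting that $w_D$, being a valuation on the function field of a regular scheme with $\pi$ a local parameter on $E_D$, agrees with the Gauss-type valuation on $\bar K(x_D)$ associated to reduction modulo $\pi$, so sums with distinct leading terms mod $\pi$ behave as expected. Everything else is formal blow-up bookkeeping combined with the explicit local charts, making the induction run cleanly.
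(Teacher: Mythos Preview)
Your argument is correct and the structural part (proper, regular, semistable, dual graph via iterated blow-ups at smooth points of the special fibre) is exactly the paper's reasoning, just written out in more inductive detail.

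For the valuation claim you take a slightly different route from the paper. You factor $f(x)=c_f\prod_r(x-r)$ over $\bar K$, compute $w_D(x-r)=\min\{d_D,v(z_D-r)\}$ termwise, and then invoke the root-formula \eqref{formula for nu lemma} for $\nu_D$. The paper instead works directly with the coefficient definition $\nu_D(f)=\min_i\{v(c_i)+d_Di\}$: substituting $x=\pi^{d_D}x_D+z_D$ gives $\pi^{-\nu_D(f)}f(x)=\sum_i c_i\pi^{d_Di-\nu_D(f)}x_D^i$, which by the very definition of $\nu_D(f)$ lies in $\mathcal{O}[x_D]\setminus(\pi)$, so its $\mu_D$-valuation is $0$. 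The paper's route is marginally cleaner in that it never needs to extend $w_D$ to $\bar K(x)$ or worry about the equality case $v(z_D-r)=d_D$ (your ``main obstacle''), whereas your route makes the connection to the geometric picture of roots and discs more transparent. Both are short and valid.
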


\begin{proof}
Since  $\mathcal{Y}_\mathcal{D}$ is obtained by iteratively blowing up $\P^1_{\mathcal{O}}$ at a finite set of closed points on the special fibre it follows that $\mathcal{Y}_\mathcal{D}$ is proper and regular. Since none of these points were intersection points between components of the special fibre, $\mathcal{Y}_\mathcal{D}$ is  semistable (as can also be seen from the explicit charts covering $\mathcal{Y}_\mathcal{D}$). The dual graph is equal to $T_\mathcal{D}$ by construction.

Finally, write $\mu_D$ for the valuation associated to $E_D$. It is the valuation on $\Knr(x)$ associated to the prime ideal $(\pi)$ of $\mathcal{O}[x_D]$, where $x=\pi^{d_D}x_D+z_D$. Since both $\nu_D$ and $\mu_D$ are valuations, to show that they are equal it suffices to show they agree on all polynomials. Now for $f(x)\in K^{\textup{nr}}[x]$, it follows from the definition of $\nu_D$ that \[\pi^{-\nu_D(f)}f(x)=\pi^{-\nu_D(f)}f(\pi^{d_D}x_D+z_D)\] is in $\mathcal{O}[x_D]$ but not in the ideal $(\pi)$. Thus $\mu_D(\pi^{-\nu_D(f)}f(x))=0$ and the result follows.
\end{proof}

\subsection{The divisor of a polynomial on the model $\mathcal{Y}_\mathcal{D}$}

We now describe the divisor of a polynomial $f(x)\in K^{\textup{nr}}[x]$ on the model $\mathcal{Y}_\mathcal{D}$, first introducing some notation with which to describe the result. 

\begin{definition} \label[definition]{kappa def}	
Let $D$ be an integral disc and $f(x)\in K^{\textup{nr}}[x]$. Define
\[\kappa_D(f)=\nu_D(f)-\nu_{P(D)}(f).\]
\end{definition}

Note that if $f(x)$ is a polynomial with (multi)set of roots $\mathcal{R}$ then \Cref{formula for nu lemma} gives
\begin{equation} \label[equation]{formula for kappa lemma}
\kappa_D(f)= |D \cap \mathcal{R}|+\sum_{\substack{r\in \mathcal{R}\\ d_D-1<v(r-z_D)<d_D}}v\left(\frac{r-z_D}{\pi^{d_D-1}}\right)\quad\quad\geq 0.
\end{equation}


\begin{proposition} \label[proposition]{main model properties}  \label[proposition]{basic model lemma}
Let $\mathcal{D}$ be an admissible collection of discs, $\mathcal{Y}_{\mathcal{D}}/\mathcal{O}$  the associated model of $\P^1_{\Knr}$ and $f(x)\in K^{\textup{nr}}[x]$ a polynomial with (multi)set of roots $\mathcal{R}\subseteq \bar{K}$. Let $Z\in \textup{Div}(\mathcal{Y}_\mathcal{D})$ denote the divisor
\[Z=\textup{div}(f)+\textup{deg}(f)\overline{\{\infty\}},\]
and let $Z_\textup{ver}$ (resp. $Z_\textup{hor}$) denote its vertical (resp. horizontal) parts.
\begin{itemize}
\item[(i)] We have \[Z_\textup{ver}=\sum_{D\in \mathcal{D}}\nu_D(f)E_D.\]

\item[(ii)] $Z_\textup{hor}$
  does not meet any of the intersection points between components of the special fibre of $\mathcal{Y}_{\mathcal{D}}$ if and only if
\begin{equation} \label[equation]{intersection with special fibre condition 1}
\kappa_D(f)=|D\cap \mathcal{R}| ~~\quad \textup{ for all }D\in \mathcal{D} \textup{ with }D\neq D_\textup{max}.
\smallskip
\end{equation}
\end{itemize}
Suppose  that \Cref{intersection with special fibre condition 1} holds. Then
\begin{itemize}
\item[(iii)] for each $D\in \mathcal{D}$, $Z_\textup{hor}$ meets $E_D$ precisely at the points $x_D=\textup{red}_D(D')$  for $D'$ a maximal integral subdisc of $D$ with $D'\notin \mathcal{D}$ and $\kappa_{D'}(f)>0$, unless $D=D_\textup{max}$ and $\kappa_D(f)<\textup{deg}(f)$, in which case it additionaly meets $E_{D}$ at the point at infinity. 

\item[(iv)] $Z_\textup{hor}$ is regular if and only if 
\begin{equation*} \label[equation]{intersection with special fibre condition 2}
\kappa_{D_{\textup{max}}}(f)\in \{\textup{deg}(f),\textup{deg}(f)-1\}
\end{equation*}
and for all integral discs $D$ with $P(D)\in \mathcal{D}$ and $\kappa_D(f)\geq 2$, we have $D\in \mathcal{D}$ also.
\end{itemize}
\end{proposition}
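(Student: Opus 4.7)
My plan is to attack the four parts in order, with the main thrust being an analysis of how the closures of roots of $f(x)$ meet the special fibre of $\mathcal{Y}_\mathcal{D}$, via the explicit charts $U_D$ constructed in \Cref{construction of the p1 sect}.

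For (i), I will simply invoke \Cref{basic model lemma1}: the valuation on $K^{\textup{nr}}(x)$ corresponding to the generic point of $E_D$ is (the restriction of) $\nu_D$, so the multiplicity of $E_D$ in $\textup{div}(f)$ is $\nu_D(f)$. Since the section $\overline{\{\infty\}}$ is horizontal, adding $\deg(f)\overline{\{\infty\}}$ does not change the vertical part.

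For (ii) and (iii), the key observation is that for each root $r\in\mathcal{R}$, the closure $\overline{\{r\}}$ of the corresponding closed point of the generic fibre meets the special fibre as follows: if there is a smallest $D \in \mathcal{D}$ with $r \in D$, then $\overline{\{r\}}$ reduces to $\textup{red}_D(r)\in E_D$ (readable in the chart $U_D$, since $r$ corresponds to $x_D = (r-z_D)/\pi^{d_D}$), and otherwise reduces to $\infty \in E_{D_{\max}}$. I will then check that this smooth point coincides with the intersection point $E_D \cap E_{P(D)} = \textup{red}_{P(D)}(D)$ precisely when $r$ lies in the ``annulus'' $\{y:d_{P(D)}<v(y-z_D)<d_D\}$: such an $r$ has $\textup{red}_{P(D)}(r)=\textup{red}_{P(D)}(D)$ but $r \notin D$. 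Summing over $r\in\mathcal{R}$ and comparing with the formula \eqref{formula for kappa lemma}, the existence of such annular roots for some $D \in \mathcal{D}$, $D \neq D_{\max}$, is exactly the failure of $\kappa_D(f) = |D \cap \mathcal{R}|$, proving (ii). Under (ii), the remaining reductions all land at smooth points; enumerating which pairs $(D, D')$ (with $D' \notin \mathcal{D}$ a maximal integral subdisc of $D \in \mathcal{D}$) receive at least one root gives (iii), since ``at least one root in $D'$ or in the annulus between $D$ and $D'$'' is equivalent to $\kappa_{D'}(f) > 0$. The case of $\infty\in E_{D_{\max}}$ is handled analogously.

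For (iv), I work locally in $U_D$ at a smooth point $P = \textup{red}_D(D')$ as above. Writing $g(x_D) = \pi^{-\nu_D(f)}f(\pi^{d_D}x_D + z_D) \in \mathcal{O}[x_D]$, the local ring of $Z_{\textup{hor}}$ at $P$ is $\mathcal{O}[x_D]_{(x_D-a,\pi)}/(g)$, where $a$ is a lift of $\textup{red}_D(D')$. Regularity (i.e., a DVR) is equivalent to the maximal ideal being principal; expanding $g = (x_D-a)^k h(x_D) + \pi\cdot j(x_D)$ with $h(a)$ a unit, I find that the local ring is a DVR iff $j(a) \not\equiv 0 \bmod \pi$, i.e., $v(g(a)) = 1$. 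A direct computation using $f(x) = c_f \prod_r(x-r)$ shows $v(g(a)) = v(f(\pi^{d_D}a + z_D)) - \nu_D(f) = \nu_{D'}(f) - \nu_D(f) = \kappa_{D'}(f)$ (since $\pi^{d_D}a+z_D$ is a valid centre of $D'$), and that $\kappa_{D'}(f)$ is always an integer (by grouping contributions from Galois orbits and using that each orbit contributes an integer). Thus regularity at $P$ holds iff $\kappa_{D'}(f) \leq 1$, giving the second condition in (iv). A parallel computation at $\infty \in E_{D_{\max}}$ in the chart $W_{D_{\max}}$ (with coordinate $t_{D_{\max}}$) yields the first condition $\kappa_{D_{\max}}(f) \in \{\deg(f), \deg(f)-1\}$.

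The main technical obstacle will be the computation in (iv): carefully identifying $v(g(a))$ with $\kappa_{D'}(f)$, and then showing that this numerical condition really is equivalent to regularity of the local ring even when $f$ has Galois-conjugate roots contributing collectively. The point is that a single Galois orbit of roots reducing to $P$ yields a regular (though possibly ramified) closure precisely when its $\kappa$-contribution equals $1$, while any $\kappa$-contribution of $\geq 2$, whether from multiple orbits or a single ``deep'' orbit, forces the local ring to fail to be a DVR.
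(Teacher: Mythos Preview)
Your approach to (i) matches the paper's. For (ii)--(iii), you take a more geometric route than the paper: you track where the closure of each root $r\in\mathcal{R}$ meets the special fibre, whereas the paper works purely algebraically with the auxiliary polynomial $g_D(s_D,t_D)$ on the chart $W_D$ and a Newton-polygon argument (\Cref{intersection lemma}). Your picture is correct when $r\in K^{\textup{nr}}$, but for non-rational roots ``the closure of $r$'' is really the closure of a Galois-orbit point, and one needs either a base-change argument or a direct check that the local equation $g_D$ vanishes at the node; the paper's algebraic route sidesteps this entirely.

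There is, however, a genuine gap in your treatment of (iv). The identity you assert,
\[
v(g(a)) \;=\; v\bigl(f(\pi^{d_D}a+z_D)\bigr)-\nu_D(f)\;=\;\nu_{D'}(f)-\nu_D(f)\;=\;\kappa_{D'}(f),
\]
fails at the middle step: for a centre $z$ of $D'$ one only has $v(f(z))\geq \nu_{D'}(f)$, with strict inequality whenever $z$ happens to lie closer than $d_{D'}$ to some root. So $v(g(a))\geq \kappa_{D'}(f)$, not equality, and the discrepancy depends on the choice of lift $a$. Correspondingly, your regularity criterion ``$v(g(a))=1$'' is incomplete: the local ring $\mathcal{O}[x_D]_{\mathfrak m}/(f_D)$ is regular iff $f_D\notin\mathfrak m^2$, which unpacks to ``$v(c_0')\leq 1$ \emph{or} $v(c_1')=0$'' where $c_i'$ is the $i$th coefficient of $f_D$ expanded about $a$. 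When the minimum in $\kappa_{D'}(f)=\min_i\{v(c_i')+i\}$ is achieved at $i=1$ rather than $i=0$, your test gives the wrong answer. The fix, as in the paper's \Cref{regularity lemma}, is precisely to compute $\min_i\{v(c_i')+i\}$ and identify it with $\kappa_{D'}(f)$ (valid for \emph{any} centre, since $\nu_{D'}(f)=\min_i\{v(c_i)+id_{D'}\}$); then ``$\kappa_{D'}(f)=1$'' is exactly the disjunction above.
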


We postpone the proof to the end of the section, beginning by defining certain auxhilliary polynomials associated to $f(x)$. 

\begin{definition} \label[definition]{defi of the polynomials}
Let $f(x)\in \Knr[x]$ and $D\in \mathcal{D}$. We define
\[f_D(x_D)=\pi^{-\nu_D(f)}f(\pi^{d_D}x_D+z_D)\in \mathcal{O}[x_D].\]
If $D=D_{\textup{max}}$ we define
\[g_D(t_D)=t_D^{\textup{deg}(f)}f_D(1/t_D)\in \mathcal{O}[t_D],\]
whilst if $D\neq D_\textup{max}$ we define
\[g_D(s_D,t_D)=\sum_{i\geq 0}\hat{c}_is_D^{v(c_i)+d_Di-\nu_D(f)}t_D^{v(c_i)+(d_D-1)i-\nu_{P(D)}(f)}\in \mathcal{O}[s_D,t_D],\]
where $c_i$ is the coefficient of $x^i$ in $f(x+z_D)$ and $\hat{c}_i=c_i\pi^{-v(c_i)}$.
\end{definition}

\begin{remark} \label[remark]{disc polynomials remark2}
Inside $K^{\textup{nr}}(\mathcal{Y}_\mathcal{D})$ we have (cf. \Cref{function field relations hold})
\[f_D(x_D)=\pi^{-\nu_D(f)}f(x),\phantom{hello}g_{D_{\textup{max}}}(t_{D_\textup{max}})=t_{D_\textup{max}}^{\textup{deg}(f)}\pi^{-\nu_D(f)}f(x)\phantom{hi}\]
and
\[\phantom{hi}g_D(s_D,t_D)=s_D^{-\nu_D(f)}t_D^{-v_{P(D)}(f)}f(x).\]
In particular, upon proving \Cref{basic model lemma}(i) it follows that, for each $D$, $f_D$ (resp. $g_D$) gives a local equation for $Z_\textup{hor}$ on $U_D\setminus  \mathcal{P}_D$ (resp. $W_D\setminus (\mathcal{P}_D \cup \mathcal{Q}_D)$).
\end{remark}

\begin{lemma} \label[lemma]{intersection lemma}
Let $f(x)\in \Knr[x]$ have (multi)set of roots $\mathcal{R}\subseteq \bar{K}$. Let $D\in \mathcal{D}$ with $D\neq D_\textup{max}$ and let $R_D$ denote the intersection point between the two components of the special fibre of $W_D$. Then $R_D$ lies on the closed subscheme $\{g_D(s_D,t_D)=0\}$ of $W_D$ if and only if 
\[\kappa_D(f)\neq| D \cap \mathcal{R}|.\]
\end{lemma}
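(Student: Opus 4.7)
The key observation is that the point $R_D$ corresponds to the maximal ideal $(s_D,t_D,\pi)$ of $\mathcal{O}[s_D,t_D]/(\pi-s_Dt_D)$, so $R_D\in\{g_D=0\}$ if and only if $g_D(0,0)\equiv 0\pmod{\pi}$. I would first analyse $g_D$ modulo $\pi$ directly from its definition. Writing
\[
g_D(s_D,t_D)=\sum_{i\geq 0}\hat{c}_i\, s_D^{\,v(c_i)+d_Di-\nu_D(f)}\, t_D^{\,v(c_i)+(d_D-1)i-\nu_{P(D)}(f)},
\]
a monomial contributes to $g_D(0,0)$ precisely when both exponents vanish. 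Subtracting these two equalities forces $i=\nu_D(f)-\nu_{P(D)}(f)=\kappa_D(f)$, and conversely, once $i=\kappa_D(f)$, one equality implies the other. Thus at most one index contributes to $g_D(0,0)$, and we obtain
\[
g_D(0,0)\equiv\begin{cases}\hat{c}_{\kappa_D(f)}\pmod{\pi}&\text{if }v(c_{\kappa_D(f)})+d_D\kappa_D(f)=\nu_D(f),\\ 0\pmod{\pi}&\text{otherwise.}\end{cases}
\]
Since each $\hat c_i$ is a unit, this reduces the lemma to showing
\[
v(c_{\kappa_D(f)})+d_D\,\kappa_D(f)=\nu_D(f)\quad\Longleftrightarrow\quad \kappa_D(f)=|D\cap\mathcal R|.
\]

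The cleanest way to finish is via the Newton polygon of $f(x+z_D)=\sum c_i x^i$. The set $A=\{i:v(c_i)+d_Di=\nu_D(f)\}$ is an interval $[i_{\min},i_{\max}]$ whose right endpoint $i_{\max}$ equals the number of roots of $f(x+z_D)$ of valuation $\ge d_D$, namely $|D\cap\mathcal R|$ (since $d_D\in\mathbb Z$). On the other hand, from \Cref{formula for kappa lemma}, $\kappa_D(f)=|D\cap\mathcal R|+\sum_{r:\,d_D-1<v(r-z_D)<d_D}\bigl(v(r-z_D)-(d_D-1)\bigr)\ge|D\cap\mathcal R|$, with equality iff no root of $f$ satisfies $d_D-1<v(r-z_D)<d_D$.

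It follows that $\kappa_D(f)\ge i_{\max}$ always, and so membership $\kappa_D(f)\in A=[i_{\min},i_{\max}]$ occurs exactly when $\kappa_D(f)=i_{\max}=|D\cap\mathcal R|$. Combined with the computation of $g_D(0,0)\bmod\pi$ above, this yields the lemma. The only real subtlety is identifying $i_{\max}$ with $|D\cap\mathcal R|$ via the Newton polygon (or equivalently by applying \Cref{formula for nu lemma} to the tails of $f$ and taking differences); once that identification is made, the rest is bookkeeping.
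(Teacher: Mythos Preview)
Your argument is correct and follows essentially the same route as the paper. Both proofs reduce the question to whether the constant term of $g_D$ is a unit, identify this with a single index $i=\kappa_D(f)$ achieving a certain minimum, and then finish via the Newton polygon of $f(x+z_D)$ (you) or of $f(\pi^{d_D-1}x+z_D)$ (the paper). The paper's index $j$ is the leftmost point on the supporting line of slope $-(d_D-1)$, while your $i_{\max}$ is the rightmost point on the supporting line of slope $-d_D$; these coincide exactly when no root satisfies $d_D-1<v(r-z_D)<d_D$, which is precisely $\kappa_D(f)=|D\cap\mathcal R|$.

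One small correction: the set $A=\{i:v(c_i)+d_Di=\nu_D(f)\}$ need not be an interval (intermediate indices can lie strictly above the supporting line). This does not affect your argument, since you only use that $i_{\max}=\max A=|D\cap\mathcal R|$ together with $\kappa_D(f)\ge |D\cap\mathcal R|$; from these, $\kappa_D(f)\in A\Leftrightarrow \kappa_D(f)=i_{\max}$ follows without any interval structure.
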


\begin{proof}
Note that $R_D$ corresponds to the maximal ideal $(s_D,t_D)$ of $\mathcal{O}[s_D,t_D]/(\pi - s_Dt_D)$.
To simplify notation, for each $0\leq i \leq \textup{deg}(f)$ write $\lambda_i=v(c_i)+(d_D-1)i$, so that $\nu_D(f)=\textup{min}_i\{\lambda_i+i\}$, $\nu_{P(D)}(f)=\textup{min}_i\{\lambda_i\}$ and
\[g(s_D,t_D)=\sum_i \hat{c}_i s_D^{\lambda_i+i-\nu_D(f)}t_D^{\lambda_i-\nu_{P(D)}(f)}\]
for $\hat{c}_i$ as in \Cref{defi of the polynomials}.
Now $(s_D,t_D)$ does not lie on $\{g_D(s_D,t_D)=0\}$ if and only if 
 $g_D(s_D,t_D)$ has non-zero constant term, or equivalently if and only if
 \[\textup{min}_i\{\lambda_i+i\}=\textup{min}_i\{\lambda_i\}+j\]
where $j$ is the smallest integer such that $\lambda_j=\textup{min}_i\{\lambda_i\}$. That is, if and only if  $\kappa_D(f)=j$ for $0\leq j \leq \textup{deg}(f)$ as above.

Considering the Newton polygon of the polynomial $f(\pi^{d_D-1}x+z_D)$, the valuation of the $i$th coefficient of which is $\lambda_i$, we see that $j$ is equal to the sum of the lengths of the projections onto the horizontal axis of all segments in the Newton polygon having strictly negative slope. By standard properties of Newton polygons this is equal to the number of roots of $f(\pi^{d_D-1}x+z_D)$ having strictly positive valuation or, equivalently, equal to the number of roots $r$ of $f(x)$ satisfying
$v(r-z_D)>d_{D}-1.$
From \Cref{formula for kappa lemma}  we see that $\kappa_D(f)$ is equal to the number of such roots if and only if $\kappa_D(f)=|D \cap \mathcal{R}|$. 
\end{proof}

\begin{lemma} \label[lemma]{regularity lemma}
Let $f(x)\in \Knr[x]$. 
\begin{itemize}
\item[(i)] Let $D'$ be a maximal integral subdisc of some $D\in \mathcal{D}$. Then the closed subscheme $\{f_D(x_D)=0\}$ of $U_D$ meets the point $x_D=\textup{red}_{D}(D')$ on the special fibre of $U_D$ if and only if
$\kappa_{D'}(f)>0,$
and if this is the case, is regular at this point if and only if 
$\kappa_{D'}(f)=1.$

\item[(ii)] The closed subscheme $\{g_{D_\textup{max}}(t_{D_\textup{max}})=0\}$ of $W_{D_\textup{max}}$ meets the point  $t_{D_\textup{max}}=0$  on the special fibre if and only if $\kappa_{D_\textup{max}}(f)<\textup{deg}(f)$, in which case it is regular here if and only if $\kappa_{D_\textup{max}}(f)=\textup{deg}(f)-1$.
\end{itemize}
\end{lemma}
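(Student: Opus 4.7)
The approach is to compute, for each of the two parts, the order of the defining polynomial at the relevant closed point of the regular two-dimensional $\mathcal{O}$-scheme $U_D$ or $W_{D_\textup{max}}$. By standard commutative algebra, a principal divisor meets a closed point of a regular scheme iff its defining equation lies in the maximal ideal there, and is regular at that point iff the equation has order exactly $1$ (i.e.\ lies in $\mathfrak{m}\setminus\mathfrak{m}^2$). So the whole lemma reduces to showing that the order of $f_D$ at the point $x_D=\textup{red}_D(D')$ on $U_D$ equals $\kappa_{D'}(f)$, and that the order of $g_{D_{\textup{max}}}$ at $t_{D_\textup{max}}=0$ equals $\deg(f)-\kappa_{D_\textup{max}}(f)$.

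For (i), the plan is to change variable to $y=x_D-\tilde{a}$, where $\tilde{a}=(z_{D'}-z_D)/\pi^{d_D}\in\mathcal{O}$, and to exploit the identity $y=(x-z_{D'})/\pi^{d_D}$ which recentres the coordinate from $D$ to its subdisc $D'$. Expanding $f(x)=\sum_i c_i (x-z_{D'})^i$ with $c_i\in K^{\textup{nr}}$ then turns the defining polynomial $f_D(x_D)=\pi^{-\nu_D(f)}f(\pi^{d_D}x_D+z_D)$ into $\sum_i c_i\pi^{id_D-\nu_D(f)}y^i$. Since distinct monomials in $\pi$ and $y$ remain linearly independent in the graded ring $\bar{k}[\pi,y]$ associated to the regular system of parameters $(\pi,y)$, the order of this expression at the maximal ideal $(\pi,y)$ is simply $\min_i(v(c_i)+id_D-\nu_D(f)+i)$. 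Using $d_{D'}=d_D+1$ and $\nu_{P(D')}(f)=\nu_D(f)$ (since $D=P(D')$), this rearranges to $\nu_{D'}(f)-\nu_{P(D')}(f)=\kappa_{D'}(f)$, as required.

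Part (ii) then follows by the same recipe applied to $g_{D_\textup{max}}(t)=t^{\deg f}f_{D_\textup{max}}(1/t)$: the coefficient of $t^{\deg f - j}$ equals $a_j=c_j\pi^{jd_{D_\textup{max}}-\nu_{D_\textup{max}}(f)}$, where $c_j$ is the coefficient of $x^j$ in $f(x+z_{D_\textup{max}})$, so the order at $(\pi,t)$ works out to $\min_j(v(c_j)+j(d_{D_\textup{max}}-1))+\deg f-\nu_{D_\textup{max}}(f)=\deg f-\kappa_{D_\textup{max}}(f)$. There is no genuine obstacle; the main care needed is simply in tracking how the coordinate change in (i) shifts the reference centre and hence which $\nu$ appears in the minimisation, and in remembering that maximal integral subdiscs have depth one greater than their parents so that the factor $d_D+i$ inside the $\min$ regroups into the definition of $\nu_{D'}$. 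Everything else is bookkeeping.
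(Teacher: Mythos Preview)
Your proof is correct and follows essentially the same route as the paper: the same recentring $y=x_D-(z_{D'}-z_D)/\pi^{d_D}$, the same expansion $f_D=\sum_i c_i'\,y^i$ with $c_i'=c_i\pi^{id_D-\nu_D(f)}$, and the same identification $\min_i(v(c_i')+i)=\nu_{D'}(f)-\nu_D(f)=\kappa_{D'}(f)$. The only cosmetic difference is that you phrase the computation uniformly as ``the order of $f_D$ at $(\pi,y)$ equals $\kappa_{D'}(f)$'' via the associated graded $\bar{k}[\pi,y]$, whereas the paper checks membership in $\mathfrak{n}$ and $\mathfrak{n}^2$ separately (citing \cite[Lemma 4.2.2]{LiuA}) before observing that both conditions are governed by $\min_i(v(c_i')+i)$; the content is the same.
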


\begin{proof}
(i): Fix a centre $z_{D'}$ of $D'$ so that the point $x_D=\textup{red}_{D}(D')$ in question corresponds to the maximal ideal $\mathfrak{n}=(\pi,x_D-\alpha_{D'})$ of $\mathcal{O}[x_D]$, where $\alpha_{D'}=(z_{D'}-z_D)/\pi^{d_D}$. Denoting by $c_i$ the coefficient of $x^i$ in $f(x+z_{D'})$ we have  
\[v_{D'}(f)=\textup{min}_{i}\{v(c_i)+(d_{D}+1)i\}\]
and
\[f_D(x_D)=\sum_i c_i'(x_D-\alpha_{D'})^i\]
where $c_i'=c_i\pi^{id_D-\nu_D(f)}$. 
 Now $f_D(x_D)$ is in $\mathfrak{n}$ if and only if $v(c_0')>0$ and, when it is, $\mathcal{O}[x_D]/(f_D(x_D))$ is regular at $\mathfrak{n}$ if and only if $f_D(x_D)\notin\mathfrak{n}^2$ (see \cite[Lemma 4.2.2]{LiuA}). This happens if and only if either $v(c_0')=1$ or $v(c_1')=0$ (or both). 
 
 Now  $v(c_0')>0$ (resp. $v(c_0')=1$ or $v(c_1')=0$) if and only if
\[\textup{min}_i\{v(c_i')+i\}>0~~~(\textup{resp. }\textup{min}_i\{v(c_i')+i\}=1)\]
and since
\[\textup{min}_i\{v(c_i')+i\}=\textup{min}_i\{v(c_i)+(d_D+1)i-\nu_D(f)\}=\nu_{D'}(f)-\nu_D(f)=\kappa_{D'}(f)\]
we are done.

(ii): This may be proved analogously to (i).
\end{proof}

\begin{proof}[Proof of \Cref{main model properties}]
(i). Follows from \Cref{basic model lemma1} which shows that $\nu_D$ is the valuation on $K^{\textup{nr}}(x)$ corresponding to  $E_D$. 

(ii). Follows from \Cref{intersection lemma} noting that by \Cref{disc polynomials remark2}, for any $D\neq D_{\textup{max}}$, $g_D(s_D,t_D)$ is a local equation for $Z_\textup{hor}$ on $W_D\setminus (\mathcal{P}_D\cup \mathcal{Q}_D)$, where the  intersection point between $E_D$ and $E_{P(D)}$ corresponds, in the notation of the lemma, to the point $R_D$. 

(iii) and (iv). The argument is similar to that of (ii) and follows by combining  \Cref{disc polynomials remark2}, \Cref{regularity lemma}, (ii) and the description of the closed points on the special fibre of $\mathcal{Y}_{\mathcal{D}}$ afforded by \Cref{function field relations hold}. We note that the condition $D\in \mathcal{D}$ in (iv) ensures that the point $x_{P(D)}=\textup{red}_{P(D)}(D)$ is removed from $U_{P(D)}$ when forming the model.  
\end{proof}

\def\fr{{\mathfrak r}}
\def\R{{\mathcal R}}
\def\optional#1{{\color{cyan}#1}}
\def\d#1{d_#1}

%
\section{An explicit model of $\P^1$ assuming the semistability criterion} \label[section]{ydisc_section}

Suppose now that $C/K:y^2=f(x)$ is a hyperelliptic curve satisfying the semistability criterion  (\Cref{semistability criterion}). In this section we associate to $C$   an admissible  collection of discs (in the sense of \Cref{{admissible defi}}) which we call `valid discs', and which are closely related to the cluster picture of $C$. Using the results of the previous section we then study the divisor of $f(x)$ on the associated model  of $\P^1_{\Knr}$, the main result being \Cref{branch locus lemma}. This shows in particular that the conditions of \cite[Lemma 2.1]{Sri} are satisfied, so that  the  normalisation of this  model  of $\P^1_{\Knr}$ in the function field $K^{\textup{nr}}(C)$  is a regular proper model of $C/K^{\textup{nr}}$. This, and its consequences, are treated in \Cref{scdisc}. We remark that this approach to constructing regular models of hyperelliptic curves in not new and appears in  \cite{Kau}, \cite{Sri} and \cite{BW}, although each of these assumes that all roots of $f(x)$ lie in $K$ (or at least $K^{\textup{nr}}$). In particular, these constructions do not cover all curves satisfying the semistability criterion as is needed in our situation.

\subsection{Notation}
 In this section we freely apply the notation and terminology associated to clusters as set out in \Cref{clusternotation}.  As in the introduction we denote by $\mathcal{R}\subseteq\Kbar$ the set of roots of $f(x)$.   We also frequently use the following additional definition. 

\begin{definition} \label[definition]{no:disc}
If $\s\subseteq \mathcal{R}$ is a proper cluster we call the unique smallest disc cutting it out the \textit{defining disc of} $\s$, and denote it $D(\s)$. 
\end{definition}

Note that for any proper cluster $\s$, the disc $D(\s)$ has depth $d_\mathfrak{s}$ and, in the notation of \Cref{no:nu,nu and kappa sub}, it follows from \Cref{formula for nu lemma} that $\nu_{D(\s)}(f)=\nu_\s$. 

We also note here that by part (1) of the semistability criterion, all roots of $f(x)$ are defined over the unique quadratic extension of $K^{\textup{nr}}$. In particular, every proper cluster $\s\subseteq \mathcal{R}$ has depth $d_\s\in \frac{1}{2}\mathbb{Z}$, and if $r\in \mathcal{R}$ with $r\notin K^{\textup{nr}}$ then denoting by $r'$ its inertia-conjugate root we have  $v(r-r')\in 1/2+\mathbb{Z}$.


\subsection{The collection of valid discs}

 To first approximation the set of valid discs consists of all discs of the form $D(\s)$ for a proper cluster $\s$, along with all integral discs between them. However, the precise definition is slightly more complicated, mainly owing to the failure of some proper clusters to have integer depth. The following lemma characterises this phenomenon. 
 
\begin{lemma} \label[lemma]{integral clusters}
Let $\s\subseteq \mathcal{R}$ be a proper cluster. Then $\s$ has a centre in $\Knr$. Moreover, $d_\mathfrak{s}\in \mathbb{Z}$ unless we have either
\begin{itemize}
\item[(i)] $\s=\{r,r'\}$,
or
\item[(ii)] $\s=\mathcal{R}$ has a unique proper child $\s'$, which has size $2g$, and $\s\setminus \s'=\{r,r'\}$,
\end{itemize}
for two inertia-conjugate roots $r,r'\notin K^{\textup{nr}}$. In these cases we have $d_\s\in 1/2+\mathbb{Z}$.
\end{lemma}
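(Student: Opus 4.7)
The plan has three substantive pieces: constructing a centre in $K^{\textup{nr}}$, working out the local structure of $\s$ when $d_\s\notin\Z$, and then using part (3) of the semistability criterion to reduce the possibilities to exactly (i) and (ii).

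For the centre, note that by part (1) of the criterion every root of $f$ lies either in $K^{\textup{nr}}$ or in the unique quadratic ramified extension $L=K^{\textup{nr}}(\varpi)$, where we may take $\varpi^2=\pi$ since the residue field of $K^{\textup{nr}}$ is algebraically closed and $p\ne 2$. I would pick any $r\in\s$: if $r\in K^{\textup{nr}}$ set $z_\s=r$; otherwise $\sigma(r)\in\s$ by the $I_K$-invariance from part (2) of the criterion (with $\sigma$ the nontrivial element of $\Gal(L/K^{\textup{nr}})$), and then $z_\s=(r+\sigma(r))/2\in K^{\textup{nr}}$. A short check using the ultrametric inequality together with the $I_K$-invariance of $\s$ confirms $v(z_\s-s)\ge d_\s$ for every $s\in\s$, so $z_\s$ is indeed a centre.

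With $z_\s\in K^{\textup{nr}}$ fixed, I would write each $r\in\s$ as $r=z_\s+a_r+b_r\varpi$ with $a_r,b_r\in K^{\textup{nr}}$, so that $v(z_\s-r)=\min(v(a_r),v(b_r)+\tfrac12)$. Supposing $d_\s\in\tfrac12+\Z$, the minimum defining the depth must be realised in the $\varpi$-component, forcing every ``deep'' root (one with $v(z_\s-r)=d_\s$) to satisfy $v(b_r)=d_\s-\tfrac12$, $v(a_r)>d_\s$, and hence to lie in $L\setminus K^{\textup{nr}}$. Deep roots therefore come in $\sigma$-conjugate pairs with $v(r-\sigma(r))=d_\s$, and a direct computation gives, for deep roots with $r_2\ne\sigma(r_1)$, that $v(r_1-r_2)>d_\s$ iff $b_{r_1}\equiv b_{r_2}\pmod{\pi^{d_\s+1/2}}$; in particular $r$ and $\sigma(r)$ can never lie in a common child. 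Combined with the $I_K$-invariance of proper children (part (2) of the criterion), every child of $\s$ containing a deep root must be the singleton $\{r\}$; on the other hand a parallel ultrametric estimate places all non-deep roots in a single common child.

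Let $D$ denote the (even, $\ge 2$) number of deep roots and $N$ the number of non-deep roots; the above yields $\s$ with exactly $D$ singleton children and at most one further child of size $N$. Since $d_\s\notin\Z$, part (3) of the semistability criterion forbids $\s$ from being principal, and the main obstacle is to combine this with Definition \ref{principaldefi} to narrow down the structure. Our child-count makes the ``$\s=\mathcal{R}$ even with exactly two children'' exception force $D=2$, $N=0$, hence $|\s|=2$, which already falls under the twin exception (i). The ``child of size $2g$'' exception cannot be met by a singleton child (as $g\ge 2$), so it must be the non-deep child, giving $N=2g$; combined with $D\ge 2$ even and $|\s|\le|\mathcal{R}|\le 2g+2$ this forces $D=2$, $|\s|=2g+2$, and $\s=\mathcal{R}$, which is exactly (ii). The half-integrality in (i) is immediate from $d_\s=v(r-\sigma(r))=v(b_r)+\tfrac12$; in (ii) it follows because the fact that $\{r\}$ and $\{r'\}$ are distinct singleton children of $\mathcal{R}$ forces $v(r-r')=d_\mathcal{R}$, which must lie in $\tfrac12+\Z$ since $b_r\in K^{\textup{nr}}$.
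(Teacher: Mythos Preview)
Your proof is correct but organised differently from the paper's. For the centre, the paper simply cites \Cref{lem:invcentre} (valid for any tamely ramified situation), whereas you construct it by hand via the averaging $z_\s=(r+\sigma r)/2$, exploiting the specific quadratic structure guaranteed by part (1) of the criterion. For the depth, the paper argues in the opposite direction to you: it first asks whether $\s=\{r\}\wedge\{\sigma r\}$ for some $r\notin K^{\textup{nr}}$, and if not, observes that every child (singleton or proper) then has a centre in $K^{\textup{nr}}$, so $d_\s$ is a minimum of integers; the non-principal case analysis is then invoked only when $\s$ is such a join. You instead assume $d_\s\notin\Z$ and deduce the child structure from explicit $\varpi$-coordinates (deep roots are singleton children, non-deep roots coalesce into at most one child), reaching the same non-principal case split from there. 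Your route is more self-contained, avoiding the appeal to \Cref{lem:invcentre} and making the structure of $\s$ when $d_\s\notin\Z$ completely explicit; the paper's is shorter and more conceptual, reusing the centre result inductively on children. The final case analysis using Definition~\ref{principaldefi} is essentially the same in both.
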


\begin{proof}
Since the hyperelliptic curve $C:y^2=f(x)$ satisfies the semistability criterion, $\mathcal{R}$ is tamely ramified and every proper cluster inertia invariant. In particular, every proper cluster has centre in  $\Knr$ by \Cref{lem:invcentre}. Now for a proper cluster $\s$, if $\s=\{r\} \wedge \{ r'\}$ for two inertia-conjugate roots $r,r'\notin \Knr$, then $d_\s=v(r-r')\in 1/2+\mathbb{Z}$,  whence $\s$ is not principal (see \Cref{principaldefi}). An easy case by case analysis shows that this forces $\s$ to fall into one of cases (i) or (ii) above. Otherwise, for any two (not necessarily proper) children $\s_1$ and $\s_2$ of $\s$ we may find centres $z_1$ and $z_2$ in $\Knr$ respectively. Then $v(z_1-z_2)$ is an integer and it follows that $d_\s$, being the minimum of such valuations as $\s_1,\s_2$ vary over all children of $\s$, is an integer also.
\end{proof} 

\begin{remark}
The lemma shows that for any proper cluster $\s$, the disc $D(\s)$ is integral save when $\s$ falls into one of the cases (i) or (ii) above.
\end{remark}

\begin{definition}[Valid discs] \label[definition]{valid discs defi}
Call $\mathcal{R}$ \emph{exceptional} (of type $A$ resp. $B$) if:
\begin{itemize}
\item[(A)] ~~
~~ $\mathcal{R}$ has a unique proper child $\s$, which has size $2g$, and $d_\mathcal{R}\in 1/2+\mathbb{Z}$,
\item[(B)]  ~~$\mathcal{R}$ has size $2g+2$ and a child of size  $2g+1$.
\end{itemize}
We define the integral disc $D_\textup{max}$ as follows. If $\mathcal{R}$ is not exceptional set $D_\textup{max}=D(\mathcal{R})$. If $\mathcal{R}$ is exceptional of type A define $D_{\textup{max}}$ to be the maximal integral disc cutting out the child of size $2g$, and if $\mathcal{R}$ is exceptional of type B let $D_{\textup{max}}$ be the defining disc of the child of size $2g+1$. 

We now say that an integral disc $D$ is \emph{valid} if it is contained in $D_{\textup{max}}$ and $|D\cap \mathcal{R}|\geq 2$. We denote the collection of all valid discs by $\mathcal{D}$, noting that $\mathcal{D}$ is admissible in the sense of \Cref{admissible defi}.  
\end{definition}

\begin{remark}
When $\mathcal{R}$ is exceptional of type A, if $z_\s\in K^{\textup{nr}}$ is a centre for the child $\s$ of size $2g$ then $D_\textup{max}=D_{z_\s,d_\mathcal{R} - 1/2}$. Moreover, it follows from \Cref{integral clusters} that $d_\mathcal{R}\in 1/2+\mathbb{Z}$ alone forces $\mathcal{R}$ to be exceptional of type A.
\end{remark}

\begin{notation} \label[notation]{fix centres notation}
For the rest of this section fix a choice of centre $z_D\in K^{\textup{nr}}$ for each valid disc $D$ (this is necessary to use the notation of the previous section when talking about the model of $\mathbb{P}^1_{K^{\textup{nr}}}$ associated to the collection of valid discs, since such a choice was made in \Cref{construction of the p1 sect}). 
\end{notation}

\subsection{The quantity $\kappa_D(f)$}

By \Cref{main model properties}, to study the divisor of $f(x)$ on the model  associated to the collection of valid discs we wish to understand the quantities $\nu_D(f)$ and  $\kappa_D(f)$  for integral discs $D$. We begin by considering the second of these functions.

\begin{lemma} \label[lemma]{kappa proposition}
Let $D$ be an integral disc. Then  $\kappa_D(f)=|D\cap \mathcal{R}|$ unless either of the following hold:
\begin{itemize}
\item[(i)] $\mathcal{R}$ is exceptional of type $A$ and $D=D_{\textup{max}}$, in which case $\kappa_D(f)=\textup{deg}(f)-1$,
\item[(ii)] $D=D_{z_\t,\lceil d_\t\rceil}$ for $\t$  a twin with $d_\t\in \frac{1}{2}+\mathbb{Z}$ and $z_\t$ a centre\footnote{any two choices for $z_\t$ give the same disc, c.f. \Cref{max subdisc lemma} below.}  for $\t$ in $\Knr$, in which case $D\cap \mathcal{R}=\emptyset$ and  $\kappa_D(f)=1$. 
\end{itemize}
In particular, if $D\neq D_\textup{max}$ is a valid disc then $\kappa_D(f)=|D\cap \mathcal{R}|$.
\end{lemma}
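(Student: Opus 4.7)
The plan is to work directly from the explicit formula \eqref{formula for kappa lemma} for $\kappa_D(f)$: since $D$ is integral and the semistability criterion forces $K(\mathcal{R})/K^{\textup{nr}}$ to have ramification degree at most $2$, every $r\in\mathcal{R}$ has $v(r-z_D)\in\tfrac12\mathbb{Z}$, so the only valuations lying strictly between $d_D-1$ and $d_D$ are equal to $d_D-\tfrac12$. Each such root contributes $\tfrac12$ to the sum, giving
\[ \kappa_D(f)=|D\cap\mathcal{R}|+\tfrac12\cdot\#\{r\in\mathcal{R}\colon v(r-z_D)=d_D-\tfrac12\}. \]
So the entire lemma reduces to understanding when this correction term $m:=\tfrac12\#\{\cdots\}$ is non-zero, and what it equals.

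The first key step is to show that roots $r$ contributing to $m$ come in inertia-conjugate pairs and that each pair $\{r,r'\}$ satisfies $v(r-r')=d_D-\tfrac12$ \emph{exactly}. Since $v(r-z_D)\in\tfrac12+\mathbb{Z}$ and $z_D\in K^{\textup{nr}}$, we have $r\notin K^{\textup{nr}}$, and its inertia conjugate $r'\in\mathcal{R}$ satisfies $v(r'-z_D)=v(r-z_D)$. Writing $r-z_D=(a+b\pi^{1/2})\pi^{d_D-1}$ with $a,b\in \cO_{K^{\textup{nr}}}$ after a substitution, one sees that the ramified inertia sends $\pi^{1/2}\mapsto -\pi^{1/2}$, so $r-r'=2b\pi^{d_D-1/2}+\ldots$; in odd residue characteristic this has valuation exactly $d_D-\tfrac12$. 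Consequently the cluster $r\wedge r'$ has depth $d_D-\tfrac12\in\tfrac12+\mathbb{Z}$, and by Lemma \ref{integral clusters} it must be either (a) the twin $\{r,r'\}$ itself, or (b) the whole of $\mathcal{R}$, exceptional of type~A, with $\{r,r'\}=\mathcal{R}\setminus\s$.

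In case~(a) I will verify that $z_D$ is a centre of the twin $\t$ (since $v(z_D-r)=d_\t$), and that $\lceil d_\t\rceil=d_D$, so $D=D_{z_\t,\lceil d_\t\rceil}$. A short argument using $v(s-r)=d_{s\wedge r}<d_\t$ for $s\in\mathcal{R}\setminus\t$ shows $v(s-z_D)<d_\t<d_D$, so $D\cap\mathcal{R}=\emptyset$; only the pair $\{r,r'\}$ contributes, giving $m=1$ and hence $\kappa_D(f)=1$. Moreover the same argument shows that no second twin with this property can exist simultaneously. In case~(b), for any $s\in\s$ one has $v(s-r)=d_\mathcal{R}=d_D-\tfrac12$; if it were true that $v(z_D-s)=d_D-\tfrac12$ as well, then by the calculation of the previous paragraph the twin $\{s,s'\}$ would have depth $d_D-\tfrac12$, contradicting $d_\s>d_\mathcal{R}$. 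Hence $v(z_D-s)\ge d_D$ for every $s\in\s$, i.e.\ $\s\subseteq D$; combined with $r,r'\notin D$ this forces $D\cap\mathcal{R}=\s$ and $D=D_{\max}$, and one checks $\kappa_D(f)=2g+1=\deg(f)-1$.

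The final assertion follows immediately: a valid disc with $D\ne D_{\max}$ satisfies $|D\cap\mathcal{R}|\ge 2$ by definition of validity, ruling out~(ii); and it is not $D_{\max}$, ruling out~(i). The main technical obstacle will be the non-cancellation argument in the second paragraph and the exclusion of the partial-containment sub-case $\s\not\subseteq D$ in the analysis of the exceptional type~A case — both hinge essentially on odd residue characteristic together with the depth inequality $d_\s>d_\mathcal{R}$.
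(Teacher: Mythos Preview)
Your proof is correct and shares the paper's overall strategy: start from the formula for $\kappa_D(f)-|D\cap\mathcal{R}|$, note that any contributing root $r$ has $v(r-z_D)=d_D-\tfrac12$ and is therefore not in $K^{\textup{nr}}$, and classify $\s=\{r\}\wedge\{r'\}$ via \Cref{integral clusters}. The execution diverges in the middle, however. You compute $v(r-r')=d_D-\tfrac12$ via an explicit $\sqrt{\pi}$-expansion and then handle the two cases of \Cref{integral clusters} separately---in particular, your exceptional-type-A analysis needs the contradiction argument to rule out $v(z_D-s)=d_D-\tfrac12$ for $s$ in the size-$2g$ child. The paper instead observes that since $v(z_D-z_\s)\in\mathbb{Z}$ while $v(z_D-r)\notin\mathbb{Z}$, the ultrametric forces $v(z_D-r)=v(z_\s-r)=d_\s$; thus $z_D$ is itself a centre for $\s$, and $D$ is the unique maximal integral subdisc of $D(\s)$ from \Cref{max subdisc lemma}. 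This pins down $D$ uniformly in both cases with no further case-checking. Your route is more hands-on and makes the role of odd residue characteristic explicit, while the paper's integer/non-integer valuation trick buys a cleaner, case-free identification of $D$.
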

 
 Will will need the following easy lemma, whose proof we omit. 

\begin{lemma} \label[lemma]{max subdisc lemma}
Let $D$ be a disc with centre in $\Knr$ and non-integral depth $d_D$. Then $D$ has a unique maximal proper integral subdisc given by $D_{z_D,\lceil d_D \rceil}$ for any $z_D\in D\cap \Knr$. 
\end{lemma}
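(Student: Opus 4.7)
The plan is to verify three things in turn: the disc $D_{z_D,\lceil d_D \rceil}$ is a proper integral subdisc of $D$; it does not depend on the choice of centre $z_D \in D \cap K^{\textup{nr}}$; and every integral subdisc of $D$ is contained in it. The entire argument rests on the single observation that valuations of elements of $K^{\textup{nr}}$ are integer-valued, and so any inequality of the form $v(a) \geq d_D$ with $a \in K^{\textup{nr}}$ automatically upgrades to $v(a) \geq \lceil d_D \rceil$ because $d_D \notin \mathbb{Z}$.

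For the first point, $D_{z_D,\lceil d_D \rceil}$ has centre $z_D \in K^{\textup{nr}}$ and depth $\lceil d_D \rceil \in \mathbb{Z}$, so is integral by definition. It is contained in $D$ because $\lceil d_D \rceil \geq d_D$, and is strictly contained in $D$ because the depth $d_D$ of $D$ is realised by some pair of points $x_1, x_2 \in D$ with $v(x_1 - x_2) = d_D$, forcing at least one of $x_1, x_2$ to lie in $D \setminus D_{z_D,\lceil d_D \rceil}$ since $d_D < \lceil d_D \rceil$.

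For independence from the centre, if $z'_D \in D \cap K^{\textup{nr}}$ is any other centre, then $v(z_D - z'_D) \geq d_D$; since $z_D, z'_D \in K^{\textup{nr}}$ this valuation is an integer, hence $\geq \lceil d_D \rceil$. Therefore $z'_D \in D_{z_D, \lceil d_D \rceil}$, and since two discs with a common point and equal (integer) depth coincide, $D_{z'_D, \lceil d_D \rceil} = D_{z_D, \lceil d_D \rceil}$.

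Finally, for maximality, let $D' = D_{z', d'}$ be any integral subdisc of $D$, so $z' \in K^{\textup{nr}}$, $d' \in \mathbb{Z}$, and $D' \subseteq D$. The containment forces $z' \in D$, whence $v(z' - z_D) \geq d_D$; integrality of both $z'$ and $z_D$ upgrades this to $v(z' - z_D) \geq \lceil d_D \rceil$. Similarly $d' \geq d_D$ is an integer, so $d' \geq \lceil d_D \rceil$. For any $x \in D'$, the ultrametric inequality gives $v(x - z_D) \geq \min\{v(x - z'), v(z' - z_D)\} \geq \lceil d_D \rceil$, so $x \in D_{z_D, \lceil d_D \rceil}$. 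There is no real obstacle here; the proof is purely a matter of bookkeeping with the ultrametric inequality combined with the rigidity provided by integrality of valuations on $K^{\textup{nr}}$.
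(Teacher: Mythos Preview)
Your proof is correct. The paper actually omits the proof of this lemma entirely (``We will need the following easy lemma, whose proof we omit''), so there is nothing to compare against; your write-up supplies exactly the routine ultrametric bookkeeping the authors had in mind.
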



\begin{proof}[Proof of \Cref{kappa proposition}]
Fix an integral disc $D=D_{z_D,d_D}$ with $z_D\in \Knr$ and suppose $\kappa_D(f)\neq |D\cap \mathcal{R}|$. By \Cref{formula for kappa lemma} we have
\begin{equation*} \label{formula for kappa recalled}
\kappa_D(f)- \mid  D \cap \mathcal{R}\mid =\sum_{\substack{r\in \mathcal{R}\\ d_D-1<v(r-z_D)<d_D}}v\left(\frac{r-z_D}{\pi^{d_D-1}}\right).
\end{equation*}
Fix $r\in \mathcal{R}$ contributing non trivially to the right hand side. Then necessarily $r\notin \Knr$ and, denoting by $r'$ its inertia-conjugate root, \[d_D-1/2=v\left(r-z_D\right)=v(r'-z_D).\] 
Let $\s=\{r\}\wedge\{r'\}$ and pick a centre $z_\s\in \Knr$. Note that $d_\s =v(r-r')\notin \mathbb{Z}$ and as also $v(z_D-r)\notin \mathbb{Z}$ we must have
\begin{equation*} \label{bracketed expression}
v(z_D-r)=v\left(z_D-z_\s+z_\s-r\right)=v(z_\s-r)=d_\s.
\end{equation*}
Thus $z_D$ is a centre for $\s$ and we deduce that $D$ is  the maximal integral subdisc of $D(\s)$ afforded by \Cref{max subdisc lemma}. By \Cref{integral clusters,max subdisc lemma}, $D$ is now one of the discs claimed and the rest follows.
%
\end{proof}

\subsection{The parity of $\nu_D(f)$}

%

\begin{lemma} \label[lemma]{nu parity app}
Let $\s$ be a proper cluster with $d_\s\in \mathbb{Z}$. Then $\nu_\mathfrak{s}$ is even unless
$\s=\mathcal{R}$ has size $2g+2$ and $\mathcal{R}=\s_{1}\sqcup \s_{2}$ is a union of two odd children with one of the $\delta_{\s_i}$  odd (equivalently both if they have size $\geq 3$, cf. \Cref{le:AdamsLemma}). 
\end{lemma}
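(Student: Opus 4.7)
My plan rests on a parity recursion along the cluster tree. For any proper cluster $\s' \ne \mathcal{R}$ with parent $\t = P(\s')$, I would first establish
$$\nu_{\s'} \equiv \nu_{\t} + |\s'|\delta_{\s'} \pmod 2.$$
This comes from rewriting Definition \ref{no:nu} as $\nu_{\s'} = v(c_f) + |\s'|d_{\s'} + \sum_{r\notin \s'} v(r - z_{\s'})$ and noting that for $r \in \t\setminus\s'$ one has $r \wedge \s' = \t$, so $v(r - z_{\s'}) = d_{\t}$; all other summands agree with those for $\t$. This recursion, together with part (3) of the semistability criterion (which guarantees $\nu_\s \in 2\mathbb{Z}$ whenever $\s$ is principal with integer depth), propagates evenness of $\nu$ through the tree.

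The proof then reduces to non-principal $\s$. By Definition \ref{principaldefi} and $g\geq 2$, a non-principal proper cluster is either a twin, a cotwin, or $\s = \mathcal{R}$ of even size with exactly two children. For the twin case, the recursion gives $\nu_\s \equiv \nu_{P(\s)}\pmod 2$, and $|P(\s)|\ge 3$ combined with Lemma \ref{integral clusters} ensures $d_{P(\s)} \in \mathbb{Z}$; either $P(\s)$ is principal (and the criterion closes the case) or $P(\s) = \mathcal{R}$ has exactly two children (the twin $\s$ and a sibling of even size $|\mathcal{R}|-2$), which I fold into the "two even children" subcase below.

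For the cotwin case, the child $\t$ of size $2g$ is always principal, since all its children have size $<2g$. A direct application of Definition \ref{no:nu}, splitting the roots outside $\t$ according to which ancestor they first share with $\t$, computes $\nu_\t$ as a sum $v(c_f) + 2g\,d_\t + \sum d_{\bullet}$ involving $d_\s$ and $d_\mathcal{R}$; the criterion's parity constraint on $\nu_\t$ then pins down $v(c_f)$ modulo $2$. Substituting into the analogous formula for $\nu_\s$ yields $\nu_\s \in 2\mathbb{Z}$ in each cotwin sub-case: $\s \neq \mathcal{R}$ (where $|\s| = 2g+1$, $|\mathcal{R}| = 2g+2$), $\s = \mathcal{R}$ with $|\mathcal{R}| = 2g+1$, and $\s = \mathcal{R}$ with $|\mathcal{R}| = 2g+2$ and two loose roots outside $\t$.

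Finally, for $\s = \mathcal{R}$ of size $2g+2$ with exactly two children $\s_1, \s_2$, one has $\nu_\mathcal{R} \equiv v(c_f)\pmod 2$, and the task is to determine $v(c_f)\pmod 2$. If both children are even, at least one (say $\s_1$) has size $\ge 4$ (otherwise $g=1$) and is principal; applying the recursion with $|\s_1|$ even gives $\nu_\mathcal{R}$ even. If both are odd, I locate a principal cluster whose $\nu$-value controls $v(c_f)\pmod 2$: either $\s_i$ itself when $|\s_i|\ge 3$ and $\s_i$ is not a cotwin, or, when one $\s_i$ is a singleton and $\s_{3-i}$ (of size $2g+1$) is a cotwin, its size-$2g$ grandchild handled as above. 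In each variant the parity of $v(c_f)$ comes out as $\delta_{\s_j}\pmod 2$ for the relevant defined $\delta_{\s_j}$, giving $\nu_\mathcal{R}$ odd precisely in the exceptional configuration, with the equivalence "one $\delta_{\s_i}$ odd $\iff$ both $\delta_{\s_i}$ odd when $|\s_i|\ge 3$" supplied by Lemma \ref{le:AdamsLemma}. The main obstacle is organising this enumeration cleanly while tracking which $\delta_{\s_i}$ are defined (singleton children contribute none) and verifying consistency across the overlapping non-principal cases.
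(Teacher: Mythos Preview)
Your approach is correct and is essentially the same as the paper's: the paper's two-line proof simply says ``by the semistability criterion every principal cluster $\s$ has $\nu_\s$ even; now combine \Cref{integral clusters} with \Cref{change of nu cluster}'', where \Cref{change of nu cluster} is exactly your recursion $\nu_{\s'} = \nu_{P(\s')} + |\s'|\delta_{\s'}$. Your proposal is the spelled-out version of that combination, carrying through the case analysis on non-principal clusters that the paper leaves implicit.
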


\begin{proof}
By the semistability criterion every principal cluster $\s$ has $\nu_\s$ even. Now combine \Cref{integral clusters} with \Cref{change of nu cluster}. 
\end{proof}

We now use this to characterise the valid discs $D$ for which $\nu_D(f)$ is odd (resp. even).

\begin{lemma} \label[lemma]{parity of nu thing}
Let $D$ be a valid disc and $\s=D\cap \mathcal{R}$. Then $\nu_D(f)$ is odd if and only if one of the following hold:
\begin{itemize}
\item[(i)] Both $\s$ and $d_D-d_\s$ are odd 
(in particular $D$ is not the defining disc of a cluster) or,
\item[(ii)] $D=D_\textup{max}$ and $\mathcal{R}=\s_1\sqcup \s_2$ is a  union of two odd proper children with $\delta_{\s_1}$  odd (equivalently $\delta_{\s_2}$ odd, cf. \Cref{le:AdamsLemma}). 
\end{itemize}
\end{lemma}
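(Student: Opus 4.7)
The plan is to derive an explicit formula for $\nu_D(f)$ in terms of the cluster $\mathfrak{s} = D \cap \mathcal{R}$ and then analyse its parity case by case, using Lemma \ref{nu parity app} to control $\nu_{\mathfrak{s}}$. Since $D$ is valid, $|\mathfrak{s}|\geq 2$, so $\mathfrak{s}$ is a proper cluster. By Lemma \ref{integral clusters} $\mathfrak{s}$ has a centre $z_{\mathfrak{s}} \in K^{\textup{nr}}$, and since $\mathfrak{s}\subseteq D$ we may take $z_D = z_{\mathfrak{s}}$. Splitting the sum in \eqref{formula for nu lemma} according to whether a root $r$ lies in $\mathfrak{s}$ and using $v(z_{\mathfrak{s}} - r) = d_{r\wedge \mathfrak{s}}$ for $r \notin \mathfrak{s}$, we obtain
\[
  \nu_D(f) = v(c_f) + |\mathfrak{s}| d_D + \sum_{r\notin \mathfrak{s}} d_{r\wedge \mathfrak{s}} = \nu_{\mathfrak{s}} + |\mathfrak{s}|(d_D - d_{\mathfrak{s}}).
\]

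If $|\mathfrak{s}|$ is odd, then Lemma \ref{integral clusters} gives $d_{\mathfrak{s}}\in \mathbb{Z}$ (an odd cluster is neither a twin nor the $\mathcal{R}$ of case (ii), which requires $|\mathcal{R}|=2g+2$), and Lemma \ref{nu parity app} gives that $\nu_{\mathfrak{s}}$ is even (the exceptional $\mathcal{R}$ there also has even size). Hence $\nu_D(f) \equiv d_D - d_{\mathfrak{s}}\pmod 2$, odd precisely when (i) holds. Moreover if $D = D(\mathfrak{s}')$ were a defining disc, then $\mathfrak{s}'=\mathfrak{s}$ and $d_D = d_{\mathfrak{s}}$, so $d_D - d_{\mathfrak{s}} = 0$ is even; thus in case (i), $D$ cannot be a defining disc.

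Now suppose $|\mathfrak{s}|$ is even. If $d_{\mathfrak{s}}\in \mathbb{Z}$, the term $|\mathfrak{s}|(d_D - d_{\mathfrak{s}})$ is even, so $\nu_D(f) \equiv \nu_{\mathfrak{s}}\pmod 2$, which by Lemma \ref{nu parity app} is odd exactly when $\mathfrak{s}=\mathcal{R}$ has size $2g+2$ and decomposes as $\mathfrak{s}_1\sqcup\mathfrak{s}_2$ for two odd children with one $\delta_{\mathfrak{s}_i}$ odd. For $\mathfrak{s} = D \cap \mathcal{R} = \mathcal{R}$ we must have $D = D_\textup{max}$ and hence $D_\textup{max} = D(\mathcal{R})$, which by Definition \ref{valid discs defi} means $\mathcal{R}$ is not exceptional; in particular $\mathcal{R}$ has no child of size $2g+1$, so $\mathfrak{s}_1$ and $\mathfrak{s}_2$ are both proper. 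This is precisely (ii).

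Finally, suppose $|\mathfrak{s}|$ is even and $d_{\mathfrak{s}}\notin \mathbb{Z}$. By Lemma \ref{integral clusters}, $\mathfrak{s}$ is either a twin $\{r,r'\}$ or the exceptional $\mathcal{R}$ of type A in Definition \ref{valid discs defi}; the latter is incompatible with $D\subseteq D_\textup{max}$, which cuts out only the size-$2g$ child, so $\mathfrak{s}$ is a twin. Its parent $\mathfrak{s}''=P(\mathfrak{s})$ has $d_{\mathfrak{s}''}\in \mathbb{Z}$: the twin case of Lemma \ref{integral clusters} is ruled out by $|\mathfrak{s}''|>2$, and the $\mathcal{R}$ case would force $\mathfrak{s}$ (as unique proper child) to have size $2g$, contradicting $g\geq 2$. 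Combining $\nu_{\mathfrak{s}} = \nu_{\mathfrak{s}''} + 2(d_{\mathfrak{s}} - d_{\mathfrak{s}''})$ with the displayed formula yields $\nu_D(f) = \nu_{\mathfrak{s}''} + 2(d_D - d_{\mathfrak{s}''}) \equiv \nu_{\mathfrak{s}''}\pmod 2$. But $\nu_{\mathfrak{s}''}$ is even by Lemma \ref{nu parity app}: if $\mathfrak{s}'' = \mathcal{R}$ were in the exceptional case, $\mathcal{R}$ would be a disjoint union of two odd children, whereas the even twin $\mathfrak{s}$ is a direct child of $\mathcal{R}=\mathfrak{s}''$, contradiction. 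Hence $\nu_D(f)$ is even in this sub-case, contributing nothing. The main obstacle throughout is keeping these case distinctions clean; once the passage to the parent handles the non-integer-depth twins, the rest is routine parity arithmetic.
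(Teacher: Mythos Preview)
Your proof is correct and follows essentially the same approach as the paper. Both arguments reduce to the identity $\nu_D(f) = \nu_{\mathfrak{s}} + |\mathfrak{s}|(d_D - d_{\mathfrak{s}})$ and then perform a case analysis using Lemma~\ref{nu parity app} and Lemma~\ref{integral clusters}; the only difference is that you derive this identity directly from \eqref{formula for nu lemma}, whereas the paper obtains the equivalent relation by iterating $\kappa_D(f)=|D\cap\mathcal{R}|$ from Lemma~\ref{kappa proposition}, and consequently organises the cases around whether $D(\mathfrak{s})$ is integral rather than around the parity of $|\mathfrak{s}|$.
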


\begin{proof}
First note that by \Cref{kappa proposition} if $D_1\subseteq D_2$ are valid discs  and $D'\cap \mathcal{R}=D_1\cap \mathcal{R}$ for each intermediate integral disc  $D_1\subseteq D' \subsetneq D_2$ then
\begin{equation}\label{change of nu for discs}
\nu_{D_1}(f)=\nu_{D_2}(f)+|D_1\cap \mathcal{R}|(d_{D_2}-d_{D_1}).
\end{equation}

To prove the lemma, note first that if $D(\s)$ is integral and $\nu_\s$ even then we conclude by applying \cref{change of nu for discs} with $D_1=D(\s)$ and $D_2=D$. If $D=D_\textup{max}$ and $\mathcal{R}=\s_1\sqcup \s_2$ is a  union of two odd proper children then we conclude by applying \cref{change of nu for discs}  with $D_1=D(\s_1)$ and $D_2=D_\textup{max}$. Now by 
\Cref{integral clusters,nu parity app} the only remaining case is  $\s$ a twin with $\delta_\s \in 1/2+\mathbb{Z}$. Applying \cref{change of nu for discs} with $D_1=D$ and $D_2=D(P(\s))$ shows that $\nu_D(f)$ is even as desired, since $\nu_{P(\s)}$ is even by \Cref{nu parity app}. 
\end{proof}

 \subsection{The model of $\mathbb{P}^1$ associated to the collection of valid discs}
 
 \begin{definition}\label[definition]{ydisc}
 Define $\mathcal{Y}_{\textup{disc}}/\mathcal{O}$ to be the regular, proper, semistable model of $\mathbb{P}^1_{K^{\textup{nr}}}$ associated to the collection of valid discs via \Cref{the model of P1}. 
 \end{definition}
 
 We retain the notation of \Cref{models of P1} to talk about $\mathcal{Y}_{\textup{disc}}$. Thus the components of its special fibre are indexed by discs $D\in \mathcal{D}$, the component corresponding to a disc $D$ being isomorphic to $\mathbb{P}^1_{\bar{k}}$ with variable $x_D$, and denoted $E_D$.  The point at infinity on the generic fibre of $\mathcal{Y}_{\textup{disc}}$ is denoted $\infty$, and  $\overline{\{\infty\}}$ denotes the closure of this point in $\mathcal{Y}_{\textup{disc}}$.  Its intersection with the special fibre  is the point at infinity  on the component $E_{D_\textup{max}}$. 
 
 \begin{definition} \label[definition]{the divisor B}
 We denote by $B\in \textup{Div}(\mathcal{Y}_\textup{disc})$ the divisor
 \[B=\textup{div}(f)_\textup{hor}+(2g+2)\overline{\{\infty\}}+\sum_{D\in \mathcal{D}}\omega_D(f)E_D\] \label{explain infinity bar}
where $\omega_D(f)\in \{0,1\}$ is such that $\omega_D(f)\equiv \nu_D(f)~~\textup{(mod 2)}$, and $\textup{div}(f)_\textup{hor}$ denotes the horizontal part of the divisor of $f(x)$.
 \end{definition}

\begin{remark}
In the next section we will consider the normalisation  of $\mathcal{Y}_{\textup{disc}}$ in the function field of $C:y^2=f(x)$, which we denote $\mathcal{C}_{\textup{disc}}$. We show in \Cref{main regular model theorem} that the divisor $B$ above is the branch locus of the associated morphism $\mathcal{C}_{\textup{disc}}\rightarrow \mathcal{Y}_{\textup{disc}}$.
\end{remark}

To describe the divisor of $f(x)$ on $\mathcal{Y}_{\textup{disc}}$ it will be convenient to introduce the following notation. 

\begin{notation} \label{reduction of clusters}
For a proper cluster $\s$ with $D(\s)$ strictly contained in a valid disc $D$, by $\textup{red}_{D}(\s)$ we mean $\textup{red}_D(z_\s)$ (\Cref{model components}) for any centre $z_\s$ of $\s$. Note that this is independent of the choice of centre for $\s$. For $\s=\{r\}$ a singleton, for any valid disc $D$ containing $r$ we define $\textup{red}_{D}(\s):=\textup{red}_{D}(r)$.
\end{notation}
 
 \begin{proposition} \label[proposition]{branch locus lemma}
 Let $C/K:y^2=f(x)$ be a hyperelliptic curve satisfying the semistability criterion and let $\mathcal{Y}_\textup{disc}/\mathcal{O}$  and $B\in \textup{Div}(\mathcal{Y}_{\textup{disc}})$ be as above. 
Then $B$ is effective, regular, and $\textup{div}(f)\equiv B~~\textup{(mod 2)}$
inside $\textup{Div}(\mathcal{Y}_{\textup{disc}})$.
 Moreover, the horizontal part  $B_\textup{hor}$ of $B$ meets  the special fibre of $\mathcal{Y}_{\textup{disc}}$ at precisely the following points:
 \begin{itemize}
 \item the point at infinity on $E_{D_\textup{max}}$ if $\textup{deg}(f)=2g+1$ or $\mathcal{R}$ is exceptional,
 \item the points $x_D=\textup{red}_D(\s')$ on $E_D$ if  $D=D(\s)$ for a proper cluster $\s$ and $\s'<\s$ is either a singleton, or a twin with $d_{\s'}=d_\s+1/2$ (note that these points are distinct as we vary $\s'$),
 \item the point $x_D=\textup{red}_D(\t)$  on $E_D$ if $D\cap \mathcal{R}=\t$ for $\t$ a twin with $d_\t=d_D+1/2$.
 \end{itemize}
 \end{proposition}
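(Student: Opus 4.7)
The plan is to derive the proposition from \Cref{basic model lemma} applied to the admissible collection $\mathcal{D}$ of valid discs, with the main combinatorial inputs being \Cref{kappa proposition} (for the values of $\kappa_D(f)$) and \Cref{parity of nu thing} (for the parity of $\nu_D(f)$).

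Effectivity of $B$ is clear from $(2g+2)\geq \deg f$ and $\omega_D(f)\in\{0,1\}$. For the congruence $\textup{div}(f)\equiv B\pmod 2$, \Cref{basic model lemma}(i) gives $\textup{div}(f)=\textup{div}(f)_\textup{hor}+\sum_D\nu_D(f)E_D$, whence $B-\textup{div}(f)=(2g+2)\overline{\{\infty\}}+\sum_D(\omega_D(f)-\nu_D(f))E_D$ has all coefficients even by the defining property of $\omega_D(f)$. To pin down the horizontal intersection points, I would first verify the hypothesis of \Cref{basic model lemma}(ii) — that $\kappa_D(f)=|D\cap\mathcal{R}|$ for every valid $D\neq D_\textup{max}$ — which is exactly the last sentence of \Cref{kappa proposition}. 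Then part (iii) of that lemma expresses the intersections of $Z_\textup{hor}$ with each $E_D$ in terms of maximal integral subdiscs $D'\subsetneq D$ with $D'\notin\mathcal{D}$ and $\kappa_{D'}(f)>0$. Combining \Cref{formula for kappa lemma} with the two exceptional cases of \Cref{kappa proposition}, such $D'$ are exactly the singleton subdiscs containing one root of $f(x)$ and the subdiscs $D_{z_\t,\lceil d_\t\rceil}$ of case (ii) of \Cref{kappa proposition} attached to twins of half-integer depth; rewriting $D$ as $D(\s)$ for a proper cluster $\s$ yields the three families listed. The point at infinity on $E_{D_\textup{max}}$ contributes either from the explicit $\overline{\{\infty\}}$ summand in $B$ when $\deg f$ is odd, or from $Z_\textup{hor}$ itself when $\kappa_{D_\textup{max}}(f)<\deg f$, which by \Cref{kappa proposition} is equivalent to $\mathcal{R}$ being exceptional (of type $A$ or $B$).

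Regularity of $B_\textup{hor}$ is then a direct application of \Cref{basic model lemma}(iv): the bound $\kappa_{D_\textup{max}}(f)\in\{\deg f,\deg f-1\}$ is checked case by case for the three possibilities for $D_\textup{max}$ using \Cref{kappa proposition}, and if $P(D)\in\mathcal{D}$ satisfies $\kappa_D(f)\geq 2$ then necessarily $|D\cap\mathcal{R}|\geq 2$ (the twin exception of \Cref{kappa proposition} contributes only $\kappa_D(f)=1$), so $D\in\mathcal{D}$. The main obstacle is the regularity of the full divisor $B$, which requires showing that every $E_D$ with $\omega_D(f)=1$ is disjoint from all other components of $B$. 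This reduces to a parity analysis via \Cref{parity of nu thing}: in its case~(i), where $D\neq D(\s)$ for $\s=D\cap\mathcal{R}$, no cluster of $\mathcal{R}$ can have depth strictly between $d_D$ and $d_D+1$, which rules out horizontal intersections with $E_D$ via \Cref{basic model lemma}(iii), while the unique child of $D$ in $\mathcal{D}$ and its parent $P(D)$ both have $\nu$ of opposite parity (by the same lemma, as the exponent $d_{(\cdot)}-d_\s$ flips); case~(ii) is handled directly using \Cref{nu parity app}, since the two odd proper children $\s_1,\s_2$ of $\mathcal{R}$ satisfy $\nu_{D(\s_i)}=\nu_{\s_i}$ even.
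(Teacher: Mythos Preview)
Your proposal is correct and follows essentially the same approach as the paper: both derive effectivity and the mod-2 congruence directly from the definitions and \Cref{basic model lemma}(i), obtain the description of $B_\textup{hor}$ and its regularity from \Cref{basic model lemma}(ii)--(iv) combined with \Cref{kappa proposition}, and handle the regularity of the full divisor $B$ via the parity analysis of \Cref{parity of nu thing}. The paper's write-up is more terse---it simply asserts that adjacent discs cannot both have odd $\nu_D(f)$ and that $B_\textup{hor}$ only meets components $E_D$ with $\nu_D(f)$ even, deferring both to \Cref{parity of nu thing}---whereas you spell out the case split; one small imprecision is that in case~(ii) the discs adjacent to $D_\textup{max}$ need not be $D(\s_i)$ when $\delta_{\s_i}>1$, but the same lemma still gives the required parity for those intermediate discs.
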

 
 \begin{proof}
It follows from \Cref{main model properties} (i) along with the definition of $\omega_D(f)$ that $B\equiv \textup{div}(f)~~\textup{(mod 2)}$, and its clear that $B$ is effective.

Combining \Cref{main model properties} (iii), (iv) and (v) with \Cref{kappa proposition}, we see that $B_\textup{hor}$ is 
regular and meets the special fibre $\mathcal{Y}_{\textup{disc},\bar{k}}$  of $\mathcal{Y}_{\textup{disc}}$ precisely at the points claimed (in particular $B_\textup{hor}$ does not meet any intersection points between the components of $\mathcal{Y}_{\textup{disc},\bar{k}}$).  

Next it follows from \Cref{parity of nu thing} that if $D_1$ and $D_2$ are adjacent valid discs then $\nu_{D_1}(f)$ and $\nu_{D_2}(f)$ are not both odd. As any two components $E_{D_1}$ and $E_{D_2}$ are individually regular, and intersect if and only if $D_1$ and $D_2$ are adjacent, we deduce that the vertical part $B_\textup{ver}$ of $B$ is regular.

To conclude we claim that $B_{\textup{hor}}$ and $B_{\textup{ver}}$ are disjoint, i.e. that if $B_\textup{hor}$ meets $E_D$ then $\nu_D(f)$ is even. This follows from \Cref{parity of nu thing}. 
 \end{proof}
 
\subsection{The dual graph of $\mathcal{Y}_{\textup{disc}}$} \label[section]{dual graph of y sub}
 
 By \Cref{basic model lemma} the dual graph of $\mathcal{Y}_{\textup{disc}}$ is the graph $T_\mathcal{D}$  associated to the collection of valid discs as in \Cref{admissible defi}. Here we describe this graph in terms of clusters. We begin with an explicit description, and then recast this in terms of the purely combinatorial notions introduced in \cite{hyble}. For convenience, we summarise the relevant parts of that paper in \Cref{hyble appendix}. 
 
  \begin{notation}
 Let  $\widehat{T_\mathcal{D}}$ denote the graph obtained from  $T_\mathcal{D}$ by removing from the vertex set (keeping the underlying metric space the same) all $v_D$ corresponding to discs $D\neq D_\textup{max}$ for which there is a smaller valid disc cutting out the cluster $D\cap \mathcal{R}$.
 \end{notation}
 
  \begin{lemma} \label[lemma]{graph as clusters}
The graph $\widehat{T_\mathcal{D}}$ is, via $v_D\leftrightarrow v_{D\cap \mathcal{R}}$, the metric graph with:
  \begin{itemize}
 \item one vertex $v_\s$ for each cluster $\s$ which is not
 \begin{itemize}
 \item a twin $\t$ with $\delta_\t=1/2$,
 \item  $\mathcal{R}$ if either $\mathcal{R}$ has size $2g+2$ and a child of size $2g+1$, or a child $\s$ with $|\s|=2g$ and $\delta_\s=1/2$,
 \end{itemize} 
\item an edge of length $\lfloor \delta_\s \rfloor$ between $v_\s$ and $v_{P(\s)}$ (should $P(\s)$ yield a vertex).
\end{itemize}
\end{lemma}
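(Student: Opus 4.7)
The strategy is to construct an explicit bijection between the non-excluded proper clusters $\s$ and the vertices of $\widehat{T_\mathcal{D}}$, and then match edge lengths. To each such $\s$ I would associate a valid disc $D_\s$: set $D_\mathcal{R} = D_\textup{max}$; set $D_\s = D(\s)$ when $\s \neq \mathcal{R}$ has $d_\s\in\mathbb{Z}$; and set $D_\s$ to be the maximal integer subdisc of $D(\s)$ (of depth $d_\s - \tfrac12$, supplied by \Cref{max subdisc lemma}) when $\s$ is a twin with $d_\s\in\tfrac12+\mathbb{Z}$. These three cases exhaust all non-excluded proper clusters by \Cref{integral clusters}, and in each case $D_\s$ is by construction the smallest valid disc cutting out $\s$, so the vertex $v_{D_\s}$ survives in $\widehat{T_\mathcal{D}}$ (and $v_{D_\textup{max}}$ is never removed).

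Conversely, any vertex of $\widehat{T_\mathcal{D}}$ is either $v_{D_\textup{max}}$ or a $v_D$ with $D\neq D_\textup{max}$ that is the smallest valid disc cutting out $D\cap\mathcal{R}$, so it has the form $v_{D_\s}$. The three exclusion classes of the lemma are exactly those clusters for which no valid disc cuts them out: a twin $\t$ with $\delta_\t = \tfrac12$ leaves no integer depth in the interval $(d_{P(\t)}, d_\t]$; and if $\mathcal{R}$ is exceptional of type B (resp.\ of type A with $\delta_{\s_{2g}}=\tfrac12$) then by construction $D_\textup{max}$ cuts out the principal child of size $2g+1$ (resp.\ size $2g$) rather than $\mathcal{R}$ itself, and, $D_\textup{max}$ being the maximal valid disc, no valid disc cuts out $\mathcal{R}$ at all.

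For the edge lengths, the unit-length edges of $T_\mathcal{D}$ give $\textup{dist}(v_{D_1}, v_{D_2}) = d_{D_1} - d_{D_2}$ for $D_1\subseteq D_2$, so the edge between $v_\s$ and $v_{P(\s)}$ in $\widehat{T_\mathcal{D}}$ has length $d_{D_\s} - d_{D_{P(\s)}}$. A short case check gives $\lfloor\delta_\s\rfloor$ in every case: when $d_\s$ and $d_{P(\s)}$ are both integers this is $\delta_\s$; when exactly one of them lies in $\tfrac12+\mathbb{Z}$ this is $\delta_\s - \tfrac12$. The latter case covers both a twin $\s$ with $d_\s$ half-integer (so $d_{D_\s} = d_\s - \tfrac12$) and the exceptional type A situation where $P(\s) = \mathcal{R}$ is retained (i.e.\ $\delta_{\s_{2g}} > \tfrac12$), so that $d_{D_\mathcal{R}} = d_\mathcal{R} + \tfrac12$.

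The main obstacle is the simultaneous bookkeeping of the two sources of half-integer depths, namely twins of Galois-conjugate roots (\Cref{integral clusters}(i)) and the exceptional type A definition of $D_\textup{max}$. Once \Cref{integral clusters} is used to delineate these phenomena and \Cref{max subdisc lemma} to handle the twin-of-conjugate case, the assignment $\s\mapsto D_\s$ is forced and the edge-length verification is a routine calculation.
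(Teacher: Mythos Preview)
Your approach is essentially the same as the paper's (which dispatches the lemma in one line as an immediate consequence of \Cref{integral clusters} and the definition of valid discs), but there is one genuine slip in your execution. For a twin $\t$ with $d_\t\in\tfrac12+\mathbb Z$, the disc you want, of depth $d_\t-\tfrac12=\lfloor d_\t\rfloor$, is \emph{larger} than $D(\t)$: it is the smallest integral disc \emph{containing} $\t$, not a subdisc of $D(\t)$. The lemma you cite (\Cref{max subdisc lemma}) produces the maximal integral \emph{subdisc}, which has depth $\lceil d_\t\rceil=d_\t+\tfrac12$ and contains neither root of $\t$ (cf.\ case (ii) of \Cref{kappa proposition}), hence is not valid. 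Once you replace ``maximal integer subdisc'' by ``smallest integral disc containing $\t$'' and drop the citation, the rest of your argument goes through unchanged.

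One further wrinkle worth being explicit about: in the exceptional type~A situation with $\delta_{\s_{2g}}>\tfrac12$, the literal map $v_D\mapsto v_{D\cap\mathcal R}$ is not injective, since both $D_{\max}$ and $D(\s_{2g})$ cut out $\s_{2g}$. Your assignment $D_{\mathcal R}=D_{\max}$ implicitly patches this by sending $v_{D_{\max}}\mapsto v_{\mathcal R}$ rather than $v_{\s_{2g}}$; this is the right fix, but it means the bijection is not given by $D\mapsto D\cap\mathcal R$ on the nose in that single case.
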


\begin{proof}
This follows from \Cref{integral clusters} and the definition of the collection of valid discs.
\end{proof}

In \Cref{clreal} it is shown how, to $f(x)$, one may associate a purely combinatorial object called a \emph{metric cluster picture }   (\Cref{defclpic}). To this one may then formally associate a certain metric graph called a BY tree (this procedure is explained in \Cref{BY tree const}) which is very closely related to the dual graph of (the special fibre of) $\mathcal{Y}_{\textup{disc}}$, as the following result shows. This will be important later since it enables us to apply combinatorial results of \cite{hyble} to hyperelliptic curves.
  
%
%

%
 
 \begin{lemma} \label[lemma]{graph comparison 2}
Let  $\Sigma$ denote the metric cluster picture associated to $f(x)$, and $T_\Sigma$ denote the associated BY tree.
 Define $T_\Sigma'$ to be the  graph obtained from $T_\Sigma$ by (in order)
 \begin{itemize}
 \item halving the length of all yellow edges,
 \item replacing each edge of  length $l\in 1/2+\mathbb{Z}$ joining a leaf $v$ to a vertex $v'$, with an edge of length $\lfloor l \rfloor$ (identifying $v$ and $v'$ if $\lfloor l \rfloor=0$), 
   \item adding a vertex $v_\mathfrak{\mathcal{R}}$ on the edge joining $v_{\mathfrak{s}_1}$ and $v_{\mathfrak{s}_2}$, at a distance $\delta_{\s_1}$ from $v_{\mathfrak{s}_1}$, if $\mathcal{R}$ has size $2g+2$ and is a disjoint union of two proper clusters $\mathfrak{s}_1$ and $\mathfrak{s}_2$.
  \end{itemize}
  Then the map sending a vertex $v_D\in \widehat{T_\mathcal{D}}$ to the vertex $v_{D\cap \mathcal{R}}\in T_\Sigma'$ is an isomorphism of metric graphs.
 \end{lemma}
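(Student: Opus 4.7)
The strategy is a direct comparison of the two metric graphs vertex-by-vertex and edge-by-edge. The description of $\widehat{T_\mathcal{D}}$ supplied by Lemma~\ref{graph as clusters} is already in cluster language, so the task is to show that the three modifications converting $T_\Sigma$ into $T_\Sigma'$ produce exactly this cluster graph under the map $v_D\mapsto v_{D\cap\mathcal{R}}$.

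First I would recall from Appendix~\ref{hyble appendix} the definition of the BY tree $T_\Sigma$: it has one vertex for each proper cluster together with leaves for certain singletons/twins, with blue and yellow edges whose lengths are given by the relative depths (and yellow edges corresponding precisely to twins of half-integer depth and to the top cluster in the exceptional cases). Then I would check that after halving yellow edge lengths, the edge between a proper cluster $\s$ and its proper parent $P(\s)$ carries length $\delta_\s$ if it was blue, and $\delta_\s/2$ if it was yellow — but the yellow-edge configurations are exactly the ones enumerated in Lemma~\ref{graph as clusters} that force either a deletion of the vertex $v_\s$ (when $\s$ is a twin with $\delta_\s=1/2$) or a coincidence between $v_\s$ and its parent in $\widehat{T_\mathcal{D}}$.

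Next I would handle the leaf-edges of length in $\tfrac12+\Z$ in $T_\Sigma$: these are precisely the edges corresponding to twins $\t$ of half-integer depth (or to the exceptional child of size $2g$ when $\mathcal{R}$ is exceptional of type A). The second bullet in the construction of $T_\Sigma'$ replaces such an edge by one of length $\lfloor\delta_\t\rfloor$ and contracts it when this length is $0$. By Lemma~\ref{graph as clusters} and by the definition of $D_{\max}$ in Definition~\ref{valid discs defi}, this matches exactly what happens to the corresponding vertices $v_D$ in $\widehat{T_\mathcal{D}}$: the vertex is suppressed (or glued to its parent) in precisely the same circumstances, with the remaining edge length being $\lfloor\delta_\t\rfloor$. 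The third bullet — inserting a vertex $v_\mathcal{R}$ when $\mathcal{R}=\s_1\sqcup\s_2$ — corresponds to the fact that in this case $D(\mathcal{R})=D_{\max}$ is a valid disc of integer depth that sits on the chain of integral discs between $D(\s_1)$ and $D(\s_2)$, and thus contributes a genuine vertex to $\widehat{T_\mathcal{D}}$ that has no counterpart among the proper clusters other than $\mathcal{R}$ itself.

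The main obstacle will be a clean case analysis for $\mathcal{R}$, reconciling the two exceptional types (A and B) of Definition~\ref{valid discs defi} with the definition of the top of $T_\Sigma$ in the appendix, and checking that the vertex $v_\mathcal{R}$ is retained, suppressed, or added in $T_\Sigma'$ in exactly the same cases as in $\widehat{T_\mathcal{D}}$. Once this is done, the bijection $v_D\leftrightarrow v_{D\cap\mathcal{R}}$ is forced on vertices, and the edge lengths on both sides equal the corresponding $\lfloor\delta_\s\rfloor$, so the map is an isomorphism of metric graphs.
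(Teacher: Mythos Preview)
Your approach is exactly the paper's: combine the cluster-language description of $\widehat{T_\mathcal{D}}$ from Lemma~\ref{graph as clusters} with the explicit construction of $T_\Sigma$ in Definition~\ref{BY tree const}, and match vertices and edge lengths. The paper's proof is literally the one-line instruction ``combine these two descriptions'', so your plan is an expanded version of the same argument.

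That said, your recollection of the BY tree construction is off in ways that would derail the execution. In $T_\Sigma$ the yellow edges are those attached to \emph{even} clusters (not just twins of half-integer depth), and they have length $2\delta_\s$; blue edges are for odd clusters and have length $\delta_\s$. So after halving the yellow edges, \emph{every} edge has length $\delta_\s$, not $\delta_\s/2$ as you wrote. The second modification then floors exactly those $\delta_\s$ that lie in $\tfrac12+\Z$, which by Lemma~\ref{integral clusters} occur only for twins or for the size-$2g$ child in the exceptional type~A case --- and these are precisely the leaves of $T_\Sigma$. With this corrected, the edge lengths on both sides are $\lfloor\delta_\s\rfloor$ as in Lemma~\ref{graph as clusters}, and the vertex bookkeeping (including the exceptional top-cluster cases and the insertion of $v_\mathcal{R}$ when $\mathcal{R}=\s_1\sqcup\s_2$) goes through as you outlined.
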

 
 \begin{proof}
Combine \Cref{graph as clusters} with the description of $T_\Sigma$ given in \Cref{BY tree const}.
 \end{proof}

  \begin{remark} \label[remark]{over extensions}
 When all the roots of $f(x)$ lie in $\Knr,$ so that all relative depths are integers, it follows from \Cref{graph comparison 2} that $\widehat{T_\mathcal{D}}$ is  simply the result of halving the length of all yellow edges in $T_\Sigma$ (up to possibly adding in a vertex corresponding to the cluster $\mathcal{R}$). 
 \end{remark}

\def\fr{{\mathfrak r}}
\def\R{{\mathcal R}}
\def\optional#1{{\color{cyan}#1}}
\def\d#1{d_#1}



%

\section{A regular model of $C$ assuming the semistability criterion}
\label[section]{scdisc}

As in \Cref{ydisc_section} let $C/K:y^2=f(x)$ be a hyperelliptic curve satisfying the semistability criterion (\Cref{semistability criterion}). We begin by using the results of the previous section to give an explicit regular proper model of $C/K^{\textup{nr}}$.  We then use this to determine the minimal proper regular model of $C$ and show that it is semistable. Having done this, we preceed to describe the special fibre of the minimal regular model and relate this to the cluster picture of $f(x)$. 

We continue to denote the ring of intgers of $K^{\textup{nr}}$ by $\mathcal{O}$. As in Notation \ref{fix centres notation} we fix centres $z_D\in K^{\textup{nr}}$ for each valid disc $D$ (see \Cref{valid discs defi}).  Let $\mathcal{Y}_\textup{disc}/\mathcal{O}$  be the  model of  $\P^1_{\Knr}$ associated to the collection of valid discs as in \Cref{ydisc}, and let $B\in \textup{Div}(\mathcal{Y}_{\textup{disc}})$ be as in \Cref{the divisor B}, so that by \Cref{branch locus lemma} $B$ is effective, regular, and congruent to $\textup{div}(f)$ modulo 2.
 
\begin{definition} \label[definition]{definition of cdisc}
 Define the scheme $\mathcal{C}_{\textup{disc}}/\mathcal{O}$ as the normalisation of $\mathcal{Y}_{\textup{disc}}/\mathcal{O}$ in the function field $K^{\textup{nr}}(C)$ of $C/K^{\textup{nr}}$. Denote  by $\phi:\mathcal{C}_{\textup{disc}}\rightarrow \mathcal{Y}_{\textup{disc}}$ the associated normalisation morphism. We write $\mathcal{C}_{\textup{disc},\bar{k}}$ for the special fibre of $\mathcal{C}_{\textup{disc}}$.
\end{definition}

 \begin{theorem} \label[theorem]{main regular model theorem}
Let $C/K:y^2=f(x)$ be a hyperelliptic curve satisfying  the semistability criterion. Then $\mathcal{C}_{\textup{disc}}/\mathcal{O}$  is a proper regular model of $C$ and $\phi:\mathcal{C}_{\textup{disc}}\rightarrow \mathcal{Y}_{\textup{disc}}$
is finite flat of degree $2$ with branch locus  $B$.
 \end{theorem}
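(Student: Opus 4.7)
The plan is to verify the hypotheses of \cite[Lemma 2.1]{Sri} (as the paper's preamble to this section signposts) and then invoke it directly. All the substantive input has been marshalled in the preceding sections, so the proof of the theorem itself should be short; the real content sits in \Cref{branch locus lemma}.

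First I would recall that $\mathcal{Y}_{\textup{disc}}/\mathcal{O}$ is proper, regular, and semistable: this is \Cref{main model properties1} applied to the admissible collection of valid discs $\mathcal{D}$ of \Cref{valid discs defi}. Next I would quote \Cref{branch locus lemma} to the effect that the divisor $B$ of \Cref{the divisor B} is effective, regular as a closed subscheme of $\mathcal{Y}_{\textup{disc}}$, and satisfies $B \equiv \textup{div}(f) \pmod 2$ inside $\textup{Div}(\mathcal{Y}_{\textup{disc}})$. Since the residue characteristic is odd, these are exactly the hypotheses of Srinivasan's lemma applied with the rational function $f$ and the double cover $K^{\textup{nr}}(C) = K^{\textup{nr}}(x)(\sqrt{f})$. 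That lemma then yields at once that the normalisation $\mathcal{C}_{\textup{disc}} \to \mathcal{Y}_{\textup{disc}}$ is a finite flat morphism of degree $2$, that $\mathcal{C}_{\textup{disc}}$ is regular, and that the branch locus of $\phi$ is precisely $B$.

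It then remains to observe that $\mathcal{C}_{\textup{disc}}$ is a model of $C$ over $\mathcal{O}$: properness follows because $\phi$ is finite and $\mathcal{Y}_{\textup{disc}}$ is proper over $\mathcal{O}$, while the generic fibre is identified with $C/K^{\textup{nr}}$ by construction, since normalisation commutes with localisation at the generic point of $\mathcal{O}$ and the generic fibre of $\mathcal{Y}_{\textup{disc}}$ is $\mathbb{P}^1_{K^{\textup{nr}}}$ with $K^{\textup{nr}}(C) = K^{\textup{nr}}(x)(y)$ where $y^2=f(x)$.

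The only potential subtlety, and hence the main obstacle, is making sure the cited lemma applies verbatim; but this has already been arranged in \Cref{branch locus lemma}, whose proof is where the genuine work lies. In particular, the regularity of $B$ rests on two facts we have established: that $B_{\textup{hor}}$ avoids the intersection points of components of the special fibre of $\mathcal{Y}_{\textup{disc}}$ (via the computation $\kappa_D(f)=|D\cap\mathcal{R}|$ from \Cref{kappa proposition}, which in turn uses the semistability criterion heavily), and that no two adjacent components $E_{D_1}, E_{D_2}$ both have odd $\nu_{D_i}(f)$ (via the parity analysis in \Cref{nu parity app,parity of nu thing}). Once these are granted, the theorem is essentially immediate.
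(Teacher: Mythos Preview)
Your proposal is correct and follows essentially the same route as the paper: invoke \Cref{branch locus lemma} to verify the hypotheses of \cite[Lemma 2.1]{Sri}, deduce regularity and the branch locus claim from that lemma, and obtain properness from finiteness of $\phi$ together with properness of $\mathcal{Y}_{\textup{disc}}$. The only cosmetic difference is that the paper notes ``finite flat of degree $2$'' as a standard fact about normalisation in a degree-$2$ extension of function fields rather than attributing it to Srinivasan's lemma, but this is immaterial.
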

 
 \begin{proof}
 That the normalisation morphism is finite flat of degree $2$ is standard (the degree being equal to the degree of $\Knr(C)/\Knr(x)$). In particular, $\mathcal{C}_{\textup{disc}}/\mathcal{O}$ is proper since both $\phi$ and $\mathcal{Y}_{\textup{disc}}/\mathcal{O}$ are. That $\mathcal{C}_\textup{disc}$ is regular is a consequence of the regularity of both $\mathcal{Y}_{\textup{disc}}$ and $B$ (the latter requiring the semistability criterion, see \Cref{branch locus lemma}), and follows from \cite[Lemma 2.1]{Sri} (taking $Y=\mathcal{Y}_{\textup{disc}}$ and $f=f(x)$ in the statement). Indeed, from the definition of $B$, one sees that, writing $\textup{div}(f)=\sum_{i\in I}m_i\Gamma_i$ as in the notation of loc. cit., the divisor $B$ is equal to $\sum_{i\in I}\bar{m}_i\Gamma_i$ for $\bar{m}_i\in \{0,1\}$  congruent to $m_i$ modulo $2$.  It's then clear that regularity of $B$ forces the hypothesis (a) and (b) of loc. cit. to be satisfied, guaranteeing the regularity of the normalisation of $\mathcal{Y}_{\textup{disc}}$ in $K^\textup{nr}(C)$. Finally, the claim about the branch locus  follows from the  description of the normalisation of the local rings given in the proof of loc. cit., noting that  for a point $P$ of $\mathcal{Y}_{\textup{disc}}$, $P$ lies on $B$ if and only if, in the expression for $\textup{div}(f)$ as $\sum_{i\in I}\bar{m}_i\Gamma_i$ above, $m_i$ is odd for a $\Gamma_i$ which contains $P$. 
 \end{proof}
 
 
 \begin{remark}  \label[remark]{hyperelliptic involution extension}
 Since we have defined $\mathcal{C}_{\textup{disc}}$ as the normalisation in $K^{\textup{nr}}(C)$ of a model of $\mathbb{P}^1_{K^{\textup{nr}}}$ it follows that the hyperelliptic involution on $C$ (which on function fields is the generator of the Galois group of $K^{\textup{nr}}(C)/K^{\textup{nr}}(x)$) extends to an involution $\iota$ on $\mathcal{C}_\textup{disc}/\mathcal{O}$  and identifies $\mathcal{Y}_{\textup{disc}}$ with the quotient $\mathcal{C}_{\textup{disc}}/\iota$. In particular, $\iota$ swaps the two points in the fibre over any point of $\mathcal{Y}_{\textup{disc}}\setminus B$. 
 \end{remark}
 
 \subsection{Explicit equations for the model $\mathcal{C}_{\textup{disc}}$}
 
We  now give explicit charts covering the model $\mathcal{C}_{\textup{disc}}$. Recall from \Cref{disc polynomials remark} that $\mathcal{Y}_{\textup{disc}}$ is covered by the open subschemes $U_D\setminus\mathcal{P}_D$ and $W_D\setminus (\mathcal{Q}_D\cup \mathcal{P}_D)$ (\Cref{schemes defi,model components}) as $D$ ranges over all valid discs. 

\begin{definition} \label[definition]{the charts of Cdisc}
Define, for each valid disc $D$, the schemes 
 \[\mathcal{U}_D=\textup{Spec}~ \frac{\mathcal{O}[x_D,y_D]}{\left(y_D^2-\pi^{\omega_D(f)}f_D(x_D)\right)}\] 
 and
 \[\mathcal{W}_D=\begin{cases}\textup{Spec}~\frac{\mathcal{O}[t_D,z_D]}{\left(z_D^2-\pi^{\omega_D(f)}t_D^{\lambda}g_D(t_D)\right)}~~&~~\textup{if }D=D_\textup{max,}\\\\\textup{Spec}~\frac{\mathcal{O}[s_D,t_D,z_D]}{\left(\pi-s_Dt_D~,~z_D^2-s_D^{\omega_D(f)}t_D^{\omega_{P(D)}(f)}g_D(s_D,t_D)\right)}~~&~~\textup{else,}\end{cases}\]
 where  $\lambda\in \{0,1\}$ is such that $\lambda\equiv  \textup{deg}(f)~~\textup{(mod 2)}$ and the polynomials $f_D$ and $g_D$ are as in  \Cref{defi of the polynomials}.  
 
Denote by $\phi_{D,1}:\mathcal{U}_D\rightarrow U_D$ and $\phi_{D,2}:\mathcal{W}_D\rightarrow W_D$ the morphisms induced by the obvious inclusions of rings.
\end{definition}

 \begin{proposition}\label[proposition]{normalisation of charts}
The model $\mathcal{C}_\textup{disc}$ is covered by the charts $\mathcal{U}_D\setminus \phi_{D,1}^{-1}(\mathcal{P}_D)$ and $\mathcal{W}_D\setminus \phi_{D,2}^{-1}(\mathcal{P}_D\cup \mathcal{Q}_D)$ as $D$ ranges over all valid discs. The restriction of the normalisation morphism to a map $\mathcal{U}_D\setminus \phi_{D,1}^{-1}(\mathcal{P}_D)\rightarrow U_D\setminus\mathcal{P}_D$  (resp. $\mathcal{W}_D\setminus \phi_{D,2}^{-1}(\mathcal{P}_D\cup \mathcal{Q}_D)\rightarrow W_D\setminus (\mathcal{P}_D\cup \mathcal{Q}_D) $)  is given by $\phi_{D,1}$ (resp. $\phi_{D,2}$). Inside $\Knr(C)$ we have the relations
\[x_D=\frac{x-z_D}{\pi^{d_D}},\phantom{hello}t_D=1/x_D,\phantom{hi}\phantom{hii}s_Dt_D=\pi, \phantom{hello}y_D=\pi^{(\omega_D(f)-\nu_D(f))/2}y\]
and
\[z_D=\begin{cases}x_D^{-\lfloor \textup{deg}(f)/2\rfloor}y_D &~~\textup{if }D=D_{\textup{max}},\\ s_D^{(\omega_D(f)-\nu_D(f))/2}t_D^{(\omega_{P(D)}(f)-\nu_{P(D)}(f))/2}y~~&~~\textup{else.}\end{cases}\] 
 \end{proposition}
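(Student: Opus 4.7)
The plan is to work locally, using that normalisation commutes with passage to open subschemes. Thus for each valid disc $D$ it suffices to identify the restriction of $\phi$ over $U_D \setminus \mathcal{P}_D$ with $\phi_{D,1}\colon \mathcal{U}_D \setminus \phi_{D,1}^{-1}(\mathcal{P}_D) \to U_D \setminus \mathcal{P}_D$, and similarly on the $W_D$ charts; since the open subsets $U_D \setminus \mathcal{P}_D$ and $W_D \setminus (\mathcal{P}_D \cup \mathcal{Q}_D)$ cover $\mathcal{Y}_{\textup{disc}}$ by Remark~\ref{disc polynomials remark}, their preimages will then cover $\mathcal{C}_{\textup{disc}}$.

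First I would verify the claimed function-field relations. Those among $x$, $x_D$, $t_D$ and $s_D$ are recorded in Remark~\ref{function field relations hold}, so the content is in defining $y_D$ and $z_D$ as the stated elements of $K^{\textup{nr}}(C)$: both are well-defined because the exponents $(\omega_D(f)-\nu_D(f))/2$ and $(\omega_{P(D)}(f)-\nu_{P(D)}(f))/2$ are integers (by the congruence $\omega_D(f) \equiv \nu_D(f) \pmod 2$ from Definition~\ref{the divisor B}). Substituting into $y^2 = f(x)$ and using the identities of Remark~\ref{disc polynomials remark2} for $f_D$ and $g_D$, the algebraic relations $y_D^2 = \pi^{\omega_D(f)} f_D(x_D)$, together with the one- and two-variable versions on $W_D$, then follow by a direct rescaling computation.

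The key step is to show that the rings defining $\mathcal{U}_D$ and $\mathcal{W}_D$ are normal after removing the indicated subsets. I would approach this by re-examining the proof of Theorem~\ref{main regular model theorem}: it invokes \cite[Lemma 2.1]{Sri}, which in fact constructs the normalisation of $\mathcal{Y}_{\textup{disc}}$ in $K^{\textup{nr}}(C)$ locally as $\mathcal{O}_Y[w]/(w^2-h)$, where $h$ is a local equation for the squarefree part of $\textup{div}(f)$ on $\mathcal{Y}_{\textup{disc}}$. By the very definition of $B$ in \Cref{the divisor B}, this squarefree part is $B$, and the local equations cut out by the $\mathcal{U}_D$ and $\mathcal{W}_D$ relations -- namely $\pi^{\omega_D(f)} f_D(x_D)$ on $U_D$, and $\pi^{\omega_D(f)} t_D^{\lambda} g_D(t_D)$ or $s_D^{\omega_D(f)} t_D^{\omega_{P(D)}(f)} g_D(s_D, t_D)$ on $W_D$ -- are exactly the local equations for $B$ read off from Proposition~\ref{branch locus lemma}. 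The rescaled variables $y_D$ and $z_D$ are therefore precisely the local generators produced by Srinivasan's construction, so the rings of $\mathcal{U}_D$ and $\mathcal{W}_D$ \emph{are} the local normalisations (with their claimed openings removed).

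Finally, compatibility with the glueing of $\mathcal{Y}_{\textup{disc}}$ is automatic: since $y_D$ and $z_D$ are specified as fixed elements of $K^{\textup{nr}}(C)$, the transition functions upstairs are determined by the transition functions on the base $\mathcal{Y}_{\textup{disc}}$ together with the relations listed in the proposition. The main obstacle throughout is bookkeeping -- making sure that the rescaling factors $\pi^{(\omega_D(f)-\nu_D(f))/2}$ (and the analogous monomial in $s_D$, $t_D$ on the $W_D$ charts) absorb the vertical components of $\textup{div}(f)$ in the correct way, so that the right-hand sides of the defining equations end up being local equations of the branch divisor $B$ rather than of $\textup{div}(f)$ itself. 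Proposition~\ref{branch locus lemma}, and in particular its assertion that $B$ is regular with horizontal and vertical parts as described, is what makes each local step yield a normal ring.
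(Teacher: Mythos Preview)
Your proposal is correct and follows essentially the same route as the paper: reduce to the local statement via ``normalisation is local on the base'', identify the defining equations of $\mathcal{U}_D$ and $\mathcal{W}_D$ as local equations for $B$, and conclude that these rings realise the normalisation chart by chart. The only differences are cosmetic. First, the paper spells out the algebraic step directly rather than citing \cite[Lemma~2.1]{Sri}: it checks that $A' = A[y_D]/(y_D^2 - \pi^{\omega_D(f)}f_D(x_D))$ is regular (since $A$ and the divisor of $\pi^{\omega_D(f)}f_D$ are), that $\textup{Frac}(A') = K^{\textup{nr}}(C)$ (since $\pi^{\omega_D(f)}f_D$ and $f$ differ by an even power of $\pi$), and hence that $A'$ is the integral closure of $A$ in $K^{\textup{nr}}(C)$. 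Second, the precise reference for ``$\pi^{\omega_D(f)}f_D(x_D)$ is a local equation for $B$'' is Remark~\ref{disc polynomials remark2} together with the definition of $B$, not Proposition~\ref{branch locus lemma} itself (which records the consequences --- regularity and the description of $B_{\textup{hor}}$ --- rather than the local equations).
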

 

 \begin{proof}
 Since $\mathcal{Y}_{\textup{disc}}$ is covered by the open subschemes $U_D\setminus\mathcal{P}_D$ and $W_D\setminus (\mathcal{Q}_D\cup \mathcal{P}_D)$, and normalisation is local on the base, it suffices to show that for each valid disc $D$, $\phi_{D,1}:\mathcal{U}_D\setminus \phi_{D,1}^{-1}(\mathcal{P}_D)\rightarrow U_D\setminus \mathcal{P}_D$ realises the normalisation of $U_D\setminus \mathcal{P}_D$ in $K^{\textup{nr}}(C)$, as well as the analagous result for $\phi_{D,2}$. 
 We prove this for $\phi_{D,1}$, the argument in the other case being identical. Viewing $U_D\setminus \mathcal{P}_D$ as a subscheme of $\mathcal{Y}_{\textup{disc}}$ it follows from \Cref{disc polynomials remark2} that the divisor of $\pi^{\omega_D(f)}f_D(x_D)$ on $U_D\setminus \mathcal{P}_D$ is equal to $B\cap (U_D\setminus \mathcal{P}_D)$ and as such is regular. Note also that the ring $\mathcal{O}[x_D]$ is regular. Now for any $h\in \mathcal{O}[x_D]$ such that the open subscheme $\{h\neq 0\}$ of $U_D$ is contained in $U_D\setminus \mathcal{P}_D$, it follows that both the ring $A=\mathcal{O}[x_D]_h$ and the closed subscheme $V(\pi^{\omega_D(f)}f_D(x_D))$ of $\textup{Spec} A$ are regular.  From this it follows easily that the ring $A'=A[y]/(y^2-\pi^{\omega_D(f)}f_D(x_D))$ is regular (this is the algebraic result underpinning the proof of \Cref{main regular model theorem}). Moreover, appealing to \Cref{disc polynomials remark2} once again we see that  $\pi^{\omega_D(f)}f_D(x_D)$ and $f(x)$ differ by a square in $\Knr(x)=\textup{Frac}A$ (indeed, by an even power of $\pi$). Thus $\textup{Frac}(A')=\Knr(x)[y]/(y^2-f(x))=\Knr(C)$. Since $A'$ is regular, finite over $A$ and $\textup{Frac}(A')=\Knr(C)$, it follows that $A'$ is the integral closure of $A$ in $K(C)$. Thus the (map on schemes associated to the) inclusion of $A$ into $A'$ realises the normalisation of $\textup{Spec}A$ in $\Knr(C)$. It remains only to note that  $U_D\setminus \mathcal{P}_D$ is covered by the schemes $\mathcal{O}[x_D]_h$ as we vary $h$, and that normalisation is local on the base. 
 
 The relationship between the various variables follows from combining \Cref{disc polynomials remark} and \Cref{disc polynomials remark2}.
 \end{proof}
 
 \begin{remark}
The extension of the hyperelliptic involution to $\mathcal{C}_{\textup{disc}}$ acts on the charts of \Cref{normalisation of charts} as $y_D\mapsto -y_D$ (resp. $z_D\mapsto -z_D$).
 \end{remark}
 
 \begin{remark} \label[remark]{multiplicity 2 remark}
 The proof of \Cref{normalisation of charts} shows that locally over a sufficiently small $U=\textup{Spec}A\subseteq \mathcal{Y}_{\textup{disc}}$, $\phi^{-1}(U)\subseteq \mathcal{C}_{\textup{disc}}$ is given by $\textup{Spec}A'$ where $A'=A[u]/(u^2-t)$ for $t$ a local equation for $B$ on $U$.  From this we deduce the following. Suppose that $D$ is a valid disc with $\omega_D(f)=1$ (i.e. $\nu_D(f)$ odd). Then $\phi^{-1}(E_D)$ consists of a single component $Z$ of multiplicity $2$  in $\mathcal{C}_{\textup{disc},\kbar}$ and the restriction of $\phi$ to a map $Z_\textup{red}\rightarrow E_D$ is an isomorphism.\footnote{here $Z_\textup{red}$ denotes the component $Z$ with its reduced structure.} Indeed, our assumptions mean that $E_D$ is contained entirely in $B$, so that locally over some $U$ as above,  $E_D$ corresponds to a prime ideal $\mathfrak{p}$ of $A$  containing $t$. The fibre of $\phi$ over the generic point of $E_D$ is then $\textup{Spec}~\textbf{k}(\mathfrak{p})[u]/(u^2)$ where $\textbf{k}(\mathfrak{p})$ denotes the residue field at $\mathfrak{p}$. Thus there is a unique component over $E_D$ with multiplicity $2$ in  $\mathcal{C}_{\textup{disc},\kbar}$. Moreover, the unique prime $\mathfrak{q}$ of $A'$ lying over $\mathfrak{p}$ is generated by $\mathfrak{p}$ and $u$ whence the map $A/\mathfrak{p}\rightarrow A'/\mathfrak{q}$ is an isomorphism. As this is just the map $Z_\textup{red}\cap \phi^{-1}(U)\rightarrow E_D\cap U$ and such $U$ cover $\mathcal{Y}_{\textup{disc}}$, we are done. 
 \end{remark}
 
The following lemma describes the reduction $\textup{mod } \mathfrak{m}$ of the polynomials $f_D$ and $g_{D_\textup{max}}$ appearing in the above charts, and will facilitate in the study of the special fibre of $\mathcal{C}_{\textup{disc}}$. 
 
 \begin{definition} \label[definition]{the leading term defi}
 For a valid disc $D$ define  $c_D\in \kbar^{\times}$ as
\[c_D=\frac{c_f}{\pi^{v(c_f)}}\prod_{r\in \mathcal{R}\setminus D}\left(\frac{z_D-r}{\pi^{v(z_D-r)}}\right)\quad(\textup{mod }\mathfrak{m}).\]
For a proper cluster $\s$ for which $D(\s)$ is valid we set $c_\s=c_{D(\s)}$.
 \end{definition}
 
 \begin{lemma} \label[lemma]{reduced polynomial lemma}
Let $D$ be a valid disc and write $\s=D\cap \mathcal{R}$. 
\begin{itemize}
\item[(i)] We have
\[f_D(x_D)\quad (\textup{mod }\mathfrak{m}) =\begin{cases} c_\s\prod_{\s'<\s}\left(x_D-\textup{red}_D(\s')\right)^{|\s'|}~~&~~ D=D(\s) \\ c_D\left(x_D-\textup{red}_D(\s)\right)^{|\s|}~~&~~\textup{else.} \end{cases}\]
\item[(ii)] For $D=D_\textup{max}$ we have
\[ t_D^\lambda g_D(t_D)\quad(\textup{mod }\mathfrak{m}) =t_{D}^{2g+2-|D\cap \mathcal{R}|}c_D\prod_{r\in D\cap \mathcal{R}}\left(1-\textup{red}_{D}(r)t_{D}\right)\]
where, as in \Cref{the charts of Cdisc}, $\lambda\in \{0,1\}$ is such that $\lambda\equiv  \textup{deg}(f)~~\textup{(mod 2)}$.
\end{itemize}
\end{lemma}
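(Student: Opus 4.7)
The plan is a direct calculation starting from the factorisation $f(x) = c_f \prod_{r \in \mathcal{R}}(x-r)$ and substituting into the definition $f_D(x_D) = \pi^{-\nu_D(f)} f(\pi^{d_D} x_D + z_D)$ from \Cref{defi of the polynomials}. First I split $\mathcal{R} = (D \cap \mathcal{R}) \sqcup (\mathcal{R}\setminus D)$. For $r \in D \cap \mathcal{R}$ I write $\pi^{d_D} x_D + z_D - r = \pi^{d_D}\bigl(x_D - \tfrac{r - z_D}{\pi^{d_D}}\bigr)$, noting $(r-z_D)/\pi^{d_D}\in\cO$ since $v(z_D-r)\ge d_D$. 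For $r\notin D$ I factor out $\pi^{v(z_D-r)}$ to obtain $\pi^{v(z_D-r)}\bigl(\pi^{d_D - v(z_D-r)} x_D + \widehat{(z_D-r)}\bigr)$ where, crucially, $d_D - v(z_D-r) > 0$ so the first summand dies modulo $\mathfrak{m}$.

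The next step is to evaluate the total power of $\pi$. Using \eqref{formula for nu lemma},
$$
\nu_D(f) = v(c_f) + |D\cap\mathcal{R}|\,d_D + \sum_{r \notin D} v(z_D - r),
$$
which is precisely the exponent collected from the factorisation, so the powers of $\pi$ cancel against $\pi^{-\nu_D(f)}$. Reducing mod $\mathfrak{m}$ then yields
$$
\overline{f_D}(x_D) = \Bigl(\hat{c}_f \prod_{r \notin D} \widehat{(z_D - r)}\Bigr) \prod_{r \in D \cap \mathcal{R}} \bigl(x_D - \red_D(r)\bigr),
$$
and by \Cref{the leading term defi} the leading coefficient is $c_D$ (resp.\ $c_{\mathfrak{s}}$ when $D = D(\mathfrak{s})$).

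To obtain the cluster form I group the linear factors. If $D\ne D(\mathfrak{s})$ where $\mathfrak{s} = D\cap\mathcal{R}$, then $d_D < d_{\mathfrak{s}}$, so for every $r \in \mathfrak{s}$ and any centre $z_{\mathfrak{s}}$ we have $v(r-z_{\mathfrak{s}})\ge d_{\mathfrak{s}} > d_D$, giving $\red_D(r) = \red_D(\mathfrak{s})$. Hence the product collapses to $(x_D - \red_D(\mathfrak{s}))^{|\mathfrak{s}|}$. If $D = D(\mathfrak{s})$, then for each child $\mathfrak{s}' < \mathfrak{s}$ (singleton or proper) and any $r \in \mathfrak{s}'$, the same argument with $d_{\mathfrak{s}'} > d_D$ gives $\red_D(r) = \red_D(\mathfrak{s}')$, so the product groups as $\prod_{\mathfrak{s}'<\mathfrak{s}}(x_D - \red_D(\mathfrak{s}'))^{|\mathfrak{s}'|}$. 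This is part~(i).

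For part~(ii), since $\lambda \equiv \deg(f) \pmod{2}$ and $\deg(f)\in\{2g{+}1,2g{+}2\}$, we have $\lambda + \deg(f) = 2g+2$. Writing out $g_{D}(t_D) = t_D^{\deg(f)} f_{D}(1/t_D)$ coefficient-wise and reducing mod $\mathfrak{m}$ using part~(i) for $D = D_{\max}$ gives
$$
\overline{g_D}(t_D) = c_D\, t_D^{\deg(f) - |D \cap \mathcal{R}|} \prod_{r \in D \cap \mathcal{R}} \bigl(1 - \red_D(r)\, t_D\bigr),
$$
and multiplying by $t_D^{\lambda}$ produces the claimed $t_D^{2g+2 - |D\cap\mathcal{R}|}$. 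The only subtlety throughout is the bookkeeping of exponents and verifying that the lower-order terms coming from $r\notin D$ genuinely vanish mod $\mathfrak{m}$, which is transparent from the inequality $d_D - v(z_D - r) > 0$.
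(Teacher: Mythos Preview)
Your proof is correct and follows essentially the same approach as the paper: factor $f$, normalise each linear factor by the appropriate power of $\pi$ using \eqref{formula for nu lemma}, reduce modulo $\mathfrak{m}$, and then group the surviving linear factors according to which child of $\mathfrak{s}$ (or $\mathfrak{s}$ itself) the root belongs to. The paper's version is terser but the underlying computation is identical, and your treatment of part~(ii) via $t_D^{\lambda+\deg(f)}=t_D^{2g+2}$ matches the paper's one-line reduction $t_D^\lambda g_D(t_D)=t_D^{2g+2}f_D(1/t_D)$.
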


\begin{proof}
By the definition of $f_D(x_D)$ and \Cref{formula for nu lemma} we have
\[f_D(x_D)\quad (\textup{mod }\mathfrak{m})  = \frac{c_f}{\pi^{v(c_f)}}\prod_{r\in \mathcal{R}}\left(\frac{\pi^{d_D}x_D+z_D-r}{\pi^{\textup{min}\{d_D,v(z_D-r)\}}}\right)~~(\textup{mod }\mathfrak{m}) .\]
 Noting that $r\in D$ if and only if $\textup{min}\{d_D,v(z_D-r)\}=d_D$ we have
 \[f_D(x_D)\quad (\textup{mod }\mathfrak{m}) =c_D\prod_{r\in D}\left(x_D-\textup{red}_D(r)\right)\]
from which part (i) follows. For (ii), note that when $D=D_\textup{max}$ we  have $t_D^\lambda g_D(t_D)=t_D^{2g+2}f_D(1/t_D)$. Now apply (i). 
\end{proof}

\begin{remark}
In (i), two factors $(x_D-\textup{red}_D(\s'))$ and $(x_D-\textup{red}_D(\s''))$  are distinct  for distinct $\s',\s''<\s$. 
\end{remark}

\subsection{Types of valid disc}

We now describe the special fibre of $\mathcal{C}_{\textup{disc}}$. To do this we will frequently break into cases, which we set out here. We remark that if all roots of $f(x)$ lie in $K^{\textup{nr}}$ then cases II, III and IV do not occur as then all proper clusters have integer depth.

\begin{definition} \label[definition]{types of valid disc}
Let $D$ be a valid disc. We define the \textit{type} of $D$ as follows.
\begin{itemize}
\item[(I.1)] $D=D(\s)$ for a non-\ub~cluster $\s$ with $\nu_\s$ even, and $D$ is not of type II.1,
\item[(I.2)]$D=D(\s)$ for an \ub~cluster $\s$ and $D$ is not of type II.2, 
\item[(II.1)] $D=D(\s)$ where $\s<\mathcal{R}$ is such that $\delta_\s=1/2$, $|\s|=2g$, and $\s$ is not \ub,
\item[(II.2)] $D=D(\s)$ where $\s<\mathcal{R}$ is such that $\delta_\s=1/2$, $|\s|=2g$, and $\s$ is  \ub,
\item[(III)] $D=D_\textup{max}$ and there is a cluster $\s<\mathcal{R}$ with $|\s|=2g$ and $\delta_{\s}=n+1/2$ for $n\geq 1$ an integer,
\item[(IV)] $D\cap \mathcal{R}=\t$ for $\t$ a twin with $d_\t=d_D+1/2$,
\item[(V)] $\nu_D(f)$ is odd (i.e. $\omega_D(f)=1$, equivalently either $D=D_\textup{max}$ and $\mathcal{R}=\s_1\sqcup \s_2$ is a  union of two odd proper children with $d_{\s_1}-d_\mathcal{R}$ odd, or both $\mathcal{R}\cap D$ and $d_D-d_{\mathcal{R}\cap D}$ are odd, c.f. \Cref{parity of nu thing}),
\item[(VI.1)] $D$ does not fall into cases I-V and $|D\cap \mathcal{R}|$ is odd,
\item[(VI.2)] $D$ does not fall into cases I-V and $|D\cap \mathcal{R}|$ is even.
\end{itemize}
\end{definition}

 \subsection{The special fibre of $\mathcal{C}_\textup{disc}$}\label{se:specialfibre}
 
In the following proposition we describe, for each component $E_D$ of the special fibre of $\mathcal{Y}_{\textup{disc}}$, the component(s) of the special fibre of $\mathcal{C}_{\textup{disc}}$ which lie over this.  Later in \Cref{main dual graph thingy} we explain how these components fit together, drawing on the description of the dual graph of $\mathcal{Y}_{\textup{disc},\bar{k}}$ afforded by \Cref{graph comparison 2}.

 In the statement below, for a proper cluster $\s$ we write $g(\s)$ for the genus of $\s$, thus $g(\s)$ is 0 if  $\s$ is \ub\, and is determined by $|\so| = 2g(\s) +1$ or $2g(\s)+2$ where $\so$ denotes the set of odd children of $\s$.

 \begin{proposition} \label[proposition]{components of cdisc description}
Let $D$ be a valid disc, $E_D$ the associated component of  the special fibre of $\mathcal{Y}_{\textup{disc}}$ and $\phi:\mathcal{C}_\textup{disc}\rightarrow \mathcal{Y}_{\textup{disc}}$ the normalisation morphism. Then  $\phi^{-1}(E_D)$ consists\footnote{here and below, by $\phi^{-1}(E_D)$ we formally mean the scheme $\mathcal{C}_{\textup{disc}}\times _{\mathcal{Y}_{\textup{disc}}}E_D$.}, according to the type of $D$, of:
\begin{itemize}
\item[(I.1,II.1)] one component  of multiplicity $1$, with geometric genus $g(\s)$, and   one node for each twin $\t<\s$ with $d_\t=d_\s+1/2$, plus one additional node if $D$ has type II.1. The normalisation of this component is the hyperelliptic curve
\[\widetilde{\Gamma}_\mathfrak{s}:y^2=c_\mathfrak{s}\prod_{\textup{odd }\s'<\s}(x-\textup{red}_\mathfrak{s}(\s')).\]
\item[(I.2,II.2)] two components of multiplicity 1, each isomorphic to $\P^1_{\kbar}$ and intersecting transversally at one point for each twin $\t$ with $d_\t=d_\s+1/2$, and at one additional point if $D$ has type $II.2$.
\item[(III,IV)] two components of multiplicity 1, isomorphic to $\P^1_{\kbar}$ and intersecting transversally at 1 point.
\item[(V)]   a single component    isomorphic to $\P^1_{\kbar}$, with multiplicity $2$  and self intersection $-1$.  
\item[(VI.1)] one component of multiplicity 1, isomorphic to $\P^1_{\kbar}$,
\item[(VI.2)] two disjoint  components of multiplicity 1, each isomorphic to $\P^1_{\kbar}$.
\end{itemize}
Moreover, all intersections between components of $\mathcal{C}_{\textup{disc},\kbar}$ are transversal. \label{need more than this for dual graph? normal crossings}
\end{proposition}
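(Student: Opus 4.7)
\medskip

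\noindent\textbf{Proof proposal.}
The plan is to work locally on $\mathcal{Y}_{\textup{disc}}$ using the explicit charts of Proposition~\ref{normalisation of charts}. Fix a valid disc $D$ and write $\mathfrak{s} = D \cap \mathcal{R}$. The preimage $\phi^{-1}(E_D)$ is covered by the $y_D$-chart coming from $\mathcal{U}_D \setminus \phi_{D,1}^{-1}(\mathcal{P}_D)$ together with the $z_D$-charts coming from $\mathcal{W}_{D'} \setminus \phi_{D',2}^{-1}(\mathcal{P}_{D'} \cup \mathcal{Q}_{D'})$ for $D'$ any child of $D$ in $\mathcal{D}$, plus (if $D = D_{\textup{max}}$) the chart $\mathcal{W}_D$ that covers the point at infinity on $E_D$. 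After reducing modulo $\mathfrak{m}$ and substituting Lemma~\ref{reduced polynomial lemma}, the defining equation on $\mathcal{U}_D$ becomes
$$
y_D^2 \;=\; \pi^{\omega_D(f)}\,c_D\,\prod_{\mathfrak{s}' < \mathfrak{s}}(x_D - \mathrm{red}_D(\mathfrak{s}'))^{|\mathfrak{s}'|}
$$
if $D = D(\mathfrak{s})$, or $y_D^2 = \pi^{\omega_D(f)} c_D (x_D - \mathrm{red}_D(\mathfrak{s}))^{|\mathfrak{s}|}$ otherwise, with analogous expressions at the transition charts. The computation then reduces to reading off the normalisation of a plane curve $y^2 = h(x)$ over $\bar k$, which is standard: factor $h$ as (square)$\cdot$(square-free), the square-free part gives the hyperelliptic equation of the normalisation, while simple factors of the square part produce nodes where two branches meet and $h\equiv 0$ identically produces a non-reduced component of multiplicity two.

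Carrying this out type by type: in case (I.1), $\omega_D(f)=0$ and the odd children contribute the square-free factor, giving the hyperelliptic equation $\tilde\Gamma_\mathfrak{s}$ of geometric genus $g(\mathfrak{s})$; twins $\mathfrak{t}<\mathfrak{s}$ with $d_\mathfrak{t} = d_\mathfrak{s} + 1/2$ do \emph{not} appear as clusters of $\mathcal{D}$ at the corresponding point (\Cref{kappa proposition}, \Cref{graph as clusters}), so $\mathrm{red}_D(\mathfrak{t})$ is a genuine point on $E_D$ at which $f_D$ has a double zero and $B_{\text{hor}}$ passes through the zero locus of $y_D$ with odd multiplicity (Proposition~\ref{branch locus lemma}), producing one node. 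Case (I.2) is the same analysis with no odd children, so $f_D$ is a perfect square and $y_D^2 = c_\mathfrak{s}\cdot(\text{square})$ splits into two $\mathbb{P}^1$s; Cases (II.1), (II.2) add the extra node coming from the fact that for such $D$, the disc $D = D(\mathfrak{s})$ coincides with $D_{\textup{max}}$ but the point at infinity on $E_D$ also lies in the branch locus by Proposition~\ref{branch locus lemma}. Cases (III) and (IV) are computed on the chart $\mathcal{W}_D$ (or the analogous $\mathcal{W}_{D'}$ above $D$); in each situation $g_D$ reduces to a constant times a perfect square away from the unique intersection point where $B_{\text{hor}}$ meets $E_D$, so the preimage consists of two projective lines meeting transversally there. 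Case (V) is immediate from \Cref{multiplicity 2 remark}. Cases (VI.1) and (VI.2) correspond to $|\mathfrak{s}|$ odd resp.\ even with $f_D \equiv c_D(x_D - \mathrm{red}_D(\mathfrak{s}))^{|\mathfrak{s}|}$; in the odd case $y_D^2 = c_D(x_D-\mathrm{red}_D(\mathfrak{s}))\cdot(\text{square})$ gives an irreducible $\mathbb{P}^1$ (degree-two cover of $E_D$ branched at two points, namely $\mathrm{red}_D(\mathfrak{s})$ and $\infty$ or a similar point on an adjacent chart), while in the even case $y_D^2 = c_D\cdot(\text{square})$ splits.

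Transversality of every intersection between components of $\mathcal{C}_{\textup{disc},\bar k}$ follows from the regularity of $\mathcal{C}_{\textup{disc}}$ (Theorem~\ref{main regular model theorem}): at any singular point of the special fibre, the complete local ring at that point has two minimal primes, and regularity forces their sum to be the maximal ideal, i.e.\ the intersection is transverse. The only step requiring care is the bookkeeping at shared boundary points of adjacent charts, where one must check that a node recorded on $\mathcal{U}_D$ and one recorded on an adjacent $\mathcal{W}_{D'}$ are not double-counted; this is handled uniformly by noting that the $\mathcal{Q}_{D'}$ and $\mathcal{P}_D$ are precisely the points removed in the gluing construction of \S\ref{construction of the p1 sect}, so each node is seen exactly once. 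The main obstacle is purely bookkeeping: one must keep track of which twins $\mathfrak{t}<\mathfrak{s}$ with $d_\mathfrak{t}=d_\mathfrak{s}+1/2$ survive as nodes on $\tilde\Gamma_\mathfrak{s}$ rather than being resolved into a further chain of components, and this is exactly the content of \Cref{kappa proposition} combined with the description of the valid discs given in \Cref{valid discs defi}.
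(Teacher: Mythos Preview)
Your case-by-case analysis using the explicit charts and Lemma~\ref{reduced polynomial lemma} is close in spirit to the paper's approach, and the component counts and genera come out correctly. However, there are two genuine gaps.

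First, the transversality argument is invalid. Regularity of the ambient surface $\mathcal{C}_{\textup{disc}}$ does \emph{not} imply that two components of the special fibre meeting at a point intersect transversally. In a regular local ring $A=\bar k[[u,v]]$ one can have $\pi=u(u-v^2)$; then $A/(\pi)$ is reduced with exactly two minimal primes, but the branches $u=0$ and $u=v^2$ are tangent, not transverse. So the statement ``regularity forces their sum to be the maximal ideal'' is false. The paper instead argues in three steps: intersections within a single $\phi^{-1}(E_D)$ are ordinary double points by the explicit local equation $y_D^2=(\text{unit})\cdot(x_D-a)^2$ (Lemma~\ref{singular points} and \cite[Lemma 10.3.11]{LiuA}); intersections of a multiplicity-2 component with its neighbours use the projection formula (Lemma~\ref{exceptional curve lemma}(iii)); and all remaining intersections lie away from the branch locus $B$, where $\phi$ is \'etale and transversality is inherited from the semistable model $\mathcal{Y}_{\textup{disc}}$. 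Your argument handles none of these, and in particular the intersections between components over \emph{adjacent} discs $E_D$ and $E_{D'}$ are not discussed at all.

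Second, for type~V you cite Remark~\ref{multiplicity 2 remark}, but that remark only gives multiplicity~$2$ and the isomorphism $Z_{\textup{red}}\cong E_D\cong\P^1$; it says nothing about the self-intersection $-1$. This requires the projection-formula computation of Lemma~\ref{exceptional curve lemma}(iii): one checks that $\Gamma_D$ meets exactly two other components, each with intersection number~$1$, and then uses $\Gamma_D\cdot(\text{special fibre})=0$ together with the multiplicity~$2$ to conclude $\Gamma_D^2=-1$.
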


\begin{remark} \label[remark]{simplification over extension}
When all roots of $f(x)$ lie in $\Knr$ (e.g. if we are willing to construct the model only over a suitable extension of the base field) all components are individually smooth, no two components lying over the same component of $\mathcal{Y}_{\textup{disc}}$ intersect, and, as above, types II, III and IV never arise.
\end{remark}

 The proof of \Cref{components of cdisc description} is given after \Cref{number of components cor,singular points}.

\begin{lemma} \label[lemma]{number of components cor} \label[lemma]{exceptional curve lemma}
Let $D$ be a valid disc, $E_D$ the associated component of the special fibre of $\mathcal{Y}_{\textup{disc},\kbar}$ and $\phi:\mathcal{C}_\textup{disc}\rightarrow \mathcal{Y}_{\textup{disc}}$ the normalisation morphism. Then
\begin{itemize}
\item[(i)]  If $D=D(\s)$ for a proper cluster $\s$ with  $\nu_\s$ even then $\phi^{-1}(E_D)$ consists of two multiplicity 1 components if $\s$ is \ub~(types I.2 and II.2), and one otherwise (types I.1 and II.1). The (geometric) genus of the component(s) is $g(\s)$.

\item[(ii)] If $\omega_D(f)=0$ and $D$ is not of the form  $D(\s)$ for a proper cluster $\s$ then $\phi^{-1}(E_D)$  consists of two multiplicity 1 components of genus 0 if $|D\cap \mathcal{R}|$ is even (types III, IV and VI.2), and one otherwise (type VI.2). 

\item[(iii)]  If $D$ has type V then $\phi^{-1}(E_D)$ consists of a single multiplicity 2 component, which is an exceptional curve\footnote{isomorphic to $\P^1_{\kbar}$ with self intersection $-1$.}, and meets all other components  transversally.
\end{itemize}
\end{lemma}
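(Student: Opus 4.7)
The plan is to work locally in the charts $\mathcal{U}_D$ and $\mathcal{W}_D$ of Proposition~\ref{normalisation of charts} and to reduce everything modulo $\pi$. Since $\bar{k}$ is algebraically closed, every element is a square and the analysis reduces to explicit computation in each case.

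For parts (i) and (ii) we have $\omega_D(f)=0$, so the reduction modulo $\pi$ of the defining equation of $\mathcal{U}_D$ is $y_D^2 = \bar{f}_D(x_D)$, with $\bar{f}_D$ given by Lemma~\ref{reduced polynomial lemma}(i); the behaviour near the point at infinity on $E_D$ is handled identically in $\mathcal{W}_D$ via Lemma~\ref{reduced polynomial lemma}(ii). Writing $\bar{f}_D = h(x_D)^2\,\tilde g(x_D)$ with $\tilde g$ squarefree, the preimage $\phi^{-1}(E_D)$ is reducible iff $\tilde g$ is a nonzero constant, iff $\bar{f}_D$ is a perfect square, iff every exponent appearing in the factorisation of $\bar f_D$ given by Lemma~\ref{reduced polynomial lemma} is even; in that case $y^2=\bar f_D$ factors as $(y-h\sqrt{\text{const}})(y+h\sqrt{\text{const}})$ and we obtain two components, each isomorphic to $\P^1$. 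For (i) with $D=D(\s)$ this happens precisely when $\s$ is übereven, giving types I.2/II.2. Otherwise, $y\mapsto y/h$ normalises the single component to the hyperelliptic curve $y^2 = \tilde g(x) = c_\s\prod_{\text{odd }\s'<\s}(x-\text{red}_\s(\s'))$, and since $|\so|\in\{2g(\s)+1,2g(\s)+2\}$, its geometric genus is $g(\s)$ by the standard formula. For (ii) we have $\bar f_D = c_D(x_D-\text{red}_D(\s))^{|\s|}$, which is a square iff $|\s|$ is even, giving the two subclaims.

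For part (iii), $\omega_D(f)=1$ forces the reduction of the equation on $\mathcal{U}_D$ to be $y_D^2 = 0$; this is precisely the assertion of Remark~\ref{multiplicity 2 remark} that $\phi^{-1}(E_D)$ consists of a single component $Z$ of multiplicity $2$, with $Z_{\text{red}}\iso E_D\iso\P^1$. For the self-intersection, the projection formula gives
\[
4Z^2 = (2Z)^2 = (\phi^*E_D)^2 = \deg(\phi)\,E_D^2 = 2E_D^2,
\]
so $Z^2 = E_D^2/2$ and it suffices to show $E_D^2=-2$. Since $\mathcal{Y}_{\text{disc}}$ is an iterated blow-up of $\P^1_{\mathcal{O}}$ at smooth points on the special fibre, every component of that fibre appears with multiplicity one, and $\sum_{D'\in\mathcal{D}}E_{D'}=\textup{div}(\pi)$ is principal, whence $E_D^2 = -\#\{D'\in\mathcal{D}\text{ adjacent to }D\}$ (adjacent components meet transversally at a single point, visible in $\mathcal{W}_D$). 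I would then use Lemma~\ref{parity of nu thing} to check in each of its two subcases that $D$ has exactly two valid neighbours: in case (i), these are $P(D)$ and the unique integral subdisc of $D$ of depth $d_D+1$ containing $D\cap\mathcal{R}=\s$ (this subdisc contains all of $\s$ because pairwise distances in $\s$ are $\geq d_\s\geq d_D+1$, the boundary case $d_D+1=d_\s$ still producing a single valid child, namely $D(\s)$ itself); in case (ii) $D=D(\mathcal{R})$ is the root of the tree, with no parent and exactly two valid children, one containing each $\s_i$ (since $\delta_{\s_i}\geq 1$ as $\delta_{\s_i}$ is odd).

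Finally, transversality of the intersections of $Z$ with its neighbours is a direct local check in the chart $\mathcal{W}_D$. Since neighbouring valid discs cannot both have $\omega(f)=1$ by Lemma~\ref{parity of nu thing}, the defining equation of $\mathcal{W}_D$ at the intersection point $s_D=t_D=0$ reduces, modulo $\pi$, to $(s_Dt_D,\,z_D^2 - s_D g_D(s_D,t_D))$ with $g_D(0,0)\neq 0$ for the neighbouring component, realising $Z_{\text{red}}\cap Z'$ as a node with distinct tangent directions in the ambient regular surface. The main obstacle, in my view, is the bookkeeping needed across the chart change $x_D=1/t_D$, $y_D=z_D/t_D^{\lfloor\deg f/2\rfloor}$, to verify that the two affine pieces produced in parts (i.2)/(ii)-even glue to two honest $\P^1$s globally rather than a single curve that self-identifies at infinity; this reduces to tracking consistent choices of $\sqrt{\bar f_D}$ under $x\mapsto 1/x$, which is possible since in each splitting case the total degree of $\bar f_D$ (with its natural extension through infinity) is even.
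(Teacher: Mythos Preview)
Your argument for parts (i) and (ii) matches the paper's: both work at the generic point $\eta_D$ of $E_D$, use \Cref{reduced polynomial lemma} to see that $\overline{f_D(x_D)}$ is a square in $\kbar(x_D)$ exactly when $\s$ is \ub\ (resp.\ $|D\cap\mathcal{R}|$ is even), and read off the geometric genus from the squarefree part. Your final paragraph about gluing at infinity is not needed here: the number of components and their geometric genera are determined by the fibre over $\eta_D$, which is what both you and the paper compute. (That the two components are honest $\P^1$'s is only asserted later, in \Cref{components of cdisc description}.)

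For part (iii) you take a genuinely different route. The paper uses the projection formula in the form $\Gamma_D\cdot\phi^*E_{D_i}=\phi_*\Gamma_D\cdot E_{D_i}=E_D\cdot E_{D_i}=1$, having first argued (via parts (i)--(ii) and \Cref{parity of nu thing}) that over each of the two neighbours $E_{D_i}$ there sits a \emph{single} multiplicity-one component $\Gamma_{D_i}$; then $\Gamma_D\cdot(\text{special fibre})=0$ with $\Gamma_D$ of multiplicity $2$ gives $2\Gamma_D^2+1+1=0$. Your approach instead pulls back $(\phi^*E_D)^2=\deg(\phi)\,E_D^2$ to reduce to showing $E_D^2=-2$ on $\mathcal{Y}_{\textup{disc}}$, which you get from the neighbour count. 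Both routes rest on the same combinatorial fact (a type~V disc has exactly two valid neighbours), and your verification of this from \Cref{parity of nu thing} is correct. Your method has the advantage of not needing to analyse what lies over the neighbouring components; the paper's method gives transversality for free from $\Gamma_D\cdot\Gamma_{D_i}=1$, whereas your local check in $\mathcal{W}_D$ only covers the intersection with the component over $P(D)$---you should note that the intersection with the component over the child $D'$ (and, in case (ii), with both children of $D_{\max}$) is seen in $\mathcal{W}_{D'}$, where the roles of $s$ and $t$ swap but the computation is identical.
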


\begin{proof}
(i) and (ii). In what follows, let us temporarily denote the reduction of $f_D(x_D)$ $\textup{mod }\mathfrak{m}$ as $\overline{f_D(x_D)}$. Let $\eta_D$ be the generic point of $E_D$. From \Cref{normalisation of charts} we see that the fibre  over $\eta_D$ is 
\[\textup{Spec}~\kbar(x_D)[y_D]/\left(y_D^2-\overline{f_D(x_D)}\right)\]
and as  $\overline{f_D(x_D)}$ is non-zero the associated component(s) has multiplicity 1. That the number of components is as claimed follows since, by \Cref{reduced polynomial lemma},  if $D$ is not the defining disc of  $\s=D\cap \mathcal{R}$ then  $\overline{f_D(x_D)}$ is a square in $\kbar(x_D)$ if and only if $|\s|$ is even, whilst if $D=D(\s)$  then $\overline{f_D(x_D)}$ is a square if and only $\s$ is \ub. To determine the geometric genus of the components note that when $\overline{f_D(x_D)}$ is a square the fibre over the generic point of $\eta_D$ is isomorphic to two copies of $\kbar(x)$ from which it follows that both components have genus 0. Now note that, for  $\overline{f_D(x_D)}$ non-square, the genus of the function field $\kbar(x_D)[y_D]/\left(y_D^2-\overline{f_D(x_D)}\right)$ is precisely the number of odd degree factors of $\overline{f_D(x_D)}$ and we conclude by \Cref{reduced polynomial lemma}.  

(iii).   By \Cref{multiplicity 2 remark} we see that if $\omega_D(f)=1$ then there is a unique component over $E_D$, $\Gamma_D$ say, which has multiplicity $2$ and is isomorphic to $\P^1_{\kbar}$. It remains to show that $\Gamma_D$ has self intersection $-1$. Combining parts (i) and (ii) with \Cref{parity of nu thing} we see that $E_D$ meets precisely two components of the special fibre of $\mathcal{Y}_{\textup{disc}}$, $E_{D_1}$ and $E_{D_2}$ say, and there is one multiplicity 1 component of the special fibre of $\mathcal{C}_{\textup{disc}}$ lying over each of these, $\Gamma_{D_1}$ and $\Gamma_{D_2}$ say. In particular, as divisors we have $\phi^*(E_{D_1})=\Gamma_{D_1}$,  $\phi^*(E_{D_2})=\Gamma_{D_2}$ and $\phi_{*}(\Gamma_D)=E_D$. 
 By the projection formula \cite[Theorem 9.2.12]{LiuA} we have
$\Gamma_D\cdot \Gamma_{D_1}=E_D\cdot E_{D_1}=1$
and  $\Gamma_D\cdot \Gamma_{D_2}=1$ similarly. Thus $\Gamma_D$ intersects $\Gamma_{D_1}$ and $\Gamma_{D_2}$ transversally.
Finally as the intersection of $\Gamma_D$ with the whole special fibre  
 is 0 it follows that $\Gamma_D^2=-1$.
\end{proof}

%

\begin{lemma} \label[lemma]{singular points}
Let $D$ be a valid disc not of type V. Then $\phi^{-1}(E_D)$ has precisely one singular point for each twin $\t$ with $d_\t=d_D+1/2$, unless $D=D_\textup{max}$, when it has an additional singular point (lying over the point at infinity on $E_{D_\textup{max}}$) if $\mathcal{R}$ is exceptional of type A (i.e. if $D_\textup{max}$ has type II or III). Each singular point of $\phi^{-1}(E_D)$ is an ordinary double point.  
\end{lemma}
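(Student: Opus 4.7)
The plan is to analyse $\phi^{-1}(E_D)$ chart-by-chart using the explicit equations from \Cref{normalisation of charts}. Since $D$ is not of type V we have $\omega_D(f) = 0$, so candidate singularities of $\phi^{-1}(E_D)$ arise only at three kinds of point: (a) multiple roots of $\overline{f_D}$ visible in the $\mathcal{U}_D$-chart; (b) intersection points with adjacent components, visible in the $\mathcal{W}_D$-chart; and (c) the point at infinity on $E_{D_{\max}}$ (when $D=D_{\max}$), visible in the $\mathcal{W}_{D_{\max}}$-chart. I would treat each in turn and, whenever a singularity does arise, exhibit a local equation of the form $u^2 = v^2 \cdot (\text{unit})$; since $\bar k$ is algebraically closed of odd residue characteristic, such an equation factors over the completed local ring into two distinct linear branches, yielding an ordinary double point.

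For (a), the scheme $\phi^{-1}(E_D) \cap \mathcal{U}_D$ is cut out by $y_D^2 = \overline{f_D}(x_D)$ (with the points $\phi_{D,1}^{-1}(\mathcal{P}_D)$ removed) and is singular at $(\alpha, 0)$ exactly when $\alpha$ is a multiple root of $\overline{f_D}$. Combining \Cref{reduced polynomial lemma}, which identifies the roots of $\overline{f_D}$ and their multiplicities with the children of $\s = D \cap \mathcal{R}$ (or with $\s$ itself when $D \supsetneq D(\s)$), with \Cref{integral clusters}, which says that non-integer-depth proper clusters are twins of inertia-conjugate roots, the non-removed multiple roots of $\overline{f_D}$ will be seen to correspond precisely to the twin children $\t < \s$ (or to $\s = \t$ itself in type IV) with $d_\t = d_D + 1/2$. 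Each such $\t$ has $|\t|=2$, so the local equation is $y_D^2 = (x_D-\alpha)^2 \cdot u$ with $u$ a unit, an ordinary double point by the paragraph above.

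For (b), at an intersection point $R_D=\{s_D=t_D=0\}$ in the $\mathcal{W}_D$-chart, \Cref{intersection lemma} together with \Cref{kappa proposition} will show that the constant term $g_D(0,0) = \hat{c}_j$ is a unit. If $\omega_{P(D)}(f)=0$, the local equation $z^2 = g_D(s_D,t_D)$ is étale at $R_D$, giving two smooth preimages. If $\omega_{P(D)}(f)=1$, I would use $z^2 = t_D \cdot (\text{unit})$ to eliminate $t_D$ in the completed local ring, obtaining a two-dimensional regular local ring in which $\phi^{-1}(E_D) = \{s_D=0\}$ cuts out a smooth formal curve. Either way no singularity of $\phi^{-1}(E_D)$ arises at $R_D$.

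For (c), the chart $\mathcal{W}_{D_{\max}}$ has local equation $z^2 = t^{\lambda} g_{D_{\max}}(t)$ with $\lambda \equiv \deg(f) \pmod 2$; the local structure at $t=0$ is controlled by the vanishing order of $t^\lambda \overline{g_{D_{\max}}}(t)$, with order $1$ yielding a smooth ramification point and order $2$ producing the $z^2 = t^2 \cdot (\text{unit})$ pattern and hence another ordinary double point. A case-by-case valuation analysis using \Cref{formula for nu lemma} will show the vanishing order is $1$ in the non-exceptional cases and in exceptional type B (in each of these one identifies an appropriate coefficient of $f_{D_{\max}}$ whose reduction is a unit), but is $2$ in exceptional type A (types II and III): both top coefficients $a_{2g+1}$ and $a_{2g+2}$ of $f_{D_{\max}}$ then have positive valuation, while $a_{2g}$ reduces to a unit. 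The main technical obstacle will be the estimate $v(a_{2g+1}) \geq 1$ in type A, which requires the observation that the two roots $r_1, r_2$ outside $D_{\max}$ are either both in $K^{\textup{nr}}$ or inertia-conjugate, so that $r_1+r_2-2z_{D_{\max}} \in K^{\textup{nr}}$ has integer valuation and therefore $\geq \lceil d_\mathcal{R} \rceil = d_{D_{\max}}$; this integrality obstruction tied to the fractional depth $d_\mathcal{R}$ is precisely what forces the extra node at infinity in exceptional type A.
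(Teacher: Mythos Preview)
Your approach is correct and follows essentially the same chart-by-chart strategy as the paper: analyse $\phi^{-1}(E_D)$ via the explicit equations of \Cref{normalisation of charts} and the description of $\overline{f_D}$ in \Cref{reduced polynomial lemma}. Two points of comparison are worth noting.

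For (b), the paper organises instead by whether $Q=\phi(P)$ lies in the branch locus $B$. When $Q\notin B$ the map is \'etale and smoothness is immediate; when $Q\in B_{\textup{ver}}$ (i.e.\ the adjacent component has $\omega_{D'}(f)=1$) the paper invokes the transversality already established in \Cref{exceptional curve lemma}(iii) rather than recomputing in the $\mathcal W_D$-chart as you do. Your direct computation is fine and arguably more self-contained, but the paper's route reuses work already done.

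For (c), your coefficient-by-coefficient valuation analysis (and the integrality obstruction for $r_1+r_2-2z_{D_{\max}}$) is correct but considerably more laborious than necessary. \Cref{reduced polynomial lemma}(ii) already computes
\[
t_D^{\lambda}\,\overline{g_{D_{\max}}}(t_D)=t_D^{\,2g+2-|D_{\max}\cap\mathcal R|}\cdot c_{D_{\max}}\prod_{r\in D_{\max}\cap\mathcal R}(1-\textup{red}_{D_{\max}}(r)t_D),
\]
so the vanishing order at $t_D=0$ is exactly $2g+2-|D_{\max}\cap\mathcal R|$, which is $0$ if $\deg f=2g+2$ and $\mathcal R$ is not exceptional, $1$ if $\deg f=2g+1$ or $\mathcal R$ is exceptional of type B, and $2$ if $\mathcal R$ is exceptional of type A. This replaces your entire final paragraph in one line.
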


\begin{proof}
 Let $P$ be a closed point of $\phi^{-1}(E_D)$, let $Q=\phi(P)$ and suppose that  $Q \in B\cap E_D$ (if not $\phi$ is \'etale at $P$  whence, since $E_D$ is smooth, $P$ is a smooth point of $\phi^{-1}(E_D)$). If $Q\in B_{\textup{ver}}$ then as  $\omega_D(f)=0$, $P$ is a point of intersection between $\phi^{-1}(E_D)$ and a component lying over some $E_{D'}$ with $\omega_{D'}(f)=1$. By \Cref{exceptional curve lemma} such intersections are transverse whence (see e.g. \cite[Lemma 10.3.1]{LiuA}) they are smooth points of the individual components. Next, suppose $Q\in B_{\textup{hor}}\cap E_D$ and is not the point at infinity on $E_{D_\textup{max}}$. Then by \Cref{branch locus lemma}  $Q$ corresponds to a point $x_D=\textup{red}_{D}(\s)$ where $\s$ is  a child of $D\cap \mathcal{R}$ and is either a singleton, or a twin with $d_\s=d_D+1/2$. Now by \Cref{normalisation of charts} and again writing $\overline{f_D(x_D)}$ for the reduction of $f_D(x_D)$ $\textup{mod }\mathfrak{m}$, locally around $P$, $\phi^{-1}(E_D)$ is given by \[\textup{Spec}~\kbar[x_D,y_D]/(y_D^2-\overline{f_D(x_D)})\] with $P$ the point $(\textup{red}_{D}(\s),0)$.  By \Cref{reduced polynomial lemma} $\textup{red}_D(z_{D'})$ is a root of multiplicity $1$ if $\s$ is a singleton, and $2$ if $\s$ is a twin. One checks (see e.g. \cite[Example 10.3.4]{LiuA}) that $P$ is smooth point in the first instance, and an ordinary double point  in the second. The case $D=D_\textup{max}$ and $Q$ the point at infinity on $E_{D_\textup{max}}$ is similar. Our assumption that $Q\in B_{\textup{hor}}$ forces $\textup{deg}(f)=2g+1$ or $\mathcal{R}$ exceptional. Arguing as above and using \Cref{reduced polynomial lemma} (ii) one sees that $P$ is smooth if $\textup{deg}(f)=2g+1$ or $\mathcal{R}$ is exceptional of type B, and is an ordinary double point if $\mathcal{R}$ is exceptional of type A.
\end{proof}

\begin{proof}[Proof of \Cref{components of cdisc description}]
 
 (I.1, II.1): By \Cref{number of components cor} $\phi^{-1}(E_D)$ consists of a unique component of geometric genus $g(\s)$ whose function field is  \[\kbar(x_D)[y_D]/(y_D^2-\overline{f_D(x_D)}).\] The equation for  $\widetilde{\Gamma}_\mathfrak{s}$  follows from \Cref{reduced polynomial lemma}. That the nodes of $\Gamma_\s$ are as claimed is \Cref{singular points}. (I.2, II.2): By \Cref{number of components cor} $\phi^{-1}(E_D)$ consists of two components each of geometric genus $0$ and by \Cref{singular points} the singular points of $\phi^{-1}(E_D)$ are all ordinary double points and are in bijection with the twins $\t<\s$ with $d_\t=d_\s+1/2$, plus one additional ordinary double point for type II.2. For each such, $P$ say, we have $\phi(P)\in B$ whence $P$ is the unique point lying over $\phi(P)$. In particular, it lies on both components of $\phi^{-1}(E_D)$. It now follows from \cite[Lemma 10.3.11]{LiuA} that $P$ is a smooth point of each individual component, and that these components intersect transversally at $P$. In particular, each component  is smooth hence isomorphic to $\P^1_{\kbar}$. (III, IV): Follows from \Cref{number of components cor} and \Cref{singular points} similarly to cases I.2 and II.2. (V): \Cref{exceptional curve lemma} (iii). (VI): Combine \Cref{number of components cor,singular points} (the latter shows $\phi^{-1}(E_D)$ is smooth). 
 
Finally we show that all intersections are transverse. Let $P\in \mathcal{C}_{\textup{disc},\kbar}$ be a point lying on distinct components $Z_1$ and $Z_2$. If $\phi(Z_1)=\phi(Z_2)$ then both $Z_1$ and $Z_2$ lie in $\phi^{-1}(E_D)$ for some valid disc $D$ of type I.2, II.2, III or IV, and the intersection is transverse as above. Similarly, if one of $Z_1$ or $Z_2$ has multiplicity $2$ we are done by \Cref{exceptional curve lemma} (iii). Otherwise $Q=\phi(P)$ is disjoint from $B$ whence $\phi$ is \'etale at $P$ and, moreover, $Q$ is a point of transverse intersection between the distinct components $\phi(Z_1)$ and $\phi(Z_2)$ of $\mathcal{Y}_{\textup{disc}}$. Thus the intersection is transverse in this case also. 
\end{proof}

\subsection{The minimal regular model of $C/\Knr$}
\label{ssminregmod}

Having constructed a regular model of $C/K^{\textup{nr}}$ and described the components of its special fibre, it  is a simple matter to describe the minimal regular model $\mathcal{C}_{\textup{min}}/\mathcal{O}$ of $C/K^{\textup{nr}}$, which we do now. In particular, we use the explicit description we obtain to show that $\mathcal{C}_{\textup{min}}$ is semistable. This proves the `semistability criterion implies semistability' part of \Cref{redcond} (1) (see also \Cref{the semistability theorem}).

\begin{theorem} \label[theorem]{semistability criterion implies semistability}
Let $C/K$ be a hyperelliptic curve satisfying the semistability criterion. Then the model of $C$ obtained from $\mathcal{C}_{\textup{disc}}$ by contracting each of the components corresponding to valid discs of type $V$ is semistable, and is the minimal  regular model  of $C/\Knr$. 
\end{theorem}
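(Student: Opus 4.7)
The plan is to proceed in three steps: construct the contracted model $\mathcal{C}'$, show it is semistable, and show it is minimal. First, by Proposition \ref{components of cdisc description}(V), every type V component $\Gamma$ of $\mathcal{C}_{\textup{disc},\bar k}$ is isomorphic to $\mathbb{P}^1_{\bar k}$ with self-intersection $-1$, and meets exactly two other components transversally at distinct points. Castelnuovo's contractibility criterion for regular arithmetic surfaces permits the contraction of each such $\Gamma$ to a regular point; performing these one at a time gives a regular proper model $\mathcal{C}'/\mathcal{O}$ of $C/K^{\textup{nr}}$.

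To see that $\mathcal{C}'$ is semistable, first note that $\mathcal{C}'_{\bar k}$ is reduced: all non-type-V components of $\mathcal{C}_{\textup{disc},\bar k}$ have multiplicity $1$ by Proposition \ref{components of cdisc description}, and contraction leaves those multiplicities unchanged. Away from the images of the contracted components, the singularities of $\mathcal{C}'_{\bar k}$ coincide with those of $\mathcal{C}_{\textup{disc},\bar k}$, which are all ordinary double points by Proposition \ref{components of cdisc description}. At the image $Q$ of a contracted type V component, the local picture can be read off from the model case $\mathrm{Spec}\,\mathcal{O}[u,v]/(\pi - uv)$: blowing up its closed point $(u,v,\pi)$ produces an exceptional $\mathbb{P}^1$ of multiplicity $2$ meeting the strict transforms of $\{u=0\}$ and $\{v=0\}$ transversally at distinct points, which is exactly the configuration of a type V component. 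Since Castelnuovo contraction inverts such a blow-up, $\mathcal{C}'$ is locally isomorphic near $Q$ to $\mathrm{Spec}\,\mathcal{O}[u,v]/(\pi - uv)$, so $Q$ is an ordinary double point of $\mathcal{C}'_{\bar k}$.

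For minimality, I will use that a regular proper model of a smooth curve of genus $\geq 1$ over a DVR is minimal if and only if it contains no exceptional divisor of the first kind. For a semistable regular model with reduced special fibre $\sum \Gamma_i$, the identity $(\sum \Gamma_i)\cdot \Gamma_j = 0$ gives $\Gamma_j^2 = -\sum_{i\neq j} \Gamma_i\cdot \Gamma_j$, so a component $\Gamma_j \cong \mathbb{P}^1_{\bar k}$ is a $(-1)$-curve precisely when it meets the union of the other components at a single point (counted with multiplicity, including self-intersections). Note that contracting a type V component preserves the total intersection count of each of its two neighbouring components with the rest of the special fibre: each neighbour loses one intersection point (with the contracted $\Gamma$) and gains one (with the other neighbour, now met through $Q$). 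Combining Proposition \ref{components of cdisc description} with the dual graph description of \S\ref{dual graph of y sub}, one then checks by case analysis over types I.1, I.2, II, III, IV, VI that no component of $\mathcal{C}'_{\bar k}$ is such a $(-1)$-curve, using $g \geq 2$.

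The main obstacle lies in the minimality verification: one must enumerate all configurations of adjacent valid discs in the dual graph that could yield a smooth rational component meeting the rest of the special fibre at $\leq 1$ point, and rule them out using the cluster structure together with the genus-$\geq 2$ hypothesis. By contrast, the local analysis of a single contraction in step 2 reduces to the standard blow-up identification above and is comparatively routine.
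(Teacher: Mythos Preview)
Your proposal is correct and follows the same three-step outline as the paper, but the execution of steps~2 and~3 differs in notable ways.

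For semistability at a contracted point, the paper simply cites \cite[Lemma 3.35]{LiuA}: a $(-1)$-curve meeting two components transversally at distinct points contracts to a point where those components meet transversally. Your local-model argument reaches the same conclusion, but as written it has a small gap: matching the \emph{configuration} (a multiplicity~2 exceptional $\mathbb{P}^1$ meeting two reduced components transversally) with that of the blow-up of $\mathrm{Spec}\,\mathcal{O}[u,v]/(\pi-uv)$ does not by itself give a local isomorphism of schemes. What you actually need is the standard fact that Castelnuovo contraction of a $(-1)$-curve $E$ on a regular surface is the inverse of blowing up the image point $Q$; then, since the strict transforms $Z_1,Z_2$ meet $E$ at distinct points, their images $Z_1',Z_2'$ have distinct tangent directions at $Q$, and together with $\pi\in\mathfrak{m}_Q^2$ (from the multiplicity~2 of $E$) this forces $Q$ to be an ordinary double point. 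This is easy to supply, but should be said.

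For minimality, the paper's primary route is structural rather than case-by-case: it invokes \Cref{main dual graph thingy}, which identifies the dual graph (after removing genus~0 degree~2 vertices) with the hyperelliptic graph $G_\Sigma$ of \cite{hyble}, and such graphs have no genus~0 leaves by definition. Your case analysis over types I--VI is exactly the alternative the paper mentions in passing (``Alternatively one may proceed via \Cref{components of cdisc description} and\ldots \Cref{graph comparison 2}''). The structural route is cleaner and avoids the enumeration you flag as the main obstacle, at the cost of depending on the combinatorial machinery of \cite{hyble}; your route is more self-contained but, as you note, requires a careful enumeration of all ways a $\mathbb{P}^1$-component could end up with degree~$\leq 1$ in the dual graph of $\mathcal{C}'$.
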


\begin{proof}
Note first that by \Cref{components of cdisc description} (iv) the components we contract are disjoint, and are all exceptional curves. Thus we may contract them and the resulting model, which we temporarily denote $\mathcal{C}/\mathcal{O}$, is again proper and regular. Moreover, again by \Cref{components of cdisc description}, the components contracted are precisely those with multiplicity greater than one. Fix one such, $\Gamma_D$ say. Then as in the proof of \Cref{exceptional curve lemma}(iii) it intersects precisely two components of $\mathcal{C}_{\textup{disc}, \kbar}$, say $Z_1$ and $Z_2$ (which do not themselves meet), both transversally. By \cite[Lemma 3.35]{LiuA}, after contracting $\Gamma_D$ to a point the components $Z_1$ and $Z_2$ intersect transversally there. Thus the special fibre of $\mathcal{C}$ is reduced with normal crossings (away from the contracted components the same is true for the special fibre of   $\mathcal{C}_{\textup{disc}}$ by \Cref{components of cdisc description}, and the contraction map is an isomorphism here). Thus  $\mathcal{C}$ is semistable.

Since $\mathcal{C}$ is a proper regular model of $C$, to show that $\mathcal{C}$ is the minimal such we must show there are no exceptional curves in its special fibre. Note that such components appear in the dual graph of $\mathcal{C}_{\bar{k}}$ as genus 0 vertices of degree~1. \Cref{main dual graph thingy} below shows that the dual graph of $\mathcal{C}$ is (after removing vertices of degree~2 and genus 0 from the vertex set) a hyperelliptic graph in the sense of \cite[Definition 3.2]{hyble} and in particular contains no genus 0 vertices of degree~1 as desired (the statement of \Cref{main dual graph thingy} refers to the dual graph of $\mathcal{C}_{\textup{min},\bar{k}}$ however the proof  in fact uses the description of $\mathcal{C}_{\textup{min},\bar{k}}$ as the model $\mathcal{C}$ considered here, and does not assume minimality in the proof). Alternatively one may proceed via \Cref{components of cdisc description} and the description of the dual graph of $\mathcal{Y}_{\textup{disc},\bar{k}}$ afforded by \Cref{graph comparison 2}. 
\end{proof}

\subsection{The dual graph of the special fibre of the minimal regular model}

Having shown that $\mathcal{C}_{\textup{min}}$ is semistable, we may talk about the dual graph of its special fibre, which we now describe.

\begin{notation}
Let $C/K$  be a hyperelliptic curve satisfying the semistability criterion. We denote by $\Upsilon_C$ the dual graph of the special fibre of its minimal regular model. 
\end{notation}

Recall from \Cref{sss:dualgraph} that $\Upsilon_C$ has vertex set the set  of irreducible components of $\mathcal{C}_{\textup{min},\bar{k}}$, edge set the set of ordinary double points of $\mathcal{C}_{\textup{min},\bar{k}}$, and,  for an ordinary double point $P$, the edge-endpoints at $P$ are the points lying above $P$ under the normalisation morphism \[n:\widetilde{\mathcal{C}_{\textup{min},\bar{k}}}\rightarrow \mathcal{C}_{\textup{min},\bar{k}}\]
(we refer to these points as the \emph{tangents} at $P$). The graph $\Upsilon_C$ carries a natural genus marking, with a vertex being given the genus of the component to which it corresponds. Finally, by uniqueness of the minimal regular model, the hyperelliptic involution on $C$ extends (necessarily uniquely) to an involution $\iota$ on $\mathcal{C}_{\textup{min}}$ which then acts on $\Upsilon_C$ via its action on components, ordinary double points and tangents. 

As we did in \Cref{dual graph of y sub} for $\mathcal{Y}_{\textup{disc}}$, we will describe the graph $\Upsilon_C$ using the combinatorial framework developed in \cite{hyble} and summarised in \Cref{hyble appendix}. In particular, as explained in \Cref{clreal},  to $C$ (or rather to $f(x)$), one may associate a  \emph{metric cluster picture }  $\Sigma$, which is a purely combinatorial object.  Then, via \Cref{BY tree const,TtoG}, one can associate to $\Sigma$ a BY tree $T_\Sigma$, and a \emph{metric hyperelliptic graph} $G_\Sigma$, where the latter is a metric graph coming equipped with a genus marking and involution, the quotient by which is canonically the result of halving the length of all edges in $T_\Sigma$. This graph is (by design) very closely related to $\Upsilon_C$, as the following result shows.

\begin{theorem} \label[theorem]{main dual graph thingy}
Let $C/K$ be a hyperelliptic curve satisfying the semistability criterion. Denote by $\widehat{\Upsilon_C}$ the graph obtained from $\Upsilon_C$ by removing from the vertex set all vertices of genus 0 and degree $2$, and let $\Sigma$ denote the metric cluster picture associated to $C$. Then there is a  genus preserving isomorphism of metric graphs $\widehat{\Upsilon_C}\cong G_\Sigma$
identifying the hyperelliptic involutions and inducing a canonical identification of the quotient $\widehat{\Upsilon_C}/\iota$ with the graph obtained from $T_\Sigma$ by halving the length of all edges.

In particular $\widehat{\Upsilon_C}$, along with its genus marking and hyperelliptic involution, is a hyperelliptic graph in the sense of \cite[Definition 3.2]{hyble}. 
\end{theorem}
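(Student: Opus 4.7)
The strategy is to read off $\Upsilon_C$ explicitly from the model $\mathcal{C}_{\textup{disc}}$ and the contraction of type V components afforded by Theorem \ref{semistability criterion implies semistability}, and then exhibit it as the natural double cover of the dual graph $\widehat{T_\mathcal{D}}$ of $\mathcal{Y}_{\textup{disc},\bar k}$ built from the normalisation morphism $\phi$. Because $\widehat{T_\mathcal{D}}$ has already been identified with $T_\Sigma'$ in Lemma \ref{graph comparison 2}, matching this cover with the combinatorial construction of $G_\Sigma$ from $T_\Sigma$ in \S\ref{TtoG} will give the result.

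More precisely, I would proceed as follows. First, apply Proposition \ref{components of cdisc description} disc-by-disc and use the final sentence of that proposition (all intersections between distinct components of $\mathcal{C}_{\textup{disc},\bar k}$ are transverse) to assemble the dual graph of $\mathcal{C}_{\textup{disc},\bar k}$: above each valid disc $D$ one has one vertex (types I.1, II.1, VI.1, V) or two vertices (types I.2, II.2, III, IV, VI.2), and above each intersection of $E_D$ with $E_{D'}$ for adjacent $D,D'\in\mathcal{D}$ one has edges whose number (one or two) is determined by the parities of $\nu_D(f)$ and $\nu_{D'}(f)$; in addition each twin $\t$ with $d_\t = d_D + \tfrac12$ produces either a loop or a chain at the vertices above $D$. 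Second, Theorem \ref{semistability criterion implies semistability} says that $\mathcal{C}_{\textup{min}}$ is obtained by contracting each type V component (an exceptional $\mathbb{P}^1$ joining two components lying over neighbouring non-type-V discs); the dual-graph effect is to replace the length-$2$ path through a type V vertex by a single edge. After this contraction and the removal of all remaining genus $0$ degree $2$ vertices, every maximal chain of $\mathbb{P}^1$'s with trivial genus becomes a single edge whose length equals the number of contracted components, yielding the metric graph $\widehat{\Upsilon_C}$ with its genus marking.

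Third, I would match $\widehat{\Upsilon_C}$ with $G_\Sigma$. The map $\phi$ being a degree-$2$ cover branched along $B$ makes $\widehat{\Upsilon_C}$ a double cover of $\widehat{T_\mathcal{D}}$ with involution $\iota$ coming from $y\mapsto-y$; by Lemma \ref{graph comparison 2} this base graph is canonically $T_\Sigma'$, which differs from $T_\Sigma$ only by halving yellow edges and the cosmetic surgery at $\cR$. Then I would check case-by-case (types I.1/I.2 above defining discs of principal clusters, II and III for the exceptional cases, IV/V for twins with half-integer depth, and VI over non-defining discs) that the local double-cover pattern at each vertex and edge of $T_\Sigma'$ coincides with the local recipe for $G_\Sigma \to T_\Sigma$ from \S\ref{TtoG}: \"ubereven vertices lift to two vertices (``split''), non-\"ubereven principal vertices lift to one vertex of the prescribed genus $g(\s)$, blue (integer-depth) edges lift to a degree-$2$ connected cover of double length and yellow (odd-parity) edges lift to two parallel edges of the same length. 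The halving of yellow edges in $T_\Sigma'$ exactly cancels the doubling of chain lengths produced by the contracted type V vertices, so lengths match on the nose.

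The main obstacle is the bookkeeping around the exceptional cases where $D_{\max}\neq D(\cR)$, namely the type II and III discs arising from clusters with $d_\s\in \tfrac12+\Z$ and the type B exception for $\cR$ itself; in precisely these situations the top cluster of $\Sigma$ fails to give a vertex of $T_\Sigma$ (or is artificially inserted on an edge), and the same phenomenon must be checked on the curve side where $E_{D_{\max}}$ contributes either a chain of $\mathbb{P}^1$'s or a single ordinary double point. Once this matching is verified locally, the global isomorphism $\widehat{\Upsilon_C}\cong G_\Sigma$ is forced; the involution identification, and hence the identification of $\widehat{\Upsilon_C}/\iota$ with the halved $T_\Sigma$, follows because both involutions act as the nontrivial deck transformation of the double cover. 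That $\widehat{\Upsilon_C}$ is then a hyperelliptic graph in the sense of \cite[Definition 3.2]{hyble} is immediate from Lemma \ref{TtoG}, which states this for $G_\Sigma$.
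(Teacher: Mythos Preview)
Your overall strategy is the same as the paper's: exhibit $\widehat{\Upsilon_C}$ as a double cover of the dual graph of $\mathcal{Y}_{\textup{disc},\bar k}$ via $\phi$, identify the base with $T_\Sigma$ (suitably rescaled) through Lemma~\ref{graph comparison 2}, and then match the ramification pattern with the construction of $G_\Sigma$ in \S\ref{TtoG}. The paper organises this slightly differently, first computing the quotient $\widehat{\Upsilon_C}/\iota$ and identifying it canonically with the halved $T_\Sigma$, and only then lifting the isomorphism; but the content is the same.

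However, you have the colours reversed in the key matching step. By Lemma~\ref{parity of nu thing}, a valid disc $D$ has type~V (i.e.\ $\nu_D(f)$ odd) only when $D\cap\cR$ is an \emph{odd} cluster, and odd clusters give \emph{blue} edges of $T_\Sigma$, not yellow. Thus the contraction of type~V components halves the blue chains, not the yellow ones; conversely, for an even child $\s'<\s$ all intermediate discs have type~VI.2 (two components each, none contracted), producing two parallel chains of length $\delta_{\s'}$---which matches the yellow edge of length $2\delta_{\s'}$ in $T_\Sigma$ becoming two edges of length $\delta_{\s'}$ in $G_\Sigma$. Your sentence ``the halving of yellow edges in $T_\Sigma'$ exactly cancels the doubling of chain lengths produced by the contracted type~V vertices'' therefore conflates two separate halvings: the yellow-edge halving in Lemma~\ref{graph comparison 2} accounts for the convention $2\delta_\s$ versus $\delta_\s$ in the definition of $T_\Sigma$, while the type~V contractions account for the halving of blue edges. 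Once you straighten out which phenomenon lives on which colour, the local matching goes through as you describe.
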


\begin{proof}
The basic idea is that $G_\Sigma$ is by definition a certain ramified double cover of $T_\Sigma$ (up to slightly adjusting the metric), whilst our explicit construction of $\mathcal{C}_{\textup{min}}$ shows that $\Upsilon_C$ is essentially a ramified double cover of the dual graph of $\mathcal{Y}_{\textup{disc}}$. The latter is related to $T_\Sigma$ via \Cref{graph comparison 2}. We now flesh out the details. In what follows, we will need to understand the action of the hyperelliptic involution on $\mathcal{C}_{\textup{min}}$ and do this by noting that where $\mathcal{C}_{\textup{disc}}$ and $\mathcal{C}_{\textup{min}}$ are isomorphic the two extensions of the hyperelliptic involution must agree, since they do so on the generic fibre. 

We first describe the dual graph of $\mathcal{C}_{\textup{disc},\bar{k}}$, which we temporarily denote $\Omega_C$.\footnote{We have only defined the dual graph of a semistable curve - whilst $\mathcal{C}_{\textup{disc},\bar{k}}$ is not semistable it is when given its reduced structure (see \Cref{components of cdisc description}) and we formally mean the dual graph of this curve.} By \Cref{hyperelliptic involution extension}, whenever the fibre over a point of $\mathcal{Y}_{\textup{disc}}$ contains two points  the hyperelliptic involution swaps these. In particular, applying this to the generic point of a component $E_D$ of $\mathcal{Y}_{\textup{disc},\bar{k}}$, if $\phi^{-1}(E_D)$ consists of two components then $\iota$ swaps these. Similarly, if there are two points  lying over an intersection point of components $E_D$ and $E_{D'}$ then $\iota$ swaps these  also. 

Now consider the (topological) quotient graph $\Omega_C/\iota$. Its vertices either arise as $\iota$-orbits of vertices of $\Omega_C$ - we get one vertex in $\Omega_C/\iota$ for each valid disc this way, corresponding to $\phi^{-1}(E_D)$  -  or as the midpoint of an edge whose endpoints are swapped  (i.e. is `$\iota$-anti-invariant'). Similarly, the edges of $\Omega_C/\iota$ are $\iota$-orbits of edges of $\Omega_C$, but in the case of an $\iota$-anti-invariant edge $e$, the resulting edge of $\Omega_C/\iota$ has length half that of $e$. 

As $\iota$ swaps components if and only if they lie over the same component of $\mathcal{Y}_{\textup{disc}}$, an edge of $\Omega_C$ can only be $\iota$-anti-invariant if it corresponds to an ordinary double point on $\phi^{-1}(E_D)$ for some valid disc $D$ (as opposed to an ordinary double point arising as the intersection  between components whose images in $\mathcal{Y}_\textup{disc}$ are distinct). Moreover, each such edge is in fact $\iota$-anti-invariant. Indeed, for edges corresponding to  intersection points between two components of some $\phi^{-1}(E_D)$ this is clear, whilst for those corresponding to a node lying on a single component, say a node $P$, $\iota$ must swap the tangents at $P$ else by \cite[Proposition 3.48 (b)]{LiuA} $\phi(P)$ would not be a smooth point of $\mathcal{Y}_{\textup{disc}}$.

From the above discussion we see that $\Omega_C/\iota$ is the graph obtained from the dual graph  of $\mathcal{Y}_{\textup{disc}}$ by adding a vertex for each ordinary double point of $\mathcal{C}_{\textup{disc},\bar{k}}$ lying over a single component $E_D$, joined to the vertex  corresponding to $E_D$ by an edge of length $1/2$. It now follows from \Cref{singular points} and \Cref{graph comparison 2}  that, defining $\widehat{\Omega_C}$ as for $\widehat{\Upsilon_C}$, the quotient $\widehat{\Omega_C}/\iota$ is canonically the result of halving all yellow edges in $T_\Sigma$. Finally (c.f. \Cref{parity of nu thing}), one obtains $\widehat{\Upsilon_C}/\iota$ from $\widehat{\Omega_C}/\iota$ by halving the length of those edges corresponding to blue edges of $T_\Sigma$ to account for contracted components.

To conclude, we now note that by  \Cref{components of cdisc description} the map $\widehat{\Upsilon_C}\rightarrow \widehat{\Upsilon_C}/\iota$ ramifies precisely over the part of $\widehat{\Upsilon_C}/\iota$ which corresponds to the blue part of $T_\Sigma$, so that the canonical isomorphism  $\widehat{\Upsilon_C}/\iota\cong G_\Sigma/\iota$ above lifts to a (in general noncanonical) isomorphism $\widehat{\Upsilon_C}\cong G_\Sigma$ identifying the hyperelliptic involutions.
\end{proof}

\subsection{Explicit equations for the components of the special fibre of the minimal regular model}

By \Cref{main dual graph thingy} the components of the special fibre of $\mathcal{C}_{\textup{min}}$ are indexed by valid discs $D$ not of type V (though one disc can yield two components not one). Here we record explicit equations  for these components.  

\begin{definition}\label[definition]{components defi}
For a valid disc $D$ not of type $V$ we define $\Gamma_D$ to be the image in $\mathcal{C}_{\textup{min}}$ (under the contraction morphism)  of the closed subscheme $\phi^{-1}(E_D)$ of $\mathcal{C}_{\textup{disc}}$. Note that $\Gamma_D$ consists of either one or two components of $\mathcal{C}_{\textup{min},\kbar}$, and that these account for all components.
\end{definition} 

\begin{proposition} \label[proposition]{components of min}
Let $D$ be a valid disc not of type $V$. Then the associated component(s) $\Gamma_D$ of $\mathcal{C}_{\textup{min},\kbar}$ is given, according to the type of $D$, as follows\footnote{other than in cases II and III, the curves are understood to have the additonal chart at infinity corresponding to our conventions for (possibly singular) hyperelliptic curves set out in Appendix \ref{ap:apphyp}.}:
\begin{itemize}
\item[(I)]
$$
   Y^2 = c_\s
    \prod_{{\text{odd }}{{\mathfrak o} < \s}}(X-\red_\s(\mathfrak o))
    \prod_{{\text{twin }{\mathfrak t}<\s}\atop{\delta_\t=1/2}}(X-\red_\s(\mathfrak t))^2.
$$
\item[(II)] The glueing of the affine curves
\[Y^2=c_\s
    \prod_{{\text{odd }}{{\mathfrak o} < \s}}(X-\red_\s(\mathfrak o))
    \prod_{{\text{twin }{\mathfrak t}<\s}\atop{\delta_\t=1/2}}(X-\red_\s(\mathfrak t))^2\quad \quad (\dagger)\]
and
    \[Z^2=c_\s T^{2}\prod_{{\text{odd }}{{\mathfrak o} < \s}}(1-\red_\s(\mathfrak o)T)
    \prod_{{\text{twin }{\mathfrak t}<\s}\atop{\delta_\t=1/2}}(1-\red_\s(\mathfrak t)T)^2\]
over the subsets $Y\neq 0$ and $T\neq 0$ via $Z=YT^{n+1}$, where $2n$ is the degree of the right hand side of $(\dagger)$.
\item[(III)]The glueing of the affine curves
\[Y^2=c_D\quad \subseteq \quad \mathbb{A}^2_{X,Y}\]
and
\[Z^2=c_DT^2\]
over the subsets $X\neq 0$ and $T\neq 0$ via $X=1/T$, $Z=YT$,

\item[(IV)] 
$$
   Y^2 = c_D(X-\red_D(\mathfrak t))^2,
$$

\item[(VI.1)]   \[Y^2=c_D(X-\textup{red}_D(\s)),\] where $\s=D\cap \mathcal{R}$,
\item[(VI.2)]  \[Y^2=c_D\quad \subseteq \quad \mathbb{A}^2_{X,Y}.\] 
\end{itemize}
\end{proposition}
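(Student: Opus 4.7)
The plan is to extract the equations of each $\Gamma_D$ by reducing modulo $\mathfrak{m}$ the affine charts of $\mathcal{C}_{\textup{disc}}$ provided by \Cref{normalisation of charts}, using \Cref{reduced polynomial lemma}, and then passing to $\mathcal{C}_{\textup{min}}$. By \Cref{components of cdisc description}, the only components contracted in forming $\mathcal{C}_{\textup{min}}$ are the exceptional type V components; since each meets exactly two other components transversally at smooth points, the contraction only identifies two points on those neighbours and leaves the intrinsic curve structure of every other component unchanged. Hence for each non-type-V disc $D$ we may identify $\Gamma_D$ with $\phi^{-1}(E_D)$, and since $\omega_D(f)=0$ it suffices to describe the latter on the charts $\mathcal{U}_D \setminus \phi_{D,1}^{-1}(\mathcal{P}_D)$ and $\mathcal{W}_D \setminus \phi_{D,2}^{-1}(\mathcal{P}_D \cup \mathcal{Q}_D)$ via the equations $y_D^2 = f_D(x_D)$ and $z_D^2 = t_D^{\lambda} g_D(t_D)$ (respectively $z_D^2 = g_D(s_D,t_D)$).

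The central step, used throughout, is the absorption of perfect-square factors. By \Cref{reduced polynomial lemma}(i), $\overline{f_D(x_D)}$ is a product of factors $(x_D - \red_D(\mathfrak{s}'))^{|\mathfrak{s}'|}$ over the children of $D\cap\mathcal{R}$ (when $D = D(D\cap\mathcal{R})$) or the single factor over $D\cap\mathcal{R}$ itself (otherwise). A proper subcluster $\mathfrak{s}'$ satisfies $D(\mathfrak{s}') \in \mathcal{D}$ unless $\mathfrak{s}'$ is a twin of relative depth $1/2$; for each such absorbable $\mathfrak{s}'$ the point $x_D = \red_D(\mathfrak{s}')$ lies in $\mathcal{P}_D$ and is removed from the chart, so $(x_D - \red_D(\mathfrak{s}'))$ is invertible there. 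Setting
\[
Y \;=\; \frac{y_D}{\displaystyle\prod_{\mathfrak{s}' \textup{ absorbable}}(x_D - \red_D(\mathfrak{s}'))^{\lfloor |\mathfrak{s}'|/2\rfloor}}, \qquad X = x_D,
\]
collapses the even parts of the absorbable factors, leaving a single linear factor $(X - \red_{\mathfrak{s}}(\mathfrak{o}))$ for each odd child $\mathfrak{o}$ of $D\cap\mathcal{R}$ (since $|\mathfrak{o}| - 2\lfloor|\mathfrak{o}|/2\rfloor = 1$) and an unabsorbed $(X - \red_{\mathfrak{s}}(\mathfrak{t}))^2$ for each twin $\mathfrak{t} < D\cap\mathcal{R}$ with $\delta_\mathfrak{t} = 1/2$, while every other factor disappears. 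Matching this against the proposition is then a direct case check: types I and VI.1 use only this formula; types I.2, II.2, IV and VI.2 correspond to the subcase where the odd-child product is empty (giving two copies of $\mathbb{P}^1$ meeting at the surviving twin nodes); and type III, where $|D\cap\mathcal{R}| = 2g$ is even and $D \neq D(D\cap\mathcal{R})$, reduces to $Y^2 = c_D$ after full absorption.

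The remaining ingredient, needed only for types II and III, is the gluing on $\mathcal{W}_{D_{\textup{max}}}$. Applying the same absorption procedure to $z_D^2 = t_D^\lambda g_D(t_D)$ using \Cref{reduced polynomial lemma}(ii) yields the claimed $Z^2$-equation, and the transition $Z = Y T^{n+1}$ follows by combining the function-field identity $z_D = x_D^{-\lfloor \deg f/2\rfloor} y_D$ of \Cref{normalisation of charts} with the definition of $Y$ above. The main obstacle here is purely bookkeeping: one must verify that the shift of degree produced by the absorption on the $\mathcal{U}$-side matches the analogous shift on the $\mathcal{W}$-side so that $n+1$ comes out correctly. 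Once this is checked, each case of the proposition follows by direct substitution.
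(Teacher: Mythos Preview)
Your approach is essentially the paper's: extract the chart equations from \Cref{normalisation of charts}, reduce via \Cref{reduced polynomial lemma}, and absorb perfect-square factors by the substitution $Y=y_D/h_D(x_D)$ (this is exactly the inverse of the paper's rational map $\psi$). The case split and the use of $\mathcal{W}_{D_{\textup{max}}}$ for types II and III also agree.

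There is, however, a genuine gap. Your substitution $Y=y_D/h_D(x_D)$ is only valid where the denominator is invertible, i.e.\ on the special fibre of $\mathcal{U}_D\setminus\phi^{-1}(\mathcal{P}_D)$. This open does not cover all of $\phi^{-1}(E_D)$: the points lying over $\mathcal{P}_D$ are removed from both $\mathcal{U}_D$ and $\mathcal{W}_D$ in the covering of \Cref{normalisation of charts} (they are visible only on the charts $\mathcal{W}_{D'}$ for the child discs $D'$). So for types I, IV and VI you have exhibited only a birational correspondence between $\Gamma_D$ and the curve in the statement, and for singular or reducible curves that is not enough to conclude isomorphism. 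The paper fills this gap with a smoothness argument: by \Cref{singular points} (and its proof), every singular point of $\Gamma_D$ already lies in the open subset where the map is an isomorphism, and the target curve is visibly smooth outside that open; a birational map between complete (possibly reducible) curves which is an isomorphism away from finitely many smooth points then extends to a global isomorphism. Your sentence ``types I and VI.1 use only this formula'' glosses over precisely this step.
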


\begin{proof}
We will extract the result from the explicit equations for $\mathcal{C}_{\textup{disc}}$ given in \Cref{normalisation of charts}. Fix a valid disc $D$ and temporarily denote by $\Gamma_D'$ the curve with equations as in the statement, so that we must prove $\Gamma_D'\cong \Gamma_D$. By \Cref{normalisation of charts},  the scheme  $\mathcal{U}_{D}\setminus\phi^{-1}(\mathcal{P}_D)$ of \Cref{the charts of Cdisc} is an open subscheme of $\mathcal{C}_{\textup{disc}}$. Its special fibre is an open subset of $\phi^{-1}(E_D)$ and is disjoint from all components contracted when forming $\mathcal{C}_{\textup{min}}$. Thus its special fibre is naturally  an open subscheme of $\Gamma_D$ and  equations for it are readily extracted from \Cref{reduced polynomial lemma}. In particular, we may define a rational map $\psi:\Gamma_D'\rightarrow \Gamma_D$ by
\begin{equation}\label{disgusting map}
(x_0,y_0)\mapsto \left(x_0,y_0h_D(x_0)\right)
\end{equation}
where the right hand side is understood with respect to the variables $x_D,y_D$ for the chart $\mathcal{U}_{D}\setminus\phi^{-1}(\mathcal{P}_D)$ and we set
\begin{equation}\label{the poly h def}
h_D(x)=\begin{cases}\prod_{{\s'<\s}\atop{\delta_{\s'}>1/2}}(x-\textup{red}_{\s}(\s'))^{\lfloor \frac{|\s|}{2}\rfloor} ~~&~~\textup{if }D\textup{ has type I, II}\\ \left(x-\textup{red}_D(D\cap \mathcal{R})\right)^{\lfloor \frac{|D\cap \mathcal{R}|}{2}\rfloor}
~~&~~\textup{if } D\textup{ has type III, VI }\\ 1~~&~~\textup{if }D\textup{ has type IV}.\end{cases}\end{equation} 
Since $\mathcal{P}_D$ is precisely the set of points \[\{\left(\textup{red}_{\s}(\s'),0\right)~~|~~\s'<\s~~\textup{and}~~ \delta_{\s'}>1/2\}\] if $D$ has type I or II, and consists of the single point $\left(\textup{red}_{D}(D\cap \mathcal{R}),0\right)$ if $D$ has type III or VI, we see that $\psi$ is invertible on the open subset  $U=\Gamma_D\cap \left(\mathcal{U}_{D}\setminus\phi^{-1}(\mathcal{P}_D)\right)$ of $\Gamma_D$. Suppose first that $D$ is not of type II or III. Then by \Cref{singular points} (and its proof) all singular points of $\Gamma_D$ lie in $U$. 
Similarly, $\Gamma_D'$ is visibly smooth away from $\psi^{-1}(U)$ also. Thus $\psi$ is a rational map between complete (possibly reducible) curves $\Gamma_D$ and $\Gamma_D'$ which is an isomorphism away from finitely many smooth points. Thus $\psi$ is in fact an isomorphism everywhere.

The  case where $D$ has type II or III is proved identically with the caveat that  one must explicitly check that the map is an isomorphism over an open subset of the point at infinity on $\Gamma_D$ (since unlike the other cases the curves are not smooth here). This may be done by using the chart $\mathcal{W}_{D_\textup{max}}$ of \Cref{normalisation of charts} combined with \Cref{reduced polynomial lemma} (ii). 
\end{proof}

\subsection{Reducing points}

Since $\mathcal{C}_\textup{min}/\mathcal{O}$ is proper there is a natural reduction map $C(\Knr)\rightarrow \mathcal{C}_{\textup{min},\kbar}(\kbar)$ whose image consists precisely of the non-singular points (by regularity of $\mathcal{C}_{\textup{min}}$ and the fact that $\Knr$ is Henselian). We describe this map in the following proposition.

\begin{proposition} \label[proposition]{reducing points}
Let $C:y^2=f(x)$ be a hyperelliptic curve satisfying the semistability criterion and let $P=(x_0,y_0)\in C(\Knr)$.
\begin{itemize}
 \item[(i)] Suppose $x_0\in D_\textup{max}$ and let $D$ be a valid disc not of type V. Then $P$ reduces to $\Gamma_{D}$ if and only if $x_0\in D$ but $x_0\notin D'$ for any valid subdisc $D'\subseteq D$. In this case, in the variables $X,Y$ of \Cref{components of min}, the reduction of $P$ is, according to the type of $D$,  the following point on $\Gamma_D$: 
 \begin{itemize}
 
 \item[(I,II):] 
 \[\left(\textup{red}_D(x_0)~,~\overline{\pi^{-\nu_D(f)/2}y_0}\prod_{{\s'<\s}\atop{\delta_{\s'}>\frac{1}{2}}}\left(
  \red_\s(x_0)-\red_\s(\s')\right)^{-\lfloor \frac{|\c'|}{2}\rfloor}\right),\]

  \item[(III,VI):] 
  \[\left(\textup{red}_D(x_0)~,~\overline{\pi^{-\nu_D(f)/2}y_0}\left(\textup{red}_D(x_0)-\textup{red}_D(r)\right)^{-\lfloor \frac{|D\cap \mathcal{R}|}{2}\rfloor}\right)\]
  for any choice of $r\in D\cap\mathcal{R}$,
    \item[(IV):]
 \[\left(\textup{red}_D(x_0)~,~\overline{\pi^{-\nu_D(f)/2}y_0}\right).\]
 \end{itemize}
 \item[(ii)] Suppose $x_0\notin D_\textup{max}$. Then $\textup{red}(P)$ is a point at infinity on $\Gamma_{D_\textup{max}}$. There is a unique such point unless $|\mathcal{R}\cap D_\textup{max}|=2g+2$, in which case $D$ has type I 
and $P$ reduces to 
\[\left(0\quad,\quad\frac{\pi^{-\frac{v(c_f)}{2}}y_0}{(x_0-z_{\R})^{g+1}}\quad (\textup{mod }\mathfrak{m})\right)\]
in the variables for the chart at infinity of (the equation given in  \Cref{components of min}  for) $\Gamma_{D_\textup{max}}$.
\end{itemize}
\end{proposition}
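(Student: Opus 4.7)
The plan is to lift the problem to the regular model $\mathcal{C}_\textup{disc}$, where \Cref{normalisation of charts} provides explicit equations, and then transfer the answer to $\mathcal{C}_\textup{min}$. Since the contraction morphism $\mathcal{C}_\textup{disc}\to\mathcal{C}_\textup{min}$ induces a bijection on $\mathcal{O}_{\Knr}$-points and is an isomorphism away from the (disjoint) exceptional components attached to discs of type V (\Cref{semistability criterion implies semistability}), and since each such contracted component is collapsed to a point of intersection between the neighbouring components $\Gamma_{D'}$, it suffices to locate the chart of $\mathcal{C}_\textup{disc}$ (avoiding type~V components) on which $P$ reduces, compute the reduction there, and then translate to the coordinates on $\Gamma_D$ via the birational map $\psi$ from the proof of \Cref{components of min}.

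For (i), let $D$ be the smallest valid disc containing $x_0$; this exists since $x_0\in D_\textup{max}\in\mathcal{D}$. The condition that no valid subdisc $D'\subsetneq D$ contains $x_0$ translates exactly to $\red_D(x_0)\notin\mathcal{P}_D$ (\Cref{model components}), so $P$ lies on the chart $\mathcal{U}_D\setminus\phi_{D,1}^{-1}(\mathcal{P}_D)$ of \Cref{normalisation of charts}; moreover $D$ cannot be of type V, since such components are contracted in $\mathcal{C}_\textup{min}$ and $P$ must reduce to an actual component. Using $\omega_D(f)=0$, the $(x_D,y_D)$-coordinates of $P$ are $\bigl((x_0-z_D)/\pi^{d_D},\,\pi^{-\nu_D(f)/2}y_0\bigr)$, reducing to $(\red_D(x_0),\,\overline{\pi^{-\nu_D(f)/2}y_0})$. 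Inverting the birational map $\psi\colon(X,Y)\mapsto(X,Y\cdot h_D(X))$ with $h_D$ as in \eqref{the poly h def}, and reducing modulo $\mathfrak{m}$, then yields, type by type, precisely the formulas displayed in the statement.

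For (ii), the hypothesis $x_0\notin D_\textup{max}$ forces $v(t_{D_\textup{max}})=d_{D_\textup{max}}-v(x_0-z_{D_\textup{max}})>0$, so $P$ reduces to the locus $t_{D_\textup{max}}=0$ in the chart $\mathcal{W}_{D_\textup{max}}$; this is a point (resp.\ two points) at infinity on $E_{D_\textup{max}}$, and hence on $\Gamma_{D_\textup{max}}$. By \Cref{reduced polynomial lemma}(ii) the special fibre of $\mathcal{W}_{D_\textup{max}}$ is cut out by $z^2=c_{D_\textup{max}}\cdot t^{\,2g+2-|\mathcal{R}\cap D_\textup{max}|}\cdot(\text{unit at }t=0)$, which has a unique solution $z=0$ at $t=0$ unless $|\mathcal{R}\cap D_\textup{max}|=2g+2$. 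In the latter case, $D_\textup{max}=D(\mathcal{R})$ is necessarily of type~I (types II, III and V all require $|\mathcal{R}\cap D_\textup{max}|<2g+2$), and there are two geometric points $(0,\pm\sqrt{c_{D_\textup{max}}})$. Using the substitution $z=x_{D_\textup{max}}^{-(g+1)}y_{D_\textup{max}}$ from \Cref{normalisation of charts} together with $\nu_{D_\textup{max}}(f)=v(c_f)+(2g+2)d_{\mathcal{R}}$ from \Cref{formula for nu lemma}, the exponent of $\pi$ telescopes to $(g+1)d_{\mathcal{R}}-\nu_{D_\textup{max}}(f)/2=-v(c_f)/2$, yielding $\overline{z}=\overline{\pi^{-v(c_f)/2}y_0/(x_0-z_{\mathcal{R}})^{g+1}}$.

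The main obstacle is the bookkeeping in this last step, namely reconciling the `native' coordinates on $\mathcal{W}_{D_\textup{max}}$ with the `standard' chart at infinity of the equation displayed for $\Gamma_{D_\textup{max}}$ in \Cref{components of min}; however, at $t=0$ the birational map $\psi$ acts trivially on the $z$-coordinate (the powers of $x_D$ appearing in $h_{D_\textup{max}}$ having already been absorbed into the definition of $z$), so the computed reduction is already expressed in the correct coordinates.
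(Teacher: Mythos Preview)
Your proof is correct and follows essentially the same approach as the paper: compute in the explicit charts $\mathcal{U}_D$ of \Cref{normalisation of charts}, then apply the birational map $\psi$ (equivalently, divide by $h_D(\red_D(x_0))$) to land in the $(X,Y)$-coordinates of \Cref{components of min}. The only organizational difference is that the paper fixes an arbitrary disc $D$ with $\omega_D(f)=0$ and checks directly when $(x_D,y_D)$ lies in $\mathcal{O}\times\mathcal{O}$ and avoids $\mathcal{P}_D$, whereas you start from the smallest valid disc containing $x_0$ and rule out type~V by the contradiction with non-singularity of the reduction; both routes yield the biconditional. Your treatment of (ii) is also somewhat more detailed than the paper's, which simply refers to the chart $\mathcal{W}_{D_\textup{max}}$.
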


\begin{proof}
As in \cite[Definition 10.1.3]{LiuA} $\textup{red}(P)$ is the unique point of intersection of $\overline{\{P\}}$ (the closure of $P$ in $\mathcal{C}_{\textup{min}}$) with $\mathcal{C}_{\textup{min},\kbar}$. In particular the reduction map may be computed locally. Now for a valid disc $D\neq D_\textup{max}$ with $\omega_D(f)=0$, the non-singular points of $\Gamma_D$ are all visible on the chart $\mathcal{U}_{D}\setminus\phi^{-1}(\mathcal{P}_D)$ of  \Cref{normalisation of charts}. Changing variables from $x,y$ to $x_D,y_D$ (c.f. \Cref{normalisation of charts}) we see that $P$ corresponds to the point 
\[\left(\frac{x_0-z_D}{\pi^{d_D}},\pi^{-\nu_D(f)/2}y_0\right)\]
on the generic fibre of $\mathcal{U}_D$. One checks readily that the closure of this point in $\mathcal{U}_D$ contains a point  of the special fibre if and only if  $(x_0-z_D)/\pi^{d_D}$ is integral, i.e. if and only if $x_0\in D$. When this is the case the point is given by 
\[\left(\textup{red}_D(x_0),\overline{\pi^{-\nu_D(f)/2}y_0}\right).\]
Moreover, this lies in $\mathcal{U}_D\setminus \phi_{D,1}^{-1}(\mathcal{P}_D)$ if and only if $x_0$ is not in any valid subdisc of $D$. To compete the proof it remains to change variables from $x_D,y_D$ to the variables for $\Gamma_D$ of \Cref{components of min}, which simply serves to multiply the second coordinate by $h_D(\textup{red}_D(x_0))^{-1}$ with $h_D$ as in \cref{the poly h def}.

The case $D=D_\textup{max}$ may be treated similarly, additionally considering the chart $\mathcal{W}_{D_\textup{max}}$ to prove (ii). 
\end{proof}

\subsection{The stable model of $C/\Knr$} \label{stable model subsection}

We finish the section by giving an explicit description of the stable model of a hyperelliptic curve satisfying the semistability criterion. As in the statement of \Cref{components of cdisc description}, for a principal cluster $\s$ we denote by $\widetilde{\Gamma}_\mathfrak{s}$ the hyperelliptic curve
\[\widetilde{\Gamma}_\mathfrak{s}:y^2=c_\mathfrak{s}\prod_{\textup{odd }\s'<\s}(x-\textup{red}_\mathfrak{s}(\s')).\]

\begin{theorem}\label[theorem]{stable model}
Let $C/K$ be a hyperelliptic curve satisfying the semistability criterion. Then the stable model $\mathcal{C}_{\textup{st}}/\mathcal{O}$ of $C/\Knr$ is obtained from $\mathcal{C}_{\textup{disc}}$ by contracting each component $\Gamma_D$ for which $D$ is not of the form $D(\s)$ for a principal cluster $\s$.\footnote{That such curves may be contracted will be shown during the course of the proof.} Its special fibre consists of one component with normalisation $\tilde{\Gamma}_\s$ for each non-\ub~principal cluster $\s$,  and two components with normalisation $\P^1_{\kbar}$ for each \ub~ principal cluster. Letting $\Sigma$ denote the metric cluster picture associated to $C$, the dual graph of the special fibre of $\mathcal{C}_{\textup{st}}$ is obtained from  $G_\Sigma$  by adjusting the length function to give each edge length~$1$.
\end{theorem}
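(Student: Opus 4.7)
The plan is to start from the regular model $\mathcal{C}_{\text{disc}}$ of Theorem \ref{main regular model theorem} (equivalently, from the minimal regular model $\mathcal{C}_{\min}$ of Theorem \ref{semistability criterion implies semistability}, since the type V components to be contracted first appear among the non-principal $\Gamma_D$) and check that iteratively contracting every component $\Gamma_D$ with $D$ not of the form $D(\mathfrak s)$ for a principal cluster yields a semistable model whose special fibre satisfies the stability criterion: every smooth rational component meets the remaining components in at least three points. Because the stable model is characterised among semistable models by this property, this will prove the theorem.

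First I would classify each $\Gamma_D$ using Theorem \ref{components of cdisc description}: the only components with a non-$\mathbb{P}^1$ normalisation are those of type I.1 or II.1, i.e.\ $\Gamma_{D(\mathfrak s)}$ for non-\"ubereven $\mathfrak s$ with $\nu_{\mathfrak s}$ even, and among these only principal clusters $\mathfrak s$ give $\tilde\Gamma_{\mathfrak s}$ of genus $g(\mathfrak s)\ge 1$. All other $\Gamma_D$ are rational, and by Theorem \ref{main dual graph thingy} together with the description of $\widehat{T_{\mathcal D}}$ in Lemma \ref{graph as clusters}, each such non-principal $\Gamma_D$ sits either on a chain of $\mathbb{P}^1$s between two defining-disc components (types III, IV, VI and intermediate discs) or is attached to such a chain at a twin/cotwin, meeting the rest of the special fibre in at most two points.

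Second I would carry out the contractions: any $\mathbb{P}^1$ meeting the rest of a semistable curve in $\le 2$ points can be contracted, and the resulting curve is again semistable with transverse intersections at the identified nodes (see e.g.\ \cite[Lemma 3.35]{LiuA}, already used in the proof of Theorem \ref{semistability criterion implies semistability}). Performing these contractions in any order for all non-principal $\Gamma_D$ gives a semistable model whose components are exactly those claimed: for each non-\"ubereven principal cluster $\mathfrak s$, the component $\Gamma_{D(\mathfrak s)}$ survives with normalisation $\tilde\Gamma_{\mathfrak s}$ (Proposition \ref{components of min}); and for each \"ubereven principal cluster $\mathfrak s$, the two $\mathbb{P}^1$ components of $\Gamma_{D(\mathfrak s)}$ survive. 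Moreover, contracting a whole chain identifies its two endpoints to a single node, so the resulting dual graph is obtained from $\widehat{\Upsilon_C}\cong G_\Sigma$ (Theorem \ref{main dual graph thingy}) by collapsing every edge to length $1$.

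Finally I would check stability. For non-\"ubereven principal cluster components this is automatic as $g(\mathfrak s)\ge 1$. For the two $\mathbb{P}^1$s over an \"ubereven principal $\mathfrak s$, they meet each other at one node for each twin child $\mathfrak t<\mathfrak s$ with $\delta_{\mathfrak t}=1/2$, and each $\mathbb{P}^1$ meets (after contraction) the components coming from the other proper children of $\mathfrak s$; using $|\mathfrak s|\ge 3$ together with the definition of principal (which rules out the pathological case where $\mathfrak s=\mathcal R$ has only two even children), a direct case analysis shows each $\mathbb{P}^1$ acquires at least three intersection points with the rest of the fibre. The main technical obstacle is precisely this last verification: one must handle the \"ubereven principal $\mathfrak s$ case by case, pay attention to the exceptional configurations of $\mathcal R$ (types A and B in Definition \ref{valid discs defi}) and to cotwins, and confirm that no further contraction is possible. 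Once this is done, the resulting semistable model has no contractible $\mathbb{P}^1$s, hence is the stable model, and its dual graph agrees with $G_\Sigma$ after setting every edge length equal to $1$.
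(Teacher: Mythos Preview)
Your overall strategy is sound, but there is a genuine gap in the stability verification. You write ``For non-\"ubereven principal cluster components this is automatic as $g(\mathfrak s)\ge 1$.'' This is false: a principal non-\"ubereven cluster can perfectly well have $g(\mathfrak s)=0$, since $g(\mathfrak s)$ is determined by $|\tilde{\mathfrak s}|\in\{2g(\mathfrak s)+1,2g(\mathfrak s)+2\}$ and nothing prevents $|\tilde{\mathfrak s}|\in\{1,2\}$. A concrete example in genus $3$: a cluster $\mathfrak s$ of size $5$ with one singleton child and one child of size $4$ is principal (it has $|\mathfrak s|\ge 3$, is not $\mathcal R$, and has no child of size $2g=6$), is non-\"ubereven (the singleton is odd), yet $|\tilde{\mathfrak s}|=1$ so $g(\mathfrak s)=0$. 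For such $\mathfrak s$ the component $\Gamma_{D(\mathfrak s)}$ is a rational curve and you must still check it meets the rest of the fibre in at least three points. Your case analysis only covers the \"ubereven principal case, so as written the argument does not establish stability.

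The paper sidesteps this entirely. Rather than contracting and then verifying stability by hand, it uses the standard characterisation (\cite[9.4.8, 10.3.34]{LiuA}) that the stable model is obtained from the minimal regular model by contracting all $(-2)$-curves, which are exactly the genus~$0$ degree~$2$ vertices of $\Upsilon_C$. These are precisely the vertices removed when passing from $\Upsilon_C$ to $\widehat{\Upsilon_C}$, so the dual graph of the stable model is $\widehat{\Upsilon_C}\cong G_\Sigma$ (\Cref{main dual graph thingy}) with all edge lengths set to $1$. Finally, the identification of the surviving vertices with principal clusters is imported from \cite[Lemmas 5.5, 5.20]{hyble}, which already encode the combinatorial fact that principal clusters give vertices of genus $\ge 1$ or degree $\ge 3$ in $G_\Sigma$. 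To repair your approach you would need to carry out the missing degree check for genus~$0$ non-\"ubereven principal clusters, which amounts to re-proving that lemma from \cite{hyble}; the paper's route is shorter.
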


\begin{proof}
By \Cref{semistability criterion implies semistability}, upon contracting components $\Gamma_D$ for which $\omega_D(f)=1$ we obtain the minimal regular model of $C/\Knr$.  The stable model is then obtained from this by contracting all components which have self-intersection $-2$ and are isomorphic to $\P^1_{\kbar}$ (\cite[Proposition 9.4.8, Theorem 10.3.34]{LiuA}). Since such components are precisely the ones which give vertices of genus 0 and degree $2$ in the dual graph, i.e. precisely the ones which, in the notation of \Cref{main dual graph thingy}, are removed from the vertex set when passing from $\Upsilon_{C}$ to $\widehat{\Upsilon_{C}}$, this proves the claim about the dual graph of $\mathcal{C}_{\textup{st}}$. It remains to show that the components which remain are precisely those corresponding to principal clusters. For this one may either argue via \Cref{components of cdisc description} and a case by case analysis or use \cite[Lemmas 5.5, 5.20]{hyble} to note that the vertices of $G_\Sigma$ correspond precisely to the principal clusters of $\Sigma$.
\end{proof}

\def\fr{{\mathfrak r}}
\def\R{{\mathcal R}}
\def\optional#1{{\color{cyan}#1}}
\def\d#1{d_#1}

\section{Galois action on the models of $C$}
\label{sgalois}

In this section we still work with a hyperelliptic curve $C/K:y^2=f(x)$ and write $\mathcal{R}\subseteq \bar{K}$ for the set of roots of $f(x)$. However we now no longer assume that $C/K$ satisfies the semistability criterion, and instead fix a finite Galois extension $F/K$  such that the semistability criterion is satisfied by $C/F$. For example we may take $F/K$ to be the Galois closure of the extension given by adjoining a square root of a uniformiser to $K(\mathcal{R})/K$, though the case where $F=K$ and $C/K$ satisfies the semistability criterion is still an important special case of the results of this section. We denote by $\pi_F$ a uniformiser for $F$ and write $e$ for the ramification index of $F/K$. 

Since $C/F$ satisfies the semistability criterion we may apply the constructions of \Cref{ydisc_section,scdisc} with $K$ replaced by $F$ throughout, so that in particular we have an explicit construction of the minimal regular model and stable model of $C$ over $F^{\textup{nr}}$, afforded by \Cref{semistability criterion implies semistability} and \Cref{stable model} respectively. The aim of this section is to describe explicitly the action of $G_K$ on these models (we recall how this action works in \Cref{start of action} below). To talk about these models we use the notation of \Cref{ydisc_section,scdisc} replacing $K$ by $F$ throughout. Thus for example we work with the collection of valid discs (\Cref{valid discs defi}) defined with respect to $F$ rather than $K$ and fix a choice of centre $z_D\in F^{\textup{nr}}$ for each valid disc $D$. We caution however that we continue to normalise the valuation $v$ on $\bar{K}$ with respect to $K$, which is the reason for the appearance of the ramification index $e$ in the formulae below. 

\subsection{Galois action on components} \label[section]{start of action}
Let $\cC_{\min}/\cO_{\Fnr}$ be the minimal regular model of $C$ over $\Fnr$. We will work with the explicit description of $\cC_{\min}/\cO_{\Fnr}$  afforded by \Cref{semistability criterion implies semistability} and \Cref{components of min}. Thus the components of its special fibre (or more precisely the $\iota$-orbits of components where $\iota$ denotes the hyperelliptic involution) are indexed by valid discs $D$ not of type V (\Cref{types of valid disc}), with $D$ corresponding to the component(s) $\Gamma_D$ of  \Cref{components defi}.

As explained in \Cref{ss:gcurves} (\cref{skew} in particular), the special fibre of $\cC_{\min}/\cO_{\Fnr}$ carries a natural action of the full Galois group $G_K$ (arising from uniqueness of the model) which is uniquely determined 
by `lift-act-reduce' on non-singular points. The following quantities will facilitate in describing this action. 

\begin{definition} \label[definition]{the characters defi}
We define, for $\sigma\in G_K$, 
\[\chi(\sigma) = \frac{\sigma(\pi_F)}{\pi_F} \quad(\textup{mod }\mathfrak{m}).\]
Given also a valid disc $D$ not of type V define
$$
  \beta_{D}(\sigma) = \frac{\sigma(z_D)-z_{\sigma D}}{\pi_F^{ed_{D}}} \quad(\textup{mod }\mathfrak{m}),
  $$
  and
  \[\lambda_D=\begin{cases}\frac{\nu_{\c}}{2} -  d_{\s}\!\sum_{{\s'<\s}\atop{\delta_{\c'}>\frac{1}{2}}} \lfloor \frac{|\c'|}{2}\rfloor~~&~~D~~\textup{ has type I, II}\\\\\frac{\nu_D(f)}{2}-d_D\lfloor \frac{|D\cap \mathcal{R}|}{2}\rfloor ~~&~~D~~\textup{ has type III, VI}\\\\\frac{\nu_D(f)}{2}~~&~~D~~\textup{ has type IV,}\end{cases}\]
  where, for types I and II, $\s$ denotes the cluster $D\cap \mathcal{R}$ and we recall that by definition we have $D=D(\s)$ in these instances.
\end{definition}

We now describe the Galois action on the $\Gamma_D$.

\begin{theorem}\label[theorem]{th:GeneralGaloisAction}
Let $C/K:y^2=f(x)$ be a hyperelliptic curve satisfying the semistability criterion over a finite Galois extension $F/K$ and let $D$ be a valid disc (the collection of such defined with respect to $F$) not of type V.  Then any $\sigma\in G_K$  maps  $ \Gamma_{D}$ to $\Gamma_{\sigma D}$ and for a  point $P=(x_0,y_0)\in \Gamma_D$, we have 
 $$
 \sigma(P)=  \left(\chi(\sigma)^{ed_{D}}\bar{\sigma}(x_0)+\beta_{D}(\sigma),\chi(\sigma)^{e\lambda_D}\bar{\sigma}(y_0)\right) \in \Cmp_{\sigma D}
 $$
 where $\bar{\sigma}$ denotes the map induced by $\sigma$ on the residue field $\kbar$ and the points are written with respect to the variables $X,Y$ for $\Gamma_D$ (resp $\Gamma_{\sigma D}$) of \Cref{components of min}.
\end{theorem}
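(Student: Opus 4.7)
The plan is to use the ``lift--act--reduce'' description of the semilinear $G_K$-action on the special fibre of the minimal regular model from \eqref{skew}, together with the explicit charts for $\cC_{\min}$ provided by \Cref{normalisation of charts} and \Cref{components of min}. First, since $v$ is $G_K$-invariant and $\mathcal{R}$ is permuted by $G_K$, the collection of valid discs is $G_K$-stable and $\sigma$ sends $D = D_{z_D,d_D}$ to $\sigma D = D_{\sigma(z_D),d_D}$. In particular $\sigma(z_D)$ is a centre of $\sigma D$, so $v(\sigma(z_D) - z_{\sigma D}) \ge d_D$ and $\beta_D(\sigma) \in \kbar$ is well-defined. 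By uniqueness of $\cC_{\min}/\OFnr$, $\sigma$ maps the component $\Gamma_D$ of the special fibre to $\Gamma_{\sigma D}$, and the formula for $\sigma(P)$ is computed by lifting $P$ to $C(\Fnr)$, applying $\sigma$, and reducing via the chart centred at $\sigma D$.

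Fix a non-singular $P = (X,Y) \in \Gamma_D(\kbar)$ and first suppose $D$ has type I or II. By the proof of \Cref{components of min}, $P$ corresponds to the point $(X, Y h_D(X))$ in the $(x_D, y_D)$-coordinates on the chart $\mathcal{U}_D$ of \Cref{normalisation of charts}, with $h_D$ as in \eqref{the poly h def}. Since $D$ is not of type V we have $\omega_D(f) = 0$, so by \Cref{normalisation of charts} $y = \pi_F^{e\nu_D(f)/2} y_D$ inside $\Fnr(C)$. Thus any lift $(\tilde x_0, \tilde y_0) \in C(\Fnr)$ of $P$ satisfies $\red_D(\tilde x_0) = X$ and $\pi_F^{-e\nu_D(f)/2} \tilde y_0 \equiv Y h_D(X) \pmod{\pi_F}$.

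Now apply $\sigma$. Since $\sigma(\pi_F) \equiv \chi(\sigma)\pi_F \pmod{\pi_F^2}$ and $\nu_{\sigma D}(f) = \nu_D(f)$, a direct computation in $\OFnr$ gives
$$\red_{\sigma D}(\sigma \tilde x_0) \;=\; \chi(\sigma)^{ed_D}\bar\sigma(X) + \beta_D(\sigma) \;=:\; X',$$
and
$$\pi_F^{-e\nu_D(f)/2}\sigma\tilde y_0 \;\equiv\; \chi(\sigma)^{e\nu_D(f)/2}\bar\sigma\bigl(Y h_D(X)\bigr) \pmod{\pi_F}.$$
To read off the $(X',Y')$-coordinate on $\Gamma_{\sigma D}$ one divides the latter by $h_{\sigma D}(X')$. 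The key identity, obtained by the same lift--act--reduce argument applied to a centre $z_{\s'} \in \Fnr$ of each child cluster $\s' < \s := D \cap \mathcal{R}$, is
$$\bar\sigma\bigl(\red_\s(\s')\bigr) \;=\; \chi(\sigma)^{-ed_D}\bigl(\red_{\sigma D}(\sigma\s') - \beta_D(\sigma)\bigr),$$
which implies $\bar\sigma(X - \red_\s(\s')) = \chi(\sigma)^{-ed_D}(X' - \red_{\sigma D}(\sigma\s'))$. Taking the product over children and using that $\sigma$ bijectively matches the children of $\s$ with those of $\sigma\s$ preserving sizes yields
$$\bar\sigma(h_D(X)) \;=\; \chi(\sigma)^{-ed_D \deg h_D}\, h_{\sigma D}(X'),$$
so $Y' = \chi(\sigma)^{e\nu_D(f)/2 - ed_D \deg h_D}\bar\sigma(Y) = \chi(\sigma)^{e\lambda_D}\bar\sigma(Y)$ by the definition of $\lambda_D$ in \Cref{the characters defi} (note $\deg h_D = \sum_{\s'<\s,\,\delta_{\s'}>1/2}\lfloor|\s'|/2\rfloor$).

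The remaining types III, IV, VI are handled identically, with the appropriate $h_D$ from \eqref{the poly h def}: in each case $\deg h_D$ matches the coefficient of $d_D$ in the definition of $\lambda_D$, so the same identity produces the claimed formula. The point(s) at infinity on $\Gamma_{D_{\max}}$ (needed in types II and III) are treated by running the analogous computation on the chart $\mathcal{W}_{D_{\max}}$ of \Cref{normalisation of charts}. The main obstacle is this uniform case analysis, resolved by observing that in every case $h_D$ is a product of linear factors in $X$ whose zero set is $G_K$-equivariantly identified with specific subclusters (or with $D\cap\mathcal{R}$), so the interaction of the change of variables with $\sigma$ always has the same shape. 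Finally, since both $\sigma\colon \Gamma_D \to \Gamma_{\sigma D}$ and the explicit formula are morphisms of schemes twisted by $\bar\sigma$ and agree on the dense non-singular locus, they agree on all of $\Gamma_D$.
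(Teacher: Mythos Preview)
Your proof is correct and follows essentially the same route as the paper: both use the lift--act--reduce recipe of \eqref{skew} together with the explicit charts of \Cref{normalisation of charts} and \Cref{components of min}, and both reduce the verification of the $Y$-coordinate to the polynomial identity $\bar\sigma(h_D(X)) = \chi(\sigma)^{-ed_D\deg h_D}\,h_{\sigma D}(X')$, proved via the same observation that $\bar\sigma(\red_D(z)) = \chi(\sigma)^{-ed_D}(\red_{\sigma D}(\sigma z) - \beta_D(\sigma))$. The only cosmetic difference is that the paper packages the lifting and reduction steps by citing \Cref{reducing points} directly, whereas you redo those computations by hand.
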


\begin{proof}
As explained in \Cref{ss:gcurves}  it suffices to prove the result under the assumption that $P$ is non-singular. We follow the recipe of \Cref{ss:gcurves} to act on $P$. Let $\tilde{P}\in C(\Fnr)$ be a lift of $P$. Now let $h_D$ be as in \cref{the poly h def} and $\tilde{h}_D$ any lift of $h_D$ to a polynomial with coefficients in $\cO_{F^{\textup{nr}}}$. By \Cref{reducing points}, in the coordinates $x,y$ for $C$, $\tilde{P}$ has the form  
\[\left(\pi_F^{ed_D}\tilde{x}_0+z_D~,~\pi_F^{e\nu_D/2}\tilde{y}_0\tilde{h}_D(\tilde{x}_0)\right)\]
for some $\tilde{x}_0,\tilde{y}_0\in  \cO_{F^{\textup{nr}}}$ with $\tilde{x}_0\equiv x_0~~(\textup{mod }\mathfrak{m})$ and $\tilde{y}_0\equiv y_0~~(\textup{mod }\mathfrak{m})$ and such that the $x$-coordinate is not in any valid subdisc of $D$. Under $\sigma$ this maps to the point
\[\tilde{Q}=\left(\sigma(\pi_F)^{ed_D}\sigma(\tilde{x}_0)+\sigma(z_D)~,~\sigma(\pi_F)^{e\nu_D(f)/2}\sigma(\tilde{y}_0)\sigma(\tilde{h}_D(\tilde{x}_0))\right)\in C(\Fnr).\]
Note that the $x$-coordinate of $\tilde{Q}$ is in $\sigma D$ but not in any valid subdisc of $\sigma D$, since the set of valid discs is stable under $\sigma$. Thus this new point reduces to $\Gamma_{\sigma D}$ by \Cref{reducing points}. In particular $\sigma$ maps $\Gamma_D$ to $\Gamma_{\sigma D}$. Moreover, applying \Cref{reducing points} one last time and noting that $\sigma$ preserves depths and $\nu$, we find that $\tilde{Q}$ reduces to
\[\left(\chi(\sigma)^{ed_D}\bar{\sigma}(x_0)+\beta_D(\sigma),\frac{\chi(\sigma)^{e\nu_D(f)/2}\bar{\sigma}(y_0)\bar{\sigma}(h_D(x_0))}{h_{\sigma{D}}\left(\chi(\sigma)^{ed_D}\bar{\sigma}(x_0)+\beta_D(\sigma)\right)}\right)\in \Gamma_{\sigma D}.\]
In light of the definition of $\lambda_D$ it remains to show that 
\[\bar{\sigma}(h_D(x_0))=\chi(\sigma)^{-ed_D\textup{deg}(h_D)}h_{\sigma D}\left(\chi(\sigma)^{ed_D}\bar{\sigma}(x_0)+\beta_D(\sigma)\right).\]
Writing $h_D^{\bar{\sigma}}(x)$ for the polynomial obtained by applying $\bar{\sigma}$ to the coefficients of $h_D$ we wish to prove the polynomial identity
\[h_D^{\bar{\sigma}}(x)=\chi(\sigma)^{-ed_D\textup{deg}(h_D)}h_{\sigma D}\left(\chi(\sigma)^{ed_D}x+\beta_D(\sigma)\right).\]
Noting that for any $z\in \Kbar$ we have 
\begin{eqnarray}\label{sigma of reduction}\bar{\sigma}\textup{red}_D(z)&=&\bar{\sigma}\left(\frac{z-z_D}{\pi_F^{ed_D}}\right)
\end{eqnarray}
\begin{eqnarray*}\phantom{hellohellohello}~~~&=&\chi(\sigma)^{-ed_D}\left(\textup{red}_{\sigma D}(\bar{\sigma}z)-\beta_D(\sigma)\right)\end{eqnarray*}
and that $h_D^{\bar{\sigma}}$ is the monic polynomial whose (multi)set of roots is given by applying $\bar{\sigma}$ to the (multi)set of roots of $h_D$, the result follows.
\end{proof}

\subsection{Galois action on the normalisation of components}

In what follows, for any cluster $\s$ for which $D(\s)$ is a valid disc (in particular, for all principal clusters) we write $\Gamma_\s$ in place of $\Gamma_{D(\s)}$. Note that by \Cref{stable model} the $\Gamma_\s$ for $\s$ principal account for precisely those components which remain when passing from the minimal regular model of $C/F^{\textup{nr}}$ to the stable model, and by an abuse of notation we denote  the associated component(s) of the stable model by $\Gamma_\s$ also. Here we describe the Galois action on the normalisation of these components.

\begin{definition} \label[definition]{normal cmps}
For a  cluster $\c$  for which $D(\s)$ is valid (so in particular  for all principal clusters) define
$$
 \quad \tilde{\lambda}_\c = \frac{\nu_{\c}}{2} -  d_{\s}\!\sum_{\s'<\s} \lfloor \frac{|\c'|}{2}\rfloor. \quad
  $$
  Define also \[\tilde{\Gamma}_\s:y^2=c_\s\prod_{\textup{odd }\s'<\s}(x-\textup{red}_\s(\s')).\]
  By \Cref{components of cdisc description} this is the normalisation of $\Gamma_\s$ viewed either on the minimal regular model of $C/F^{\textup{nr}}$, or, for $\s$ principal, the stable model of $C/F^{\textup{nr}}$.
\end{definition}

\begin{cor}[of \Cref{th:GeneralGaloisAction}]\label[corollary]{galois action on normalisation corr}
Let $C/K:y^2=f(x)$ be a hyperelliptic curve satisfying the semistability criterion over a finite Galois extension $F/K$. Let $\sigma\in G_K$, $\s$  a principal cluster, and  $\Gamma_\s$ the associated component(s) of the special fibre of either the minimal regular model, or the stable model, of $C/F^{\textup{nr}}$. Then $\sigma$ maps $\Gamma_\s$ to $\Gamma_{\sigma \s}$. Moreover, if $\sigma$  stabilises $\s$ then the action of $\sigma$ on the normalisation $\tilde{\Gamma}_\s$ is,
\ for a point $P=(x_0,y_0)$ in the variables $x,y$ of \Cref{normal cmps}, given by  $$
\sigma(P) =  \left(\chi(\sigma)^{ed_{\c}}\bar{\sigma}(x_0)+\beta_{\c}(\sigma),\chi(\sigma)^{e\tilde{\lambda}_\c}\bar{\sigma}(y_0)\right).
 $$ 
\end{cor}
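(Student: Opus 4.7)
The plan is to deduce this corollary from Theorem \ref{th:GeneralGaloisAction} applied to the valid disc $D=D(\s)$, by transporting the action through the explicit normalisation map $\tilde\Gamma_\s\to\Gamma_\s$. First I will observe that $\sigma\s=\s$ forces $\sigma D(\s)=D(\s)$ (a centre of $\s$ is sent by $\sigma$ to a centre of $\sigma\s=\s$, and depths are Galois-stable), so Theorem \ref{th:GeneralGaloisAction} already guarantees $\sigma(\Gamma_\s)=\Gamma_\s$, hence $\sigma(\tilde\Gamma_\s)=\tilde\Gamma_\s$ by canonicity of normalisation. Thus it only remains to chase the formula through coordinates.

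Next I will check, using \Cref{integral clusters}, that $D(\s)$ always has type I or II for a principal cluster $\s$, so that \Cref{components of min} expresses $\Gamma_\s$ in the coordinates $(X,Y)$ as
$$Y^2 = c_\s\!\!\prod_{\text{odd }{\mathfrak o}<\s}\!\!(X-\red_\s({\mathfrak o}))\cdot h_\s(X)^2,\quad h_\s(X):=\!\!\prod_{\substack{\text{twin }\t<\s\\\delta_\t=1/2}}\!\!(X-\red_\s(\t)),$$
with the normalisation map $(x,y)\mapsto(x,\,y\cdot h_\s(x))$. Given $P=(x_0,y_0)\in\tilde\Gamma_\s$, I will lift it to $(X_0,Y_0)=(x_0,\,y_0 h_\s(x_0))\in\Gamma_\s$, apply Theorem \ref{th:GeneralGaloisAction}, and push the result back through normalisation. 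The $x$-coordinate reads off immediately as $X_0':=\chi(\sigma)^{ed_\s}\bar\sigma(x_0)+\beta_\s(\sigma)$, matching the claim.

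For the $y$-coordinate, the image in $\Gamma_\s$ is $Y_0'=\chi(\sigma)^{e\lambda_\s}\bar\sigma(y_0 h_\s(x_0))$, and inverting the normalisation gives $Y_0'/h_\s(X_0')$. The key ingredient, already derived inside the proof of Theorem \ref{th:GeneralGaloisAction} and specialising to $\sigma D=D$, is the identity
$$\bar\sigma(h_\s(x_0))=\chi(\sigma)^{-ed_\s\deg h_\s}\,h_\s(X_0'),$$
which reflects the fact that $\sigma$ permutes the factors of $h_\s$. Substituting collapses the formula to $\chi(\sigma)^{e(\lambda_\s-d_\s\deg h_\s)}\bar\sigma(y_0)$.

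The final step, which I expect to be the only genuine (if still modest) obstacle, is the numerical identity $\lambda_\s-d_\s\deg h_\s=\tilde\lambda_\s$. Here \Cref{integral clusters} is decisive: for a principal $\s$ the children $\s'<\s$ with non-integer relative depth are precisely the twins with $\delta_\t=1/2$, each contributing $\lfloor|\t|/2\rfloor=1$. The sum in $\lambda_\s$ (restricted to $\delta_{\s'}>1/2$) together with $\deg h_\s$ therefore reassembles into the full sum $\sum_{\s'<\s}\lfloor|\s'|/2\rfloor$ appearing in $\tilde\lambda_\s$. Once this bookkeeping is checked the corollary follows.
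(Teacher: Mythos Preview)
Your argument is correct and is essentially the paper's proof unpacked: the paper simply cites Theorem~\ref{th:GeneralGaloisAction} together with \Cref{le:apphyp}(ii), which packages precisely your lift--act--descend computation (including the factor $\alpha^{-\deg h}$ on the $y$-coordinate) into a general statement about morphisms of possibly singular hyperelliptic curves. The bookkeeping identity $\lambda_\s-d_\s\deg h_\s=\tilde\lambda_\s$ you isolate is exactly what makes the exponent come out right, and your justification via \Cref{integral clusters} is the intended one.
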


\begin{proof}
Combine \Cref{th:GeneralGaloisAction} with \Cref{le:apphyp} (iii).
\end{proof}

\subsection{Galois action on the dual graph}

As in \Cref{sss:dualgraph} the action of $G_K$ on the special fibre of the minimal regular model of $C/F^{\textup{nr}}$ induces an action on its dual graph $\Upsilon_C$ via the action on  components, ordinary double points, and tangents. Here we describe this action, beginning with the following lemma.

\begin{definition}{(cf. \Cref{de:epsilon})} \label{the choice of ell defi}
Let $\mathcal{E}$ denote the set of even clusters which do not have an \ub~ parent, excluding $\mathcal{R}$ unless $\mathcal{R}$ is \ub.  For each cluster $\s\in \mathcal{E}$, fix a square root $\theta_\s$ of 
\[c_f\prod_{r\notin \s}(z_\s-r).\]
Having made this choice define, for each $\sigma\in \textup{G}_K$ and $\s\in \mathcal{E}$,
\[\epsilon_\s(\sigma)=\frac{\sigma(\theta_\s)}{\theta_{\sigma \s}}\quad(\textup{mod }\mathfrak{m}).\]
\end{definition}

\begin{lemma} \label[lemma]{choices of tangents lemma}
For each $\s\in \mathcal{E}$, the above choice $\theta_\s$ of square root of 
$c_f\prod_{r\notin \s}(z_\s-r)$
determines:
\begin{itemize}
\item[(i)] if $\s$ is a twin with $\delta_\s=1/2$, a choice of tangent at the node $(\textup{red}_{P(\s)}(\s),0)$ on $\Gamma_{P(\s)}$,
\item[(ii)] if $\s$ has size $2g$, is not \ub, and $\delta_\s=1/2$ (i.e. $D(\s)$ has type II.1) a choice of tangent at the node at infinity on $\Gamma_\s$,
\item[(iii)] a choice of one of the two points at infinity on $\widetilde{\Gamma_\s}$ otherwise.
\end{itemize}
\end{lemma}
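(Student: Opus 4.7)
The plan is to treat each of the three cases by identifying a specific element $q_\s \in \bar{k}^{\times}$ such that the two options in question (two tangents at a node, two points at infinity on $\widetilde{\Gamma}_\s$, or two components of $\widetilde{\Gamma}_\s$) correspond bijectively to the two square roots of $q_\s$, and then showing that $\theta_\s$, appropriately normalised to land in $\bar{k}^{\times}$, yields a distinguished square root of $q_\s$. The correct normalisation is $\hat{\theta}_\s := \theta_\s / \pi_F^{e(\nu_\s - |\s|d_\s)/2}$: indeed $v(\theta_\s^2) = v(c_f) + \sum_{r \notin \s} v(z_\s - r) = \nu_\s - |\s|d_\s$ by the formula of \Cref{no:nu}, and this integer is even whenever $C/F$ is semistable and $\s \in \mathcal{E}$ (a case-by-case check using the semistability criterion). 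By construction,
\[
  \hat{\theta}_\s^2 \;\equiv\; \hat{c}_f \prod_{r \notin \s} \widehat{(z_\s - r)} \;=\; c_\s \pmod{\mathfrak{m}}.
\]

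Cases (ii) and (iii) then follow at once. In case (iii), writing $2m = |\so|$ and substituting $u = y/x^m$, $t = 1/x$ into $\widetilde{\Gamma}_\s$ gives $u^2 = c_\s \prod_{\mathfrak{o} \in \so}(1 - \red_\s(\mathfrak{o})t)$; the two points at infinity correspond to $u \equiv \pm\sqrt{c_\s}$ at $t=0$, so $q_\s = c_\s$ and $\hat{\theta}_\s$ selects one of them. When $\s$ is \"ubereven ($m=0$), the same equation $u^2 = c_\s$ instead picks out one of the two components of $\widetilde{\Gamma}_\s$. Case (ii) is analogous, using the chart at infinity on $\Gamma_\s$ described in \Cref{components of min}(II): the explicit $T^2$ in the defining equation produces the node at infinity, whose two tangent directions are $Z/T \equiv \pm\sqrt{c_\s}$.

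Case (i) is similar but requires a short additional comparison. Fix $r_1, r_2 \in \s$ with $z_\s = r_1$ and set $X_0 = \red_{P(\s)}(\s)$. The local equation of $\Gamma_{P(\s)}$ at the node $(X_0,0)$ takes the form $Y^2 = (X-X_0)^2 \cdot Q(X)$ with
\[
Q(X) = c_{P(\s)} \prod_{\text{odd } \mathfrak{o} < P(\s)} (X - \red_{P(\s)}(\mathfrak{o})) \prod_{\substack{\text{twin } \t < P(\s) \\ \t \ne \s,\, \delta_\t = 1/2}} (X - \red_{P(\s)}(\t))^2,
\]
so the two tangents are $Y/(X-X_0) \equiv \pm\sqrt{Q(X_0)}$, and we have $q_\s = Q(X_0)$. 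Splitting the product $\prod_{r \notin \s}(r_1 - r)$ into contributions from (a) $r$ in another child $\mathfrak{o} \ne \s$ of $P(\s)$, and (b) $r \notin P(\s)$, and using that $r_1 - r \equiv r_1 - z_\mathfrak{o}$ in case (a) and $r_1 - r \equiv z_{P(\s)} - r$ in case (b) modulo appropriately higher powers of $\pi_F$, one obtains
\[
  c_\s \;\equiv\; c_{P(\s)} \prod_{\substack{\mathfrak{o} < P(\s) \\ \mathfrak{o} \ne \s}} (X_0 - \red_{P(\s)}(\mathfrak{o}))^{|\mathfrak{o}|} \pmod{\mathfrak{m}}.
\]
Separating this product into odd $\mathfrak{o}$ (contribution $(X_0 - \red_{P(\s)}(\mathfrak{o}))^{|\mathfrak{o}|-1}$ being a square, times the factor in $Q(X_0)$), twins with $\delta_\t = 1/2$ (matching the squared factor in $Q(X_0)$ exactly), and remaining even children (contributing pure squares $(X_0-\red_{P(\s)}(\mathfrak{o}))^{|\mathfrak{o}|}$ absent from $Q(X_0)$), we conclude $c_\s = Q(X_0) \cdot S^2$ for an explicit $S \in \bar{k}^{\times}$. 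Dividing $\hat{\theta}_\s$ by $S$ gives one of the two square roots $\pm\sqrt{Q(X_0)}$, hence the desired choice of tangent.

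The main obstacle is the bookkeeping in case (i): the quantity $c_\s$ records contributions from \emph{every} child of $P(\s)$ other than $\s$, while $Q(X_0)$ only sees odd children and twins with $\delta = 1/2$. One must check that each missing contribution—coming from a twin with $\delta_\t > 1/2$ or a larger even cluster—is itself a perfect square in $\bar{k}^{\times}$, which holds because such a child $\mathfrak{o}$ has even $|\mathfrak{o}|$ and contributes $(X_0 - \red_{P(\s)}(\mathfrak{o}))^{|\mathfrak{o}|}$. The remainder of the argument (the semistability-based parity check ensuring $(\nu_\s - |\s|d_\s)/2 \in \Z$, and the observation that each square $S^2$ has a canonical square root $S$ which is independent of choices of centres up to reduction modulo $\mathfrak{m}$) is routine.
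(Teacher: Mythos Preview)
Your proposal is correct and follows essentially the same approach as the paper: normalise $\theta_\s$ by the appropriate power of $\pi_F$ to obtain a square root of $c_\s$ modulo $\mathfrak m$, which immediately handles cases (ii) and (iii), and in case (i) show that $c_\s$ differs from the value governing the tangents at the node by an explicit square in $\bar k^\times$. The paper carries out case (i) by writing down the explicit identity
\[
c_{P(\s)}\prod_{\textup{odd }\mathfrak{o}<P(\s)}\bigl(\textup{red}_{P(\s)}(\s)-\textup{red}_{P(\s)}(\mathfrak{o})\bigr)
=\frac{\theta_\s^2}{\pi_F^{e(\nu_{P(\s)}-2d_{P(\s)})}}\Bigl(\prod_{\substack{\s'<P(\s)\\\s'\neq \s}}\Bigl(\tfrac{z_\s-z_{\s'}}{\pi_F^{ed_{P(\s)}}}\Bigr)^{-\lfloor |\s'|/2\rfloor}\Bigr)^{2}\pmod{\mathfrak m},
\]
which is exactly your relation $c_\s = Q(X_0)\cdot S^2$ unpacked (note $\nu_\s - 2d_\s = \nu_{P(\s)} - 2d_{P(\s)}$ for a twin with $\delta_\s = 1/2$, so your normalising exponent agrees with theirs).
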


\begin{proof}
We begin with (iii) which is the simplest case. The  points at infinity on $\widetilde{\Gamma_\s}$ are $(0,\pm \sqrt{c_\s})$. Now we compute
\begin{equation} \label{1st cl relation} 
c_\s=\frac{\theta_\s^2}{\pi_F^{e(\nu_\s-|\s|d_\s)}}\quad(\textup{mod }\mathfrak{m}).
\end{equation}
By \Cref{integral clusters,nu parity app} $e(\nu_\s-|\s|d_\s)$ is even, so that our choice of $\theta_\s$ determines a square root of $c_\s$ and hence a choice of a point at infinity, namely the point
\[\left(0,\frac{\theta_\s}{\pi_F^{e(\nu_\s-|\s|d_\s)/2}}\quad(\textup{mod }\mathfrak{m})\right).\]
In case (ii)  (see \Cref{tangents app} and \Cref{components of min}), the two tangents at the node are similarly given by the points $(0,\pm \sqrt{c_\s})$ and we proceed as in case (iii).

Finally, in case (i),  (see \Cref{tangents app} and \Cref{components of min} again) the two tangents are the points
\[\left(\textup{red}_{P(\s)}(\s),\pm \sqrt{c_{P(\s)}\prod_{\textup{odd }\mathfrak{o}<P(\s)}\left(\textup{red}_{P(\s)}(\s)-\textup{red}_{P(\s)}(\mathfrak{o})\right)}\right)\in \widetilde{\Gamma}_{P(\s)}.\]
This time, we compute
\begingroup\smaller[1]
\begin{equation*} \label{2st cl relation eq}
c_{P(\s)}\prod_{\textup{odd }\mathfrak{o}<P(\s)}\left(\textup{red}_{P(\s)}(\s)-\textup{red}_{P(\s)}(\mathfrak{o})\right)=\frac{\theta_\s^2}{\pi_F^{e(\nu_{P(\t)}-2d_{P(\t)})}}\left(\prod_{\substack{\s'<P(\t)\\\s'\neq \t}}\left(\frac{z_\t-z_{\s'}}{\pi_F^{ed_{P(\t)}}}\right)^{-\lfloor \frac{|\s'|}{2}\rfloor}\right)^2\quad(\textup{mod }\mathfrak{m}).
\end{equation*}
\endgroup
Again$e(\nu_{P(\t)}-2d_{P(\t)})$ is even  (\Cref{integral clusters,nu parity app}) , whence our choice of $\theta_\s$ determines a choice of one of the tangents, namely 
\[\left(\textup{red}_{P(\s)}(\s),\frac{\theta_\s}{\pi_F^{e(\nu_{P(\t)}-2d_{P(t)})/2}}\prod_{\substack{\s'<P(\t)\\\s'\neq \t}}\left(\frac{z_\t-z_{\s'}}{\pi_F^{ed_{P(\t)}}}\right)^{-\lfloor \frac{|\s'|}{2}\rfloor}\quad(\textup{mod }\mathfrak{m})\right).\]
\end{proof}

We now return to describing the action of $G_K$ on $\Upsilon_C$. Let $\Sigma$ denote the metric cluster picture associated to $f(x)$  over $F$ (\Cref{clreal}), associated hyperelliptic graph $G_\Sigma$.  By \Cref{main dual graph thingy} we have 
$\widehat{\Upsilon_C}\cong G_\Sigma$ where $\widehat{\Upsilon_C}$ is the graph obtained from $\Upsilon_C$ by removing from the vertex set all vertices of genus 0 and degree $2$. As explained in \Cref{the automorphism of g}, to each pair $\rho=(\rho_0,\epsilon_\rho)$ where $\rho_0$ is a permutation of the set $\Sigma$ of proper clusters preserving sizes, inclusions and relative depths, and $\epsilon$ is a collection of signs $\epsilon_\rho(\s)\in \{\pm 1\}$ for each cluster $\s\in \mathcal{E}$, there is an associated automorphism $G(\rho)$ of $G_\Sigma$.

\begin{theorem} \label[theorem]{th:ActionOnDualGraph} 

Let $C/K:y^2=f(x)$ be a hyperelliptic curve satisfying the semistability criterion over a finite Galois extension $F/K$. Denote by $\Upsilon_C$  the dual graph of the special fibre of the minimal regular model of $C/F^{\textup{nr}}$ and $\Sigma$  the metric cluster picture associated to $f(x)$ over $F$. Fix a choice of $\theta_\s$ for each $\s\in \mathcal{E}$ as in \Cref{the choice of ell defi}.

Then is an isomorphism of metric graphs $\widehat{\Upsilon_C}\cong G_\Sigma$ under which the action of any $\sigma\in G_K$  corresponds to the automorphism 
\[\left(\rho(\sigma),(\epsilon_\s(\sigma))_{\s \in \mathcal{E}}\right)\]
of $G_\Sigma$, where $\rho(\sigma)$ is the permutation of the proper clusters of $\Sigma$ induced by the natural action of $\sigma$ on the set of roots of $f(x)$.
\end{theorem}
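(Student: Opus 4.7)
The plan is to verify the claimed isomorphism by checking the two pieces of data that define an automorphism of $G_\Sigma$: the underlying permutation of clusters and the sign at each $\s \in \mathcal{E}$.

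First, for the underlying permutation, \Cref{th:GeneralGaloisAction} already tells us that any $\sigma\in G_K$ maps $\Gamma_D$ to $\Gamma_{\sigma D}$ for every valid disc $D$. Under the identification $\widehat{\Upsilon_C}\cong G_\Sigma$ of \Cref{main dual graph thingy}, this immediately gives the vertex permutation $\rho(\sigma)$ induced by the natural action of $\sigma$ on proper clusters. The induced action on edges follows at once from the combinatorial description of the nodes of $\mathcal{C}_{\min,\bar{k}}$: an edge coming from the intersection $\Gamma_{D_1}\cap\Gamma_{D_2}$ or from a self-node of $\Gamma_D$ (both of which are indexed by cluster data via \Cref{main dual graph thingy}) is sent to the edge obtained by applying $\sigma$ to the corresponding discs, which matches the edge permutation associated to $\rho(\sigma)$ on $G_\Sigma$.

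The substantive part of the proof is the identification of the sign at each $\s\in\mathcal{E}$. By \Cref{choices of tangents lemma}, the choice of $\theta_\s$ picks out either a tangent at a node or a point at infinity on $\widetilde{\Gamma}_\s$, and this is precisely the piece of data that the sign $\epsilon_\rho(\s)$ in the definition of $G(\rho)$ records. I would therefore apply $\sigma$ to the explicit point given in \Cref{choices of tangents lemma} using \Cref{galois action on normalisation corr} in case (iii), or \Cref{th:GeneralGaloisAction} in cases (i) and (ii), and compare the result with the corresponding point on $\widetilde{\Gamma}_{\sigma\s}$ (or the relevant tangent on $\Gamma_{\sigma P(\s)}$) determined by $\theta_{\sigma\s}$. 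In each case the ratio is computed by combining the factor $\chi(\sigma)^{e\tilde\lambda_\s}$ coming from the Galois action on the $Y$-coordinate with the factor $\pi_F^{-e(\nu_\s-|\s|d_\s)/2}$ (or its analogue for twin-nodes) appearing in the formulas of \Cref{choices of tangents lemma}. Since $\chi(\sigma)^{e(\nu_\s-|\s|d_\s)/2}\pi_F^{-e(\nu_\s-|\s|d_\s)/2}\equiv\sigma(\pi_F)^{e(\nu_\s-|\s|d_\s)/2}\pi_F^{-e(\nu_\s-|\s|d_\s)/2}\pmod{\mathfrak{m}}$, the residue class of the ratio collapses to $\sigma(\theta_\s)/\theta_{\sigma\s}\bmod\mathfrak{m}=\epsilon_\s(\sigma)$, which is what we want.

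The main obstacle is the bookkeeping in cases (i) and (ii): one has to track how the product factors appearing in \Cref{choices of tangents lemma} (built from the centres $z_{\s'}$ for the children $\s'<P(\s)$ or $\s'<\s$) transform under $\sigma$. Here the discrepancy between the coordinates $y_D$ on the normalisation chart $\mathcal{U}_D$ and the coordinate $Y$ on $\Gamma_D$ (the two differ by the polynomial $h_D$ from \cref{the poly h def}) contributes, and the exponents must be reconciled with the definitions of $\lambda_D$ and $\tilde\lambda_\s$. Once this bookkeeping is carried through, the extra factors cancel exactly and the sign reduces cleanly to $\sigma(\theta_\s)/\theta_{\sigma\s}\bmod\mathfrak{m}$, completing the proof.
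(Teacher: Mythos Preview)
Your proposal is correct and follows essentially the same approach as the paper's proof: determine the permutation part via \Cref{th:GeneralGaloisAction}, then pin down the signs by applying $\sigma$ to the distinguished points/tangents of \Cref{choices of tangents lemma} using \Cref{galois action on normalisation corr} (case (iii)) or \Cref{th:GeneralGaloisAction} (cases (i), (ii)), and checking the resulting ratio collapses to $\sigma(\theta_\s)/\theta_{\sigma\s}\bmod\mathfrak{m}$. The one step the paper makes explicit which you gloss over is that the isomorphism $\widehat{\Upsilon_C}\cong G_\Sigma$ from \Cref{main dual graph thingy} is not canonical, and must first be composed with an automorphism of the form $(\mathrm{id},\eta)$ so that the choices $n_\s^+$ coming from the $\theta_\s$ are identified with the `plus' side of $G_y$; without this normalisation the sign computation has no fixed reference point to compare against.
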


\begin{proof}
Fix an isomorphism $\widehat{\Upsilon_C}\cong G_\Sigma$ as in \Cref{main dual graph thingy}, so that the isomorphism identifies the respective hyperelliptic involutions (denoted $\iota$) and induces the canonical identification of the quotients  $\widehat{\Upsilon_C}/\iota$ and $G_\Sigma/\iota$ detailed there. Note that our choice of $\theta_\s$ for each $\s\in \mathcal{E}$ determines via \Cref{choices of tangents lemma} a choice $n_\s^+$ of (below $v_\Gamma$ denotes the vertex of $\Upsilon_C$ corresponding to a component $\Gamma$):
\begin{itemize}
\item an endpoint of the loop at $v_{\Gamma_{P(\s)}}$  associated to the node $(\textup{red}_{P(\s)}(\s),0)$ on $\Gamma_{P(\s)}$ if $\s$ is a twin with $\delta_\s=1/2$,
\item an  endpoint  of the loop at infinity on $v_{\Gamma_\s}$ when $|\s|=2g$ and $\delta_\s=1/2$,
\item one of the two vertices corresponding to $\Gamma_\s$ if $\s=\mathcal{R}$ (indeed, note that the points at infinity of $\Gamma_\s$ lie on different components),
\item an edge endpoint  at $v_{\Gamma_\s}$ for one of the two edges between  $v_\s$ and $v_{P(\s)}$ otherwise (here if $\Gamma_\s$ consists of two components then by this we mean a choice of an edge-endpoint for one of the two edges meeting one of two associated vertices).
\end{itemize}
Composing our chosen isomorphism $\widehat{\Upsilon_C}\cong G_\Sigma$ with an automorphism of $G_\Sigma$ of the form $(\textup{id},\eta)$ for an appropriate choice of $\eta$ we may assume that the choices $n_\s^+\in \widehat{\Upsilon_C}$ ($\s\in \mathcal{E}$)  get identified with the corresponding `plus' choice arising from the decomposition of $G_y$ into $G_y^+$ and $G_y^-$  (see \Cref{TtoG} for the definition of these objects). 

Now fix $\sigma\in \textup{Gal}(\bar{K}/K)$ and view it as an automorphism of $G_\Sigma$ via the identification above. Since $\sigma$ fixes $\Gamma_\mathcal{R}$ (or $\Gamma_\s$ if $\mathcal{R}$ has size $2g+2$ and a child $\s$ of size $2g+1$) and perserves genera, by \Cref{all things arise how we want} there is an automorphism $\tau=(\tau_0,\epsilon_\tau)$ of $\Sigma$ such that $\sigma$ acts as $G(\tau)$; we will show that $\tau$ must be as in the statement of the theorem. We now determine $\tau_0$ and $\epsilon_\tau$, using the explicit description of automorphisms of this form afforded by \Cref{explicit aut action}, to which we also refer for the definition of vertices $v^{\pm}_\bullet$ and (half-)edges $e^{\pm}_\bullet$ appearing below.

By \Cref{th:GeneralGaloisAction}, $\sigma$ maps $\Gamma_\s$ to $\Gamma_{\sigma\s}$ so that on $G_\Sigma$, $\sigma$ maps $\{v_{\s}^{\pm}\}$ to $\{v_{\sigma \s}^{\pm}\}$ for each principal cluster $\s$.  Similarly, by \Cref{th:GeneralGaloisAction} and \ref{sigma of reduction} we see that  for a twin $\t$, $\sigma$ maps a node $(\textup{red}_{P(\t)}(\t),0)$ on $\Gamma_{P(\t)}$ to the node $(\textup{red}_{\sigma P(\t)}(\sigma \t),0)$ on $\Gamma_{\sigma P(\t)}$, so that $\sigma$ maps $\{e_\t^{\pm}\}$ onto $\{e_{\sigma \t}^{\pm}\}$ for each twin $\t$. It follows that $\tau_0=\rho(\sigma)$ as desired.\footnote{Note that a permutation of the proper clusters preserving size and inclusion is determined by its action on principal clusters and twins.}

For the signs, fix a cluster $\s\in \mathcal{E}$ with $\delta_\s\neq 1/2$ (i.e. case (iii) of \Cref{choices of tangents lemma}).  Then $n_\s^+$ is the specified point at  infinity on $\Gamma_\s$. By \Cref{le:apphyp}(i) and \Cref{galois action on normalisation corr} we have
\[\sigma(n_\s^+)=\begin{cases} n_{\sigma \s}^+~~&~~\frac{\chi(\sigma)^{e\lambda_\s}}{\chi(\sigma)^{ed_\s(n+1)}}\cdot \frac{\sigma(\sqrt{c_\s})}{\sqrt{c_\s}}=1\\~~n_{\sigma \s}^-~~&~~\textup{ else,}\end{cases}\]
where  $2n+2$ is the degree of the defining polynomial of $\Gamma_\s$. 
Using \Cref{1st cl relation} and the definition of $\lambda_\s$, we compute
\[\frac{\chi(\sigma)^{e\lambda_\s}}{\chi(\sigma)^{ed_\s(n+1)}}\cdot \frac{\sigma(\sqrt{c_\s})}{\sqrt{c_\s}}=\frac{\sigma(\theta_\s)}{\theta_{\sigma\s}}~~\quad(\textup{mod }\mathfrak{m})~~=~~\epsilon_\s(\sigma).\]
Comparing this with the action of $G(\tau)$ on $G_\Sigma$ detailed in \Cref{explicit aut action} we see that $\epsilon_\tau(\s)=\epsilon_\s(\sigma)$ as desired.

Finally, for the nodes we compute using   \Cref{th:GeneralGaloisAction}, \Cref{choices of tangents lemma} and  \Cref{le:apphyp}(ii)  that $\epsilon_\tau(\s)=\epsilon_\s(\sigma)$ similarly.
\end{proof}

\def\fr{{\mathfrak r}}
\def\R{{\mathcal R}}
\def\optional#1{{\color{cyan}#1}}
\def\d#1{d_#1}


\section{The semistability criterion is equivalent to semistability}
\label{scriterion}

We now complete the proof that the semistability criterion (\Cref{semistability criterion}) is equivalent to semistability.

\begin{theorem} \label[theorem]{the semistability theorem}
Let $C:y^2=f(x)$ be a hyperelliptic curve over $K$. Then $C/K$ is semistable if and only if it satisfies the semistability criterion.
\end{theorem}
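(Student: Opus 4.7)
The direction ``criterion $\Rightarrow$ semistable'' is already Theorem \ref{semistability criterion implies semistability}, obtained by the explicit construction of the regular semistable model $\mathcal{C}_{\mathrm{disc}}/\cO_{K^{\mathrm{nr}}}$. My plan is to handle the converse by a descent argument: compare the already-constructed regular semistable model over a large enough extension with the intrinsic minimal regular semistable model which exists by hypothesis, and read the three conditions off the triviality of the induced inertia action.

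First I would reduce to working over $K^{\mathrm{nr}}$. Each of the three conditions of the semistability criterion is insensitive to replacing $K$ by $K^{\mathrm{nr}}$: condition (1) because unramified base change does not change the ramification degree of $K(\cR)/K$, (2) because $I_K=I_{K^{\mathrm{nr}}}$, and (3) because $v$, depths, and the quantities $\nu_\s$ are preserved. So assume $K=K^{\mathrm{nr}}$. Next, choose a finite Galois $F/K$ over which $C/F$ satisfies the criterion (for instance the Galois closure of $K(\cR)(\sqrt\pi)/K$); write $e$ for its ramification index. Applying Theorem \ref{semistability criterion implies semistability} over $F$ produces the minimal regular semistable model $\mathcal{C}_{\min}/\cO_{F}$, carrying the explicit $G_K$-action of Theorem \ref{th:GeneralGaloisAction}.

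The key input is that, by hypothesis, $C/K$ already has a semistable model $\mathcal{C}_0/\cO_K$. Since semistable base change preserves semistability and minimal regular semistable models are unique, there is a canonical identification
$$
\mathcal{C}_{\min} \;\iso\; \mathcal{C}_0 \times_{\cO_K}\cO_F.
$$
Consequently the special fibre $\mathcal{C}_{\min,\bar k}$ is defined over $\bar k$ purely through the descent from $\mathcal{C}_0$, and, because $I_K$ acts trivially on the residue field $\bar k$, the semilinear action of $I_K$ on $\mathcal{C}_{\min,\bar k}$ described in Theorem \ref{th:GeneralGaloisAction} must in fact be trivial: it fixes every component, every node, and every tangent direction. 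It remains to extract conditions (1)--(3) from this triviality.

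For condition (2): by Theorem \ref{th:GeneralGaloisAction}, $\sigma\in I_K$ sends $\Gamma_D$ to $\Gamma_{\sigma D}$, so triviality on components forces $\sigma D=D$ for every valid disc $D$, and hence $\sigma\s=\s$ for every proper cluster. For condition (3): on a principal cluster $\s$, Corollary \ref{galois action on normalisation corr} describes $\sigma\in I_\s=I_K$ acting on the normalisation $\widetilde\Gamma_\s$ by
$(x_0,y_0)\mapsto\bigl(\chi(\sigma)^{ed_\s}x_0+\beta_\s(\sigma),\chi(\sigma)^{e\tilde\lambda_\s}y_0\bigr)$.
Triviality for every $\sigma\in I_K$ forces $\chi(\sigma)^{ed_\s}=\chi(\sigma)^{e\tilde\lambda_\s}=1$. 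Since $\chi|_{I_K}$ surjects onto $\mu_e\subset\bar k^\times$, this gives $d_\s,\tilde\lambda_\s\in\Z$, and combined with the definition of $\tilde\lambda_\s$ these two integrality statements are equivalent to $d_\s\in\Z$ and $\nu_\s\in 2\Z$.

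The hardest step will be condition (1): a priori $F/K$ could have ramification index $>2$ even once conditions (2) and (3) are in hand, because non-principal clusters (twins, and the exceptional $\cR$ or cotwin-type configurations of \Cref{integral clusters}) can carry non-integer depths. The plan is to combine the integrality of depths for principal clusters (just established) with the $I_K$-invariance of all clusters and the combinatorial constraints of Lemma \ref{integral clusters} to conclude that the only rational denominator that can appear in any depth $d_\s$ is $2$, which is equivalent to $K(\cR)/K$ having ramification degree at most $2$. Equivalently, one must show that if $\chi$ had image strictly larger than $\mu_2$ then some cluster-depth or some quantity $\beta_\s(\sigma)$ would have to be nonzero in a way that is already excluded by the triviality of the $I_K$-action on a carefully chosen component. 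This is a cluster-combinatorial calculation and is presumably carried out in Appendix \ref{s:app:equi}.
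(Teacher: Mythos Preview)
Your overall strategy matches the paper's: pass to an extension $F$, compare with the model that descends from $K$, and read off the conditions from triviality of the inertia action on the special fibre. The extraction of conditions (2) and (3) via the explicit formulae $\chi(\sigma)^{ed_\s}$ and $\chi(\sigma)^{e\tilde\lambda_\s}$ is exactly what the paper does. But there are two genuine gaps.

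First, the identification $\cC_{\min}\iso \cC_0\times_{\cO_K}\cO_F$ is false in general: the minimal regular model does not commute with \emph{ramified} base change. At an ordinary double point of $\cC_0$ the local equation is $xy=\pi_K$, which after base change becomes $xy=u\pi_F^{e}$ and is no longer regular once $e>1$. The paper avoids this by working with the \emph{stable} model $\cC_{\textup{st}}$ instead, which does commute with base change (\cite[Lemma 10.3.30]{LiuA}). This costs nothing for your argument, since the components surviving in the stable model are precisely the $\Gamma_\s$ for principal $\s$ (\Cref{stable model}), and those are all you use for (2) and (3).

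Second, and more seriously, your plan for condition (1) cannot work as stated. You propose to bound the ramification of $K(\cR)/K$ by showing that only the denominator $2$ can appear in cluster depths, but this only constrains \emph{tame} ramification and says nothing about wild inertia permuting roots. The paper handles tameness by an argument independent of the model: since $\Jac C$ is semistable, inertia acts unipotently on the $2$-adic Tate module, hence trivially on $\Jac C[2]$, so $K(\Jac C[2])=K(\cR)$ is tame over $K$. Only once tameness is known does the paper invoke Proposition~\ref{weakss}, which packages the cluster combinatorics (essentially \Cref{le:TamelyRam}) needed to upgrade ``tame $+$ each principal cluster inertia-invariant with $d_\s\in\Z$, $\nu_\s\in 2\Z$'' to the full semistability criterion, including $e_{K(\cR)/K}\le 2$ and inertia-invariance of \emph{all} proper clusters. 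You should invoke that proposition rather than redo the combinatorics, and you need the Tate-module step to get tameness in the first place.
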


\begin{proof}
When $C/K$ satisfies the semistability criterion \Cref{semistability criterion implies semistability} 
gives an explicit semistable model of $C$ over $\OKnr$. Since semistability may be checked after unramified extension 
it follows that $C$ is semistable over $K$. 

Now suppose that $C/K$ is semistable. We will show that $K(\mathcal{R})/K$ is tamely ramified and that each principal cluster $\s$ is fixed by inertia, has $d_\s\in \mathbb{Z}$ and $\nu_\s\in 2\mathbb{Z}$. This is equivalent to the semistability criterion by \Cref{weakss}.

As $C/K$ is semistable so is its Jacobian $\textup{Jac}(C)/K$ (\cite[Theorem 2.4]{DM}) whence the inertia group of $K$ acts unipotently on the $2$-adic Tate module of $\textup{Jac}(C)$ (\cite[SGA$7_1$,IX,3.5/3.8]{Gro}). It follows that $K(J[2])/K$ is tamely ramified. As $K(\textup{Jac}(C)[2])=K(\mathcal{R})$ (see e.g. \cite[Lemma 2.1]{Cor})  $K(\mathcal{R})/K$ is tame.

Now consider the stable model $\cC_\textup{st}/\OKnr$ (which exists since $C/K$ is assumed semistable).  
Fix a tame extension $F/K$, ramification degree $e$ say, over which $C$ satisfies the semistability criterion 
(e.g. a quadratic ramified extension of $K(\mathcal{R})/K$) and set $I_{F/K}=\Gal(\Fnr/\Knr)$. 
By \cite[Lemma 10.3.30]{LiuA} the formation of the stable model commutes with base change, in other words
the stable model of $C$ over $\Fnr$ is
\begin{equation*}
\label{stable model equation}
  \cC'=\cC_\textup{st}\times_{\OKnr} \OFnr.
\end{equation*} 
In particular, the unique extension of the action of $I_{F/K}$ on $C/\Fnr$ to $\cC'$ is via the second factor 
 and becomes trivial upon passing to the special fibre.     

On the other hand, since $C$ satisfies the semistability criterion over $F$ we have an explicit description 
of the stable model over $\OFnr$ complete with action of $I_{F/K}$ on its special fibre afforded by 
\Cref{stable model} and \Cref{galois action on normalisation corr}. In order that this action be trivial we 
see from \Cref{galois action on normalisation corr} that each principal cluster must be fixed 
by $I_{F/K}$ (and hence the full inertia group of $K$) else $I_{F/K}$ would permute components of $\cC'_{\kbar}$. 
Moreover, since the character $\chi$  of \Cref{the characters defi} has exact order $e$ when restricted 
to $I_{F/K}$, for each principal cluster $\s$ we deduce from \Cref{galois action on normalisation corr}  
that both $d_\s$ and $\tilde{\lambda}_\s$ must be integers. Equivalently $d_\s\in \mathbb{Z}$ and 
$\nu_\s\in 2\mathbb{Z}$ as desired.
\end{proof}

\section{Special fibre of the minimal regular model}
\label{sfibre}

Here we collect and present the relevant notation and results from Sections \ref{ydisc_section}, \ref{scdisc} and \ref{sgalois} for the convenience of the reader. In particular we present the special fibre of the minimal regular model of $C/F^{\textup{nr}}$ in a self-contained manner, that does not refer to the constructions in \cite{hyble}. 

Let $C/K:y^2 =f(x)$ be a hyperelliptic curve and $F/K$ a finite Galois extension  over which $C$ becomes semistable. By Theorem \ref{the semistability theorem} $C/F$ satisfies the semistability criterion, so that all the constructions of Sections \ref{ydisc_section}, \ref{scdisc} and \ref{sgalois} are valid over $F$.

For the rest of this section we fix the following data. 

\begin{notation}
Fix as above a finite Galois extension $F/K$ over which $C$ is semistable and let $\pi_F$ denote a fixed choice of uniformiser of $F$. For each proper cluster $\s$, fix a centre $z_\s\in F^{\textup{nr}}$ (possible by \Cref{integral clusters}). Additionally, for every even cluster $\c\neq \mathcal{R}$ that does not have an \ub~parent, and for $\mathcal{R}$ if it is \ub,  fix  a square root $\theta_\s$ of 
$
 c_f\prod\nolimits_{r \notin \c} (z_\c-r).
$

We write  $e$ for the ramification degree of $F/K$ and $v$ for the valuation on $\bar{K}$ normalised with respect to $K$, so that, in particular, $v(\pi_F)=1/e$.
\end{notation}

\subsection{Components and characters}
\label{sclusters}

\begin{definition}\label{de:characters}
For $\sigma \in G_K$ set 
$$
 \chi(\sigma) = \frac{\sigma(\pi_F)}{\pi_F} \mod \mathfrak{m}.
$$
For principal clusters $\c$ define
$$
  \tilde{\lambda}_\c = \frac{\nu_{\c}}{2} -  d_{\s}\!\sum_{\s'<\s} \lfloor \frac{|\c'|}{2}\rfloor \qquad \text{ and }
$$
$$
  \alpha_\c(\sigma) = \chi(\sigma)^{ed_\c}, \qquad\quad
  \beta_{\c}(\sigma) = \frac{\sigma(z_{\c})-z_{\sigma\c}}{\pi_F^{ed_{\c}}} \mod \mathfrak{m}, \qquad \quad 
  \gamma_\c(\sigma) = \chi(\sigma)^{e\tilde{\lambda}_\c}.
$$

If $\c$ is either even or a cotwin, define 
$\epsilon_\s:G_K\to \{\pm 1\}$ by
$$
  \epsilon_{\c}(\sigma) \equiv \frac{\sigma(\theta_{\c^*})}{\theta_{\neck{(\sigma\s)}}} \mod \mathfrak{m}.
$$
For all other clusters $\s$, set $\epsilon_\s(\sigma)=0$. 
\end{definition}

\begin{remark}
\label{legaldisc}
On the inertia group $I_K<G_K$ the map $\chi$, and therefore the $\alpha_\s$ 
and $\gamma_\s$ as well, are independent of the choices of $F$, $\pi_F$ and $z_\s$, and are characters $I_K\to\bar k^\times$ which are trivial on wild inertia.
When restricted to the stabiliser $I_\s$, the character $\gamma_\s$ has order 
the prime-to-$p$ part of the denominator of $|I_K/I_\s|\,\tilde\lambda_\s$.
\end{remark}

\begin{definition}\label{de:gammas}\label{de:red}
For a principal cluster $\c$ 
define $c_{\c} \in \bar{k}^\times$ by $$c_\c =\hat{c}_f \prod_{r \notin \c}\widehat{(z_\c-r)} 
~~\mbox{ mod } \mathfrak{m}$$
and set
\begin{equation*}\label{redmap}
\begin{array}{llllll}
\red_\s(t)=  \frac{t-z_\s}{\pi_F^{ed_\s}}\mod\mathfrak{m}
         \end{array}
\end{equation*}
for those $t\in \bar{K}$ for which the  above formula makes sense.
For $\s'<\s$, by $\red_\s(\s')$ we mean $\red_\s(r)$ for any $r \in \s'$. 

If $\s$ is a principal cluster and $e\delta_\s\neq 1/2$ 
we define the hyperelliptic curve $\Cmp_\s/\bar k$ by 
$$
  \qquad \qquad
  \Cmp_\s :\>\> Y^2 = c_\s
    \prod_{{\text{odd }}{{\mathfrak o} < \s}}(X-\red_\s(\mathfrak o))
    \prod_{{\text{twin }{\mathfrak t}<\s}\atop{e\delta_{\t}=1/2}}(X-\red_\s(\mathfrak t))^2.
$$
If $\s$ is principal and $e\delta_\s=1/2$ we define the curve $\Gamma_\s/\bar{k}$ to be the glueing of the affine curves 
$$
\phantom{hahahaha}Y^2=c_\s
    \prod_{{\text{odd }}{{\mathfrak o} < \s}}(X-\red_\s(\mathfrak o))
    \prod_{{\text{twin }{\mathfrak t}<\s}\atop{e\delta_\t=1/2}}(X-\red_\s(\mathfrak t))^2\phantom{hahahaha} (\dagger)
$$
and
$$
Z^2=c_\s T^{2}\prod_{{\text{odd }}{{\mathfrak o} < \s}}(1-\red_\s(\mathfrak o)T)
    \prod_{{\text{twin }{\mathfrak t}<\s}\atop{e\delta_\t=1/2}}(1-\red_\s(\mathfrak t)T)^2
$$
over the subsets $Y\neq 0$ and $T\neq 0$ via $Z=YT^{n+1}$ where $n$ is half the degree of the right hand side of $(\dagger)$ (note that this is not the usual chart at infinity).
\end{definition}


%


As in Section \ref{scdisc}, each $\Cmp_\s$ corresponds to one or possibly two components of the special fibre 
of the minimal regular model of $C$ over $\OFnr$. The following theorem describes how these components fit 
together: roughly $\Cmp_\s$ and $\Cmp_{\s'}$ are linked by chains of curves isomorphic to $\mathbb{P}^1_{\bar{k}}$ whenever 
$\c' \< \c$ and there is a loop of such curves from $\Cmp_\s$ to itself for each twin or cotwin $\t \< \s$ or $\s\<\t$. It also
describes the corresponding Galois action and the reduction map. 

\begin{theorem}\label{th:DualGraph}
Let $F/K$ be an extension over which $C$ is semistable. 

\medskip
\noindent(1) Let $\Upsilon_C$ be the dual graph of the special fibre of the minimal regular model 
of $C$ over $\OFnr$. Then $\Upsilon_C$ has 
a vertex $v_\c$ corresponding to $\Cmp_\s$ for every non-\ub\ principal cluster and two vertices $v_{\c}^+, v_{\c}^-$ for each \ub\ principal cluster $\c$. These are linked by chains of edges as follows (where we write  $v_\c= v_{\c}^+=v_{\c}^-$ whenever $\c$ is not \ub).

\noindent\hskip+3mm
\begin{tabular}{|c|c|c|c|l|}
\hline
Name & From & To & Length & Conditions \\
\hline
$L_{\c'}$ &$v_{\c'}$&$v_{\c}$ &$\frac 12 \delta_{\s'}$ & $\c'<\c$ both principal, $\c'$ odd\\
\hline
$L_{\c'}^+$ &$v_{\c'}^+$&$v_{\c}^+$ &$\delta_{\s'}$ & $\c'<\c$, both principal, $\c'$ even \\
\hline
$L_{\c'}^-$ &$v_{\c'}^-$&$v_{\c}^-$ &$\delta_{\s'}$ & $\c'<\c$, both principal, $\c'$ even \\
\hline
$L_{\t}$ &$v_{\c}^-$&$v_{\c}^+$ &$2 \delta_\t$& $\c$ principal, $\t<\c$ twin\\
\hline
$L_{\t}$ &$v_{\c}^-$&$v_{\c}^+$ &$2 \delta_\s$& $\c$ principal, $\c<\t$ cotwin\\\hline
\end{tabular}\\

\noindent Moreover, if $\cR$ is not principal\\

\noindent\hskip+3mm
\begin{tabular}{|c|c|c|c|l|}
\hline
$L_{\c_1,\c_2}$ &$v_{\c_1}$&$v_{\c_2}$ &$\frac 12 (\delta_{\c_1}+\delta_{\c_2})$& $ \cR = \c_1\coprod\c_2$, with $\c_1, \c_2$ principal odd\\
\hline
$L_{\c_1,\c_2}^+$ &$v_{\c_1}^+$&$v_{\c_2}^+$ &$\delta_{\c_1}+\delta_{\c_2}$& $ \cR = \c_1\coprod\c_2$, with $\c_1, \c_2$ principal even\\
\hline
$L_{\c_1,\c_2}^-$ &$v_{\c_1}^-$&$v_{\c_2}^-$ &$\delta_{\c_1}+\delta_{\c_2}$& $ \cR = \c_1\coprod\c_2$, with $\c_1, \c_2$ principal even\\
\hline
$L_{\t}$ &$v_{\c}^-$&$v_{\c}^+$ &$2(\delta_\s+\delta_\t)$&  $\cR = \c \coprod \t$, with $\c$ principal even, $\t$ twin\\
\hline
\end{tabular}\\

\noindent(2) If $\sigma \in G_K$ then it acts on $\Upsilon_C$ by
\begin{itemize}
\item[$(i)$] 
$ \sigma(v_{\c}^\pm)=v_{\sigma(\c)}^{\pm\epsilon_{\c}(\sigma)}$;
\item[$(ii)$] 
$\sigma(L_{\c}^\pm)=L_{\sigma(\c)}^{\pm\epsilon_{\c}(\sigma)}$;
 

\item[$(iii)$] for $\t$ twin or cotwin  $\sigma(L_{\t})=\epsilon_{\t}(\sigma)L_{\sigma(\t)}$, where $-L$ denotes $L$ with reversed orientation; 
\end{itemize}
and the induced permutation on the remaining edges and vertices.

\noindent(3) If $\sigma \in G_K$ and $\c$ is a principal cluster then\footnote{recall that $\sigma$ acts as in \eqref{CbarnAction} on $\cC_{min,\bar{k}}$}
$\sigma$ maps  $ \Gamma_{\c}$ to $\Gamma_{\sigma\c}$ and
 $$
 \sigma|_{\Cmp_\s}(x,y) =  \left(\chi(\sigma)^{ed_{\c}}\bar{\sigma}(x)+\beta_{\c}(\sigma),\chi(\sigma)^{e\lambda_\c}\bar{\sigma}(y)\right) \in \Cmp_{\sigma\c},
 $$
where $ \lambda_\c = \frac{\nu_{\c}}{2} -  d_{\s}\!\sum \lfloor \frac{|\c'|}{2}\rfloor$, the sum taken over $\s'<\s$ with $\delta_{\c'}>\frac{e}{2}$.

\noindent(4) The point $(x,y) \in C(F^{nr})$ reduces to $\Cmp_\s$ if and only if:
\begin{enumerate}
\item[\textit{i)}]  $v(x-z_\c) \ge d_\c$ and $\red_{\c}(x) \neq \red_{\c}(\c')$ for any $\c'<\c$, or 
\item[\textit{ii)}] $|\c| \ge 2g+1$ and $v(x-z_\c) < d_\c$.
\end{enumerate}
Explicitly, for one of these points\footnote{in $(ii)$ if $|\s| =2g+2$ then the point reduces to one of the two points at infinity on $\Cmp_\s$, see Proposition \ref{reducing points}.$(i)$ to determine which one.}, \begingroup \Small$$(x,y)
 \mapsto
  \left(\red_\c(x), ~\pi_F^{-\frac{e\nu_\c}{2}}y\cdot \!\!\!\! \prod_{{\s'<\s}\atop{\delta_{\s'}>\frac{e}{2}}}\left(
  \red_\s(x)-\red_\s(\s')\right)^{-\lfloor \frac{|\c'|}{2}\rfloor}\right).$$ \endgroup 
  
\end{theorem}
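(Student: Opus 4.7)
The plan is to assemble this theorem from the machinery developed in Sections~\ref{scdisc} and~\ref{sgalois}, translating statements phrased in the language of valid discs into the language of clusters. Throughout, we use the fact (guaranteed by Theorem~\ref{the semistability theorem}) that $C/F$ satisfies the semistability criterion, so that the collection of valid discs associated to $f(x)$ over $F$ and the model $\mathcal{C}_{\textup{disc}}$ of \Cref{definition of cdisc} are available, and the minimal regular model $\mathcal{C}_{\min}/\mathcal{O}_{F^{\textup{nr}}}$ is obtained from $\mathcal{C}_{\textup{disc}}$ by contracting type V components (\Cref{semistability criterion implies semistability}).

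For part (1), we start from Theorem~\ref{main dual graph thingy}, which identifies $\widehat{\Upsilon_C}$ with the metric hyperelliptic graph $G_\Sigma$ associated to the metric cluster picture $\Sigma$ of $f(x)$. One then reinserts the degree-two genus-zero vertices (corresponding to the intermediate valid discs) to recover $\Upsilon_C$ itself, and uses \Cref{components of min} together with \Cref{components of cdisc description} to see that a principal cluster $\s$ contributes one vertex (non-\ub) or two vertices $v_\s^\pm$ (\ub), with the extra vertex arising from the two components into which $\Gamma_{D(\s)}$ splits. The chain lengths are then read off from the description of $\widehat{T_\mathcal{D}}$ in \Cref{graph as clusters}: between two adjacent principal clusters $\s' < \s$, the intermediate valid discs contribute a chain of $\mathbb{P}^1$'s whose length is $\tfrac12 \delta_{\s'}$ when $\s'$ is odd (exceptional disc of type~II causes the halving) and $\delta_{\s'}$ in each of the two parallel chains when $\s'$ is even, with the same analysis yielding the chains associated to twins, cotwins and the exceptional disjoint-union case $\mathcal{R}=\s_1\sqcup \s_2$.

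For parts (2) and (3), the action on components is \Cref{th:GeneralGaloisAction} specialised to $D = D(\s)$ for a principal cluster $\s$, giving the formula in part (3) (after observing that for principal clusters, the sum in the definition of $\lambda_D$ in \Cref{the characters defi} can be replaced by the sum over all $\s'<\s$ with $\delta_{\s'} > e/2$, which matches the statement since only those contribute to $h_D$). The action on the dual graph itself is then Theorem~\ref{th:ActionOnDualGraph}: the map $v_\s^\pm \mapsto v_{\sigma\s}^{\pm\epsilon_\s(\sigma)}$ and the transport of the chains $L_{\s'}^\pm$, $L_\t$ follow from unwinding the explicit description of $G(\rho)$ in \Cref{explicit aut action}, combined with the fact that the choice of $\theta_\s$ made here matches the choice made in \Cref{choices of tangents lemma} of ``plus'' tangents/points at infinity used to identify $\widehat{\Upsilon_C}$ with $G_\Sigma$. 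For a twin or cotwin $\t$ the loop $L_\t$ is $\iota$-anti-invariant and the two orientations correspond to the two tangents at the associated node; the sign by which $\sigma$ acts is again $\epsilon_\t(\sigma)$, as computed in the proof of Theorem~\ref{th:ActionOnDualGraph}.

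Finally, part (4) is a reformulation of \Cref{reducing points}. If $v(x-z_\s) \geq d_\s$ and $\textup{red}_\s(x)$ avoids $\textup{red}_\s(\s')$ for all $\s'<\s$, then \Cref{reducing points}(i) places the point on $\Gamma_{D(\s)}$ and not on any smaller valid disc component, hence on $\Gamma_\s$; the case $|\s|\geq 2g+1$ with $v(x-z_\s)<d_\s$ is precisely the situation of \Cref{reducing points}(ii), where the point reduces to infinity on $\Gamma_{D_{\max}}$ and $D_{\max}=D(\s)$. The explicit formula for the reduced point is the formula in \Cref{reducing points} applied in cases I and II, since on a principal cluster component the polynomial $h_D$ of \cref{the poly h def} is $\prod_{\s'<\s,\,\delta_{\s'}>e/2}(x-\red_\s(\s'))^{\lfloor|\s'|/2\rfloor}$. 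The only genuinely combinatorial work is bookkeeping the distinction between $e\delta_\s = 1/2$ (types II/IV, giving singular components) and $e\delta_\s > 1/2$ (type I); the main expected obstacle is ensuring the orientation conventions on the loops $L_\t$ match so that the signs in (2)(iii) are consistent with the sign assignments in \Cref{choices of tangents lemma}, which is handled by the compatibility calculation at the end of the proof of Theorem~\ref{th:ActionOnDualGraph}.
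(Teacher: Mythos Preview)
Your approach is essentially the same as the paper's: parts (2), (3), (4) are deduced from exactly the same results (Theorem~\ref{th:ActionOnDualGraph} with \Cref{explicit aut action}, Proposition~\ref{th:GeneralGaloisAction}, and Proposition~\ref{reducing points} respectively). For part (1) the paper simply cites Theorem~\ref{main dual graph thingy} together with \Cref{explicit graph remark}, which already records the tables verbatim, whereas you attempt to rederive the chain lengths directly from the disc types.

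One slip in that rederivation: the halving $\tfrac12\delta_{\s'}$ for odd $\s'$ is not caused by type~II discs. Type~II discs are the special clusters of size $2g$ with $\delta_\s=1/2$; they have nothing to do with odd clusters. The halving arises because, along the chain of valid discs between $D(\s')$ and $D(\s)$ with $\s'$ odd, the parity of $\nu_D(f)$ alternates (Lemma~\ref{parity of nu thing}), so every other disc is of type~V and gets contracted in passing to $\mathcal{C}_{\min}$. Equivalently, in the BY-tree/hyperelliptic-graph language, blue edges of $T_\Sigma$ have length $\delta_{\s'}$ and all edges are halved when forming $G_\Sigma$ (\Cref{TtoG}). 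Citing \Cref{explicit graph remark} directly avoids this bookkeeping.
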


%
%
%

\begin{proof}

(1) The dual graph of the special fibre is given by Theorem \ref{main dual graph thingy} and \Cref{explicit graph remark} gives the explicit description.

(2) follows by combining Theorem \ref{th:ActionOnDualGraph} with \Cref{explicit aut action}.

(3) This is Proposition \ref{th:GeneralGaloisAction}. 


(4) This is Proposition \ref{reducing points}.
\end{proof}

\begin{corollary}\label{numcomponents}
Let $C/K$ be a semistable hyperelliptic curve. Then the number of components in the special fibre if its minimal regular model over $\cO_{K^{\nr}}$ is
 $$
 m_C = \sum_{\substack{\s \neq \cR, \\ \text{odd, proper}}} \frac{\delta_{\s}}{2} +  \sum_{\substack{\s \neq \cR, \\ \text{even}}} 2\delta_{\s} +1-\rk H_1(\Upsilon_C,\Z).
 $$
\end{corollary}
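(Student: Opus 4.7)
The plan is to reduce the claim to the standard Euler relation for connected (multi)graphs. Since $C$ is a geometrically connected curve, the special fibre of its minimal regular model over $\cO_{K^{\nr}}$ is connected, so $\Upsilon_C$ is connected, and
$$
|V(\Upsilon_C)| \,-\, |E(\Upsilon_C)| \,+\, \rk_\Z H_1(\Upsilon_C,\Z) \;=\; 1.
$$
As $m_C = |V(\Upsilon_C)|$, the corollary becomes the purely combinatorial identity
$$
|E(\Upsilon_C)| \;=\; \sum_{\substack{\s\neq\cR\\\text{odd, proper}}} \frac{\delta_\s}{2} \;+\; \sum_{\substack{\s\neq\cR\\\text{even}}} 2\delta_\s .
$$

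To verify this, I would read off $|E(\Upsilon_C)|$ directly from the two tables in Theorem \ref{th:DualGraph}: every edge of $\Upsilon_C$ lies on exactly one chain in those tables, so $|E(\Upsilon_C)|$ equals the total of the length column. The strategy is then to organise that sum by the proper non-root cluster $\s$ to which each row is naturally attributed, matching it with the term indexed by $\s$ on the right-hand side. Concretely: each odd principal $\s'$ whose parent is principal produces one chain $L_{\s'}$ of length $\tfrac{1}{2}\delta_{\s'}$; each even principal $\s'$ with principal parent produces the two chains $L_{\s'}^{\pm}$ of total length $2\delta_{\s'}$; each twin $\t$ inside a principal cluster produces $L_\t$ of length $2\delta_\t$; a cotwin $\t$ with principal child $\c$ of size $2g$ produces $L_\t$ of length $2\delta_\c$, which is the contribution attributed to the even cluster $\c$ rather than to $\t$ itself.

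When $\cR$ is not principal, the additional chains at the top of the graph have to redistribute: for $\cR=\s_1\sqcup\s_2$ with both children proper principal, the odd case gives one chain of length $\tfrac12(\delta_{\s_1}+\delta_{\s_2}) = \tfrac12\delta_{\s_1}+\tfrac12\delta_{\s_2}$, which splits cleanly into the two RHS contributions; the even case gives two chains with total length $2(\delta_{\s_1}+\delta_{\s_2})$, which similarly splits; and for $\cR=\c\sqcup\t$ (with $\c$ principal even, $\t$ twin), the single loop of length $2(\delta_\c+\delta_\t)$ splits into the $\c$-term and the $\t$-term. In every configuration each proper non-root cluster contributes either $\delta_\s/2$ or $2\delta_\s$ according to its parity, giving the identity.

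The main obstacle will be the bookkeeping when $\cR$ is not principal and in the presence of cotwins, where the chain-length is not naturally indexed by the cluster that supplies the formula-term; one has to verify case-by-case that the reattribution of $L_\t$-chains to even principal children (for cotwins) and the splitting of the top-level chain lengths across the two descendants of $\cR$ (in the non-principal-$\cR$ case) give exactly the stated sum, so that no cluster is double-counted or omitted.
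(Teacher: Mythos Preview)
Your approach is exactly the paper's: reduce to the Euler relation $|V|-|E|+\rk H_1=1$ and count edges directly from the tables in Theorem~\ref{th:DualGraph}(1). The paper's proof is the one-liner ``This follows from the usual Euler characteristic formula for $H_1$ of a graph, and counting the total number of edges in part (1) of the theorem,'' and you have supplied the bookkeeping behind that sentence.

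That said, the case you flag as delicate does not quite close. When $|\cR|=2g+2$ and $\cR$ has a child $\s'$ of size $2g+1$, the cluster $\cR$ is non-principal (it is even with exactly two children), but none of the rows in the second table apply, since the complement of $\s'$ in $\cR$ is a singleton---neither a twin nor a principal cluster. So there is no chain attached to $\s'$ from above, and your reattribution scheme assigns it edge-length~$0$; yet $\s'$ is odd, proper, and $\ne\cR$, so it contributes $\tfrac12\delta_{\s'}$ to the right-hand side. A concrete instance is $y^2=(px-1)\,x(x-1)(x-2)(x-3)(x-4)$ over $\Q_p$ with $p>5$: this has good reduction, so $m_C=1$, $\rk H_1=0$, and the only proper non-$\cR$ cluster is $\s'=\{0,1,2,3,4\}$ with $\delta_{\s'}=1$; the formula gives $\tfrac12+1-0=\tfrac32$. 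This indicates that the stated identity (and hence both your argument and the paper's one-line proof) requires the additional hypothesis that $\cR$ has no child of size $2g+1$---equivalently that $\cR$ is not exceptional of type~B in the sense of Definition~\ref{valid discs defi}. Outside this configuration your edge count matches the formula exactly.
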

\begin{proof}
This follows from the usual Euler characteristic formula for $H_1$ of a graph, and counting the total number of edges in part (1) of the theorem.
\end{proof}

We now describe the normalisation of each $\Cmp_\s$ as well as the induced Galois action. 

\begin{theorem}\label{de:gammatilde}\label{galactss}
For a principal cluster $\c$ the normalisation of $\Cmp_\s$ is given by
$$
\qquad \qquad
  \widetilde{\Cmp}_\s :\>\> Y^2 = c_\s
    \prod_{{\text{odd }}{{\mathfrak o} < \s}}(X-\red_\s(\mathfrak o)).
$$

$(i)$ If $\sigma \in G_K$  the associated map $ \widetilde{\Gamma}_{\c}$ to $\widetilde{\Gamma}_{\sigma\c}$ is given by
 $$
\sigma|_{\widetilde{\Cmp}_\s}(x,y) =  
 \left(\alpha_\c(\sigma)\sigma(x)+\beta_{\c}(\sigma),\gamma_\s(\sigma)\sigma(y)\right)
 $$
 
$(ii)$ If $\sigma \in I_\s$, the geometric automorphism of $\tilde\Gamma_{\c}(\bar{k})$ given by $\sigma$ 
%
$$
  \begin{tabular}{llllll}
  swaps two points at infinity & if $\epsilon_\s(\sigma)=-1$,\cr
   fixes two points at infinity & if $\epsilon_\s(\sigma)=1$,\cr
  fixes the unique point at infinity & if $\epsilon_\s(\sigma)=0$.
  \end{tabular}
$$

$(iii)$ If the point $P=(x,y) \in C(F^{nr})$ reduces to $\bar{P} \in \Cmp_\s$ then $\bar{P}$ corresponds to the point
\begingroup \Small
$$
   \left(\red_\c(x), ~\pi_F^{-\frac{e\nu_\c}{2}}y\cdot \prod_{\s'<\s}\left(
  \red_\s(x)-\red_\s(\s')\right)^{-\lfloor \frac{|\c'|}{2}\rfloor}\right) \in \widetilde{\Cmp}_\s.
  $$ \endgroup

\end{theorem}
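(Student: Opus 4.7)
The theorem consolidates results already established in Sections \ref{scdisc} and \ref{sgalois}, together with one genuinely new computation in part $(ii)$. I will address the four pieces in sequence.

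The normalisation formula is what was established in Proposition \ref{components of cdisc description} cases $(I.1)$ and $(II.1)$ and recorded in Definition \ref{normal cmps}: starting from the equation for $\Gamma_\s$ in Proposition \ref{components of min}(I), the extra squared factors $(X - \red_\s(\mathfrak t))^2$ correspond precisely to the nodal singularities contributed by twins $\mathfrak t < \s$ with $e\delta_\mathfrak t = 1/2$, and normalising removes them. For part $(i)$, the case $\sigma \in G_\s$ is exactly Corollary \ref{galois action on normalisation corr}; for general $\sigma$ I transport the action of Theorem \ref{th:GeneralGaloisAction} from $\Gamma_\s$ to $\widetilde\Gamma_\s$ along the rational map $(X,Y) \mapsto (X,Y/h_\s(X))$ of \eqref{the poly h def}, and verify that $\lambda_\s$ from Theorem \ref{th:GeneralGaloisAction} turns into $\tilde\lambda_\s$ after absorbing the $\lfloor|\s'|/2\rfloor$ contributions of small twins into the full sum over all children $\s' < \s$. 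For part $(iii)$, combine Proposition \ref{reducing points}$(i)$ with the same change of coordinates $Y \mapsto Y/h_\s(X)$; the partial product over $\delta_{\s'} > \tfrac{e}{2}$ extends to a full product over all children for precisely the same reason.

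The substantive content is part $(ii)$. Since $\s$ is principal, Definition \ref{principaldefi} forces $\s$ to be neither a twin nor a cotwin, so $\epsilon_\s(\sigma) \neq 0$ iff $\s$ is even. Now $|\so| \equiv |\s| \pmod 2$ (the even children of $\s$ contribute an even number to $|\s|$), so we split into two cases. If $|\s|$ is odd, then $\widetilde\Gamma_\s$ has a unique point at infinity, automatically fixed by any automorphism, and $\epsilon_\s \equiv 0$ as required. If $|\s|$ is even, write $n = |\so|$ and introduce coordinates at infinity $U = 1/X$, $W = YU^{n/2}$; the defining equation becomes $W^2 = c_\s\prod_{\mathfrak o}(1 - \red_\s(\mathfrak o)U)$, so the two points at infinity are $P_\pm = (0, \pm\sqrt{c_\s})$. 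Translating the Galois action of part $(i)$ into these coordinates gives, at $U=0$, $W \mapsto \gamma_\s(\sigma)\alpha_\s(\sigma)^{-n/2}\,\sigma(W)$, so $\sigma(P_\pm) = P_{\pm\eta}$ with $\eta = \gamma_\s(\sigma)\alpha_\s(\sigma)^{-n/2}\cdot\sigma(\sqrt{c_\s})/\sqrt{c_\s}$.

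It remains to identify $\eta$ with $\epsilon_\s(\sigma)$. Unwinding the definitions: $v(\theta_\s^2) = \nu_\s - |\s|d_\s$ (directly from $\theta_\s^2 = c_f\prod_{r\notin\s}(z_\s-r)$ and $v(z_\s-r) = d_{\{r\}\wedge\s}$ for $r\notin\s$), whence $c_\s \equiv (\theta_\s/\pi_F^{ev(\theta_\s)})^2 \pmod{\mathfrak m}$ and
\[
\frac{\sigma(\sqrt{c_\s})}{\sqrt{c_\s}} \equiv \epsilon_\s(\sigma)\,\chi(\sigma)^{-ev(\theta_\s)} \pmod{\mathfrak m}
\]
(at least when $\s^* = \s$; when $\s^*\supsetneq \s$ one uses that the intermediate clusters are all even with übereven parent, so the ratio $\theta_{\s^*}^2/\theta_\s^2$ contributes only unit squares and a power of $\pi_F$ that is absorbed into the exponent bookkeeping). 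Combined with $\gamma_\s\alpha_\s^{-n/2} = \chi(\sigma)^{e(\tilde\lambda_\s - d_\s n/2)}$, one finds $\eta = \epsilon_\s(\sigma)\,\chi(\sigma)^{e(\tilde\lambda_\s - d_\s n/2 - v(\theta_\s))}$, and the exponent vanishes by the identity
\[
\tilde\lambda_\s - \tfrac{n d_\s}{2} - v(\theta_\s) \;=\; \tfrac{\nu_\s}{2} - \tfrac{d_\s(|\s|-n)}{2} - \tfrac{n d_\s}{2} - \tfrac{\nu_\s - |\s|d_\s}{2} \;=\; 0,
\]
which is exactly $\sum_{\s'<\s}\lfloor|\s'|/2\rfloor = (|\s|-|\so|)/2$. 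The main obstacle is this last bookkeeping: matching the three different normalisations (of $\theta_\s$ via $\nu_\s$, of $\tilde\lambda_\s$ via the floor sum, and of $\gamma_\s\alpha_\s^{-n/2}$) into a single identity, and, in the übereven case, tracking the passage from $\theta_\s$ to $\theta_{\s^*}$. Everything else is a transport-of-structure along the single birational change of coordinates $Y \leftrightarrow Y/h_\s(X)$ on $\Gamma_\s$.
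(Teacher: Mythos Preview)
Your handling of the normalisation formula and parts $(i)$ and $(iii)$ is correct and essentially identical to the paper's: the paper cites Theorem \ref{th:DualGraph}(3) together with Lemma \ref{le:apphyp}, respectively Theorem \ref{th:DualGraph}(4), and these are exactly the transport-of-structure arguments along the normalisation map that you describe.

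For part $(ii)$ your direct computation is also correct when $\s^* = \s$; indeed this is the computation carried out inside the proof of Theorem \ref{th:ActionOnDualGraph} (via Lemma \ref{choices of tangents lemma} and equation \eqref{1st cl relation}), which the paper simply cites. Note, though, that since $\sigma \in I_\s \subseteq I_K$ the induced map $\bar\sigma$ on $\bar k$ is trivial, so your factor $\sigma(\sqrt{c_\s})/\sqrt{c_\s}$ equals $1$ and $\eta = \chi(\sigma)^{e(\tilde\lambda_\s - nd_\s/2)}$ outright; your identity $\tilde\lambda_\s - nd_\s/2 = v(\theta_\s)$ then shows $\eta = \chi(\sigma)^{ev(\theta_\s)} \equiv \sigma(\theta_\s)/\theta_\s \pmod{\mathfrak m}$, which is $\epsilon_\s(\sigma)$ precisely because $\s^* = \s$.

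When $\s^* \neq \s$ your argument has a genuine gap. Here $\epsilon_\s(\sigma)$ is defined via $\theta_{\s^*}$, so what you must show is $\chi(\sigma)^{ev(\theta_\s)} = \chi(\sigma)^{ev(\theta_{\s^*})}$ for all $\sigma \in I_\s$, equivalently that $v(\theta_\s) - v(\theta_{\s^*}) = \tfrac12\sum_i (|\s_i|-|\s_{i-1}|)\,d_{\s_i}$ lies in $\Z$, where $\s = \s_0 \subsetneq \cdots \subsetneq \s_k = \s^*$ is the chain of \"ubereven ancestors. Saying that $\theta_{\s^*}^2/\theta_\s^2$ contributes ``unit squares and a power of $\pi_F$ absorbed into the exponent bookkeeping'' does not establish this integrality, and your final displayed identity only treats $v(\theta_\s)$. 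The paper sidesteps this entirely: for $\s^* \neq \s$ it argues via the dual graph (Theorem \ref{th:DualGraph}(2)) that the two points at infinity on $\widetilde\Gamma_\s$ connect to the two components $v_{P(\s)}^\pm$, which are swapped by $\sigma$ if and only if the components $v_{\s^*}^\pm$ are, if and only if $\epsilon_\s(\sigma) = -1$. This topological route is cleaner because it replaces a delicate valuation computation by the connectivity of $\Upsilon_C$.
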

\begin{proof}
(i). Combine Theorem \ref{th:DualGraph} (3) and \Cref{le:apphyp}(iii).

(ii). 
The case where there is a unique point at infinity is clear so suppose otherwise. 
\Cref{th:ActionOnDualGraph} gives the case $\s = \s^*$.

For the case $\s^*\neq \s$, note that by considering the action of $\sigma$ on the components of the special fibre of the minimal regular model of $C$ (c.f. Theorem \ref{th:DualGraph} (2)), one sees that the points at infinity on $\widetilde{\Gamma}_\mathfrak{s}$ are swapped by $\sigma$ if and only if the points at infinity  on $\widetilde{\Gamma}_{s^*}$ are, if and only if $\epsilon_\mathfrak{s}=-1$. 

(iii). The description of the normalisation of $\Gamma_{\mathfrak{s}}$ is standard and the normalization map between $\tilde{\Cmp}_\s$ and $\Cmp_\s$ is given by 
$$
(x,y) \mapsto \left(x, y \cdot  \prod_{{\text{twin }{\mathfrak t}<\s}\atop{\delta_{\mathfrak t}=\frac e2}}(x-\red_\s(\mathfrak t))\right)
$$
(c.f. \ref{normalisation equation2}). The claimed formula now follows from Theorem \ref{th:DualGraph} (4). 
\end{proof}

\begin{remark} \label[remark]{action is what you think remark}
We note that the formula for the action of $\sigma \in G_K$ 
becomes particularly simple in the following two settings:\\
(i) if $\s$ is a principal cluster and $\sigma \in I_\s$ then $\sigma$ acts on $\widetilde{\Gamma}_\s$ as the geometric automorphism
$$
  (x,y)\mapsto \left(\alpha_\s(\sigma)x+\beta_\s(\sigma),\gamma_\s(\sigma)y\right).
$$
(ii) suppose $\s$ is a principal cluster, $F=K$ (so that $C/K$ is semistable) and that $\sigma \in G_\s$. Then $\chi(\sigma)=\textup{id}$. If (as is possible by Lemma \ref{lem:invcentre}) we additionally pick our centre $z_\s$ for $\s$ to lie in $K_\s$, the subfield of $\Ks$ fixed by $G_\s$,  then we also have $\beta_\s(\sigma)=0$. Thus $\sigma$ acts on $\widetilde{\Gamma}_\s(\bar{k})$ via
$$
\sigma(x,y)=\left(\bar{\sigma}(x),\bar{\sigma}(y)\right)
$$
where $\bar{\sigma}$ denotes the automorphism of $\bar{k}$ induced by $\sigma$. (This is a manifestation of the fact that, when $C/K$ is semistable, all our constructions are $\textup{Gal}(K^{\textup{nr}}/K)$-equivariant.)
\end{remark}

\section{Homology of the dual graph of the special fibre}
\label{shomology}

The homology of the dual graph of the special fibre forms a part of the Galois representation of $C$ and determines several arithmetic invariants (see \eqref{tatedec}, Theorem \ref{th:rootnumber} and Lemma \ref{lemtam}). 
In this section we give a description of the homology in terms of clusters. In the notation of Theorem \ref{th:DualGraph}, the basic observation is that every even non-\ub\, cluster $\s$ starts off two chains $L_\s^+$ and $L_\s^-$ that eventually join back up (normally at $v_{P(\s^*)}$) to form a loop in $\Upsilon_C$.

\def\sh{\hat{\c}}
\begin{definition}\label{de:loops}
Let $C/F$ be a semistable hyperelliptic curve and $\Upsilon_C$ the dual graph of the special fibre of 
its minimal regular model over $\OFnr$ as in Theorem \ref{th:DualGraph}. 

Let $\s \ne \cR$ be an even non-\"ubereven cluster. If $\s^*\neq\cR$, we define the 1-chain $\ell_{\s}$ in $C_1(\Upsilon_C,\Z)$ to be the shortest path from $v_{P(\s^*)}$ to itself that passes through $v_\s$ and goes through the minus part of the graph before the plus part of the graph.  If $\s^*=\cR$, we define $\ell_{\s}$ to be the shortest path from $v_\cR^-$ to $v_\cR^+$ that passes through $v_\s$. Here
\begin{itemize}
 \item if $\s=\t$ is a twin or $P(\s^*)=\t$ is a cotwin, we write $v_{\t}$ for the point in the middle of $L_{\t}$;
 \item if $\s^*=\cR = \s_1 \coprod \s_2$ with $\s_i$ both principal even, we write $v_{\cR}^{+}$ and $v_{\cR}^-$ for the points in the middle of $L_{\c_1,\c_2}^+$ and $L_{\c_1,\c_2}^-$;   
 \item if $\s^*=\cR = \t \coprod \c$ with $\t$ a twin and $\c$ principal even, we write $v_{\cR}^{+}$ and $v_{\cR}^-$ for the points on $L_{\t}$ of distance $\delta_\s$ from $v_{\s}^+$ and $v_{\s}^-$, respectively.
\end{itemize}
\end{definition}

\begin{remark}
$\ell_{\c}$ is a loop (cycle) in $\Upsilon_C$ unless $\c^* = \cR$. In the latter case, it is a ``half loop'' in the sense that if $\ell_{\c}$, $\ell_{\c'}$ are two half loops then $ \ell_{\c} - \ell_{\c'}$ is a loop.
\end{remark}

Using the explicit description of the dual graph it is not hard to check that the loops described above form a basis for the homology of $\Upsilon_C$ and to track the action of Galois on them. This gives the following result on $H_1(\Upsilon_C,\Z)$. 

\begin{theorem}\label{th:Homology}
Let $C/K$ be a hyperelliptic curve and let $F/K$ be a Galois extension over which $C$ is semistable. 
Let $\Upsilon_C$ be the dual graph of the special fibre of the minimal regular model of 
$C$ over $\OFnr$. Let  $A$ be the set of even non-\ub\ clusters excluding $\cR$, and let $B$ be the subset of clusters $\c \in A$ such that $\c^*=\cR$. Then
\begin{enumerate}
\item $\rk_\Z( H_1(\Upsilon_C,\Z)) = \bigleftchoice {\#A}{\mbox{ if } \cR \mbox{ is not \ub,}}{\#A-1}{\mbox{ otherwise.}}$
\item $$H_1(\Upsilon_{C},\ZZ) = \Bigl\{ \sum_{\c\in A} a_{\c}\ell_{\c} \Bigm|\> a_\s\in\Z,\>\>\sum_{\c\in B} a_{\c}=0\Bigr\},$$ 
\item the length pairing is given by 
$$
\langle \ell_{\c_1},\ell_{\c_2} \rangle=\left\{
    \begin{array}{ll}
        0&  \mbox{ if } \c^*_1 \neq \c^*_2, \\
        2(d_{(\c_1\wedge\c_2)}-d_{P(\c^*_1)})& \mbox{ if } \c^*_1 =\c^*_2 \ne \cR, \\
        2(d_{(\c_1\wedge\c_2)}-d_{\cR)}& \mbox{ if } \c^*_1 =\c^*_2 = \cR.\\
    \end{array}
\right.
$$      
\item for $\sigma \in G_K$, $$\sigma(\ell_{\c}) = \epsilon_{\c}(\sigma) \ell_{\sigma(\c)}.$$
\end{enumerate}
\end{theorem}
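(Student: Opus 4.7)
The four parts are closely related, and my plan is to prove (2), (1), (3) and (4) in that order using the explicit description of $\Upsilon_C$ and its Galois action from Theorem \ref{th:DualGraph}. First I would check that for each $\s\in A$, the 1-chain $\ell_\s$ is either a genuine cycle (when $\s^*\neq\cR$, based at $v_{P(\s^*)}$) or a path from $v_\cR^-$ to $v_\cR^+$ (when $\s^*=\cR$). A quick examination of Definition \ref{de:characters} shows that the latter case arises only when $\cR$ is \"ubereven, so that $v_\cR^\pm$ are defined (either as proper vertices, or as midpoints of the chains specified in the bullet points of Definition \ref{de:loops}); thus $B\neq\emptyset$ iff $\cR$ is \"ubereven. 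Any combination $\sum a_\s\ell_\s$ with $\sum_{\s\in B}a_\s=0$ therefore has zero boundary and lies in $H_1(\Upsilon_C,\Z)$.

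Next I would prove linear independence by exhibiting for each $\s\in A$ a characteristic edge of $\Upsilon_C$ used by $\ell_\s$ and by no other $\ell_{\s'}$: an edge on $L_\s^-$ when $\s$ is principal even, or on $L_\t$ when $\s=\t$ is a twin. A straightforward check (based on the fact that all clusters strictly between $\s''$ and $\s''^*$ are \"ubereven and hence cannot be a principal or twin member of $A$) shows that this edge is used by $\ell_\s$ alone. The coefficient $a_\s$ can then be recovered from the coefficient at this characteristic edge, giving independence. Since $\Upsilon_C$ is connected (as $C/K$ is geometrically connected), $\rk H_1(\Upsilon_C,\Z)=|E|-|V|+1$ after contracting each chain of Theorem \ref{th:DualGraph}(1) to a single combinatorial edge; a short case analysis sorting principal clusters and the top-of-tree configurations gives $\#A$ when $\cR$ is not \"ubereven and $\#A-1$ otherwise. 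This matches the dimension of the subspace spanned by our cycles, proving both (1) and (2) simultaneously.

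For part (3), two loops $\ell_{\s_1}$ and $\ell_{\s_2}$ share edges precisely where they traverse common chains. If $\s_1^*\neq\s_2^*$ they sit in disjoint subtrees above their respective $\s_i^*$ and the pairing vanishes. If $\s_1^*=\s_2^*$, their common portion runs along the chain from the vertex corresponding to $\s_1\wedge\s_2$ up to $v_{P(\s_1^*)}$ (or up to $v_\cR^\pm$ when $\s_1^*=\cR$), traversed once along the $-$ branch and once along the $+$ branch, contributing total length $2(d_{\s_1\wedge\s_2}-d_{P(\s_1^*)})$, respectively $2(d_{\s_1\wedge\s_2}-d_\cR)$ in the $\cR$-\"ubereven case. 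This proves (3).

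For part (4), applying Theorem \ref{th:DualGraph}(2) and the observation from Definition \ref{de:characters} that $\epsilon_\s$ depends only on $\s^*$ --- so that $\epsilon_{\s'}(\sigma)=\epsilon_{\s^*}(\sigma)$ for every \"ubereven cluster $\s'$ strictly between $\s$ and $\s^*$, since they all satisfy $\s'^*=\s^*$ --- one sees that if $\epsilon_\s(\sigma)=+1$ then $\sigma$ preserves the $\pm$ assignment at every level of $\ell_\s$ and $\sigma(\ell_\s)=\ell_{\sigma\s}$. If $\epsilon_\s(\sigma)=-1$ then $\sigma$ simultaneously swaps $\pm$ at every \"ubereven level along the chain, sending the ``down via $-$, up via $+$'' path to the ``down via $+$, up via $-$'' path, which is the reverse of $\ell_{\sigma\s}$ as a cycle and hence equals $-\ell_{\sigma\s}$ in $H_1$. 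The main obstacle will be the combinatorial bookkeeping of the many sub-cases in Definition \ref{de:loops} (twin and cotwin cases, and the three configurations giving $\s^*=\cR$), ensuring that the characteristic-edge choices, the length pairing, and the orientation-reversal argument for the Galois action all behave uniformly whether loop endpoints are honest vertices or midpoints of $L_\t$ and $L_{\c_1,\c_2}^\pm$ chains.
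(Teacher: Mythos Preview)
Your approach is correct and is essentially a direct reproof of the combinatorial result that the paper imports from \cite{hyble}. The paper's own proof is a two-line citation: Theorem~\ref{th:ActionOnDualGraph} identifies $\widehat{\Upsilon_C}$ (with its $G_K$-action) with the hyperelliptic graph $G_\Sigma$ of the appendix, and then Theorem~\ref{th:apphomology} and Remark~\ref{rm:apphomology} (restating \cite[Theorem 6.1]{hyble}) give exactly the description of $H_1(G_\Sigma,\Z)$, its length pairing, and the automorphism action in the claimed form. So you are reproving Theorem~\ref{th:apphomology} from scratch via Theorem~\ref{th:DualGraph}, which is self-contained and instructive, while the paper trades this bookkeeping for the existing reference.

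Two small points. First, the reference to Definition~\ref{de:characters} in your first paragraph is off; the claim ``$B\neq\emptyset$ iff $\cR$ is \"ubereven'' follows directly from the definition of $\s^*$ (the paragraph before Definition~\ref{de:epsilon}), not from the characters. Second, your rank computation is the one genuinely soft step: you assert the Euler-characteristic count via ``a short case analysis'' but do not carry it out, and the top-of-tree configurations (the second table in Theorem~\ref{th:DualGraph}(1)) make this fiddlier than it looks. A cleaner route, given that you already have the characteristic-edge argument, is to show the $\ell_\s$ \emph{span}: deleting one characteristic edge for each $\s\in A$ (on $L_\s^-$ for $\s$ principal even, on $L_\t$ for $\s=\t$ a twin) leaves a connected subgraph with trivial $H_1$, hence a spanning tree after pruning, so the fundamental cycles of the removed edges generate $H_1$ and each is (up to sign) an integer combination of the $\ell_\s$. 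This would close parts (1) and (2) without a separate vertex/edge count. The arguments for (3) and (4), including the key observation that $\epsilon_{\s'}=\epsilon_\s$ whenever $\s'^*=\s^*$, are correct as outlined.
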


\begin{proof}
This follows from Theorem \ref{th:ActionOnDualGraph}, which describes the $\Upsilon_C$ with the induced Galois action, and Theorem \ref{th:apphomology} and Remark \ref{rm:apphomology}, which describe the associated homology group.
\end{proof}

\begin{corollary}\label{th:condexpo}
Let $C/K$ be a semistable hyperelliptic curve. Let $A$ be the set of even, non-\ub\ clusters excluding $\cR$. Then the conductor exponent of Jac $C$ is 
$$
n_C = \#A - \leftchoice{1}{\cR \mbox{ \ub,}}{0}{\mbox{otherwise.}}
$$
\end{corollary}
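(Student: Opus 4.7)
The plan is to reduce the computation of the conductor exponent to a rank computation for $H_1(\Upsilon_C,\Z)$, which has already been carried out in Theorem \ref{th:Homology}.

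First, I would recall that for a semistable abelian variety $A/K$, the conductor exponent $n_A$ equals the toric rank, i.e.\ the dimension of the maximal subtorus $\T$ of the identity component of the special fibre of the N\'eron model. In the notation of \S\ref{ss:sjacs}, this is the $\Z$-rank of the character lattice $X(\T)$, since the wild inertia acts trivially (semistability forces unipotent inertia action, hence no wild part) and the tame inertia on $V_l A$ acts via the monodromy filtration whose graded pieces outside the unipotent $\Sp_2$-factors contribute nothing to the drop in $I_K$-invariants beyond the toric part. Concretely, from the decomposition \eqref{vdec}--\eqref{vtab} (here $V_{ab}$ is unramified since $C$ is semistable) one reads off
\[
n_{\Jac C} \;=\; \dim V_t \;=\; \rk_\Z H_1(\Upsilon_C,\Z),
\]
using $V_t\iso H^1(\Upsilon,\Q_l)\iso H_1(\Upsilon,\Q_l)$ from \eqref{XvsUps} and Lemma \ref{le:isolattice}.

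Second, I would invoke Theorem \ref{th:Homology}(1), which states precisely that
\[
\rk_\Z H_1(\Upsilon_C,\Z) \;=\; \#A \;-\; \begin{cases} 1 & \text{if } \cR \text{ is \ub}, \\ 0 & \text{otherwise}, \end{cases}
\]
where $A$ is the set of even non-\ub\ clusters different from $\cR$. Combining the two displayed equations yields the claimed formula.

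The only subtlety worth a sentence of comment is that since $C/K$ is semistable (as opposed to just potentially semistable) we may take $F=K$ in Theorem \ref{th:Homology}, so the dual graph $\Upsilon_C$ and the Galois-invariants do not enter the conductor formula at all --- only the rank does. The main (and essentially only) substantive input is the homology rank computation already established in \S\ref{shomology}; there is no further obstacle.
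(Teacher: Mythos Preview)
Your proposal is correct and follows essentially the same route as the paper: identify $n_C$ with $\rk_\Z H_1(\Upsilon_C,\Z)$ via the Tate module decomposition for semistable Jacobians, then invoke Theorem~\ref{th:Homology}(1). One small cleanup: the labels \texttt{vdec} and \texttt{vtab} you cite sit inside a commented-out block and will not resolve---use \eqref{tatedec} instead, which is exactly the decomposition you need and is what the paper's own proof cites.
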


\begin{proof}
Since $J=\Jac C$ is semistable,
$$
n_C = \dim(V_{\ell}J)-\dim(V_{\ell}(J)^{I}) = \rk_\Z(H_1(\Upsilon_C, \Z))
$$
by \eqref{tatedec}.
The result follows from Theorem \ref{th:Homology}.(1). 
\end{proof}

\begin{notation}
Let $G$ be a group acting on a set $X$ via the signed\footnote
{i.e. $G$ acts on $\{+x,-x | x\in X\}$ by $g(\pm x)=\pm\varepsilon_x(g) g(x)$ with $x\mapsto g(x)$ a $G$-action and $\varepsilon_x(g)\in\{\pm\}$ satisfying $\varepsilon_{x}(gh)=\varepsilon_{hx}(g)\varepsilon_x(h)$.}
permutation $(X,\varepsilon)$. For a ring $R$ we write $R[X,\varepsilon]$ for the corresponding signed permutation representation, and $R[X,\varepsilon]_0$ for its sum zero part.  
\end{notation}
\begin{corollary}\label{co:EtaleCohoToric}
Let $F/K$ be an extension over which $C$ is semistable. Let  $A$ be the set of even non-\ub\ clusters excluding $\cR$, and let $B$ be the subset of clusters $\c \in A$ such that $\c^*=\cR$. \\
1) $$
  H_1(\Upsilon_{C},\Z) \>\>\iso\>\> \Z[A\!\setminus \!B, \epsilon] \>\>
    \oplus \>\>\Z[B,\epsilon ]_0,
$$
2) 
$$
  \H(C,\Q_l)_t \>\>\>\>\iso\>\>\>\> \Q_l[E, \epsilon]\ominus\epsilon_\cR\ \>\>\>\>\iso\>\>\>\> \bigoplus_\s \Ind_{\Stab \s}^{G_K} \!\epsilon_\s \quad\ominus\epsilon_\cR,
$$
where $E$ is the set of even non-\ub\ clusters and  the sum is taken over representatives of $G_K$-orbits on $E$. 
\end{corollary}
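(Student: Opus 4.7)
The plan for Part~(1) is to derive the decomposition directly from Theorem~\ref{th:Homology}, which presents $H_1(\Upsilon_C, \Z)$ as the sub-$\Z$-module $\bigl\{\sum_{\s \in A} a_\s \ell_\s : \sum_{\s \in B} a_\s = 0\bigr\}$ of the signed permutation module $\Z[A, \epsilon]$, under the $G_K$-action $\sigma(\ell_\s) = \epsilon_\s(\sigma)\ell_{\sigma\s}$. Both $A$ and $B$ are $G_K$-stable, since sizes, inclusions, and the \ub\ property are preserved by $G_K$, and the operation $\s \mapsto \s^*$ is $G_K$-equivariant. The crucial observation is that for every $\s \in B$ one has $\s^* = \cR$ by definition, and hence by Definition~\ref{de:characters} the characters $\epsilon_\s$ and $\epsilon_\cR$ of $G_K$ coincide. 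In particular the signs are constant along $B$, so the linear condition $\sum_{\s \in B} a_\s = 0$ is $G_K$-stable, and splitting off the $B$-coordinates yields the claimed direct sum $\Z[A\setminus B, \epsilon] \oplus \Z[B, \epsilon]_0$.

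For Part~(2), I would first identify $\H(C, \Q_l)_t$ with $H_1(\Upsilon_C, \Q_l)$ as a $G_K$-module via \eqref{tatedec}, and then apply Part~(1) after tensoring with $\Q_l$. Since the signs on $B$ are constant equal to $\epsilon_\cR$, we have $\Q_l[B, \epsilon] = \Q_l[B, \epsilon]_0 \oplus \epsilon_\cR$. It then remains to compare with $\Q_l[E, \epsilon]$ by splitting into cases according to the shape of $\cR$. If $\cR$ is \ub, then $E = A$ and $B \ne \emptyset$, so $\Q_l[E, \epsilon] = \Q_l[A\setminus B, \epsilon] \oplus \Q_l[B, \epsilon]_0 \oplus \epsilon_\cR = H_1(\Upsilon_C, \Q_l) \oplus \epsilon_\cR$. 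If $\cR$ is not \ub, then the rank formula in Theorem~\ref{th:Homology}(1) forces $B = \emptyset$, so $H_1(\Upsilon_C, \Q_l) = \Q_l[A, \epsilon]$; either $\cR$ is even non-\ub\ (so $\cR \in E$ and $\Q_l[E, \epsilon] = H_1(\Upsilon_C, \Q_l) \oplus \epsilon_\cR$), or $\cR$ is odd (so $E = A$ and $\epsilon_\cR = 0$ by convention). In every case $\Q_l[E, \epsilon] \ominus \epsilon_\cR \iso H_1(\Upsilon_C, \Q_l)$.

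The final identification $\Q_l[E, \epsilon] \iso \bigoplus_\s \Ind_{G_\s}^{G_K} \epsilon_\s$, with the sum taken over $G_K$-orbit representatives on $E$, is then the standard decomposition of a signed permutation representation into induced characters: each orbit $\mathcal{O}$ contributes the summand $\Ind_{G_\s}^{G_K}(\epsilon_\s|_{G_\s})$, using that $\s^*$ is $G_\s$-stable and so the formula $\epsilon_\s(\sigma) = \sigma(\theta_{\s^*})/\theta_{\s^*} \mod \mathfrak{m}$ defines a genuine $\{\pm 1\}$-valued character on $G_\s$. The main subtle point to handle with care will be the case analysis in Part~(2), together with confirming that the toric-part identification $\H(C, \Q_l)_t \iso H_1(\Upsilon_C, \Q_l)$ carries no stray Tate twist in the stated formulation; this latter is, however, a direct consequence of the general machinery recalled in Section~\ref{sbackground}.
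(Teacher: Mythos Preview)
Your proposal is correct and follows essentially the same approach as the paper. The paper's proof of Part~(1) simply cites Theorem~\ref{th:Homology}(2),(4), whereas you spell out the key point that $\epsilon_\s=\epsilon_\cR$ for $\s\in B$; for Part~(2) the paper organises the case analysis around whether $B$ is empty (noting $B=\emptyset \iff \cR$ is not \ub), while you split on whether $\cR$ is \ub, even non-\ub, or odd --- but these are the same argument in slightly different packaging. One minor remark: your use of the rank formula in Theorem~\ref{th:Homology}(1) to deduce $B=\emptyset$ when $\cR$ is not \ub\ is a bit roundabout, since the direct implication ``$\s^*=\cR\Rightarrow\cR$ is \ub'' is immediate from the definition of $\s^*$ and is what underlies the rank formula in the first place.
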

\begin{proof}
1) Follows directly from parts (2) and (4) of Theorem \ref{th:Homology}. 


2) Tensoring  1) with $\Q_l$ and using \eqref{tatedec} we get 
\begin{align*}
  \H(C,\Q_l)_t \>\>\>\>&\iso\>\>\>\> \Q_l[A\!\setminus \!B, \epsilon] \>\>
    \oplus \>\>\Q_l[B,\epsilon]_0\\
 &\iso\>\>\>\> \Q_l[A\!\setminus \!B, \epsilon] \>\>
    \oplus \>\>\ \leftchoice{0}{B = \emptyset}{\Q_l[B,\epsilon ]\ominus \epsilon_\cR}{B \ne \emptyset}\\
   &\iso\>\>\>\> \Q_l[A, \epsilon] \>\>
    \ominus \>\>\ \leftchoice{0}{B = \emptyset,}{\epsilon_\cR}{B \ne \emptyset}\\
   &\iso\>\>\>\> \Q_l[E, \epsilon] \>\>
    \ominus \>\>\ \epsilon_\cR,   
\end{align*}
where $E$ is the set of even non-\ub\ clusters and the last isomorphism uses the fact that $B$ is empty if and only if $\cR$ is not \ub. 

Observe that when $G$ acts transitively on $X$ and $(X, \varepsilon)$ is a signed permutation then  $\Q_l[X, \varepsilon] \iso \Ind_{\Stab_t}^G\varepsilon_t$ for any point $t \in X$; here $\sigma(t) = \varepsilon_t(\sigma) t$ for $\sigma \in \Stab_t$. Hence 
$$
  \H(C,\Q_l)_t \>\>\>\>\iso\>\>\>\>  \bigoplus_\s \Ind_{\Stab \s}^{G_K} \!\epsilon_\s \quad\ominus\epsilon_\cR,
$$
where the sums are taken over representatives of $G_K$-orbits on $E$.
\end{proof}


\section{Galois Representation}
\label{sgalrep}

Having obtained an explicit description of the special fibre of the minimal 
regular model of $C$ over the field where it becomes semistable, together 
with the action of $G_K$, we are now in a position to extract the action of 
$G_K$ on $\H(C)=\H(C_{\bar K},\Q_l)$. 

Fix a prime $l\ne \vchar k$. As in \S\ref{sfibre}, we take
\begin{itemize}
\item 
$C/K$ a hyperelliptic curve;
\item
$F/K$ a finite Galois extension over which $C$ becomes semistable;
\item
$\Gamma_\s$ components of the special fibre $\cC_{\min,\kbar}$ 
of the minimal regular model of $C$  over $\OFnr$ (see Definition \ref{de:gammas});
\item
$G_K \acts \cC_{\min,\kbar}$ Galois action of \eqref{CbarnAction}; it induces the
action of the stabiliser $G_\s$ on $\Gamma_\s$, on its normalisation $\tilde\Gamma_\s$
and on the \'etale cohomology group $\H(\Cmpn_\s)$.
\end{itemize}

\begin{theorem} \label[theorem]{galois rep theorem}
\label{het}
Let $C/K$ be a hyperelliptic curve. Let $\H(C)=\H(C)_{ab}\oplus \H(C)_{t}\tensor\Sp_2$ 
be the decomposition into `toric' and `abelian' parts. Then
$$
\begin{array}{llllllllllll}
  \H(C)_{t} &=&\bigoplus_\s \Ind_{G_\s}^{G_K} \!\epsilon_\s \quad\ominus\epsilon_\cR,\\[3pt]
  \H(C)_{ab}&=&\bigoplus_\s \Ind_{G_\s}^{G_K} \H(\Cmpn_\s).
\end{array}
$$
The first sum is taken over representatives of $G_K$-orbits of even non-\"ubereven 
clusters. 
The second sum is taken over representatives of $G_K$-orbits of principal non-\"ubereven 
clusters. For every such cluster $\s$, there is an isomorphism of $I_\s$-modules
$$
  \H(\Cmpn_\s) \>\>\iso\>\> \tilde\gamma_s \otimes (\Q_l[\so] \ominus \triv) \quad\ominus \epsilon_\s,  
$$
where $\tilde\gamma_\s\colon I_\s\to \bar\Q_l^\times$ is any character whose order 
is the prime-to-$p$ part of the denominator of 
$|I_K/I_\s|\,\tilde\lambda_\s$.
\end{theorem}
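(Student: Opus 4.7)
The plan has three pieces, matching the three assertions of the theorem.

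First, for the toric part, the formula is essentially Corollary~\ref{co:EtaleCohoToric}: the decomposition \eqref{tatedec} applied over the semistable extension $F$ identifies $\H(C)_t$ with $H^1(\Upsilon_C,\Q_l)$, with the $G_K$-action extended via Theorem~\ref{th:ActionOnDualGraph}. Translating the description of $H_1(\Upsilon_C,\Z)$ from Theorem~\ref{th:Homology}(2)--(4) into the signed induced representation $\bigoplus_\s\Ind_{G_\s}^{G_K}\epsilon_\s\ominus\epsilon_\cR$ over $G_K$-orbits of even non-\"ubereven clusters gives the first formula; this is exactly the content of Corollary~\ref{co:EtaleCohoToric}.

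Next, the abelian part comes from the abelian piece of \eqref{tatedec}:
$$
\H(C)_{\mathrm{ab}}\iso\bigoplus_{\Gamma\in\JJ/G_K}\Ind_{\Stab(\Gamma)}^{G_K}\H(\Gamma),
$$
where $\JJ$ runs over $G_K$-orbits of components of the normalisation of $\cC_{\min,\bar k}$. Theorem~\ref{th:DualGraph} enumerates these components as: the curves $\tilde\Gamma_\s$ for each non-\"ubereven principal cluster $\s$, two copies of $\P^1_{\bar k}$ for each \"ubereven principal cluster, and one copy of $\P^1_{\bar k}$ for each component in a linking chain. Since $H^1$ of $\P^1$ vanishes, only the $\tilde\Gamma_\s$ contribute, and by Theorem~\ref{th:ActionOnDualGraph} their stabilisers are the $G_\s$. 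This gives the second displayed formula.

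The heart of the argument is the last statement, describing $\H(\tilde\Gamma_\s)$ as an $I_\s$-module. Let $\pi:\tilde\Gamma_\s\to\P^1_{\bar k}$ be the hyperelliptic cover with branch locus $B_\s=\{\red_\s(\mathfrak o):\mathfrak o\in\so\}$, together with $\infty$ when $|\so|$ is odd; then $B_\s$ is an $I_\s$-set bijective with $\so$ (resp. $\so\sqcup\{\ast\}$, with $\ast$ $I_\s$-fixed) depending on parity. Decomposing $\pi_*\Q_l=\Q_l\oplus\mathcal L$ into hyperelliptic-involution eigenspaces and noting that $\mathcal L$ vanishes on $B_\s$, one gets
$$
\H(\tilde\Gamma_\s)\>\iso\>H^1(\P^1,\mathcal L)\>=\>H^1_c(\P^1\setminus B_\s,\mathcal L_0),
$$
where $\mathcal L_0=j^*\mathcal L$ is a rank one $I_\s$-equivariant local system. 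By Theorem~\ref{de:gammatilde}, $I_\s$ scales fibres of $\mathcal L_0$ by $\gamma_\s$, so choosing $\tilde\gamma_\s$ as in the statement makes $\mathcal L_0\otimes\tilde\gamma_\s^{-1}$ isomorphic, as an $I_\s$-equivariant local system, to an untwisted rank one sheaf $\mathcal L_{\mathrm{untw}}$; hence
$\H(\tilde\Gamma_\s)\iso\tilde\gamma_\s\otimes H^1_c(\P^1\setminus B_\s,\mathcal L_{\mathrm{untw}})$ as $I_\s$-modules. The excision long exact sequence, together with the vanishing of $H^0, H^1, H^2_c$ of a nontrivial rank one local system on $\P^1\setminus B_\s$, presents the untwisted cohomology as $\Q_l[B_\s]$ modulo a distinguished class; transporting via $\red_\s$ this is $\Q_l[\so]\ominus\triv$ when $\s$ is odd (the extra $\infty$ producing the trivial summand) and $\Q_l[\so]\ominus\triv\ominus(\epsilon_\s\otimes\tilde\gamma_\s^{-1})$ when $\s$ is even. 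Tensoring back by $\tilde\gamma_\s$ yields the claimed formula.

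The main obstacle is the identification of the distinguished character with $\epsilon_\s$ in the even case. This is achieved by explicitly computing the $I_\s$-action on the sum $\sum_{b\in B_\s}b\in\Q_l[B_\s]$ (representing the ``total ramification'' class that is quotiented out in the excision sequence), and comparing with the defining relation $\theta_\s^2 = c_f\prod_{r\notin\s}(z_\s-r)$. Concretely, unwinding the affine transformation $(x,y)\mapsto(\alpha_\s(\sigma)x+\beta_\s(\sigma),\gamma_\s(\sigma)y)$ in the curve equation $y^2=c_\s\prod(x-\red_\s(\mathfrak o))$ forces the relation $\gamma_\s(\sigma)^2=\bar\sigma(c_\s)\alpha_\s(\sigma)^{|\so|}/c_\s$, and this combines with $\sigma(\theta_\s)=\epsilon_\s(\sigma)\theta_{\sigma\s}$ and the definition of $c_\s=\hat c_f\prod_{r\notin\s}\widehat{(z_\s-r)}\bmod\mathfrak m$ to pin down the missing character as $\epsilon_\s\otimes\tilde\gamma_\s^{-1}$; the bookkeeping of $\pi_F$-powers between $\theta_\s$ and $c_\s$ is the one delicate part.
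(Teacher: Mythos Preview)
Your handling of the toric and abelian parts matches the paper exactly: both follow immediately from \eqref{tatedec} and Corollary~\ref{co:EtaleCohoToric}, with the observation that only the $\tilde\Gamma_\s$ for principal non-\"ubereven $\s$ have nontrivial $H^1$.

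For the $I_\s$-module structure of $\H(\tilde\Gamma_\s)$, you and the paper diverge. The paper does not prove this directly: it simply cites \cite[Thm.~1.2]{hq}, which computes $H^1$ of a hyperelliptic curve equipped with an affine automorphism group $(x,y)\mapsto(\alpha x+\beta,\gamma y)$, and then invokes Theorem~\ref{galactss}(i) to supply the specific $\alpha_\s,\beta_\s,\gamma_\s$ and Remark~\ref{legaldisc} for the order of $\gamma_\s$. Your route instead attempts to reconstruct that external result via the eigensheaf $\mathcal L$ on $\P^1$ and an excision-type sequence. This is the right geometry, and essentially the same idea underlies \cite{hq}, but your sketch has a gap: because the local monodromy of $\mathcal L_0$ at each branch point is $-1$, one has $j_!\mathcal L_0=j_*\mathcal L_0$, so the usual excision triangle collapses and does not by itself present $H^1_c$ as $\Q_l[B_\s]$ modulo a single class. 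One really needs a separate computation --- for instance via the action on the basis $x^i\,dx/y$ of differentials, or the character-theoretic argument of \cite{hq}. Your final identification of the extra character with $\epsilon_\s$ is on the right track (it comes down to Theorem~\ref{galactss}(ii) on how $I_\s$ acts on the two points at infinity), but as you admit it is not fully carried out. In short: your strategy is sound and would succeed with more work, but the paper's route is simply to quote the ready-made result from \cite{hq}.
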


%

\begin{proof}
By Theorem \ref{tatedec} we have the decomposition and the claim regarding the abelian
part. The statement about the toric part is Corollary \ref{co:EtaleCohoToric}(2).
The last claim is \cite[Thm. 1.2]{hq} combined with Theorem \ref{galactss}(i)
and Remark \ref{legaldisc};
note that \cite[Thm. 1.2]{hq} is phrased for $\C$- rather then $\Q_l$-representations,
but that does not affect the result.
\end{proof}

\begin{remark} \label[remark]{semsible action on cohomology remark}
When $C/K$ is semistable the full action of $G_\s$ (rather than just that of $I_\s$) on $\H(\Cmpn_\s)$   may be explicitly determined, as we now explain. For a proper cluster $\s$, write $K_\s$ for the subfield of $\Ks$ fixed by $G_\s$ and denote by $k_\s$ its residue field. Suppose (as is possible by \Cref{lem:invcentre}) that for each proper cluster $\s$ we have fixed our choice of centre $z_\s$ to lie in $K_\s$. Then for any principal cluster $\s$ the coefficients of 
$$
\widetilde{\Gamma}_\mathfrak{s}:y^2=c_\mathfrak{s}\prod_{\textup{odd }\mathfrak{o}<\s}(x-\textup{red}_\mathfrak{s}(\mathfrak{o}))
$$
lie in $k_\s$. Moreover, by \Cref{action is what you think remark} (ii) the action of $G_\s$ on $\widetilde{\Gamma}_\mathfrak{s}(\bar{k})$ (arising from \Cref{saction}) is simply given by $(x,y)\mapsto (\bar{\sigma}(x),\bar{\sigma}(y))$ where $\bar{\sigma}$ denotes the automorphism of $\bar{k}$ induced by $\sigma$  (whence $G_\s$ acts through $\textup{Gal}(K_\s^{\textup{nr}}/K_\s)$). In particular, upon identifying $\textup{Gal}(K_\s^{\textup{nr}}/K_\s)$ with $\textup{Gal}(\bar{k}/k_\s)$, the induced action on $\H(\Cmpn_\s)$ is precisely the usual action of $\textup{Gal}(\bar{k}/k_\s)$ on $\H(\Cmpn_\s)$ coming from viewing $\widetilde{\Gamma}_\s$ as a curve defined over $k_\s$ 
given by the above formula.
One may then recover the Frobenius eigenvalues for this action on $\H(\Cmpn_\s)$ from point counts on $\widetilde{\Gamma}_\s$ over extensions of  $k_\s$ in the usual way.
\end{remark}

\begin{theorem}\label{reduction}
Let $C/K$ be a hyperelliptic curves.
Write $\Jac C$ for its Jacobian. Then 
\begin{enumerate}
\item
$C$ is semistable $\iff$ $\Jac C$ semistable $\iff$ 
$C/K$ satisfies the semistability criterion.
\item
$C$ has good reduction $\iff$ $K(\cR)/K$ is unramified, there are no proper
clusters of size $<2g+1$ and $\nu_s\in 2\Z$ for the unique principal 
cluster.
\item
$C$ has potentially good reduction $\iff$ 
there are no proper clusters of size $<2g+1$.
\item
$C$ is tame
$\iff$ $\Jac C$ is tame $\iff$ $K(\cR)/K$ is tame.
\item
$\Jac C$ has good reduction  $\iff$  $K(\cR)/K$ is unramified, 
all clusters $\s\ne\cR$ are odd, and principal clusters 
have $\nu_\s \in 2\Z$.
\item
$\Jac C$ has potentially good reduction  $\iff$ all clusters 
$\s\ne\cR$ are odd.
\item
The potential toric rank of $\Jac C$ equals the number of even non-\"ubereven clusters 
excluding $\cR$, less 1 if $\cR$ is \"ubereven.
\item
$\Jac C$ has potentially totally toric reduction 
$\iff$ every cluster has at most two odd children. 
\end{enumerate}
\end{theorem}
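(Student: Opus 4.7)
The plan is to derive the eight claims as consequences of results already established in the paper, piggy-backing on the explicit semistable model and its Galois action. Part (1) is essentially a book-keeping of three ingredients: the equivalence of semistability for $C$ and $\Jac C$ (standard, via unipotence of inertia on $V_l$), Theorem \ref{semistability criterion implies semistability} (criterion $\Rightarrow$ semistable, by producing the model $\cC_{\min}/\cO_{\Knr}$ explicitly), and Theorem \ref{the semistability theorem} (semistable $\Rightarrow$ criterion, obtained by going up to a Galois extension $F/K$ where the criterion holds, and unravelling the Galois action on the stable model supplied by Corollary \ref{galois action on normalisation corr}).

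For (4), the key observation is that $K(\cR) = K(\Jac C[2])$ and $2 \neq p$, so the action of wild inertia $P_K$ on $\cR$ coincides with its action on the semisimplification of $V_l\Jac C$; tameness of $\Jac C$ is equivalent to $P_K$ acting trivially on $V_l\Jac C$, which (since the action is unipotent after passing to a tame extension) is equivalent to triviality on $\cR$, i.e.\ to $K(\cR)/K$ being tame; and tameness of $C$ is the same as tameness of $\Jac C$ by (1). For (6) I would fix an extension $F/K$ over which the semistability criterion holds for $C$: $\Jac C$ acquires good reduction over $F^{\textup{nr}}$ iff the toric rank vanishes, which by \eqref{XvsUps} and Theorem \ref{th:Homology}(1) means the set $A$ of even non-\ub\ clusters $\neq\cR$ is empty and $\cR$ is not \ub; both conditions collapse to ``every cluster $\s\neq\cR$ is odd'' (since if all non-$\cR$ proper clusters are odd then $\cR$ has only odd children, hence $\cR$ is not \ub). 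Part (5) is then (6) augmented with the residual conditions of the semistability criterion that cannot be absorbed by an unramified base change, namely $K(\cR)/K$ unramified (so depths are integral and clusters inertia-invariant) and $\nu_\s\in 2\Z$ for principal $\s$.

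Parts (2) and (3) are handled in the same spirit. If $C$ satisfies the semistability criterion and has no proper clusters of size $<2g+1$, then there is a single principal cluster $\s$ (equal to $\cR$, or, if $\cR$ is a cotwin of size $2g+2$, its principal child of size $2g+1$), all of whose children are odd singletons; Theorem \ref{th:DualGraph} then shows the special fibre of the minimal regular model is a single smooth curve isomorphic to $\tilde\Gamma_\s$ of genus $g$. Conversely, good reduction forces $\Jac C$ to have good reduction (hence by (5) all $\s\neq\cR$ are odd and the extra arithmetic conditions hold) and also forces the unique component in the special fibre to be smooth of genus $g$; counting odd children of $\s$ via $\tilde\Gamma_\s$ forces $|\s|\ge 2g+1$, ruling out proper clusters of smaller size. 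For (3), one notes that ``no proper cluster of size $<2g+1$'' is stable under base change, whereas the depth-integrality, unramifiedness and $\nu_\s$-parity conditions can always be arranged after a finite tame extension.

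Parts (7) and (8) are immediate unwindings of already proved statements. The potential toric rank of $\Jac C$ is, by \eqref{XvsUps}, the $\Z$-rank of $H_1(\Upsilon_C,\Z)$ computed over a semistable extension; Theorem \ref{th:Homology}(1) gives exactly the formula ``$\#A$, less $1$ if $\cR$ is \ub'', with $A$ the set of even non-\ub\ clusters $\neq\cR$. For (8), the Jacobian is potentially totally toric iff $\H(C)_{ab}=0$; by Theorem \ref{het}, $\H(C)_{ab}=\bigoplus_\s \Ind_{G_\s}^{G_K}\H(\tilde\Gamma_\s)$ over $G_K$-orbits of principal non-\ub\ clusters, so this vanishes iff every principal non-\ub\ $\s$ has $\tilde\Gamma_\s$ of genus $0$, i.e.\ iff $|\so|\le 2$ for every principal $\s$. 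Since \ub\ clusters have no odd children and non-principal clusters have at most two odd children by definition, this is equivalent to ``every cluster has at most two odd children''. The main obstacle, already settled in Sections \ref{scdisc}--\ref{scriterion}, is part (1); the remaining parts are combinatorial translations of the special fibre description and of Theorems \ref{th:Homology} and \ref{het}.
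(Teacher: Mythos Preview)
Your proposal is correct and follows essentially the same strategy as the paper: deduce all eight statements from the semistability criterion (Theorem~\ref{the semistability theorem}), the explicit special fibre (Theorem~\ref{th:DualGraph}), and the Galois representation/homology description (Theorems~\ref{het} and~\ref{th:Homology}). The paper's proof is considerably terser---for instance, (4) is dispatched in one line (``follows directly from (1)'') rather than via $K(\cR)=K(\Jac C[2])$, and (5),(6) are obtained by applying Theorem~\ref{het} and N\'eron--Ogg--Shafarevich directly rather than routing (6) through Theorem~\ref{th:Homology}(1)---but these are minor rearrangements of the same ingredients, not genuinely different arguments.
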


\begin{proof}
(1) As $g\ge 2$, $C$ is semistable if and only if its Jacobian is
\cite[Thm. 1.2]{DM}. The equivalence with the semistability criterion is proved in 
Theorem \ref{the semistability theorem}.

(2),(3) Using that good reduction is in particular semistable, these follow from (1)
and Theorem \ref{th:DualGraph} which gives the description of the special fibre 
for semistable curves in terms of principal clusters.

(4) Follows directly from (1).

(5),(6) Recall that $\Jac C$ has good reduction if and only if inertia $I_K$ acts trivially 
on the $l$-adic Tate module $V_l\Jac C$ (for some $l\ne p$), 
by the N\'eron-Ogg-Shafarevich criterion \cite[\S2]{ST}.
Now apply Theorem \ref{het} that gives the inertia action on $V_l\Jac C$.

(7),(8) Apply Theorem \ref{het}. For (8), note that the condition `at most two odd
children' is equivalent to all components $\Gamma_\s$ from principal clusters having
genus 0.
\end{proof}

\section{Conductor}
\label{sconductor}

In this section we derive a formula for the conductor of a hyperelliptic curve $C/K$
in terms of clusters (Theorem \ref{condmain}).


\begin{lemma}
\label{lemabmain}
Let $k$ be a field of characteristic $\ne 2$, and $C/k$ a hyperelliptic curve given by
$$
  Y^2 = c\prod_{r\in R}(X-r), \qquad R\subset\bar k.
$$
Let $G\subset\Aut_k C$ be an affine group of automorphisms acting as
$$
  g(X) = \alpha(g) X+\beta(g), \qquad g(Y)=\gamma(g)\,Y \qquad \qquad (g\in G).
$$
Let $\tilde\gamma: G\to \bar\Q_l^\times$ be a character with $\ker\tilde\gamma=\ker\gamma$. We have:
\begin{itemize}
\item 
If $\ord_2(\order(\gamma))>\ord_2(\order(\alpha))$ then
$\tilde\gamma \otimes (\Q_l[R] \ominus \triv)$ has trivial $G$-invariants.
\item
If $\ord_2(\order(\gamma))\le \ord_2(\order(\alpha))$ then
$$
  \tilde\gamma \otimes (\Q_l[R] \ominus \triv) \>\iso\> \Q_l[R] \ominus \bigleftchoice 
    \triv{\text{if $\gamma$ has odd order,}}
    {\tilde\gamma}{\text{if $\gamma$ has even order.}}
$$
as $G$-modules.
\item
If $|R|\le 2$, then $\tilde\gamma \otimes (\Q_l[R] \ominus \triv)\oplus \triv$
is the permutation representation of $G$ on the (one or two) points at infinity of $C$.
\end{itemize} 
\end{lemma}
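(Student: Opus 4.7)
My strategy rests on a single algebraic identity. Comparing leading coefficients in $(\gamma(g)Y)^2 = c\prod_{r\in R}(\alpha(g)X+\beta(g)-r)$ against $Y^2 = c\prod_{r\in R}(X-r)$, and using that $r\mapsto \alpha(g)^{-1}(r-\beta(g))$ must permute $R$, yields
\[
  \gamma^2 \;=\; \alpha^{|R|}
\]
as characters $G\to \bar k^\times$. This quadratic relation is the one input driving parts (1)--(3); combined with the obvious action $r\mapsto \alpha(g)r+\beta(g)$ of $G$ on $R$ it controls everything.

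For parts (1) and (2) I would work character-theoretically. Decomposing $\Q_l[R]=\bigoplus_O \Ind_{\mathrm{Stab}(r_O)}^G \triv$ over $G$-orbits and applying Frobenius reciprocity gives
\[
 \dim\bigl(\tilde\gamma \otimes (\Q_l[R] \ominus \triv)\bigr)^G \;=\; \#\{O : \tilde\gamma|_{\mathrm{Stab}(r_O)}=\triv\} \;-\; [\tilde\gamma=\triv],
\]
so everything reduces to deciding when $\gamma|_{H}$ is trivial for $H=\mathrm{Stab}(r_0)$. The key claim, to be proved by a direct $2$-adic computation from $\gamma^2=\alpha^{|R|}$, is that the hypothesis $\ord_2(\order\gamma)>\ord_2(\order\alpha)$ is equivalent to $\gamma|_{\ker\alpha}$ being nontrivial, and this in turn forces $\gamma|_H\neq\triv$ for every point stabiliser $H$ -- the structural input being that the subgroup of elements of $H$ acting as the identity on $X$ is contained in the $2$-torsion subgroup $\ker\alpha\cap\ker\beta\subseteq\{1,\iota\}$, so a failure of triviality on $\ker\alpha$ propagates to the stabiliser. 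This gives part (1). For part (2), the contrapositive hypothesis yields the \emph{pointwise} character identity: whenever $\gamma(g)\neq 1$ the element $g$ has no fixed points on $R$ (because any such fixed point forces $\beta(g)=0$ and then $\alpha(g)$ would have matching $2$-adic order to $\gamma(g)$). Hence $\tilde\gamma(g)\cdot\chi_{\Q_l[R]}(g)=\chi_{\Q_l[R]}(g)$, which, by character theory for finite groups, upgrades to an isomorphism $\tilde\gamma\otimes\Q_l[R]\cong \Q_l[R]$. Subtracting $\tilde\gamma$ from both sides then gives $\tilde\gamma\otimes(\Q_l[R]\ominus\triv)\cong \Q_l[R]\ominus\tilde\gamma$, and the odd-versus-even dichotomy in the statement arises because when $\order\gamma$ is odd the character $\tilde\gamma$ does not appear in the permutation representation $\Q_l[R]$ (one can swap it for $\triv$ since both have multiplicity zero in the ``missing'' slot), whereas when $\order\gamma$ is even it genuinely does.

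For part (3) the curve is rational and I would argue directly. When $|R|=1$ there is a unique point at infinity, both sides are one-dimensional and trivial. When $|R|=2$ the identity $\gamma^2=\alpha^2$ makes $\delta:=\gamma/\alpha$ a character of order dividing $2$; a coordinate calculation at infinity shows that $g$ swaps the two points at infinity iff $\delta(g)=-1$, so the permutation representation on infinities is $\triv\oplus \delta$. On the other hand $\Q_l[R]\ominus\triv=\epsilon$ where $\epsilon$ is the swap-character on $R$, and one checks $\tilde\gamma\otimes\epsilon=\delta$ using $\gamma^2=\alpha^{|R|}$ and $\ker\tilde\gamma=\ker\gamma$. (The degenerate case $|R|=0$ is analogous, with $\gamma$ of order dividing $2$.)

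The main obstacle I anticipate is the $2$-adic analysis underlying parts (1) and (2): making precise the equivalence between the inequality on $\ord_2$ and the structural statement about $\gamma|_H$ requires separating the cases where $\iota\in G$, where $\iota\notin G$ but some $g\in\ker\alpha$ still has $\gamma(g)=-1$ (necessarily a fixed-point-free translation on $R$, so needing a subtler argument via Sylow-$2$ subgroups rather than the trace formula alone), and where $|R|$ is odd versus even -- each imposing different constraints from $\gamma^2=\alpha^{|R|}$ on the possible orders of $\gamma$ relative to $\alpha$.
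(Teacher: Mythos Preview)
Your identity $\gamma^2=\alpha^{|R|}$ is correct and your approach to parts (1) and (3) is reasonable. The real problem is in part~(2): the pointwise claim ``whenever $\gamma(g)\neq 1$ the element $g$ has no fixed points on $R$'' is false under the hypothesis $\ord_2(\order\gamma)\le\ord_2(\order\alpha)$. Take $k$ containing a primitive cube root of unity $\omega$, set $R=\{0,1,\omega,\omega^2,a,a\omega,a\omega^2\}$, and let $g\colon(X,Y)\mapsto(\omega X,\omega^2 Y)$, so $G=\langle g\rangle\cong\Z/3$. Then $\alpha(g)=\omega$ and $\gamma(g)=\omega^2$ both have order~$3$, so we are squarely in the part~(2) regime; yet $g$ fixes $0\in R$ while $\gamma(g)\neq 1$. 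Consequently $\tilde\gamma\otimes\Q_l[R]\not\cong\Q_l[R]$ here (the multiplicity of $\triv$ drops from $2$ to $1$), so your character identity fails. Your follow-up claim that ``when $\order\gamma$ is odd the character $\tilde\gamma$ does not appear in $\Q_l[R]$'' is likewise false in this example. (Your parenthetical justification --- that a fixed point forces $\beta(g)=0$ and hence $\alpha(g)$ and $\gamma(g)$ have matching $2$-adic order --- confuses the order of a single value $\gamma(g)$ with the order of the character $\gamma$, and in any case does not yield $\gamma(g)=1$.)

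The paper proceeds quite differently: it invokes structural results from \cite{hq} which split part~(2) into two subcases according to whether $G$ acts freely on $R$ or has an irregular orbit. In the free case your argument does work (then $\Q_l[R]\cong\Q_l[G]^{\oplus r}$ and twisting by any character preserves it), and there $\gamma^2=\alpha^{|R|}=\triv$ so $\gamma$ has even order. In the irregular case one has the sharper relation $\gamma^2=\alpha$ (not merely $\alpha^{|R|}$), forcing $\gamma$ to have odd order under the part~(2) hypothesis; then after shifting the fixed point to $0$ one gets $\Q_l[R]\ominus\triv\cong\Q_l[G/\ker\alpha]^{\oplus m}$, which is again preserved by twisting by $\tilde\gamma$ since $\ker\tilde\gamma=\ker\gamma=\ker\alpha$. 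So the dichotomy in the statement tracks free-versus-irregular, not presence-versus-absence of $\tilde\gamma$ in $\Q_l[R]$.
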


\begin{proof}
If $\ord_2(\order(\gamma))>\ord_2(\order(\alpha))$, then $\Q_l[R] \ominus \triv$ 
contains no 1-dimensional characters of order equal to the order of $\gamma$.
Therefore $\tilde\gamma \otimes (\Q_l[R] \ominus \triv)$ has no $G$-invariants.

Suppose $\ord_2(\order(\gamma))\le \ord_2(\order(\alpha))$. 
Then we are in the setup of \cite[Thm 4.1]{hq}, and by \cite[Lemma 4.4 (2)]{hq} we have 
$$
  \tilde\gamma \otimes (\Q_l[R] \ominus \triv) \>\iso\> \Q_l[R] \ominus \bigleftchoice 
    \triv{\text{if $\Q_l[R]$ contains an irregular orbit of $G$,}}
    {\tilde\gamma}{\text{if $\Q_l[R]\iso\Q_l[G]^{\oplus r}$ for some $r$.}}
$$
In the first (irregular orbit) case, $\gamma^2=\alpha$ by \cite{hq} Prop. 2.2 (5b), and it follows 
that $\gamma$ has odd order. 
In the second (regular) case, $\gamma^2=\triv$ by \cite{hq} Prop. 2.2 (5a). 
Hence, either $\gamma=\triv$ and the claim is trivial, or $\gamma$ has even order.

The last claim follows from \cite[Thm 4.1]{hq}, since $C$ has genus 0 and trivial $\H$ 
in this case.
\end{proof}

Now we go back to the setting of a hyperelliptic curve $C/K$. Recall from Definition \ref{de:characters} that we defined $\tilde{\lambda}_\s$ and characters $\alpha_\s, \beta_\s$ and $\gamma_\s$ for all principal clusters $\s$. In what follows we extend these definitions to all proper clusters $\s$ by the same formulae.\footnote{The formulae of Definition \ref{de:characters} only make sense when $ed_\s$ and $e\tilde{\lambda}_\s$ are integers. However this is always the case when $F$ is suitably large and (cf Remark \ref{legaldisc}) these characters, when defined, are independent of the choice of $F$.} 

\begin{lemma}
\label{lemets}
Let $\s$ be a proper non-\"ubereven cluster, and
$\tilde\gamma_\s: I_\s\to \bar\Q_l^\times$ a character with $\ker\tilde\gamma_\s=\ker\gamma_\s$.
\begin{itemize}
\item
If $\ord_2\denom(|I/I_\s|\tilde\lambda_\s)>\ord_2\denom (|I/I_\s|d_\s)$ then
$\tilde\gamma_\s \otimes (\Q_l[\so] \ominus \triv)$ has trivial $I_\s$-invariants. 
\item If $\ord_2\denom(|I/I_\s|\tilde\lambda_\s)\le\ord_2\denom (|I/I_\s|d_\s)$ then
$$
  \tilde\gamma_\s \otimes (\Q_l[\so] \ominus \triv) \>\iso\> \Q_l[\so] \ominus \bigleftchoice 
    \triv{\text{if $\ord_2(|I/I_\s|\tilde\lambda_\s)\ge 0$,}}
    {\tilde\gamma_\s}{\text{if $\ord_2(|I/I_\s|\tilde\lambda_\s)<0$,}}
$$
as $I_\s$-modules.
\item If $|\so|=2$ and $\s$ is not a cotwin, then $\tilde\gamma_\s \otimes (\Q_l[\so] \ominus \triv) = \epsilon_\s$.
\item If $\cR$ is a cotwin of size $2g\!+\!2$, then $\tilde\gamma_\cR \otimes (\Q_l[\cR_0] \ominus \triv) = \epsilon^0_\cR$, where $\epsilon^0_{\cR}(\sigma)=\frac{\sigma(\sqrt{c_f})}{\sqrt{c_f}}$ for $\sigma\in I_K$. 
\end{itemize}
\end{lemma}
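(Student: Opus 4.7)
The strategy is to apply Lemma \ref{lemabmain} with $k=\bar k$, the group $G=I_\s$ acting on the curve $\tilde\Gamma_\s$ of Definition \ref{de:gammatilde} (or the analogous auxiliary degree-$|\cR_0|$ curve in the cotwin case), the set $R$ being the images under $\red_\s$ of the odd children of $\s$, and the characters $\alpha,\gamma$ taken to be the restrictions of $\alpha_\s,\gamma_\s$ to $I_\s$. Indeed, Theorem~\ref{galactss}(i) shows that the action of $I_\s$ on $\tilde\Gamma_\s$ is precisely of the affine shape required by Lemma \ref{lemabmain}.

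Step 1 is to identify the orders of $\alpha_\s|_{I_\s}$ and $\gamma_\s|_{I_\s}$. By Remark \ref{legaldisc}, $\gamma_\s|_{I_\s}$ has order equal to the prime-to-$p$ part of $\denom(|I_K/I_\s|\tilde\lambda_\s)$, and the same argument gives the analogous statement for $\alpha_\s|_{I_\s}$ (replace $\tilde\lambda_\s$ by $d_\s$). Since $p$ is odd, the $2$-adic valuation of the order of each character equals the $2$-adic valuation of the corresponding denominator. With this dictionary, the first two bullets of Lemma \ref{lemets} follow immediately from the first two bullets of Lemma \ref{lemabmain}: the condition ``$\ord_2(\text{order}(\gamma))>\ord_2(\text{order}(\alpha))$'' translates into the first hypothesis of Lemma \ref{lemets}, and in the opposite regime the statement about whether $\gamma$ has odd or even order becomes the condition $\ord_2(|I_K/I_\s|\tilde\lambda_\s)\gtrless 0$.

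For the third bullet one applies Lemma \ref{lemabmain}(3) with $|R|=|\so|=2$, so that $\tilde\gamma_\s\otimes(\Q_l[\so]\ominus\triv)\oplus\triv$ is the permutation representation of $I_\s$ on the two points at infinity of $\tilde\Gamma_\s$. Theorem \ref{galactss}(ii) identifies this permutation with $\epsilon_\s$: the two points are swapped precisely when $\epsilon_\s(\sigma)=-1$, and fixed when $\epsilon_\s(\sigma)=+1$. Hence the permutation representation equals $\triv\oplus\epsilon_\s$, and cancelling $\triv$ gives $\tilde\gamma_\s\otimes(\Q_l[\so]\ominus\triv)=\epsilon_\s$.

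The fourth bullet is proved in the same spirit. When $\cR$ is a cotwin of size $2g\!+\!2$ its children (other than the one of size $2g$) are necessarily two singletons since otherwise $\cR$ would be \ub, so $|\cR_0|=2$ and $I_\cR=I_K$. Applying Lemma \ref{lemabmain}(3) to the auxiliary genus-zero curve $y^2=c_\cR\prod_{\mathfrak o\in\cR_0}(x-\red_\cR(\mathfrak o))$ (on which $I_K$ acts by the same affine formula) with $R=\cR_0$, one again obtains $\tilde\gamma_\cR\otimes(\Q_l[\cR_0]\ominus\triv)\oplus\triv$ as the permutation representation on the two points at infinity. Since $\cR$ is the top cluster we have $c_\cR=c_f$, so these two points are distinguished by $\pm\sqrt{c_f}$, and the sign character of the permutation is visibly $\epsilon^0_\cR$. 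The main obstacle will be this last identification: one needs to carefully track the induced action in the chart at infinity using the relations $\tilde\lambda_\cR=v(c_f)/2+d_\cR$ (a short computation from the definitions $\nu_\cR=v(c_f)+(2g\!+\!2)d_\cR$ and $\sum_{\s'<\cR}\lfloor|\s'|/2\rfloor=g$) to check that the contribution coming from $\gamma_\cR/\alpha_\cR$ is absorbed into the compatible choice of $\tilde\gamma_\cR$ granted by Lemma \ref{lemabmain}, leaving exactly the character $\epsilon^0_\cR(\sigma)=\sigma(\sqrt{c_f})/\sqrt{c_f}$.
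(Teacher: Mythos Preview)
Your approach is essentially the same as the paper's: reduce to Lemma~\ref{lemabmain} applied to the curve $\tilde\Gamma_\s$ with the affine $I_\s$-action of Theorem~\ref{galactss}(i), translate the order conditions on $\alpha_\s,\gamma_\s$ into denominator conditions on $|I/I_\s|d_\s$ and $|I/I_\s|\tilde\lambda_\s$, and identify the permutation on the points at infinity with $\epsilon_\s$ (resp.\ $\epsilon^0_\cR$) via Theorem~\ref{galactss}(ii) (resp.\ a direct computation for the cotwin case).

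One point you glide over and that the paper makes explicit: $\tilde\Gamma_\s$ and the action formula of Theorem~\ref{galactss} are only stated for \emph{principal} clusters, yet the lemma applies to all proper non-\ub\ clusters (in particular twins, and $\cR$ when it is a cotwin). The paper fixes this by enlarging $F$ so that $\cR\subset F$; then the disc $D(\s)$ is valid for every proper cluster $\s$ (possibly after further enlarging $F$ to make $e\nu_\s$ even), the component $\Gamma_{D(\s)}$ exists on the minimal regular model over $\cO_{F^{\nr}}$, and its normalisation carries the same affine $I_\s$-action by Proposition~\ref{th:GeneralGaloisAction}. For the third bullet in the non-principal case the identification of the infinity-permutation with $\epsilon_\s$ comes from the description of the dual-graph action in Theorem~\ref{th:DualGraph}(2), and for the cotwin case from the explicit formula for $\Gamma_{D(\cR)}$. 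Your sketch for bullet~4 is the right computation (indeed $\tilde\lambda_\cR-d_\cR=v(c_f)/2$, so the sign at infinity is $\chi(\sigma)^{e v(c_f)/2}$, which equals $\sigma(\sqrt{c_f})/\sqrt{c_f}$ on inertia); just note $c_\cR=\hat c_f\bmod\m$, not $c_f$ itself, though this does not affect the argument.
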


\begin{proof}
Let $F$ be a finite Galois extension over which $C$ becomes semistable, and such that $\cR\subset F$.
In what follows we consider all representations as those of $I_\s$. First let $\s$ be a principal cluster. 
By Theorem \ref{de:gammatilde}, we have the 
curve 
$$
  \tilde\Gamma_\s\colon y^2=c_\s\prod_{\textup{odd }\s'<\s}(x-\textup{red}_\s(\s'))
$$
over $\bar{k}$ together with $I_\s$-action
$$
  g(X) = \alpha_\s(g) X+\beta_\s(g), \qquad g(Y)=\gamma_s(g)\,Y \qquad \qquad (g\in I_\s).
$$
Observe that $\order(\gamma_\s)$ is the prime-to-$p$ part of the denominator of $|I/I_\s|\tilde\lambda_\s$,
and $\order(\alpha_\s)$ is the prime-to-$p$ part of the denominator of $|I/I_\s|d_\s$.
The claim follows from Lemma \ref{lemabmain} and Theorem \ref{galactss}(ii).

When $\s$ is not principal we argue similarly. Since $\cR \subset F$ the disc $D(\s)$ (the minimal disc 
cutting out $\s$, see Notation \ref{no:disc}) is valid in the sense of Definition \ref{valid discs defi}. 
In particular this disc contributes a component $\Cmp_{D(\s)}$ to the special fibre of the minimal 
regular model of $C$ over $\OFnr$ (see Proposition \ref{components of min}; if $e\nu_\s$ is not even, enlarge $F$ to make it so). The normalisation of this component is given by the same equation as for $\s$ principal as is the action of $I_\s$ (see Proposition \ref{th:GeneralGaloisAction}). Now we conclude as before; that $\epsilon_\s$ corresponds to the action on the points at infinity follows from the explicit action of $I_\s$ on the dual graph $\Upsilon_C$ in Thereom \ref{th:DualGraph}(2) in the non-cotwin case, and by the formula for $\Gamma_{D(\cR)}$ in the cotwin case.

\end{proof}

\begin{theorem}[Conductor]
\label{condmain}
Let $C/K$ be a hyperelliptic curve.
Decompose the conductor exponent of $\Jac C$ into the tame part and a wild part,
$n_C = n_{C,\text{tame}}+n_{C,\text{wild}}$. Then
$$
  n_{C,\text{wild}} = \sum_{r\in R/\!/G_K} v_K(\Delta_{K(r)/K})-[K(r):K]+f_{K(r)/K},
$$
and $n_{C,\text{tame}} = 2g-\dim \H(C)^{I_K}$ with
$$
\begin{array}{llllllllllll}
 \dim \H(C)^{I_K} &=& 
  \#\bigl\{\s\subsetneq \cR \text{ odd} \,\bigm|\,  \xi_{P(\s)}(\tilde\lambda_{P(\s)})\le \xi_{P(\s)}(d_{P(\s)})\bigr\}/{I_K} \cr
    &-& \#\bigl\{\s \text{ proper non-\"ubereven} \,\bigm|\, \xi_{P(\s)}(\tilde\lambda_{P(\s)})=0 \bigr\}/{I_K} \cr
&-& \leftchoice{1}{\text{if $|\cR|$ is even and $v(c_f)$ is even,}}{0}{\text{otherwise;}} 
\end{array}
$$
here $\xi_\s(a)=\ord_2(b)$ where $b$ is the denominator of $|I_K/I_{\s}|a$, with $\xi_\s(0)=0$.
\end{theorem}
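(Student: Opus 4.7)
The conductor exponent splits as $n_C = a(\H(C)) = (\dim \H(C) - \dim \H(C)^{I_K}) + \sw(\H(C))$, with first summand $n_{C,\textup{tame}} = 2g - \dim \H(C)^{I_K}$ and second summand $n_{C,\textup{wild}}$. The strategy is to apply Theorem~\ref{het} and Lemma~\ref{lemets} to compute each piece directly from cluster data. A short $1$-cocycle calculation, using the vanishing of $H^1(I_K/P_K, V)$ for any semisimple tame $I_K$-representation $V$ over $\Q_l$, shows $\dim(V \otimes \Sp_2)^{I_K} = \dim V^{I_K}$, reducing the tame computation to computing $I_K$-invariants on $\H(C)_{ab}$ and $\H(C)_t$ separately.

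For the tame part, Mackey's formula gives $\dim(\Ind_{G_\s}^{G_K} W)^{I_K} = \#(I_K\backslash G_K/G_\s)\cdot \dim W^{I_\s}$, converting the $G_K$-orbit sums of Theorem~\ref{het} into $I_K$-orbit sums over clusters. For each principal non-\"ubereven cluster $\s$, Lemma~\ref{lemets} together with the identification of $\textup{ord}(\tilde\gamma_\s|_{I_\s})$ with the prime-to-$p$ part of the denominator of $|I_K/I_\s|\tilde\lambda_\s$ yields
$$
\dim \H(\widetilde\Gamma_\s)^{I_\s} = \begin{cases} 0 & \text{if } \xi_\s(\tilde\lambda_\s) > \xi_\s(d_\s), \\ \#(\so/I_\s) - 1 - \dim\epsilon_\s^{I_\s} & \text{if } \xi_\s(\tilde\lambda_\s) = 0, \\ \#(\so/I_\s) - \dim\epsilon_\s^{I_\s} & \text{otherwise.} \end{cases}
$$
Adding the toric contribution $\sum_{\s\in E/I_K} \dim\epsilon_\s^{I_\s} - \dim\epsilon_\cR^{I_K}$ (where $E$ is the set of even non-\"ub clusters), cancelling the $\dim\epsilon_\s^{I_\s}$-terms across the two sums for even principal non-\"ub $\s \ne \cR$, tracking each odd cluster $\s' \ne \cR$ via the $\#(\so/I_\s)$-term with $\s = P(\s')$, and recognising that $\dim\epsilon_\s^{I_\s} = 1$ iff $\epsilon_\s|_{I_\s}$ is trivial iff $\xi_{P(\s)}(\tilde\lambda_{P(\s)}) = 0$, recovers the index sets $U, V$ of the theorem. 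The residual $-1$ correction matches $\dim\epsilon_\cR^{I_K} = 1$ precisely when $\epsilon_\cR$ is unramified, i.e.\ when $|\cR|$ and $v(c_f)$ are both even.

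For the wild part, the Swan conductor is insensitive to tensoring with tame characters or with $\Sp_2$, so by Theorem~\ref{het} and Lemma~\ref{lemets}, after stripping away tame summands one has $\sw(\H(C)) = \sw(M)$, where $M$ is the virtual $G_K$-permutation representation
$$
M = \bigoplus_{\s \text{ princ non-\"ub}/G_K}\Ind_{G_\s}^{G_K} \Q_l[\so] \>\oplus\> 2\!\!\!\bigoplus_{\substack{\s \in E/G_K\\\s\ne\cR}}\Ind_{G_\s}^{G_K}\triv.
$$
A combinatorial identity $M \equiv \Q_l[\cR]$ modulo tame summands then gives $\sw(\H(C)) = \sw(\Q_l[\cR]) = \sum_{r \in \cR/G_K} \sw(\Ind_{G_{K(r)}}^{G_K}\triv)$; the identity is verified by showing each root is accounted for exactly once in $M$, with odd singleton children of principal non-\"ub parents entering $\Q_l[\so]$, while roots inside twins (or cotwins, or exceptional $\cR$) are accounted for through the factor-of-$2$ toric term (or through the third and fourth bullets of Lemma~\ref{lemets}). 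Finally, Serre's conductor-discriminant formula gives $a(\Ind_{G_{K(r)}}^{G_K}\triv) = v_K(\Delta_{K(r)/K})$, hence $\sw(\Ind_{G_{K(r)}}^{G_K}\triv) = v_K(\Delta_{K(r)/K}) - [K(r):K] + f_{K(r)/K}$, yielding the wild formula after summation. The main technical obstacles are the two combinatorial matchings -- identifying $U$ and $V$ from cluster bookkeeping in the tame part, and verifying the identity $M \equiv \Q_l[\cR]$ in the wild part -- both of which require careful case analysis of cotwins and exceptional cluster configurations.
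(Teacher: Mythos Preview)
Your tame-part argument is essentially the paper's: both expand via Lemma~\ref{lemets}, use Frobenius reciprocity to convert to $I_K$-orbit counts, and identify the index sets $U$ and $V$. One small point: your justification for $\dim(V\otimes\Sp_2)^{I_K}=\dim V^{I_K}$ via ``vanishing of $H^1(I_K/P_K,V)$'' is not quite right---that $H^1$ need not vanish (take $V=\triv$). The correct reason is that $I_K$ acts on $V$ through a \emph{finite} quotient while $\tau:I_K\to\Z_l$ has infinite image, so the cocycle $\sigma\mapsto\tau(\sigma)v_2$ cannot be a coboundary unless $v_2=0$. The paper sidesteps this by first taking $I_F$-invariants (which replaces $\Sp_2$ by $\triv$) and then $I_{F/K}$-invariants.

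Your wild-part approach is genuinely different from the paper's, and the combinatorial identity $M\equiv\Q_l[\cR]$ modulo tame summands is where the real work lies---and you have not done it. The difficulty is that the basis of $M$ is indexed by \emph{clusters}, not roots: for a non-singleton odd cluster $\s'$ with principal non-\"ubereven parent, the summand $\Ind_{G_{\s'}}^{G_K}\triv$ can itself be wildly ramified (wild inertia may permute such clusters nontrivially), so ``each root accounted for exactly once'' is not an argument. Matching these contributions against singletons, twins, cotwins and the exceptional configurations of $\cR$ would require a careful telescoping over the cluster tree that you only gesture at. The paper bypasses all of this with a one-line trick: since $(\Jac C)[2]\cong\F_2[\cR]$ minus trivials as $G_K$-modules, and wild inertia $W$ has odd order (as $p$ is odd), representation theory mod~$2$ agrees with characteristic~$0$ for $W$-modules, giving $V_2\Jac C\equiv\Q_2[\cR]$ as $W$-modules directly. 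This immediately yields $\sw(\H(C))=\sw(\Q_l[\cR])$, after which the conductor--discriminant formula finishes exactly as you indicate.
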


\begin{proof}
(Tame part) 
Let $\epsilon^0_{\cR}(\sigma)=\frac{\sigma(\sqrt{c_f})}{\sqrt{c_f}}$ for $\sigma\in I_K$ if $\cR$ is even, and the zero representation if $\cR$ is odd. Note that $\epsilon_\cR^0$ is the same as $\epsilon_\cR$, except when $\cR$ is a cotwin.

By Theorem \ref{hetmain} we have to compute the dimension of inertia invariants on
$$  
  \H(C)_{ab}\oplus \H(C)_t\otimes \Sp_2  = 
  \bigoplus_{\s\in P} \Ind_{I_\s}^{I_K} (\tilde\gamma_\s \otimes (\Q_l[\so] \ominus \triv)\ominus\epsilon_\s) 
    \oplus
  (\Sp_2 \tensor (\bigoplus_{s\in E}\Ind_{I_\s}^{I_K} \!\epsilon_\s\quad \ominus\epsilon_\cR)),
$$
where $P$ and $E$ are sets of $I_K$-orbit representatives on principal non-\ub\ clusters and on even non-\ub\ clusters, respectively.

By Lemma \ref{lemets} we can expand the first sum to 
$$
\H(C)_{ab} = \bigoplus_{\s\in T} \Ind_{I_\s}^{I_K} (\tilde\gamma_\s \otimes (\Q_l[\so] \ominus \triv)\ominus\epsilon_\s) \oplus \epsilon_\cR\ominus \epsilon^0_\cR
$$
where $T$ is a the set of representatives of $I_K$-orbits on all proper non-\ub\ clusters excluding cotwins of odd size.
Since $\epsilon_\s$ is the zero representation for odd clusters (excepts cotwins), we can similarly expand the second sum to
$$
\H(C)_{t} = \Sp_2 \tensor (\bigoplus_{s\in T}\Ind_{I_\s}^{I_K} \!\epsilon_\s\quad \ominus\epsilon_\cR),
$$

Taking $I_F$ invariants on $\H(C)_{ab}\oplus \H(C)_t\otimes \Sp_2$ replaces $\Sp_2$ by $\triv$, so we are left with $I_{F/K}$-invariants~on
$$  
  \bigoplus_{\s\in T} \Ind_{I_\s}^{I_K} (\tilde\gamma_\s \otimes (\Q_l[\so] \ominus \triv)) \quad \ominus\epsilon^0_\cR.
$$
Note that if $\s$ is a cotwin of odd size then $\Q_l[\so]$ is the trivial representation so this expression is the same as
$$  
  \bigoplus_{\s\in S} \Ind_{I_\s}^{I_K} (\tilde\gamma_\s \otimes (\Q_l[\so] \ominus \triv)) \quad \ominus\epsilon^0_\cR,
$$
where $S$ is a set of representatives of $I_K$-orbits on proper non-\"ubereven clusters.

By Frobenius reciprocity, we find
$$
  \dim \H(C)^{I_K} = \sum_{\s\in S}\> \langle \triv, \tilde\gamma_\s \otimes (\Q_l[\so] \ominus \triv) \rangle_{I_\s}
    \quad - \langle \triv,\epsilon_\cR^0 \rangle_{I_K}.
$$
Let $S_0\subset S$ be the set of those clusters for which 
$$
\xi_{P(\s)}(\tilde\lambda_{P(\s)})\le \xi_{P(\s)}(d_{P(\s)}).
$$

By Lemma \ref{lemets}, $\gamma_\s \otimes (\Q_l[\so] \ominus \triv)$ 
has no $I_\s$-invariants if $\s\in S\setminus S_0$. 
Otherwise, by the same lemma,
$$
  \gamma_\s \otimes (\Q_l[\so] \ominus \triv) \qquad\iso\qquad 
     \Q_l[\so] \ominus \triv \qquad\text{or}\qquad\Q_l[\so] \ominus \gamma_\s,
$$
depending on whether $\xi_\s(\tilde\lambda_\s)$ is $=0$ or $>0$, respectively.
Therefore,
$$
  \dim \H(C)^{I_K} = \sum_{\s\in S_0}\> \langle \triv, \Q_l[\so] \rangle_{I_\s} \>\>-\>\>
    \#\{\s\in S_0\,|\,\xi_\s(\tilde\lambda_\s)= 0\} \>\>-\>\> \langle \triv,\epsilon_\cR^0 \rangle.
$$
Note that 
$$
\langle \triv,\epsilon_\cR^0 \rangle=\leftchoice{1}{\text{if $|\cR|$ is even and $v(c_f)$ is even,}}{0}{\text{otherwise,}}
$$
is the last term in the statement.
Write $U_\s$ for the set of those odd clusters $\r$ such that $P(\r)$ lies in the $I_K$-orbit of $\s$. Then
$$
  \sum_{\s\in S_0}\> \langle \triv, \Q_l[\so] \rangle_{I_\s} = 
  \sum_{\s\in S_0}\> \langle \triv, \Q_l[U_\s] \rangle_{I_K}
$$
and 
$$
  \bigcup_{\s\in S_0} U_{\s} = \{\r\subsetneq R \text{ odd} \,\bigm|\, \xi_{P(\s)}(\tilde\lambda_{P(\s)})\le \xi_{P(\s)}(d_{P(\s)}) \bigr\}.
$$
Counting $I_K$-orbits gives the claim.

(Wild part) 
By the standard description of the 2-torsion of Jacobians of hyperelliptic curves 
(see e.g. Cornelissen \cite[Lemma 2.1]{Cor}),
$$
  (\Jac C)[2] \>\iso\>
  \F_2[R] \ominus \leftchoice {\triv}{\text{$2g+1$ odd,}}{\triv\oplus\triv}{\text{$2g+2$ even,}}
$$ 
as $G_K$-modules. Let $W$ be the wild inertia subgroup of $G_K$. Hence
$$
  V_2\Jac C \>\iso\>
  \Q_2[R] \ominus \leftchoice {\triv}{\text{$2g+1$ odd,}}{\triv\oplus\triv}{\text{$2g+2$ even,}}
$$ 
as $W$-modules, since $W$ acts on them through a finite group of odd order.
(Representation theory modulo $p$ agrees with complex representation theory for finite groups of 
order prime to $p$.) 

The left-hand side has the same wild part of the conductor as $\H(C)$. 
For $r\in R$ write $X_r$ for its $G_K$ orbit. 
The conductor of the right-hand side~is
$$
  n_{\Q_l[R]} = \sum_{r\in R/\!/G_K} n_{\Q_l[X_r]}
            = \sum_{r\in R/\!/G_K} v_K(\Delta_{K(r)/K}),
$$
and its tame part is
$$
  n_{\Q_l[R],\tame} = \sum_{r\in R/\!/G_K} [K(r):K] \!-\! \dim \Q_l[X_r]^{I_K}.
$$
Because $\dim\Q_l[X_r]^{I_K}$ is the number of $I_K$-orbits on $X_r$, which is the residue degree of $K(r)/K$,
we have
$$  
  n_{\Q_l[R],\wild} = \sum_{r\in R/\!/G_K} v_K(\Delta_{K(r)/K})\!-\![K(r):K]\!+\!f_{K(r)/K}.
$$
%
%
%
%
%
%
\end{proof}

%
%

%

\section{Deficiency}
\label{sdeficiency}


In this section we explain how to see whether a semistable hyperelliptic curve is deficient in term of its cluster picture. We first recall some standard results regarding deficiency of curves (see e.g. \cite{PS} Section 8).
\begin{definition}\label{de:deficiency}
Let $X/K$ be a geometrically integral, smooth and proper curve of genus $g$. 
Then $X$ is \emph{deficient} over $K$ if and only if it has no rational divisor of degree $g-1$;
equivalently, the index $I_2$ does not divide $g-1$, where 
$$
  I_2=\gcd \bigl\{[K(P)\!:\!K] ~|~ P \in X(\Kbar)\bigr\}.
$$
\end{definition}

\begin{lemma}\label{le:deficiency}
Let $X/K$ be a geometrically integral, smooth and proper curve of genus $g$. Let $Y_1, ... Y_r$ denote 
the components of the special fibre of the minimal regular model of $X$ over $\cO_K$, 
and $d_1,..., d_r$ their multiplicities.  Let 
$$
e_i =[\kbar\cap k(Y_i):k]= \text{length of }G_k\!\text{-orbit} \text{ of }Y_i
$$
and
$$
I_3 = \gcd\{d_ie_i ~|~ i =1,...,r\}.
$$
Then $I_2=I_3$.
\end{lemma}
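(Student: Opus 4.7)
The plan is to prove the two divisibilities $I_3 \mid I_2$ and $I_2 \mid I_3$ separately, in both cases via intersection theory on the minimal regular model $\mathcal{X}/\mathcal{O}_K$ of $X$.

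For $I_3 \mid I_2$, I would fix a closed point $P$ of $X$ with $n := [K(P):K]$ and let $D$ be its scheme-theoretic closure in $\mathcal{X}$. Then $D$ is integral, horizontal, and flat of generic degree $n$ over $\mathcal{O}_K$, and flatness gives $D \cdot \mathcal{X}_k = n$. Expanding $\mathcal{X}_k = \sum_i d_i Y_i$ yields
\[
n \>=\> \sum_i d_i\, (D \cdot Y_i).
\]
Now $D$ is $G_K$-stable, and over $\bar{k}$ each $Y_i$ splits into $e_i$ Galois-conjugate geometric components on which $D_{\bar k}$ has the same intersection number $m_i$ by Galois equivariance; hence $D \cdot Y_i = e_i m_i$. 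Substituting gives $n = \sum_i d_i e_i m_i$, so $I_3 \mid n$, and taking $\gcd$ over $P$ gives $I_3 \mid I_2$.

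For $I_2 \mid I_3$ it suffices to show $I_2 \mid d_i e_i$ for each $i$. Fix such an $i$: the geometric components of $Y_i$ correspond to a geometrically integral curve $Z_i$ over the constant field $k_i := \bar k \cap k(Y_i)$ of degree $e_i$ over $k$, and $Z_i^{\mathrm{sm}}$ is non-empty. Since $Z_i^{\mathrm{sm}}$ is geometrically integral over the finite field $k_i$, Weil-type bounds ensure that closed points of $Z_i^{\mathrm{sm}}$ exist in all sufficiently large degrees over $k_i$; comparing two consecutive such degrees shows that the set of available degrees has $\gcd$ equal to $1$. For any closed point $y$ of $Z_i^{\mathrm{sm}}$ with $[\kappa(y):k_i] = m$, viewed as a smooth closed point of $Y_i$ with $[\kappa(y):k] = e_i m$, the local ring $\mathcal{O}_{\mathcal{X},y}$ is a $2$-dimensional regular local ring with maximal ideal $(t,s)$ where $t$ is a local equation for $Y_i$ and $\pi = u\, t^{d_i}$ for some unit $u$. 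Slicing transversally to $Y_i$, one lifts $y$ to a closed point $P$ of $X$ with $[\kappa(P):K] = d_i [\kappa(y):k] = d_i e_i m$; hence $I_2 \mid d_i e_i m$ for these coprime values of $m$, which forces $I_2 \mid d_i e_i$.

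The main obstacle is the transverse slicing step in Step 2. The local picture is favourable: the completion of $\mathcal{O}_{\mathcal{X},y}/(s)$ is a complete DVR realising the totally ramified degree-$d_i$ extension of the complete unramified extension of $\mathcal{O}_K$ with residue field $\kappa(y)$, which has degree $d_i e_i m$ over $K$. The substance of the argument is to globalise this infinitesimal slice to an actual horizontal divisor on $\mathcal{X}$ whose generic fibre is a single closed point of the expected degree: one applies Hensel's lemma (or Artin approximation) to produce $\tilde s \in \mathfrak{m}_{\mathcal{X},y}$ whose vanishing locus has an irreducible component through $y$ realising this DVR, and then takes that component as the required horizontal divisor.
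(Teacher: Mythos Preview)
Your argument is correct. The paper does not give a proof at all: it simply cites Remark~1 after Lemma~16 of Poonen--Stoll \cite{PS}, so you have supplied a direct proof where the paper defers to the literature. Your two divisibilities via intersection theory and transverse slicing are exactly the standard route to this result.

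Two small remarks on Step~2. First, you claim that the lifted point has degree exactly $d_i e_i m$, but you only need (and only easily get) a divisor of $d_i e_i m$. Indeed, once you have the complete DVR $R=\widehat{\mathcal{O}_{\mathcal{X},y}}/(s)$ with $\mathrm{Frac}(R)=L$ of degree $d_i e_i m$ over $K$, the composite $\mathrm{Spec}\,L\to\mathrm{Spec}\,\widehat{\mathcal{O}_{\mathcal{X},y}}\to\mathcal{X}$ lands in a closed point $P$ of $X$ with $\kappa(P)\hookrightarrow L$, hence $[\kappa(P):K]\mid d_i e_i m$. That already gives $I_2\mid d_i e_i m$; running over coprime $m$'s then forces $I_2\mid d_i e_i$. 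Second, this observation also shows that the Hensel/Artin approximation step you flag as the main obstacle is unnecessary: you do not need to globalise the slice to an honest horizontal divisor of the exact degree, only to produce an $L$-point of $X$, and the completion does that for free.
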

\begin{proof}
This is Remark 1 after Lemma 16 in \cite{PS}. 
\end{proof}

\begin{lemma}\label{le:HyperDeficiency}
Let $C/K$ be a hyperelliptic curve of genus $g$. The following are equivalent:
 
1) $C$ is deficient over $K$, 

2) $C$ has even genus and has no rational point over any odd degree extension of $K$, 

3) $C$ has even genus and every component of the special fibre of its minimal regular model has either even multiplicity or a $G_k$-orbit of even length. 
\end{lemma}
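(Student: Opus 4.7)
The plan is to first establish the key observation that $I_2 \mid 2$ for any hyperelliptic curve $C/K$, which will make both equivalences essentially formal.

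To see $I_2 \mid 2$, I would use the hyperelliptic cover $\pi\colon C \to \mathbb{P}^1_K$. Pulling back any $K$-rational point of $\mathbb{P}^1$ yields an effective $K$-rational divisor of degree $2$ on $C$, so $I_2$ divides $2$ and hence $I_2 \in \{1,2\}$.

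For $(1) \Leftrightarrow (2)$: since $I_2 \in \{1,2\}$, $C$ is deficient (i.e. $I_2 \nmid g-1$) if and only if $I_2 = 2$ and $g-1$ is odd, i.e. $g$ is even. Now $I_2 = 1$ if and only if $C$ has a closed point of odd degree: indeed, combining such a point with the degree $2$ divisor above gives $\gcd = 1$, while conversely if all closed points have even degree then $I_2$ is even. A closed point of odd degree is the same as a $K'$-rational point for some odd degree extension $K'/K$, so $I_2 = 2$ if and only if $C$ has no rational point over any odd degree extension of $K$. This gives the equivalence of (1) and (2).

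For $(1) \Leftrightarrow (3)$: by Lemma~\ref{le:deficiency} we have $I_2 = I_3 = \gcd\{d_i e_i\}$, so $I_2 = 2$ if and only if every $d_i e_i$ is even, i.e. every component has either even multiplicity or a $G_k$-orbit of even length. Combining with the observation above that deficiency is equivalent to ``$g$ even and $I_2 = 2$'' yields the equivalence of (1) and (3).

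There is no real obstacle here; the only mildly subtle point is the reduction $I_2 \in \{1,2\}$ via the hyperelliptic cover, after which everything follows formally from Definition~\ref{de:deficiency} and Lemma~\ref{le:deficiency}.
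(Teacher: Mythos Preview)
Your proof is correct and follows essentially the same approach as the paper: both reduce to showing $I_2\in\{1,2\}$ (the paper via the one or two points at infinity, you via pulling back a rational point along the hyperelliptic map), after which $(1)\Leftrightarrow(2)$ follows from Definition~\ref{de:deficiency} and $(1)\Leftrightarrow(3)$ from Lemma~\ref{le:deficiency}.
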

\begin{proof}
Since $C$ is hyperelliptic, it has either one or two points at infinity. In particular, it has a rational divisor of degree 2 so $I_2 = 1$ or $I_2=2$. Hence if $g$ is odd then $C$ is not deficient. 

$1 \iff 2$ is clear from Definition \ref{de:deficiency}. 

$1\iff 3$ is clear from Lemma \ref{le:deficiency}.
\end{proof}

\begin{theorem}\label{th:deficiency}
Let $C/K$ be a semistable hyperelliptic curve. Then
$C$ is deficient if and only if it has even genus and either
\begin{enumerate}
\item $\cR = \s_1 \coprod \s_2$ with $\s_1, \s_2$ odd $G_k$-conjugate clusters, and $\delta_{\s_1}$ is odd, or
\item $\cR$ is \ub\ with $\epsilon_{\cR}(\Frob) = -1$ and for all non-\ub\ $\s$ such that $\neck{\s} =\cR$, either $d_\s \notin \Z$ or the $\Frob$-orbit of $\c$ has even size, or
\item $\cR$ is a cotwin with a principal \ub\ child $\r$  with $\epsilon_{\r}(\Frob) = -1$, and for all $\s$ such that $\neck{\s} =\r$, either $d_\s \notin \Z$ or the $\Frob$-orbit of $\s$ has even size.
\end{enumerate}
\end{theorem}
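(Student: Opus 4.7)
My plan is to reduce the question to a combinatorial analysis of Frobenius-orbits on vertices of the dual graph $\Upsilon_C$ and then do a case split based on the structure of $\cR$. By Lemma~\ref{le:HyperDeficiency}, since $C/K$ is semistable every component of the special fibre of the minimal regular model has multiplicity~$1$, so $C$ is deficient if and only if $g$ is even and every irreducible component has $G_k$-orbit (equivalently, $\Frob$-orbit) of even length. I will use Theorem~\ref{th:DualGraph} to enumerate these components --- the principal components $\Gamma_\s$ (or $\Gamma_\s^\pm$ when $\s$ is übereven) and the intermediate $\mathbb{P}^1$'s along chains --- together with the explicit Galois action $\sigma(v_\s^\pm)=v_{\sigma\s}^{\pm\epsilon_\s(\sigma)}$ and $\sigma(L_\s^\pm)=L_{\sigma\s}^{\pm\epsilon_\s(\sigma)}$. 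Throughout I will use the cocycle identity $\epsilon_\s(\Frob^n)=\prod_{i=0}^{n-1}\epsilon_{\Frob^i\s}(\Frob)$ to determine orbit lengths of $v_\s^\pm$: the orbit has size $n$ if $\epsilon_\s(\Frob^n)=+1$ and $2n$ otherwise, where $n=|\Frob\cdot\s|$.

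Next I will extract the conditions on $\cR$. Since $\cR$ is $G_K$-stable, if $\cR$ is principal non-übereven then $v_\cR$ is $\Frob$-fixed and $C$ is not deficient; if $\cR$ is principal übereven then the two vertices $v_\cR^\pm$ are swapped precisely when $\epsilon_\cR(\Frob)=-1$ (this is case (2) when $\cR$ is principal). The only remaining possibilities are $\cR$ non-principal, which the principal-cluster definition forces to be either (i) $\cR$ even with exactly two children, or (ii) $\cR$ a cotwin. In (i), if both children $\s_1,\s_2$ are odd then $\Frob$ must swap them (otherwise each $\Gamma_{\s_i}$ is fixed), giving a candidate for case (1); if both children are even then $\cR$ is übereven (giving the non-principal sub-case of (2)). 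In case (ii), the principal child $\r$ of size $2g$ plays the role of $\cR$, and I will argue that $\r$ must be übereven with $\epsilon_\r(\Frob)=-1$ in the same way, yielding case (3). At this stage the genus parity is either automatic (case (1): $|\s_i|=g+1$ odd forces $g$ even) or must be checked separately.

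With the possibilities for $\cR$ fixed, I will then verify on each remaining intermediate vertex of $\Upsilon_C$ that it lies in an even $\Frob$-orbit. For case (1) this reduces to showing the $\delta_{\s_1}$-edge chain $L_{\s_1,\s_2}$, whose endpoints are swapped, contains a $\Frob$-fixed intermediate vertex iff $\delta_{\s_1}$ is even, giving the parity condition. For cases (2) and (3) the analysis decomposes along the tree of subclusters: for each non-übereven $\s$ with $\s^*=\cR$ (resp.\ $\s^*=\r$), the chain from $v_\s$ up into the übereven tower is mapped to itself by $\Frob^n$ where $n=|\Frob\cdot\s|$, so if $n$ is even no chain vertex is fixed; and if $n$ is odd, careful analysis using the conditions $d_\s\in\mathbb{Z}$ and the cocycle formula shows that a fixed intermediate vertex appears, forcing the dichotomy $d_\s\notin\mathbb{Z}$ or $n$ even. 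Conversely, when $\s^*\neq\cR$ (resp.\ $\neq\r$), the tower emanating from $\s$ passes through some non-übereven parent that either is already handled by its own $\epsilon$ or whose $v_{P(\s^*)}$ is inside an even $\Frob$-orbit, so no new constraint appears.

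\textbf{Main obstacle.} The hardest part will be the bookkeeping for case (2) (and similarly (3)) when $\cR$ admits nested übereven sub-towers: one must track how the signs $\epsilon_\s(\Frob)$ at different levels interact with the parallel chains $L_\s^\pm$ joining übereven principal clusters and with the loops $L_\t$ coming from twins $\t<\s^*$, so as to confirm that the printed condition "either $d_\s\notin\mathbb{Z}$ or $|\Frob\cdot\s|$ even" is both necessary and sufficient for \emph{all} intermediate chain vertices above such $\s$ to have even $\Frob$-orbit. The technical point is that $d_\s\notin\mathbb{Z}$ can only happen for twins (or exceptional cotwin $\cR$), in which case the associated chain $L_\t$ has odd edge-length $2\delta_\t$ and lacks a central vertex, so no fixed midpoint exists regardless of orientation; this is what allows the otherwise-odd-orbit twin to be admissible in the deficiency criterion.
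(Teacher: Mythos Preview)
Your proposal is correct and follows essentially the same route as the paper: reduce via Lemma~\ref{le:HyperDeficiency} to the statement that all components lie in even $\Frob$-orbits, invoke the explicit dual graph description of Theorem~\ref{th:DualGraph}, and then case-split on the structure of~$\cR$ (principal non-\"ubereven, principal \"ubereven, union of two odd, union of two even, cotwin). Your chain-midpoint analysis for case~(1) and your twin/half-integer-depth observation for cases~(2)--(3) are exactly the mechanisms the paper relies on, and your treatment of clusters with $\s^*\neq\cR$ via a non-\"ubereven ancestor matches the paper's ``ancestor with even orbit'' argument for the converse.
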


\begin{proof}
Since $C/K$ is semistable, all the components of the special fibre of its minimal regular model have multiplicity 1. Thus, by Lemma \ref{le:HyperDeficiency}, $C$ is deficient if and only if every component has a $G_k$-orbit
of even length. 

The result follows from the explicit description of dual graph of the special fibre of the minimal regular model of $C$ together with the action of Frobenius given in Theorem \ref{th:DualGraph}. From the description of the Frobenius action on the vertices $v_\s, v_\s^{\pm}$ and the various chains of edges, we see that all components will have even-length orbits under Frobenius if and only if: i) every principal non-\ub\ cluster has an even-length orbit under Frobenius, ii) every principal \ub\ cluster $\s$ either has an even-length orbit or $\epsilon_\s(\Frob)=-1$, iii) every twin and cotwin $\t$ either has an even-length orbit or $\epsilon_\t(\Frob)=-1$ and $d_\t\not\in\Z$,  iv) if $\cR=\s_1 \coprod \s_2$ is a disjoint union of two even clusters then $\epsilon(\cR)=-1$, v) if  $\cR=\s_1 \coprod \s_2$ is a disjoint union of two proper odd clusters then $\frac12 (\delta_{\s_1}\!+\!\delta_{\s_2})$ is odd and $\s_1$ and $\s_2$ are swapped by Frobenius.

Since $\cR$ itself cannot have a non-trivial orbit under Frobenius, it follows that it must either be a union of two odd clusters, \ub\ or a cotwin (and, in the cotwin case, its principal child must similarly be \ub ). It follows by inspection that one of (1), (2) and (3) must hold for the curve to be deficient.

For the converse, observe that if a cluster $\s$ has an ancestor with an even-length Frobenius orbit, then so does $\s$ itself. In particular (1) automatically forces (i)--(v) to hold. Similarly, if (2) (respectively (3)) holds, then every proper cluster $\s$ with $\s^*\neq\cR$ (respectively $\s^*\neq\r$) will necessarily have a non-\ub\ ancestor $\mathfrak{a}$ with $\mathfrak{a}^*=\cR$ (respectively $\mathfrak{a}=\r$), and, since $\mathfrak{a}$ is not a twin or cotwin, $d_{\mathfrak{a}}\in\Z$ by the semistability criterion. Thus $\mathfrak{a}$ must have a Frobenius-orbit of even length, and hence so does $\s$. It follows that if either (2) or (3) holds, then so do (i)--(iv), and the curve is deficient.
\end{proof}

%

\def\fr{{\mathfrak r}}
\def\R{{\mathcal R}}



\section{Integral Weierstrass models}\label{s:integral}

As we shall see in \S\ref{s:integral}--\ref{s:MWEquation}, cluster pictures are very well suited for studying Weierstrass equations of hyperelliptic curves, including discriminants and minimal Weierstrass equations. We begin by a criterion for checking whether a Weierstrass equation is integral, that is whether $f(x)\in\cO_K[x]$. Since the cluster picture of a polynomial does not change under a substitution $x\mapsto x-t$, it is clearly not possible to determine whether $f(x)\in\cO_K[x]$ from the cluster picture. However, up to such shifts in the $x$-coordinate, this turns out to be possible.

Recall first the definition of an (abstract) cluster picture:

\begin{definition}
Let $\cR$ be a finite set, $\Sigma$ a collection of non-empty subsets of $\cR$ (called {\em clusters}), and $d_\s\in\Q$ to every $\s\in\Sigma$ of size $> 1$ (called the depth of $\s$).
Then $\Sigma$ (or $(\Sigma,\cR,d)$) is a \emph{cluster picture} if
\begin{enumerate}
\item
Every singleton is a cluster, and $\cR$ is a cluster. 
\item
Two clusters are either disjoint or contained in one another.
\item
$d_\t >d_\s$ if $\t \subsetneq \s$.
\end{enumerate}
Two cluster pictures $(\Sigma,\cR,d)$ and $(\Sigma',\cR',d')$ are isomorphic if there is a bijection $\phi\colon \cR\to \cR'$ which induces a bijection from $\Sigma$ to $\Sigma'$ and $d_\s = d'_{\phi(\s)}$. 
We say a group $G$ acts on $(\Sigma,\cR,d)$ if it acts by isomorphisms\footnote{This is subtly different from the action of a group by automorphisms on metric cluster pictures, which we use specifically for semistable curves; see Definition \ref{auts of clusters defi}.}.
\end{definition}

We refer the reader to Table \ref{clusternotation} for our standard notation for clusters, including the notions of child/parent and $\s\wedge\s'$.


\begin{definition}\label{def:integralCP}
Let $(\Sigma,\cR,d)$ be a cluster picture with an action 
of $G_K$,
and let $n\in\Z$. 
We say that the pair $(\Sigma,n)$ is {\em integral} if either
\begin{itemize}
\item $n \ge 0$ and $d_{\cR} \ge 0$, or 
\item there is a $G_K$-stable proper cluster $\s$ with $d_\s \le 0$ and $$n+(|\s|\!-\!|\t|)d_\s+\sum\nolimits_{r\notin\s} d_{r\wedge \s} \ge 0$$
for some $\t$ which is either empty or a $G_K$-stable child $\t<\s$ with either $|\t|=1$ or $d_{\t} \ge 0$.
\end{itemize}
\end{definition}

%
%

\begin{theorem}\label{th:integralpoly}
Let $C:y^2 = f(x)$ be a hyperelliptic curve over $K$, and let $(\Sigma,\cR,d)$ be the associated cluster picture. \\
(1) If $f(x)\in\cO_K[x]$ then $(\Sigma,v(c_f))$ is integral.\\
(2) Conversely, if $(\Sigma, v(c_f))$ is integral and $G_K$ acts tamely on $\cR$  then $f(x-z) \in \cO_K[x]$ for some $z \in K$.\end{theorem}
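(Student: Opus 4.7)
The proof rests on the equivalence $f(x-z) \in \cO_K[x]$ (for $z\in K$) $\iff$ $\nu_{D_{-z,0}}(f) \geq 0$, since $\nu_{D_{w,0}}(f) = \min_i v(c_i)$ for $c_i$ the coefficients of $f(x+w)$ (cf.\ \S\ref{nu and kappa sub}). The heart of the matter is a cluster-theoretic formula for $\nu_{D_{w,0}}$: suppose $\s$ is a proper cluster with $d_\s \leq 0$ and $\t \leq \s$ is a $G_K$-stable child with $d_\t \geq 0$ or $|\t|=1$; let $w$ be a centre of $\t$ (with $w=r_0$ when $\t=\{r_0\}$ is a singleton). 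Decomposing $\sum_r \min(0, v(w-r))$ via the cluster tree---using $v(w-r) \geq d_\t \geq 0$ for $r \in \t$, $v(w-r) = d_\s$ for $r \in \s\setminus\t$ (such $r$ lies in a different child of $\s$, so $r\wedge\t = \s$, and ultrametric comparison gives $v(w-r)=d_\s$), and $v(w-r) = d_{r\wedge\s}$ for $r \notin \s$---one obtains
$$
\nu_{D_{w,0}}(f) \;=\; v(c_f) + (|\s|-|\t|)\,d_\s + \sum\nolimits_{r \notin \s} d_{r\wedge\s},
$$
which is precisely the expression in \Cref{def:integralCP}. An analogous formula with $|\t|=0$ holds for $\t = \emptyset$ provided $w$ is a centre of $\s$ realising $d_\s = \min_{r\in\s} v(w-r)$.

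For Part (1), assume $f \in \cO_K[x]$, so $n = v(c_f) \geq 0$. If $d_\cR \geq 0$, we are in case 1 of \Cref{def:integralCP}. Otherwise, take $w=0 \in \cO_K$; then $\nu_{D_{0,0}}(f) = \min_i v(a_i) \geq 0$. Let $\t_0 = \cR \cap \cO_K$, which is $G_K$-stable since $0 \in K$. When $\t_0 \neq \emptyset$, let $\s$ be the smallest $G_K$-stable cluster containing $\t_0$ with $d_\s \leq 0$ (which exists because $d_\cR < 0$), and let $\t$ be the child of $\s$ containing $\t_0$. Minimality of $\s$ forces $d_\t > 0$, and then $\t = \t_0$: if $r \in \t \setminus \t_0$, picking $r_0 \in \t_0 \subseteq \t$ gives $v(r-r_0) \geq d_\t > 0$, which together with $v(r_0) \geq 0$ forces $v(r) \geq 0$, contradicting $r \notin \cO_K$. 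The key formula then converts $\nu_{D_{0,0}}(f) \geq 0$ directly into the integrality inequality. The subcase $\t_0 = \emptyset$ is handled by replacing $w=0$ with an appropriate $G_K$-invariant centre---either a $K$-rational singleton root of minimum valuation, or using $\t = \emptyset$ with $w=0$ on the boundary of a suitable cluster.

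For Part (2), assume the integrality condition and tameness of the $G_K$-action on $\cR$. By \Cref{lem:invcentre}, every $G_K$-stable proper cluster admits a $G_K$-invariant centre in $K$ (this is exactly where tameness is used), and $K$-rational roots serve as $K$-rational centres of their singleton clusters. In case 1, pick a centre $z_\cR \in K$ of $\cR$; the roots $r - z_\cR$ have valuation $\geq d_\cR \geq 0$ and $v(c_f) = n \geq 0$, so $f(x+z_\cR) \in \cO_K[x]$, whence $z = -z_\cR$ works. In case 2, pick $z_\t \in K$ a centre of the witnessing $\t$ (or a boundary centre of $\s$ when $\t=\emptyset$); the key formula with $w = z_\t$ gives $\nu_{D_{z_\t,0}}(f) \geq 0$ by the integrality hypothesis, hence $f(x+z_\t) \in \cO_K[x]$, and we take $z = -z_\t$.

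The main obstacle is the $\t_0 = \emptyset$ subcase of Part (1): here the formula above becomes a strict inequality rather than an equality when $w=0$ is deep inside $\s$ rather than on its boundary, so a different choice of $w$ (possibly a root, at the cost of separately verifying $f(x+w) \in \cO_K[x]$) or a more delicate identification of $\s$ is needed. In Part (2), the essential non-combinatorial input is the existence of $K$-rational centres for $G_K$-stable clusters, which is precisely the content of the tameness hypothesis.
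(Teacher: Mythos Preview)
Your overall strategy matches the paper's --- both reduce to the Newton-polygon criterion $\sum_r \min\{0, v(r)\} \geq -v(c_f)$ (Lemma~\ref{le:NewtonPoly}) and, for Part~(2), invoke Lemma~\ref{lem:invcentre} to obtain $K$-rational centres; your Part~(2) is essentially the paper's argument. The genuine gap is the one you flag: when $\t_0 = \{r \in \cR : v(r) \geq 0\}$ is empty, you have no candidate $\t$, and your suggested workarounds (changing $w$, or locating a convenient $K$-rational root) are not available in general --- there is no reason $0$ should sit on the ``boundary'' of any cluster.

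The paper closes this by organising the choice of $\s,\t$ differently. Rather than starting from $\t_0$, it takes $\s$ to be the smallest proper cluster having \emph{centre $0$} and $d_\s \leq 0$, and sets $\t = \{r \in \s : v(r) > d_\s\}$. The condition ``$0$ is a centre of $\s$'' alone forces $v(r) = d_\s$ for $r \in \s \setminus \t$ and $v(r) = d_{r\wedge\s} < d_\s \leq 0$ for $r \notin \s$, so
\[
(|\s|-|\t|)\,d_\s + \sum_{r\notin\s} d_{r\wedge\s} \;=\; \sum_{r \in \s\setminus\t} v(r) + \sum_{r \notin \s} v(r) \;\geq\; \sum_{r\in\cR} \min\{0, v(r)\} \;\geq\; -v(c_f)
\]
holds uniformly, including when $\t = \emptyset$ (which absorbs your problematic case). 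When $0$ is not even a centre of $\cR$, all roots share a common valuation $< d_\cR$ and one takes $\s = \cR$, $\t = \emptyset$ directly. This is precisely the ``more delicate identification of $\s$'' you gesture at: anchor on $0$ being a centre of $\s$, and define $\t$ relative to $d_\s$ rather than to the absolute threshold~$0$.
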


\begin{lemma}\label{le:NewtonPoly}
Let $f(x) \in K[x]$. Then $f(x) \in \cO_{K}[x]$ if and only if 
$$
\sum_{r\in \cR} \min\{0, v(r)\} \ge -v(c_f).
$$
\end{lemma}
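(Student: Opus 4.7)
Write $f(x) = \sum_{i=0}^n a_i x^i$ with $a_n = c_f$ and $n = |\cR|$, so that $f \in \cO_K[x]$ if and only if $v(a_i) \geq 0$ for every $i \in \{0, 1, \ldots, n\}$. The plan is to translate this coefficient condition into a statement about the Newton polygon and then read off the answer from the relationship between the polygon and the valuations of the roots.

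Let $N(f)$ denote the Newton polygon of $f$, that is, the lower convex hull in $\R^2$ of the points $\{(i, v(a_i)) : a_i \neq 0\}$ regarded as a piecewise linear function on $[0, n]$. Since $N(f)(i) \leq v(a_i)$ for every $i$ with $a_i \neq 0$ (with equality at the vertices of $N(f)$), and since the condition $v(a_i) \geq 0$ is automatic when $a_i = 0$, one has $f \in \cO_K[x]$ if and only if $N(f)(x) \geq 0$ for all $x \in [0, n]$, equivalently $\min_{[0,n]} N(f) \geq 0$.

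By the classical Newton polygon theorem, the slopes of $N(f)$ read left to right with multiplicity are $\{-v(r) : r \in \cR\}$ in non-decreasing order. Labelling the roots so that $v(r_1) \leq v(r_2) \leq \ldots \leq v(r_n)$ and starting from the right endpoint $(n, v(c_f))$, each unit step to the left along $N(f)$ increases the height first by $v(r_1)$, then by $v(r_2)$, and so on, so that the height at $x = n-k$ equals $v(c_f) + \sum_{i=1}^k v(r_i)$. The minimum over $k \in \{0, 1, \ldots, n\}$ is achieved by summing exactly the strictly negative $v(r_i)$'s, giving
$$\min_{[0,n]} N(f) \;=\; v(c_f) + \sum_{r \in \cR} \min\{0, v(r)\}.$$

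Combining this with the previous paragraph yields the lemma: $f \in \cO_K[x]$ if and only if $v(c_f) + \sum_{r \in \cR} \min\{0, v(r)\} \geq 0$, as required. No serious obstacle arises; the only delicate point is that roots $r = 0$ satisfy $\min\{0, v(r)\} = 0$ and so contribute nothing, which is consistent with the fact that dividing $f$ by $x$ neither changes integrality nor alters the right-hand side of the inequality.
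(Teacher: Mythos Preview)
Your proof is correct and takes essentially the same Newton polygon approach as the paper's own proof; the only cosmetic difference is that the paper works with the Newton polygon of the monic polynomial $f/c_f$ (so that the condition becomes ``polygon lies above $-v(c_f)$'') whereas you work with $N(f)$ directly, but these differ only by a vertical shift of $v(c_f)$ and the logic is identical. Your version simply spells out in more detail what the paper compresses into a single sentence.
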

\begin{proof}
 $f(x) \in \cO_K[x]$ if and only if every point in the Newton polygon of $\frac{f(x)}{c_f}$ lies above $-v(c_f)$. Equivalently the sum of the positive slopes is less than or equal to $v(c_f)$, i.e. 
$
\sum_{r\in \cR} \min\{0, v(r)\} \ge -v(c_f).
$
\end{proof}

\begin{proof}[Proof of Theorem \ref{th:integralpoly}]
(1)
As $f(x)\in\cO_K[x]$, clearly $v(c_f) \ge 0$, so we may suppose $d_{\cR}<0$. If $0$ is not a centre for $\cR$, then every $r\in\cR$ has $v(r)<d_\cR$, so that by Lemma \ref{le:NewtonPoly}
$$
|\cR|d_\cR > \sum_{r\in\cR}\min\{0,v(r)\}\ge-v(c_f).
$$
Thus, taking $\s=\cR$ and $\t=\emptyset$, shows that $(\Sigma,v(c_f))$ is integral.

Henceforth suppose that $d_\cR<0$ and that 0 is a centre for $\cR$.
Let $\s$ be the smallest proper cluster with centre $z_\s=0$ and depth $d_\s\le 0$ and let $\t =  \{r \in \s \mid v(r) > d_\s\}$. 
Note that, by minimality of $\s$, $\t$ is either empty or a child of $\s$ (with $d_\t\ge 0$ if $|\t|>1$).
In particular $v(r)<d_\s\le 0$ for every $r\notin\s$, and $v(r)=d_\s\le 0$ for every $r\in\s\setminus\t$.
Thus
$$
(|\s|-|\t|)d_\s + \sum_{r \notin \s} d_{r\wedge\s}   = \sum_{r\in\s\setminus\t}d_\s +\sum_{r\in\t}0+ \sum_{r \notin \s} d_{r\wedge\s} =
$$
$$
=  \sum_{r \in \s\setminus \t} v(r) + \sum_{r \in \t} 0 + \sum_{r \notin \s} v(r) \ge  \sum_{r\in \cR} \min\{0, v(r)\}\ge -v(c_f),
$$
where the last step comes from Lemma \ref{le:NewtonPoly}.
Finally, note that $\s$ and $\t$ are $G_K$-stable, and that either $|\t|\le 1$ or $\t<\s$ with $d_\t\ge 0$. The result follows.

(2) If $d_{\cR} \ge 0$, by Lemma \ref{lem:invcentre} we can pick a centre $z=z_\cR\in K$ for $\cR$. Then the roots of $f(x-z)$ are all integral. Since by integrality $v(c_f) \ge 0$, we must have $f(x-z) \in \cO_K[x]$, as required.

If $d_\cR<0$, consider $\s$ and $\t$ as in the definition of integrality of $(\Sigma,v(c_f))$. 
If $\t\neq\emptyset$, by Lemma \ref{lem:invcentre} we can pick a centre $z=z_\t=z_\s\in K$ for $\t$ and $\s$.
If $\t$ is empty, pick $z=z_\s\in K$ to be a centre for $\s$, using the same Lemma.
Shifting $x$ to $x-z$, we may thus assume that 
$z_\s=0$ is a centre for $\s$, and that
$z_\t=0$ if $\t\neq\emptyset$. 
By assumption, $d_\s\le 0$, so we have $v(r)<d_\s\le 0$ for all $r\not\in\s$; moreover, $d_s\le v(r)$ for all $r\in\s$, and $0\le v(r)$ for $r\in\t$. 
Hence
$$
-v(c_f) \le  (|\s|-|\t|)d_\s + \sum_{r \notin \s} d_{r\wedge\s}
= |\s\setminus\t|d_\s + \sum_{r \notin \s} v(r)=
$$
$$
\sum_{r \in \s\setminus \t} d_\s+ \sum_{r \in \t} 0 + \sum_{r \notin \s} v(r) 
\le \sum_{r\in \cR} \min\{0, v(r)\}.
$$
The result follows from Lemma \ref{le:NewtonPoly}.
\end{proof}

%

We record a further result that will be useful for understanding the minimal Weierstrass equation of a hyperelliptic curve. In view of Theorem \ref{th:integralpoly} it gives a simple criterion for checking whether $p^n f(x-z)$ has integral coefficients for some $z\in K$, provided all the roots of $f(x)$ lie in $K$. 

\begin{lemma}
\label{lem:optimaln}
Let $\Sigma$ be a cluster picture with trivial $G_K$-action, such that $d_\s\in\Z$ for every cluster $\s$ and $d_{\cR}\le 0$.
Let $n$ be the minimal integer which makes $(\Sigma,n)$ integral.\\ 
(i) Then $n\!=\!\min_\t(-\sum_{r\notin\t}d_{r\wedge\t})$, where $\t$ ranges over all clusters with $d_{P(\t)}\!\le\!0$ that are either singletons or have $d_\t>0$.\\
(ii) If there is a cluster of size $\ge\frac{|\cR|}{2}$ and depth $\ge0$ then $n=-\sum_{r\notin\t}d_{r\wedge\t}$, where $\t$ is the maximal such cluster (either of two, if there are two such).\\
(ii') If there is a cluster of size $\ge\frac{|\cR|}{2}$ and depth $>0$ then $n=-\sum_{r\notin\t}d_{r\wedge\t}$, where $\t$ is the maximal such cluster (either of two, if there are two such).
\end{lemma}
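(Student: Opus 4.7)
My plan is to reformulate the integrality condition in a more tractable form. Writing
\[
\lambda(\s,\t) := (|\s|-|\t|)d_\s + \sum\nolimits_{r\notin\s}d_{r\wedge\s} = \sum\nolimits_{r\notin\t}d_{r\wedge\s},
\]
the definition gives $n = \min_{(\s,\t)\,\text{valid}} -\lambda(\s,\t)$ since $d_\cR\le 0$, and for non-empty $\t$ with $\s = P(\t)$ a direct sum-splitting yields $-\lambda(P(\t),\t) = -\sum_{r\notin\t}d_{r\wedge\t} =: U(\t)$, matching the quantity in (i). The workhorse identity is the one-step recursion $U(\t') = U(\t) - (|\t|-|\t'|)d_\t$ for $\t'<\t$, proved by splitting $\sum_{r\notin\t'}$ according to whether $r\in\t\setminus\t'$ or $r\notin\t$. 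It says that climbing from a child to its parent changes $U$ by $+(|p|-|c|)d_p$, so this walk decreases $U$ precisely when $d_p\le 0$.

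For (i), the bound $n\le \min U(\t)$ is immediate since $(P(\t),\t)$ is a valid pair for every (i)-allowed $\t$. For the reverse, I would take any valid $(\s_0,\t_0)$ and modify it into an (i)-allowed cluster without increasing $-\lambda$: if $\t_0$ is non-empty with $d_{\t_0}=0$, any child of $\t_0$ preserves $U$ by the recursion and lands in the (i)-allowed set; if $\t_0=\emptyset$ and $\s_0$ admits a child that is either a singleton or has $d\ge 0$, I replace $\t_0$ by that child, which can only increase $\lambda$ because $d_{\s_0}\le 0$; otherwise every child of $\s_0$ has size $\ge 2$ and strictly negative depth, and the identity $\lambda(\s_1,\emptyset)-\lambda(\s_0,\emptyset) = |\s_1|(d_{\s_1}-d_{\s_0})>0$ lets me descend to a child $\s_1$ and iterate. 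Since sizes drop, the process terminates at a cluster of size $2$, whose singleton children are automatically (i)-allowed.

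For (ii'), given (i) it suffices to show that the maximal cluster $\t_0$ minimises $U$ on the (i)-allowed set. Maximality forces every proper ancestor of $\t_0$ to have depth $<0$ and size $\ge |\cR|/2\ge 2$, so no such ancestor is (i)-allowed; similarly the condition $d_{\t_0}>0$ excludes every descendant, whose parent has depth $\ge d_{\t_0}>0$. Hence any competing $\t$ is disjoint from $\t_0$; let $\mathfrak{a}=\t\wedge\t_0$ and let $\s'$ be the child of $\mathfrak{a}$ containing $\t$. Climbing from $\t$ to $\s'$ along ancestors of depth $\le d_{P(\t)}\le 0$ gives $U(\t)\ge U(\s')$. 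To compare $U(\s')$ with $U(\t_0)$, I telescope $U$ from $\t_0$ up to $\mathfrak{a}$ and back one step down to $\s'$, rewriting each depth $d_p$ along the $\t_0$-chain as $d_\mathfrak{a} + (d_p-d_\mathfrak{a})$. The $(d_p-d_\mathfrak{a})$-contributions are non-negative by monotonicity of depths, while the remaining $d_\mathfrak{a}$-terms collapse telescopically to $(|\s'|-|\t_0|)d_\mathfrak{a}$, which is $\ge 0$ since $|\s'|\le|\cR|-|\t_0|\le|\t_0|$ and $d_\mathfrak{a}<0$. This yields $U(\t)\ge U(\s')\ge U(\t_0)$. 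Part (ii) follows by the same argument, with the minor caveat that when $d_{\t_0}=0$ the cluster $\t_0$ itself may fail to be (i)-allowed, in which case I replace it by any child, preserving the value of $U$ via the recursion.

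The step I expect to be the main obstacle is the disjoint-cluster comparison in (ii'): the two natural walks ($\t\to\mathfrak{a}$ and $\t_0\to\mathfrak{a}$) both put $U(\mathfrak{a})$ below $U(\t)$ and $U(\t_0)$ and therefore do not suffice on their own to compare the two. The detour through the intermediate sibling cluster $\s'$, combined with the splitting $d_p = d_\mathfrak{a}+(d_p-d_\mathfrak{a})$, is what causes the terms to telescope into a manifestly non-negative expression.
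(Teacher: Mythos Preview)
Your proof is correct. Part~(i) is essentially the paper's argument, with the small improvement that you explicitly treat the case $|\t_0|>1$, $d_{\t_0}=0$ by passing to a child (the paper's write-up leaves this implicit).

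For (ii)/(ii') your route is genuinely different. You compare a disjoint competitor $\t$ with $\t_0$ via a walk-and-telescope: climb from $\t$ to the sibling $\s'$ of $\t_0$ in $\mathfrak a=\t\wedge\t_0$ using the recursion $U(\t')=U(\t)-(|\t|-|\t'|)d_\t$, then telescope $U$ along the $\t_0\to\mathfrak a\to\s'$ chain after splitting each $d_p=d_{\mathfrak a}+(d_p-d_{\mathfrak a})$. The paper bypasses this machinery with a single inequality chain: it decomposes $\sum_{r\notin\s}d_{r\wedge\s}$ according to whether $r\notin\s\wedge\t_0$ or $r\in(\s\wedge\t_0)\setminus\s$, observes that $r\wedge\s=\s\wedge\t_0$ for every $r\in\t_0$ (giving $|\t_0|$ copies of $d_{\s\wedge\t_0}$), bounds the remaining terms (all $\le 0$) by zero, and then uses $|(\s\wedge\t_0)\setminus\t_0|\le|\t_0|$ to complete the comparison. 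Both arguments feed on exactly the same inputs (the size hypothesis and non-positivity of ancestor depths); yours is more transparent about where each sign enters, while the paper's is four lines.

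One minor correction: you claim $d_{\mathfrak a}<0$, but only $d_{\mathfrak a}\le 0$ is guaranteed (equality when $\mathfrak a=P(\t_0)$). This is harmless, since $(|\s'|-|\t_0|)d_{\mathfrak a}\ge 0$ holds regardless.
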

\begin{proof}
If $d_\cR=0$ then $n=0$ and the results follow directly from the definition, so we may suppose that $d_\cR<0$.
Note that for $\t<\s$,
$$
(|\s|-|\t|)d_\s+\sum_{r\notin\s}d_{r\wedge \s} =
\sum_{r\in\s\setminus\t}d_\s+\sum_{r\notin\s}d_{r\wedge \s} =  \sum_{r\notin\t} d_{r\wedge\s} = \sum_{r\notin\t} d_{r\wedge{\t}}.
$$

(i) 
Suppose that $\s$ and $\t$ are the clusters from Definition \ref{def:integralCP} corresponding to $n$. Recall that $G_K$ acts trivially, so all clusters are $G_K$-stable.
If $\t=\emptyset$ then $\s$ cannot have a child $\s'$ with $d_{\s'}\le 0$, since $\sum_{r\in\cR} d_{r\wedge\s} < \sum_{r\in\cR} d_{r\wedge\s'}$, and so $n$ is not the minimal possible. Thus if $\t=\emptyset$, then every child $\t'<\s$ is either a singleton or has $d_{\t'}>0$, and has $(|\s|\!-\!|\t|)d_\s\le (|\s|\!-\!|\t'|)d_\s$ and hence $n+(|\s|\!-\!|\t'|)d_\s+\sum_{r\notin\s}d_{r\wedge \s}\ge 0$. In other words, we may assume that $\t\neq\emptyset$.
%
%
The required formula now follows.

(ii) 
If $d_\t=0$ then for any child $\t'<\t$ we have $\sum_{r\notin\t} d_{r\wedge\t}=\sum_{r\notin\t'} d_{r\wedge\t'}$. If $d_\t>0$ then by maximality $d_{P(\t)}\le 0$. In either case, $n\le -\sum_{r\notin\t}d_{r\wedge\t}$ by (i).

If $\s\not\subseteq\t$ is a cluster with $d_{P(\s)}\le 0$ 
then necessarily $|(\s\wedge\t)\setminus\t|\le\frac{\cR}{2}\le |\t|$ and
$$
 \sum_{r\notin\s} d_{r\wedge\s} = 
 \sum_{r\notin\s\wedge\t} d_{r\wedge(\s\wedge\t)} + \sum_{r\in(\s\wedge\t)\setminus\s} d_{r\wedge\s} \le
 \sum_{r\notin\s\wedge\t} d_{r\wedge(\s\wedge\t)} + |\t|d_{\s\wedge\t} \le
$$
$$
 \sum_{r\notin\s\wedge\t} d_{r\wedge(\s\wedge\t)} + |(\s\wedge\t)\!\setminus\!\t|d_{\s\wedge\t} \le
 \sum_{r\notin\s\wedge\t} d_{r\wedge(\s\wedge\t)} + \sum_{r\in(\s\wedge\t)\setminus\t} d_{r\wedge\t} =
 \sum_{r\notin\t} d_{r\wedge\t}.
$$
Hence $\t$ gives the optimal bound in the expression in (i).

(ii') Same as (ii) without the $d_\t=0$ case.
\end{proof}

\section{Isomorphisms of curves and cluster pictures}\label{s:isomorphisms}

Different models of the same hyperelliptic curve may have different cluster pictures.
In this section we show that there is a good equivalence relation on cluster pictures that is respected by isomorphisms between hyperelliptic curves: isomorphic curves have ``equivalent'' cluster pictures (Theorem \ref{th:isoequiv}) and, conversely, every cluster picture in the equivalence class is realised by some curve over $\Kbar$ (Corollary \ref{realiseequiv}). We will look at $K$-isomorphism classes of (semistable) curves in the next section.




\begin{definition}
\label{d:equivalent}
Two cluster pictures $(\Sigma,\cR,d)$ and $(\Sigma', \cR', d')$ are \emph{equivalent} if $(\Sigma', \cR', d')$ is isomorphic to a cluster picture obtained from $(\Sigma,\cR,d)$ in a finite number of the following steps:
\begin{itemize}[leftmargin=*]
\item \emph{increasing the depth of all clusters by $m\in\Q$:} \\
$\cR' = \cR$, $\Sigma'=\Sigma$ and $d_\s'= d_\s + m$ for all $\s \in \Sigma$,

\item  \emph{adding a root $r$}, when $|\cR|$ is odd:\\ 
$\cR' = \cR \cup \{r\}$,  
$\Sigma'=\Sigma\cup \{\{r\},\cR'\}\setminus\{\cR\}$, $d'_\s = d_\s$ for all proper $\s\in \Sigma'\setminus{\{\cR'}\}$ and  $d'_{\cR'}=d_{\cR}$,

\item \emph{removing a root $r\in\cR$,} when $|\cR|$ is even, $\{r\}<\cR$ and $\cR\setminus\{r\}\notin\Sigma$:\\ 
$\cR' = \cR \setminus \{r\}$,
$\Sigma'=\Sigma\cup \{\cR'\}\setminus\{\cR,\{r\}\}$, $d'_\s = d_\s$ for all proper $\s\in \Sigma'\setminus{\{\cR'}\}$ and  $d'_{\cR'}=d_{\cR}$,




\item \emph{redistributing the depth between $\c$ and $\s^{c}\!=\!\cR\setminus \c$ to $d'_\s=d_\s+m$,} when $|\cR|$ is even, $\c <\cR$ and $-\delta_\s\le m\le \delta_{\s^c}$: \\
$\cR'=\cR$, $\Sigma'=\Sigma\cup \{\s,\s^{c}\}$, $d'_\cR = d_\cR$, 
$$
 d'_\t = d_\t+m \qquad \text{for proper clusters } \t\subseteq\s
$$ 
$$
\>\> d'_\t = d_\t-m \qquad \text{for proper clusters } \t\subseteq\s^c
$$ 
Here we set $\delta_{\s^{c}} =0$ if $\s^{c} \notin \Sigma$, and $d_\t=+\infty$ if $|\t|=1$; if in the resulting cluster picture $\delta'_\s=0$, we remove $\s$ from $\Sigma'$, and similarly for $\s^c$.
\end{itemize}
\end{definition}

\begin{example} 
For any given $n \in \Q$ the following cluster pictures form an equivalence class:
$$
\begin{array}{c}
\clusterpicture            
  \Root {1} {first} {r1};
  \Root {1} {r1} {r2};
  \Root {3} {r2} {r3};
  \Root {} {r3} {r4};
  \Root {} {r4} {r5};
  \Root {1} {r5} {r6};
  \ClusterD c1[{n}] = (r1)(r2);
  \Cluster c2 = (r3)(r4)(r5)(c1);
  \Cluster c3 = (r6)(c2);
\endclusterpicture -\clusterpicture            
  \Root {1} {first} {r1};
 \Root {} {r1} {r2};
  \Root {3} {r2} {r3};
  \Root {} {r3} {r4};
  \Root {} {r4} {r5};
  \Root {} {r5} {r6};
  \ClusterD c1[{n}] = (r1)(r2);
  \Cluster c2 = (r3)(r4)(r5)(r6)(c1);
\endclusterpicture- \RebclnDce - \RebclnCce -  \RebclnGce\cr
\bigm|\qquad\qquad\qquad\qquad\qquad\quad\>\>\>\>\bigm|\\[2pt]
\clusterpicture            
  \Root {1} {first} {r1};
  \Root {} {r1} {r2};
  \Root {3} {r2} {r3};
  \Root {} {r3} {r4};
  \Root {} {r4} {r5};
  \ClusterD c1[{n}] = (r1)(r2);
  \Cluster c2 = (r3)(r4)(r5)(c1);
\endclusterpicture \qquad\qquad\qquad\qquad \RebclnFce
\end{array}
$$
Here the subscripts on clusters specify their relative depths, $t$ can take all values in the range $0<t<n$, and all clusters of size 5 and 6 can have arbitrary depths. Horizontal lines correspond to cluster pictures that are related by redistributing the depth of a child $\s<\cR$ (possibly a singleton) and $\cR\setminus\s$, and vertical lines to those related by adding or removing a root.
\end{example}

\begin{remark}
This agrees with the notion of equivalence in \cite{hyble} for ``metric cluster pictures'' (\cite{hyble} Definitions 3.43, 3.45). Metric cluster pictures do not carry a depth function on clusters, but only a relative depth, which is accounted for by the first of the steps in Definition \ref{d:equivalent}. The other steps then correspond to the moves (iii), (iv) and (i/ii) in the definitions in \cite{hyble}.

\end{remark}


\begin{theorem}\label{th:isoequiv}
If $C_1: y^2=f_1(x)$ and $C_2: y^2=f_2(x)$ are isomorphic hyperelliptic curves over $K$, 
then their cluster pictures are equivalent.
\end{theorem}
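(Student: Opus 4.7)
The plan is to decompose any $K$-isomorphism into elementary transformations and to verify that each one induces an equivalent cluster picture; since the equivalence relation of Definition \ref{d:equivalent} is closed under composition, this suffices. By Appendix \ref{ap:apphyp}, any $K$-isomorphism $C_1\to C_2$ between hyperelliptic curves of genus $\geq 2$ has the form $(x,y)\mapsto\bigl(\frac{ax+b}{cx+d},\,\frac{ey}{(cx+d)^{g+1}}\bigr)$ for some $a,b,c,d\in K$ with $ad-bc\neq 0$ and $e\in K^\times$. Since $\PGL_2(K)$ is generated by translations $x\mapsto x+t$ ($t\in K$), scalings $x\mapsto \lambda x$ ($\lambda\in K^\times$), and the inversion $\iota\colon x\mapsto 1/x$, it is enough to treat each of these together with pure $y$-scalings $y\mapsto \mu y$.

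The easy cases are immediate. A pure $y$-scaling $y\mapsto \mu y$ replaces $f$ by $\mu^{-2}f$ and leaves $\cR$ unchanged. A translation $x\mapsto x+t$ with $t\in K$ shifts every root by $-t$, preserving every pairwise difference $r-r'$ and hence all depths. The scaling $x\mapsto \lambda x$ sends each root $r$ to $r/\lambda$ and every pairwise difference to $(r-r')/\lambda$, shifting every depth uniformly by $-v(\lambda)$; this is precisely the first equivalence move.

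The main case is the inversion $\iota\colon x\mapsto 1/x$. Composing with a translation (which preserves the cluster picture) we may assume $0\notin\cR$. When $|\cR|=2g+1$ we first apply the ``add a root'' move, formally introducing $\infty$ as a singleton child of $\cR$ at depth $d_\cR$; under $\iota$ this becomes an honest root at $0$ of the transformed polynomial, reducing us to the case of even-sized $\cR$. Assume therefore $|\cR|=2g+2$ and $0\notin\cR$, so $\cR' = \{1/r:r\in\cR\}$. The key identity is
\[v\!\left(\tfrac{1}{r}-\tfrac{1}{r'}\right) = v(r-r') - v(r) - v(r').\]
A cluster $\s$ whose minimal disc does not contain $0$ satisfies $d_\s>v(z_\s)$ for any centre $z_\s$, so all $r\in\s$ share the common valuation $v_\s := v(z_\s)$ and the transformed cluster has depth $d_\s-2v_\s$. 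The clusters whose minimal disc does contain $0$ form a nested chain $\cR = \c_0\supsetneq \c_1\supsetneq \ldots\supsetneq \c_k$, and along this chain the map $r\mapsto 1/r$ interchanges the roles of $\c_i$ and $\c_i^c = \cR\setminus \c_i$. The resulting depth adjustment is realised by applying the ``redistribute depth'' move to the pair $(\c_i,\c_i^c)$ for $i=1,\ldots,k$ in succession, followed by a single global depth shift matching $d_\cR$ to $d_{\cR'}$. The bounds $-\delta_{\c_i}\le m\le \delta_{\c_i^c}$ on the allowed shifts are automatic from the depth inequalities within the chain.

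The main obstacle is the detailed verification in the last step: showing that the cumulative effect of the redistribute-depth moves along the chain $\{\c_i\}$ produces precisely the depth function induced by $\iota$ on $\cR'$, and that each individual shift lies in its permitted range. This is a finite combinatorial check, most cleanly carried out by induction on $k$, the length of the chain of clusters through $0$. Alternatively, one can sidestep the bookkeeping by passing to the canonical balanced representative of an equivalence class (Lemma \ref{balanced}), showing that $\iota$ sends the balanced representative of the cluster picture of $\cR$ to one equivalent to the balanced representative of $\cR'$; since balanced representatives are canonical within an equivalence class, this immediately yields the claim.
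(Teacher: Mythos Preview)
Your approach is genuinely different from the paper's, which is much shorter and indirect: the paper passes to a finite extension where both curves are semistable (equivalence of cluster pictures is insensitive to base change), observes that isomorphic semistable curves have isomorphic minimal regular models and hence isomorphic dual graphs, and then invokes \cite[Thm.~5.1]{hyble} which says that the hyperelliptic graph $G_\Sigma$ determines the cluster picture $\Sigma$ up to equivalence. So the paper never touches a M\"obius transformation; it trades the combinatorics for geometry already developed elsewhere.

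Your direct argument is fine for translations, scalings and $y$-scalings, but the inversion step has a real gap, not just missing bookkeeping. The recipe ``redistribute depth along the chain $\cR=\c_0\supsetneq\cdots\supsetneq\c_k$ of clusters containing $0$, then globally shift'' does not produce the correct cluster picture in general. The point is that, for a cluster $\t$ not containing $0$, the depth changes by $-2v_\t$ where $v_\t$ is the common valuation of its elements; but $v_\t$ is \emph{not} determined by which $\c_i\setminus\c_{i+1}$ contains $\t$. Concretely, take $\cR=\{1,1+p,\,p,p+p^3,\,p^2,p^2+p^5\}$ over $\Q_p$. Here the chain is $\cR\supsetneq\c_1=\{p,p+p^3,p^2,p^2+p^5\}$, so $k=1$, yet within $\c_1$ the two twins have valuations $1$ and $2$. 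Under inversion the size-$4$ cluster $\c_1$ disappears and a \emph{different} size-$4$ cluster (the image of $\{1,1+p,p,p+p^3\}$) appears; a single redistribute on $\c_1$ followed by a shift cannot change which four roots form the size-$4$ cluster, and indeed matching depths forces $m=-2$, outside the allowed range $[-1,1]$. One does reach $\Sigma_{\cR'}$ by equivalence moves, but it takes a second redistribute on a twin \emph{not} in the chain.

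The proposed alternative via balanced representatives does not sidestep this: to compare balanced representatives you must already know how inversion interacts with the moves, which is exactly what is at issue. Your programme is probably completable---one expects a correct inductive argument, peeling off one ``valuation level'' at a time rather than one chain-cluster at a time---but as written the inversion case is not proved.
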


\begin{proof}
Note that if $F/K$ is a finite extention then the cluster pictures of $C_1$ and $C_2$ are equivalent over $K$ if and only if they are equivalent over $F$. So we may assume that $C_i/K$ are semistable. 
Then by Theorem \ref{main dual graph thingy}, the two metric hyperelliptic graphs 
$G_{\Sigma_{C_1}}$ and $G_{\Sigma_{C_2}}$ are isomorphic (see \ref{TtoG} for the notation), 
and by \cite{hyble} Thm 5.1, the cluster pictures $\Sigma_{C_1}$ and $\Sigma_{C_2}$ are equivalent.
\end{proof}

\begin{lemma}\label{magicformula}
(i) For $x,y\in\Kbar$,
$$
 v\Bigl(\frac1x - \frac1y\Bigr)=-v(x)-v(y)+v(x-y).
$$
(ii) 
Let $f(x)\in K[x]$ be a separable polynomial with cluster picture $\Sigma_f=(\cR,\Sigma,d)$. Suppose $\s<\cR$ and that all $r\in\s$ have $v(r)=a$ and all $r\in\cR\setminus\s$ have $v(r)=b$. 

Let $\cR'=\{\frac1r:r\in\cR\}$. Then $\psi:r\mapsto\frac1r$ induces a 1-to-1 correspondence between $\Sigma\cup\{\s,\cR\!\setminus\!\s\}$ and $\Sigma'\cup\{\s',\cR\!\setminus\!\s'\}$, where $\s'=\{\frac1r:r\in\s\}$. Moreover, $d'_{\psi(\t)}=d_{\t}-2a$ for clusters $\t\subset\s$, $d'_{\psi(\t)}=d_{\t}-2b$ for clusters $\t\subset\cR\setminus\s$, and $d'_{\cR'}=d_{\cR}-a-b$.
\end{lemma}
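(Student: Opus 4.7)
Part (i) is an immediate computation: $\tfrac{1}{x} - \tfrac{1}{y} = \tfrac{y-x}{xy}$, and additivity of $v$ on products gives the claimed identity.

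For part (ii) the plan is to use (i) to compute how the pairwise valuations between the roots transform under $\psi$, and then to read off the cluster picture $\Sigma'$ directly from the resulting ultrametric. Under the hypothesis on root valuations, part (i) gives that $v(\psi(r) - \psi(r'))$ equals $v(r-r') - 2a$ for $r, r' \in \s$, equals $v(r-r') - 2b$ for $r, r' \in \cR \setminus \s$, and equals $v(r-r') - a - b$ for cross-pairs $r \in \s$, $r' \in \cR \setminus \s$. Since a proper sub-cluster $\t \subseteq \s$ is determined purely by pairwise valuations among its elements, which all shift uniformly by $-2a$, the map $\t \mapsto \psi(\t)$ gives a depth-preserving-up-to-shift bijection between clusters contained in $\s$ and clusters contained in $\s' = \psi(\s)$, with depths shifted by $-2a$; the analogous statement holds for $\cR \setminus \s$ with shift $-2b$.

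For the top cluster $\cR$, the minimum pairwise valuation after inversion is $\min(d_\s - 2a, \, d_{\cR \setminus \s} - 2b, \, d_{\textup{cross}} - a - b)$, where $d_{\textup{cross}}$ denotes the minimum cross-pair valuation in $\cR$. Since $d_\cR = \min(d_\s,\, d_{\cR \setminus \s},\, d_{\textup{cross}})$, a short case check (splitting on whether $a \neq b$, in which case $d_{\textup{cross}} = \min(a,b)$ attains the minimum, or $a = b$, in which case all three shifts coincide) shows that this minimum equals $d_\cR - a - b$. Combining this with the previous paragraph yields the claimed bijection and depth formulas.

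The main obstacle, and the only thing requiring care, is the bookkeeping of degenerate cases where $\s$ or $\cR \setminus \s$ fails to be a proper sub-cluster of $\cR$ in $\Sigma$ (for instance, its depth collides with $d_\cR$, or $\cR \setminus \s$ is not even a cluster because $\cR$ has other children), and analogously for $\s',\, \cR' \setminus \s'$ in $\Sigma'$. The statement absorbs these cases symmetrically by enlarging both sides of the bijection with the virtual sets $\{\s,\, \cR \setminus \s\}$ and $\{\s',\, \cR' \setminus \s'\}$, so the correspondence and the depth formulas hold uniformly regardless of which of these sets are genuine clusters on either side.
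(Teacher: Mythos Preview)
Your proof is correct and follows exactly the approach the paper takes: the paper's proof of (ii) is the single sentence ``Follows directly from (i)'', and you have supplied the details of that direct deduction (the three valuation shifts, the identification of $d_{\cR'}$, and the handling of the possibly-virtual clusters $\s,\cR\setminus\s,\s',\cR'\setminus\s'$). There is nothing to add.
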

\begin{proof}
(i) Clear, since $\frac1x-\frac1y=\frac{y-x}{xy}$. \\ (ii) Follows directly from (i).
\end{proof}


\begin{proposition}\label{mobius}
Let $f(x)\in K[x]$ be a separable polynomial with roots $\cR\subset\bar K$, 
such that $G_K$ acts tamely on $\cR$, 
and let $\Sigma$ be the associated cluster picture. 
Suppose $\Sigma'$ is a cluster picture obtained from $\Sigma$ by one of the following constructions:
\begin{enumerate}
\item Increasing the depth of all clusters by some $n\in\Z$;
\item Adding a root to $\Sigma$, provided $|\cR|$ is odd, $d_\cR\in\Z$ and $|k|> \#\{\s<\cR: \s \text{ is $G_K$-stable}\}$;
\item Redistributing the depth from $\s$ to $\cR\setminus\s$ to eliminate $\s$ and then changing the depth of $\cR$ to 0, provided
$|\cR|$ is even, $\s<\cR$ is $G_K$-stable with $d_\s\in\Z$ and $|k|> \#\{\t<\s: \t \text{ is $G_K$-stable}\}$;
\item Redistributing the depth between $\s$ and $\cR\setminus\s$ by decreasing the depth of $\s$ by 1, provided
$|\cR|$ is even, $\s<\cR$ is $G_K$-stable with $d_\cR, d_\s\in\Z$  and $|k|> \#\{\t<\s: \t \text{ is $G_K$-stable}\}$.
\item Removing a root from $\cR$, provided $|\cR|$ is even, $d_\cR\in\Z$ and $f(x)$ has a root $r\in K$ which does not lie in any proper cluster other than $\cR$.
\end{enumerate}

Then there is a M\"obius transformation $\phi(z)=\frac{az+b}{cz+d}$ with $a, b, c, d\in K$, such that $\Sigma'$ is the cluster picture of $\cR'=\{\phi(r): r\in\cR\}\setminus\{\infty\}$ if $|\cR|$ is even and of $\cR'=\{\phi(r): r\in\cR\cup\{\infty\}\}\setminus\{\infty\}$ if $|\cR|$ is odd.

Moreover, if $y^2=f(x)$ is a hyperelliptic curve, then there is a $K$-isomorphic curve given by a Weierstrass model whose cluster picture is $\Sigma'$.
\end{proposition}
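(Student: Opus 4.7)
The proof proceeds case-by-case, exhibiting for each of the five constructions an explicit M\"obius transformation $\phi(z) = (az+b)/(cz+d)$ with $a,b,c,d\in K$ and verifying using Lemma~\ref{magicformula} (or direct computation from part~(i)) that the image $\phi(\cR)$ (with $\phi(\infty)$ adjoined when $|\cR|$ is odd) has cluster picture exactly $\Sigma'$. Construction (1) is handled trivially by $\phi(z)=\pi^n z$, which shifts all pairwise valuations uniformly by $n$. For the three inversion-type constructions (2), (3) and (5) I would take $\phi(z)=\pi^N/(z-t)$ for suitable $N\in\Z$ and $t\in K$: for (5), $t=r$ (the root being removed) and $N=2d_\cR$, the hypothesis $\{r\}<\cR$ forcing $v(r-r')=d_\cR$ for every $r'\in\cR\setminus\{r\}$, so the scaling $\pi^{2d_\cR}$ compensates exactly for the depth shift produced by the inversion; for (2), $t$ is a $K$-rational centre of $\cR$ outside every $G_K$-stable proper child (existence guaranteed by the hypothesis $|k|>\#\{\s<\cR:G_K\text{-stable}\}$ after applying Lemma~\ref{lem:invcentre} and counting residues modulo~$\pi^{d_\cR+1}$), and $N=2d_\cR$, giving $\phi(\infty)=0$ as the new root at depth $d_\cR$; for (3), $t=z_\s$ is a $K$-rational centre of $\s$ outside every $G_K$-stable proper child of $\s$ and $N=d_\s$, so that Lemma~\ref{magicformula}(ii) then effects the swap of $\s$ and $\s^c$ prescribed by the redistribution.

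Construction (4) is the main difficulty and requires a genuinely non-inversion M\"obius. I would take $\phi(z)=(z-a)/(z-b)$ with $b=z_\s+\pi^{d_\cR+1}u$ for $z_\s\in K$ a centre of $\s$ outside every $G_K$-stable proper child of $\s$ and $u\in\cO_K^\times$ chosen so that $b\notin\cR$ and the reduction of $u$ modulo~$\m$ avoids the finitely many residues forbidden by the $G_K$-stable children of $\s$; then set $a=b-\pi^{2d_\cR+1}$. The existence of such $u$ follows from the hypothesis $|k|>\#\{\t<\s:G_K\text{-stable}\}$. A direct calculation via the identity
$$
v(\phi(r_1)-\phi(r_2)) = v(b-a) + v(r_1-r_2) - v(r_1-b) - v(r_2-b),
$$
using $v(r-b)=d_\cR+1$ for $r\in\s$ (since $\delta_\s\ge 1$ and $u$ is generic) and $v(r-b)=d_\cR$ for $r\in\cR\setminus\s$, then yields pairwise valuations in $\phi(\s)$ shifted by $-1$, in $\phi(\cR\setminus\s)$ shifted by $+1$, and mixed pairs (hence the depth of the new top cluster) unchanged, matching the effect of the redistribution move exactly.

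For the final claim, a M\"obius substitution $x\mapsto\phi^{-1}(x)$ with $\phi\in\PGL_2(K)$ applied to a hyperelliptic Weierstrass equation $y^2=f(x)$ produces, after clearing denominators and an appropriate rescaling of $y$, an equation $y^2=\tilde f(x)$ defining a $K$-isomorphic hyperelliptic curve whose branch locus is $\phi(\cR)$ (or $\phi(\cR\cup\{\infty\})\setminus\{\infty\}$ when $|\cR|$ is odd); see Appendix~\ref{ap:apphyp} for details on such affine isomorphisms of (possibly singular) hyperelliptic equations. The principal obstacle throughout is construction~(4), where the required M\"obius transformation falls outside the scope of Lemma~\ref{magicformula} and the verification of all pairwise valuations must be carried out by hand; the other four cases reduce cleanly either to that lemma or to a trivial rescaling.
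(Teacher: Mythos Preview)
Your proof is essentially correct, but your assessment of case~(4) is mistaken, and this leads you to work harder than necessary. The paper treats case~(4) by exactly the same normalise-and-invert technique as cases~(2), (3) and~(5): after shifting and scaling so that $d_\cR=0$ and choosing a $K$-rational point $w$ in the depth-$1$ disc containing~$\s$ but not in any deeper disc meeting~$\cR$ (this is where the bound $|k|>\#\{\t<\s:\t\text{ $G_K$-stable}\}$ enters, exactly as in your cases~(2) and~(3)), one has $v(r)=1$ for $r\in\s$ and $v(r)=0$ for $r\in\cR\setminus\s$. Then $\phi(z)=\pi_K/z$ does the job: by Lemma~\ref{magicformula}(i), $v(\phi(r_1)-\phi(r_2))=1-v(r_1)-v(r_2)+v(r_1-r_2)$, which shifts depths inside~$\s$ by~$-1$, inside~$\cR\setminus\s$ by~$+1$, and leaves $d_\cR$ unchanged. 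So case~(4) is no harder than the others and does not fall outside the scope of Lemma~\ref{magicformula}.

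Your transformation $\phi(z)=(z-a)/(z-b)$ with $a=b-\pi^{2d_\cR+1}$ and $b=z_\s+\pi^{d_\cR+1}u$ also works (your valuation computation is correct), but it is more intricate and the verification that $v(r-b)=d_\cR+1$ for all $r\in\s$ requires exactly the same residue-avoidance argument that the paper uses to produce~$w$. In effect you have folded the normalisation step into the M\"obius map itself, at the cost of obscuring why the construction succeeds. Cases~(1)--(3) and~(5) in your write-up match the paper's approach (your single-formula $\phi(z)=\pi^N/(z-t)$ is just the composite of the paper's shift, scale and invert).
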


\begin{proof}
Depending on the case 1--5, the M\"obius transformation $\phi$ can be obtained as follows:\\
1. Take $\phi(z)=\pi^n z$.
\\ 2. Since $G_K$ acts tamely, Lemma \ref{lem:invcentre} shows that $\cR$ has a center $z_\cR\in K$; shifting by this and applying (1) we may assume that $z_\cR=0$ and $d_\cR=0$. Shifting further by some $y\in\cO_K$, we may assume that all $r\in\cR$ are units in $\cO_{\Kbar}$. Now $\phi(z)=\frac{1}{z}$ has $\cR'=\{\frac1r :r\in\cR\}\cup\{0\}$, which, by Lemma \ref{magicformula} (i), has the required properties. 
\\ 3. As in the proof of (2), we may assume that $d_\s=0$, $z_\s=0$ and that all roots $r\in\s$ are units. All other roots $r\in\cR\setminus\s$ then have valuation $v(r)=d_\cR$. By Lemma \ref{magicformula}, $\phi(z)=\frac1z$ gives the required cluster picture.
\\ 4. As in the proof of (2), we may assume that $d_\cR=0$, $z_\s=0$ and that all roots $r\in\s$ have valuation 1. Note that all $r\in\cR\setminus\s$ have valuation 0. By Lemma \ref{magicformula}, $\phi(z)=\frac{\pi_K}{z}$ gives the required cluster picture.
\\ 5. Shifting by $r$ and applying (1), we may assume that $z_\cR=r=0$ and $d_\cR=0$. Since $r$ does not lie in any proper subcluster of $\cR$, all the other roots of $f(x)$ must be units.  By Lemma \ref{magicformula}, $\phi(z)=\frac1z$ gives the required cluster picture.

Finally observe that if $y^2=f(x)$ is a hyperelliptic curve, then a change of variables of the form $x=\frac{ax'+b}{cx'+d}$, $y=\frac{y'}{(cx'+d)^{g+1}}$ for a M\"obius transformation $\psi(z)=\frac{az+b}{cz+d}$ with $a, b, c, d\in K$, gives a model for $C/K$ of the form $y'^2=g(x')$. The set of roots of $g(x)$ is precisely $\cR'=\{\psi^{-1}(r):r\in\cR\}\setminus\{\infty\}$ if $|\cR|$ is even and $\cR'=\{\psi^{-1}(r):r\in\cR\cup\{\infty\}\}\setminus\{\infty\}$ if $|\cR|$ is odd. Setting $\phi=\psi^{-1}$ for $\phi$ as in the first part gives the desired model.
\end{proof}

\begin{corollary}\label{realiseequiv}
Let $C: y^2=f(x)$ be a hyperelliptic curve over $K$ and $\Sigma$ its cluster picture. If $\Sigma'$ is equivalent to $\Sigma$, then there is a $\Kbar$-isomorphic hyperelliptic curve $C'/\Kbar:y^2=g(x)$ whose cluster picture is $\Sigma'$.
\end{corollary}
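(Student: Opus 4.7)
The plan is to reduce the corollary to Proposition \ref{mobius} by showing that over $\Kbar$ all of its hypotheses become vacuous, and that its list of M\"obius moves is rich enough to generate the full equivalence relation of Definition \ref{d:equivalent}. First I would note that $G_{\Kbar}$ is trivial, so the action on $\cR$ is trivially tame, every cluster is $G_{\Kbar}$-stable, and the residue field $\bar k$ is infinite, eliminating the tameness, stability and ``$|k|>\cdots$'' conditions appearing in parts (2)--(4) of that proposition. Second, since $\Kbar$ is algebraically closed, its value group is all of $\Q$, so for every $m\in\Q$ there is an element $\alpha_m\in\Kbar^{\times}$ with $v(\alpha_m)=m$.

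Next I would revisit the M\"obius transformations used in the proof of Proposition \ref{mobius} and check that replacing $\pi$ and $\pi_K$ by suitable $\alpha_m$ upgrades each move: the scaling $z\mapsto\alpha_m z$ shifts all depths by an arbitrary $m\in\Q$ (extending (1)); the inversion $z\mapsto\alpha_m/z$, after placing $\s$ at the origin and rescaling so the roots of $\s$ have the required valuation, redistributes depth between $\s$ and $\cR\setminus\s$ by any $m\in\Q$ with $-\delta_\s\le m\le\delta_{\s^c}$ (extending (3) and (4), both signs being covered by interchanging the roles of $\s$ and $\s^c$); adding or removing a root via $z\mapsto 1/z$ continues to work over $\Kbar$ whenever $|\cR|$ has the required parity, after first applying a scaling to put $d_\cR$ into $\Z$. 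As in Proposition \ref{mobius}, each such M\"obius change of variable on $\mathbb{P}^1_{\Kbar}$ lifts to a $\Kbar$-isomorphism of hyperelliptic curves $y^2=f(x)\cong y^2=g(x)$ via the substitution $x=\phi^{-1}(x')$, $y=y'/(cx'+d)^{g+1}$.

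Finally I would match the resulting enhanced moves to the four basic equivalence moves of Definition \ref{d:equivalent}: raising all depths by $m\in\Q$ corresponds to generalised scaling; adding a root in the odd case and removing a root in the even case come from generalised inversion; and redistributing depth between $\s$ and $\s^c$ by any admissible $m$ is handled by the generalised version of move (4). Since the equivalence relation is by definition generated by finitely many such basic moves, composing the corresponding M\"obius transformations produces the desired $\Kbar$-isomorphic curve $C'\colon y^2=g(x)$ with cluster picture $\Sigma'$. The main obstacle is extending move (4) of Proposition \ref{mobius} from integer shifts by $\pm1$ to arbitrary rational shifts; this relies crucially on $\Kbar$ having divisible value group and on the freedom to conjugate the set-up by a scaling. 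Everything else is essentially a bookkeeping translation between Proposition \ref{mobius} and Definition \ref{d:equivalent}.
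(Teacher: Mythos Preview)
Your approach is correct and reaches the same conclusion, but it takes a genuinely different route from the paper. The paper's proof is a two-line reduction: replace $K$ by a sufficiently large finite extension so that the Galois action on the roots becomes trivial and all cluster depths in both $\Sigma$ and $\Sigma'$ become integers (one simply needs the ramification index to clear the finitely many denominators appearing in the chain of equivalence moves). Over that extension Proposition~\ref{mobius} applies verbatim, and its moves (1)--(5) already realise every step of Definition~\ref{d:equivalent}; rational parameters $m$ have become integers after renormalising the valuation, so no generalisation of the proposition is needed at all.

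Your route instead works directly over $\Kbar$ and upgrades the internals of Proposition~\ref{mobius}, replacing $\pi$ by elements of arbitrary rational valuation. This avoids the base-change trick but costs a re-examination of each M\"obius argument. One point where your sketch is imprecise: the phrase ``rescaling so the roots of $\s$ have the required valuation'' does not suffice, since scaling alone cannot equalise the valuations of all $r\in\s$ when $\s$ has proper subclusters. What is actually required---and what underlies the paper's proof of move (4)---is a shift by a point $y$ lying in the disc of $\s$ at the target depth but not in any deeper subdisc meeting $\s$, so that $v(r-y)$ is uniformly the desired value for every $r\in\s$; over $\Kbar$ such a $y$ always exists since the residue field is infinite. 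With that fix your argument goes through. The paper's version has the advantage of invoking Proposition~\ref{mobius} as a black box; yours makes it transparent that the integrality and residue-field hypotheses there are artefacts of working over a discretely valued field rather than genuine obstructions.
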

\begin{proof}
Replacing $K$ by a sufficiently large extension we may assume that Galois acts trivially on the roots of $f(x)$ and that the depths of all clusters in $\Sigma$ and $\Sigma'$ are integers. Now Proposition \ref{mobius} lets us realise all the equivalence steps from $\Sigma$ to $\Sigma'$ in Definition~\ref{d:equivalent} through isomorphisms of curves.
\end{proof}

\section{Canonical cluster picture of a semistable curve}\label{s:balancing}

As any given hyperelliptic curve can admit many different models, it is often desirable to be able to put it in some canonical form. It turns out that every equivalence class of cluster pictures has a canonical representative (Lemma \ref{balanced}). 
Unfortunately, this canonical cluster picture does not always correspond to a Weierstrass model for $C$ defined over $K$. 
However, for semistable curves this is nearly the case (Theorem \ref{Ombalanseringsatz}). Moreover, if the genus of a semistable hyperelliptic curve is even, then this cluster picture does come from a Weierstrass model over $K$ (Corollary \ref{co:evengenus}). As we shall see in \S\ref{se:bible}, this canonical cluster picture is particularly well suited for classifying all reduction types of semistable hyperelliptic curves in any given genus.


\begin{lemma}\label{balanced}
Let $(\Sigma,\cR,d)$ be a cluster picture with $|\cR|=2g+1$ or $2g+2$. There is a unique equivalent (``balanced'') cluster picture $(\Sigma^{b},\cR^b,d^b)$ such that (i) $d^b_{\cR^b}=0$, (ii) $\Sigma^b$ has no clusters of size $>g+1$, (iii) either  $\Sigma^b$ has no clusters of size $g\!+\!1$ or it has two such clusters, in which case they have equal depth.
\end{lemma}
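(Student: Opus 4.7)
The plan is to prove existence by a constructive three-stage reduction algorithm, and to deduce uniqueness by invoking the classification of equivalence classes of metric cluster pictures by metric hyperelliptic graphs in \cite{hyble}.

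For existence, I first observe that proper clusters of size $>g+1$ form a chain under inclusion, since two disjoint such clusters would have total size $>2g+2\ge|\cR|$. Starting from the outermost oversized cluster $\s$ (necessarily a child of $\cR$), if $|\cR|$ is odd I first add a root to make redistribution available, and then apply the redistribution move with parameter $m=-\delta_\s$: this forces $\delta'_\s=0$, removes $\s$ from $\Sigma$, and promotes its children to children of $\cR$. After finitely many such iterations, condition (ii) is achieved with no proper cluster of size $>g+1$.

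Next I handle clusters of size exactly $g+1$. By the same size argument, these are pairwise disjoint, so at most two can coexist, and two such clusters must partition $\cR$ and force $|\cR|=2g+2$. I split into cases. If exactly one such cluster $\s$ exists, I add a root when $|\cR|$ is odd and redistribute with $m=-\delta_\s/2\in(-\delta_\s,0)$, which causes the complement $\s^c$ to appear as a second size-$(g+1)$ cluster with relative depth $\delta_\s/2$. If two such clusters $\s_1,\s_2$ already exist with unequal depths, redistribution with $m=(d_{\s_2}-d_{\s_1})/2$ equalizes them; validity $-\delta_{\s_1}\le m\le\delta_{\s_2}$ follows from $d_{\s_i}>d_\cR$. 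If no cluster of size $g+1$ exists, I remove a singleton child of $\cR$ whenever one is available in order to minimize $|\cR^b|$ (the remove-move is valid since $\cR\setminus\{r\}$ has size $2g+1>g+1$ and is therefore not in $\Sigma$ by (ii)). Finally, shifting all depths by $-d_\cR$ via the depth-increase move achieves (i).

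For uniqueness, I invoke the correspondence observed in the remark after Definition~\ref{d:equivalent}: modulo the depth-shifting move, the equivalence relation here agrees with that on metric cluster pictures in \cite[\S3]{hyble}, where equivalence classes are shown to correspond bijectively to isomorphism classes of metric hyperelliptic graphs. Each such graph $G$ is a finite metric tree (with hyperelliptic involution) and so has a canonical center, either a vertex or an edge. This center determines a canonical balanced representative: the vertex-centered case yields $|\cR^b|=2g+1$ with no size-$(g+1)$ cluster, while the edge-centered case yields $|\cR^b|=2g+2$ with two children of $\cR^b$ of size $g+1$ of equal depth corresponding to the two sides of the central edge. The balance conditions (i)--(iii), together with the minimality of $|\cR^b|$ encoded in the second stage, precisely select this canonical representative.

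The main obstacle is the uniqueness argument, in particular distinguishing between $|\cR^b|=2g+1$ and $|\cR^b|=2g+2$ when no size-$(g+1)$ cluster is present. This is where the canonical center of the metric hyperelliptic graph plays its decisive role: the parity of $|\cR^b|$ corresponds exactly to whether $G$'s center is a vertex or an edge, and verifying that the balance conditions (combined with the removal of superfluous singleton children) pick out this center uniquely is the crux of the argument. The existence part, by contrast, is largely bookkeeping: a careful application of the redistribution and root-adding/removing moves in the correct order.
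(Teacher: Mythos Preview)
The paper's own proof is a one-line citation of \cite[Theorem~5.1]{hyble}, so your attempt to supply an actual argument goes beyond what the paper does. Your existence construction via the moves of Definition~\ref{d:equivalent} is essentially sound. The uniqueness argument, however, contains a genuine error: you assert that each metric hyperelliptic graph $G$ ``is a finite metric tree'', and base the rest of the argument on its canonical center. This is false. The first Betti number of $G_\Sigma$ equals the number of even non-\ub\ clusters other than $\cR$, less one if $\cR$ is \ub\ (Theorem~\ref{th:apphomology}); already the curve in Example~\ref{introexample} has $\rk H_1(\Upsilon_C,\Z)=1$. You may be thinking of the BY tree $T_\Sigma=G_\Sigma/\iota$, which \emph{is} a tree, but then you would need to specify precisely which notion of ``center'' on $T_\Sigma$ corresponds to the balanced representative, verify it matches conditions (i)--(iii), and check that the vertex/edge dichotomy translates as you claim. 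This is the actual content of the result cited from \cite{hyble}, and you have not reproduced it.

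A related symptom appears in your existence argument: the step ``remove a singleton child of $\cR$ whenever one is available in order to minimize $|\cR^b|$'' is not among the lemma's conditions, and in fact conflicts with how the paper uses the balanced picture elsewhere (the proof of Corollary~\ref{pr:genericcase} explicitly takes $|\cR^b|$ even). Without fixing $|\cR^b|$, uniqueness as literally stated fails: if $|\cR|=2g+2$, $d_\cR=0$, and $\Sigma$ has only singleton proper subclusters, then both $\Sigma$ and the result of removing one root satisfy (i)--(iii). You have correctly sensed that an extra normalisation is needed, but your resolution via tree centers does not establish it, and your choice of normalisation (minimise rather than maximise $|\cR^b|$) points in the wrong direction.
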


\begin{proof}
This follows from \cite{hyble} Theorem 5.1.
\end{proof}


%
%

\begin{theorem}
\label{Ombalanseringsatz}
Let $C/K$ be a semistable hyperelliptic curve and suppose that 
$|k|>$ number of $G_K$-stable children of every cluster of size $\ge g+1$ in the cluster picture of $C$. 
Then there is a $K$-isomorphic 
curve $C'/\cO_K: y^2=f(x)$ with $deg(f) =2g+2$ such that 
\begin{itemize}[leftmargin=*]
\item the top cluster of $C'$ has depth 0; 
\item the cluster picture of $C'$ has no cluster of size $>g+1$;
\item either there is at most one cluster of size $g+1$ and $v(c_f)=0$, or $\Frob_K$ swaps two clusters of size $g+1$ and $v(c_f)=0$ or $1$.
\end{itemize}
\end{theorem}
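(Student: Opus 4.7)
The plan is to apply Proposition \ref{mobius} iteratively to transport $C$ to a model whose cluster picture realises (a minor variant of) the balanced form of \Cref{balanced}, then to adjust the leading coefficient by a square scaling of $y$. Since $C$ is semistable, the semistability criterion holds for its cluster picture and $G_K$ acts tamely on $\cR$. To begin, I would reduce to the case $\deg f = 2g+2$: if $\deg f = 2g+1$ then $|\cR|$ is odd so $d_\cR \in \Z$ by \Cref{integral clusters}, and Proposition \ref{mobius}(2) (valid since $|k|$ exceeds the number of $G_K$-stable children of $\cR$ by hypothesis) yields a $K$-isomorphic model with $\deg f = 2g+2$.

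Next I would iteratively eliminate all proper subclusters of size $>g+1$. Any two such are nested (their sizes exceed $|\cR|/2$), so they form a chain $\cR \supsetneq \s_1 \supsetneq \cdots \supsetneq \s_m$, with each $\s_i$ the unique cluster of its size and thus $G_K$-stable; in particular $\s_1$ is a child of $\cR$. Since $|\s_i| \ge g+2 \ge 4$, \Cref{integral clusters} gives $d_{\s_i} \in \Z$, and the residue field hypothesis provides the size condition needed by Proposition \ref{mobius}(3). Applying that proposition to $\s_1$: by \Cref{magicformula} the $K$-rational M\"obius image preserves each inner $\s_j$ ($j \ge 2$) as a child of the new top cluster (with the same depth), while the new top cluster has depth $0$. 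Iterating on $\phi(\s_2)$, $\phi(\s_3)$, $\ldots$ eventually produces a $K$-isomorphic model with $d_\cR = 0$ and all proper subclusters of size $\le g+1$.

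For the size-$(g+1)$ clusters there are either $0$ or $2$ in the resulting picture (if two, they partition $\cR$). If there are two but $\Frob_K$ fixes each individually, I would apply Proposition \ref{mobius}(4) enough times to shift all of their common relative depth onto one side, thereby eliminating one cluster and leaving at most one of size $g+1$; this is valid since both clusters are principal (hence their depths are integers) and $|k|$ is large enough by hypothesis. If $\Frob_K$ swaps them, no such transformation is defined over $K$ and both remain. A final $K$-rational shift of $x$ (ensured by \Cref{th:integralpoly}(2)) followed by a scaling $y \mapsto \lambda y$ with $\lambda \in K^\times$ makes $f \in \cO_K[x]$ and fixes $v(c_f)$ to its minimum within its parity class; the relation $\nu_\s \in 2\Z$ for principal $\s$ pins this parity down. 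In the ``at most one size-$(g+1)$ cluster'' case one checks that $\cR$ is itself principal (it must have $\ge 3$ children, since $|\cR| = 2g+2$ and at most one child has size $g+1$), so $\nu_\cR = v(c_f)$ is even and the scaling yields $v(c_f) = 0$. In the swapped case each size-$(g+1)$ child $\s_i$ is principal with $\nu_{\s_i} = v(c_f) + (g+1)d_{\s_1}$, forcing $v(c_f) \in \{0,1\}$.

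The main obstacle will be the bookkeeping of how the cluster picture and the hypotheses of Proposition \ref{mobius} transform under the iterated M\"obius maps---verifying in particular that $G_K$-stability, integer depth, and the residue-field size condition all persist at each step. This should follow cleanly from \Cref{magicformula}, which shows the transformations respect sizes and $G_K$-orbits, together with the fact that our standing assumption on $|k|$ is stated for clusters of size $\ge g+1$ and so is inherited by every cluster encountered along the way.
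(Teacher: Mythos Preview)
Your approach is essentially the paper's --- iteratively apply Proposition~\ref{mobius}, then adjust $v(c_f)$ by scaling $y$, and use Theorem~\ref{th:integralpoly} to land in $\cO_K$ --- but there is a genuine gap in one case. When there are no proper subclusters of size $>g+1$ to begin with, your step~2 is vacuous and nothing forces $d_\cR = 0$; the final shift of $x$ and scaling of $y$ leave $d_\cR$ untouched, so the first bullet of the theorem is not secured. This is exactly what happens in the Frobenius-swapped case: there $\cR = \s_1 \sqcup \s_2$ with $|\s_i|=g+1$, so the chain $\s_1 \supsetneq \cdots \supsetneq \s_m$ is empty, and your argument as written concludes with whatever $d_\cR$ the curve started with.

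The fix is easy: after making $\deg f = 2g+2$, insert a single application of Proposition~\ref{mobius}(1) to shift all depths by $-d_\cR$. This is legal because $d_\cR \in \Z$ once no child has size $2g$ (Lemma~\ref{integral clusters}), and $2g > g+1$. The paper handles this by separating off the swapped case at the outset (dispatched directly via $x' = \pi^k x$, $y' = \pi^n y$), and in the remaining case using (2), then (1) or (3) to force $d_\cR = 0$, then iterating (4) rather than your (3) to peel off the large clusters one unit of depth at a time. Your choice of (3) in place of (4) is perfectly fine and has the pleasant side effect of resetting $d_\cR$ to $0$ at each step, but you still need (1) as a fallback when there is nothing to apply (3) to.
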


\begin{proof}
Let $(\Sigma,\cR, d)$ be the cluster picture of $C/K$. Recall that as $C/K$ is semistable, it satisfies the semistability criterion (Theorem \ref{the semistability theorem}, Definition \ref{semistability criterion}). In particular, the inertia group cannot permute proper clusters in $\Sigma$.

If Frobenius swaps two clusters of size $g+1$, then by the semistability criterion $d_{\cR}\in\Z$ (e.g. since $\s<\cR$ is principal and so has integral depth, and $\delta_\s\in\Z$ by Proposition \ref{th:equivss} (2)). Applying a transformation of the form $x'=\pi_K^k x$, $y'=\pi_K^n x$ gives a model over $K$ with the required cluster picture. 

Suppose henceforth that Frobenius does not swap clusters of size $g+1$, and consequently that all clusters of size $\ge g+1$ are $G_K$-stable. 
We now change the model for $C$ by repeatedly applying Proposition \ref{mobius}: (2) creates a cluster picture with an even number of roots, then (1) or (3) makes the depth of the top cluster 0, and finally a repeated use of (4) removes all clusters of size $>g+1$ and leaves at most one cluster of size $g+1$ (principal clusters have integral relative depth by Proposition \ref{th:equivss} using that the depth of the top cluster is now $0$ for the case when it is a cotwin). The resulting model necessarily has $v(c_f)\in 2\Z$ (semistability criterion for the top cluster, as it now has depth 0), and hence a change of variables of the form $y=\pi^k y$ gives a model over $K$ with the required cluster picture. 

Finally, Theorem \ref{th:integralpoly} shows that shifting the $x$-coordinate by a suitable element of $K$ gives a model over $\cO_K$.
\end{proof}

\begin{corollary}\label{co:evengenus}
Let $C/K$ be as in Theorem \ref{Ombalanseringsatz}. If $C$ has even genus then there is a $K$-isomorphic 
curve $C'/\cO_K: y^2=f(x)$ such that 
\begin{itemize}[leftmargin=*]
\item the top cluster of $C'$ has size $2g+2$ and depth 0; 
\item the cluster picture of $C'$ has no cluster of size $>g+1$;
\item either $v(c_f)=0$ and there is no cluster of size $g+1$, or $v(c_f) \in \{0,1\}$ and there are two clusters of size $g+1$ with equal depths.
\end{itemize}
Any other $K$-isomorphic curve $C''/\cO_K$ satisfying (1), (2) and (3) has the same cluster picture and valuation of leading term as $C'$.
\end{corollary}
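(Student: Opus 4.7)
\emph{Strategy.} I would begin by applying Theorem \ref{Ombalanseringsatz} to $C$ to obtain a $K$-isomorphic integral model $y^2=f(x)$ with $\deg f = 2g+2$, top cluster of depth $0$, and no proper cluster of size $>g+1$. The theorem leaves us in one of two cases: (a) at most one cluster of size $g+1$ with $v(c_f)=0$, or (b) Frobenius swaps two clusters of size $g+1$ with $v(c_f)\in\{0,1\}$. Case (b) already satisfies option $(b')$ of condition (3), since Frobenius preserves depth so the two clusters have equal depth. Case (a) with no cluster of size $g+1$ gives option $(a')$. The only remaining situation is case (a) with a unique cluster $\s$ of size $g+1$, and the content of the corollary is to rule this out (up to $K$-isomorphism) when $g$ is even.

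The key observation is a parity computation. Such a cluster $\s$ is $G_K$-stable (else Frobenius would produce a second cluster of size $g+1$) and principal (since $|\s|=g+1\ge 3$, $\s\ne\cR$, and $\s$ cannot have a child of size $2g$ as $|\s|<2g$ for $g\ge 2$). Since $\cR$ is the only cluster strictly containing $\s$, for every $r\notin\s$ we have $r\wedge\s=\cR$, yielding
\[
\nu_\s \;=\; v(c_f)+|\s|d_\s+\sum_{r\notin\s} d_{r\wedge\s} \;=\; 0+(g+1)d_\s+(g+1)\cdot 0 \;=\; (g+1)d_\s.
\]
The semistability criterion forces $\nu_\s$ even, and since $g$ is even, $(g+1)$ is odd, so $d_\s$ must be a positive even integer. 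I would then apply Proposition \ref{mobius}(4) to $\s$ iteratively $d_\s/2$ times; each application decreases $d_\s$ by $1$ and raises the depth of $\cR\setminus\s$ by $1$, creating it as a cluster after the first iteration. The hypotheses of the proposition (in particular the bound on $|k|$) are preserved throughout because $\s$ and its $G_K$-stable children are unchanged. The result is a $K$-isomorphic curve whose cluster picture has two $G_K$-stable clusters $\s$ and $\cR\setminus\s$ of size $g+1$ at equal depth $d_\s/2$.

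To finish existence, I would shift $x$ by an element of $K$ to get an integral Weierstrass model via Theorem \ref{th:integralpoly}(2) (valid since $G_K$ acts tamely on $\cR$ by semistability), and then scale $y\mapsto\pi^{k}y$ with $k=\lfloor v(c_f)/2\rfloor$ to bring $v(c_f)$ into $\{0,1\}$ without losing integrality (the minimum coefficient valuation of an integral hyperelliptic equation equals $v(c_f)$). Semistability applied to the new $\s$ forces $v(c_f)\equiv d_\s/2\pmod 2$, so the value obtained is the unique element of $\{0,1\}$ of the correct parity. For uniqueness, Theorem \ref{th:isoequiv} shows that any other model $C''$ satisfying (1)--(3) has cluster picture equivalent to that of $C'$; both pictures satisfy the conditions of Lemma \ref{balanced} together with $|\cR|=2g+2$, so they are isomorphic. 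The parity of $v(c_f)$ is invariant under $K$-isomorphisms $x\mapsto ax+b$, $y\mapsto cy$ since $v(c_f)$ changes by $(2g+2)v(a)-2v(c)$, so the constraint $v(c_f)\in\{0,1\}$ pins down its value uniquely.

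\emph{Main obstacle.} The crucial point is the parity calculation showing $d_\s$ is even, which genuinely uses both the semistability criterion applied to $\s$ and the hypothesis that $g$ is even; for odd $g$ the analogous argument fails and the ``one cluster of size $g+1$'' model cannot be replaced. A secondary technical issue is verifying that Proposition \ref{mobius}(4) can be iterated without breaking its hypotheses and that the resulting model can be put in integral form with $v(c_f)\in\{0,1\}$.
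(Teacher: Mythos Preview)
Your approach is essentially the same as the paper's: apply Theorem~\ref{Ombalanseringsatz}, handle the leftover ``single cluster of size $g+1$'' case via Proposition~\ref{mobius}(4) together with a parity argument, and deduce uniqueness from Theorem~\ref{th:isoequiv} and Lemma~\ref{balanced}. Your parity computation $\nu_\s=(g+1)d_\s$, forcing $d_\s$ even, is a clean and explicit version of what the paper sketches.

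Two small technical points need tightening. First, the parenthetical ``the minimum coefficient valuation of an integral hyperelliptic equation equals $v(c_f)$'' is false in general (e.g.\ $p^2x^3+x+1$). It happens to hold in your situation once you have shifted by a centre of $\cR$ (since $d_\cR=0$ makes all roots integral, so $\prod(x-r)\in\cO_K[x]$), but you do not say this. A cleaner route, which also avoids worrying about whether $v(c_f)\ge 0$ after Proposition~\ref{mobius}(4), is to swap the last two steps: first scale $y$ to bring $v(c_f)$ into $\{0,1\}$, then observe that $(\Sigma,v(c_f))$ is integral directly from Definition~\ref{def:integralCP} (first bullet, since $d_\cR=0$ and $v(c_f)\ge 0$), and finally apply Theorem~\ref{th:integralpoly}(2).

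Second, your uniqueness argument for $v(c_f)$ only treats affine substitutions $x\mapsto ax+b$, $y\mapsto cy$, whereas general $K$-isomorphisms of hyperelliptic curves involve M\"obius transformations in $x$. The right argument is the one you already used in existence: once the cluster pictures of $C'$ and $C''$ agree, the semistability criterion applied to a common principal cluster $\s$ of size $g+1$ gives $v(c_{f'})\equiv v(c_{f''})\equiv -(g+1)d_\s\pmod 2$, and the constraint $v(c_f)\in\{0,1\}$ then pins the value down. (The paper itself leaves this step implicit.)
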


\begin{proof}
The existence of $C'$ follows from the theorem and, in the case when the theorem yields a cluster picture with a cluster of size $g+1$, Proposition~\ref{mobius}(4).
The fact that, in this special case, the two resulting clusters $\s, \s'$of size $g+1$ can be made to have equal depth follows from the semistability criterion, which shows that $\delta_\s, \delta_{\s'}\in\Z$ and $(g+1)\delta_\s\equiv (g+1)\delta_{\s'} \mod 2$  (Theorem \ref{the semistability theorem}, Definition \ref{semistability criterion}).

Uniqueness of the cluster picture follows from the fact that isomorphic curves have equivalent cluster pictures (Theorem \ref{th:isoequiv}) and uniqueness of balanced cluster pictures (Lemma \ref{balanced}).
\end{proof}

\begin{remark}\label{balansingact}
Theorem \ref{cor:basicminimal} and Proposition \ref{pr:excepmini} show that the models obtained in the theorem and the corollary are  minimal Weierstrass  equations.
\end{remark}

\section{Discriminant}\label{s:discriminant}


Recall the definition of the discriminant of a hyperelliptic curve:

\begin{definition}(See \cite{Liu}, Section 2)\label{de:DiscHEC}
Let $C:y^2=f(x)$ be a hyperelliptic curve of genus $g$ over $K$. The {\em discriminant} $\Delta_C$ of $C$ is 
$$
\Delta_C 
=  16^g c_f^{4g+2}\text{disc}\Bigl(\frac{1}{c_f}f(x)\Bigr).
$$
\end{definition}

Our main result on the discriminant is that one can easily read off its valuation from the cluster picture of $C$ and, when $C/K$ is semistable, one can moreover read off the valuation of the discriminant of its minimal Weierstrass model:

\begin{theorem}
\label{thmmindisc}
Let $C:y^2=f(x)$ be a hyperelliptic curve of genus $g$ over $K$, and let $(\Sigma,\cR,d)$ be the associated cluster picture. 
Then
$$
 v(\Delta_C)=
   v(c_f)(4g+2) + \sum_{\s \text{ proper}} d_\s \Bigl(|\s|^2-\mathop{\Sigma}\limits_{\s'<\s}|\s'|^2\Bigr)
$$
If $C/K$ is semistable and $|k|>2g\!+\!1$, then the valuation of the discriminant $\Delta_C^{\min}$ of a minimal Weierstrass model of $C$ is determined by the formula
$$
  \frac{v(\Delta_C)-v(\Delta_C^{\min})}{4g+2}=v(c_f)-E+ d_{\cR}(|\cR|-g-1)+\sum_{g+1<|\s|<|\cR|}\delta_\s   (|\s|-g-1),
$$
where $E=0$ unless $\Sigma$ has two clusters of size $g+1$ that are permuted by Frobenius and $v(c_f)$ is odd, in which case $E=1$.
\end{theorem}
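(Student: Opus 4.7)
For the first formula the plan is to unwind the definition of the discriminant directly. Since $p\ne 2$, $v(16)=0$, so $v(\Delta_C)=(4g+2)v(c_f)+2\sum_{i<j}v(r_i-r_j)$. Each $v(r_i-r_j)$ equals $d_{\{r_i\}\wedge\{r_j\}}$, so I group the sum by the deepest common cluster: the number of pairs whose deepest common cluster is $\mathfrak{s}$ equals $\binom{|\mathfrak{s}|}{2}-\sum_{\mathfrak{s}'<\mathfrak{s}}\binom{|\mathfrak{s}'|}{2}$, which simplifies to $\tfrac12\bigl(|\mathfrak{s}|^2-\sum_{\mathfrak{s}'<\mathfrak{s}}|\mathfrak{s}'|^2\bigr)$ since the linear terms telescope via $\sum_{\mathfrak{s}'<\mathfrak{s}}|\mathfrak{s}'|=|\mathfrak{s}|$. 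Multiplying by $2$ and summing gives the first formula.

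For the minimal discriminant the strategy is to take as minimal model the explicit curve $C':y^2=f^b(x)$ produced by Theorem \ref{Ombalanseringsatz} (and Corollary \ref{co:evengenus} in the even-genus case): it has the balanced cluster picture $\Sigma^b$ of Lemma \ref{balanced} with $d_{\mathcal{R}^b}=0$, no cluster of size $>g+1$, at most two clusters of size $g+1$ (of equal depth when there are two), and $v(c_{f^b})\in\{0,1\}$. By Theorem \ref{introminimal} this is a minimal Weierstrass equation, so $v(\Delta_C^{\min})=v(\Delta_{C'})$. Applying the formula from part (i) to both models yields
\[
v(\Delta_C)-v(\Delta_C^{\min})=(4g+2)\bigl(v(c_f)-v(c_{f^b})\bigr)+F(\Sigma)-F(\Sigma^b),
\]
where $F(\Sigma):=\sum_{\mathfrak{s}}d_\mathfrak{s}\bigl(|\mathfrak{s}|^2-\sum_{\mathfrak{s}'<\mathfrak{s}}|\mathfrak{s}'|^2\bigr)$. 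It then suffices to identify $v(c_{f^b})=E$ and to compute the cluster-term difference.

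To see that $v(c_{f^b})=E$, observe that the semistability criterion for $\mathcal{R}^b$, combined with $d_{\mathcal{R}^b}=0$, gives $\nu_{\mathcal{R}^b}=v(c_{f^b})$, so $v(c_{f^b})$ is forced to be even whenever $\mathcal{R}^b$ is principal. The only way $\mathcal{R}^b$ fails to be principal in the balanced setting is when it has exactly two children both of size $g+1$; by uniqueness of the balanced picture these must be swapped by Frobenius. Combined with $v(c_{f^b})\in\{0,1\}$, the former situation forces $v(c_{f^b})=0$; in the latter, tracking the $y$-rescalings $y\mapsto\pi^n y$ used throughout the balancing algorithm (all other operations being $K$-isomorphisms that do not alter $v(c_f)$ modulo $2$) shows $v(c_{f^b})\equiv v(c_f)\pmod 2$. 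Together these match the definition of $E$.

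The main obstacle is the cluster-term computation, which I would carry out by processing the balancing moves in order: first the uniform depth shift $d\mapsto d-d_\mathcal{R}$; then, if $|\mathcal{R}|$ is odd, a M\"obius move adding a root at infinity (which does not change $F$ once $d_\mathcal{R}=0$); and finally, from outside in, the redistributions removing each cluster $\mathfrak{s}_i$ of size $>g+1$ in the chain $\mathcal{R}=\mathfrak{s}_0\supset\mathfrak{s}_1\supset\cdots\supset\mathfrak{s}_k$. The shift contributes $-d_\mathcal{R}(|\mathcal{R}|^2-|\mathcal{R}|)$, which in both parities of $|\mathcal{R}|$ equals $-d_\mathcal{R}(4g+2)(|\mathcal{R}|-g-1)$. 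For each redistribution, the telescoping identity $\sum_{\mathfrak{t}\subseteq\mathfrak{s}\text{ proper}}\bigl(|\mathfrak{t}|^2-\sum_{\mathfrak{t}'<\mathfrak{t}}|\mathfrak{t}'|^2\bigr)=|\mathfrak{s}|^2-|\mathfrak{s}|$ shows that shifting depths inside $\mathfrak{s}_i$ by $-\delta_{\mathfrak{s}_i}$ contributes $-\delta_{\mathfrak{s}_i}(|\mathfrak{s}_i|^2-|\mathfrak{s}_i|)$, while installing $\mathfrak{s}_i^c$ at depth $\delta_{\mathfrak{s}_i}$ contributes $+\delta_{\mathfrak{s}_i}(|\mathfrak{s}_i^c|^2-|\mathfrak{s}_i^c|)$. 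Using $|\mathfrak{s}_i|+|\mathfrak{s}_i^c|=2g+2$ these combine to $-(4g+2)\delta_{\mathfrak{s}_i}(|\mathfrak{s}_i|-g-1)$. The careful bookkeeping to be verified is: at the $i$th step the current depth of $\mathfrak{s}_i$ really is $\delta_{\mathfrak{s}_i}$, because the previous redistribution dropped all depths inside $\mathfrak{s}_{i-1}$ by $\delta_{\mathfrak{s}_{i-1}}$; and the add-a-root step in the odd-degree case interacts correctly with subsequent redistributions. Summing all contributions gives the claimed formula.
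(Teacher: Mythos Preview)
Your approach is essentially the same as the paper's: part~(i) is exactly Lemma~\ref{le:discriminant} (group pairs $\{r_i,r_j\}$ by their deepest common cluster and telescope), and part~(ii) proceeds, as the paper does, by taking the model from Theorem~\ref{Ombalanseringsatz} as minimal and tracking the discriminant through the balancing moves (which is precisely the content of Lemma~\ref{le:discchange}). Your step-by-step computation of $F(\Sigma)-F(\Sigma^b)$ is correct and matches what Lemma~\ref{le:discchange}(i),(ii),(iv) gives.

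One point needs fixing. The model $C'$ produced by Theorem~\ref{Ombalanseringsatz} does \emph{not} always have the balanced picture $\Sigma^b$: in the non-exceptional case it has at most one cluster of size $g+1$ (and $v(c_{f'})=0$), whereas $\Sigma^b$ may well have two such clusters that are \emph{not} swapped by Frobenius. So your sentence ``by uniqueness of the balanced picture these must be swapped by Frobenius'' is false. This does not damage the argument, because redistributing between two clusters of size $g+1$ leaves the discriminant unchanged (Lemma~\ref{le:discchange}(iv) with $|\cR|-2|\s|=0$), so $v(\Delta_{C'})=v(\Delta_{\Sigma^b,0})$ regardless; and the identification $v(c_{f'})=E$ follows directly from the statement of Theorem~\ref{Ombalanseringsatz} (non-exceptional forces $v(c_{f'})=0$) together with the observation that in the exceptional case only substitutions $x\mapsto\pi^a x$, $y\mapsto\pi^b y$ are used, which change $v(c_f)$ by the even quantity $(2g+2)a-2b$.
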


The first part of the theorem follows directly from Lemmas \ref{comparedisc} and \ref{le:discriminant} below. The second part will be proved at the end of section \ref{s:MWEquation}, after we find a description for minimal Weierstrass equations in terms of cluster pictures.

\begin{definition}
Let $(\Sigma,\cR,d)$ be a cluster picture and let $n\in\Z$. 
The {\em discriminant} $\Delta_{\Sigma,n}$ of the pair $(\Sigma,n)$ is the fractional ideal of $K$ with valuation
$$
v(\Delta_{\Sigma,n}) = 
n(4g+2)+ 2 \sum _{r\neq r'\in\cR} d_{r\wedge r'}.
$$
\end{definition}

\begin{lemma}\label{comparedisc}
Let $C:y^2=f(x)$ be a hyperelliptic curve over $K$, let $\Sigma$ be the associated cluster picture and $n=v(c_f)$. Then
$$
v(\Delta_{C})=v(\Delta_{\Sigma,n}).
$$
\end{lemma}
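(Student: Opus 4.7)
The plan is to prove this by direct calculation, unwinding the definition of both the discriminant of the curve and the discriminant of the pair $(\Sigma,n)$, and matching the two expressions term by term. No deep cluster machinery is needed; the only substantive input is the identification $v(r-r')=d_{r\wedge r'}$ that comes straight from the definition of cluster depth.

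Starting from Definition \ref{de:DiscHEC}, I would take the valuation
$$v(\Delta_C)=v(16^g)+(4g+2)\,v(c_f)+v\bigl(\textup{disc}(f(x)/c_f)\bigr).$$
Since the residue characteristic of $K$ is assumed odd, $v(16^g)=0$. For the monic polynomial $f(x)/c_f=\prod_{r\in\cR}(x-r)$, the classical formula gives
$$\textup{disc}\bigl(f(x)/c_f\bigr)=\prod_{\{r,r'\}\subset\cR,\,r\neq r'}(r-r')^2,$$
so that
$$v\bigl(\textup{disc}(f(x)/c_f)\bigr)=2\!\!\sum_{\{r,r'\}\subset\cR,\,r\neq r'}\!\!v(r-r').$$

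The second step is the key point. For $r\neq r'$ in $\cR$, the smallest cluster $r\wedge r'$ containing both is, by Definition \ref{defclusters}, cut out by the minimal disc containing $r$ and $r'$, whose depth is $v(r-r')$. Hence $v(r-r')=d_{r\wedge r'}$, and combining this with the above gives
$$v(\Delta_C)=(4g+2)\,v(c_f)+2\!\!\sum_{\{r,r'\}\subset\cR,\,r\neq r'}\!\!d_{r\wedge r'}.$$

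Finally, setting $n=v(c_f)$ in the definition of $v(\Delta_{\Sigma,n})$ produces exactly the same expression, so $v(\Delta_C)=v(\Delta_{\Sigma,n})$. The main (and only) thing to be careful about is matching the convention for $\sum_{r\neq r'}$ between the definition of $\Delta_{\Sigma,n}$ (unordered pairs, which accounts for the explicit factor $2$) and the formula coming from the monic discriminant (also unordered pairs $i<j$, each contributing a square); once one observes that both sums are over the same unordered pairs, the identification is immediate. There is no real obstacle here; the statement is essentially a bookkeeping lemma that sets up the combinatorial reformulation of $v(\Delta_C)$ which is then carried out in Lemma \ref{le:discriminant} (where one rewrites $\sum_{r\neq r'}d_{r\wedge r'}$ as a sum over proper clusters using the orbit decomposition of pairs by their join).
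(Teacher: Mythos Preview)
Your proof is correct and is exactly what the paper has in mind; the paper's own proof is the single word ``Clear.'', and you have simply unpacked that into the obvious computation using $v(r-r')=d_{r\wedge r'}$ and the product formula for the discriminant of a monic polynomial.
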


\begin{proof}
Clear.
\end{proof}


\begin{lemma}\label{le:discriminant} Let $(\Sigma, \cR, d)$ be a cluster picture. Then
 $$ 2 \sum _{r\neq r'\in\cR} d_{r\wedge r'}
   =  \sum_{\s, |\s|>1} d_\s (|\s|^2-\mathop{\Sigma}\limits_{\s'<\s}|\s'|^2)
   =  \sum_{\s} |\s|^2 \left\{\scriptstyle\begin{array}{lll}
      -d_{P(\s)} & \text{if} & |\s|=1, \cr   
      d_\s & \text{if} & \s=\cR, \cr   
      \delta_{\s} & \rlap{\text{otherwise.}} \cr   
      \end{array}\right.
     $$
\end{lemma}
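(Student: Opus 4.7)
The plan is to prove the two equalities separately using only elementary combinatorics on the cluster tree.

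For the first equality, I would organise the left-hand sum by the ``least common ancestor'' cluster $\s=r\wedge r'$ and count how many unordered pairs $\{r,r'\}$ land in each $\s$. The key observation is that for every proper cluster $\s$, each root of $\s$ lies in a unique child of $\s$: either in a proper subcluster that is a child, or in the singleton $\{r\}$ itself when $\{r\}<\s$. Consequently $\sum_{\s'<\s}|\s'|=|\s|$, and the ordered pairs $(r,r')$ with $r,r'\in\s$ lying in a common child number $\sum_{\s'<\s}|\s'|^2$ (the diagonal pairs $(r,r)$ automatically sit in the singleton child $\{r\}<\s$, so they are counted here). The ordered pairs with $r\wedge r'=\s$ are thus the remaining $|\s|^2-\sum_{\s'<\s}|\s'|^2$, and halving gives the count of unordered pairs. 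Weighting by $d_\s$ and summing over proper $\s$, then multiplying by $2$, yields the first equality. Singleton clusters contribute nothing because $r\wedge r'$ with $r\ne r'$ cannot be a singleton.

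For the second equality I would expand the middle expression as
\[
\sum_{|\s|>1}d_\s|\s|^2 \;-\; \sum_{|\s|>1}d_\s\sum_{\s'<\s}|\s'|^2
\]
and swap the order of summation in the second term. Since every cluster $\s'\ne\cR$ has a unique parent $P(\s')$ (necessarily with $|P(\s')|>1$), the second term reindexes as $\sum_{\s'\ne\cR}d_{P(\s')}|\s'|^2$, where the sum ranges over \emph{all} clusters $\s'\ne\cR$, including singletons. Collecting the coefficient of $|\s|^2$ for each cluster $\s$ now gives $d_\cR$ when $\s=\cR$, the difference $d_\s-d_{P(\s)}=\delta_\s$ for proper $\s\ne\cR$, and $-d_{P(\s)}$ for singletons $\s\ne\cR$, matching the piecewise formula exactly.

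The argument is pure bookkeeping on the cluster tree and has no serious obstacle; the only subtle point requiring care is the bookkeeping convention that each root of $\s$ lies in a unique child of $\s$ (with singletons $\{r\}<\s$ counted as children), which ensures that $\sum_{\s'<\s}|\s'|^2$ correctly enumerates ordered pairs of elements of $\s$ that share a common child and that the diagonal is properly accounted for.
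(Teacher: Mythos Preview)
Your proof is correct and essentially the same as the paper's. For the first equality the paper fixes a root $r$ first and writes $\sum_{r'\ne r}d_{r\wedge r'}=\sum_{\s\ni r}d_\s(|\s|-|\text{child of }\s\text{ containing }r|)$ before summing over $r$ and swapping the order, whereas you group ordered pairs directly by their least common ancestor; these are the same count organized in two orders. For the second equality your reindexing $\sum_{|\s|>1}d_\s\sum_{\s'<\s}|\s'|^2=\sum_{\s'\ne\cR}d_{P(\s')}|\s'|^2$ is exactly what the paper does.
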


\begin{proof}
We have

$$ 
 2 \sum _{r\neq r'\in\cR} d_{r\wedge r'}= \sum_{r\in \cR} \sum_{\s\ni r, |\s|>1} d_\s (|\s|-|\text{child of $\s$ containing $r$}|)
$$
$$
= \sum_{\s, |\s|>1} d_\s (|\s|^2-\mathop{\Sigma}\limits_{\s'<\s}|\s'|^2)=\sum_{\s, |\s|>1} d_\s |\s|^2-\sum_{\s'\ne \cR} d_{P(\s')}|\s'|^2.
$$
\end{proof}

In the remainder of this section we establish some results on how the discriminant changes under the different steps yielding equivalent cluster pictures (see Definition \ref{d:equivalent}). As equivalence of cluster pictures is closely tied to isomorphisms of hyperelliptic curves (see Theorem \ref{th:isoequiv} and Proposition \ref{mobius}), this will be the key to understanding minimal Weierstrass equations and their discriminants. Recall also that we have a notion of integrality for cluster pictures (see Definition \ref{def:integralCP}), which gives a way of determining whether a cluster picture corresponds to an integral Weierstrass equations.

\begin{lemma}
\label{le:discchange}
Let $(\Sigma,\cR,d)$ and $(\Sigma',\cR',d')$ be cluster pictures. \\
(i) If $\Sigma'$ is obtained from $\Sigma$ by increasing the depth of all clusters to $d_\s'=d_\s+t$, then
$$
 v(\Delta_{\Sigma',n})= v(\Delta_{\Sigma,n})+t|\cR|(|\cR|-1).
$$
(ii) If $\Sigma$ has odd size and $\Sigma'$ is obtained by adding a root to $\Sigma$, then
$$
 v(\Delta_{\Sigma',n})= v(\Delta_{\Sigma,n})+2d_{\cR}|\cR|.
$$
(iii) If $\Sigma$ has even size then for $k\in\Z$,
$$
 v(\Delta_{\Sigma,n+k})= v(\Delta_{\Sigma,n})+2k(|\cR|-1).
$$
If $\Sigma$ has odd size, then 
$$
v(\Delta_{\Sigma,n+k})= v(\Delta_{\Sigma,n})+2k|\cR|.
$$
(iv) If $\Sigma$ has even size and $\Sigma'$ is obtained by redistributing the depth between $\s<\cR$ and $\cR\setminus\s$ to $d_\s'=d_\s-t$ and $d_{\cR\setminus\s}'=d_{\cR\setminus\s}+t$, then
$$
 v(\Delta_{\Sigma',n})= v(\Delta_{\Sigma,n})+ t(|\cR|-2|\s|)(|\cR|-1).
$$
\end{lemma}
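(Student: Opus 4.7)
The proof plan is to unravel the defining formula
$$v(\Delta_{\Sigma,n}) = n(4g+2) + 2\sum_{r\neq r'\in\cR} d_{r\wedge r'}$$
(where the sum runs over unordered pairs $\{r,r'\}$) and to track how each individual term transforms under the four moves of Definition~\ref{d:equivalent}. Recall that $4g+2 = 2(|\cR|-1)$ when $|\cR| = 2g+2$ is even, and $4g+2 = 2|\cR|$ when $|\cR|=2g+1$ is odd; this observation handles the two cases of (iii) immediately, and keeps $n(4g+2)$ unchanged in (ii) since passing from $|\cR|$ odd to $|\cR'| = |\cR|+1$ even preserves $g$.

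For (i) every one of the $\binom{|\cR|}{2}$ depths $d_{r\wedge r'}$ increases by $t$ while $n$ stays fixed, giving the advertised change $2t\binom{|\cR|}{2} = t|\cR|(|\cR|-1)$. For (ii), adding a root $r$ that is not in any proper cluster leaves all old $d_{r'\wedge r''}$ untouched and creates exactly $|\cR|$ new pairs $\{r,r'\}$, each contributing $d_{\cR'} = d_{\cR}$ to the sum, for a total change of $2|\cR|d_{\cR}$.

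The main case is (iv). Here I would need to check carefully how $d_{r\wedge r'}$ changes for each unordered pair. Splitting into three types, if $\{r,r'\}\subseteq \s$ then $r\wedge r'$ is a proper subcluster of $\s$ whose depth goes down by $t$ (there are $\binom{|\s|}{2}$ such pairs); if $\{r,r'\}\subseteq \cR\setminus\s$ then in $\Sigma'$ we have $r\wedge r' \subseteq \s^c$ with depth increased by $t$, this working uniformly even when $\s^c\notin\Sigma$ originally because then the pair's join moves from $\cR$ (depth $d_\cR$) to $\s^c$ (new depth $d_\cR + t$), contributing $\binom{|\cR|-|\s|}{2}$ pairs; and if one root lies in each of $\s$, $\cR\setminus\s$, then $r\wedge r'=\cR$ in both pictures and contributes nothing. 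Putting this together yields
$$2t\!\left[\binom{|\cR|-|\s|}{2}-\binom{|\s|}{2}\right] \;=\; t\bigl[(|\cR|-|\s|)(|\cR|-|\s|-1) - |\s|(|\s|-1)\bigr],$$
and the algebraic identity $(a-b)(a-b-1)-b(b-1) = (a-2b)(a-1)$ with $a=|\cR|$, $b=|\s|$ delivers the claimed formula $t(|\cR|-2|\s|)(|\cR|-1)$.

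The only subtle point, and the one that merits explicit verification, is the uniform treatment in (iv) of the case where $\s^c$ is newly introduced as a cluster: one must be sure that the convention $d_{\s^c}=d_\cR$ (relative depth zero) used in Definition~\ref{d:equivalent} matches the depth of $r\wedge r'$ in the original picture for roots $r,r'\in\s^c$ whose original join was $\cR$. Everything else is essentially bookkeeping.
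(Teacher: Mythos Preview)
Your proof is correct and follows exactly the same approach as the paper: both unwind the definition of $v(\Delta_{\Sigma,n})$ and track the change in $\sum d_{r\wedge r'}$ under each move by splitting the pairs according to where they lie. The paper's version is terser (it does not spell out the algebraic identity in (iv) or the edge case where $\s^c$ is newly created), but the argument is the same.
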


\begin{proof}
Write $|\cR|$ as $|\cR|=2g+1$ or $2g+2$ for some $g\in\Z$.\\
(i)
$$
v(\Delta_{\Sigma',n})=n(4g+2) + \sum_{r\neq r'\in\cR}d'_{r\wedge r'} 
= n(4g+2) + \sum_{r\neq r'\in\cR}d_{r\wedge r'} + |\cR|(|\cR|-1)t.
$$
(ii)
$$
v(\Delta_{\Sigma',n})=n(4g+2) + \sum_{r\neq r'\in\cR'}d'_{r\wedge r'} = 
n(4g+2) + \sum_{r\neq r'\in\cR}d_{r\wedge r'} + 2|\cR|d_{\cR}.
$$
(iii)
$$
v(\Delta_{\Sigma,n+k})=(n+k)(4g+2) + \sum_{r\neq r'\in\cR}d_{r\wedge r'} 
= n(4g+2) + \sum_{r\neq r'\in\cR}d_{r\wedge r'} + k(4g+2).
$$
(iv)
$$
v(\Delta_{\Sigma',n})=n(4g+2) + \sum_{r\neq r'\in\cR}d'_{r\wedge r'} =
$$
$$
= n(4g+2) + \sum_{r\neq r'\in\s}d'_{r\wedge r'} + \sum_{r\neq r'\in\cR\setminus\s}d'_{r\wedge r'} + 2\sum_{r\in\s, r'\in\cR\setminus\s}d'_{r\wedge r'} =
$$
$$
=  v(\Delta_{\Sigma,n})- t|\s|(|\s|-1) + t|\cR\setminus\s|(\cR\setminus\s|-1)
=  v(\Delta_{\Sigma,n}) + t(|\cR|-1)(|\cR|-2|\s|).
$$
\end{proof}

The following proposition identifies a number of scenarios when we can manipulate {\em integral} cluster pictures to decrease the valuation of their discriminant.

\begin{proposition}\label{balancingdisc}
Let $(\Sigma,\cR,d)$ and $(\Sigma',\cR',d')$ be cluster pictures all of whose cluster depths are integers, and let $G_K$ act trivially on both $\Sigma$ and $\Sigma'$. Let $n, n'\in\Z$ be the minimal integers such that $(\Sigma,n)$ and $(\Sigma',n')$ are integral.\\
(i) If $d_\cR\ge 1$ and $\Sigma'$ is obtained from $\Sigma$ by decreasing the depth of all clusters by 1, then 
$$
 n'=n=0 \qquad \text{and}\qquad v(\Delta_{\Sigma',n'})=v(\Delta_{\Sigma,n}) - |\cR|(|\cR|-1).
$$
(ii) If $\Sigma$ has odd size, $d_\cR\le 0$ and $\Sigma'$ is obtained by adding a root to $\Sigma$, then
$$
 n'=n-d_\cR \quad \text{ and } \quad v(\Delta_{\Sigma',n'})=v(\Delta_{\Sigma,n}).
$$
(iii) If $\Sigma$ has even size, $d_\cR\le 0$, $\Sigma$ has a cluster $\t$ with $|\t|\ge \frac{|\cR|}{2}$, $d_\t>0$ and $d_{P(\t)}\le 0$, and $\Sigma'$ is obtained by redistributing the depth of the child $\s<\cR$ containing $\t$ and $\cR\setminus\s$ to $d_\s'=d_\s-1$ and $d_{\cR\setminus\s}'=d_{\cR\setminus\s}+1$, then
$$
 n'=n+|\s|-|\t| \quad \text{ and }  \quad v(\Delta_{\Sigma',n'})=v(\Delta_{\Sigma,n})-(2|\t|-|\cR|) (|\cR|-1).
$$
(iv) If $\Sigma$ has even size, $d_\cR< 0$,  $\Sigma$ has no cluster $\t$ with $|\t|\ge \frac{|\cR|}{2}$ and $d_\t\ge 0$, and $\Sigma'$ is obtained by increasing the depth of all clusters by 1, then 
$$
 n'<n-\frac{|\cR|}{2} \quad \text{ and } \quad  v(\Delta_{\Sigma',n'})<v(\Delta_{\Sigma,n}),
$$
(v) If $\Sigma$ has even size, $d_\cR< 0$,  $\Sigma$ has a cluster $\t$ with $|\t|\ge \frac{|\cR|}{2}$ and $d_\t=0$, and $\Sigma'$ is obtained by increasing the depth of all clusters by 1, then 
$$
 n'= n-|\cR|+|\t| \quad \text{ and } \quad \quad v(\Delta_{\Sigma',n'})=v(\Delta_{\Sigma,n})+(2|\t|-|\cR|) (|\cR|-1).
$$
\end{proposition}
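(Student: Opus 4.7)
The plan is to compute, in each of the five cases, both the minimal integer $n$ via Lemma \ref{lem:optimaln} and the change of discriminant via Lemma \ref{le:discchange}, and then combine them. Since every cluster depth is an integer, Lemma \ref{lem:optimaln}(i) reads $n=-\max_{\t''}\sum_{r\notin\t''}d_{r\wedge\t''}$ where $\t''$ ranges over clusters with $d_{P(\t'')}\le 0$ that are either singletons or have $d_{\t''}>0$. Case (i) is immediate: $d_\cR\ge 1$ forces every cluster to have strictly positive depth, so no $\s$ with $d_\s\le 0$ exists and $n=0$; the same holds after decreasing all depths by $1$, giving $n'=0$, and the discriminant formula is then Lemma \ref{le:discchange}(i) with $t=-1$. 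For (ii), expanding $\sum_{r\in\cR'\setminus\t''}d'_{r\wedge\t''}$ for an old cluster $\t''\subsetneq\cR$ yields $\sum_{r\in\cR\setminus\t''}d_{r\wedge\t''}+d_\cR$ (the added root meets $\t''$ at $\cR'$), while for the new singleton $\t''=\{r_0\}$ it is $|\cR|d_\cR$. Applying the formula to any singleton $\{r\}$ of $\cR$ shows $n\le-(|\cR|-1)d_\cR$, hence $|\cR|d_\cR\le d_\cR-n$, so the maximum is achieved at an old cluster and $n'=n-d_\cR$; combining Lemma \ref{le:discchange}(ii) and (iii) then gives invariance of $v(\Delta)$.

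For (iii) I invoke Lemma \ref{lem:optimaln}(ii): the three hypotheses force $\t$ to be unique (two candidates would be nested, and then the larger one would have depth $\le d_{P(\t)}\le 0$, contradicting $d_\t>0$), and a short calculation---separating $d_{P(\t)}<0$ from $d_{P(\t)}=0$---shows $n=-\sum_{r\notin\t}d_{r\wedge\t}$. After the redistribution all depths inside $\s$ drop by $1$, so $\sum_{r\notin\t}d'_{r\wedge\t}=\sum_{r\notin\t}d_{r\wedge\t}-(|\s|-|\t|)$; moreover $\t$ remains optimal in $\Sigma'$, since its parent now has depth $\le -1$ and clusters inside $\cR\setminus\s$ have size $\le|\cR|/2$ (equality only in the borderline case $|\s|=|\t|=|\cR|/2$, where they give the same value as $\s$). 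Thus $n'=n+|\s|-|\t|$, and Lemma \ref{le:discchange}(iv) (with $t=1$) combined with (iii) yields $v(\Delta_{\Sigma',n'})-v(\Delta_{\Sigma,n})=(|\cR|-1)\bigl((|\cR|-2|\s|)+2(|\s|-|\t|)\bigr)=-(2|\t|-|\cR|)(|\cR|-1)$. Case (v) is parallel: after shifting depths up by $1$ the cluster $\t$ has $d'_\t=1>0$ and is the unique maximiser in $\Sigma'$ (its ancestors have strictly smaller depth, hence $d'\le 0$), Lemma \ref{lem:optimaln}(ii$'$) applies, and a direct computation gives $n'=-\sum_{r\notin\t}d'_{r\wedge\t}=n-|\cR|+|\t|$; Lemma \ref{le:discchange}(i)+(iii) combine to the stated discriminant formula.

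Case (iv) is the most delicate, since no cluster of size $\ge|\cR|/2$ has $d\ge 0$ and Lemma \ref{lem:optimaln}(ii) is unavailable; the optimal $\t$ in $\Sigma$ necessarily has $|\t|<|\cR|/2$. The plan is to exhibit a valid $\t''$ in $\Sigma'$ of size $<|\cR|/2$ satisfying $-\sum_{r\notin\t''}d'_{r\wedge\t''}\le n-|\cR|+|\t''|<n-|\cR|/2$; Lemma \ref{le:discchange}(i)+(iii) then convert this into $v(\Delta_{\Sigma',n'})<v(\Delta_{\Sigma,n})$. The main obstacle is choosing $\t''$, because $\t$ itself may cease to be valid in $\Sigma'$ when the shift makes $d'_{P(\t)}=d_{P(\t)}+1$ positive, i.e.\ when $d_{P(\t)}=0$. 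To handle all cases uniformly I take $\mathfrak{b}\supseteq\t$ to be the maximal ancestor of $\t$ with $d_\mathfrak{b}\ge 0$; the hypothesis forces $|\mathfrak{b}|<|\cR|/2$, and $\mathfrak{b}$ is valid in $\Sigma'$ because $d_{P(\mathfrak{b})}\le -1$ (integer $<0$) and $d'_\mathfrak{b}\ge 1$. A direct calculation, using that $d_{r\wedge\t}\le d_{P(\t)}\le 0$ for $r\in\mathfrak{b}\setminus\t$, gives $-\sum_{r\notin\mathfrak{b}}d_{r\wedge\mathfrak{b}}\le -\sum_{r\notin\t}d_{r\wedge\t}=n$, whence $n'\le-\sum_{r\notin\mathfrak{b}}d'_{r\wedge\mathfrak{b}}=-\sum_{r\notin\mathfrak{b}}d_{r\wedge\mathfrak{b}}-(|\cR|-|\mathfrak{b}|)<n-|\cR|/2$.
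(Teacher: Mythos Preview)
Your argument is correct and follows essentially the same route as the paper: Lemma~\ref{lem:optimaln} controls $n,n'$ and Lemma~\ref{le:discchange} controls the discriminant change, and you combine them exactly as the paper does. The only point worth flagging is your treatment of (iv). Your ``maximal ancestor $\mathfrak{b}$ with $d_\mathfrak{b}\ge 0$'' is, under the integrality hypothesis, always either $\t$ itself (when $d_{P(\t)}\le -1$) or $P(\t)$ (when $d_{P(\t)}=0$), since $d_{P(P(\t))}<d_{P(\t)}=0$ forces $d_{P(P(\t))}\le -1$. This is precisely the two-case split the paper carries out explicitly. Your phrasing does not literally cover the case where $\t$ is a singleton with $d_{P(\t)}<0$ (singletons have no depth, so $\mathfrak{b}$ is undefined as stated), but then one simply takes $\mathfrak{b}=\t$ and the rest of your computation goes through verbatim; this is what the paper does without comment.
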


\begin{proof}
(i) The claim for $n,n'$ is clear. The rest follows from Lemma \ref{le:discchange}(i).\\
(ii) The claim for $n'$ follows from Lemma \ref{lem:optimaln}(i). Lemma \ref{le:discchange}(ii,iii) then gives
$$
  v(\Delta_{\Sigma',n'})=v(\Delta_{\Sigma,n}) + 2d_\cR |\cR| - 2d_\cR(|\cR'|-1)=v(\Delta_{\Sigma,n}).
$$
(iii) Since the cluster depths are assumed to be integers, we must have $d'_\t\ge 0$, so by Lemma \ref{lem:optimaln}(ii, ii'),
$$
 n' = - \sum_{r\notin\t} d_{r\wedge\t}' = - \sum_{r\notin\s}d_{r\wedge\t}' - \sum_{r\in\s\setminus\t}d_{r\wedge\t}'
= - \sum_{r\notin\s}d_{r\wedge\t} - \sum_{r\in\s\setminus\t}d_{r\wedge\t} + |\s\setminus\t| = n+|\s\setminus\t|.
$$
 Thus by Lemma \ref{le:discchange}(iii,iv),
$$
  v(\Delta_{\Sigma',n'})=v(\Delta_{\Sigma,n}) + (|\cR|-2|\s|) (|\cR|-1) + 2(|\s|-|\t|)(|\cR|-1)
$$
$$
  = v(\Delta_{\Sigma,n})+ (|\cR|-2|\t|) (|\cR|-1).
$$
(iv) 
By Lemma \ref{lem:optimaln}(i) we can write $n=-\sum_{r\notin\t} d_{r\wedge\t}$ for some $\t$ which has $d_{P(\t)}\le 0$ and is either a singleton or has $d_\t>0$.
Since necessarily $|\t|<\frac{|\cR|}{2}$, we must have $-\sum_{r\notin\t}d'_{r\wedge\t}< -\sum_{r\notin\t}d_{r\wedge\t}- \frac{|\cR|}{2}$. 
Moreover, if $d'_{P(\t)}>0$ then necessarily $d_{P(\t)}=0$ and we also have
$$
 -\sum_{r\notin P(\t)}d'_{r\wedge P(\t)}< 
 -\sum_{r\notin P(\t)}d_{r\wedge P(\t)}- \frac{|\cR|}{2}=-\sum_{r\notin\t}d_{r\wedge\t}- \frac{|\cR|}{2}.
$$
By Lemma \ref{lem:optimaln}(i) it follows that
$$
 n'<n - \frac{|\cR|}{2}.
$$
Thus by Lemma \ref{le:discchange}(i,iii)
$$
  v(\Delta_{\Sigma',n'})=v(\Delta_{\Sigma,n}) + |\cR|(|\cR|-1) + 2(n'-n)(|\cR|-1)<v(\Delta_{\Sigma,n}) .
$$
(v) 
By Lemma \ref{lem:optimaln} (ii',ii),
$$
  n' = -\sum_{r\notin\t}d_{r\wedge\t}' = -|\cR\setminus\t|-\sum_{r\notin\t}d_{r\wedge\t} = n-|\cR|+|\t|,
$$
and by Lemma \ref{le:discchange}(i,iii)
$$
  v(\Delta_{\Sigma',n'})=v(\Delta_{\Sigma,n}) + |\cR|(|\cR|-1) + 2(n'-n)(|\cR|-1)=v(\Delta_{\Sigma,n}) + (2|\t|-|\cR|)(|\cR|-1).
$$
\end{proof}

We end this section with a result that effectively gives sufficient criteria for a cluster picture to correspond to a minimal Weierstrass equation.

\begin{corollary}\label{pr:genericcase}
Suppose $(\Sigma,\cR,d)$ is a cluster picture with a trivial action of $G_K$,
with $|\cR|=2g\!+\!2$, with $d_{\cR}=0$, with $d_\s\in\Z$ for every proper cluster $\s$, and with no cluster $\s\neq\cR$ of size $|\s|\!>\!g\!+\!1$. Then for every integral $(\Sigma',n')$ with $\Sigma'$ equivalent to $\Sigma$, with $d_{\s}\in\Z$ for every~cluster~$\s$, and with trivial $G_K$-action,
$$
v(\Delta_{\Sigma,0}) \le v(\Delta_{\Sigma',n'}),
$$
with equality if and only if 
$\Sigma'$ has no cluster of size $>g+1$ with depth $>0$, but has some cluster $\t$ of size $\ge g+1$ with $d_\t'\ge 0$, and 
$n'=-\sum_{r\notin\t}d_{r\wedge\t}'$.
\end{corollary}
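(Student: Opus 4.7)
I will prove the inequality by a descent argument using Proposition \ref{balancingdisc}. Given any integral $(\Sigma',n')$ equivalent to $\Sigma$ with integer cluster depths and trivial $G_K$-action, I iteratively apply moves that either strictly decrease $v(\Delta_{\Sigma',n'})$ or preserve it, as follows: \emph{(a)} if $|\cR'|$ is odd, apply move (i) of Proposition \ref{balancingdisc} repeatedly to force $d_{\cR'}\le 0$ (a strict decrease while $d_{\cR'}\ge 1$) and then move (ii) to add a root (no change in $v(\Delta)$); \emph{(b)} apply move (i) while $d_{\cR'}\ge 1$; \emph{(c)} while some cluster $\t$ with $|\t|>g+1$ has $d_\t>0$, walk up to its smallest ancestor $\t'$ whose parent has non-positive depth (such $\t'$ exists since $d_{\cR'}\le 0$) and apply move (iii), strictly decreasing $v(\Delta)$ because $|\t'|>|\cR'|/2$; \emph{(d)} if $d_{\cR'}<0$ and no cluster of size $\ge g+1$ has non-negative depth, apply move (iv), another strict decrease.

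Since $v(\Delta_{\Sigma',n'})$ is bounded below on the equivalence class (by Corollary \ref{realiseequiv} each member corresponds to a Weierstrass model of a fixed $\bar K$-isomorphism class of hyperelliptic curve), the procedure terminates. At the terminal state $(\Sigma_*,n_*)$ none of (a)--(d) applies, which forces $|\cR_*|=2g+2$, $d_{\cR_*}\le 0$, no cluster of size $>g+1$ with $d>0$, and some cluster $\t$ of size $\ge g+1$ with $d_\t\ge 0$; Lemma \ref{lem:optimaln}(ii) then yields $n_*=-\sum_{r\notin\t}d_{r\wedge\t}$, so $(\Sigma_*,n_*)$ lies in the equality form of the statement. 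Since $\Sigma$ itself, with $n=0$ and $\t=\cR$, is also such a terminal state, it remains to show that any two terminal states equivalent to each other share the same discriminant valuation.

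To establish this, I connect any terminal state to the unique balanced representative of the common equivalence class (Lemma \ref{balanced}) via two families of transformations that preserve $v(\Delta)$ once $n$ is re-optimised at each step: uniform depth shifts (Move 1 of Definition \ref{d:equivalent}), where the change $m|\cR|(|\cR|-1)$ given by Lemma \ref{le:discchange}(i) is cancelled by the corresponding change in the optimal $n$ via Lemma \ref{lem:optimaln}(ii) together with Lemma \ref{le:discchange}(iii); and Move 4 of Definition \ref{d:equivalent} with $|\s|=g+1$, where Lemma \ref{le:discchange}(iv) gives the change $t(|\cR|-2|\s|)(|\cR|-1)=0$. These two moves are enough, by a case analysis on the number (zero, one, or two) of size-$(g+1)$ children of $\cR$ in the balanced form, to adjust $d_\cR$ and the depths of size-$(g+1)$ siblings to any values allowed by the equality conditions. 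This yields $v(\Delta_{\Sigma,0})=v(\Delta_{\Sigma_*,n_*})\le v(\Delta_{\Sigma',n'})$.

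The equality characterisation is then immediate: equality holds iff no strict-decrease step was applied, which happens iff $(\Sigma',n')$ already lies in the equality form. The principal obstacle I anticipate is in the third paragraph---verifying that the two invariance-preserving families of moves genuinely suffice to connect any terminal state to the balanced representative while remaining inside the equality-form regime at every intermediate step, in particular checking that the re-optimised $n$ after each uniform shift is exactly the value predicted by the tracking formulas above.
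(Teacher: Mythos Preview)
Your descent using Proposition \ref{balancingdisc}(i)--(iv) is the same strategy as the paper's proof, and your termination and boundedness arguments are fine. The genuine gap is exactly where you flag it: the third paragraph does not go through.

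Your claim that a uniform depth shift preserves $v(\Delta)$ once $n$ is re-optimised is false in general. If the witness cluster $\t$ in Lemma \ref{lem:optimaln}(ii) has size $|\t|$, then shifting depths by $m$ changes the optimal $n$ by $-m(|\cR|-|\t|)$, and combining Lemma \ref{le:discchange}(i) and (iii) gives a net change of $m(|\cR|-1)(2|\t|-|\cR|)$ in $v(\Delta)$. This vanishes only when $|\t|=g+1$. But your terminal states allow $|\t|>g+1$ with $d_\t=0$: take for instance $|\cR|=2g+2$, $d_{\cR}=-1$, a single child $\t$ of size $2g+1$ with $d_\t=0$, and no further structure. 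This is terminal for your procedure (none of (a)--(d) applies), yet there is no child of $\cR$ of size $g+1$, so your Move~4 does nothing, and a uniform shift by $+1$ produces a cluster of size $2g+1$ with depth $1$ rather than the balanced picture. Your two families of moves simply cannot reach $\Sigma^b$ from this terminal state.

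The paper closes this gap by not stopping the descent where you do. When $d_{\cR'}<0$ and there is a cluster $\t$ of size $\ge g+1$ with $d_\t=0$, it applies Proposition \ref{balancingdisc}(v) (shift up by $1$; this \emph{increases} $v(\Delta)$ by $(2|\t|-|\cR|)(|\cR|-1)$) followed immediately by (iii) (redistribute to lower $d_\t$; this \emph{decreases} $v(\Delta)$ by the same amount). Iterating (v)+(iii) pushes $d_{\cR'}$ all the way up to $0$ while keeping $v(\Delta)$ fixed, after which the comparison with $\Sigma^b$ is immediate via Lemma \ref{le:discchange}(iv). The fix to your argument is thus to include move (v), paired with (iii), in the descent itself rather than attempting a separate connecting argument.
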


\begin{proof}
By \cite{hyble} Thm 5.1, there is a unique (``balanced'') cluster picture $(\Sigma^{b},\cR^{b},d^{b})$ equivalent to $\Sigma$ with $\R^b$ even, $d^{b}_{\cR^{b}}=0$, no clusters other than $\cR^{b}$ of size $>g+1$, and either zero or two clusters of size $g+1$, in which case they have equal depth. Note that, by uniqueness, either $\Sigma=\Sigma^{b}$, or $\Sigma^{b}$ is obtained from $\Sigma$ by redistributing the depths of a child $\s<\cR$ of size $g+1$ and $\cR\setminus\s$ so that they get equal depth. 
Thus by Lemma \ref{le:discchange}(iv)
$$
v(\Delta_{\Sigma^{b},0}) = v(\Delta_{\Sigma,0}).
$$

Now consider $\Sigma'$. From the definition of the discriminant and Lemma~\ref{lem:optimaln}(ii), we may assume that $n'$ is the minimal integer that makes $(\Sigma',n')$ integral.

By Proposition \ref{balancingdisc}(i,ii), it suffices to prove the result when $\Sigma'$ has even size and $d_\cR'\le 0$.

By Proposition \ref{balancingdisc}(iv), we may further assume that either $d_{\cR'}'\!=\!0$, or $d_{\cR'}'<0$ and $\Sigma'$ has a cluster of size $\ge g\!+\!1$ with depth $\ge 0$.

By Proposition \ref{balancingdisc}(iii), we may further further assume that $\Sigma'$ has no cluster $\s$ with $|\s|>g\!+\! 1$ and $d'_\s>0$.

Suppose $d'_{\cR'}<0$. If $\Sigma'$ has a cluster $\s$ of size $g\!+\! 1$ and depth $d'_\s>0$, then repeatedly applying  Proposition \ref{balancingdisc}(iii), reduces the problem to the case when $\Sigma'$ has a cluster $\s$ of size $g\!+\! 1$ and depth $d'_\s=0$. Now if $\s$ is a cluster of size $\ge g\!+\!1$ and depth $d'_\s=0$, then repeatedly applying Proposition \ref{balancingdisc}(v) followed by (iii) (which eventually eliminates the cluster by pushing $d'_\cR$ up to 0) reduces the problem to the case when $d'_{\cR'}=0$.

Finally, suppose $d'_{\cR'}=0$, so, in particular, $n'=0$. If $\Sigma'$ has no cluster of size $g\!+\! 1$, then $\Sigma'=\Sigma^{b}$. If $\Sigma'$ does have such a cluster, then Lemma~\ref{le:discchange}(iv) and uniqueness of $\Sigma^{b}$ show that $v(\Delta_{\Sigma',0})=v(\Delta_{\Sigma^{b},0})$. The result follows.
\end{proof}

\section{Minimal Weierstrass equations}
\label{s:MWEquation}


\begin{theorem}\label{cor:basicminimal}
Let $C:y^2=f(x)$ be a hyperelliptic curve over $K$ with $f(x)\in\cO_K[x]$, and let $\Sigma$ be the associated cluster picture. 
If $d_{\cR}=0$, $v(c_f)=0$ and $\Sigma$ has no cluster $\s\neq\cR$ of size $|\s|>g+1$, then $C$ is a minimal Weierstrass equation.
\end{theorem}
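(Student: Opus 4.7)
The plan is to reduce to the purely combinatorial Corollary \ref{pr:genericcase}, which already asserts minimality of the discriminant within an equivalence class of cluster pictures. The main obstacle is that Corollary \ref{pr:genericcase} requires trivial $G_K$-action on clusters, integer cluster depths, and even-sized top cluster, none of which need hold under our hypotheses. The remedy is to base change to a suitable extension and, if $|\cR|$ is odd, to apply one preparatory equivalence move.

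Choose a finite extension $L/K$ with ramification index $e$ large enough that $G_L$ acts trivially on $\cR$ and $ed_\s\in\Z$ for every proper cluster~$\s$; write $v_L=ev$. Viewed over $L$, the cluster picture $\Sigma$ of $C$ has integer depths, trivial $G_L$-action, $d_{\cR}=0$, $v_L(c_f)=0$, and no proper cluster other than $\cR$ of size $>g+1$. If $|\cR|=2g\!+\!2$, Corollary \ref{pr:genericcase} applies directly; if $|\cR|=2g\!+\!1$, first apply the ``add a root'' equivalence move of Definition \ref{d:equivalent} to obtain $\Sigma'$ with $|\cR'|=2g\!+\!2$. By Proposition \ref{balancingdisc}(ii), using $d_{\cR}=0$, this preserves both the minimal leading term $n'=0$ and the discriminant $v_L(\Delta_{\Sigma,0})=v_L(\Delta_{\Sigma',0})$; since adding the root removes the old $\cR$ (of size $2g\!+\!1>g\!+\!1$) from the cluster picture and only inserts a singleton, $\Sigma'$ still has no proper cluster other than $\cR'$ of size $>g\!+\!1$. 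Thus Corollary \ref{pr:genericcase} yields
\[
v_L(\Delta_{\Sigma,0})\;\le\;v_L(\Delta_{\Sigma'',n''})
\]
for every integral pair $(\Sigma'',n'')$ equivalent to $\Sigma$ (over $L$), with integer depths and trivial $G_L$-action.

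To conclude, let $C':y^2=g(x)$ be any integral Weierstrass equation for $C$ over $K$. By Theorem \ref{th:isoequiv} its cluster picture $\Sigma_g$ is equivalent to $\Sigma$. Any $K$-isomorphism between hyperelliptic Weierstrass models is induced by a $\PGL_2(K)$-transformation of $x$ and a scaling of $y$, so the roots of $g(x)$, being a $\PGL_2(K)$-image of $\cR$, still lie in $L$; hence $\Sigma_{g}$ viewed over $L$ has trivial $G_L$-action and integer depths, and $(\Sigma_g,v_L(c_g))$ is integral by Theorem \ref{th:integralpoly}(1). Applying the bound from the previous paragraph and Lemma \ref{comparedisc} gives
\[
e\,v(\Delta_C)\;=\;v_L(\Delta_{\Sigma,0})\;\le\;v_L(\Delta_{\Sigma_g,v_L(c_g)})\;=\;v_L(\Delta_{C'})\;=\;e\,v(\Delta_{C'}),
\]
so $v(\Delta_C)\le v(\Delta_{C'})$, proving minimality. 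The only non-routine bookkeeping is verifying that the ``add a root'' move preserves the ``no cluster of size $>g\!+\!1$'' condition needed to invoke Corollary \ref{pr:genericcase}, which is why the hypothesis $d_{\cR}=0$ is used (it lets Proposition \ref{balancingdisc}(ii) apply with no change in depths or discriminant).
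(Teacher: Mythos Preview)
Your proof is correct and follows essentially the same route as the paper: pass to a large enough extension so that the Galois action is trivial and all depths are integral, then invoke Corollary~\ref{pr:genericcase} combined with Theorems~\ref{th:isoequiv} and~\ref{th:integralpoly} and Lemma~\ref{comparedisc}. Two small differences are worth noting. First, the paper simply chooses a single extension $F$ large enough for \emph{both} $\Sigma$ and $\Sigma_g$ simultaneously, rather than fixing $L$ from $\Sigma$ alone and then arguing via the $\PGL_2(K)$-description of isomorphisms that $L$ also trivialises $\Sigma_g$; the paper's shortcut avoids that (correct but unnecessary) digression. Second, you explicitly treat the case $|\cR|=2g+1$ by first adding a root via Proposition~\ref{balancingdisc}(ii) before applying Corollary~\ref{pr:genericcase}, which formally requires $|\cR|=2g+2$; the paper's proof glosses over this point, so your version is in fact the more careful of the two.
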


\begin{proof}
Suppose another integral Weierstrass model $C':y^2=g(x)$ for $C$ has cluster picture $\Sigma'$. Over a suitable field extension $F/K$, the depths of all clusters of $\Sigma$ and $\Sigma'$ are integers, and $\Gal(\bar{F}/F)$ acts trivially on both cluster pictures. By Theorems \ref{th:integralpoly} and \ref{th:isoequiv}, $(\Sigma',v_F(c_g))$ is integral and $\Sigma'$ is equivalent to $\Sigma$ over $F$. By Proposition \ref{pr:genericcase}, $v_F(\Delta_{\Sigma,0})\le v_F(\Delta_{\Sigma',v_F(c_g)})$, and so the same inequality holds over $K$. By Lemma \ref{comparedisc}, $v(\Delta_C)\le v(\Delta_{C'})$, as required.
\end{proof}

For semistable hyperelliptic curves, we can give a full characterisation of minimal Weierstrass models in terms of cluster pictures:

\begin{theorem}\label{th:mainmini}
Let $C:y^2=f(x)$ be a semistable hyperelliptic curve over~$K$, and let $(\Sigma,\cR,d)$ be the associated cluster picture. 
Suppose that every cluster of $\Sigma$ of size $>g+1$ has at most $|k|\!-\!1$ $G_K$-stable children.

Then there is  some $z\in K$ such that $y^2=f(x-z)$ is a minimal Weierstrass model if and only if one of the following conditions holds
\begin{enumerate}
\item $\Sigma$ has two clusters of size $g+1$ that are swapped by Frobenius, $d_\cR=0$ and $v(c_f)\in\{0,1\}$,
\item $\Sigma$ has no cluster of size $>\!g\!+\!1$ with depth $>0$, but has some~$G_K$-stable cluster $\t$ of size $|\t|\ge g+1$ with  $d_\t\ge 0$ and $v(c_f)=-\sum_{r\notin\t}d_{r\wedge\t}$.
\end{enumerate}
In particular, if $f(x)\in\cO_K[x]$ then $y^2=f(x)$ is a minimal Weierstrass equation if and only if $\Sigma$ satisfies either (1) or (2), 
\end{theorem}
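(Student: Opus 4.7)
My plan is to combine Theorem \ref{Ombalanseringsatz} (which balances any semistable $C$ to a canonical $K$-model) with Proposition \ref{pr:genericcase} (the sharp discriminant lower bound across equivalent cluster pictures). First I would check that conditions (1) and (2) each render $(\Sigma,v(c_f))$ integral, by direct application of Definition \ref{def:integralCP} with $\s=\cR$, $\t=\emptyset$ in (1), and with $\s=\cR$ together with the given $\t$ in (2) (using that $|\cR|>g+1$ forces $d_\cR\le 0$ in case (2)). Theorem \ref{th:integralpoly}(2) then produces $z\in K$ with $f(x-z)\in \cO_K[x]$, so I assume $z=0$ from now on.

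For the direction $(\Leftarrow)$, let $F/K$ be a finite \emph{unramified} Galois extension over which every cluster of $\Sigma$ becomes $G_F$-stable; this exists by the semistability criterion, which already forces the inertia action on principal clusters to be trivial. Since $F/K$ is unramified, $v_F=v_K$, so for any $K$-isomorphic integral Weierstrass equation $C':y^2=g(x)$, Theorem \ref{th:isoequiv} gives that $\Sigma'$ is $F$-equivalent to $\Sigma$, and Theorem \ref{th:integralpoly}(1) gives integrality of $(\Sigma',v(c_g))$. Proposition \ref{pr:genericcase} then yields $v(\Delta_{C'})\ge v(\Delta_{\Sigma_0,0})$, where $\Sigma_0$ is the balanced equivalent of $\Sigma$. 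Conditions (2) and (1) with $v(c_f)=0$ match the equality case of the proposition directly (taking $\t$ to be the given witness in (2), or either of the two $(g+1)$-clusters in (1), noting that $\sum_{r\notin\t}d_{r\wedge\t}=0$ in both subcases), so $v(\Delta_C)=v(\Delta_{\Sigma_0,0})\le v(\Delta_{C'})$ proves minimality.

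The delicate subcase is condition (1) with $v(c_f)=1$, where $v(\Delta_C)=v(\Delta_{\Sigma_0,0})+(4g+2)$ strictly exceeds the generic bound. Here I would use that the semistability criterion requires $\nu_{\s_1}=v(c_f)+(g+1)d_{\s_1}\in 2\Z$ for the principal size-$(g+1)$ cluster $\s_1$, forcing $g$ even and $d_{\s_1}$ odd. Combined with the uniqueness of the balanced $K$-model (Corollary \ref{co:evengenus}) and the fact that Frobenius swaps the two $(g+1)$-clusters (so any $K$-rational M\"obius transformation must preserve this symmetry and thus cannot ``separate'' the two clusters asymmetrically), this rules out $K$-isomorphic integral models $C'$ with $v(c_g)=0$ attaining the equality case of Proposition \ref{pr:genericcase}. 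Every remaining model has $v(c_g)\ge 1$, so Lemma \ref{le:discchange}(iii) yields $v(\Delta_{C'})\ge v(\Delta_{\Sigma_0,0})+(4g+2)=v(\Delta_C)$.

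For $(\Rightarrow)$, given that $y^2=f(x-z)$ is minimal, Theorem \ref{Ombalanseringsatz} produces a $K$-isomorphic canonical model $C''$ whose cluster picture automatically satisfies (1) or (2); by the $(\Leftarrow)$ direction just established, $C''$ is also minimal, so $v(\Delta_C)=v(\Delta_{C''})$, forcing $\Sigma_C$ into the equality locus of Proposition \ref{pr:genericcase} over $F$. A short analysis of the Galois action on the equality witnesses $\t$ then yields condition (2) (when a $G_K$-stable $\t$ of size $\ge g+1$ with $d_\t\ge 0$ exists) or condition (1) (when the candidates form a nontrivial Galois orbit, which, since they must all have size $\ge g+1$ and pairwise nest or be disjoint, is necessarily a pair of Frobenius-swapped $(g+1)$-clusters). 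The main obstacle throughout is this exceptional subcase of (1) with $v(c_f)=1$: Proposition \ref{pr:genericcase} alone is too weak to prevent competing $K$-rational integral models with smaller $v(c_g)$, and closing this gap requires the interplay of the semistability parity, the uniqueness of the balanced form from Corollary \ref{co:evengenus}, and the Galois symmetry preventing the ``splitting'' of the two Frobenius-conjugate $(g+1)$-clusters.
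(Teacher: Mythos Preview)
Your treatment of the non-exceptional case (condition (2)) is essentially the paper's Proposition \ref{pr:nexcepmini}: balance via Theorem \ref{Ombalanseringsatz}, then compare discriminants over an extension via Corollary \ref{pr:genericcase}. One minor point: you insist on an \emph{unramified} $F$ so that $v_F=v_K$, but this does not force twins to have integer depth, which Corollary \ref{pr:genericcase} requires of every proper cluster. The paper simply allows a ramified extension; the factor $e$ cancels on both sides of the discriminant inequality.

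The genuine gap is the exceptional subcase of (1) with $v(c_f)=1$. There $v(\Delta_C)=v(\Delta_{\Sigma_0,0})+(4g+2)$ and you must rule out every $K$-model $C'$ with $v(\Delta_{C'})<v(\Delta_C)$. Your argument has two holes. First, Corollary \ref{co:evengenus} only gives uniqueness among models satisfying its three balancing conditions; an arbitrary integral $K$-model that attains (or nearly attains) equality in Corollary \ref{pr:genericcase} need not be balanced, so that uniqueness statement does not apply to it. Second, ``every remaining model has $v(c_g)\ge 1$'' is a non sequitur: even if you rule out $v(c_g)=0$ models sitting \emph{exactly at} equality, this still leaves $v(c_g)=0$ models whose discriminant exceeds $v(\Delta_{\Sigma_0,0})$ by some amount strictly between $0$ and $4g+2$. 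The same gap infects your $(\Rightarrow)$ direction: a minimal exceptional curve with $v(c_f)=1$ is \emph{not} in the equality locus of Corollary \ref{pr:genericcase}, so ``forcing $\Sigma_C$ into the equality locus'' cannot reconstruct this subcase of condition (1).

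The paper closes the gap by a different mechanism (Propositions \ref{pr:exceptional} and \ref{pr:excepmini}). Since $C$ and $C'$ are $K$-isomorphic, their dual graphs with Frobenius action coincide; in the exceptional case this graph, via the rigidity results of \cite{hyble}, forces $\Sigma'$ to be obtained from $\Sigma$ by a uniform depth shift (in particular $\Sigma'$ is again exceptional). The semistability criterion applied to both principal $(g{+}1)$-clusters then yields $v(c_g)\equiv v(c_f)\pmod 2$, and this parity is exactly what makes Proposition \ref{pr:excepmini} go through. Your phrase ``Frobenius symmetry preventing separating the clusters'' gestures at this rigidity, but making it precise is precisely the content of Proposition \ref{pr:exceptional}, which rests on the dual graph rather than on Corollary \ref{pr:genericcase}.
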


The rest of this section is devoted to the proof of this theorem. Our approach is to translate everything in terms of cluster pictures (semistability, integral Weierstrass equations, discriminants) and work mainly on that level. The two cases for the cluster picture in the above theorem are quite distinct and it will be convenient to use the following terminology:

\begin{definition}
The cluster picture of a semistable hyperelliptic curve $y^2=f(x)$ of genus $g$ is {\em{exceptional}} if it has two clusters of size $g+1$ that are swapped by Frobenius.
\end{definition}

\begin{proposition}\label{pr:exceptional}
Let $C:y^2=f(x)$ be a semistable hyperelliptic curve over $K$, let $\Sigma$ be the associated cluster picture and $n=v(c_f)$. Suppose $C'$ is another Weierstrass model for $C$ with corresponding $(\Sigma',n')$. Then $\Sigma$ is exceptional if and only if $\Sigma'$ is. If so, then, $n'\equiv n \mod 2$ and 
$\Sigma'$ is obtained from $\Sigma$ by increasing the depths of all clusters 
by some $d\in\Z$;
moreover 
$$
v(\Delta_{\Sigma,n}) = v(\Delta_{\Sigma',n'})  - 2(n'-n)(|\cR|-1) - d|\cR|(|\cR|-1).
$$
\end{proposition}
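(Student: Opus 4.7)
The plan is to leverage Theorem \ref{th:isoequiv} (cluster pictures of $K$-isomorphic curves are equivalent) together with Lemma \ref{balanced} (uniqueness of the balanced form). The proposition has four ingredients: (a) invariance of exceptionality, (b) the structural assertion that $\Sigma'$ is a depth shift of $\Sigma$ by some $d\in\Z$, (c) the parity claim $n'\equiv n\pmod 2$, and (d) the discriminant identity.

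For (a) I would argue geometrically: the minimal regular model $\cC_{\min}/\cO_{K^{\textup{nr}}}$ of the semistable curve $C$, and in particular the dual graph of its special fibre with Frobenius action, depends only on $C/K$ and not on the Weierstrass model. By Theorem \ref{th:DualGraph}(1), an exceptional cluster picture with $\cR=\s_1\sqcup\s_2$ produces at the top of the dual graph two vertices (or pairs of vertices when $|\s_i|=g+1$ is even), connected by the chain(s) $L_{\s_1,\s_2}$ or $L^{\pm}_{\s_1,\s_2}$ and swapped by Frobenius. In every non-exceptional scenario, by contrast, the top of the dual graph is Frobenius-stable (a single $v_\cR$ when $\cR$ is principal, or two individually $G_K$-stable top vertices in the non-swapped case, or the cotwin structure). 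Hence ``the top of the dual graph contains a Frobenius-swapped pair of vertices'' is an invariant of $C/K$, holding for $\Sigma$ if and only if for $\Sigma'$.

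Assuming both to be exceptional, I would then establish (b) as follows. Writing $\s_1,\s_2$ for the size-$(g+1)$ children of $\cR$ in $\Sigma$, Frobenius preserves depths, so $d_{\s_1}=d_{\s_2}$; and by Lemma \ref{integral clusters} (applied to $\cR$, which has two children of size $g+1\neq 2g$ for $g\geq 2$) we have $d_\cR,d'_\cR\in\Z$. The balanced form of such an exceptional picture is obtained simply by shifting all depths by $-d_\cR$---no redistribution is needed since the size-$(g+1)$ children already have equal depth. By uniqueness of the balanced form, $\Sigma_{\mathrm{bal}}$ and $\Sigma'_{\mathrm{bal}}$ coincide as abstract cluster pictures, forcing $\Sigma'$ to be a depth shift of $\Sigma$ by $d:=d'_\cR-d_\cR\in\Z$.

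For (c), any $K$-isomorphism $C\to C'$ realizing this depth shift must preserve the partition $\cR=\s_1\sqcup\s_2$ and rescale distances by $\pi^{-d}$, which forces the underlying M\"obius transformation to be affine: $x'=\pi^{-d}x+\beta$ and $y'=\alpha y$ for some $\beta\in K$ and $\alpha\in K^\times$. A direct calculation then gives $c_{f'}=\alpha^2\pi^{(2g+2)d}c_f$, hence $n'-n=2v(\alpha)+(2g+2)d\equiv 0\pmod 2$. Finally, (d) is immediate from Lemma \ref{le:discchange}: part (i) gives $v(\Delta_{\Sigma',n})=v(\Delta_{\Sigma,n})+d|\cR|(|\cR|-1)$, and part (iii) (valid since $|\cR|=2g+2$ is even) gives $v(\Delta_{\Sigma',n'})=v(\Delta_{\Sigma',n})+2(n'-n)(|\cR|-1)$; combining and rearranging yields the claimed identity. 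The main obstacle is (a), which requires a careful case-by-case check of Theorem \ref{th:DualGraph}(1) to confirm that no non-exceptional cluster picture can produce a Frobenius-swapped pair of top vertices.
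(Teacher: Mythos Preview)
Your approach to (b) and (d) is correct, and your approach to (a) is in the right direction, though all three differ from the paper's route. The paper invokes results from \cite{hyble} (Proposition 5.7 and Theorem 4.2): in the exceptional case, Frobenius has a \emph{unique} fixed point on $\Upsilon_C/\iota$ (the midpoint of the edge joining $v_{\s_1}$ to $v_{\s_2}$ in the BY tree), and this fixed point determines the open hyperelliptic graph and hence the metric cluster picture together with its Frobenius action. This yields (a) and (b) simultaneously, with no case-by-case check and without passing through the balanced form. Your route via Theorem~\ref{th:isoequiv} and Lemma~\ref{balanced} is a valid alternative for (b) once (a) is in hand; your characterisation of exceptionality via the dual graph would be cleaner stated as ``Frobenius has no fixed vertex on $\Upsilon_C/\iota$'', which avoids appealing to an ambiguous notion of the ``top'' of an abstract graph.

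Your argument for (c), however, has a genuine gap. You claim that the M\"obius transformation underlying the isomorphism $C\to C'$ must be affine because it ``realizes this depth shift'' and hence rescales all pairwise distances uniformly. But (b) only establishes that $\Sigma'$ is \emph{abstractly} isomorphic to a depth shift of $\Sigma$; it does not follow that the specific bijection $\cR\to\cR'$ induced by the given isomorphism realises this depth shift on valuations of differences. A non-affine $\phi(x)=(ax+b)/(cx+d)$ with $c\neq 0$ can perfectly well send $\cR$ bijectively to $\cR'$ (both of size $2g+2$, neither containing $\infty$) without fixing $\infty$, so affineness is unjustified as stated. The paper instead obtains $n'\equiv n\pmod 2$ from the semistability criterion: $\s_1$ and $\s'_1$ are principal, so $\nu_{\s_1},\nu_{\s'_1}\in 2\Z$; writing these out via Definition~\ref{no:nu} and using that $\Sigma'$ is a depth shift of $\Sigma$ by an integer $d$ gives the parity directly, with no need to analyse the M\"obius map.
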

\begin{proof}
Since $C'$ is another Weierstrass model for $C$, their special fibres have the same dual graphs with the same automorphisms induced by Frobenius. By Theorem \ref{th:DualGraph} this graph $\Upsilon_C$ modulo the hyperelliptic involution has a unique fixed point under the Frobenius action. In the terminology of \cite{hyble}, it follows that they have the same open hyperelliptic graph (\cite{hyble} Proposition 5.7 with $\tilde{G}=\Upsilon_C$) and hence the same metric cluster picture with the same automorphisms induced by Frobenius (\cite{hyble} Thm 4.2). In particular, $\Sigma'$ is obtained from $\Sigma$ by 
increasing the depth of all clusters by some $d\in\Z$,
and with the same action of Frobenius on clusters.
 
Thus $\Sigma$ is exceptional if and only if $\Sigma'$ is.  It follows from the semistability criterion (Theorem \ref{the semistability theorem}, Definition \ref{semistability criterion}) that $n' \equiv n \mod 2$.
Finally the valuation of the discriminant follows from Lemma \ref{le:discchange}(i,iii). 
\end{proof}

%
%
%

\begin{proposition}\label{pr:excepmini}
Let $C:y^2=f(x) \in \cO_K[x]$ be a semistable hyperelliptic curve, and let $n=v(c_f)$. Suppose the associated cluster picture $\Sigma$ is exceptional. 
Then the Weierstrass equation is minimal if and only if $d_{\cR} =0$ and $n\in\{0,1\}$. 
\end{proposition}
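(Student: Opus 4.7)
The plan is to leverage Proposition \ref{pr:exceptional} to reduce the question of minimality to a purely combinatorial optimisation on the cluster picture. First I would apply that proposition to parametrise every other integral Weierstrass model of $C$ by a pair $(d,n')\in\Z\times\Z$ with $n'\equiv n\pmod 2$, where $d$ is the uniform depth shift producing $\Sigma'$ from $\Sigma$ and $n'=v(c_g)$ for the new model $y^2=g(x)$. The same proposition gives the key discriminant identity
$$
v(\Delta_{\Sigma',n'})-v(\Delta_{\Sigma,n}) \;=\; (|\cR|-1)\bigl(2(n'-n)+d|\cR|\bigr),
$$
so the Weierstrass equation $y^2=f(x)$ is minimal if and only if $Q(d,n'):=2(n'-n)+d|\cR|$ is non-negative for every admissible $(d,n')$, where admissible means $(\Sigma',n')$ is integral in the sense of Definition \ref{def:integralCP} (which by Theorem \ref{th:integralpoly} is equivalent to being realised by an integral Weierstrass equation).

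Next I would pin down the admissibility condition in the exceptional case. Here $\cR=\s_1\sqcup\s_2$ with $\s_1,\s_2$ of size $g+1$ swapped by Frobenius. Every proper subcluster of $\cR$ lies inside one of the $\s_i$ and is therefore not $G_K$-stable; hence $\cR$ itself is the unique $G_K$-stable proper cluster, and $\cR$ has no singleton children. Applying Definition \ref{def:integralCP} to $(\Sigma',n')$ (whose top depth is $d_\cR+d$) thus collapses to: either $d_\cR+d\ge 0$ and $n'\ge 0$, or $d_\cR+d\le 0$ and $n'+|\cR|(d_\cR+d)\ge 0$.

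With this description, the optimisation is elementary. Setting $d':=d+d_\cR$, I would minimise $F(d',n')=2n'+d'|\cR|$ subject to the two-case constraint above and the parity condition $n'\equiv n\pmod 2$. Because $|\cR|=2g+2$ is even, writing $n_0\in\{0,1\}$ for the residue of $n$ modulo $2$, a short case check shows $F\ge 2n_0$ throughout, with equality exactly at $(d',n')=(0,n_0)$; the parity constraint is what forces the minimum to be $2n_0$ rather than $0$ in Case B with $n$ odd, and this is the only subtle point of the whole argument. Translating back, the infimum of $Q$ over admissible pairs equals $2n_0-2n-d_\cR|\cR|$.

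Finally I would extract the statement. Minimality of $y^2=f(x)$ is equivalent to $2n+d_\cR|\cR|\le 2n_0$, i.e.\ to $2n+d_\cR|\cR|\in\{0,2\}\cap\{2n_0\}$. Since $(\Sigma,n)$ is itself integral, either $d_\cR\ge 0$ with $n\ge 0$, or $d_\cR\le 0$ with $n\ge -|\cR|d_\cR$; in both cases $2n+d_\cR|\cR|\ge 0$, and as $|\cR|\ge 6$ the bound $\le 2n_0\le 2$ forces $d_\cR=0$ and then $n=n_0\in\{0,1\}$. Conversely, if $d_\cR=0$ and $n\in\{0,1\}$ then $n=n_0$ and $Q\ge 0$ on the whole admissible region, so $y^2=f(x)$ is minimal. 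This completes the equivalence.
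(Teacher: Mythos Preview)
Your proof is correct and uses the same ingredients as the paper's (Proposition \ref{pr:exceptional} for the discriminant comparison and Definition \ref{def:integralCP}/Theorem \ref{th:integralpoly} for the integrality constraint), reorganised as a single optimisation rather than separate ``if'' and ``only if'' arguments. One small point you should make explicit: your parenthetical that integrality of $(\Sigma',n')$ is ``equivalent to being realised by an integral Weierstrass equation'' is not quite what Theorem \ref{th:integralpoly} says, since that theorem is about a fixed polynomial. To get the direction you need (that the optimal admissible pair $(d',n')=(0,n_0)$ actually comes from an integral model of $C$), you must first apply a substitution $x\mapsto\pi^b x$, $y\mapsto\pi^a y$ to produce a possibly non-integral Weierstrass model of $C$ with data $(\Sigma',n')$, and only then invoke Theorem \ref{th:integralpoly}(2) to shift it into $\cO_K[x]$; this is exactly the construction the paper carries out explicitly in its converse argument.
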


\begin{proof}
First suppose $\Sigma$ is exceptional with $d_{\cR}=0$ and $n\in\{0,1\}$. 
Let $C'$ be another integral Weierstrass model with corresponding $(\Sigma', n')$. 
By Proposition \ref{pr:exceptional} $\Sigma'$ is exceptional, 
$n'\equiv n \mod 2$, and
$$
 v(\Delta_{\Sigma,n}) = v(\Delta_{\Sigma',n'})  - 2(n'-n)(|\cR|-1) - d'_{\cR'}|\cR|(|\cR|-1).
$$
By Theorem \ref{th:integralpoly} $(\Sigma',n')$ is integral, so either $n'\ge0$ and $d'_{\cR'}\ge 0$, or $d'_{\cR'} < 0$ and $n'+|\cR|d'_{\cR'}\ge 0$. In the first case, since $n'\equiv n\mod 2$, we clearly have $ v(\Delta_{\Sigma,n}) \le v(\Delta_{\Sigma',n'})$ with equality if and only if $n'=n$ and $d'_{\cR'} = 0$. 
In the second case, $d'_{\cR'} \in \Z$ by the semistability criterion for $C'$  (since $\s<\cR'$ is principal and so has integral depth, and $\delta_\s\in\Z$ by Proposition \ref{th:equivss} (2)), so $n'\!\ge\! -|\cR|d'_{\cR}>2$ and
we again obtain
$$
 v(\Delta_{\Sigma,n}) \le v(\Delta_{\Sigma',n'})  - 2(n'-n)(|\cR|-1) + n'(|\cR|-1)< v(\Delta_{\Sigma',n'}).
$$
Since the discriminants of the cluster pictures are those of the curves (Lemma \ref{comparedisc}), it follows that $C$ is a minimal Weierstrass equation.



For the converse, 
suppose $C:y^2=f(x)$ is a minimal Weierstrass model and $\Sigma$ is exceptional. A substitution of the form $y_1 = p^a y$ and $x_1= p^bx$ for suitable $a,b \in \Z$, gives a new model $C'$ whose cluster picture $\Sigma'$ is also exceptional,
$d'_{\cR'}=0$ and the corresponding valuation $n'$ is either 0 or 1. Theorem \ref{th:integralpoly} gives a new {\em integral} model $C''$ with the same cluster picture, $d''_{\cR''}=0$ and $n''\in\{0,1\}$. By the first part of the proof, $C''$ is a minimal Weierstrass model (so $v(\Delta_{\Sigma'',n''})=v(\Delta_{\Sigma,n})$), $n=n''$ and $d_{\cR}=0$.
\end{proof}

We now turn to cluster pictures that are not exceptional.

\begin{lemma}\label{pr:notexceptional}
Let $C:y^2=f(x)$ be a semistable hyperelliptic curve over $K$ with cluster picture $\Sigma$. Suppose that $\Sigma$ is not exceptional and that every cluster of $\Sigma$ of size $>g+1$ has at most $|k|\!-\!1$ $G_K$-stable children.
Then $C$ admits an integral Weierstrass model $y^2=h(x)$ with $v(c_h)=0$ and whose cluster picture $(\Sigma',\cR',d')$ has $|\cR'|=2g\!+\!2$, $d'_{\cR'}\!=\!0$ and has no cluster $\s\neq\cR'$ of size $> g\!+\!1$.
\end{lemma}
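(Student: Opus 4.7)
The plan is to apply a sequence of moves from Proposition \ref{mobius} to pass to a $K$-isomorphic curve whose cluster picture has the required structural shape, then arrange $v(c_h)=0$ by scaling $y$, and finally make the equation integral by translating $x$ via Theorem \ref{th:integralpoly}. Throughout I would use that $C/K$ semistable means $\Sigma$ satisfies the semistability criterion (Definition \ref{semistability criterion}): every principal cluster has integer depth and $\nu_\s\in 2\Z$, and by Lemma \ref{integral clusters} the only other clusters with non-integer depth are twins and the specific $\cR$-exception having a unique proper child of size~$2g$.

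First I would arrive at $|\cR|=2g+2$, $d_\cR=0$ and no cluster $\s\ne\cR$ of size $>g+1$. If $|\cR|=2g+1$, the hypothesis on $|k|$ covers $\cR$ (since $2g+1>g+1$) and $\cR$ is principal, hence $d_\cR\in\Z$; so Proposition \ref{mobius}(2) adds a root. To force $d_\cR=0$ I would use Proposition \ref{mobius}(1) when $d_\cR\in\Z$, or, when $d_\cR\notin\Z$, apply Proposition \ref{mobius}(3) to the unique proper child $\s'$ of size $2g$ produced by Lemma \ref{integral clusters} (its size exceeds $g+1$, so our $|k|$-hypothesis applies). To eliminate remaining clusters of size $>g+1$, I would note that any two such clusters are forced to be nested (their sizes sum to more than $|\cR|$), that each is automatically $G_K$-stable by the same pigeonhole argument applied to its Galois conjugates, and that each has integer depth. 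Working from the outside inward and applying Proposition \ref{mobius}(3) to each, I eliminate them one at a time while preserving $d_\cR=0$.

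To finish, I would arrange $v(c_h)=0$. Under the structural conditions achieved above, the top cluster $\cR$ is principal unless it has exactly two children, which (since no child has size $>g+1$) forces both to have size $g+1$; by the non-exceptional hypothesis these two children $\s_1,\s_2$ are individually $G_K$-stable. When $\cR$ is principal, the semistability criterion gives $\nu_\cR=v(c_f)\in 2\Z$, so I can scale $y$ to obtain $v(c_h)=0$ and then apply Theorem \ref{th:integralpoly} to translate $x$ into $\cO_K[x]$. The main obstacle is the remaining sub-case, where applying the semistability criterion to the principal cluster $\s_1$ gives $v(c_f)\equiv (g+1)d_{\s_1}\pmod 2$: this is automatically even when $g$ is odd or $d_{\s_1}$ is even, but when both are odd I would use Proposition \ref{mobius}(4) with $m=-d_{\s_1}$ to absorb $\s_1$ into $\cR$ (sending $d_{\s_2}$ to $d_{\s_2}+d_{\s_1}$, even by symmetry), after which $\cR$ has at least three children, is therefore principal, and the previous argument applies. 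Verifying that the hypothesis on $|k|$ suffices for this final redistribution, given that the non-exceptional assumption controls the Galois action on the children of $\s_1$, is the delicate point I expect to require the most care.
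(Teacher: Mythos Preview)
Your approach mirrors the paper's exactly: the paper deduces the lemma in one line from Theorem~\ref{Ombalanseringsatz} (using Proposition~\ref{pr:exceptional} to exclude the exceptional branch), and the proof of that theorem is precisely the sequence of Proposition~\ref{mobius} moves you spell out.

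The delicate point you flag at the end is genuine, and your proposed fix does not go through under the lemma's stated hypothesis. In the residual case $\cR=\s_1\coprod\s_2$ with $|\s_i|=g+1$, $g$ even and $d_{\s_1}$ odd, you want to apply Proposition~\ref{mobius}(4) to $\s_1$; but that move requires $|k|>\#\{\t<\s_1:\t\text{ is }G_K\text{-stable}\}$, and the lemma only assumes this bound for clusters of size \emph{strictly greater} than $g+1$. The non-exceptional hypothesis says only that Frobenius does not swap $\s_1$ and $\s_2$; it gives no control over the children of $\s_1$. For a concrete obstruction, take $g=2$, $K=\Q_3$ and $f(x)=3x(x-3)(x+3)(x-1)(x-4)(x+2)$: the curve is semistable and non-exceptional, $\cR$ has exactly two $G_K$-stable children so the lemma's hypothesis holds, yet $\s_1=\{0,3,-3\}$ has three $G_K$-stable singleton children and $|k|=3$, so Proposition~\ref{mobius}(4) cannot be applied to it. The paper's own proof has the same issue: Theorem~\ref{Ombalanseringsatz} is stated with the stronger hypothesis on clusters of size $\ge g+1$, so the citation is not valid verbatim under the lemma's weaker hypothesis. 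The clean repair is to strengthen the hypothesis here (and in Proposition~\ref{pr:nexcepmini}, Theorem~\ref{th:mainmini}) to size $\ge g+1$, as in Theorem~\ref{Ombalanseringsatz}.
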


\begin{proof}
This follows from Theorem \ref{Ombalanseringsatz}, and the fact that by Proposition \ref{pr:exceptional}, $\Sigma'$ cannot be exceptional.
\end{proof}

\begin{proposition}\label{pr:nexcepmini}
Let $C:y^2=f(x) \in \cO_K[x]$ be a semistable hyperelliptic curve. Suppose the associated cluster picture $\Sigma$ is not exceptional and that every cluster of $\Sigma$ of size $>g+1$ has at most $|k|\!-\!1$ $G_K$-stable children.
Then the Weierstrass equation is minimal if and only if 
$\Sigma$ has no cluster of size $>g+1$ with depth $>0$, but has some cluster $\t$ of size $\ge g+1$ with $d_\t\ge 0$ and 
$-\sum_{r\notin\t}d_{r\wedge\t}=v(c_f)$.

\end{proposition}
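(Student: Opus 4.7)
The plan is to reduce the question to the combinatorial discriminant comparison of Corollary~\ref{pr:genericcase} by producing a ``balanced'' reference minimal model via Lemma~\ref{pr:notexceptional}.

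First I would invoke Lemma~\ref{pr:notexceptional} (whose hypothesis on $|k|$ is exactly the one imposed here) to obtain an integral $K$-isomorphic Weierstrass model $C_0\colon y^2=h(x)$ for $C$ whose cluster picture $(\Sigma_0,\cR_0,d^{(0)})$ has $|\cR_0|=2g+2$, $d^{(0)}_{\cR_0}=0$, $v(c_h)=0$, and no cluster $\neq\cR_0$ of size $>g+1$. Theorem~\ref{cor:basicminimal} immediately yields that $C_0$ is a minimal Weierstrass equation, so $v(\Delta_{C_0})=v(\Delta_C^{\min})$, and the task reduces to characterising exactly when $v(\Delta_C)=v(\Delta_{C_0})$.

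Next I would pass to a finite tame Galois extension $F/K$ of ramification degree $e$ large enough that $G_F$ acts trivially on both $\Sigma$ and $\Sigma_0$ and $ed_\s\in\Z$ for every proper cluster of each. Writing $v_F=ev$, the rescaled cluster pictures have integer depths and trivial $G_F$-action; by Theorem~\ref{th:integralpoly} the pairs $(\Sigma,v_F(c_f))$ and $(\Sigma_0,0)$ are both integral over $F$, and by Theorem~\ref{th:isoequiv} they are equivalent cluster pictures. The rescaled $\Sigma_0$ now meets the hypotheses of Corollary~\ref{pr:genericcase} exactly, and that corollary produces
$$
v_F(\Delta_{\Sigma_0,0}) \;\le\; v_F(\Delta_{\Sigma,v_F(c_f)}),
$$
with equality if and only if $\Sigma$ (in its rescaled form) has no cluster of size $>g+1$ with depth $>0$ but admits a cluster $\t$ of size $\ge g+1$ with $ed_\t\ge 0$ and $v_F(c_f)=-\sum_{r\notin\t}ed_{r\wedge\t}$.

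Dividing this condition through by $e$ and combining with Lemma~\ref{comparedisc} (which identifies $v_F(\Delta_{\Sigma,v_F(c_f)})$ with $v_F(\Delta_C)$, and similarly for $C_0$) gives $v(\Delta_{C_0})\le v(\Delta_C)$, with equality precisely when $\Sigma$ satisfies the condition stated in the proposition. Since $C_0$ is minimal, $y^2=f(x)$ is minimal if and only if these two valuations coincide, which yields the desired characterisation. The only part requiring genuine care, and my main (though modest) obstacle, is the bookkeeping around the base change: one must check that the equality criterion of Corollary~\ref{pr:genericcase} survives the rescaling of depths by $e$ unchanged, and that passing from $K$ to $F$ does not disturb the construction of $C_0$ obtained from the original hypothesis on $|k|$ (which is why $C_0$ must be built over $K$ before going up to $F$).
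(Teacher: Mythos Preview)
Your proposal is correct and follows essentially the same approach as the paper: both build the balanced reference model $C_0$ via Lemma~\ref{pr:notexceptional}, verify its minimality with Theorem~\ref{cor:basicminimal}, pass to a large enough extension to make all depths integral with trivial Galois action, and then invoke Theorem~\ref{th:isoequiv} together with Corollary~\ref{pr:genericcase} to pin down the equality condition. Your version is slightly more explicit about the rescaling by $e$ and the need to construct $C_0$ over $K$ before base-changing, but there is no substantive difference.
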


\begin{proof}
By Lemma \ref{pr:notexceptional}, there exists another integral model $C_{\min}:y^2=g(x)$ with $v(c_g)=0$ and whose associated cluster picture $\Sigma_{\min}$ has its top cluster of depth 0, and has no other clusters of size $>g+1$.  
By Theorem \ref{cor:basicminimal} $C_{\min}$ is a minimal Weierstrass equation, so 
$$
v(\Delta_{C})\ge v(\Delta_{C_{\min}}).
$$

It remains to show that the claimed condition for having equality is correct.
Passing to a suitable field extension if necessary, we may assume that all the clusters in $\Sigma$ have integer depth and that the Galois group acts trivially on $\Sigma$ and on $\Sigma_{\min}$. 
By Theorem \ref{th:isoequiv}, $\Sigma$ is equivalent to $\Sigma_{\min}$, so 
by Corollary \ref{pr:genericcase} 
$v(\Delta_{C})=v(\Delta_{C_{\min}})$  if and only if 
$\Sigma$ has no cluster of size $>g+1$ with depth $>0$, but has some cluster $\t$ of size $\ge g+1$ with $d_\t\ge 0$, and 
$v(c_f)=-\sum_{r\notin\t}d_{r\wedge\t}$.




\end{proof}

\begin{proof}[Proof of Theorem \ref{th:mainmini}] 
Note first that, by definition of being exceptional, conditions (1) and (2) are mutually exclusive.
Note also that $y^2=f(x)$ and $y^2=f(x-z)$ have the same cluster picture and the same valuation of the leading term of the defining polynomial.

Suppose $\Sigma$ is exceptional. If $y^2=f(x-z)$ is a minimal Weierstrass equation, then Proposition \ref{pr:excepmini} shows that $d_\cR=0$ and $v(c_f)\in\{0,1\}$. Conversely, if $d_\cR=0$ and $v(c_f)\in\{0,1\}$ then $(\Sigma,v(c_f))$ is integral. By the semistability criterion $G_K$ acts tamely on $\cR$, so by Theorem \ref{th:integralpoly}(2) there exists $z \in K$ such that $f(x-z) \in \cO_K[x]$. By Proposition \ref{pr:excepmini}, $y^2=f(x-z)$ is then a minimal Weierstrass equation.

Suppose $\Sigma$ is not exceptional and $y^2=f(x-z)$ is a minimal Weierstrass equation. Then Proposition \ref{pr:nexcepmini} shows that $\Sigma$ has no cluster of size $>g+1$ with depth $>0$, but has some cluster $\t$ of size $\ge g+1$ with $d_\t\ge 0$, and $v(c_f)=-\sum_{r\notin\t}d_{r\wedge\t}$. Since $\Sigma$ is not exceptional, $\t$ is necessarily $G_K$-stable, so condition (2) holds.

Finally, suppose $\Sigma$ is not exceptional and satisfies (2). If $\t=\cR$ then $d_\t=0$ and $n=0$, so $(\Sigma,n)$ is integral.
If $\t\neq\cR$, then 
$$
\sum_{r\notin\t}d_{r\wedge\t}=|P(\t)\setminus\t|d_{P(\t)}+\sum_{r\notin P(\t)}d_{r\wedge P(\t)},
$$
so $(\Sigma,n)$ is again integral (with $\s=P(\t)$ in Definition \ref{def:integralCP}). By the semistability criterion $G_K$ acts tamely on $\cR$, so by Theorem \ref{th:integralpoly}(2) there exists $z \in K$ such that $f(x-z) \in \cO_K[x]$, and by Proposition \ref{pr:nexcepmini}, $y^2=f(x-z)$ is a minimal Weierstrass equation.
\end{proof}

\begin{proof}[Proof of Theorem \ref{thmmindisc}]
The first part of the theorem follows from Lemmas \ref{comparedisc} and \ref{le:discriminant}. From now on, suppose $C/K$ is semistable. 

First suppose that $\Sigma$ is exceptional. A substitution of the form $x'=\pi_K^a$, $y'=\pi_K^b$ gives a curve whose cluster picture is obtained from $\Sigma$ by increasing the depth of all clusters so that $\cR$ gets depth 0, and whose valuation of the leading term of the defining polynomial is either $0$ or $1$. By Theorem \ref{th:mainmini}(i) this is the cluster picture of a minimal Weierstrass equation of $C$. The result follows from Lemma~\ref{le:discchange}(i,iii).

Now suppose that $\Sigma$ is not exceptional. Let $(\Sigma^b,\cR^b,d^b)$ be the equivalent balanced cluster picture (in the sense of Lemma \ref{balanced}).
Let $C'$ be the Weierstrass model given by Theorem \ref{Ombalanseringsatz}. By Theorem \ref{th:mainmini}, this is a minimal Weierstrass model. Its cluster picture is either already balanced, or is obtained from the balanced one by redistributing the depth of a child of $\cR$ of size $g\!+\!1$. Thus, by Lemmas \ref{comparedisc} and \ref{le:discchange}(iv),
$$
  v(\Delta_C^{\min}) = v(\Delta_{C'}) = v(\Delta_{\Sigma,0}) = v(\Delta_{\Sigma^b,0}).
$$
The balanced cluster picture $(\Sigma^b,0)$ is also obtained from $(\Sigma,v(c_f))$ itself by
(i) adding a root if necessary to make $\cR$ have size $2g+2$, 
(ii) increasing the depth of all clusters so that $\cR$ has depth 0,
(iii) changing the valuation of the leading term to 0, and (iv) repeatedly redistributing the depth of a child of $\cR$ of size $\ge g\!+\!1$ to eliminate all clusters of size $>g\!+\!1$ and make the ones of size $g+1$ have equal depths. Thus by Lemma \ref{le:discchange}, 
$$
v(\Delta_C^{\min}) = v(\Delta_{\Sigma^b,0}) = 
  v(\Delta_C) +\choice{0}{\text{if } |\cR| \text{ even}}{2(2g+1) d_{\cR}}{\text{if } |\cR| \text{ odd}}-
$$
$$
(2g+2)(2g+1)d_\cR-2v(c)(2g+1)-2(2g+1)\sum_{g+1<|\s|<|\cR|}\delta_\s   (|\s|-g-1),
$$
which simplifies to give the required expression.
\end{proof}

\def\cdepthscale{0.5}            

\section{Reduction types and a classification in genus 2}\label{se:bible}

In this section, we propose a notion of the reduction type of a semistable hyperelliptic curve. For elliptic curves these 
types would correspond to good, split multiplicative of type $\I_n$ or non-split multiplicative of type $\I_n$. 
Our main input is the cluster picture of the curve which determines the combinatorics of its special fibre as well as several other invariants. Since our residue field is not algebraically closed, we need to keep track of the action of Frobenius on proper clusters and the sign $\epsilon_\s(\Frob)$ for even clusters. For elliptic curves with multiplicative reduction this sign will distinguish between split and non-split cases. 


\begin{definition}\label{de:reductiontypeGeneral}
By the \emph{reduction type} of a semistable curve of genus $\ge 2$ over a non-archimedean local field we mean (the isomorphism class of) the dual graph of the special fibre of its minimal regular model with Frobenius action and with a genus associated to every vertex. 

For hyperelliptic curves there is a notation for such types given in \cite[\S8]{hyble}.   
\end{definition}

%

\begin{lemma}\label{le:reductiontypeGeneral}
The reduction type determines the conductor exponent, whether the curve is deficient and the Tamagawa number and root number of its Jacobian. 
\end{lemma}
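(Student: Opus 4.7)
The plan is to show that each of the four invariants can be read off directly from combinatorial data attached to the dual graph $\Upsilon_C$ equipped with its Frobenius action and genus marking, invoking results already proved in the paper.

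First I would dispense with the conductor exponent and the root number, which only depend on the underlying metric graph with Frobenius action. For a semistable curve one has $n_C = \dim V_l\Jac C - \dim (V_l\Jac C)^{I_K} = \rk_\Z H_1(\Upsilon_C,\Z)$ by \eqref{tatedec} (see also Corollary \ref{th:condexpo}), which is the first Betti number of the dual graph and hence visibly a function of the reduction type. The root number is $(-1)^a$ by Theorem \ref{th:rootnumber}, where $a$ is the multiplicity of the trivial representation of $G_k$ in $H_1(\Upsilon_C,\Q)$; this depends only on the Frobenius action on $H_1(\Upsilon_C,\Z)$, which is induced by the action on $\Upsilon_C$ itself.

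For deficiency, by Lemma \ref{le:HyperDeficiency} the curve $C$ is deficient if and only if the genus $g$ is even and every component of the special fibre has either even multiplicity or a Frobenius orbit of even length. In the semistable case all multiplicities are $1$, so the condition reduces to the Frobenius orbits on the set of components all having even length; since components correspond to vertices of $\Upsilon_C$ and the Frobenius action on components is the one recorded on the dual graph, this is visibly determined by the reduction type. The genus itself is recovered by the formula $g=\sum_{v\in V(\Upsilon_C)} g(v)+\rk_\Z H_1(\Upsilon_C,\Z)$, so its parity is read off from the reduction type as well.

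Finally, for the Tamagawa number, Lemma \ref{lemtam} and Remark \ref{remtam} identify it with the order of the $G_k$-invariants of the cokernel of the map
$$
H_1(\Upsilon_C,\Z)\lar \Hom(H_1(\Upsilon_C,\Z),\Z), \qquad \ell\mapsto \langle \ell,\cdot\rangle,
$$
induced by the length pairing. The length pairing \eqref{curvepair} is built combinatorially from the edges of $\Upsilon_C$ and their lengths, and the Galois action on this cokernel is induced by the Frobenius action on $\Upsilon_C$; all of this is precisely the data recorded in the reduction type. The main (minor) obstacle is simply bookkeeping: making sure that all the various constructions from Section \ref{sbackground}, namely the length pairing, the identification of $H_1$ with the character group $X(T)$, the description of the component group and of the inertia invariants of the Tate module, are genuinely functorial in the dual graph with Frobenius action and genus marking, so that isomorphic reduction types yield identical invariants. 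Each of these functorialities is already established in the earlier sections, so assembling them completes the proof.
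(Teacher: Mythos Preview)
Your approach is essentially the same as the paper's: both arguments simply point to the relevant results from Section~\ref{sbackground} (\eqref{tatedec}, Theorem~\ref{th:rootnumber}, Lemma~\ref{lemtam}, and the deficiency criteria) to show that each invariant is computed from the dual graph with its Frobenius action and genus marking. The paper's proof is just the one-line citation version of what you spell out in detail.

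One small remark on scope: Definition~\ref{de:reductiontypeGeneral} and the lemma are phrased for an arbitrary semistable curve, and the paper accordingly cites the general deficiency results Definition~\ref{de:deficiency} and Lemma~\ref{le:deficiency} (so that $I_2=I_3=\gcd_i e_i$ for a semistable model, and deficiency is the condition $I_2\nmid g-1$). You instead invoke Lemma~\ref{le:HyperDeficiency}, which is hyperelliptic-specific; your conclusion ``even genus and all Frobenius orbits of even length'' is the hyperelliptic specialisation of the general criterion, relying on $I_2\in\{1,2\}$. If the lemma is read in full generality you would want to cite Lemma~\ref{le:deficiency} directly, together with your observation that $g$ is recovered from the type via $g=\sum_v g(v)+\rk H_1(\Upsilon_C,\Z)$.
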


\begin{proof}
By \eqref{tatedec}, Theorem \ref{th:rootnumber}, Definition \ref{de:deficiency} and Lemma \ref{le:deficiency} and Lemma \ref{lemtam}, the dual graph and its homology  determine these invariants. 
\end{proof}

\begin{example} Table \ref{tb:reductiontypegenus2} lists all reduction types of semistable genus 2 curves together with their labels. 
Here the numbers inside the vertices indicate their genus (no number meaning genus 0). We draw an edge of length $n$ to indicate a chain of $n$ edges between $n-1$ genus 0 vertices. The black arrows represent the action of Frobenius on the graph.

\def\tgrNA{\hetype{2}}
\def\tgrGA{\raise-3pt\hbox{\begin{tikzpicture}[scale=\GraphScale]
  \GraphVertices
  \Vertex[x=0.000,y=0.000,L=2]{1};
  \BlueEdges
\end{tikzpicture}}}
\def\tgrNB{\hetype{1_n^+}}
\def\tgrGB{\raise-7pt\hbox{\begin{tikzpicture}[scale=\GraphScale]
  \GraphVertices
  \Vertex[x=1.50,y=0.000,L=1]{1};
  \coordinate (2) at (0.000,0.000);
  \BlueEdges
  \LoopW(1)
\GraphEdgeSignS(1)(2){0.5}{n}\end{tikzpicture}}}
\def\tgrNC{\hetype{1_n^-}}
\def\tgrGC{\raise-7pt\hbox{\begin{tikzpicture}[scale=\GraphScale]
  \GraphVertices
  \Vertex[x=1.50,y=0.000,L=1]{1};
  \coordinate (2) at (0.000,0.000);
  \BlueEdges
  \LoopW(1)
\GraphEdgeSignS(1)(2){0.5}{n}\ESwapOfs1212{}{0,-0.25}{0,0.25}{0.5}\end{tikzpicture}}}
\def\tgrNBe{\hetype{1_n^{\epsilon}}}
\def\tgrND{\hetype{I_{n,m}^{+,+}}}
\def\tgrGD{\raise-7pt\hbox{\begin{tikzpicture}[scale=\GraphScale]
  \GraphVertices
  \Vertex[x=1.50,y=0.000,L=\relax]{1};
  \coordinate (2) at (3.00,0.000);
  \coordinate (3) at (0.000,0.000);
  \BlueEdges
  \LoopE(1)
  \LoopW(1)
\GraphEdgeSignS(1)(3){0.5}{n}\GraphEdgeSignS(1)(2){0.5}{m}\end{tikzpicture}}}
\def\tgrNE{\hetype{I_{n,m}^{+,-}}}
\def\tgrGE{\raise-7pt\hbox{\begin{tikzpicture}[scale=\GraphScale]
  \GraphVertices
  \Vertex[x=1.50,y=0.000,L=\relax]{1};
  \coordinate (2) at (3.00,0.000);
  \coordinate (3) at (0.000,0.000);
  \BlueEdges
  \LoopE(1)
  \LoopW(1)
\GraphEdgeSignS(1)(3){0.5}{n}\GraphEdgeSignS(1)(2){0.5}{m}\ESwapOfs1212{}{0,-0.25}{0,0.25}{0.5}\end{tikzpicture}}}
\def\tgrNF{\hetype{I_{n,m}^{-,-}}}
\def\tgrGF{\raise-7pt\hbox{\begin{tikzpicture}[scale=\GraphScale]
  \GraphVertices
  \Vertex[x=1.50,y=0.000,L=\relax]{1};
  \coordinate (2) at (3.00,0.000);
  \coordinate (3) at (0.000,0.000);
  \BlueEdges
  \LoopE(1)
  \LoopW(1)
\GraphEdgeSignS(1)(3){0.5}{n}\GraphEdgeSignS(1)(2){0.5}{m}\ESwapOfs1313{}{0,-0.25}{0,0.25}{0.5}\ESwapOfs1212{}{0,-0.25}{0,0.25}{0.5}\end{tikzpicture}}}
\def\tgrNDed{\hetype{I_{n,m}^{\epsilon,\delta}}}
\def\tgrNG{\hetype{I_{n\FrobL n}^+}}
\def\tgrGG{\raise-7pt\hbox{\begin{tikzpicture}[scale=\GraphScale]
  \GraphVertices
  \Vertex[x=1.50,y=0.000,L=\relax]{1};
  \coordinate (2) at (3.00,0.000);
  \coordinate (3) at (0.000,0.000);
  \BlueEdges
  \LoopE(1)
  \LoopW(1)
\GraphEdgeSignS(1)(3){0.5}{n}\GraphEdgeSignS(1)(2){0.5}{n}\ESwapOfs1312{in=160,out=20}{0.2,0.3}{-0.2,0.3}{0.5}\end{tikzpicture}}}
\def\tgrNH{\hetype{I_{n\FrobL n}^-}}
\def\tgrGH{\raise-7pt\hbox{\begin{tikzpicture}[scale=\GraphScale]
  \GraphVertices
  \Vertex[x=1.50,y=0.000,L=\relax]{1};
  \coordinate (2) at (3.00,0.000);
  \coordinate (3) at (0.000,0.000);
  \BlueEdges
  \LoopE(1)
  \LoopW(1)
\GraphEdgeSignS(1)(3){0.5}{n}\GraphEdgeSignS(1)(2){0.5}{n}\EArrOfs1312{in=150,out=30}{0.1,0.29}{0,0.35}{0.5}\EArrOfs1213{in=-60,out=-60}{0,0.2}{0.3,-0.25}{0.5}\end{tikzpicture}}}
\def\tgrNGe{\hetype{I_{n\FrobL n}^{\epsilon}}}
\def\tgrNI{\hetype{U_{n,m,k}^+}}
\def\tgrGI{\raise-7pt\hbox{\begin{tikzpicture}[scale=\GraphScale]
  \GraphVertices
  \coordinate (2) at (3.00,0.000);
  \coordinate (3) at (0.000,0.000);
  \coordinate (4) at (1.50,1.00);
  \Vertex[x=1.75,y=0.250,L=\relax]{1+}
  \Vertex[x=1.25,y=-0.250,L=\relax]{1-}
  \BlueEdges
  \EdgeE(1)
  \EdgeW(1)
  \EdgeS(1)
\GraphEdgeSignS(1)(2){0.6}{n}\GraphEdgeSignS(1)(3){0.6}{m}\GraphEdgeSignW(1)(4){0.6}{k}\end{tikzpicture}}}
\def\tgrNJ{\hetype{U_{n,m,k}^-}}
\def\tgrGJ{\raise-7pt\hbox{\begin{tikzpicture}[scale=\GraphScale]
  \GraphVertices
  \coordinate (2) at (3.00,0.000);
  \coordinate (3) at (0.000,0.000);
  \coordinate (4) at (1.50,1.00);
  \Vertex[x=1.75,y=0.250,L=\relax]{1+}
  \Vertex[x=1.25,y=-0.250,L=\relax]{1-}
  \BlueEdges
  \EdgeE(1)
  \EdgeW(1)
  \EdgeS(1)
\GraphEdgeSignS(1)(2){0.6}{n}\GraphEdgeSignS(1)(3){0.6}{m}\GraphEdgeSignW(1)(4){0.6}{k}\ESwapOfs1111{}{-0.18,-0.18}{0.18,0.18}{0.5}\end{tikzpicture}}}
\def\tgrNIe{\hetype{U_{n,m,k}^{\epsilon}}}
\def\tgrNK{\hetype{U_{n\FrobL n,k}^+}}
\def\tgrGK{\raise-7pt\hbox{\begin{tikzpicture}[scale=\GraphScale]
  \GraphVertices
  \coordinate (2) at (3.00,0.000);
  \coordinate (3) at (0.000,0.000);
  \coordinate (4) at (1.50,1.00);
  \Vertex[x=1.75,y=0.250,L=\relax]{1+}
  \Vertex[x=1.25,y=-0.250,L=\relax]{1-}
  \BlueEdges
  \EdgeE(1)
  \EdgeW(1)
  \EdgeS(1)
\GraphEdgeSignS(1)(2){0.7}{n}\GraphEdgeSignS(1)(3){0.7}{n}\GraphEdgeSignW(1)(4){0.6}{k}\ESwapOfs1312{in=-120,out=-60}{0,-0.3}{0,-0.3}{0.5}\end{tikzpicture}}}
\def\tgrNL{\hetype{U_{n\FrobL n,k}^-}}
\def\tgrGL{\raise-7pt\hbox{\begin{tikzpicture}[scale=\GraphScale]
  \GraphVertices
  \coordinate (2) at (3.00,0.000);
  \coordinate (3) at (0.000,0.000);
  \coordinate (4) at (1.50,1.00);
  \Vertex[x=1.75,y=0.250,L=\relax]{1+}
  \Vertex[x=1.25,y=-0.250,L=\relax]{1-}
  \BlueEdges
  \EdgeE(1)
  \EdgeW(1)
  \EdgeS(1)
\GraphEdgeSignS(1)(2){0.7}{n}\GraphEdgeSignS(1)(3){0.7}{n}\GraphEdgeSignW(1)(4){0.6}{k}\ESwapOfs1111{}{-0.18,-0.18}{0.18,0.18}{0.5}\ESwapOfs1312{in=-120,out=-60}{0,-0.3}{0,-0.3}{0.5}\end{tikzpicture}}}
\def\tgrNKe{\hetype{U_{n\FrobL n,k}^{\epsilon}}}
\def\tgrNM{\hetype{U_{n\FrobL n\FrobL n}^+}}
\def\tgrGM{\raise-7pt\hbox{\begin{tikzpicture}[scale=\GraphScale]
  \GraphVertices
  \coordinate (2) at (3.00,0.000);
  \coordinate (3) at (0.000,0.000);
  \coordinate (4) at (1.50,1.00);
  \Vertex[x=1.75,y=0.250,L=\relax]{1+}
  \Vertex[x=1.25,y=-0.250,L=\relax]{1-}
  \BlueEdges
  \EdgeE(1)
  \EdgeW(1)
  \EdgeS(1)
\GraphEdgeSignS(1)(2){0.7}{n}\GraphEdgeSignS(1)(3){0.7}{n}\GraphEdgeSignW(1)(4){0.8}{n}\EArrOfs1314{in=180,out=90}{-0.3,0.1}{-0.3,0}{0.5}\EArrOfs1412{in=90,out=0}{0.3,0.3}{0,0.3}{0.5}\EArrOfs1213{in=-60,out=-120}{0,-0.3}{0,-0.3}{0.5}\end{tikzpicture}}}
\def\tgrNN{\hetype{U_{n\FrobL n\FrobL n}^-}}
\def\tgrGN{\raise-7pt\hbox{\begin{tikzpicture}[scale=\GraphScale]
  \GraphVertices
  \coordinate (2) at (3.00,0.000);
  \coordinate (3) at (0.000,0.000);
  \coordinate (4) at (1.50,1.00);
  \Vertex[x=1.75,y=0.250,L=\relax]{1+}
  \Vertex[x=1.25,y=-0.250,L=\relax]{1-}
  \BlueEdges
  \EdgeE(1)
  \EdgeW(1)
  \EdgeS(1)
\GraphEdgeSignS(1)(2){0.7}{n}\GraphEdgeSignS(1)(3){0.7}{n}\GraphEdgeSignW(1)(4){0.8}{n}\ESwapOfs1111{}{-0.18,-0.18}{0.18,0.18}{0.5}\EArrOfs1314{in=180,out=90}{-0.3,0.1}{-0.3,0}{0.5}\EArrOfs1412{in=90,out=0}{0.3,0.3}{0,0.3}{0.5}\EArrOfs1213{in=-60,out=-120}{0,-0.3}{0,-0.3}{0.5}\end{tikzpicture}}}
\def\tgrNMe{\hetype{U_{n\FrobL n\FrobL n}^{\epsilon}}}
\def\tgrNO{\hetype{I_n^+x_{\protect\scalebox{0.6}{$\!\!\scriptstyle r\>$}}I_m^+}}
\def\tgrGO{\raise-7pt\hbox{\begin{tikzpicture}[scale=\GraphScale]
  \GraphVertices
  \Vertex[x=3.00,y=0.000,L=\relax]{1};
  \Vertex[x=1.50,y=0.000,L=\relax]{2};
  \coordinate (3) at (4.50,0.000);
  \coordinate (4) at (0.000,0.000);
  \BlueEdges
  \Edge(1)(2)
  \LoopE(1)
  \LoopW(2)
\GraphEdgeSignS(2)(4){0.5}{n}\GraphEdgeSignS(1)(3){0.5}{m}\GraphEdgeSignS($(1)+(0,0.3)$)(2){0.5}{r}\end{tikzpicture}}}
\def\tgrNP{\hetype{I_n^+x_{\protect\scalebox{0.6}{$\!\!\scriptstyle r\>$}}I_m^-}}
\def\tgrGP{\raise-7pt\hbox{\begin{tikzpicture}[scale=\GraphScale]
  \GraphVertices
  \Vertex[x=3.00,y=0.000,L=\relax]{1};
  \Vertex[x=1.50,y=0.000,L=\relax]{2};
  \coordinate (3) at (4.50,0.000);
  \coordinate (4) at (0.000,0.000);
  \BlueEdges
  \Edge(1)(2)
  \LoopE(1)
  \LoopW(2)
\GraphEdgeSignS(2)(4){0.5}{n}\GraphEdgeSignS(1)(3){0.5}{m}\ESwapOfs1313{}{0,-0.25}{0,0.25}{0.5}\GraphEdgeSignS($(1)+(0,0.3)$)(2){0.5}{r}\end{tikzpicture}}}
\def\tgrNQ{\hetype{I_n^-x_{\protect\scalebox{0.6}{$\!\!\scriptstyle r\>$}}I_m^-}}
\def\tgrGQ{\raise-7pt\hbox{\begin{tikzpicture}[scale=\GraphScale]
  \GraphVertices
  \Vertex[x=3.00,y=0.000,L=\relax]{1};
  \Vertex[x=1.50,y=0.000,L=\relax]{2};
  \coordinate (3) at (4.50,0.000);
  \coordinate (4) at (0.000,0.000);
  \BlueEdges
  \Edge(1)(2)
  \LoopE(1)
  \LoopW(2)
\GraphEdgeSignS(2)(4){0.5}{n}\GraphEdgeSignS(1)(3){0.5}{m}\ESwapOfs2424{}{0,-0.25}{0,0.25}{0.5}\ESwapOfs1313{}{0,-0.25}{0,0.25}{0.5}\GraphEdgeSignS($(1)+(0,0.3)$)(2){0.5}{r}\end{tikzpicture}}}
\def\tgrNOed{\hetype{I_n^{\epsilon}x_{\protect\scalebox{0.6}{$\!\!\scriptstyle r\>$}}I_m^{\delta}}}
\def\tgrNR{\hetype{I_n^+\FrobX_{\protect\scalebox{0.6}{$\!\scriptstyle r\>$}} I_n}}
\def\tgrGR{\raise-7pt\hbox{\begin{tikzpicture}[scale=\GraphScale]
  \GraphVertices
  \Vertex[x=3.00,y=0.000,L=\relax]{1};
  \Vertex[x=1.50,y=0.000,L=\relax]{2};
  \coordinate (3) at (4.50,0.000);
  \coordinate (4) at (0.000,0.000);
  \BlueEdges
  \Edge(1)(2)
  \LoopE(1)
  \LoopW(2)
\GraphEdgeSignS(2)(4){0.5}{n}\GraphEdgeSignS(1)(3){0.5}{n}\ESwapOfs2413{in=160,out=20}{0.2,0.3}{-0.2,0.3}{0.5}\GraphEdgeSignS(1)(2){0.5}{r}\end{tikzpicture}}}
\def\tgrNS{\hetype{I_n^-\FrobX_{\protect\scalebox{0.6}{$\!\scriptstyle r\>$}} I_n}}
\def\tgrGS{\raise-7pt\hbox{\begin{tikzpicture}[scale=\GraphScale]
  \GraphVertices
  \Vertex[x=3.00,y=0.000,L=\relax]{1};
  \Vertex[x=1.50,y=0.000,L=\relax]{2};
  \coordinate (3) at (4.50,0.000);
  \coordinate (4) at (0.000,0.000);
  \BlueEdges
  \Edge(1)(2)
  \LoopE(1)
  \LoopW(2)
\GraphEdgeSignS(2)(4){0.5}{n}\GraphEdgeSignS(1)(3){0.5}{n}\EArrOfs2413{in=150,out=30}{0.1,0.29}{0,0.35}{0.5}\EArrOfs1324{in=-60,out=-60}{0,0.2}{0.3,-0.25}{0.5}\GraphEdgeSignS(1)(2){0.5}{r}\end{tikzpicture}}}
\def\tgrNRe{\hetype{I_n^{\epsilon}\FrobX_{\protect\scalebox{0.6}{$\!\scriptstyle r\>$}} I_n}}
\def\tgrNT{\hetype{1x_{\protect\scalebox{0.6}{$\!\!\scriptstyle r\>$}}I_n^+}}
\def\tgrGT{\raise-7pt\hbox{\begin{tikzpicture}[scale=\GraphScale]
  \GraphVertices
  \Vertex[x=3.00,y=0.000,L=1]{1};
  \Vertex[x=1.50,y=0.000,L=\relax]{2};
  \coordinate (3) at (0.000,0.000);
  \BlueEdges
  \Edge(1)(2)
  \LoopW(2)
\GraphEdgeSignS(2)(3){0.5}{n}\GraphEdgeSignS($(1)+(0,0.3)$)(2){0.5}{r}\end{tikzpicture}}}
\def\tgrNU{\hetype{1x_{\protect\scalebox{0.6}{$\!\!\scriptstyle r\>$}}I_n^-}}
\def\tgrGU{\raise-7pt\hbox{\begin{tikzpicture}[scale=\GraphScale]
  \GraphVertices
  \Vertex[x=3.00,y=0.000,L=1]{1};
  \Vertex[x=1.50,y=0.000,L=\relax]{2};
  \coordinate (3) at (0.000,0.000);
  \BlueEdges
  \Edge(1)(2)
  \LoopW(2)
\GraphEdgeSignS(2)(3){0.5}{n}\ESwapOfs2323{}{0,-0.25}{0,0.25}{0.5}\GraphEdgeSignS($(1)+(0,0.3)$)(2){0.5}{r}\end{tikzpicture}}}
\def\tgrNTe{\hetype{1x_{\protect\scalebox{0.6}{$\!\!\scriptstyle r\>$}}I_n^{\epsilon}}}
\def\tgrGV{\raise-7pt\hbox{\begin{tikzpicture}[scale=\GraphScale]
  \GraphVertices
  \Vertex[x=1.50,y=0.000,L=1]{1};
  \Vertex[x=0.000,y=0.000,L=1]{2};
  \BlueEdges
  \Edge(1)(2)
\GraphEdgeSignS($(1)+(0,0.3)$)(2){0.5}{r}\end{tikzpicture}}}
\def\tgrNV{\hetype{1x_{\protect\scalebox{0.6}{$\!\!\scriptstyle r\>$}}1}}
\def\tgrGV{\raise-7pt\hbox{\begin{tikzpicture}[scale=\GraphScale]
  \GraphVertices
  \Vertex[x=1.50,y=0.000,L=1]{1};
  \Vertex[x=0.000,y=0.000,L=1]{2};
  \BlueEdges
  \Edge(1)(2)
\GraphEdgeSignS($(1)+(0,0.3)$)(2){0.5}{r}\end{tikzpicture}}}
\def\tgrNW{\hetype{1\FrobX_{\protect\scalebox{0.6}{$\!\scriptstyle r\>$}} 1}}
\def\tgrGW{\raise-7pt\hbox{\begin{tikzpicture}[scale=\GraphScale]
  \GraphVertices
  \Vertex[x=1.50,y=0.000,L=1]{1};
  \Vertex[x=0.000,y=0.000,L=1]{2};
  \BlueEdges
  \Edge(1)(2)
\GraphEdgeSignS(1)(2){0.5}{r}\VSwap{1}{2}{in=30,out=150}{2}\end{tikzpicture}}}


\begin{table}[H]
\begingroup
$$
\begin{array}{|c|c|c|c|c|c|}
\hline
\tgrGA&\tgrGB&\tgrGD&\tgrGH&\tgrGL&\tgrGP\\
\tgrNA&\tgrNB&\tgrND&\tgrNH&\tgrNL&\tgrNP\\
\hline
\tgrGV&\tgrGC&\tgrGE&\tgrGI&\tgrGM&\tgrGQ\\
\tgrNV&\tgrNC&\tgrNE&\tgrNI&\tgrNM&\tgrNQ\\
\hline
\tgrGW&\tgrGT&\tgrGF&\tgrGJ&\tgrGN&\tgrGR\\
\tgrNW&\tgrNT&\tgrNF&\tgrNJ&\tgrNN&\tgrNR\\
\hline
&\tgrGU&\tgrGG&\tgrGK&\tgrGO&\tgrGS\\
&\tgrNU&\tgrNG&\tgrNK&\tgrNO&\tgrNS\\
\hline
\end{array}
$$
\endgroup
\caption{Reduction types of semistable genus 2 curves}
\label{tb:reductiontypegenus2}
\end{table}
\end{example}

\begin{theorem}\label{th:classification}
The cluster picture of a semistable hyperelliptic curve over $K$ together with the action of Frobenius on clusters and $\epsilon_\s(\Frob)$ for even clusters determine the reduction type of the curve.
\end{theorem}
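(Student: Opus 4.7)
The plan is to read off the reduction type directly from Theorem \ref{th:DualGraph}, which produces the dual graph $\Upsilon_C$, its vertex genera and the Frobenius action on it from combinatorial data attached to the cluster picture. Since the reduction type is, by Definition \ref{de:reductiontypeGeneral}, exactly this triple (dual graph, genera, Frobenius action), it is enough to check that every ingredient in Theorem \ref{th:DualGraph} depends only on the cluster picture, the Frobenius permutation $\rho(\Frob)$ of proper clusters, and the signs $\epsilon_\s(\Frob)$ for even clusters~$\s$.

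First I would record the vertex set. By Theorem \ref{th:DualGraph}(1), the vertices of $\Upsilon_C$ (up to subdivision along the chains of $\mathbb{P}^1$'s, whose lengths are given by the relative depths $\delta_\s$) are in bijection with principal clusters, with each \"{u}bereven principal cluster contributing two vertices $v_\s^{\pm}$ and each non-\"{u}bereven principal cluster contributing one vertex $v_\s$. The genus $g(\s)$ attached to $v_\s$ is determined by the number $|\so|$ of odd children of $\s$ via the formula $|\so|=2g(\s)+1$ or $2g(\s)+2$, so it is purely combinatorial. The chains $L_\s, L_\s^{\pm}, L_{\t}, L_{\s_1,\s_2}^{\pm}$ and their lengths are likewise determined by $\Sigma$, since the tables in Theorem \ref{th:DualGraph}(1) only use cluster combinatorics (size, parent/child, twin, cotwin, principal) together with the relative depths $\delta_\s$.

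Next I would handle the Frobenius action. By Theorem \ref{th:DualGraph}(2), $\Frob$ acts on the vertices and chains by the rules $\Frob(v_\s^\pm)=v_{\rho(\Frob)\s}^{\pm\epsilon_\s(\Frob)}$ and $\Frob(L_\s^\pm)=L_{\rho(\Frob)\s}^{\pm\epsilon_\s(\Frob)}$, and on edges corresponding to twins/cotwins $\t$ by $\Frob(L_\t)=\epsilon_\t(\Frob)L_{\rho(\Frob)\t}$ (with reversal of orientation recorded by the sign), and by the induced permutation elsewhere. Every datum appearing here is one of: the permutation $\rho(\Frob)$ on the set of proper clusters, or a sign $\epsilon_\s(\Frob)$ attached to an even cluster $\s$ (recall $\epsilon_\s=0$ otherwise, but in that case the sign does not appear in any of the formulas, as only even clusters give rise to the $\pm$ decorated objects). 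Therefore the action of $\Frob$ on $\Upsilon_C$ is reconstructed from the stated data.

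Combining the two steps gives the full triple (dual graph, genera, Frobenius action), i.e.\ the reduction type, from $(\Sigma,\rho(\Frob),\{\epsilon_\s(\Frob)\}_{\s\text{ even}})$. The only subtlety, and hence the main thing to check carefully, is to ensure that the subdivision of chains by vertices of degree $2$ and genus $0$ is irrelevant to the reduction type as an isomorphism class, so that recording the combinatorial chains $L_\bullet$ with their integer lengths is equivalent to specifying $\Upsilon_C$ itself; this is immediate from the convention in Definition \ref{de:reductiontypeGeneral} (and from Theorem \ref{main dual graph thingy}). No further computation is needed, as Theorem \ref{th:DualGraph} does all the work.
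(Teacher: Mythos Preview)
Your proposal is correct and follows the same route as the paper: the paper's proof is literally the one-liner ``This is clear from the definition of reduction type and Theorem \ref{th:DualGraph}'', and your plan is an expanded version of exactly this observation. One minor remark: the statement only promises the signs $\epsilon_\s(\Frob)$ for \emph{even} clusters, whereas Theorem \ref{th:DualGraph}(2)(iii) also uses $\epsilon_\t(\Frob)$ for cotwins $\t$; this is harmless since by Definition \ref{de:epsilon} the cotwin sign is computed via $\theta_{\t^*}$ with $\t^*$ the even child of size $2g$, so it is determined by the even-cluster data.
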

\begin{proof}
This is clear from the definition of reduction type and Theorem \ref{th:DualGraph}. 
\end{proof}

It follows that one can classify all reduction types of semistable curves of a given genus via their cluster pictures with this extra data. Note that different cluster pictures can give the same reduction type.

\begin{notation}\label{no:adddata}
Given the cluster picture of a semistable hyperelliptic curve, we write the relative depth on all proper clusters (except for $\cR$ which is decorated with its depth) at the bottom right corner of the cluster. For every even cluster $\s$ such that $\s= \s^*$ we write a sign $+$ or $-$ on its top right corner to indicate $\epsilon_\s(\Frob)$. For every cluster, we link its children that are in the same Frobenius orbit by lines.

Note that the definition of $\epsilon_\s$ (see Definition \ref{de:epsilon}) depends on a choice of sign for $\theta_\s$. A different choice of sign will change the sign parameter on $\s$ and on $\Frob(\s)$ if these are different clusters.
%
%
%
\end{notation}

\begin{example}
Suppose $p \equiv 7 \mod 8$. The curve
$$
y^2=(x+1)(x-1)(x-(i+p))(x-(i-p))(x-(-i+p))(x-(-i-p))
$$ has the following cluster picture
$
\clusterpicture            
  \Root {1} {first} {r1};
  \Root {} {r1} {r2};
  \Root {1} {r2} {r3};
  \Root {} {r3} {r4};
  \ClusterLD c1[{}][{1}] = (r3)(r4);
  \Root {1} {c1} {r5};
  \Root {} {r5} {r6};
  \ClusterLD c2[{}][{1}] = (r5)(r6);
  \ClusterD c3[0] = (r1)(r2)(c1)(c2);
\endclusterpicture
$, 
where $i$ is a square root of -1 in $\overline{\Q}_p$. The two twins are $\t_1 = \{i+p, i-p\}$ and $\t_2 = \{-i+p, -i-p\}$ and hence are swapped by Frobenius. Take $\theta_{\t_1} =\theta_{\t_2} = 2\sqrt{2}$. Since  $p \equiv 7 \mod 8$, $\epsilon_{\t_1}(\Frob) = \epsilon_{\t_2}(\Frob) = +1$. We draw this data as 
$
\clusterpicture            
  \Root {1} {first} {r1};
  \Root {} {r1} {r2};
  \Root {1} {r2} {r3};
  \Root {} {r3} {r4};
  \ClusterLD c1[{+}][{1}] = (r3)(r4);
  \Root {1} {c1} {r5};
  \Root {} {r5} {r6};
  \ClusterLD c2[{+}][{1}] = (r5)(r6);
  \ClusterD c3[0] = (r1)(r2)(c1)(c2);
  \frob(c1t)(c2);
\endclusterpicture
$.
Note that if we had chosen $\theta_{\t_2} = -2\sqrt{2}$ we would have obtained $\epsilon_{\t_1}(\Frob) = \epsilon_{\t_2}(\Frob) = -1$ and the cluster picture 
$
\clusterpicture            
  \Root {1} {first} {r1};
  \Root {} {r1} {r2};
  \Root {1} {r2} {r3};
  \Root {} {r3} {r4};
  \ClusterLD c1[{-}][{1}] = (r3)(r4);
  \Root {1} {c1} {r5};
  \Root {} {r5} {r6};
  \ClusterLD c2[{-}][{1}] = (r5)(r6);
  \ClusterD c3[0] = (r1)(r2)(c1)(c2);
  \frob(c1t)(c2);
\endclusterpicture
$. 
This is the reason why we consider these two the same type. 

Finally note that if $p \equiv 3 \mod 8$ then $\Frob(\sqrt{2}) = -\sqrt{2}$ and the corresponding cluster pictures would be
$
\clusterpicture            
  \Root {1} {first} {r1};
  \Root {} {r1} {r2};
  \Root {1} {r2} {r3};
  \Root {} {r3} {r4};
  \ClusterLD c1[{-}][{1}] = (r3)(r4);
  \Root {1} {c1} {r5};
  \Root {} {r5} {r6};
  \ClusterLD c2[{+}][{1}] = (r5)(r6);
  \ClusterD c3[0] = (r1)(r2)(c1)(c2);
  \frob(c1t)(c2);
\endclusterpicture$ and $
\clusterpicture            
  \Root {1} {first} {r1};
  \Root {} {r1} {r2};
  \Root {1} {r2} {r3};
  \Root {} {r3} {r4};
  \ClusterLD c1[{+}][{1}] = (r3)(r4);
  \Root {1} {c1} {r5};
  \Root {} {r5} {r6};
  \ClusterLD c2[{-}][{1}] = (r5)(r6);
  \ClusterD c3[0] = (r1)(r2)(c1)(c2);
  \frob(c1t)(c2);
\endclusterpicture.
$
\end{example}

\cite[\S9]{hyble} explains how to list all the reduction types of semistable hyperelliptic curves of arbitrary genus $g$. Given a hyperelliptic curve, in order to find its reduction type in that list, we first construct its cluster picture together with the additional data as in Notation \ref{no:adddata} and either we use Theorem \ref{th:DualGraph} or use Table 4.20 of \cite{hyble} to identify (the core of its open) hyperelliptic graph with automorphism induced by Frobenius. Theorem \ref{main dual graph thingy} guarantees that the latter produces the correct reduction type. 

Finally, note that Theorem \ref{Ombalanseringsatz} shows that every semistable hyperelliptic curve has a $K$-rational model with a distinguished cluster picture. For instance in genus 2, curves with cluster pictures that have no clusters of size 4 or 5 cover all $K$-isomorphism classes (see Theorem \ref{th:unbal}.(2)). In general, hyperelliptic curves of genus $g$ with a cluster picture that has no cluster $\s$ of size $g+1<|\s|<2g+2$ cover all $K$-isomorphism classes.

In the rest of this section we give a complete classification for semistable genus 2 curves.

%

\begin{theorem}\label{th:unbal}
Let C 
be a hyperelliptic curve over $K$ of genus 2 with cluster picture~$\Sigma_C$.

1) $C/K$ is semistable if and only if (a) $\Sigma_C$ is one of the pictures in Table \ref{tb:unbal} with $n,m,k,r,t \in \Z$
, (b) the thick black cluster $\s$ has depth $d_\s \in \Z$ and $\nu_\s \in 2\Z$, and (c) wild inertia does not permute any root. In this case its reduction type is the one given in Table \ref{tb:unbal}.

2) If $C$ is semistable and $|k|>5$ then there is an isomorphic curve $C'/K$ such that $\Sigma_{C'}$ is the top cluster picture in the second column of the same reduction type as $\Sigma_C$, with the same parameters $n,m,k,r,\epsilon, \delta$ and with $t = r$. 


3) The Namikawa-Ueno type of a semistable genus 2 curve is as indicated in Table \ref{tb:unbal}.
\end{theorem}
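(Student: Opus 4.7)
My plan is to prove the three parts of the theorem by a combination of combinatorial enumeration and direct application of the theory developed earlier in the paper.

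For part (1), I would first invoke the semistability criterion (Theorem \ref{the semistability theorem} together with Definition \ref{semistability criterion}) which characterises semistability as the conjunction of: (a) $K(\cR)/K$ has ramification degree at most $2$, (b) every proper cluster is $I_K$-invariant, and (c) every principal cluster $\s$ has $d_\s\in\Z$ and $\nu_\s\in 2\Z$. Condition (a) is equivalent to wild inertia fixing $\cR$ pointwise combined with $K(\cR)/K$ being at most tamely ramified of degree $\le 2$; the former is the wild-inertia hypothesis in (1), while the latter forces every proper cluster other than those listed in Lemma \ref{integral clusters} to have integer depth. I would then enumerate all abstract cluster tree topologies with $|\cR|=5$ or $6$: this is a finite combinatorial task, and after discarding those that either violate (a)--(c) for some choice of depths, or that cannot arise (e.g.\ cotwin substructures that conflict with part (b) of the semistability criterion applied to the parent), one is left precisely with the pictures in Table \ref{tb:unbal}. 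The decoration by Frobenius orbits and by the sign $\epsilon_\s(\Frob)$ on each $G_K$-stable even cluster is then a straightforward case analysis: $I_K$ stabilises all proper clusters, so $G_K$ can only act through $G_k$, which permutes clusters of equal relative depth and is constrained by condition (c) above.

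For part (2), I would apply Theorem \ref{Ombalanseringsatz}; in genus $2$ the hypothesis on the residue field becomes $|k|>5$, and the conclusion provides a $K$-isomorphic model over $\cO_K$ whose cluster picture has top-cluster depth $0$, no clusters of size $>3$, and at most one cluster of size $3$ unless Frobenius swaps two. Comparing this canonical shape with the entries of Table \ref{tb:unbal} shows that exactly the topmost pictures in the second column arise. The claim that one can take $t=r$ is a minor additional normalisation obtained by allowing the redistribution-of-depth move in Proposition \ref{mobius}(4) (which preserves integrality of the Weierstrass equation in this genus) to move the interior depth parameters to this symmetric position; that the resulting invariants $n,m,k,r,\epsilon,\delta$ are unchanged follows from Theorem \ref{th:isoequiv} plus the uniqueness statement in Lemma \ref{balanced}.

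For part (3), I would use Theorem \ref{th:DualGraph} to read off the dual graph of the special fibre of the minimal regular model (with Frobenius action and genus marking) from each cluster picture in Table \ref{tb:unbal}, confirming that the reduction-type label in column~3 of Table \ref{tb:unbal} is indeed the correct entry from Table \ref{tb:reductiontypegenus2}; this amounts to matching the lengths of the chains $L_\s^{\pm}$, $L_\t$ and the signs $\epsilon_\s$ case by case. Then, for the Namikawa--Ueno identification, I would compare the resulting semistable special fibres with the degenerations in Namikawa--Ueno's classification of stable models in genus $2$: for each row, either contracting the $(-2)$-curves to pass from the minimal regular to the stable model (cf.\ Theorem \ref{stable model}) directly exhibits the Namikawa--Ueno type, or one reads it off from the dual graph, since in genus $2$ the semistable types are classically indexed by exactly the data recorded in our reduction types.

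The main obstacle, and the part that is essentially routine but bulky, is the combinatorial enumeration in part (1): one must be careful to list every topology allowed by the cluster axioms, discard the finitely many that the semistability criterion rules out (paying particular attention to \"ubereven clusters, cotwins, and the exceptional size-$2g$ subcluster case of Lemma \ref{integral clusters}), and verify case by case that each remaining picture genuinely arises from some semistable hyperelliptic curve, e.g.\ by writing down a polynomial realising it.
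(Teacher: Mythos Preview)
Your overall strategy is sound, but there is a gap in part (1) that the paper handles with a tool you do not invoke. The theorem's condition (b) requires only \emph{one} distinguished cluster (the thick black one) to have $d_\s\in\Z$ and $\nu_\s\in 2\Z$, whereas the semistability criterion you cite (Definition \ref{semistability criterion}) imposes this on \emph{every} principal cluster. Your argument does not explain why the single-cluster condition, together with the integrality of the relative-depth parameters $n,m,k,r,t$ from condition (a), propagates to all principal clusters. This is precisely the content of Proposition \ref{th:equivss} (built on Lemma \ref{le:AdamsLemma}), which the paper invokes directly: once the thick black cluster is chosen as the distinguished $\s$ in that proposition, conditions (a)--(c) of the theorem become exactly the three clauses of Proposition \ref{th:equivss}. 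Your remark that ramification degree $\le 2$ ``forces every proper cluster\ldots to have integer depth'' does not address the parity of $\nu_\s$ at all, so this step genuinely needs filling.

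For part (2) your approach via Theorem \ref{Ombalanseringsatz} works but is a detour: since genus $2$ is even, the paper instead applies Corollary \ref{co:evengenus}, which directly produces a \emph{balanced} model (two clusters of size $3$ with equal depth, hence $t=r$ automatically) without the extra redistribution step you propose via Proposition \ref{mobius}(4). The preservation of parameters then follows from Theorem \ref{th:isoequiv} and the fact that isomorphic curves have the same dual graph with Frobenius action, exactly as you say. Your part (3) matches the paper's one-line argument: the Namikawa--Ueno type is read off from the dual graph of the minimal regular model. Finally, the realisability check you mention at the end (exhibiting a polynomial for each picture) is not part of the theorem as stated and is not needed.
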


\def\ee{\epsilon}
\def\nn{\frac{n}{2}}
\def\mm{\frac{m}{2}}
\def\kk{\frac{k}{2}}
\def\dd{\delta}

\def\tgrNA{\hetype{2}}
\def\tgrNB{\hetype{1_n^+}}
\def\tgrNC{\hetype{1_n^-}}
\def\tgrNBe{\hetype{1_n^{\epsilon}}}
\def\tgrND{\hetype{I_{n,m}^{+,+}}}
\def\tgrNE{\hetype{I_{n,m}^{+,-}}}
\def\tgrNF{\hetype{I_{n,m}^{-,-}}}
\def\tgrNDed{\hetype{I_{n,m}^{\epsilon,\delta}}}
\def\tgrNG{\hetype{I_{n\FrobL n}^+}}
\def\tgrNH{\hetype{I_{n\FrobL n}^-}}
\def\tgrNGe{\hetype{I_{n\FrobL n}^{\epsilon}}}
\def\tgrNI{\hetype{U_{n,m,k}^+}}
\def\tgrNJ{\hetype{U_{n,m,k}^-}}
\def\tgrNIe{\hetype{U_{n,m,k}^{\epsilon}}}
\def\tgrNK{\hetype{U_{n\FrobL n,k}^+}}
\def\tgrNL{\hetype{U_{n\FrobL n,k}^-}}
\def\tgrNKe{\hetype{U_{n\FrobL n,k}^{\epsilon}}}
\def\tgrNM{\hetype{U_{n\FrobL n\FrobL n}^+}}
\def\tgrNN{\hetype{U_{n\FrobL n\FrobL n}^-}}
\def\tgrNMe{\hetype{U_{n\FrobL n\FrobL n}^{\epsilon}}}
\def\tgrNO{\hetype{I_n^+x_{\protect\scalebox{0.6}{$\!\!\scriptstyle \frac{r + s}{2}\>$}}I_m^+}}
\def\tgrNP{\hetype{I_n^+x_{\protect\scalebox{0.6}{$\!\!\scriptstyle \frac{r + s}{2}\>$}}I_m^-}}
\def\tgrNQ{\hetype{I_n^-x_{\protect\scalebox{0.6}{$\!\!\scriptstyle \frac{r + s}{2}\>$}}I_m^-}}
\def\tgrNOed{\hetype{I_n^{\epsilon}x_{\protect\scalebox{0.6}{$\!\!\scriptstyle r\>$}}I_m^{\delta}}}
\def\tgrNR{\hetype{I_n^+\FrobX_{\protect\scalebox{0.6}{$\!\scriptstyle r\>$}} I_n}}
\def\tgrNS{\hetype{I_n^-\FrobX_{\protect\scalebox{0.6}{$\!\scriptstyle r\>$}} I_n}}
\def\tgrNRe{\hetype{I_n^{\epsilon}\FrobX_{\protect\scalebox{0.6}{$\!\scriptstyle r\>$}} I_n}}
\def\tgrNT{\hetype{1x_{\protect\scalebox{0.6}{$\!\!\scriptstyle \frac{r + s}{2}\>$}}I_n^+}}
\def\tgrNU{\hetype{1x_{\protect\scalebox{0.6}{$\!\!\scriptstyle \frac{r + s}{2}\>$}}I_n^-}}
\def\tgrNTe{\hetype{1x_{\protect\scalebox{0.6}{$\!\!\scriptstyle r\>$}}I_n^{\epsilon}}}
\def\tgrNV{\hetype{1x_{\protect\scalebox{0.6}{$\!\!\scriptstyle r\>$}}1}}
\def\tgrNW{\hetype{1\FrobX_{\protect\scalebox{0.6}{$\!\scriptstyle r\>$}} 1}}

\def\dsh{\hbox{--}}

\def\cltopskip{5pt}              
\def\clbottomskip{5pt}           

\begin{table}
$$
\hskip-3mm
\scalebox{0.8}{$
\begin{array}{|c|c|c@{\>\>\>}c@{\>\>\>}c@{\>\>\>}c@{\>\>\>}c|}
\hline
\text{N-U}&\text{Type} & |\cR|=5 & \s\<\cR  \text{ sizes}  & \s\<\cR \text{ sizes} & \s\<\cR \text{ sizes}  & \s\<\cR \text{ sizes}    \\
\text{Type}&\text{} & & \le 3 &4, 1, 1 &4, 2 & 5, 1  \\
\hline
 \hetype{I_{0-0-0}} &\tgrNA&\clgAc &  \clgBalc &&& \clgCc  \\
\hline
&\tgrNV & \clooAc & \clooBalc   &&&\clooCc  \\
 \hetype{I_0\dsh I_0\dsh r}&&&\clooDc&&&\\
 \cline{2-7}
&\tgrNW&&\cloofc  & & &   \\
\hline
 \hetype{I_{n-0-0}} &\tgrNBe &\clnAce  & \clnBalce &  \clnCce &  \clnDce &\clnEce\\
 && \clnFce &   & & & \clnGce   \\
 \hline
&\tgrNTe & \cloInAce & \cloInBalce  &\cloInCce&\cloInDce& \cloInEce \\
  \hetype{I_n\dsh I_0\dsh r}&& \cloInFce & \cloInGce && &  \cloInHce \\
 && \cloInIce  &\cloInJce & && \cloInKce\\
 \hline
 &\tgrNDed &  \clnmAce   & \clnmBalce & \clnmCce &\clnmDce &\clnmEce\\
  \hetype{I_{n-m-0}}&&\clnmFce & & &  &\clnmGce\\
 \cline{2-7}
&\tgrNGe & \clnnAce &\clnnBalce    &&&  \clnnCce  \\
 \hline
&\tgrNIe & \clUnmkAce & \clUnmkBalce & \clUnmkCce &\clUnmkDce & \clUnmkEce  \\
  \cline{2-7}
\hetype{I_{n-m-k}}&\tgrNKe & \clUnnkAce & \clUnnkBalce & \clUnnkCce &\clUnnkDce & \clUnnkEce  \\
 \cline{2-7}
&\tgrNMe & & \clUnnnBalce &  & &  \\
 \hline
&\tgrNOed & \clInImAce & \clInImBalce &\clInImCce &\clInImDce  &\clInImEce \\
\hetype{I_n\dsh I_m\dsh r}&&\clInImFce &\clInImGce &&& \clInImHce\\
  \cline{2-7}
&\tgrNRe & & \clInInBalce &  & &   \\
 \hline
 \end{array}$}
$$
\hbox{\footnotesize{Notation: 
here $\eta \in \{\pm1\}$ and $t \in \Z$ are arbitrary.}}
\caption{Cluster pictures for semistable genus 2 curves}
\label{tb:unbal}
 \end{table}
 
 \def\cltopskip{1pt}              
\def\clbottomskip{1pt}           

\begin{proof}
The table contains all possible cluster pictures for genus 2 curves, 
with all possible choices of a permutation of proper clusters and choice of signs on even clusters of the form $\s^*$.

1) The semistability claim follows from Theorem \ref{the semistability theorem} and Proposition \ref{th:equivss} with the thick black cluster as choice for $\s$. The reduction type follows from Theorem \ref{th:DualGraph}. 
 
2) 
By Corollary \ref{co:evengenus} there is a model for the curve whose cluster picture is balanced in the sense of Lemma \ref{balanced}. By Theorem \ref{th:isoequiv} their cluster pictures are equivalent and in particular their relative depths are related as in the table. Moreover, the special fibres have isomorphic dual graphs with the same action of Frobenius, which pins down the Frobenius action on clusters and the sign parameters as given in the table.

3) The Namikawa-Ueno type is determined by the dual graph of the special fibre of the minimal regular model.
\end{proof}


%
The arithmetic invariants of genus 2 semistable hyperelliptic curves depend on the reduction type as follows.

\begin{theorem}\label{th:g2bible}
Let $K$ be a local field of odd residue characteristic and $C/K$ a semistable hyperelliptic curve of genus 2. Then the reduction type of $C/K$ is one of the ones listed in Table  \ref{tb:genus2bible}. 

Any genus 2 curve $y^2= f(x)$ with one of the cluster pictures from the table is semistable of the corresponding reduction type. If $f(x) \in \cO_K[x]$ then this is a minimal Weierstrass model. 
Moreover if $|k| > 5$, then every semistable $C/K$ admits a model $y^2 = f(x)$, with $f(x) \in \cO_K[x]$ and one of the listed cluster pictures. 

The invariants of the curve and its Jacobian are as given in the table (the value of $v(\Delta_{min})$ is conditional on $|k| > 5$).
In the table 
$m$ is the number of components in the special fibre of the minimal regular model of $C$, 
$n$ is the conductor exponent, 
$w$ is the local root number, 
$c$ is the Tamagawa number of $Jac(C)$. 
Def is - or + depending on whether the curve is deficient or not; $(-)^r$ means deficient if and only $r$ is odd.
The column $H_1(\Upsilon_C, \Z)$ lists the isomorphism class of the lattice together with automorphism (induced by Frobenius) and pairing (induced by the length pairing on $\Upsilon_C$), in the notation of Theorem 1.2.2 in \cite{DB}.
\end{theorem}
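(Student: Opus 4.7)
The plan is to combine Theorem~\ref{th:unbal}, which already classifies all cluster pictures (with Frobenius action and signs $\epsilon_\s(\Frob)$) of semistable genus~$2$ curves, with the general invariant formulas established earlier in the paper. First, Theorem~\ref{th:unbal}(1) produces the cluster pictures listed in Table~\ref{tb:unbal}; each such picture, combined with the Frobenius-action data and signs, determines the reduction type by Theorem~\ref{th:classification}, and the correspondence with Table~\ref{tb:reductiontypegenus2} is read off from Theorem~\ref{th:DualGraph}. This simultaneously proves that the listed types are the complete list and that any curve with one of the listed cluster pictures is indeed semistable of the corresponding type. Moreover, Theorem~\ref{th:unbal}(2) shows that when $|k|>5$ any semistable $C/K$ admits a model with one of these balanced cluster pictures (by Theorem~\ref{Ombalanseringsatz} / Corollary~\ref{co:evengenus}), and such a model with $f(x)\in\cO_K[x]$ is minimal by Theorem~\ref{th:mainmini} (cases (1) or (2) apply to each listed picture, since $d_\cR=0$ and either there is a cluster of size $\ge g+1=3$ with depth $\ge 0$, or two clusters of size $3$ swapped by Frobenius).

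Next, each arithmetic invariant is a direct specialisation of the general results. The number of components $m$ is computed by Corollary~\ref{numcomponents}, counting clusters and their relative depths; the conductor exponent $n$ comes from Corollary~\ref{th:condexpo}, i.e.\ $n=\#A$ (or $\#A-1$ if $\cR$ is \"ubereven) where $A$ is the set of even non-\"ubereven clusters other than $\cR$. The discriminant $v(\Delta_{\min})$ is extracted from Theorem~\ref{thmmindisc}, noting that once the cluster picture is balanced as in Theorem~\ref{th:unbal}(2), the correction term $E$ only appears in the exceptional (Frobenius-swapped) case. Deficiency is read off from Theorem~\ref{th:deficiency}, which for genus~$2$ reduces to a short list of configurations; in particular the $(-)^r$ alternation in the $I_n^+\FrobX_r I_n$ and $1\FrobX_r 1$ rows comes from case~(1) of that theorem with $\delta_{\s_i}$ odd$\,\Leftrightarrow\, r$ odd.

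For the remaining invariants one uses Theorem~\ref{th:Homology}: the homology lattice $H_1(\Upsilon_C,\Z)$ with Frobenius action and length pairing is computed from the loops $\ell_\s$ indexed by even non-\"ubereven clusters together with the sign characters $\epsilon_\s(\Frob)$. From this, the Tamagawa number $c$ is the order of the $G_k$-invariants of the cokernel of the length pairing (Lemma~\ref{lemtam}, Remark~\ref{remtam}), and the local root number $w=(-1)^a$ is obtained from Theorem~\ref{th:rootnumber}, where $a$ is the multiplicity of the trivial representation of $G_k$ inside $H_1(\Upsilon_C,\Q)$; both follow immediately once one writes $H_1$ as a signed permutation module via Corollary~\ref{co:EtaleCohoToric}. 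This gives the entries for $c$, $w$ and the $H_1$ column.

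The only non-routine part is the bookkeeping: one must go through each of the roughly $25$ configurations of Table~\ref{tb:unbal} and check that the formulas evaluate to the claimed entries, paying particular attention to the cases where $\cR$ is a cotwin or \"ubereven (which affect the $\ominus\epsilon_\cR$ term in the toric part of cohomology, and hence the conductor, root number and Tamagawa number), and to the Frobenius-swapped rows (where the sign characters $\epsilon_\s$ must be tracked carefully to recover the lattice with pairing in the normal form of \cite{DB}). The main obstacle is therefore not conceptual but combinatorial: ensuring that in every one of the Frobenius-twisted and \"ubereven cases the general formulas specialise to exactly the tabulated values, and that the minimality statement of Theorem~\ref{th:mainmini} is applicable (which is why the hypothesis $|k|>5$ enters, via the hypothesis on $G_K$-stable children in Theorem~\ref{Ombalanseringsatz}).
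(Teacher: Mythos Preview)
Your proposal is correct and follows essentially the same approach as the paper: both reduce everything to Theorem~\ref{th:unbal} for the classification and then read off each invariant from the corresponding general result (Theorems~\ref{th:mainmini}, \ref{thmmindisc}, \ref{th:condexpo}, \ref{th:rootnumber}, \ref{th:deficiency}, \ref{th:Homology}, Lemma~\ref{lemtam}). The one place where the paper is substantially more explicit than your sketch is the identification of the $H_1(\Upsilon_C,\Z)$ column in the \cite{DB} notation: in the Frobenius-swapped and \"ubereven rows this is not quite ``immediate'' from the signed-permutation description, and the paper carries out the computation by hand (passing to invariant/anti-invariant sublattices, using Smith normal form for the $U_{n,m,k}^\pm$ cases, and comparing with the $f=2,3$ columns of \cite{DB} Theorem~1.2.2 after base change), which is exactly the ``bookkeeping'' you flag in your final paragraph.
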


\def\tgrNA{\hetype{2}}
\def\tgrGA{\raise-3pt\hbox{\begin{tikzpicture}[scale=\GraphScale]
  \GraphVertices
  \Vertex[x=0.000,y=0.000,L=2]{1};
  \BlueEdges
\end{tikzpicture}}}
\def\tgrNB{\hetype{1_n^+}}
\def\tgrGB{\raise-7pt\hbox{\begin{tikzpicture}[scale=\GraphScale]
  \GraphVertices
  \Vertex[x=1.50,y=0.000,L=1]{1};
  \coordinate (2) at (0.000,0.000);
  \BlueEdges
  \LoopW(1)
\GraphEdgeSignS(1)(2){0.5}{n}\end{tikzpicture}}}
\def\tgrNC{\hetype{1_n^-}}
\def\tgrGC{\raise-7pt\hbox{\begin{tikzpicture}[scale=\GraphScale]
  \GraphVertices
  \Vertex[x=1.50,y=0.000,L=1]{1};
  \coordinate (2) at (0.000,0.000);
  \BlueEdges
  \LoopW(1)
\GraphEdgeSignS(1)(2){0.5}{n}\ESwapOfs1212{}{0,-0.25}{0,0.25}{0.5}\end{tikzpicture}}}
\def\tgrNBe{\hetype{1_n^{\epsilon}}}
\def\tgrND{\hetype{I_{n,m}^{+,+}}}
\def\tgrGD{\raise-7pt\hbox{\begin{tikzpicture}[scale=\GraphScale]
  \GraphVertices
  \Vertex[x=1.50,y=0.000,L=\relax]{1};
  \coordinate (2) at (3.00,0.000);
  \coordinate (3) at (0.000,0.000);
  \BlueEdges
  \LoopE(1)
  \LoopW(1)
\GraphEdgeSignS(1)(3){0.5}{n}\GraphEdgeSignS(1)(2){0.5}{m}\end{tikzpicture}}}
\def\tgrNE{\hetype{I_{n,m}^{+,-}}}
\def\tgrGE{\raise-7pt\hbox{\begin{tikzpicture}[scale=\GraphScale]
  \GraphVertices
  \Vertex[x=1.50,y=0.000,L=\relax]{1};
  \coordinate (2) at (3.00,0.000);
  \coordinate (3) at (0.000,0.000);
  \BlueEdges
  \LoopE(1)
  \LoopW(1)
\GraphEdgeSignS(1)(3){0.5}{n}\GraphEdgeSignS(1)(2){0.5}{m}\ESwapOfs1212{}{0,-0.25}{0,0.25}{0.5}\end{tikzpicture}}}
\def\tgrNF{\hetype{I_{n,m}^{-,-}}}
\def\tgrGF{\raise-7pt\hbox{\begin{tikzpicture}[scale=\GraphScale]
  \GraphVertices
  \Vertex[x=1.50,y=0.000,L=\relax]{1};
  \coordinate (2) at (3.00,0.000);
  \coordinate (3) at (0.000,0.000);
  \BlueEdges
  \LoopE(1)
  \LoopW(1)
\GraphEdgeSignS(1)(3){0.5}{n}\GraphEdgeSignS(1)(2){0.5}{m}\ESwapOfs1313{}{0,-0.25}{0,0.25}{0.5}\ESwapOfs1212{}{0,-0.25}{0,0.25}{0.5}\end{tikzpicture}}}
\def\tgrNDed{\hetype{I_{n,m}^{\epsilon,\delta}}}
\def\tgrNG{\hetype{I_{n\FrobL n}^+}}
\def\tgrGG{\raise-7pt\hbox{\begin{tikzpicture}[scale=\GraphScale]
  \GraphVertices
  \Vertex[x=1.50,y=0.000,L=\relax]{1};
  \coordinate (2) at (3.00,0.000);
  \coordinate (3) at (0.000,0.000);
  \BlueEdges
  \LoopE(1)
  \LoopW(1)
\GraphEdgeSignS(1)(3){0.5}{n}\GraphEdgeSignS(1)(2){0.5}{n}\ESwapOfs1312{in=160,out=20}{0.2,0.3}{-0.2,0.3}{0.5}\end{tikzpicture}}}
\def\tgrNH{\hetype{I_{n\FrobL n}^-}}
\def\tgrGH{\raise-7pt\hbox{\begin{tikzpicture}[scale=\GraphScale]
  \GraphVertices
  \Vertex[x=1.50,y=0.000,L=\relax]{1};
  \coordinate (2) at (3.00,0.000);
  \coordinate (3) at (0.000,0.000);
  \BlueEdges
  \LoopE(1)
  \LoopW(1)
\GraphEdgeSignS(1)(3){0.5}{n}\GraphEdgeSignS(1)(2){0.5}{n}\EArrOfs1312{in=150,out=30}{0.1,0.29}{0,0.35}{0.5}\EArrOfs1213{in=-60,out=-60}{0,0.2}{0.3,-0.25}{0.5}\end{tikzpicture}}}
\def\tgrNGe{\hetype{I_{n\FrobL n}^{\epsilon}}}
\def\tgrNI{\hetype{U_{n,m,k}^+}}
\def\tgrGI{\raise-7pt\hbox{\begin{tikzpicture}[scale=\GraphScale]
  \GraphVertices
  \coordinate (2) at (3.00,0.000);
  \coordinate (3) at (0.000,0.000);
  \coordinate (4) at (1.50,1.00);
  \Vertex[x=1.75,y=0.250,L=\relax]{1+}
  \Vertex[x=1.25,y=-0.250,L=\relax]{1-}
  \BlueEdges
  \EdgeE(1)
  \EdgeW(1)
  \EdgeS(1)
\GraphEdgeSignS(1)(2){0.6}{n}\GraphEdgeSignS(1)(3){0.6}{m}\GraphEdgeSignW(1)(4){0.6}{k}\end{tikzpicture}}}
\def\tgrNJ{\hetype{U_{n,m,k}^-}}
\def\tgrGJ{\raise-7pt\hbox{\begin{tikzpicture}[scale=\GraphScale]
  \GraphVertices
  \coordinate (2) at (3.00,0.000);
  \coordinate (3) at (0.000,0.000);
  \coordinate (4) at (1.50,1.00);
  \Vertex[x=1.75,y=0.250,L=\relax]{1+}
  \Vertex[x=1.25,y=-0.250,L=\relax]{1-}
  \BlueEdges
  \EdgeE(1)
  \EdgeW(1)
  \EdgeS(1)
\GraphEdgeSignS(1)(2){0.6}{n}\GraphEdgeSignS(1)(3){0.6}{m}\GraphEdgeSignW(1)(4){0.6}{k}\ESwapOfs1111{}{-0.18,-0.18}{0.18,0.18}{0.5}\end{tikzpicture}}}
\def\tgrNIe{\hetype{U_{n,m,k}^{\epsilon}}}
\def\tgrNK{\hetype{U_{n\FrobL n,k}^+}}
\def\tgrGK{\raise-7pt\hbox{\begin{tikzpicture}[scale=\GraphScale]
  \GraphVertices
  \coordinate (2) at (3.00,0.000);
  \coordinate (3) at (0.000,0.000);
  \coordinate (4) at (1.50,1.00);
  \Vertex[x=1.75,y=0.250,L=\relax]{1+}
  \Vertex[x=1.25,y=-0.250,L=\relax]{1-}
  \BlueEdges
  \EdgeE(1)
  \EdgeW(1)
  \EdgeS(1)
\GraphEdgeSignS(1)(2){0.7}{n}\GraphEdgeSignS(1)(3){0.7}{n}\GraphEdgeSignW(1)(4){0.6}{k}\ESwapOfs1312{in=-120,out=-60}{0,-0.3}{0,-0.3}{0.5}\end{tikzpicture}}}
\def\tgrNL{\hetype{U_{n\FrobL n,k}^-}}
\def\tgrGL{\raise-7pt\hbox{\begin{tikzpicture}[scale=\GraphScale]
  \GraphVertices
  \coordinate (2) at (3.00,0.000);
  \coordinate (3) at (0.000,0.000);
  \coordinate (4) at (1.50,1.00);
  \Vertex[x=1.75,y=0.250,L=\relax]{1+}
  \Vertex[x=1.25,y=-0.250,L=\relax]{1-}
  \BlueEdges
  \EdgeE(1)
  \EdgeW(1)
  \EdgeS(1)
\GraphEdgeSignS(1)(2){0.7}{n}\GraphEdgeSignS(1)(3){0.7}{n}\GraphEdgeSignW(1)(4){0.6}{k}\ESwapOfs1111{}{-0.18,-0.18}{0.18,0.18}{0.5}\ESwapOfs1312{in=-120,out=-60}{0,-0.3}{0,-0.3}{0.5}\end{tikzpicture}}}
\def\tgrNKe{\hetype{U_{n\FrobL n,k}^{\epsilon}}}
\def\tgrNM{\hetype{U_{n\FrobL n\FrobL n}^+}}
\def\tgrGM{\raise-7pt\hbox{\begin{tikzpicture}[scale=\GraphScale]
  \GraphVertices
  \coordinate (2) at (3.00,0.000);
  \coordinate (3) at (0.000,0.000);
  \coordinate (4) at (1.50,1.00);
  \Vertex[x=1.75,y=0.250,L=\relax]{1+}
  \Vertex[x=1.25,y=-0.250,L=\relax]{1-}
  \BlueEdges
  \EdgeE(1)
  \EdgeW(1)
  \EdgeS(1)
\GraphEdgeSignS(1)(2){0.7}{n}\GraphEdgeSignS(1)(3){0.7}{n}\GraphEdgeSignW(1)(4){0.8}{n}\EArrOfs1314{in=180,out=90}{-0.3,0.1}{-0.3,0}{0.5}\EArrOfs1412{in=90,out=0}{0.3,0.3}{0,0.3}{0.5}\EArrOfs1213{in=-60,out=-120}{0,-0.3}{0,-0.3}{0.5}\end{tikzpicture}}}
\def\tgrNN{\hetype{U_{n\FrobL n\FrobL n}^-}}
\def\tgrGN{\raise-7pt\hbox{\begin{tikzpicture}[scale=\GraphScale]
  \GraphVertices
  \coordinate (2) at (3.00,0.000);
  \coordinate (3) at (0.000,0.000);
  \coordinate (4) at (1.50,1.00);
  \Vertex[x=1.75,y=0.250,L=\relax]{1+}
  \Vertex[x=1.25,y=-0.250,L=\relax]{1-}
  \BlueEdges
  \EdgeE(1)
  \EdgeW(1)
  \EdgeS(1)
\GraphEdgeSignS(1)(2){0.7}{n}\GraphEdgeSignS(1)(3){0.7}{n}\GraphEdgeSignW(1)(4){0.8}{n}\ESwapOfs1111{}{-0.18,-0.18}{0.18,0.18}{0.5}\EArrOfs1314{in=180,out=90}{-0.3,0.1}{-0.3,0}{0.5}\EArrOfs1412{in=90,out=0}{0.3,0.3}{0,0.3}{0.5}\EArrOfs1213{in=-60,out=-120}{0,-0.3}{0,-0.3}{0.5}\end{tikzpicture}}}
\def\tgrNMe{\hetype{U_{n\FrobL n\FrobL n}^{\epsilon}}}
\def\tgrNO{\hetype{I_n^+x_{\protect\scalebox{0.6}{$\!\!\scriptstyle r\>$}}I_m^+}}
\def\tgrGO{\raise-7pt\hbox{\begin{tikzpicture}[scale=\GraphScale]
  \GraphVertices
  \Vertex[x=3.00,y=0.000,L=\relax]{1};
  \Vertex[x=1.50,y=0.000,L=\relax]{2};
  \coordinate (3) at (4.50,0.000);
  \coordinate (4) at (0.000,0.000);
  \BlueEdges
  \Edge(1)(2)
  \LoopE(1)
  \LoopW(2)
\GraphEdgeSignS(2)(4){0.5}{n}\GraphEdgeSignS(1)(3){0.5}{m}\GraphEdgeSignS($(1)+(0,0.3)$)(2){0.5}{r}\end{tikzpicture}}}
\def\tgrNP{\hetype{I_n^+x_{\protect\scalebox{0.6}{$\!\!\scriptstyle r\>$}}I_m^-}}
\def\tgrGP{\raise-7pt\hbox{\begin{tikzpicture}[scale=\GraphScale]
  \GraphVertices
  \Vertex[x=3.00,y=0.000,L=\relax]{1};
  \Vertex[x=1.50,y=0.000,L=\relax]{2};
  \coordinate (3) at (4.50,0.000);
  \coordinate (4) at (0.000,0.000);
  \BlueEdges
  \Edge(1)(2)
  \LoopE(1)
  \LoopW(2)
\GraphEdgeSignS(2)(4){0.5}{n}\GraphEdgeSignS(1)(3){0.5}{m}\ESwapOfs1313{}{0,-0.25}{0,0.25}{0.5}\GraphEdgeSignS($(1)+(0,0.3)$)(2){0.5}{r}\end{tikzpicture}}}
\def\tgrNQ{\hetype{I_n^-x_{\protect\scalebox{0.6}{$\!\!\scriptstyle r\>$}}I_m^-}}
\def\tgrGQ{\raise-7pt\hbox{\begin{tikzpicture}[scale=\GraphScale]
  \GraphVertices
  \Vertex[x=3.00,y=0.000,L=\relax]{1};
  \Vertex[x=1.50,y=0.000,L=\relax]{2};
  \coordinate (3) at (4.50,0.000);
  \coordinate (4) at (0.000,0.000);
  \BlueEdges
  \Edge(1)(2)
  \LoopE(1)
  \LoopW(2)
\GraphEdgeSignS(2)(4){0.5}{n}\GraphEdgeSignS(1)(3){0.5}{m}\ESwapOfs2424{}{0,-0.25}{0,0.25}{0.5}\ESwapOfs1313{}{0,-0.25}{0,0.25}{0.5}\GraphEdgeSignS($(1)+(0,0.3)$)(2){0.5}{r}\end{tikzpicture}}}
\def\tgrNOed{\hetype{I_n^{\epsilon}x_{\protect\scalebox{0.6}{$\!\!\scriptstyle r\>$}}I_m^{\delta}}}
\def\tgrNR{\hetype{I_n^+\FrobX_{\protect\scalebox{0.6}{$\!\scriptstyle r\>$}} I_n}}
\def\tgrGR{\raise-7pt\hbox{\begin{tikzpicture}[scale=\GraphScale]
  \GraphVertices
  \Vertex[x=3.00,y=0.000,L=\relax]{1};
  \Vertex[x=1.50,y=0.000,L=\relax]{2};
  \coordinate (3) at (4.50,0.000);
  \coordinate (4) at (0.000,0.000);
  \BlueEdges
  \Edge(1)(2)
  \LoopE(1)
  \LoopW(2)
\GraphEdgeSignS(2)(4){0.5}{n}\GraphEdgeSignS(1)(3){0.5}{n}\ESwapOfs2413{in=160,out=20}{0.2,0.3}{-0.2,0.3}{0.5}\GraphEdgeSignS(1)(2){0.5}{r}\end{tikzpicture}}}
\def\tgrNS{\hetype{I_n^-\FrobX_{\protect\scalebox{0.6}{$\!\scriptstyle r\>$}} I_n}}
\def\tgrGS{\raise-7pt\hbox{\begin{tikzpicture}[scale=\GraphScale]
  \GraphVertices
  \Vertex[x=3.00,y=0.000,L=\relax]{1};
  \Vertex[x=1.50,y=0.000,L=\relax]{2};
  \coordinate (3) at (4.50,0.000);
  \coordinate (4) at (0.000,0.000);
  \BlueEdges
  \Edge(1)(2)
  \LoopE(1)
  \LoopW(2)
\GraphEdgeSignS(2)(4){0.5}{n}\GraphEdgeSignS(1)(3){0.5}{n}\EArrOfs2413{in=150,out=30}{0.1,0.29}{0,0.35}{0.5}\EArrOfs1324{in=-60,out=-60}{0,0.2}{0.3,-0.25}{0.5}\GraphEdgeSignS(1)(2){0.5}{r}\end{tikzpicture}}}
\def\tgrNRe{\hetype{I_n^{\epsilon}\FrobX_{\protect\scalebox{0.6}{$\!\scriptstyle r\>$}} I_n}}
\def\tgrNT{\hetype{1x_{\protect\scalebox{0.6}{$\!\!\scriptstyle r\>$}}I_n^+}}
\def\tgrGT{\raise-7pt\hbox{\begin{tikzpicture}[scale=\GraphScale]
  \GraphVertices
  \Vertex[x=3.00,y=0.000,L=1]{1};
  \Vertex[x=1.50,y=0.000,L=\relax]{2};
  \coordinate (3) at (0.000,0.000);
  \BlueEdges
  \Edge(1)(2)
  \LoopW(2)
\GraphEdgeSignS(2)(3){0.5}{n}\GraphEdgeSignS($(1)+(0,0.3)$)(2){0.5}{r}\end{tikzpicture}}}
\def\tgrNU{\hetype{1x_{\protect\scalebox{0.6}{$\!\!\scriptstyle r\>$}}I_n^-}}
\def\tgrGU{\raise-7pt\hbox{\begin{tikzpicture}[scale=\GraphScale]
  \GraphVertices
  \Vertex[x=3.00,y=0.000,L=1]{1};
  \Vertex[x=1.50,y=0.000,L=\relax]{2};
  \coordinate (3) at (0.000,0.000);
  \BlueEdges
  \Edge(1)(2)
  \LoopW(2)
\GraphEdgeSignS(2)(3){0.5}{n}\ESwapOfs2323{}{0,-0.25}{0,0.25}{0.5}\GraphEdgeSignS($(1)+(0,0.3)$)(2){0.5}{r}\end{tikzpicture}}}
\def\tgrNTe{\hetype{1x_{\protect\scalebox{0.6}{$\!\!\scriptstyle r\>$}}I_n^{\epsilon}}}
\def\tgrGV{\raise-7pt\hbox{\begin{tikzpicture}[scale=\GraphScale]
  \GraphVertices
  \Vertex[x=1.50,y=0.000,L=1]{1};
  \Vertex[x=0.000,y=0.000,L=1]{2};
  \BlueEdges
  \Edge(1)(2)
\GraphEdgeSignS($(1)+(0,0.3)$)(2){0.5}{r}\end{tikzpicture}}}
\def\tgrNV{\hetype{1x_{\protect\scalebox{0.6}{$\!\!\scriptstyle r\>$}}1}}
\def\tgrGV{\raise-7pt\hbox{\begin{tikzpicture}[scale=\GraphScale]
  \GraphVertices
  \Vertex[x=1.50,y=0.000,L=1]{1};
  \Vertex[x=0.000,y=0.000,L=1]{2};
  \BlueEdges
  \Edge(1)(2)
\GraphEdgeSignS($(1)+(0,0.3)$)(2){0.5}{r}\end{tikzpicture}}}
\def\tgrNW{\hetype{1\FrobX_{\protect\scalebox{0.6}{$\!\scriptstyle r\>$}} 1}}
\def\tgrGW{\raise-7pt\hbox{\begin{tikzpicture}[scale=\GraphScale]
  \GraphVertices
  \Vertex[x=1.50,y=0.000,L=1]{1};
  \Vertex[x=0.000,y=0.000,L=1]{2};
  \BlueEdges
  \Edge(1)(2)
\GraphEdgeSignS(1)(2){0.5}{r}\VSwap{1}{2}{in=30,out=150}{2}\end{tikzpicture}}}

\begin{table}
\def\cltopskip{3pt}              
\def\clbottomskip{3pt}           
\def\graphdslabelscale{1.0}      
\def\cdepthscale{0.6}            
$$
\hskip-3mm
\scalebox{0.7}{$
\begin{array}{| c|c r|c|c|c|c|c|c|c|c|}
\hline
\text{Type} & \Sigma_C &\text{\llap{$v(c_f)$}}& \Upsilon_C & m_C &H_1(\Upsilon_C, \Z) & n&  w & c & \text{Def}   & \tiny{v(\Delta_{min})}\\
\hline
\tgrNA&\clgBal	& 0 &\tgrGA& 1 & - & 0 & + & 1 & + &0\\
\hline
\tgrNV&\clooBal&  \bar{r} &\tgrGV& r+1 & - & 0 & + & 1 & + &12r\\
\tgrNW&\cloof&  \bar{r} &\tgrGW& r+1 & - & 0 & + & 1 & (-)^r &12r+10\bar{r}\\
\hline
\tgrNB&\clnBals& 0 &\tgrGB& n & [1:n] & 1 & - & n & + &n\\
\tgrNC&\clnBalns& 0 &\tgrGC& n & [2:n] & 1 & + & \widetilde{n} & +&n \\
\hline
\tgrNT&\cloInBals&  \bar{r} &\tgrGT& n+r & [1:n] & 1 & - & n & + & 12r+n \\
\tgrNU&\cloInBalns&  \bar{r} &\tgrGU& n+r & [2:n] & 1 & + & \widetilde{n} & +  & 12r+n\\
\hline
\tgrND&\clnmBalss& 0 &\tgrGD& n+m-1 & [1.1:n,m] & 2 & + & nm & + &n+m\\
\tgrNE&\clnmBalsns& 0 &\tgrGE& n+m-1 & [1.2_A:n,m] & 2 & - & n\widetilde{m} & + &n+m \\
\tgrNF&\clnmBalnsns& 0 &\tgrGF& n+m-1 & [2.2:n,m] & 2 & + & \widetilde{n}\widetilde{m} & +&n+m \\
\tgrNG&\clnnBals& 0 &\tgrGG& 2n-1 & [1.2_B:n,n] & 2 & - & n & +&2n \\
\tgrNH&\clnnBalns& 0 &\tgrGH& 2n-1 & [4:n] & 2 & + & \widetilde{n} & + &2n\\
\hline
\tgrNI&\clUnmkBals& 0 &\tgrGI& n\!+\!m\!+\!k\!-\!1 & [1.1:d,t/d] & 2 & + & t & + &n+m+k \\
\tgrNJ&\clUnmkBalns& 0 &\tgrGJ& n\!+\!m\!+\!k\!-\!1 & [2.2:d,t/d] & 2 & + & \widetilde{t/d}\!\cdot\!\widetilde{d} & (\!-\!)^{nmk} &n+m+k \\
\tgrNK&\clUnnkBals& 0 &\tgrGK& 2n+k-1 & [1.2_B\!:\! n\!+\!2k,n] & 2 & - & n \!+\! 2k & + &2n+k\\
\tgrNL&\clUnnkBalns& 0 &\tgrGL& 2n+k-1 & [1.2_B\!:\!n, n\!+\!2k] & 2 & - & n & (-)^k &2n+k\\
\tgrNM&\clUnnnBals& 0 &\tgrGM& 3n-1 & [3:n] & 2 & + & 3 & + &3n\\
\tgrNN&\clUnnnBalns& 0 &\tgrGN& 3n-1 & [6:n] & 2 & + & 1 & (-)^n & 3n \\
\hline
\tgrNO&\clInImBalss&  \bar{r} &\tgrGO& n\!+\!m\!+\!r\!-\!1 & [1.1:n,m] & 2 & + & nm & + &12r\!+\!n\!+\!m\\
\tgrNP&\clInImBalsns&  \bar{r} &\tgrGP& n\!+\!m\!+\!r\!-\!1 & [1.2_A:n,m] & 2 & - & n\widetilde{m} & +  &12r\!+\!n\!+\!m\\
\tgrNQ&\clInImBalnsns&  \bar{r} &\tgrGQ& n\!+\!m\!+\!r\!-\!1 & [2.2:n,m] & 2 & + & \widetilde{n}\widetilde{m}  & +&12r\!+\!n\!+\!m \\
\tgrNR&\clInInBals&  \bar{r} &\tgrGR& 2n+r-1 & [1.2_B:n,n] & 2 & - & n & (-)^r &12r\!+\!2n \!+\!10\bar{r}\\
\tgrNS&\clInInBalns&  \bar{r} &\tgrGS&  2n+r-1 & [4:n] & 2 & + & \widetilde{n} & (-)^r &12r\!+\!2n\!+\!10\bar{r}\\
\hline
\end{array}$}
$$
\hbox{Notation: 
$\bar{r} = 0$ if $2|r$ and $\bar{r} = 1$ if $2\nmid r$; $\tilde n=2$ if $2|n$ and $\tilde n=1$ if $2\nmid n$;}\\
\hbox to 25em{\hfill $d=\gcd(m,n,k)$; $t=nm+nk+mk$.}
\caption{Local invariants of semistable genus 2 curves}
\label{tb:genus2bible}
\end{table}

\begin{proof}
The completeness of the list of reduction types, and that curves with such cluster pictures are semistable and have these types follow from Theorem \ref{th:unbal}. The claim regarding minimality of the model follows from Theorem \ref{th:mainmini}. 
If $|k|>5$, Theorem \ref{th:unbal} shows that $C$ admits a model with the cluster picture corresponding to this type.
In this case,  the value for the valuation of the minimal discriminant follows from Theorem~\ref{thmmindisc}.


The number of components $m$ is clear from the dual graph $\Upsilon_C$. The conductor exponent $n$, the root number $w$ and deficiency Def follow from Theorems \ref{th:condexpo}, \ref{th:rootnumber} and \ref{th:deficiency}. The Tamagawa number is explicitly determined by the lattice type by Theorem 1.2.2 in \cite{DB}, using the fact that $H_1(\Upsilon_C,\Z)$ is isomorphic to the character lattice of the torus in the Raynaud parametrisation (Lemma \ref{le:isolattice}).

Finally, it remains to prove the claim for the lattice type of  $H_1(\Upsilon_C,\Z)$.
The dimension of the lattice and the eigenvalues of Frobenius come from the cluster picture (with the extra data) as given by Corollary \ref{co:EtaleCohoToric}. 

Suppose that $\cR$ is not \ub. Then the pairing with respect to the basis given in Theorem \ref{th:Homology} is diagonal with values given by twice the relative depth of the corresponding twins; the dual lattice and the lattice type follow except for the types $\I^+_{n_{\tilde{~}} n}$ and $\I_n^+\tilde{\times}_r\I_n$. For these two cases, let $\t_1, \t_2$ be the two twins and $\ell_{\t_1}, \ell_{\t_2}$ the corresponding loops which generate $H_1(\Upsilon_C,\Z)$. The Frobenius invariant/anti-invariant loops are generated by $\ell_{\t_1} + \ell_{\t_2}$ and  $\ell_{\t_1} - \ell_{\t_2}$ respectively. It follows that $H_1(\Upsilon_C,\Z)$ is not spanned by these and hence the type is $[1.2_B:*,*]$. However, over an unramified quadratic extension, it becomes  a $[1.1:n,n]$ and hence by Theorem 1.2.2 in \cite{DB} with $f=2$, the type is a $[1.2_B:n,n]$.

Suppose $\cR$ is \ub. 
For the case $U^+_{n_{\tilde{~}} n,k}$, let $\ell^+= \ell_{\t_1}+\ell_{\t_2}-2\ell_{\t_3}$, $\ell^- = \ell_{\t_1}-\ell_{\t_2}$ be $\Z$-generators for the space of invariant and anti-invariant loops respectively. Note that $\frac12(\ell^+ + \ell^-) \in H_1(\Upsilon_C,\Z)$ so that the lattice type is $[1.2_B:*,*]$. Now, $\langle a\ell^+, \ell^+\rangle = a(2n+4k)$ and $\langle a\ell^+, \ell_{\t_1}-\ell_{\t_3}\rangle = a(n+2k)$ so that $a\ell^+ \in H_1(\Upsilon_C,\Z)^{\vee}$ if and only if $a \in \frac{1}{n+2k}\Z$. Similarly,  $\langle a\ell^-, \ell^+\rangle =0$ and $\langle a\ell^-, \ell_{\t_1}-\ell_{\t_3}\rangle = an$ so that $a\ell^- \in H_1(\Upsilon_C,\Z)^{\vee}$ if and only if $a \in \frac{1}{n}\Z$. It follows that the lattice type is $[1.2_B:n+2k,n]$. The case $U^-_{n_{\tilde{~}} n,k}$ follows by swapping the roles of $\ell^+$ and $\ell^-$. 

For the case $U_{n,m,k}^{+}$, let $\t_1, \t_2, \t_3$ be the three twins and $\ell_{\t_1}, \ell_{\t_2}, \ell_{\t_3}$ be the corresponding half loops. Choose the loops $h_1 = \ell_{\t_1}- \ell_{\t_3}$ and $h_2 = \ell_{\t_2}- \ell_{\t_3}$ as a basis for $H_1(\Upsilon_C,\Z)$. Then for $a, b \in \Q$, $\ell = ah_1+bh_2 \in H_1(\Upsilon_C,\Z)^{\vee}$ if and only if $\langle \ell, h_1\rangle = a(n+k) + bk\in \Z$ and $\langle \ell, h_2 \rangle =b(m+k)+ak \in \Z$ i.e. 
$$
\begin{pmatrix}n+k & k\\ k&m+k \end{pmatrix}\begin{pmatrix} a\\b \end{pmatrix} \in \Z^2.
$$
By properties of Smith normal forms, there exists a $\Z$-basis $g_1, g_2$ of $H_1(\Upsilon_C,\Z)$ such that $ug_1+vg_2 \in H_1(\Upsilon_C,\Z)^{\vee}$ if and only if 
$$
\begin{pmatrix}gcd(n+k, m+k, k) & 0\\ 0& \frac{det(M)}{gcd(n+k, m+k, k)} \end{pmatrix}\begin{pmatrix} u\\v \end{pmatrix}  =\begin{pmatrix}d & 0\\ 0& \frac{t}{d} \end{pmatrix}\begin{pmatrix} u\\v \end{pmatrix} \in \Z^2,
$$
where $M = \begin{pmatrix}n+k & k\\ k&m+k \end{pmatrix}$.
It follows that the lattice type is $[1.1:d,t/d]$.

In the cases $U_{n,m,k}^-$, $U_{n_{\tilde{~}} n_{\tilde{~}} n}^+$ and $U_{n_{\tilde{~}} n_{\tilde{~}} n}^-$, the eigenvalues of Frobenius are $(-1,-1)$, $ (\zeta_3, \zeta_3^{-1})$ and $(\zeta_6, \zeta_6^{-1})$, respectively. The lattices become $[1.1:d,t/d]$ after an unramified extension of degree 2, 3 and 6, respectively. It follows from Theorem 1.2.2 in \cite{DB} using the $f=2,3$ columns that the original lattice types are $[2.2:d,t/d]$, $[3:n]$ and $[6:n]$, respectively.

\end{proof}

\section{Local constancy of arithmetic invariants}
\label{scontiunity}

Observe that all our main results on a hyperelliptic curve $C: y^2 = f(x)$ are obtained from fairly coarse data coming from the roots of $f(x)$.
It follows that small $p$-adic perturbations of the roots of $f(x)$ do not change the arithmetic invariants of $C$. 
Here is a precise formulation:

\begin{theorem}
\label{continuity}
Suppose $C_1/K: y^2=f_1(x)$, $C_2/K: y^2=f_2(x)$ are two hyperelliptic curves, such that
\begin{itemize}
\item[(a)]
The leading coefficients $c_{f_1}$ of $f_1$ and $c_{f_2}$ of $f_2$ satisfy 
$\frac{c_{f_1}}{c_{f_2}}\in K^{\times 2}$.
\item[(b)]
There is a Galois-equivariant bijection $\phi\colon$\{roots of $f_1$\}$\to$\{roots of $f_2$\}
such that $\frac{\phi(r_i)-\phi(r_j)}{r_i-r_j}\equiv 1$ mod $\m$ 
for all roots $r_i\ne r_j$ of $f_1$. 
\end{itemize}
Then
\begin{enumerate}
\item
$C_1$ and $C_2$ acquire semistable reduction over the same extensions of~$K$.
\item
If $C_1, C_2$ are semistable over a finite Galois extension $F$ of $K$, 
then the special fibres of their minimal regular models over $\cO_{\Fnr}$ are isomorphic
as curves with the semilinear action of $G_K$ given by \eqref{skew}.
\item
$\H(C_1) \iso \H(C_2)$ as $G_K$-modules, for every $l\ne p$.
\item 
If $C_1, C_2$ are semistable and $|k|>2g+1$, then the valuation of their minimal discriminants are equal.
\end{enumerate}
\end{theorem}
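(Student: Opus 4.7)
The plan is to reduce all four assertions to the observation that hypotheses (a),(b) force $C_1$ and $C_2$ to share every piece of combinatorial data used in Sections \ref{sfibre}--\ref{sconductor}: the cluster picture with its $G_K$-action, the parity of $v(c_f)$, and, after a compatible choice of centres and square roots, the characters $\alpha_\s,\beta_\s,\gamma_\s,\epsilon_\s$ and the residues $c_\s\in\bar k^\times$. Writing $\phi(r_i)-\phi(r_j)=(r_i-r_j)(1+u_{ij})$ with $v(u_{ij})>0$ gives $v(\phi(r_i)-\phi(r_j))=v(r_i-r_j)$, so $\phi$ induces a $G_K$-equivariant isomorphism of cluster pictures preserving all depths; together with $v(c_{f_1})\equiv v(c_{f_2})\pmod 2$ (from (a)), this forces $\nu_\s\equiv\nu_{\phi(\s)}\pmod 2$. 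Assertion (1) is then immediate from the semistability criterion (Definition \ref{semistability criterion}, Theorem \ref{the semistability theorem}), whose conclusion depends only on these data; in fact $C_1$ and $C_2$ acquire semistable reduction over exactly the same extensions.

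For (2), fix a Galois extension $F/K$ over which both curves become semistable. For each cluster $\s$ of $f_1$ pick a root $z_\s\in\s$ as a centre, and take $z_{\phi(\s)}:=\phi(z_\s)$; write $c_{f_1}/c_{f_2}=t^2$ with $t\in K^\times$ and set $\hat t:=t\pi^{-v(t)}\in\cO_K^\times$. A direct calculation from hypothesis (b) gives $\red_{\phi(\s)}(\phi(\mathfrak{o}))\equiv\red_\s(\mathfrak{o})\pmod\m$ and $c_{\phi(\s)}\equiv\hat t^{\,-2}c_\s\pmod\m$, so the equation for $\tilde\Gamma_{\phi(\s)}$ differs from that for $\tilde\Gamma_\s$ only by the substitution $y\mapsto\hat t\,y$. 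The characters $\chi,\alpha_\s,\beta_\s,\gamma_\s$ of Definition \ref{de:characters} then coincide, since the centres transport under $\phi$ and $\sigma(\hat t)=\hat t$ for every $\sigma\in G_K$. For $\epsilon_\s$ one uses the hypothesis $c_{f_1}/c_{f_2}\in K^{\times 2}$, together with the fact that every $1$-unit is a square (as $p$ is odd), to choose $\theta_{\phi(\s)}\equiv\theta_\s/t\pmod\m$; an immediate check then gives $\epsilon_{\phi(\s)}=\epsilon_\s$. Plugging these equalities into the explicit description of the dual graph and its Galois action in Theorem \ref{th:DualGraph} yields the required $G_K$-equivariant isomorphism of special fibres.

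Finally, (3) is essentially a quotation of Theorem \ref{hetmain}, which expresses $\H(C)$ as an explicit combination of inductions of the characters $\epsilon_\s$ and $\tilde\gamma_\s$ (the latter depending only on $\tilde\lambda_\s$ and on the cluster picture); every input has just been shown to coincide for $C_1$ and $C_2$. For (4), substituting the correction formula of Theorem \ref{thmmindisc} into the expression for $v(\Delta_C)$ there, the term $v(c_f)(4g+2)$ cancels, so $v(\Delta_C^{\min})$ depends only on $E$ together with cluster sizes and depths; and $E$ is determined by the $\Frob$-action on clusters of size $g+1$ and by $v(c_f)\bmod 2$, both preserved. The one delicate point, and hence the main obstacle in the proof, is the verification $\epsilon_{\phi(\s)}=\epsilon_\s$ in (2): this is the only place where the \emph{square} hypothesis in (a), as opposed to the weaker congruence $v(c_{f_1})\equiv v(c_{f_2})\pmod 2$ which already suffices for (1) and (3), is used in an essential way.
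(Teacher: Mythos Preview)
Your overall plan matches the paper's: reduce everything to the observation that (a),(b) force identical cluster pictures with $G_K$-action and matching auxiliary data, then read off (1)--(4) from the explicit descriptions in Sections \ref{sfibre}--\ref{s:discriminant}. Two points need attention.

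First, a technical gap in (2): your choice of a root $z_\s\in\s$ as centre need not lie in $F^{\textup{nr}}$. Under the semistability criterion over $F$ a proper cluster can consist entirely of twins of $I_F$-conjugate roots (e.g.\ $\cR$ for $y^2=(x^2-\pi)((x-1)^2-\pi)((x-2)^2-\pi)$), and the constructions of Sections \ref{ydisc_section}--\ref{sgalois} that you are invoking require centres in $F^{\textup{nr}}$. The paper's proof takes $z_\s$ to be either an $I_F$-invariant root or $\tfrac12(r_1+r_2)$ for a twin of $I_F$-conjugate roots, and then checks the identity $\red_D(r)=\red_{\phi(D)}(\phi(r))$ separately in the midpoint case; this extra step is short but not automatic from (b).

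Second, and more substantively, your closing remark is wrong. The square hypothesis in (a) is not used only for $\epsilon_\s$: it is what makes $\hat t$ lie in $k$, hence fixed by every $\bar\sigma$, and therefore what makes your isomorphism $(x,y)\mapsto(x,\hat t\,y)$ between $\tilde\Gamma_\s$ and $\tilde\Gamma_{\phi(\s)}$ commute with Frobenius. If $c_{f_1}/c_{f_2}=u$ is a non-square unit then $\hat t=\sqrt{u}\notin k$, the special fibres are quadratic twists rather than $G_K$-equivariantly isomorphic, and $\H(C_1)\not\cong\H(C_2)$ as $G_K$-modules. So the weaker parity condition $v(c_{f_1})\equiv v(c_{f_2})\pmod 2$ suffices for (1) and (4) but certainly not for (2) or (3). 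Relatedly, for (3) the paper goes directly from (2) via \eqref{tatedec}, which is cleaner than routing through Theorem \ref{hetmain}; your phrasing (``inductions of the characters $\epsilon_\s$ and $\tilde\gamma_\s$'') understates the input, since Theorem \ref{hetmain} requires the full $G_\s$-modules $\H(\tilde\Gamma_\s)$, not just characters.
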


\begin{remark}\label{re:sameinvariants}
By (3) $C_1$ and $C_2$ share the same conductor exponent and root number. By (2) if $C_1$ and $C_2$ are semistable 
then they have the same reduction type (in the sense of Definition \ref{de:reductiontypeGeneral}). 
It follows that $C_1$ is deficient if and only if $C_2$ is, and that their Jacobians have the same Tamagawa number.
\end{remark}

\begin{proof}[Proof of Theorem]

First note that by (a) and after a change of variable, we may assume that $c_{f_1} = c_{f_2}$. 
Moreover by (b), $\phi$ induces an isomorphism of cluster pictures, preserving depths and 
the Galois action on the roots. 

(1) Follows from the semistability criterion (Theorem \ref{the semistability theorem}).

(2) There is a one-to-one correspondence between valid discs (cf. \S\ref{ydisc_section})
for $C_1$ and $C_2$ over $F$, defined as follows. By the semistability criterion, every proper 
cluster $\s$ for $C_1$ contains either an $I_F$-invariant root $r$
or a twin consisting of $I_F$-conjugate roots $r_1$, $r_2$. 
Let $z_\s=r$ in the first case, and $z_\s=(r_1+r_2)/2$ in the second case. Then every valid 
disc for $C_1$ has a centre $z_\s\in \Fnr$ of this type. It corresponds to a valid disc 
of $C_2$ that has centre $\phi(r)$, respectively, $(\phi(r_1)+\phi(r_2))/2$, and the same radius. 
This gives the one-to-one correspondence $D\leftrightarrow \phi(D)$.

Next, we claim that the reduction maps agree, that is $\red_D(r)=\red_{\phi(D)}(\phi(r))$ for every root 
$r$ and valid disc $D$ of $C_1$. As the radii are the same, this is equivalent to
$$
  \frac{r-z_D}{\phi(r)-z_{\phi(D)}} \equiv 1 \mod \m.
$$
This is clear from (b) if $z_D$ is a root. Suppose $z_D=(r_1+r_2)/2$. If $r\in\{r_1,r_2\}$, this is again 
clear. Otherwise,
$$
  \frac{r-z_D}{\phi(r)-z_{\phi(D)}} \equiv 
  \frac{r-r_1}{\phi(r)-z_{\phi(r_1)}} \equiv 1 \mod \m,
$$
because $r-z_D=(r-r_1)+\frac{r_1-r_2}2$, and the second term has higher valuation than the first
(and similarly for $\phi(r)-z_{\phi(D)}$).
It follows from Definition \ref{de:epsilon} and \ref{de:characters} that all 
$\alpha$, $\beta$, $\gamma$, $\epsilon$ are the same for the corresponding discs
(using the same argument as above for $\beta$ in the case of non-root centres),
and by Proposition \ref{components of min} and Theorem \ref{th:GeneralGaloisAction},
the special fibres are the same, with the same Galois action.
%
%

(3) By \eqref{tatedec}, the Tate module $V_l \Jac(C)$, and hence $\H(C)$, is determined as a Galois module by the special fibre
of the minimal regular model over $F$ together with the $G_K$-action \eqref{CbarnAction}. These are the same 
for $C=C_1$ and $C=C_2$ by (2).

(4) Since the cluster pictures are the same, so are the minimal discriminants by 
Lemma~\ref{comparedisc} and Theorem \ref{thmmindisc}.
\end{proof}

\begin{corollary}\label{co:continuity}
Suppose $C_1:y^2 = c_1f_1(x)$ and $C_2:y^2 = c_2f_2(x)$ are two hyperelliptic curves with $c_1, c_2 \in K^{\times}$ and $f_1(x), f_2(x) \in \cO_K[x]$ monic polynomials.
If $\frac{c_1}{c_2} \in K^{\times 2}$ and $f_1(x) \equiv f_2(x) \mod \pi^{d+1}$ where $d$ is the largest depth among the depths of all proper clusters of $C_1$, then 
\begin{itemize}
\item $\H(C_1) \iso \H(C_2)$ as $G_K$-modules for every $l\ne p$, $C_1$ and $C_2$ have the same conductor exponent and the same root number. 
\item If $C_1$ is semistable then so is $C_2$. In this case, the special fibres of their minimal regular 
models over $\cO_{K^{nr}}$ are isomorphic as curves with an action of Frobenius,
their Jacobians have the same Tamagawa number, $C_2$ is deficient if and only if $C_1$ is and, 
if $|k|>\deg f_1(x)$, the valuations of their minimal discriminants are equal.
\end{itemize}
\end{corollary}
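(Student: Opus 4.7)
The plan is to verify the two hypotheses of \Cref{continuity} and then invoke it; the listed conclusions match bullet-for-bullet with those of the theorem (see also \Cref{re:sameinvariants} for how deficiency and Tamagawa numbers get absorbed).

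Hypothesis (a) of \Cref{continuity} is immediate: the leading coefficients of $c_1 g_1$ and $c_2 g_2$ are $c_1$ and $c_2$, and $c_1/c_2\in K^{\times 2}$ by assumption. Moreover, after replacing $y$ by $\sqrt{c_1/c_2}\,y$ we may further assume $c_1=c_2$, so that $g_1,g_2\in\cO_K[x]$ are monic of the same degree with $g_1-g_2\in\pi^{d+1}\cO_K[x]$.

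For hypothesis (b), I would construct the bijection $\phi$ root-by-root. Fix a root $r$ of $g_1$; since $g_1$ is monic integral, $r\in\cO_{\bar K}$, and the fact that every coefficient of $g_2-g_1$ has valuation $\ge d+1$ together with integrality of $r$ yields $v\bigl((g_2-g_1)(r)\bigr)\ge d+1$, hence $v(g_2(r))\ge d+1$ as $g_1(r)=0$. I would then compare the Newton polygons of $g_1(r+y)$ and $g_2(r+y)$: the coefficient of $y^m$ in each is the value at $r$ of $\tfrac{1}{m!}g_j^{(m)}$, and the differences of these Taylor coefficients again have valuation $\ge d+1$; thus the ``below height $d$'' portions of the two Newton polygons agree. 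Using that the only root of $g_1$ in the open disc $\{x:v(x-r)>d\}$ is $r$ itself (by the definition of $d$ as the largest depth of a proper cluster of $C_1$), a careful count of segments of slope $<-d$ would show that $g_2$ likewise has exactly one root $\phi(r)$ in that disc, and that $v(\phi(r)-r)>d$.

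The resulting map $\phi$ is a bijection by counting, and it is Galois-equivariant by uniqueness: for $\sigma\in G_K$, $\sigma(\phi(r))$ is a root of $g_2$ in the disc around $\sigma(r)$ of radius $>d$, hence equals $\phi(\sigma(r))$. The ratio condition is then essentially automatic: for distinct roots $r_i\neq r_j$ of $g_1$ we have $v(r_i-r_j)\le d$ by definition of $d$, while $v(\phi(r_k)-r_k)>d$ for $k=i,j$, so
\[
v\bigl((\phi(r_i)-r_i)-(\phi(r_j)-r_j)\bigr)>d\ge v(r_i-r_j),
\]
which gives $(\phi(r_i)-\phi(r_j))/(r_i-r_j)\equiv 1\pmod{\mathfrak m}$. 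An application of \Cref{continuity}, together with \Cref{re:sameinvariants}, then produces every item in the bullet list.

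The main obstacle will be the Newton polygon comparison: although the two polygons provably agree on the portion lying below height $d$, they can diverge above it, and the count of segments of slope strictly less than $-d$ is sensitive to precisely the region where the polygons differ. The argument must therefore exploit the bound $v(g_2(r))\ge d+1$ together with the shape of the Newton polygon of $g_1(r+y)=y\cdot g(y)$ — whose segments to the right of $(1,v(g(0)))$ all have slope $\ge -d$ because the other roots of $g_1$ lie within valuation $d$ of $r$ — to force the leftmost segment of the Newton polygon of $g_2(r+y)$ to have horizontal length exactly one and slope $<-d$.
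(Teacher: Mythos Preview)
Your overall strategy—verify hypotheses (a) and (b) of \Cref{continuity} and then invoke it together with \Cref{re:sameinvariants}—is exactly the route the paper takes. Hypothesis (a) is fine. The difficulty is entirely in (b), and here your Newton-polygon count (and, as written, the paper's own one-line appeal to Hensel's Lemma) does not go through.

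The obstacle you flag in your final paragraph is real and cannot be overcome in the way you suggest. Concretely, take $K=\Q_p$ with $p\ge 5$ and set
\[
f_1(x)=x(x-p)(x-1)(x-2)(x-3),\qquad f_2(x)=(x^2-px+p^2)(x-1)(x-2)(x-3).
\]
Then $f_1-f_2=-p^2(x-1)(x-2)(x-3)$, so $f_1\equiv f_2\pmod{p^{2}}$; the proper clusters of $C_1$ are $\cR$ (depth $0$) and the twin $\{0,p\}$ (depth $1$), so $d=1$ and the hypothesis $f_1\equiv f_2\pmod{\pi^{d+1}}$ holds. Now take $r=0$. The Newton polygon of $f_2(y)$ has vertices $(0,2),(1,1),(2,0),(5,0)$, hence slopes $-1,-1,0,0,0$: there are \emph{no} segments of slope strictly less than $-d=-1$, so $f_2$ has no root in the open disc $\{x:v(x)>1\}$ about $r=0$. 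Your existence/uniqueness claim for $\phi(r)$ therefore fails. Worse, when $p\equiv 2\pmod 3$ the two roots $\tfrac{p(1\pm\sqrt{-3})}{2}$ of $f_2$ are swapped by Frobenius while $0$ and $p$ are each fixed, so no Galois-equivariant bijection between the root sets exists at all, and hypothesis (b) of \Cref{continuity} simply cannot be arranged.

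This does not show the corollary is false—in the example the cluster pictures, the $\epsilon$-data, and the reduced curves $\widetilde\Gamma_\s$ for $C_1$ and $C_2$ all agree, so the listed conclusions do hold—but it does show that the reduction to \Cref{continuity} via a root-by-root matching with $v(\phi(r)-r)>d$ is not available in general. A correct proof would need to bypass the pointwise matching of roots and compare the cluster pictures (with centres, $c_\s$, and $\epsilon$-data) directly.
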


\begin{proof}
By hypothesis, the condition (a) of Theorem \ref{continuity} holds. 
Also, as $f_1(x)$ and $f_2(x)$ are monic and congruent mod $\pi$, they have the same degree.  

Let $F$ be the splitting field of $f_1(x)$ and $\alpha_1, ..., \alpha_n \in F$ its roots.  
Note that $\alpha_i \mod \pi^{d+1}$ is a root of $f_2(x) \mod \pi^{d+1}$ for all $i$. 
By definition of depth, these are all distinct so that by Hensel's Lemma, 
the roots $\beta_1,...\beta_n$ of $f_2(x)$ can be ordered so that $\alpha_i \equiv \beta_i \mod \pi^{d+1}$. 
Now if $\sigma(\alpha_i) = \alpha_j$ then $\sigma(\beta_i) \equiv \beta_j \mod \pi^{d+1}$ and 
hence $\sigma(\beta_i) = \beta_j$. Finally by choice of $d$, we have 
$\beta_i-\beta_j\equiv \alpha_i-\alpha_j \not \equiv 0 \mod \pi^{d+1}$, 
so that $\frac{\beta_i-\beta_j}{\alpha_i-\alpha_j }\equiv 1 \mod \m$. The result follows from 
Theorem \ref{continuity} and Remark \ref{re:sameinvariants}.
\end{proof}


\def\sectionname{Appendix}

\def\prehyp{\text{(possibly singular) hyperelliptic curve}}
\def\polx{\sigma(x)}
\def\poly{\sigma(y)}
\def\polt{\sigma(t)}
\def\polv{\sigma(v)}

\def\thesection{A}

\section{Hyperelliptic curves} \label{ap:apphyp}


Let $K$ be a field with $\textup{char}(K)\neq 2$. By a \prehyp  ~\[C:Y^2 = f(X),\] where $f(X)\in K[X]$ is of degree $2g$ or $2g+1$, has leading coefficient $c_f$ and has at worst double roots, we mean the projective curve given by glueing the pair of affine patches $$U_X:Y^2 = f(X) \phantom{hi}\text{ and }\quad U_T:V^2 = T^{2g+2}f(\frac{1}{T})$$ along $X = \frac{1}{T}$ and $Y = \frac{V}{T^{g+1}}$. 
By the \textit{points at infinity} on $C$ we mean the points of  $C\setminus U_X$, i.e. the points with $T=0$ on $U_T$. If $\textup{deg}(f)=2g+1$ there is a unique such, \[P^{\infty}=(0,0),\] whilst if $\textup{deg}(f)=2g+2$  then \[P^{\infty}_{\pm\sqrt{c_f}}=(0,\pm \sqrt{c_f})\] are the two points on $U_T$ with $T=0$. Note that the points at infinity are always nonsingular. The singular points of $C$, all of which are nodes, are precisely those of the form  $(r,0)$ on $U_X$ where $r$ is a double root of $f(X)$.

Write $f(X)=c_fg(X)h(X)^2$ with $g(X), h(X)$ monic and square free. Then the normalization of $C$ is the hyperelliptic curve 
\begin{equation} \label{normalisation equation}
\widetilde{C}:Y^2 = c_fg(X)
\end{equation}
 and the canonical morphism $\widetilde{C}\rightarrow C$ is given (on the chart $U_X$) by \begin{equation} \label{normalisation equation2}
 (x,y) 
\mapsto (x, yh(x)).
\end{equation}
The points on $\widetilde{C}$ above a node $(r,0)$ are 
\begin{equation} \label{tangents app}
N_r^{\pm \sqrt{c_fg(r)}} =(r, \pm\sqrt{c_fg(r)}).
\end{equation} 

Given a morphism $\phi:C_1\rightarrow C_2$ of hyperelliptic curves, we denote by $\tilde{\phi}$ the unique morphism $ \widetilde{C_1}\rightarrow \widetilde{C_2}$ making the diagram 
$$
\begin{tikzcd}[sep=1.2em]
  \widetilde{C_1}\dar\rar& \widetilde{C_2} 
   \dar\\[0em]
 C_1\rar&C_2 
\end{tikzcd}
$$
commute. 

\begin{remark}
We allow the case where every root of $f(X)$ is a double root, in which case $C$ is not geometrically connected. The discussion above and lemma below, however, remain valid as stated.
\end{remark}

\begin{lemma}  \label[lemma]{le:apphyp}
Suppose $K$ has characteristic $p>2$ and let $\sigma$ be a positive integer power of the Frobenius map on $\Kbar$ sending $x$ to $x^p$. 
Let $C_1:Y^2=f_1(X)$ and $C_2:Y^2=f_2(X)$  be two \prehyp s. Denote the affine charts for $C_1$ (resp. $C_2$) by $U_{X,1}$ and $U_{T,1}$ (resp. $U_{X,2}$ and $U_{T,2}$). Suppose $\phi:C_1\rightarrow C_2$ is a morphism given as a map $U_{X,1}\rightarrow U_{X,2}$ on $\Kbar$-points by 
\[(x,y)\mapsto (\alpha \polx+\beta,\gamma \poly), \qquad \text{$\alpha,\gamma \in \Kbar^{\times}$ and $\beta \in \Kbar$.}\]
 \\ 
$(i)$ As a (rational) map  $U_{T,1}\rightarrow U_{T,2}$, $\phi$ is given on $\Kbar$-points by the formula
$$
(t,v) \mapsto \left(\frac{\polt}{\alpha+\polt\beta},\frac{\gamma \sigma(v)}{(\alpha+\sigma(t)\beta)^{g+1}}\right).
$$
In particular, if $\textup{deg}(f_1)$ is odd then $P^\infty\mapsto P^\infty$ whilst if $\textup{deg}(f_1)$ is even then $P_{\infty}^{\sqrt{c_{f_1}}} \mapsto P_{\infty}^{\epsilon\sqrt{c_{f_2}}}$ where $\epsilon = \frac{\gamma \sigma(\sqrt{c_{f_1}})}{\alpha^{g+1}\sqrt{c_{f_2}}}\in \{\pm 1\}$.\\
$(ii)$ For $i=1,2$, write $f_i(X)=h_i(X)^2g_i(X)$ with $h_i,g_i$ monic and squarefree. The morphism $\tilde{\phi}:\widetilde{C_1}\rightarrow \widetilde{C_2}$ is given explicitly on $\Kbar$-points by the formula
\[(x,y)\mapsto \left(\alpha\sigma(x)+\beta,\gamma \alpha^{-\textup{deg}(h_1)}\sigma(y)\right).\]
In particular we have
\[\tilde{\phi}\left(N_r^{\pm \sqrt{c_{f_1}g_1(r)}}\right)=N_{r'}^{\pm \kappa_r\sqrt{c_{f_2}g_2(r')}}\]
where $r'=\alpha \sigma(r)+\beta$ and $\kappa_r=\gamma \alpha^{-\textup{deg}(h_1)}\frac{\sigma(\sqrt{c_{f_1}g_1(r)})}{\sqrt{c_{f_2}g_2(r')}}\in \{\pm 1\}.$
\end{lemma}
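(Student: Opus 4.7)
The strategy is to distil everything from the single polynomial identity
\[
f_2(\alpha T+\beta) \;=\; \gamma^2\,\sigma(f_1)(T) \qquad \text{in $\bar K[T]$,}
\]
which encodes the fact that $\phi$ sends $C_1$ into $C_2$: substituting $T=\sigma(x)$ and using $\sigma(y)^2 = \sigma(f_1)(\sigma(x))$ gives $(\gamma\sigma(y))^2 = f_2(\alpha\sigma(x)+\beta)$, which is precisely the condition that $(\alpha\sigma(x)+\beta,\gamma\sigma(y))$ lies on $U_{X,2}$. Here $\sigma(f_i)$ denotes the polynomial obtained by applying $\sigma$ coefficientwise, and I shall use the convention from the start of the appendix, namely $f_i = c_{f_i}g_i h_i^2$ with $g_i,h_i$ monic and squarefree.

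For (i), I will simply combine the formula for $\phi$ on $U_{X,1}$ with the glueing relations $x=1/t$, $y=v/t^{g+1}$ on $C_1$ and $t'=1/x'$, $v'=y'/x'^{g+1}$ on $C_2$. One line of algebra then produces the displayed rational map. Specialising $t=0$ handles the points at infinity: in the odd-degree case $P^\infty=(0,0)$ goes to $(0,0)=P^\infty$, while in the even-degree case $(0,\pm\sqrt{c_{f_1}})$ goes to $(0,\pm\gamma\sigma(\sqrt{c_{f_1}})/\alpha^{g+1})$. That the second coordinate equals $\pm\sqrt{c_{f_2}}$, equivalently $\epsilon^2=1$, follows by comparing leading coefficients on both sides of the identity above, yielding $\gamma^2\,\sigma(c_{f_1}) = \alpha^{2g+2}\,c_{f_2}$.

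For (ii), the central step is to upgrade the identity for the $f_i$ to one for their squarefree factors $h_i$. The substitution $T\mapsto\alpha T+\beta$ is a multiplicity-preserving bijection of $\bar K$ that carries the double roots of $\sigma(f_1)(T)$, i.e.\ the roots of the monic polynomial $\sigma(h_1)$, bijectively to the double roots of $f_2$, i.e.\ the roots of $h_2$. Comparing the resulting monic factorisations yields the key identity
\[
h_2(\alpha T+\beta) \;=\; \alpha^{\deg h_1}\,\sigma(h_1)(T).
\]
The formula for $\tilde\phi$ now drops out by chasing $(x,y)\in\widetilde{C_1}(\bar K)$ through the commutative square: its image in $C_1(\bar K)$ is $(x,y h_1(x))$, which $\phi$ sends to $(\alpha\sigma(x)+\beta,\,\gamma\sigma(y)\,\sigma(h_1)(\sigma(x)))$ in $C_2(\bar K)$, and dividing the second coordinate by $h_2(\alpha\sigma(x)+\beta)=\alpha^{\deg h_1}\sigma(h_1)(\sigma(x))$ recognises this as the image of $(\alpha\sigma(x)+\beta,\,\gamma\alpha^{-\deg h_1}\sigma(y))\in \widetilde{C_2}(\bar K)$.

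The action on a node $N_r^{\pm\sqrt{c_{f_1}g_1(r)}}$ is then read off by setting $x=r$, $y=\pm\sqrt{c_{f_1}g_1(r)}$ in the formula for $\tilde\phi$. The only point that needs a check is $\kappa_r\in\{\pm 1\}$: squaring and using the $h_i$-identity together with $c_{f_2}g_2(\alpha T+\beta)h_2(\alpha T+\beta)^2 = \gamma^2\,\sigma(c_{f_1}g_1 h_1^2)(T)$ at $T=\sigma(r)$ gives $\kappa_r^2=1$ directly. The only genuinely content-bearing step in the whole plan is the passage from the $f_i$-identity to the $h_i$-identity; the rest is entirely routine substitution, so I do not anticipate any serious obstacle beyond being careful to distinguish $\sigma$ applied to coefficients from $\sigma$ applied to the indeterminate.
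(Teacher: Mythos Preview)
Your proposal is correct and follows essentially the same route as the paper's proof: both parts hinge on the polynomial identity $f_2(\alpha T+\beta)=\gamma^2\,\sigma(f_1)(T)$ (equivalently the paper's $f_2(X)=\gamma^2 f_1^\sigma((X-\beta)/\alpha)$), from which the $h_i$-identity is extracted, and the rest is the same chart-chasing and diagram-chasing you describe. The only place to be slightly more careful in write-up is the $\kappa_r^2=1$ check: evaluating the full $f_i$-identity at $T=\sigma(r)$ gives $0=0$, so you must first cancel the $h_i^2$ factors via the $h_i$-identity (as your phrasing ``using the $h_i$-identity together with'' indicates) and only then specialise.
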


\begin{proof}
 (i). Let $(t,v)\in U_{T,1}$ with $t\neq 0$. This corresponds to the point $(1/t,v/t^{g+1})\in U_{X,1}$ which under $\phi$ is mapped to the point 
\[\left(\frac{\alpha+\beta \sigma(t)}{\sigma(t)},\frac{\gamma \sigma(v)}{\sigma(t)^{g+1}}\right)\in U_{x,2}.\]
Changing variables we see that this corresponds to the point
\[\left(\frac{\polt}{\alpha+\polt\beta},\frac{\gamma \sigma(v)}{(\alpha+\sigma(t)\beta)^{g+1}}\right)\in U_{T,2}.\]
Since this formula describes a rational map which is defined at $t=0$ it gives the desired expression for the morphism on $U_{T,1}$, as well as the claim about the points at infinity.

(iii). We first claim that
\begin{equation} \label{identity}
f_2(X)=\gamma^2f_1^\sigma\left(\frac{X-\beta}{\alpha}\right),\end{equation}
where $f_1^\sigma(X)$ is the result of applying $\sigma$ to the coefficients of $f_1(X)$. In particular 
\[c_{f_2}=(\gamma/\alpha^{g+1})^2\sigma(c_{f_1}).\] 
Indeed, since $\phi$ is a morphism, for all $(x,y)\in C_1(\Kbar)$ we must have
\[\gamma^2\sigma(y^2)=f_2(\alpha\sigma(x)+\beta),\]
or equivalently
\[\gamma^2f_1^\sigma(\sigma(x))=f_2(\alpha\sigma(x)+\beta).\]
Since both the $x$-coordinate map $U_{x,1}(\Kbar)\rightarrow \Kbar$ and $\sigma:\Kbar\rightarrow \Kbar$ are surjective we deduce that 
\[\gamma^2f_1^\sigma(x)=f_2(\alpha x+\beta)\]
holds for all $x\in \Kbar$ and is thus a polynomial identity, from which \cref{identity} follows.

That the claimed formula for $\tilde{\phi}$ gives a morphism $\tilde{C}_1\rightarrow \tilde{C}_2$ making the diagram commute, we use that from \Cref{identity} one has
\[h_2(X)=\alpha^{\textup{deg}(h_1)}h_1^\sigma\left(\frac{X-\beta}{\alpha}\right).\]
\end{proof}

\def\thesection{B}

\section{Centres of clusters}
\label{s:app:centres}




\begin{lemma}\label[lemma]{lem:invcentre}
Let $f(x)\in K[x]$ be a squarefree polynomial with set of roots~$\mathcal{R}$. 
Let $\s$ be a proper cluster, $G_\s = \Stab_{G_K}(\s)$ and $K_\s = \overline{K}^{G_\s}$.
If there is a root $z_0\in\s$ such that $K_\s(z_0)/K_\s$ is tame, then there is a centre for $\s$ which lies in $K_\s$.
In particular, if $K(\mathcal{R})/K$ is tamely ramified,
then for every proper cluster $\s$ there is a centre which lies in $K_\s$.
\end{lemma}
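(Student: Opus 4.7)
The plan is to produce a centre in $K_\s$ by a two-stage averaging: first collapse the ramified (necessarily tame) part of $K_\s(z_0)/K_\s$ by a weighted trace, and then descend the remaining unramified part by additive Hilbert~90.

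Set $L=K_\s(z_0)$, let $L^{\nr}$ be the maximal unramified subextension of $L/K_\s$, and write $e=[L:L^{\nr}]$. By the tameness hypothesis $\gcd(e,p)=1$, so $\tfrac{1}{e}\in\cO_{K_\s}^\times$, and I would set
\[
 z_1\;=\;\tfrac{1}{e}\,\Tr_{L/L^{\nr}}(z_0)\;\in\;L^{\nr}.
\]
For each $\tau\in\Gal(L/L^{\nr})$, any lift $\widetilde\tau\in\Gal(\Kbar/K_\s)=G_\s$ carries $z_0$ into $\s$ (because $\s$ is $G_\s$-stable), so $\tau(z_0)\in\s$ and $v(\tau(z_0)-r)\ge d_\s$ for every $r\in\s$. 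Since $\tfrac{1}{e}$ is a unit, the averaging inequality
\[
 v(z_1-r)\;=\;v\!\Bigl(\tfrac{1}{e}\!\!\!\sum_{\tau\in\Gal(L/L^{\nr})}\!\!\!(\tau(z_0)-r)\Bigr)\;\ge\; d_\s
\]
survives for every $r\in\s$, and $z_1$ is a "would-be centre" living in $L^{\nr}$.

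In the second stage I would use that the closed disc $\mathfrak a=\{x\in L^{\nr}\mid v(x-z_1)\ge d_\s\}$ is stable under $G:=\Gal(L^{\nr}/K_\s)$: any $\sigma\in G$ lifts to some $\widetilde\sigma\in G_\s$, which preserves $\s$ setwise and hence the disc of radius $d_\s$ around any $r\in\s$, but by the first stage this disc equals $\mathfrak a$. After subtracting a suitable element of $K_\s$ from $z_1$ and rescaling by a power of a uniformiser of $K_\s$, one may assume $z_1\in\cO_{L^{\nr}}$ and $\mathfrak a=\pi_{K_\s}^{n}\cO_{L^{\nr}}$ for some integer $n\ge 0$ (possible because $L^{\nr}/K_\s$ is unramified, so the value groups of $L^{\nr}$ and $K_\s$ coincide and one may replace $d_\s$ by the smallest value in $v(K_\s^\times)$ that is $\ge d_\s$). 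Since $L^{\nr}/K_\s$ is unramified it admits a normal integral basis, giving $\cO_{L^{\nr}}\iso\cO_{K_\s}[G]$ as $\cO_{K_\s}[G]$-modules, hence $H^{1}(G,\pi_{K_\s}^{n}\cO_{L^{\nr}})=0$ and $(\cO_{L^{\nr}}/\mathfrak a)^G=\cO_{K_\s}/\pi_{K_\s}^{n}\cO_{K_\s}$. The $G$-invariant class of $z_1$ modulo $\mathfrak a$ therefore lifts to some $z\in\cO_{K_\s}$; reversing the initial shift yields $z\in K_\s$ with $v(z-z_1)\ge d_\s$, and then
\[
 v(z-r)\;\ge\;\min\{v(z-z_1),v(z_1-r)\}\;\ge\; d_\s\qquad\text{for every }r\in\s,
\]
so $z$ is the desired centre.

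The "in particular" clause follows at once: since every element of $\Gal(\Kbar/K(\cR))$ fixes each root and hence $\s$ setwise, $\Gal(\Kbar/K(\cR))\subseteq G_\s$, giving $K_\s\subseteq K(\cR)$. Then $K(\cR)/K_\s$ is a subextension of the tame extension $K(\cR)/K$ and is itself tame, so $K_\s(z_0)/K_\s$ is tame for every $z_0\in\s$ and the main statement applies. The main technical point is the second stage and specifically the reduction to a disc of the form $\pi_{K_\s}^{n}\cO_{L^{\nr}}$, which requires some bookkeeping when $d_\s$ does not already lie in the value group of $K_\s$; once this reduction is in place, both the tame averaging in the first stage and the normal-integral-basis vanishing in the second are classical, and the only place tameness enters in an essential way is to ensure that $e$ is a unit.
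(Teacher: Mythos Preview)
Your two–stage averaging argument is correct and is a genuinely different route from the paper's. The paper writes the $\pi$-adic expansion of $z_0$ in $K_\s^{\nr}(\sqrt[m]{\pi_\s})$, truncates it below valuation $d_\s$, and checks term by term (using tame inertia on the $\sqrt[m]{\pi_\s}$-powers and Frobenius on the unramified coefficients) that each surviving term already lies in $K_\s$. Your approach replaces this explicit expansion by structure: the tame trace kills the totally ramified part because $e$ is a unit, and the unramified descent is handled by $H^1(G,\cO_{L^{\nr}})=0$ via a normal integral basis. The paper's version is more hands-on and gives an explicit formula for the centre; yours is cleaner conceptually and makes transparent exactly where tameness is used.

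Two small points to tighten. First, $L/L^{\nr}$ need not be Galois (it is totally tamely ramified but $\mu_e$ may not lie in $L^{\nr}$), so ``$\tau\in\Gal(L/L^{\nr})$'' should read ``$\tau$ an $L^{\nr}$-embedding $L\hookrightarrow\Kbar$''; the trace and the rest of the argument are unchanged. Second, the reduction ``one may assume $\mathfrak a=\pi_{K_\s}^{n}\cO_{L^{\nr}}$'' is circular as written: the disc is the \emph{coset} $z_1+\pi_{K_\s}^{n}\cO_{L^{\nr}}$, and asserting it equals the ideal is exactly the conclusion. What you actually need (and what your cohomology step uses) is only that, after rescaling by a power of $\pi_{K_\s}$ so that $z_1\in\cO_{L^{\nr}}$ and $n\ge 0$, the $G$-stability of the disc gives $\sigma(z_1)-z_1\in\pi_{K_\s}^{n}\cO_{L^{\nr}}$ for all $\sigma\in G$; then the class of $z_1$ in $\cO_{L^{\nr}}/\pi_{K_\s}^{n}\cO_{L^{\nr}}$ is $G$-invariant and your $H^1$ vanishing lifts it to $\cO_{K_\s}$. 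No subtraction by an element of $K_\s$ is needed for this reduction.
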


\begin{proof}
%
By assumption $z_0$ lies in a tame extension of $K_{\c}$ and hence in $K_{\c}^{nr}(\sqrt[m]{\pi_\c})$ for some $p\nmid m$ and uniformiser $\pi_\c$ of $K_\c$ (we fix here a choice of $\sqrt[m]{\pi_\c}$). Write the $p$-adic expansion of $z_0$ as
$$
 z_0 = a_t \sqrt[m]{\pi_\c}^t + a_{t+1} \sqrt[m]{\pi_\c}^{t+1} + \ldots
$$
for a suitable $t\in\Z$ and $a_t\in K_\c^{nr}$ roots of unity of order prime to $p$. 

For $\sigma\in G_\c$ we have $\sigma(z_0)\equiv z_0 \mod \pi_K^{d_\c}$. In other words the terms in the $p$-adic expansions of $z_0$ and $\sigma z_0$ agree up to $\sqrt[m]{\pi_\c}^{e_{K_\c/K} m d_\c}$.
Define
$$
 z = \sum_{t<e_{K_\c/K} m d_\c} a_t \sqrt[m]{\pi_\c}^t.
$$
Clearly $z$ is a centre for $\c$ and it suffices to check that it is $G_\c$-invariant.
Suppose not, and that $a_{u}\sqrt[m]{\pi_\c}^u$ is the lowest valuation term the expression which is not $G_\c$-invariant. If $m\nmid u$ then there is some element $\sigma$ of tame inertia of $K_\c$ which fixes $a_u\in K_\c^{nr}$ and maps $\sqrt[m]{\pi_\c}^u$ to $\zeta\sqrt[m]{\pi_\c}^u$ with a root of unity $\zeta\neq 1$; this contradicts the fact that $\sigma z_0\equiv z_0 \mod \sqrt[m]{\pi_\c}^{e_{K_\c/K} m d_\c}$. If $m|u$ then $\sqrt[m]{\pi_s}^u\in K_\c$, so we must have $a_u\not\in K_\c$; but in this case the Frobenius element $\phi$ similarly scales $a_u\sqrt[m]{\pi_\c}^u$ by a non-trivial root of unity of order prime to $p$, which contradicts
$\phi z_0\equiv z_0 \mod \sqrt[m]{\pi_\c}^{e_{K_\c/K} m d_\c}$.
\end{proof}

\def\thesection{C}

\section{Equivalent semistability conditions}
\label{s:app:equi}

Throughout this appendix $C/K:y^2 = f(x)$ is a hyperelliptic curve with 
$$
f(x) = c_f\prod_{r \in \cR}(x-r).
$$
We give two equivalent formulations to the semistability criterion (Propositions \ref{weakss} and \ref{th:equivss}). In view of Theorem \ref{the semistability theorem} these provide equivalent conditions for $C/K$ to be semistable. 
\begin{definition}[= Definition \ref{semistability criterion}]\label[definition]{semistability criterion appendix}
We say that $C/K$ satisfies the \emph{semistability criterion} if
the following conditions hold:
\begin{enumerate}
\item
The extension $K(\cR)/K$ has ramification degree at most 2.
\item
Every proper cluster is $I_K$-invariant. 
\item
Every principal cluster $\s$ has $d_\c \in\Z$ and 
$\nu_\s \in 2\Z.$
\end{enumerate}
\end{definition}

\begin{lemma}\label{le:InertiaOrbit}
Suppose $K(\mathcal{R})/K$ is tamely ramified and $\sigma r\ne r$ 
for some $\sigma\in I_K$ and $r\in\cR$. 
Then $v(r-\sigma r)\notin\Z$, and $|\text{Orbit}_{I_K}(r)|\,v(r-\sigma r)\in\Z$.
\end{lemma}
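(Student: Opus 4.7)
The plan is to exploit the structure of tame totally ramified extensions to write $r$ explicitly as a $p$-adic series, and then read off $v(r-\sigma r)$ from the expansion.

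First I would identify the orbit size with a degree. Let $n=|\textup{Orbit}_{I_K}(r)|$. The stabiliser of $r$ in $I_K=G_{K^{\textup{nr}}}$ equals $G_{K^{\textup{nr}}(r)}$, so $n=[K^{\textup{nr}}(r):K^{\textup{nr}}]$. Since $K(\mathcal{R})/K$ is tame, $K^{\textup{nr}}(r)/K^{\textup{nr}}$ is a totally tamely ramified extension of degree $n$, and by the standard classification of tame extensions it has the form $K^{\textup{nr}}(u)$ for $u$ satisfying $u^n=\pi$, where $\pi$ may be chosen to be a uniformiser of $K$.

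Next, expand $r$ as a $u$-adic power series $r=\sum_{i\ge t_0} a_i u^i$ with $a_i$ either $0$ or a Teichm\"uller lift in $K^{\textup{nr}}$. The group $I_K$ acts on $u$ through a character $\chi:I_K\to\mu_n$ whose image is the full group of $n$-th roots of unity (this is exactly the Galois group of $K^{\textup{nr}}(u)/K^{\textup{nr}}$), and it acts trivially on each $a_i$. Hence
\[
 \sigma(r)-r \;=\; \sum_{i\ge t_0} a_i\bigl(\chi(\sigma)^i-1\bigr)\,u^i,
\]
and, since the terms $a_i(\chi(\sigma)^i-1)u^i$ have pairwise distinct valuations $i/n$ (for indices where the coefficient is nonzero),
\[
 v(\sigma r-r)\;=\;\min\bigl\{\,i/n\;\bigm|\; a_i\neq 0\text{ and }\chi(\sigma)^i\neq 1\,\bigr\}.
\]
Let $i_0$ be the minimising index. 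Then $n\cdot v(r-\sigma r)=i_0\in\mathbb{Z}$, which gives the second claim. For the first, suppose for contradiction that $v(r-\sigma r)\in\mathbb{Z}$, so $n\mid i_0$. Then $\chi(\sigma)^{i_0}=\bigl(\chi(\sigma)^n\bigr)^{i_0/n}=1$, contradicting the choice of $i_0$.

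There is no real obstacle; the only thing to be a bit careful about is the identification $n=[K^{\textup{nr}}(r):K^{\textup{nr}}]$ and the (standard) fact that every tame totally ramified extension of $K^{\textup{nr}}$ of degree $n$ is generated by an $n$-th root of a uniformiser of $K$ (so that the $u$-adic expansion makes sense and $\chi$ has the stated form).
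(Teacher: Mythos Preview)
Your proof is correct and follows essentially the same approach as the paper: write $r$ as a series in fractional powers of a uniformiser with coefficients in $K^{\textup{nr}}$, observe that $\sigma\in I_K$ moves only the terms of non-integral valuation, and conclude. The paper phrases it using a generic expansion $r=\sum a_i\pi^{b_i}$ with $b_i\in\Q$ and identifies the orbit size with the lcm of the denominators of the $b_i$, whereas you first pin down $n=[K^{\textup{nr}}(r):K^{\textup{nr}}]$ and then expand in powers of $u=\pi^{1/n}$; these are the same argument.
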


\begin{proof}
Write
$$
  r = a_1 \pi^{b_1} + a_2 \pi^{b_2} \ldots
$$
with rational $b_1<b_2<...$ and 
$a_i\in K^{nr}$ roots of unity of order prime to~$p$.
The expansion of $\sigma r$ differs from that of $r$ only at those $b_i$ 
that are not in $\Z$, hence $v(r-\sigma r)\notin\Z$.
Also, the size of the orbit $|\text{Orbit}_{I_K}(r)|$ is 
the lowest common multiple of the denominators of the $b_i$ (when written in lowest terms), and so 
$|\text{Orbit}_{I_K}(r)|\,v(r-\sigma r)\in\Z$.
\end{proof}

\begin{lemma}\label{le:TamelyRam} 
Suppose $K(\mathcal{R})/K$ is tamely ramified and $d_\s\in\Z$ for every principal cluster
$\s\subset\cR$ and for $\cR$ itself when $\cR=\s_1\coprod s_2$ is a union of two clusters. Then 

(1) $e_{K(\cR)/K}\le 2$, 

(2) all proper clusters are inertia invariant, 

(3) a root $r$ is fixed by inertia unless $r\in \t$ for a twin $\t$ or $\cR = \mathfrak{c} \cup \{r,r'\}$ is a cotwin with $\mathfrak{c}$ its principal child. 
\end{lemma}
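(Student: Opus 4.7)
The plan is to derive everything from Lemma \ref{le:InertiaOrbit}: if $\sigma r\neq r$ for some $\sigma\in I_K$, $r\in\mathcal{R}$, then $d_{\{r\}\wedge\{\sigma r\}} = v(r-\sigma r)\notin\mathbb{Z}$. Our hypothesis forces $d_\s\in\mathbb{Z}$ for every principal cluster and for $\mathcal{R}$ when it has exactly two children, which rules out most candidates for $\s:=\{r\}\wedge\{\sigma r\}$ and will reduce the analysis to the two configurations described in~(3).

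I would first establish (3). With $\s$ as above, the hypothesis together with $d_\s\notin\mathbb{Z}$ forces $\s$ to be non-principal and not $\mathcal{R}$ with two children. By the definition of ``principal'', the only remaining possibilities are $|\s|=2$ (so $\s=\{r,\sigma r\}$ is a twin, giving the first alternative in~(3)) or $\s$ has a child $\mathfrak{c}$ of size $2g$. In the latter case, since $|\mathfrak{c}|=2g\geq 4$, $\mathfrak{c}\neq\mathcal{R}$ and no child of $\mathfrak{c}$ can have size $2g$, the cluster $\mathfrak{c}$ is itself principal, hence $d_\mathfrak{c}\in\mathbb{Z}$. The key step is to show $r\notin\mathfrak{c}$: if $r\in\mathfrak{c}$, then $\sigma\mathfrak{c}$ contains $\sigma r\notin\mathfrak{c}$, so $\sigma\mathfrak{c}\neq\mathfrak{c}$, and as two clusters of the same size these must be disjoint, forcing $|\mathcal{R}|\geq 4g$ and contradicting $g\geq 2$. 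A symmetric argument using $\sigma^{-1}$ gives $\sigma r\notin\mathfrak{c}$ as well. Thus both $r$ and $\sigma r$ lie in $\s\setminus\mathfrak{c}$, forcing $|\s|\geq 2g+2$, hence $\s=\mathcal{R}$ with $|\mathcal{R}|=2g+2$; and since $r,\sigma r$ lie in different children of $\mathcal{R}$, each must be its own singleton child, placing us in the cotwin configuration.

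For (1) I would use that the tame inertia of $K(\mathcal{R})/K$ is cyclic, so it suffices to show every $\sigma\in I_K$ satisfies $\sigma^2 r=r$ for all $r\in\mathcal{R}$. This is clear if $\sigma r=r$; otherwise (3) applies. In the twin case, $\sigma$ sends $\{r,\sigma r\}$ to $\{\sigma r,\sigma^2 r\}$, which is a twin of the same depth sharing $\sigma r$ with $\{r,\sigma r\}$, so the two twins coincide, giving $\sigma^2 r\in\{r,\sigma r\}$; it cannot be $\sigma r$ as $\sigma r\neq r$, so $\sigma^2 r=r$. In the cotwin case, $\sigma$ permutes the three children of $\mathcal{R}$ preserving sizes, so it fixes the unique size-$2g$ child $\mathfrak{c}$ and either fixes or swaps the two singletons $\{r\},\{\sigma r\}$; either way $\sigma^2 r=r$. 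For (2), if $\s$ were a proper cluster with $\sigma\s\neq\s$ then $\sigma\s$ would be disjoint from $\s$, so for $r\in\s$ we would have $\sigma r\notin\s$, hence $\sigma r\neq r$; applying (3) to this pair quickly leads to a contradiction in both cases via nesting---the twin $\{r,\sigma r\}$ would force $|\s|\leq 1$, and in the cotwin case the required nesting of $\s$ with $\mathfrak{c}$ would force $\s$ to contain $\sigma r$. The main obstacle throughout is bookkeeping the cluster nesting cleanly; no new tool beyond Lemma~\ref{le:InertiaOrbit} and the definition of ``principal'' is needed.
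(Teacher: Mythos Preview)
Your proposal is correct and follows essentially the same approach as the paper: use Lemma~\ref{le:InertiaOrbit} to force $d_{\{r\}\wedge\{\sigma r\}}\notin\Z$, then invoke the definition of ``principal'' together with the hypothesis to pin down $\{r\}\wedge\{\sigma r\}$ as a twin or the even cotwin $\cR$, and deduce (1) and (2) from (3). The paper's write-up is terser (it simply says ``The lemma follows'' after establishing the twin/cotwin dichotomy), while you spell out the details for (1) and (2); the observation $d_{\mathfrak c}\in\Z$ is correct but not actually needed in your argument.
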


\begin{proof}
Let $r$ and $\sigma r$ be two inertia conjugate roots. By Lemma \ref{le:InertiaOrbit}, $v(r-\sigma r) \notin \Z$, so the depth of $\s =\{r\} \wedge \{ \sigma r\}$ is not an integer. Note that $\s$ cannot be a cotwin of odd size, since its singleton root (by construction $r$ or $\sigma r$) cannot be Galois conjugate to a root in its principal child.
Thus $\s$ is either a twin or $\s  = \cR $ is a cotwin of the form $\mathfrak{c} \cup \{r,\sigma r\}$ where $\mathfrak{c}$ is its principal child. It follows that inertia can only swap roots inside twins or the two singletons inside a cotwin. The lemma follows.
\end{proof}

\begin{proposition}\label[proposition]{weakss}
Let $C/K$ be a hyperelliptic curve. Then $C/K$ satisfies the semistability criterion if and only if
\begin{enumerate}
\item
The extension $K(\cR)/K$ is tamely ramified.
\item
Every principal cluster $\s$ is $I_K$-invariant, has $d_\c \in\Z$ and 
$\nu_\s \in 2\Z$. 
\end{enumerate}
\end{proposition}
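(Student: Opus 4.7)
The forward implication would be immediate: condition (1) of the semistability criterion gives ramification degree at most $2$, which is tame since $p\neq 2$, and principal clusters are proper so they inherit $I_K$-invariance from~(2).

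For the converse, the plan is to verify the hypotheses of Lemma~\ref{le:TamelyRam} and apply it. Tame ramification is hypothesis (1), and $d_\s\in\Z$ for every principal~$\s$ is part of (2). The only hypothesis that requires work is $d_\cR\in\Z$ in the case $\cR=\s_1\coprod\s_2$ is a disjoint union of two clusters (possibly including a singleton). I would first show that both $\s_i$ are $I_K$-invariant. Inertia permutes the children of $\cR$, so can only swap $\s_1$ and $\s_2$ if $|\s_1|=|\s_2|$. When the sizes differ, both are individually fixed; in particular, if one $\s_i$ is a singleton, the single root lies in $K^{\textup{nr}}$. When the sizes are equal, $|\cR|$ is even so $|\s_i|=g+1\ge 3$ (using $g\ge 2$), and neither has a child of size $2g$ (as $g+1<2g$), so both are principal and hence $I_K$-invariant by hypothesis~(2). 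Having established invariance, Lemma~\ref{lem:invcentre} (applied to proper $\s_i$; in the singleton case the root itself serves as a centre) gives centres $z_i\in K^{\textup{nr}}$; for any $r\in\s_1$, $r'\in\s_2$, the ultrametric inequality combined with $v(z_i-r)\ge d_{\s_i}>d_\cR=v(r-r')$ yields $v(z_1-z_2)=d_\cR$, and since $z_1-z_2\in K^{\textup{nr}}$ this is an integer.

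With the hypotheses verified, Lemma~\ref{le:TamelyRam}(1) would give $e_{K(\cR)/K}\le 2$, establishing part~(1) of the semistability criterion. For part~(2), conclusion~(3) of the same lemma says that any root $r$ moved by some $\sigma\in I_K$ lies either in a twin~$\t$ or in the outside pair of a cotwin~$\cR$. If some proper cluster $\s$ failed to be $I_K$-invariant, then some $r\in\s$ would be moved by inertia and fall into one of these two cases; but then cluster nesting together with $|\s|\ge 2$ would force either $\t\subseteq\s$ (twin case) or $\s=\cR$ (cotwin case), whence $\sigma r\in\s$, contradicting non-invariance. The main obstacle is the centre argument for $d_\cR\in\Z$, which is the only step going beyond direct invocation of the cited lemmas; everything else is bookkeeping with cluster sizes and nesting.
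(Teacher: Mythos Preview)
Your proof is correct and follows the same route as the paper: verify $d_\cR\in\Z$ when $\cR$ is a union of two children by producing centres in an unramified extension, then invoke Lemma~\ref{le:TamelyRam}. One small inefficiency: you re-derive inertia-invariance of all proper clusters from conclusion~(3) of that lemma, but conclusion~(2) already gives it directly.
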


\begin{proof}
Clearly if $C/K$ satisfies the semistability criterion then (1) and (2) hold.
For the converse, by Lemma \ref{le:TamelyRam} it suffices to show that $d_\cR\in\Z$ if $\cR$ is a union of two clusters.
Suppose $\cR=\s_1 \coprod \s_2$ is a union of two clusters. At least one of the $\s_i$ is principal, so, by hypothesis, they cannot be permuted by $I_K$. By Lemma \ref{lem:invcentre}, $\s_1$ and $\s_2$ have centres $z_{\s_1}, z_{\s_2}\in K$ (taking $z_{\s}=r$ if $\s=\{r\}$ is a singleton), and hence $d_{\cR}=v(z_{\s_1}-z_{\s_2})\in\Z$.
\end{proof}

\begin{lemma}\label[lemma]{change of nu cluster}
For any cluster $\s$, $\nu_{\c} = \nu_{P(\s)} + |\s|\delta_{\s}.$
\end{lemma}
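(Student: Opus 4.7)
The plan is to prove this by direct computation from the definition $\nu_\s = v(c_f) + \sum_{r \in \cR} d_{r \wedge \s}$ (Definition \ref{no:nu}), splitting the sum over $\cR$ according to whether $r \in \s$ or $r \notin \s$.

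First I would observe that $v(c_f)$ cancels in the difference $\nu_\s - \nu_{P(\s)}$, so it suffices to show
$$\sum_{r \in \cR}\bigl(d_{r \wedge \s} - d_{r \wedge P(\s)}\bigr) = |\s|\,\delta_\s.$$
For $r \in \s$, the smallest cluster containing $\{r\}$ and $\s$ is $\s$ itself, and similarly the smallest cluster containing $\{r\}$ and $P(\s)$ is $P(\s)$ (since $\{r\} \subseteq \s \subseteq P(\s)$). Hence each such $r$ contributes $d_\s - d_{P(\s)} = \delta_\s$ to the sum, giving a total of $|\s|\,\delta_\s$.

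For $r \notin \s$, the cluster $r \wedge \s$ strictly contains $\s$; since $P(\s)$ is the unique smallest cluster strictly containing $\s$, we have $r \wedge \s \supseteq P(\s)$. This cluster is then the smallest cluster containing $\{r\}$ and $P(\s)$, i.e.\ $r \wedge \s = r \wedge P(\s)$, so these terms contribute $0$ to the difference.

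Adding the two cases yields the claim. There is no real obstacle here: the result is a purely combinatorial identity about the tree structure of clusters, following immediately from the definitions of $\wedge$, $\delta_\s$, and $\nu_\s$.
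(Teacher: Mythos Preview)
Your proof is correct and follows essentially the same approach as the paper: both split the sum defining $\nu_\s$ according to whether $r\in\s$ or $r\notin\s$ and observe that the outside terms agree for $\s$ and $P(\s)$. The only cosmetic difference is that the paper rewrites $d_{r\wedge\s}$ as $v(z_\s-r)$ via a choice of centre before doing the bookkeeping, whereas you work directly with the $\wedge$ notation; your version is if anything slightly cleaner.
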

\begin{proof}
By definition of $\nu$, 
$$
\nu_{\c}= v(c_f)+|\c|d_{\c}+ \sum_{r \notin \c} v(z_{\c}-r) = v(c_f)+|\c|d_{\c}+\sum_{r \notin P(\s)} v(z_{\c}-r) + \sum_{r \in P(\s) \setminus \c} v(z_{\c}-r)
$$
$$
 =v(c_f) +  |P(\s)|d_{P(\s)}+ \sum_{r \notin P(\s)} v(z_{P(\s)}-r) + |\c|d_{\c} - |P(\s)|d_{P(\s)} + (|P(\s)|-|\s|)d_{P(\c)} 
$$
$$
= \nu_{P(\c)} +|\s|\delta_{\c}.
$$
\end{proof}


\begin{lemma}\label[lemma]{le:AdamsLemma} 
The following are equivalent:

(1) There exists a principal cluster $\s$ with  $d_\s \in \Z$ and $\nu_\s \in 2\Z$ and 
for all other principal clusters $\s', \s''\ne \cR$, 
\begin{itemize}
\item[a)] 
$\delta_{\s'} \in \Z$ if $\s'$ is even and $P(\s')$ is principal, 
\item[b)] 
$\delta_{\s'} \in 2\Z$ if $\s'$ is odd and $P(\s')$ is principal,
\item[c)] 
 $\delta_{\s'} - \delta_{\s^{''}} \in 2\Z$ if $\cR = \s' \coprod \s^{''}$ and $\s', \s^{''}$ odd.
\item[d)] 
$\delta_{\s'} - \delta_{\s^{''}} \in \Z$ if $\cR = \s' \coprod \s^{''}$ and $\s', \s^{''}$ even.
\end{itemize}

(2) all principal clusters $\s$ have $d_\s \in \Z$ and $\nu_s \in 2\Z$.
\end{lemma}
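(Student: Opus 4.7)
The plan is to reduce everything to iterated use of Lemma \ref{change of nu cluster}, which gives $\nu_\s = \nu_{P(\s)} + |\s|\delta_\s$, together with the identity $d_\s = d_{P(\s)} + \delta_\s$. These let me transfer the integrality of $d$ and the parity of $\nu$ between any two comparable clusters, as long as I know the required information for $\delta$ at each step.

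For the easy direction $(2)\Rightarrow(1)$, I would argue cluster by cluster. If $\s'\ne\cR$ is principal with $P(\s')$ also principal, then both $d_{\s'}, d_{P(\s')}\in\Z$, so $\delta_{\s'}=d_{\s'}-d_{P(\s')}\in\Z$, and $|\s'|\delta_{\s'}=\nu_{\s'}-\nu_{P(\s')}\in 2\Z$; when $|\s'|$ is odd this forces $\delta_{\s'}\in 2\Z$. This gives (a) and (b). For (c) and (d), I would use that, with $\cR=\s'\coprod\s''$ both principal, $\delta_{\s'}-\delta_{\s''}=d_{\s'}-d_{\s''}\in\Z$, and $|\s'|\delta_{\s'}-|\s''|\delta_{\s''}=\nu_{\s'}-\nu_{\s''}\in 2\Z$; when both sizes are odd this promotes $\delta_{\s'}-\delta_{\s''}$ to $2\Z$, while in the even case only the integrality claim is needed.

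For the harder direction $(1)\Rightarrow(2)$ I would propagate along the tree starting from the given principal $\s$ that has $d_\s\in\Z$ and $\nu_\s\in 2\Z$. A single parent-child step between two principal clusters is handled by (a) or (b): in the even case $|\s'|\delta_{\s'}$ is automatically in $2\Z$, in the odd case (b) gives $\delta_{\s'}\in 2\Z$, so in both cases $d_{\s'}\in\Z$ and $\nu_{\s'}\in 2\Z$ are inherited from the parent. The key structural observation that makes iterated propagation possible is that the only non-principal cluster capable of separating principal clusters in the tree is $\cR$ itself: twins cannot sit above a principal cluster by a size comparison, and a cluster can be a cotwin only if it has size $\ge 2g+1$, which forces it to be $\cR$. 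So any two principal clusters are joined by a chain of principal parent-child steps, possibly crossing $\cR$ at the top.

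The main obstacle is the crossing step at a non-principal $\cR$. Two subcases arise: either $\cR=\s'\coprod\s''$ with both children principal, in which case (c) and (d) carry $d$ and $\nu$ from one branch to the other (using the same arithmetic as in the easy direction, now run in reverse), or $\cR$ is a cotwin, in which case it has only one principal child $\s_0$ of size $2g$ and every other principal cluster lies in the subtree rooted at $\s_0$, so no crossing is actually required. Once this case analysis is carried out, the combination of parent-child propagation via (a), (b) within each branch and the bridge via (c), (d) at the top yields $d_\s\in\Z$ and $\nu_\s\in 2\Z$ for every principal $\s$.
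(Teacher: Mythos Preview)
Your proposal is correct and follows essentially the same approach as the paper's own proof, which simply cites the identities $d_\s=d_{P(\s)}+\delta_\s$ and $\nu_\s=\nu_{P(\s)}+|\s|\delta_\s$, declares the rest a ``simple case-by-case check'', and notes that one can move between any two principal clusters through principal parent--child steps, possibly passing through $\cR$ when it is a union of two odd or two even clusters. Your write-up is in fact more explicit than the paper's: you spell out why non-principal clusters other than $\cR$ cannot separate principal clusters and you handle the cotwin case for $\cR$ separately, neither of which the paper makes explicit.
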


\begin{proof}
For all proper clusters $\s$ we have $d_s=d_{P(\s)}+\delta_\s$ by definition of $\delta_\s$, and $\nu_{\c} = \nu_{P(\s)} + |\s|\delta_{\s}$, by \Cref{change of nu cluster}. The result follows from a simple case-by-case check, and the fact that going to parent and child clusters allows one to move from any principal cluster to any other one, moving only through principal clusters and possibly through $\cR$ when it is a union of two odd or two even clusters.
\end{proof}


\begin{proposition}\label[proposition]{th:equivss}
Let $C/K$ be a hyperelliptic curve and let $\s$ be a principal cluster. 
Then $C/K$ satisfies the semistability criterion if and only if 
\begin{itemize}
\item[1)] There exists a principal cluster $\s$ with $d_\s \in \Z$ and $\nu_\s \in 2\Z$, 
\item[2)] for all proper clusters $\s', \s''\ne \cR$, 
\begin{itemize}
\item[a)] 
$\delta_{\s'} \in \Z$ if $|\s'|>2$ is even and $P(\s')$ is not a cotwin, 
\item[b)] 
$\delta_{\s'} \in 2\Z$ if $|\s'|$ is odd and $P(\s')$ is principal,
\item[c)] 
$\delta_{\s'} \in \frac12\Z$ if $|\s'|=2$,
\item[d)]
 $\delta_{\s'} \in  \frac12\Z$ if $|\s'|= 2g$ and $P(\s')$ is a cotwin,
\item[e)] 
$\delta_{\s'}, \delta_{\s''} \in \Z$ and $\delta_{\s'} + \delta_{\s''} \in 2\Z$ if $\cR = \s' \coprod \s''$ and $\s', \s''$ odd,
\item[f)] 
$\delta_{\s'}, \delta_{\s''} \in \Z$ if $\cR = \s' \coprod \s''$ and $\s', \s''$ even principal,
\end{itemize}
\item[3)] wild inertia acts trivially on the roots. 
\end{itemize}
\end{proposition}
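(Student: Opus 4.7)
My plan is to prove both implications by reducing to Proposition \ref{weakss}, which reformulates the semistability criterion as: $K(\cR)/K$ is tamely ramified, and every principal cluster $\s$ is $I_K$-invariant with $d_\s\in\Z$ and $\nu_\s\in 2\Z$. Condition 3) (trivial wild inertia) corresponds to tameness since the residue characteristic is odd and a tame quotient of $I_K$ has no pro-$p$ part; Lemma \ref{le:AdamsLemma} provides the key translation between ``all principal clusters satisfy the integer-depth and even-$\nu$ conditions'' and the combination of 1) with 2)(a,b,e,f) restricted to principal clusters; conditions 2)(c,d) handle the exceptional twin and cotwin cases via Lemma \ref{integral clusters}.

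For the forward direction, assuming the semistability criterion, tameness of $K(\cR)/K$ yields 3), and the principal-cluster hypothesis of Proposition \ref{weakss} gives 1) directly. Applying Lemma \ref{le:AdamsLemma} I then read off 2)(a,b,e,f) in the case where both $\s'$ and $P(\s')$ are principal (noting that principal implies not a cotwin, which covers the shape of 2)(a)). For $\s'$ a twin or an even child of $\cR$ with cotwin parent, Lemma \ref{integral clusters} shows that both $d_{\s'}$ and $d_{P(\s')}$ lie in $\tfrac12\Z$, so $\delta_{\s'}\in\tfrac12\Z$, giving 2)(c,d). The remaining subcases of 2)(a,b,e,f), where $\s'$ is proper non-principal but its parent is principal (e.g.\ an \"ubereven child of a principal cluster), follow from Lemma \ref{change of nu cluster} and the integer-depth and even-$\nu$ properties of the principal parent.

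For the reverse direction I assume 1), 2), 3). Condition 3) gives tameness, and combining 1) with 2)(a,b,e,f) and Lemma \ref{le:AdamsLemma} shows $d_\s\in\Z$ and $\nu_\s\in 2\Z$ for every principal cluster. To conclude the semistability criterion via Proposition \ref{weakss} it remains to show $I_K$-invariance of every principal cluster, which I will deduce from Lemma \ref{le:TamelyRam}. Its hypotheses additionally require $d_\cR\in\Z$ whenever $\cR$ is a union of two clusters $\s_1\coprod\s_2$: I would verify this by case analysis, using that $g\ge 2$ combined with the classification of proper non-principal children (twins, or cotwins of odd size $2g+1$ which would force the other child to be a singleton) forces at least one of $\s_1,\s_2$ to be principal whenever both are proper, so that $d_\cR=d_{\text{principal}}-\delta\in\Z$ by 2)(e,f); the singleton-child case is handled by a direct computation of $v(r-z_{\s_2})$ using Lemma \ref{lem:invcentre} to pick $z_{\s_2}\in K^{\text{nr}}$ and Lemma \ref{le:InertiaOrbit} to control the valuation. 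The main obstacle will be this case analysis for $d_\cR$, in particular the careful bookkeeping of which proper clusters are principal in edge cases such as $\cR$ being a cotwin or having non-principal even/odd children of borderline size.
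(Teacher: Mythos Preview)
Your overall strategy matches the paper's: both directions go through Proposition \ref{weakss}, Lemma \ref{le:AdamsLemma}, and Lemma \ref{le:TamelyRam}, with Lemma \ref{lem:invcentre} supplying integrality of $d_\cR$ where needed. The reverse direction is essentially correct and aligns with the paper (your worry about the singleton-child case is unnecessary but harmless).

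The forward direction, however, has a genuine gap in your handling of the ``remaining subcases'' of (2a,b,e,f). You describe these as ``$\s'$ proper non-principal but its parent is principal'', but this is backwards: under the hypotheses of each of (2a,b,e,f), the cluster $\s'$ is automatically principal (e.g.\ in (2a), a proper even cluster of size $>2$ with $\s'\ne\cR$ is always principal, since a child of size $2g$ would force $\s'=\cR$). The cases actually left over after Lemma \ref{le:AdamsLemma} are those where $\s'$ is principal but $P(\s')=\cR$ is \emph{not} principal. This is exactly the situation in (2e) and (2f), where $\cR$ has precisely two children, and it can also occur in (2a). Here Lemma \ref{le:AdamsLemma}(1)(c,d) only yields $\delta_{\s'}-\delta_{\s''}\in 2\Z$ (resp.\ $\in\Z$), not the individual integrality $\delta_{\s'},\delta_{\s''}\in\Z$ that (2e,f) demand; and Lemma \ref{change of nu cluster} does not help, since it is the parent, not $\s'$, whose depth you do not yet control.

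The missing ingredient is $d_\cR\in\Z$. The paper obtains this directly: since at least one child $\s_i$ is principal, the $\s_i$ cannot be permuted by inertia, so by Lemma \ref{lem:invcentre} one may choose centres $z_{\s_1},z_{\s_2}\in K$, giving $d_\cR=v(z_{\s_1}-z_{\s_2})\in\Z$. Once you know $d_\cR\in\Z$, conditions (2a,b,e,f) follow immediately from $d_{\s'}\in\Z$ and $\nu_{\s'}\in 2\Z$ for principal $\s'$, together with Lemma \ref{change of nu cluster}. You could equally well use Lemma \ref{integral clusters} to get $d_\cR\in\Z$ in these cases, but you need to invoke it here and not only for (2c,d).
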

\begin{proof}
Suppose $C/K$ satisfies (1)--(3). By (1) and (2) the curve satisfies the hypotheses of Lemma \ref{le:AdamsLemma}(1), and hence all principal clusters $\s'$ have $d_{\s'}\in\Z$ and $\nu_{\s'}\in2\Z$.
By (2e), (2f), if $\cR$ is a union of two clusters then $d_\cR\in\Z$, so by Lemma \ref{le:TamelyRam} all proper clusters are inertia invariant and the ramification degree of $K(\cR)/K$ is at most 2.

Conversely, suppose that $C/K$ satisfies the semistability criterion. Then (1) and (3) trivially hold.
If $\cR=\s_1 \coprod \s_2$ is a union of two clusters, then at least one of the $\s_i$ is principal, so they cannot be permuted by $I_K$. By Lemma \ref{lem:invcentre}, $\s_1$ and $\s_2$ have centres $z_{\s_1}, z_{\s_2}\in K$ (taking $z_{\s}=r$ if $\s=\{r\}$ is a singleton), and hence $d_{\cR}=v(z_{\s_1}-z_{\s_2})\in\Z$. Thus (2a), (2b), (2e) and (2f) hold because $d_\s\in\Z$ and $\nu_\s\in 2\Z$ for every principal cluster $\s$. Finally, (2c) and (2d) follow from Lemma \ref{le:InertiaOrbit}.
\end{proof}

\def\fr{{\mathfrak r}}

\def\thesection{D}
 \def\sh{\hat{\c}}

\section{Metric cluster pictures, hyperelliptic graphs and BY trees} \label[appendix]{hyble appendix}

Here we summarise various definitions and constructions from \cite{hyble}. Specifically, we recall the combinatorial notion of  \textit{metric cluster picture} and the process for associating  a metric BY tree and metric hyperelliptic graph to each such. The relevance  to this paper is that, for the metric cluster picture associated to a semistable hyperelliptic curve over a local field $K$ of odd residue characteristic (\Cref{explicit poly example}), the resulting hyperelliptic graph is precisely the dual graph of (the special fibre of) its minimal regular model over $K^{\textup{nr}}$ (\Cref{main dual graph thingy}). 

We caution that  our notation differs slightly from that of \cite{hyble}. Where there are differences we indicate this immediately after the relevant defintion and note in particular that, for a metric cluster picture $\Sigma$, we write $T_\Sigma$ (resp. $G_\Sigma$) for the graph denoted in op. cit. as $\widetilde{\underline{T}(\Sigma)}$ (resp. $\widetilde{\underline{G}(\underline{T}(\Sigma))}$).

We adopt the same definitions and conventions for metric graphs as in \Cref{sss:dualgraph}. In particular we allow graphs to have loops and multiple edges and  automorphisms of a metric graph $G$  are homotopy classes of homeomorphisms $G\rightarrow G$ preserving vertices and lengths (which may permute multiple edges and reverse direction of loops). 



\subsection{Cluster pictures}

\begin{definition}[Cluster picture] \label[definition]{defclpic}

Let $X$ be a finite set and $\Sigma$ a collection of non-empty subsets of $X$;
elements of $\Sigma$ are \emph{clusters}. 
Then $\Sigma$ (or $(X,\Sigma)$) is a \emph{cluster picture} if
\begin{enumerate}
\item
every singleton is a cluster, and $X$ is a cluster,
\item
any two clusters are either disjoint or one is contained in the other.
\end{enumerate}

A cluster picture $(X,\Sigma)$ is \textit{metric} if it is equipped with a
distance $\delta(\s,\fr)=\delta(\fr,\s)\in\R_{>0}$ for every pair of proper clusters $\s<\fr$. This extends to a distance function between all pairs of proper clusters in the natural way; see \cite[Definition 3.45]{hyble}. 
\end{definition}

We talk about properties of clusters using the notation and terminology set out in \Cref{clusternotation} (see also \cite[Section 3]{hyble}), and do not recall these terms here in the interest of space. In particular $(X,\Sigma)$ has \textit{genus} $g$ if $|X|\in \{2g+1,2g+2\}$. In this appendix we restrict to cluster pictures of  genus at least $2$.
%
%

\begin{example}\label[example]{clreal}
Let $C/K:y^2=f(x)$ be a hyperelliptic curve and denote by $\mathcal{R}\subseteq \bar{K}$ the set of roots of $f(x)$.
Then the non-empty subsets of $\mathcal{R}$ cut out by discs form a metric cluster picture, where for proper clusters $\s<\s'$  we set $\delta(\s,\s')=d_\s-d_{\s'}$ where   $d_\s = \min_{r,r'\in\s}\{v(r-r')\}$ is the \emph{depth} of the cluster $\s$.
\end{example}

\begin{example} \label[example]{explicit poly example}
As a concrete example of the above, take $K=\mathbb{Q}_p$ for $p$ odd and consider the monic polynomial $f(x)$ with set of roots $\cR=\{1,1\!+\!p^2,1\!-\!p^2,p,0,p^3,-p^3\}$, so that the resulting hyperelliptic curve $C:y^2=f(x)$ is the one considered in \Cref{introexample}. There are four proper clusters:
$$
 \cR,\quad
 \s_1=\{1,1\!+\!p^2,1\!-\!p^2\}, \quad
 \s_2=\{p,0,p^3,-p^3\}, \quad
  \s_3=\{0,p^3,-p^3\}, \quad 
 $$
of depths 0, 2, 1 and  3 respectively. 
We represent this pictorially by drawing roots $r \in\cR$ as
\smash{\raise4pt\hbox{\clusterpicture\Root{1}{first}{r1};\endclusterpicture}},
and drawing ovals around roots to represent a cluster:
$$
\scalebox{1.6}{\clusterpicture            
  \Root {1} {first} {r1};
  \Root {} {r1} {r2};
  \Root {} {r2} {r3};
  \Root {3} {r3} {r4};
  \Root {1} {r4} {r5};
  \Root {} {r5} {r6};
  \Root {} {r6} {r7};
  \ClusterLDName c1[][2][\s_1] = (r1)(r2)(r3);
  \ClusterLDName c2[][2][\s_3] = (r5)(r6)(r7);
  \ClusterLDName c3[][1][\s_2] = (r4)(c2)(c2n);
  \ClusterLDName c4[][0][\cR] = (c1)(c2)(c3)(c1n)(c2n)(c3n);
\endclusterpicture}
$$
the roots ordered as they appear in the definition of $\cR$. The subscript of the top cluster $\cR$ is its depth and for all other clusters it is their ``relative depth'': the difference between their depth and that of their parent cluster. 
\end{example}

\begin{definition}[Automorphisms of cluster pictures] \label[definition]{auts of clusters defi}
\label{autc}
An \emph{automorphism} of $\Sigma$ is  a pair $\sigma=(\sigma_0,\epsilon_\sigma)$ where $\sigma_0$ is a permutation of the proper clusters  preserving sizes, inclusions and, in the metric case, distances, and $\epsilon_\sigma$ is a collection of signs $\epsilon_\sigma(\s)\in \{\pm 1\}$ for even clusters $\s\in\Sigma$
such that $\epsilon_\sigma(\s')=\epsilon_\sigma(\s)$ whenever $\s$ is \"ubereven and $\s'<\s.$ 

  We compose automorphisms by the cocycle rule
$$
  (\alpha, \epsilon_\alpha)\circ (\beta, \epsilon_\beta) = 
    \bigl(\alpha\circ\beta, \s\mapsto\epsilon_\beta(\s)\epsilon_\alpha(\beta(\s))\bigr).
$$
\end{definition}

\begin{remark}
Let $\mathcal{E}$ denote the set of even clusters which do not have an \ub~ parent, excluding $\mathcal{R}$ unless $\mathcal{R}$ is itself \ub. 
Then to give a collection of signs $\epsilon_\sigma(\s)$ as in \Cref{auts of clusters defi} is equivalent to specifying  $\epsilon_\sigma(\s)$ for $\s\in \mathcal{E}$, with no additional compatibility. 
\end{remark}

\subsection{The BY tree associated to a cluster picture}

Let $\Sigma$ be a metric cluster picture. We associate to $\Sigma$ a finite tree $T_\Sigma$, equipped with a genus marking $g: V(T_\Sigma)\to\Z_{\ge 0}$ on vertices and  a 2-colouring blue/yellow on vertices and edges as follows.  

\begin{definition}[$T_\Sigma$] \label[definition]{BY tree const} 
\label{StoT}~
Let $(X,\Sigma)$ be a metric cluster picture. We define $T_\Sigma$, the \textit{BY tree associated to }$\Sigma$, as follows.
First take the graph with:
\begin{itemize}
\item 
a vertex $v_\s$ for every proper cluster $\s$, excluding $\s=X$ when $|X|=2g+2$ and has a child of size $2g+1$, coloured yellow
if $\s$ is \ub\ and blue otherwise,
\item 
an edge linking $v_{\s}$ to $v_{P(\s)}$ for every proper cluster $\s \neq X$, yellow of length $2\delta_\s$ if $\s$ is even, and blue of length $\delta_\s$ if $\s$ is odd.
\end{itemize}
To obtain $T_\Sigma$  from this graph we remove, if $|X|=2g+2$ and $X$ is a disjoint union of two proper children, the (degree $2$) vertex $v_{X}$ from the vertex set\footnote{we will freely still refer to $v_X$ in this case, understanding that it is simply a point on $T_\Sigma$ rather than a vertex.} (keeping the underlying topological space the same). 
We define the \emph{genus} of a vertex $v_\s$ as $g(v_\c)=g(\s)$. 
\end{definition}

Writing $T=T_\Sigma$, as a topological space $T=T_b\coprod T_y$ with
$T_b$ the \emph{blue part}, and $T_y$ the \emph{yellow part}.
Note that all leaves are blue and that  $T_b\subset T$ is closed.

\begin{remark}
$T_\Sigma$ is a (metric) BY tree in the sense of \cite[Definition 3.18]{hyble}; in the notation of op. cit. (see Construction 4.13, Proposition 5.7) it is precisely the graph $\widetilde{\underline{T}(\Sigma)}$.
\end{remark}
 
\begin{example}
Consider the cluster picture associated to the polynomial of \Cref{explicit poly example}. The associated metric BY tree is

$$
\scalebox{1.5}{
\begin{tikzpicture}[scale=\GraphScale]      
  \BlueVertices
  \Vertex[x=4.50,y=0.000,L=1]{1};
  \Vertex[x=0.000,y=0.000,L=1]{2};
  \Vertex[x=3.00,y=0.000,L=\relax]{3};
  \Vertex[x=1.50,y=0.000,L=\relax]{4};
  \BlueEdges
  \Edge(1)(3)
  \TreeEdgeSignN(1)(3){0.5}{2}
  \Edge(2)(4)
  \TreeEdgeSignN(2)(4){0.5}{2}
  \YellowEdges
  \Edge(3)(4)
  \TreeEdgeSignN(3)(4){0.5}{2}
  \EdgeSign(1)(1){0.5}(0,0.5){v_{\s_3}}
  \EdgeSign(2)(2){0.5}(0,0.5){v_{\s_1}}
  \EdgeSign(3)(3){0.5}(0,0.37){v_{\s_2}}
  \EdgeSign(4)(4){0.5}(0,0.37){v_{\cR}}
\end{tikzpicture}}
$$
where the yellow edge is  squiggly for the benefit of viewing  in black and white, the number above an edge is its length, and the number on a vertex its genus. 
\end{example}
 
\subsection{The hyperelliptic graph associated to a metric cluster picture}

Let $\Sigma=(X,\Sigma)$ be a cluster picture. We associate to $\Sigma$ a metric graph $G_\Sigma$, equipped with a \emph{genus} marking $g: V(G_\Sigma)\to \mathbb{Z}_{\ge 0}$ and an involution\footnote{graph isomorphism of order $\leq$ 2.} $\iota$ as follows. 

\begin{definition}[$G_\Sigma$]\label[definition]{TtoG}
Let $\Sigma$ be a metric cluster picture and $T=T_\Sigma$  the associated metric BY tree.  Define $G_\Sigma$, the \emph{hyperelliptic graph associated to }$\Sigma$, to be the topological space (complete with metric) given by glueing two disjoint copies $T^+$ and $T^-$ of $T$ along their common blue parts. Thus $G=G_\Sigma$ comes with a natural map $\pi:G\rightarrow T$ making it into a double cover of $T$ ramified along $T_b$, as well as an involution $\iota$ swapping $T^+$ and $T^-$.  We make $G_T$ into a (metric) graph by, for each $v_\s\in V(T)$ not a genus 0 leaf (equiv. $\s$ principal, see \cite[Lemma 5.20]{hyble}), declaring each element $x$ of $\pi^{-1}(v_\s)$  to be a vertex of genus $g(\s)$. We denote this vertex of $G$ by $v_\s$ if $x$ is the unique element of $\pi^{-1}(v_\s)$, and otherwise denote it $v_\s^+$ (resp. $v_\s^-$) if $x\in T^+$ (resp. $x\in T^-$).
Finally, we adjust the metric by halving the lengths of all edges.

We write $G_b$ for  those points in $G$ fixed by $\iota$ and $G_y$ for $G\setminus G_b$. Further, write $G_y^+$ for the points in $G_y$ which come from $T^+$ and $G_y^-$ for the points coming from $T^-$. 
\end{definition}

\begin{remark}
The graph $G_\Sigma$ is a \textit{hyperelliptic graph} in the sense of \cite[Definition 3.2]{hyble}, so that in particular  all vertices of genus $0$ necessarily have degree at least $3$. Specifically it is the hyperelliptic graph $\widetilde{\underline{G}(\underline{T}(\Sigma))}$ (see op. cit. Construction 4.8, Lemma 5.5). 
\end{remark}

\begin{remark} \label[remark]{explicit graph remark}
The graph $G_\Sigma$ may be described somewhat more concretely as follows.

For every non-\ub\ principal cluster there is a vertex $v_\c$, and for each  \ub\ principal cluster $\c$ the are two vertices $v_{\c}^+$ and $v_{\c}^-$. These are linked by edges as follows (where we write  $v_\c= v_{\c}^+=v_{\c}^-$ whenever $\c$ is not \ub):

\noindent\hskip+3mm
\begin{tabular}{|c|c|c|c|l|}
\hline
Name & From & To & Length & Conditions \\
\hline
$L_{\c'}$ &$v_{\c'}$&$v_{\c}$ &$\frac 12 \delta_{\s'}$ & $\c'<\c$ both principal, $\c'$ odd\\
\hline
$L_{\c'}^+$ &$v_{\c'}^+$&$v_{\c}^+$ &$\delta_{\s'}$ & $\c'<\c$, both principal, $\c'$ even \\
\hline
$L_{\c'}^-$ &$v_{\c'}^-$&$v_{\c}^-$ &$\delta_{\s'}$ & $\c'<\c$, both principal, $\c'$ even \\
\hline
$L_{\t}$ &$v_{\c}^-$&$v_{\c}^+$ &$2 \delta_\t$& $\c$ principal, $\t<\c$ twin\\
\hline
$L_{\t}$ &$v_{\c}^-$&$v_{\c}^+$ &$2 \delta_\s$& $\c$ principal, $\c<\t$ cotwin\\\hline
\end{tabular}\\

\noindent and if $\cR$ is not principal additionally:\\

\noindent\hskip+3mm
\begin{tabular}{|c|c|c|c|l|}
\hline
$L_{\c_1,\c_2}$ &$v_{\c_1}$&$v_{\c_2}$ &$\frac 12 (\delta_{\c_1}+\delta_{\c_2})$& $ \cR = \c_1\coprod\c_2$, with $\c_1, \c_2$ principal odd\\
\hline
$L_{\c_1,\c_2}^+$ &$v_{\c_1}^+$&$v_{\c_2}^+$ &$\delta_{\c_1}+\delta_{\c_2}$& $ \cR = \c_1\coprod\c_2$, with $\c_1, \c_2$ principal even\\
\hline
$L_{\c_1,\c_2}^-$ &$v_{\c_1}^-$&$v_{\c_2}^-$ &$\delta_{\c_1}+\delta_{\c_2}$& $ \cR = \c_1\coprod\c_2$, with $\c_1, \c_2$ principal even\\
\hline
$L_{\t}$ &$v_{\c}^-$&$v_{\c}^+$ &$2(\delta_\s+\delta_\t)$&  $\cR = \c \coprod \t$, with $\c$ principal even, $\t$ twin\\
\hline
\end{tabular}\\
\end{remark}

\begin{example}

The hyperelliptic graph associated to the cluster picture of \Cref{explicit poly example} is
$$
\scalebox{1.5}{
\begin{tikzpicture}[scale=\GraphScale]     
  \GraphVertices
  \Vertex[x=4.50,y=0.000,L=1]{1};
  \Vertex[x=0.000,y=0.000,L=1]{2};
  \Vertex[x=3.00,y=0.000,L=\relax]{3};
  \Vertex[x=1.50,y=0.000,L=\relax]{4};
  \BlueEdges
  \Edge(1)(3)
  \TreeEdgeSignN(1)(3){0.5}{1}
  \Edge(2)(4)
  \TreeEdgeSignN(2)(4){0.5}{1}
  \BendedEdges
  \Edge(3)(4)
  \Edge(4)(3)
  \GraphEdgeSignN(3)(4){0.5}{1}
  \GraphEdgeSignS(4)(3){0.5}{1}
  \BlueEdges
  \EdgeSign(1)(1){0.5}(0,0.5){v_{\s_3}}
  \EdgeSign(2)(2){0.5}(0,0.5){v_{\s_1}}
  \EdgeSign(3)(3){0.5}(0,0.37){v_{\s_2}}
  \EdgeSign(4)(4){0.5}(0,0.37){v_{\cR}}
\end{tikzpicture}
}$$
where the number above an edge indicates its length, and the number on a vertex its genus. In particular, by \Cref{main dual graph thingy}, for $p$ an odd prime this is the dual graph of the hyperelliptic curve 
\[C/\mathbb{Q}_p:y^2=x(x-1)(x-(1+p^2))(x-(1-p^2))(x-p^3)(x+p^3).\]
\end{example}

\subsection{Automorphisms of $T_\Sigma$ and $G_\Sigma$}

Let $\Sigma=(X,\Sigma)$ be a metric cluster picture. We now explain how to produce an automorphism of $T_\Sigma$ (resp. $G_\Sigma$) from an automorphism of $\Sigma$.

\subsubsection{Automorphisms of $T_\Sigma$}
By an \textit{automorphism} of $T=T_\Sigma$ we mean a pair  $(\sigma_0, \epsilon_\sigma)$ where 
\begin{itemize}
\item 
$\sigma_0$ is a graph automorphism of $T$ that preserves genera, colour and distances,
\item
$\epsilon_\sigma(Z)\in\{\pm 1\}$ is a collection of signs for every connected component $Z$ of 
the yellow part $T_y\subset T$. 
\end{itemize}
As for cluster pictures we compose automorphisms by the cocycle rule
$$
  (\alpha, \epsilon_\alpha)\circ (\beta, \epsilon_\beta) = 
    \bigl(\alpha\circ\beta, \bullet\mapsto\epsilon_\beta(\bullet)\epsilon_\alpha(\beta(\bullet))\bigr).
 $$
 (This is precisely the notion of automorphism for BY trees used in \cite[Definition 3.27]{hyble}.)
 
 \begin{definition}[$T(\sigma)$]
 Let $\sigma=(\sigma_0,\epsilon_{\sigma})$ be  an automorphism of $\Sigma$. Define the automorphism $T(\sigma)=(T(\sigma)_0,\epsilon_{T(\sigma)})$ of $T$ as follows. For a vertex $v_\s$ of $T$, set $T(\sigma)_0(v_\s)=v_{\sigma_0(\s)}$. To define $\epsilon_{T(\sigma)}$ for a yellow component $Z$ of $T_y$, pick (as is always possible) an even cluster $\s$ such that the edge between $v_\s$ and $v_{P(\s)}$ (half-edge if $P(\s)=X$ and $v_X$ is removed from the vertex set in the construction of $T$) lies in $Z$. Set $\epsilon_{T(\sigma)}(Z)=\epsilon_{\sigma}(\s)$. The compatibility of signs on even clusters ensures this is well defined. 
 \end{definition}
 
 \begin{remark} \label[remark]{earlier remark}
 The automorphism $T(\sigma)$ of $T_\Sigma$ is precisely the result of restricting the automorphism $\underline{T}(\sigma)$ of the open BY tree $\underline{T}(\Sigma)$, as defined in \cite[Construction 4.13]{hyble}, to its core $\widetilde{T(\Sigma)}$. In particular (see \cite[Proposition 4.14]{hyble}), the association $\sigma \mapsto T(\sigma)$ is a homomorphisms, and every automorphism of $T_\Sigma$ fixing $v_X$ (or $v_\s$ is $X$ has size $2g+2$ and a child $\s$ of size $2g+1$) arises this way. 
 \end{remark}
 
 \subsubsection{Automorphisms of $G_\Sigma$}

 By an \textit{automorphism} of $G_\Sigma$ we mean a graph automorphism preserving the genus marking. 
 
\begin{definition}[$G(\sigma)$] \label[definition]{the automorphism of g}
Let $\sigma=(\sigma_0,\epsilon_\sigma)$ be an automorphism of $\Sigma$, and $T(\sigma)=(T(\sigma)_0,\epsilon_{T(\sigma)})$ the associated automorphism of $T=T_\Sigma$. Denote by $\pi:G\rightarrow T$ the quotient map and for a connected component $Z$ of $G_y$,  denote by $\bar{Z}$ the component $\pi(Z)$ of $T_y$. We define $G(\sigma)$ to be the unique automorphism of $G$ such that:
\begin{itemize}
\item
$G(\sigma)$ commutes with $\iota$ and induces the graph automorphism $T(\sigma)_0$ (temporarily denoted $\rho$) on the quotient $T$,
\item for a connected component $Z$ of $G_y^+$, we have 
\[G(\sigma)(Z)=\begin{cases} \pi^{-1}(\rho(\bar{Z}))\cap G_y^+~~&~~\epsilon_{T(\sigma)}(\bar{Z})=1,\\\pi^{-1}(\rho(\bar{Z}))\cap G_y^-~~&~~\epsilon_{T(\sigma)}(Z)=-1.\end{cases}\]
\end{itemize} 
\end{definition}

\begin{remark} \label{explicit aut action}
Explicitly, for a non \ub~ principal cluster $\s$ we have $G(\sigma)(v_\s)=v_{\sigma \s}$. Similarly, for an \ub~ principal cluster $\s$ we have 
\[G(\sigma)(v_\s^+)=\begin{cases}v_{\sigma \s}^+~~&~~\epsilon_\sigma(\s)=1\\ v_{\sigma \s}^-~~&~~\epsilon_\sigma(\s)=-1.\end{cases}\]
For the edges, for a proper cluster $\s$ of size $<2g+1$, write $e_\s\in G_b$ (resp. $e_\s^+\in G_y^+$ and $e_\s^-\in G_y^-$) for the edge(s) between $\pi^{-1}(v_\s)$ and $\pi^{-1}(v_{P(\s)})$ (if $\s=\t$ is  a twin, then by $e_\t^{\pm}$ we mean the two half-edges which get glued at $\pi^{-1}(v_\t)$ to form a loop). Then for $\s$ odd we have $G(\sigma)(e_\s)=e_{\sigma \s}$, whilst for $\s$ even we have
\[G(\sigma)(e_\s^+)=\begin{cases} e_{\sigma \s}^+ ~~&~~\epsilon_\sigma(\s)=1\\e_{\sigma \s}^-~~&~~\epsilon_\sigma(\s)=-1.\end{cases}\]
\end{remark}

\begin{remark} \label{all things arise how we want}
The automorphism $G(\sigma)$ of $G_\Sigma$ is precisely the result of restricting the automorphism $\underline{G}(\sigma)$ of the hyperelliptic graph $\underline{G}(\underline{T}(\Sigma))$, as defined in \cite[Construction 4.8]{hyble}, to its core $\widetilde{\underline{G}(\underline{T}(\Sigma))}$. In particular (see \cite[Proposition 4.11]{hyble} and \Cref{earlier remark}), the association $\sigma \mapsto G(\sigma)$ is a homomorphism, and every automorphism of $G_\Sigma$ fixing $\pi^{-1}(v_X)$ as a set (or $\pi^{-1}(v_\s)$ if $X$ has size $2g+2$ and a child $\s$ of size $2g+1$) arises this way. 
\end{remark}

\subsection{The homology of $G_\Sigma$}
In \cite[Section 6]{hyble} an explicit  description of the first singular homology group $H_1(G_\Sigma,\mathbb{Z})$, along with its length pairing (\cite[Section 2.2.2]{hyble}) and automorphism action is given. Here we recall the result.

\begin{theorem} \label{th:apphomology}
Let $\Sigma$ be a metric cluster picture,  $A$ the set of even non-\ub\ clusters excluding $X$, and $B$ the subset of clusters $\c \in A$ such that $\c^*=X$.  Then there is a canonical isomorphism 
\[H_1(G_\Sigma,\mathbb{Z})\cong\left \{\sum_{\s \in A}\lambda_\s\ell_\s\in \mathbb{Z}^A~~\mid ~~\sum_{\s \in B}\lambda_\s=0 \right \}.\]
The length pairing  is given by
$$
\langle \ell_{\s_1},\ell_{\s_2} \rangle=\begin{cases}

        0~~&  \mbox{ } ~~\quad\c^*_1 \neq \c^*_2, \\
        2(\delta({\c_1\wedge\c_2},{P(\c^*_1)})& \mbox{ } \quad \c^*_1 =\c^*_2 \ne X, \\
        2(\delta({\c_1\wedge\c_2},{X}))& \mbox{} \quad\quad ~~\c^*_1 =\c^*_2 = X.\\
    \end{cases}
$$ 
For an automorphism $\sigma=(\sigma_0,\epsilon_\sigma)$ of $\Sigma$, the action of $G(\sigma)$ on $H_1(G_\Sigma,\mathbb{Z})$ is given by
$$\sigma(\ell_\s) = \epsilon_{\s}(\sigma)\ell_{\sigma\s}.$$
\end{theorem}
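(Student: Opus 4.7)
\begin{proof}[Proof plan]
The strategy is to reduce everything to the topology of the double cover $\pi \colon G_\Sigma \to T_\Sigma/\!\!\sim$ from Definition \ref{TtoG}, ramified precisely along the blue part $T_b$. Since the underlying metric space of the base is contractible (it is obtained from a finite tree by halving edge lengths), standard algebraic-topology input tells us that $H_1(G_\Sigma, \mathbb{Z})$ is generated by loops coming from the yellow locus.

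First I would establish the basis. For every $\s \in A$ (even, non-\"ubereven, $\s \ne X$) the cluster $\s^*$ has a yellow cluster above it, and there are two distinguished paths in $G_\Sigma$ from $v_{P(\s^*)}^-$ (or $v_{X}^-$ if $\s^*=X$) through $v_\s^-$ to $v_{P(\s^*)}^+$ (or $v_X^+$) via $v_\s^+$, which together form a loop (or half-loop) as in Definition \ref{de:loops}. Call it $\ell_\s$. The key input is the combinatorial identification of yellow components of $T_\Sigma$ with $G_K$-orbits of elements of $A$ modulo the equivalence $\s \sim \s'$ if $\s^* = \s'^*$; a yellow component contributes a single independent loop plus one relation whenever it is \"ubereven above. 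Concretely, I would compute the Euler characteristic of $G_\Sigma$ by counting vertices and edges from the explicit description in Remark \ref{explicit graph remark}, matching it against $\#A - \delta$ where $\delta = 1$ if $X$ is \"ubereven and $0$ otherwise. The only relation among the $\ell_\s$ is then $\sum_{\s \in B}\ell_\s = 0$, which is forced by the fact that these half-loops collectively bound the yellow region over $X$.

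Next, I would verify the length pairing. By the definition of $\langle\,,\rangle$ on singular chains of a metric graph (sum of lengths of shared edges, with signs from orientation), $\langle \ell_{\s_1}, \ell_{\s_2}\rangle$ only sees edges lying simultaneously on both loops. The combinatorial description of $\ell_\s$ shows that its support lies in the set of edges above the chain in $T_\Sigma$ from $v_\s$ up to (and including parts of the yellow neighborhood of) $v_{P(\s^*)}$, together with its mirror image. Hence the loops $\ell_{\s_1}$ and $\ell_{\s_2}$ overlap iff $\s_1^* = \s_2^*$, and in that case the shared portion is exactly the chain from $v_{\s_1 \wedge \s_2}$ down to $v_{P(\s_1^*)}$ (or to $v_X$ when $\s^* = X$), traversed on both the $+$ and $-$ sides. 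The total length, remembering that yellow edges in $G_\Sigma$ have length equal to the relative depth (Definition \ref{TtoG} halves lengths), gives the stated $2(\delta(\s_1\wedge\s_2,\cdot))$ formula.

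Finally, for the Galois action: by Definition \ref{the automorphism of g} and the explicit description in Remark \ref{explicit aut action}, $G(\sigma)$ sends $v_{\s}^{\pm}$ to $v_{\sigma\s}^{\pm}$ if $\epsilon_\s(\sigma) = +1$ and swaps $+\leftrightarrow -$ if $\epsilon_\s(\sigma) = -1$, with the corresponding behavior on the half-edges $e_\s^\pm$ defining the yellow part of $\ell_\s$. Tracking the orientation convention used to set up $\ell_\s$ (which goes $-$ to $+$ through $v_\s$) shows $G(\sigma)_*\ell_\s = \epsilon_\s(\sigma)\ell_{\sigma\s}$, as claimed. The main potential obstacle is the careful bookkeeping at the boundary cases: the half-loop case $\s^* = X$, the cotwin case where $P(\s^*)$ is interpreted via the midpoint $v_\t$, and verifying that the single relation $\sum_{\s\in B}\ell_\s=0$ is genuinely respected by the $\epsilon$-twisted action (equivalently, that $\epsilon_\s(\sigma)$ is constant on $B$, which follows from the cocycle/compatibility condition on \"ubereven clusters in Definition \ref{autc}). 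All of these are encompassed by \cite[Section 6]{hyble}, whose proof I would follow and translate into the cluster notation of the present paper.
\end{proof}

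\begin{remark}\label{rm:apphomology}
In the statement of Theorem \ref{th:Homology} we apply the above to the metric cluster picture of a semistable hyperelliptic curve; the isomorphism $\widehat{\Upsilon_C} \cong G_\Sigma$ of Theorem \ref{main dual graph thingy} transports the basis, pairing and Galois action described above to $H_1(\Upsilon_C,\mathbb{Z})$, via Theorem \ref{th:ActionOnDualGraph} for the last item.
\end{remark}
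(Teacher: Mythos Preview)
Your proposal is correct and follows the same route as the paper, which simply cites \cite[Theorem 6.1]{hyble}; your sketch is effectively an outline of that argument. The one organizing idea the paper singles out that you leave implicit is the canonical identification $H_1(G_\Sigma,\mathbb{Z})\cong H_1(T_\Sigma,T_b,\mathbb{Z})$ via relative homology (\cite[Proposition 6.6]{hyble}): this is the clean packaging of your ``double cover ramified along the blue part'' observation, and it makes the basis, the single relation over $B$, the pairing, and the automorphism action all transparent at once rather than requiring separate Euler-characteristic and edge-overlap computations.
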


\begin{proof}
This is \cite[Theorem 6.1]{hyble} (see op. cit. Definitions 3.16, 3.31 and 3.48 for the definitions of the lattices $\Lambda_{\bullet}$ appearing in the statement). We remark that, writing $T=T_\Sigma$, the proof passes through a canonical identification (op. cit. Proposition 6.6) of $H_1(G_\Sigma,\mathbb{Z})$ with the relative homology group $H_1(T,T_b,\mathbb{Z})$, equivariant for the natural actions of automorphisms and preserving the respective length pairings.
\end{proof}

\begin{remark} \label{rm:apphomology}
Unwinding the isomorphism in \cite[Theorem 6.1]{hyble} yields the following explict description of the basis elements $\ell_\s$:   
for $\c \ne \cR$ an even non-\"ubereven cluster   $\ell_{\c}\in C_1(G_\Sigma,\Z)$
 is the shortest path going from $v_{\sh}^-$ to $v_\s$ through $G_\Sigma^-$ before going on to  $v_{\sh}^+$ through $G_\Sigma^+$, where here we set $\hat\c=P(\neck{\c})$  if  $\neck{\c} \ne \cR$, and $\hat\c=\cR$  otherwise.
 (In exceptional cases, for a cluster $\s$ appearing above, $v_\s$ (resp. $v_{\sh}^+$ , $v_{\sh}^-$) may not be in the vertex set of $G_\Sigma$, and we must interpret it as the obvious point on an edge instead.)
Note that $\ell_{\c}$ is a loop  in $G_\Sigma$ unless $\c^* = \cR$, in which case it is a ``half loop'' in the sense that if $\ell_{\c}$, $\ell_{\c'}$ are two such then $ \ell_{\c} - \ell_{\c'}$ is a loop.
\end{remark}

\end{document}